\documentclass[12pt,a4paper, reqno]{amsart}
\usepackage{color}
\usepackage{url}
\usepackage{bbm}
\usepackage{amsfonts}
\usepackage{amsxtra}
\usepackage{amssymb}
\usepackage{amscd}
\usepackage{amsthm}
\usepackage{mathrsfs}
\usepackage{hyperref}
\usepackage{leqno}
\usepackage{multirow}
\setlength{\oddsidemargin}{0in} \setlength{\evensidemargin}{0in}
\setlength{\textwidth}{5.5in}

\makeatletter

\numberwithin{equation}{section}
\newtheorem*{theorem*}{Theorem}
\newtheorem{lemma}{Lemma}[section]
\newtheorem{proposition}[lemma]{Proposition}
\newtheorem{remark}[lemma]{Remark}
\newtheorem{example}[lemma]{Example}

\newtheorem{theorem}[lemma]{Theorem}
\newtheorem{definition}[lemma]{Definition}

\newtheorem{corollary}[lemma]{Corollary}

\newtheorem{assumption}[lemma]{Assumption}
\newtheorem*{question*}{Question}
\newtheorem*{assumption*}{Assumption}
\newtheorem*{axiom*}{Axiom}

\newtheorem*{theorem*1}{Theorem (\ref{theta1})}
\newtheorem*{theorem*2}{Theorem (\ref{theta2})}
\newtheorem*{theorem*3}{Theorem (\ref{theta3})}
\newtheorem*{theorem*4}{Theorem (\ref{theta4})}
\newtheorem*{proposition*5}{Proposition (\ref{theta5})}
\newtheorem*{proposition*6}{Proposition (\ref{theta6})}
\oddsidemargin=-1cm
\evensidemargin=-1cm
\baselineskip 20pt \textwidth 18cm \sloppy \theoremstyle{plain}

\newcommand{\Aut}{\operatorname{Aut}}
\newcommand{\End}{\operatorname{End}}
\newcommand{\Hom}{\operatorname{Hom}}
\renewcommand{\Im}{\operatorname{Im}}
\newcommand{\Ker}{\operatorname{Ker}}

\newcommand{\Irr}{\operatorname{Irr}}
\newcommand{\ind}{\operatorname{ind}}

\newcommand{\Ind}{\operatorname{Ind}}
\newcommand{\Rep}{\operatorname{Rep}}
\newcommand{\Res}{\operatorname{Res}}

\newcommand{\Id}{\operatorname{Id}}

\newcommand{\C}{\mathbb C}

\newcommand{\Q}{\mathbb Q}

\newcommand{\Z}{\mathbb Z}

\newcommand{\GL}{\operatorname{GL}}

\newcommand{\M}{\operatorname{M_m}}
\newcommand{\SL}{\operatorname{SL}}

\newcommand{\Ann}{\operatorname{Ann}}
\newcommand{\Bn}{\operatorname{B}}

\newcommand{\Mm}{\operatorname{M}}
\newcommand{\rad}{\operatorname{rad}}

\setcounter{tocdepth}{1}
\setcounter{secnumdepth}{1}

\newcommand{\Coind}{\operatorname{Coind}}

\newcommand{\diag}{\operatorname{diag}}

\newcommand{\id}{\operatorname{Id}}

\newcommand{\Stab}{\operatorname{Stab}}

\begin{document}
\title{On the theta representations of finite inverse  monoids}
\author{Chun-Hui Wang}
\address{School of Mathematics and Statistics\\Wuhan University \\Wuhan, 430072,
P.R. CHINA}
\email{cwang2014@whu.edu.cn}
\begin{abstract}
(I) We study   Clifford-Mackey-Rieffel's theory for  finite monoids, (II) We prove some results of Theta Representations of finite inverse monoids.
\end{abstract}
\maketitle
\setcounter{secnumdepth}{3}
\tableofcontents{}
\section{Introduction}
In this paper, we continue our study of   theta representations or general Howe correspondences.  Our original  motivation is to study  the tensor induced representations of $p$-adic groups. For that purpose, we study theta representations of finite monoids around this topic. Let  $M$ be a finite monoid.    Let $\Rep_f(M)$ denote the set of  equivalence classes of   finite dimensional  complex representations of $M$.  Analogous of representations of $p$-adic groups(cf.\cite{BernZ},\cite{BushH},\cite{Ca}), for $\pi \in\Rep_f(M)$,  we set  $\mathcal{R}_{M}(\pi)=\{ \rho \in \Irr(M) \mid \Hom_M(\pi, \rho) \neq 0\}$.  Let us  consider the two-monoid case. Let $M_1$, $M_2$  be two finite monoids.   Let $(\Pi, \mathcal{V})$ be a finite dimensional complex representation of $M_1\times M_2$.  For   $(\pi_i, V_i) \in \mathcal{R}_{M_i}(\Pi)$, let $\mathcal{V}_{\pi_i}$ denote the greatest $\pi_i$-isotypic quotient of $(\Pi,\mathcal{V})$.    By  Waldspurger's lemmas on local radicals(cf.Lemmas \ref{waldspurger1}, \ref{waldspurger2}), $\mathcal{V}_{\pi_i} \simeq \pi_i \otimes \Theta_{\pi_i}$, for some  $ \Theta_{\pi_i}\in \Rep_f(M_j)$, $1\leq i\neq j\leq 2$.  If  for any  $\pi_1\otimes \pi_2 \in \Irr(M_1\times M_2)$, $\Theta_{\pi_i}=0$ or $\Theta_{\pi_i}$ has a unique irreducible quotient $\theta_{\pi_i}$, and $ \dim \Hom_{M_1\times M_2}(\Pi, \pi_1\otimes \pi_2) \leq 1$, we will call $(\Pi,\mathcal{V})$ a theta representation of  $M_1\times M_2$.   The  $\theta$ bimap will  define  a  Howe  correspondence  between $ \mathcal{R}_{M_1}(\Pi)$ and $ \mathcal{R}_{M_2}(\Pi)$. Such definition originates  from the works of \cite{Ho1},\cite{Ho2},\cite{MVW}, etc, and it can be given similarly for other representation theory.

Now let $M=G$ be a finite group, and $(\pi, V)$  an irreducible complex representation of $G$ of dimension $m$. We can tensor $V$ by $n$-times and get $V^{\otimes n}$. By classical Schur-Weyl's duality, one can  decompose $V^{\otimes n}$  and   get a correspondence between irreducible representations of $\GL(V)$ and of  $S_n$. In other words, $V^{\otimes n}$ is a theta representation of $\GL(V) \times S_n$. However this is not sufficient  for us to deal with the tensor induced representation of infinite dimension. Hence we consider two possible  ways to modify the Schur-Weyl duality for finite group representation theory in this text. On the first way, we  construct a   monoid $G^{\odot n}$, which contains $G$ as a subgroup.
\begin{theorem*1}
$(\pi^{\otimes n} , V^{\otimes n})$ is a theta representation of $G^{\odot n} \times S_n$.
\end{theorem*1}
These  $G^{\odot n}$ are closely related to Schur's algebra. We don't know whether these monoids have appeared   directly  in somewhere in   literatures.  On the other way,   fix a  basis of $V$, and consider the twisted action  of $S_m$ on $V$. Combining with the original representation $\pi$, we can get  a  representation $(\Pi, V)$ of $G\ast S_m$, which extends the action of $G$.  For use, one can also treat $\pi$ as a rational representation over $\overline{\Q}$ by Serre's text book \cite{Serre}.
  \begin{assumption*}
  There exists an element  $g\in G$, such that $\pi(g)$ is a regular element in  $\GL_m(\overline{\Q})$.
  \end{assumption*}
 Under this assumption for $(\pi, V)$,  using some  results of  G. Prasad and A. S. Rapinchuk(cf. \cite{PR1}, \cite{PR3}, \cite{PR2}) on generic elements in Zariski-dense subgroups, we show that there exists a basis, such that $\overline{K}^{\times}\Im(\Pi)$ is Zariski-dense in $\M(\overline{K})$, for some subfield $K$ of $\C$. Using this result and some exercises from the book \cite{KrPr} of H.Kraft and C.Procesi, we can easily get the following result from  the classical Schur-Weyl's duality:
\begin{theorem*2}
$(\Pi^{\otimes n} , V^{\otimes n})$ is a theta representation of $(G\ast S_m) \times S_n$.
\end{theorem*2}
As is known that one can use character varieties to approach representations of  finite  groups. (cf. \cite{LM}, \cite{Si}, \cite{Wei})  We don't know  whether the above result has been considered in   this direction.  On the other hand, we shall come back to the  assumption for finite groups of Lie type in future.

 \emph{By abuse of notion, if $\C[M]$ is  semi-simple, we will call  $M$ semi-simple in this text.}   By the way, we also discuss  complex representations of finite semi-simple monoids.  Finite Monoid theory  has developed well for a long time. Our main purpose here is to generate the results of \cite{Wa1} to certain monoid cases. To do so, we need some  tools from the Clifford-Mackey theory for rings developed by  Rieffel in \cite{Ri2}. We also  do benefit from  Dade's work \cite{Da} on Clifford theory for  graded algebra and Witherspoon's  work  \cite{W} on  Clifford theory  for algebra. In \cite{Ri2}, Rieffel  gave   definitions of   ``normal'' subring and stability subring, and then provided a ring  version of Clifford and Mackey's theory.  Our main task is to find out some proper monoids to represent these rings and give some   specific results for use. This will also  provide some examples for Rieffel's result in the semi-simple  case in  \cite{Ri2}. Finally, we really  find  several  different  monoids $J^1_{M}(\sigma)$, $I^1_{M}(\sigma)$,  $I_{M}(\sigma)$ to represent the corresponding  stability subrings. One can see section \ref{DRWI} for details.  We remark that in Rieffel's paper \cite{Ri2}, he also discussed the non-semisimple case. Let us present some results as a consequence in this process.

Let $M$ be a finite monoid, and $N$, $K$ its two sub-monoids with the same identity element.  Let us give      the Green's relations for $M$ related to  $N$, $K$  as follows:  for two elements $m_1, m_2\in M$, we say (1) $m_1 \mathcal{L}_N m_2$ if $Nm_1=Nm_2$, (2) $m_1\mathcal{R}_K m_2$ if $m_1 K=m_2 K$, (3)  $m_1 \mathcal{J}_{(N,K)} m_2$  or $m_1 \mathcal{L}_N\mathcal{R}_K m_2$ if $N m_1 K=Nm_2 K$, (4) $m_1 \mathcal{H}_{(N,K)} m_2$ if $N m_1=Nm_2$  and  $ m_1K=m_2 K$.  For $m\in M$, let   $L_m^N$,  $R_m^K$,  $J_m^{(N,K)}$ denote the generators of   $Nm$,  $mK$, $NmK$ respectively, and $H_m^{(N,K)}=L_m^N\cap R_m^K$. By  following the exercise 1.28 in \cite{Stein}, we can treat $H_m^{(N,K)}$ as a monoid with the identity element $m$. Using this observation,   we  rewrite the relative structure theory of finite monoids  by following Steinberg's book \cite{Stein}.

  Let $\Delta$ be a complete set of representatives for $M/{\mathcal{L}_N\mathcal{R}_K}$. For each $m$, let  $x_1, \cdots, x_{\alpha^N_m}$ be a  complete set of representatives for $L_m^N/H_m^{(N,K)}$, and   $y_1, \cdots, y_{\beta^K_m}$ a  complete set of representatives for    $H_m^{(N,K)} \setminus R_{m}^K$.
\begin{theorem*3}[Mackey formulas]
\begin{itemize}
\item[(1)]  $M=\sqcup_{m\in \Delta} J_m^{(N,K)}=\sqcup_{m\in \Delta} L_m^N \otimes_{H_m^{(N,K)}} R_m^K=\sqcup_{m\in \Delta} \sqcup_{i=1, j=1}^{\alpha^N_m, \beta^K_m} x_i \circ_m H^{(N, K)}_m \circ_m y_j$.
\item[(2)] Assume that $\C[N], \C[ K]$ both  are   semi-simple.  Then  as $N-K$-bimodules,  $\C[M] \simeq \oplus_{m\in \Delta} \mathbb{C}[L^N_m]\otimes_{\mathbb{C}[H_m^{(N, K)}]} \mathbb{C}[R^K_m]$.
\end{itemize}
\end{theorem*3}
As is known that one can also use the  groupoid theory to approach inverse monoid. For Mackey  theory for groupoids, one can also read the  paper \cite{KaSp}, written by L. Kaoutit and L. Spinosa.  When the above $M$ is an inverse monoid, and all the   idempotents of $M$ belong to the submonoids, we expect that some above  results  will be compatible with their results there. However, for the later quotient monoid(Section \ref{csubmonoid}), we do not  know whether they will be the same thing.  It is  also interesting to interpret  their results for inverse monoids, in particular for infinite inverse monoids.  One reason for us is that  many proofs of our results rely on the finiteness condition on monoid.
  We remark that  Mackey formulas for Lusztig induction and restriction  have already worked out    by  Bonnaf\'e\cite{Bo1}, \cite{Bo2}, Bonnaf\'e-Michel\cite{BoMi}, and  Taylor \cite{Ta}, for different types.

 Following  the language  of \cite[Ch. 10]{CP1}, assume now that $N$  is a centric submonoid of a finite monoid $M$ in the sense that $Nm=mN$, for any $m\in M$. In this case, we can consider the quotient monoid $\frac{M}{N}$. To facilitate use in projective representations, we proved the next result directly:
\begin{theorem*4}
 $\C[M]$ is  semi-simple iff $\C[N]$ and  $\C[\frac{M}{N}]$ both are semi-simple.
\end{theorem*4}
Let $F^{\times }$ be a finite subgroup of $\C^{\times}$, and let $F=F^{\times} \cup \{0\}$. Let $N=F$ or $F^{\times}$ be an abelian multiplicative  monoid.  Recall that  a multiplier $\alpha$   is a function from $M\times M$ to $N$ satisfying (1)  the normalized  condition that $ \alpha(m,1)=1= \alpha(1,m)$, (2) $ \alpha (m_1, m_2)  \alpha(m_1m_2, m_3) = \alpha(m_2, m_3)\alpha(m_1, m_2m_3)$, for $m, m_i\in M$. As a consequence, the above  result shows that an $\alpha$-projective complex representation of a semisimple monoid is semisimple. In  \cite{P1}, \cite{P2}, Patchkoria introduced several definitions of cohomology monoids with coefficients in  semimodules. From  his theory,  whether one can prove some finiteness results for  $H^2(M, \C)$ or $H^2(M, \C^{\times})$, and determine the image of a  $2$-cocyle in a  finite set of $\C$?(cf. Deligne's \cite{De})
\begin{proposition*5}
Under the semi-simple assumption on finite monoids $M$, $N$,  if $N$ is a centric submonoid of $M$, then  $\C[N] $  is a normal subring of $\C[M]$ in the sense of Rieffel.
\end{proposition*5}

Then  there comes  an inverse problem:  if $\C[N] $  is a normal subring of $\C[M]$, which congruence condition we can get for  the monoid pair  $N, M$?(cf. \cite{CP2},   \cite{HoLa}, \cite{Na}, \cite{PaPe}, \cite{Pet})  Our next result is the following proposition \ref{theta6}.

\begin{assumption*}
  \begin{itemize}
 \item[(1)] $M_1$, $M_2$ both are semi-simple,
 \item[(2)] for each $i$, $N_i$, $M_i$ are centric submonoids of  $M_i$,
 \item[(3)] for each $i$, $N_i$ is also a subgroup of $M_i$,
 \item[(4)] $\iota:  \frac{M_1}{N_1} \simeq \frac{M_2}{N_2}$.
\end{itemize}
\end{assumption*}
Under the assumption,  we can identify $ E(M_i)$ with $E(\frac{M_i}{N_i})$. Hence    $\iota$ defines a bijective map from $ E(M_1)=E(\frac{M_1}{N_1})$ to  $E(M_2)=E(\frac{M_2}{N_2})$.  For simplicity, we use the same notations  $E$ for $E(M_1)$ and $E(M_2)$. Let $\Irr^{E}(M_1\times M_2)$ denote the set of irreducible representations of $M_1\times M_2$ having the apexes of the form $(f, f)$,  $f\in E$.

Let  $\overline{\Gamma}  \subseteq \frac{M_1}{N_1}\times \frac{M_2}{N_2}$ be the graph of $\iota$. Let $p: M_1\times M_2 \longrightarrow    \frac{M_1\times M_2}{N_1\times N_2}   \simeq \frac{M_1}{N_1}\times \frac{M_2}{N_2} $, and  $\Gamma=p^{-1}(\overline{\Gamma})$. Note that  $\Gamma\supseteq N_1\times N_2$. Let  $(\rho, W)$ be a finite dimensional representation of $\Gamma$  having the same apex $(f, f)$ for each irreducible components. Under the above assumption, we have:
\begin{proposition*6}
  $\Res^{\Gamma}_{N_1\times N_2} \rho$ is a theta representation of $N_1 \times N_2$ iff $\pi=\Ind_{\Gamma}^{M_1 \times M_2} \rho$ is a theta representation of $M_1\times M_2$ with respect to  $\Irr^{E}(M_1\times M_2)$.
\end{proposition*6}
As is known that one can use  character theory to approach  inverse monoid. In \cite{Stein2}, \cite{Stein3}, Steinberg obtained   character formulas for multiplicities of irreducible components of a  representation of an inverse monoid. So  it is possible  to  use his formulas to give another proof of the above result.

The paper is organized as follows. In section \ref{rpSn}, we recall some  results of complex  representations of symmetric groups, wreath product groups by following Kerber's  two books \cite{K1}\cite{K2},  James' book \cite{Ja}. In section \ref{rela}, we systematically studied  the relative structures  of finite monoids. We study  the localization of a monoid at every element.  In section \ref{centmonoid}, we study the concrete behavior when an irreducible representation of  a semi-simple monoid  is restricted to   its centric submonoids.
Section \ref{DRWI} is devoted to presenting  Clifford-Mackey-Rieffel theory for monoids. Section  \ref{se3}  is devoted to extending an irreducible representation of a finite group to a free product group. In this section, we shall use some  tools from algebraic geometry, mainly developed by G. Prasad and A. S. Rapinchuk. In section \ref{syex}, we shall consider the symmetric extension.    We distribute some  monoids to a finite group. In the last two sections \ref{finitemonoidsI}, \ref{finitemonoidsII}, we will prove our main results: theorems \ref{theta1},\ref{theta2},\ref{theta3}. In section \ref{finitemonoidsI}, we also provide some equivalent definitions for a theta representation in the semi-simple monoid case.

\subsubsection*{Acknowledgement} We  warmly thank Alex Patchkoria for  sending his papers  to the author.

\section{Complex representations of symmetric groups}\label{rpSn}
Let us first recall some results of complex  representations of symmetric groups, wreath product groups. Our main references  are  Kerber's  books \cite{K1}\cite{K2},  James' \cite{Ja} .

\subsection{Representations of $S_n$} We shall fix the symbol $\Omega=\{1, \cdots, n\}$. Let $S_n$ be the permutation group of degree $n$,  $A_n$   the alternating subgroup. An element $p\in S_n$ can then be acting on $\Omega$ by  $i \longrightarrow p(i)$, so we write $p=\begin{pmatrix} 1 & \cdots & n  \\ p(1) & \cdots & p(n)\end{pmatrix}$. In this text, the products of  two permutations $p, p'\in S_n$ is defined as $p'p= \begin{pmatrix} 1 & \cdots & n  \\ p'\big(p(1)\big) & \cdots & p'\big(p(n)\big)\end{pmatrix}$.  If
$\lambda=(\lambda_1, \cdots, \lambda_k)$ is a partition of $n$ with $\lambda_1 \geq \cdots \geq \lambda_k\geq 1$ and   $\lambda_1+\cdots + \lambda_k=n$, we will write $\lambda \vdash n$.  To  $\lambda \vdash n$ is associated a Young diagram $[\lambda]$ with $\lambda_i$ nodes in the $i$-th row and $k$ columns. Let $[\lambda^{\vee}]$ be another  Young diagram associated to $[\lambda]$ by interchanging the rows and columns.   To each $\lambda\vdash n$,  let $S_{\lambda}=S_{\lambda_1} \times \cdots \times S_{\lambda_k} $ be the corresponding Young subgroup  of $S_n$.  Unless differently specified, we will henceforth write $1$ resp. $\chi^+$ for  the trivial resp.  sign representations  of a symmetric group.  The following result is  well known.
\begin{lemma}[{\cite[p.61, 4.4]{K1}}]
For each $\lambda\vdash n$, $\dim_{\C}\Hom_{S_n}(\Ind^{S_n}_{S_{\lambda}} 1 , \Ind^{S_n}_{S_{\lambda^{\vee}}} \chi^+)=1$.
\end{lemma}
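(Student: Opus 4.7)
The plan is to chain together Frobenius reciprocity and Mackey's double-coset formula, reducing the computation to a combinatorial count of $0/1$ matrices with prescribed row and column sums.

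First, by Frobenius reciprocity one has
\[
\Hom_{S_n}\bigl(\Ind^{S_n}_{S_{\lambda}} 1, \Ind^{S_n}_{S_{\lambda^{\vee}}} \chi^+\bigr)
\simeq \Hom_{S_{\lambda}}\bigl(1,\Res^{S_n}_{S_{\lambda}} \Ind^{S_n}_{S_{\lambda^{\vee}}} \chi^+\bigr).
\]
Next, applying Mackey's decomposition along a set of double coset representatives $D \subseteq S_\lambda \backslash S_n / S_{\lambda^{\vee}}$, the right-hand side breaks as
\[
\bigoplus_{g \in D} \Hom_{S_{\lambda} \cap g S_{\lambda^{\vee}} g^{-1}}\bigl(1, (\chi^+)^g\bigr).
\]
Each summand is either $0$ or $1$: it equals $1$ exactly when the twisted sign character is trivial on the intersection, which happens precisely when $S_{\lambda} \cap g S_{\lambda^{\vee}} g^{-1}$ contains no transposition.

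The second step is to parametrize $S_\lambda \backslash S_n / S_{\lambda^\vee}$ combinatorially. The standard identification (see e.g.\ James \cite{Ja}) assigns to each double coset $S_{\lambda} g S_{\lambda^{\vee}}$ a nonnegative integer matrix $A_g = (a_{ij})$ whose $i$-th row sums to $\lambda_i$ and whose $j$-th column sums to $\lambda_j^{\vee}$; moreover the intersection factors as
\[
S_{\lambda} \cap g S_{\lambda^{\vee}} g^{-1} \simeq \prod_{i,j} S_{a_{ij}}.
\]
Hence the intersection contains no transposition if and only if every entry of $A_g$ belongs to $\{0,1\}$.

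The third and decisive step is to verify that there is \emph{exactly one} $0/1$ matrix $A$ with row sums $\lambda$ and column sums $\lambda^{\vee}$. Since $\lambda_j^{\vee} = |\{i : \lambda_i \geq j\}|$, the characteristic matrix of the Young diagram $[\lambda]$ (namely $a_{ij} = 1$ iff $j \leq \lambda_i$) is such a matrix; uniqueness follows by an inductive argument on rows, using that a $0/1$ row of sum $\lambda_i$ whose ones lie in a set of columns of maximal available mass must occupy the first $\lambda_i$ columns once the partition shape is imposed. This distinguished double coset contributes one to the Hom space, all other double cosets contribute zero, and we obtain the claimed equality.

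The only subtle point is the uniqueness step (and, upstream of it, the explicit description of $S_{\lambda} \cap g S_{\lambda^{\vee}} g^{-1}$ via the matrix $A_g$); this is standard but needs to be invoked carefully. Everything else is bookkeeping.
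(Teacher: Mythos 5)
Your proposal is correct, and it is essentially the standard argument behind the cited result: the paper itself gives no proof here but simply quotes Kerber \cite[p.61, 4.4]{K1}, where the statement is obtained by exactly this combination of Frobenius reciprocity, Mackey's double-coset formula for a pair of Young subgroups $S_{\lambda}$, $S_{\lambda^{\vee}}$ (with $S_{\lambda}\cap gS_{\lambda^{\vee}}g^{-1}\simeq \prod_{i,j}S_{a_{ij}}$), and the observation that exactly one $0/1$ matrix has row sums $\lambda$ and column sums $\lambda^{\vee}$. The only place worth tightening is the uniqueness step: it is cleanest by induction on columns --- the first column sum $\lambda^{\vee}_1$ equals the number of rows, forcing that column to be all ones, after which deleting it reduces to the partition $(\lambda_1-1,\dots,\lambda_k-1)$ and its conjugate --- rather than the row-by-row "maximal available mass" phrasing you give.
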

Then there exists only one  common irreducible representation  in $\mathcal{R}_{S_n}(\Ind^{S_n}_{S_{\lambda}} 1) \cap \mathcal{R}_{S_n}(\Ind^{S_n}_{S_{\lambda^{\vee}}} \chi^+)$; as in \cite[p.63]{K1}, let us  denote this irreducible representation simply by $[\lambda]$. It is known that $\Irr(S_n)=\{ [\lambda]\mid  \lambda\vdash n\}$, and $[\lambda] \ncong [\delta]$ for two different $\lambda \vdash n$, $\delta \vdash n$.
\begin{example}
 Let   $V$ be a $\mathbb{C}$-vector space of dimension $n$ with a basis $e_1, \cdots, e_n$.  A canonical action  of $S_n$  on $V$ is given by $p(\sum_{i=1}^n c_i e_i)=\sum_{i=1}^n c_i e_{p(i)}$. Let $S^{(n-1, 1)}=\{ v=\sum_{i=1}^n c_i e_i \in V\mid \sum_{i=1}^n c_i=0\}$. Then $(\pi, S^{(n-1, 1)})\simeq [\lambda]$, for $ \lambda=(n-1, 1) \vdash n$.
  \end{example}

\subsection{Representations of  $G \wr S_n$}\label{rpGSn} Let $G$ be   a  finite group. Let $G_{\Omega}$ be the set of elements  $ f: \Omega \longrightarrow G$. An  action of $S_n$ on $G_{\Omega}$ can then be  given by   $f_{p}(j)= f(p^{-1}(j))$, for $f\in G_{\Omega}, p \in S_n, j\in \Omega$. The wreath product group $G \wr S_n$ consists of  elements $(f, p) \in G_{\Omega} \times  S_n$, together with the group law $(f, p)\dot (f',p')=(ff'_{p}, p p')$, for $f, f'\in G_{\Omega}, p, p'\in S_n$.  Then $G\wr S_n \simeq G_{\Omega} \rtimes S_n$,
which  contains two canonical subgroups $G_{\Omega}  \simeq\underbrace{G \times \cdots \times G}_n$, $S^{\ast}_n=\{ (1, p) \mid p\in S_n\} \simeq S_n$.

 Let $\pi_{\Omega}=\pi_1 \otimes \cdots \otimes \pi_n\in \Irr(G_{\Omega})$,  $I_{G\wr S_n}(\pi_{\Omega})=\{ (g, p)\in G\wr S_n \mid \pi_{\Omega}^{(g,p)} \simeq \pi_{\Omega}\}$.  Let  $\mathcal{A}=\{ \delta_1, \cdots, \delta_r\}$ be an ordered set of all pairwise inequivalent irreducible representations of $G$.   Let $(n)=(n_1, \cdots, n_r)$ be the type of $\pi_{\Omega}$ with respect to $\mathcal{A}$(cf. \cite[pp.90-91]{K1}),  and let  $S_{(n)}=S_{n_1} \times \cdots \times S_{n_r}$. By  \cite[pp.90-91]{K1}, $I_{G\wr S_n}(\pi_{\Omega}) \simeq G\wr S_{(n)}$, and $\pi_{\Omega}$ can be extensible naturally to an irreducible representation $\widetilde{\pi_{\Omega}}$ of $I_{G\wr S_n}(\pi_{\Omega})$.    Through the canonical  projection  $G \wr S_{(n)} \longrightarrow S_{(n)}$,   an element  $(\sigma, W) \in \Irr(S_{(n)})$ is also  an irreducible representation of $G \wr S_{(n)}$. In order to distinguish them, we   denote this representation  by  $(\widetilde{\sigma}, G \wr S_{(n)}, W)$.  By Clifford-Mackey theory, we have:
 \begin{theorem}[{\cite[p.29, 2.15]{K2}}]\label{Irr}
  $\Irr(G\wr S_n )=\left\{ \Ind^{G\wr S_n} _{G \wr S_{(n)}}  (\widetilde{\pi_{\Omega}} \otimes \widetilde{\sigma}) \mid  \pi_{\Omega} \in \Irr(G_{\Omega}), \sigma  \in \Irr(S_{(n)})\right\} $.
  \end{theorem}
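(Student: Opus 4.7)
The plan is to apply Clifford--Mackey theory to the normal subgroup $G_{\Omega} \trianglelefteq G \wr S_n$, whose quotient is $S_n$. Any $\rho \in \Irr(G\wr S_n)$ restricts to $G_\Omega$ as a sum of conjugates of some $\pi_\Omega = \pi_1 \otimes \cdots \otimes \pi_n \in \Irr(G_\Omega)$, and by the general Clifford correspondence the map $\tau \mapsto \Ind_{I_{G\wr S_n}(\pi_\Omega)}^{G\wr S_n} \tau$ gives a bijection between $\Irr(I_{G\wr S_n}(\pi_\Omega) \mid \pi_\Omega)$ and the set of $\rho \in \Irr(G\wr S_n)$ whose restriction contains $\pi_\Omega$. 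Orbit representatives under the $S_n$-action on $\Irr(G_\Omega)$ are in natural bijection with ordered multisets of elements of $\mathcal{A}$, that is, with assignments of a type $(n)=(n_1,\ldots,n_r)$ together with a choice of representatives $\delta_j$; and one checks directly that $I_{G\wr S_n}(\pi_\Omega) = G\wr S_{(n)}$ for such a representative.

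The next step, and the genuine content of the theorem, is to show that $\pi_\Omega$ extends to an irreducible representation $\widetilde{\pi_\Omega}$ of $G\wr S_{(n)}$. I would do this block by block: after grouping the indices so that the first $n_1$ components are all equal to $\delta_1$, the next $n_2$ to $\delta_2$, etc., each block $\delta_j^{\otimes n_j}$ carries an obvious commuting action of $S_{n_j}$ permuting the tensor factors, and this action together with the diagonal action of $G^{n_j}$ assembles to an action of $G \wr S_{n_j}$ on the representation space of $\delta_j^{\otimes n_j}$. Taking the tensor product over $j$ produces the required extension $\widetilde{\pi_\Omega}$, whose restriction to $G_\Omega$ is $\pi_\Omega$ itself.

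Once the extension is in hand, the Clifford correspondence becomes explicit through Gallagher's theorem: every irreducible representation of $G\wr S_{(n)}$ lying above $\pi_\Omega$ is of the form $\widetilde{\pi_\Omega} \otimes \eta$ for a unique $\eta \in \Irr\bigl(G\wr S_{(n)} / G_\Omega\bigr) = \Irr(S_{(n)})$, and pulling $\eta$ back gives precisely $\widetilde{\sigma}$ for some $\sigma \in \Irr(S_{(n)})$. Assembling this with the Clifford induction bijection yields the stated list, and the irreducibility and pairwise inequivalence follow from the standard Mackey irreducibility criterion applied to the pair $(I_{G\wr S_n}(\pi_\Omega), \pi_\Omega)$.

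The main obstacle is the canonical extension step: without the block-diagonal structure of $\pi_\Omega$ one would only know abstractly that the obstruction class lies in $H^2(S_{(n)},\C^\times)$, so the argument must exploit that each factor within a block is literally the same representation $\delta_j$, which lets one transport vectors by the permutation $S_{n_j}$ action on the tensor-power space without any projective cocycle appearing. Everything else is a routine application of Clifford theory, and the counting of irreducible characters versus the number of conjugacy classes of $G\wr S_n$ provides a useful sanity check that the list is exhaustive.
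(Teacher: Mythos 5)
Your argument is correct and follows exactly the route the paper relies on: the paper gives no independent proof, citing Kerber \cite[p.29, 2.15]{K2} with the remark ``By Clifford--Mackey theory,'' and that source proceeds precisely as you do --- Clifford theory for $G_{\Omega}\trianglelefteq G\wr S_n$, identification of the inertia group $G\wr S_{(n)}$ by type, the cocycle-free block-by-block extension $\widetilde{\pi_{\Omega}}$ via the tensor-factor permutation action, and the Gallagher/induction bijection. Your emphasis that the extension exists without a projective obstruction because each block consists of copies of one and the same $\delta_j$ is indeed the key point.
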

 \begin{remark}
  For a subgroup $H \subseteq S_n$, the similar result also holds for the wreath product  group $G\wr H$(see \cite[p.29, 2.15]{K2} for the details).
  \end{remark}
  For the convenience of use,  analogue of  Definition 1.5 in \cite[p.81]{La2}, we  give the following local definition.
  \begin{definition}\label{wreathproduct}
  For $(\pi, V)\in \Irr(G)$, $(\sigma, W)\in \Irr(S_n)$,  $\pi_{\Omega}=\pi^{\otimes n}$ is an irreducible representation of $G_{\Omega}$. The irreducible representation $\widetilde{\pi_{\Omega}} \otimes \widetilde{\sigma}$ of $G\wr S_n$ is  called  \textbf{the wreath product of $\pi$ with $\sigma$} (by on $\Omega$), and denoted  by $(\pi \wr \sigma, V\wr W)$.
  \end{definition}
  \begin{remark}
  For the general $\pi_{\Omega}=\pi_1 \otimes \cdots \otimes \pi_n\in \Irr(G_{\Omega})$ of type $(n)=(n_1, \cdots, n_r)$, $G\wr S_{(n)} \simeq (G\wr S_{n_1}) \times \cdots \times (G\wr S_{n_r})$.   Then  the irreducible representation of $G\wr S_n$ in the theorem \ref{Irr} can be written as $$\Ind^{G\wr S_n}_{G\wr S_{(n)}} [(\delta_1 \wr \sigma_1) \otimes  \cdots \otimes (\delta_r \wr \sigma_r)]$$ for $\sigma=\sigma_1\otimes \cdots \otimes \sigma_r\in \Irr(S_{(n)})$.  Here, by abuse of notations,  $G\wr S_{0}=1$, and any irreducible  representation of this group is trivial.
       \end{remark}
   \begin{example}
   Let us  consider now  $G=S_m$, and $m\geq 5, n \geq 5$. Then there are four characters of $S_m\wr S_n$: $\chi^{0,0}=1_{S_m}\wr 1_{S_n}$, $\chi^{0,1}=1_{S_m}\wr \chi^+_{S_n}$, $\chi^{1,0}=\chi^+_{S_m} \wr  1_{S_n}$, $\chi^{1,1}=\chi^+_{S_m}\wr  \chi^+_{S_n}$,  for the trivial representations $(1_{S_m},S_m)$, $(1_{S_n}, S_n)$ , and the  sign representations $(\chi^+_{S_m}, S_m)$, $(\chi^+_{S_n}, S_n)$.
 \end{example}
 Consequently, for $m,n\geq 5$, there are  three normal subgroups of $S_m\wr S_n$ of index $2$: (1) $\Ker \chi^{0,1}=S_m \wr A_n$, (2) $\Ker \chi^{1,0}=\{ (f, p) \mid f(1)\cdots f(n)\in A_m\}=(S_m\wr S_n)_{A_m}$, (3) $\Ker \chi^{1,1}=\{ (f, p) \mid \chi_{S_m}^+( f(1)\cdots f(n) )\chi_{S_n}^+(p)=1\}=(S_m\wr S_n)^{A_n}_{A_m}$; here the right-hand notations originated from \cite[p.7]{K2}.
   \begin{example}
   Let $(\pi, V)\in \Irr(S_m)$, $(\sigma, W)=(1_{S_n}, \mathbb{C}) \in \Irr(S_n)$.  Then $(\pi \wr \sigma,  V\wr W ) \in \Irr(S_m\wr S_n )$. For different $\rho\in \Irr(S_n)$, by  considering  the $\rho$-isotypic  component of $\Res_{S_n}^{S_m\wr S_n}( V\wr W)$, we  obtain  the $n$th $\rho$-twisted tensor power of $V$.
      \end{example}
Let $\Sigma=\{1, \cdots, m\}$. With the help of the following lemma, one can  also treat   a wreath product group as a subgroup of  certain   permutation group.
\begin{lemma}[{\cite[p.7, 1.4]{K2}}]\label{EM}
There exists a faithful permutation representation $\phi$ of $S_m \wr S_n$  on $\Omega \times \Sigma$, given by $\phi: S_m\wr S_n  \longrightarrow S_{mn}; (f, p) \mapsto \begin{pmatrix} (j-1)m+i  \\ (p(j)-1)m+f(p(j))(i)\end{pmatrix}_{1\leq i\leq m, 1\leq j\leq n}$
\end{lemma}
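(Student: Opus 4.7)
\medskip

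\noindent\textbf{Proof plan.}
The idea is to reinterpret the formula in the lemma as the natural action of $S_m \wr S_n$ on the product set $\Sigma \times \Omega$, and then reduce to routine checks. Identify $\Omega \times \Sigma$ with $\{1,\ldots,mn\}$ via the bijection $(i,j)\mapsto (j-1)m+i$, so that the target set is broken into $n$ ordered blocks of size $m$ (block $j$ consisting of the elements $(j-1)m+1,\ldots,(j-1)m+m$). With this identification, the formula in the statement is exactly
\[
\phi(f,p)\bigl((i,j)\bigr) \;=\; \bigl(f(p(j))(i),\, p(j)\bigr),
\]
i.e., block $j$ is sent to block $p(j)$ and inside the new block the $S_m$-factor $f(p(j))$ is applied. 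This is the natural ``blockwise permutation plus inner permutation'' action, and it is the cleanest form in which to carry out the verifications.

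First I would check that $\phi(f,p)$ is a bijection of $\Omega\times\Sigma$, hence genuinely lies in $S_{mn}$: since $p\in S_n$ permutes blocks and each $f(k)\in S_m$ permutes within block, the inverse is given explicitly by $(i,j)\mapsto (f(j)^{-1}(i),\, p^{-1}(j))$. Next I would verify that $\phi$ is a group homomorphism. Using the wreath-product law recorded just above the lemma, $(f,p)\cdot(f',p')=(f f'_{p},\, pp')$ with $f'_p(j)=f'(p^{-1}(j))$, and the convention $(pp')(j)=p(p'(j))$ for $S_n$, a direct substitution gives
\[
\phi(f f'_p,\,pp')(i,j) \;=\; \bigl(f(p(p'(j)))\,f'(p'(j))(i),\; p(p'(j))\bigr),
\]
while
\[
\phi(f,p)\bigl(\phi(f',p')(i,j)\bigr) \;=\; \phi(f,p)\bigl(f'(p'(j))(i),\, p'(j)\bigr) \;=\; \bigl(f(p(p'(j)))\bigl(f'(p'(j))(i)\bigr),\; p(p'(j))\bigr),
\]
and the two agree because the product in $S_m$ satisfies $(ab)(i)=a(b(i))$.

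Finally I would verify faithfulness: suppose $\phi(f,p)$ is the identity. Reading the second coordinate shows $p(j)=j$ for every $j\in\Omega$, so $p=1$; plugging $p=1$ into the first coordinate gives $f(j)(i)=i$ for all $i,j$, hence $f(j)=1_{S_m}$ for each $j$, i.e.\ $(f,p)$ is the identity in $S_m\wr S_n$. Thus $\ker\phi$ is trivial.

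The only real subtlety is bookkeeping with the two multiplication conventions (the $S_n$-rule $(pp')(j)=p(p'(j))$ and the wreath-product twist by $f'_p$), since a sign error there would produce the opposite-group action instead of a homomorphism; once one writes out a single careful substitution as above, everything is forced. No additional machinery beyond the definitions fixed in Section~\ref{rpGSn} is required.
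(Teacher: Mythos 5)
Your verification is correct: the block reading $\phi(f,p)(i,j)=(f(p(j))(i),\,p(j))$, the homomorphism computation against the law $(f,p)(f',p')=(ff'_p,\,pp')$ with $f'_p(j)=f'(p^{-1}(j))$ and $(pp')(j)=p(p'(j))$, and the kernel argument are all consistent with the conventions fixed in Section \ref{rpGSn}. The paper itself supplies no proof for this lemma (it is quoted from Kerber's book), so your direct computation is exactly the standard argument that the citation points to, and nothing further is needed.
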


Let $C_m$ denote  the cyclic group of order $m$.     The group $S_m \wr S_n$ contains many interesting subgroups(\cite[p.8]{K2}):
\begin{itemize}
\item[(1)] $C_m \wr S_n$: the generalized symmetric group;
\item[(2)] $C_2 \wr S_n$:  the Hyperoctahedral group;
\item[(3)] $S_{n}$:  the Weyl group of type $A_{n-1}$, for $n\geq 2$;
\item[(4)]  $\phi(C_2 \wr S_n)$:   the Weyl group of type $B_n$, for $n\geq 2$,  or the Weyl group of type  $C_n$,  for  $n\geq 3$;
\item[(5)] $\phi(C_2 \wr S_n)\cap A_{2n}$:  the Weyl group of type $D_n$,  for $n\geq 4$.
\end{itemize}
\subsection{Twisted $C_2\wr S_n$-actions}\label{twisted}   Let $V$ be a $\C$-vector space of dimension $n$, with  a fixed basis $\{e_1, \cdots, e_n\}$ of $V$.  Clearly there exists a canonical action $\pi_n$ of  $S_n$ on $V$  given as follows:  $\pi_{n}(p)(v )=\sum_{i=1}^n  c_i e_{p(i)}$, for $p\in S_n$,  $v=\sum_{i=1}^n c_i e_i \in V $.

By the discussion in section \ref{rpGSn}, it is not hard to see that  for $n\geq 2$,  there are at least  eight  kind of  representations of $C_2\wr S_n$ of dimension $n$: (1)  $1\wr \pi_n$, (2) $\chi^+\wr \pi_n$, (3)  $1\wr (\pi_n \otimes \chi^+)$,  (4) $\chi^+\wr (\pi_n \otimes \chi^+)$,  (5) $\Ind_{(C_2\wr S_{n-1})\times (C_2\wr S_1)}^{C_2\wr S_n} [(1\wr 1) \otimes (\chi^+ \wr 1)]$,  (6)   $\Ind_{(C_2\wr S_{n-1})\times (C_2\wr S_1)}^{C_2\wr S_n} [(1\wr \chi^+) \otimes (\chi^+ \wr \chi^+)]$,
  (7)   $\Ind_{(C_2\wr S_{n-1})\times (C_2\wr S_1)}^{C_2\wr S_n} [(\chi^+\wr 1) \otimes (1 \wr 1)]$,  (8)  $\Ind_{(C_2\wr S_{n-1})\times(C_2\wr S_1) }^{C_2\wr S_n} [(\chi^+ \wr \chi^+) \otimes (1\wr \chi^+)]$; we will denote these representations by
  ${}^{+,+}_{+,+}\Pi$, ${}^{+,+}_{-,-} \Pi$, ${}^{-,-}_{+,+}\Pi$, ${}^{-,-}_{-,-}\Pi$, $\Pi^{+,+}_{-,+}$, $\Pi^{-,-}_{-,+}$,  $\Pi^{+,+}_{+,-}$, $\Pi^{-,-}_{+,-}$ respectively.

  Notice that (1) there are also other kinds of such representations of dimension $n$,   (2) for some  small $n$, some representations among them can be  isomorphic, (3) for $n\geq 2$,  the first four representations are not irreducible, but the rest ones are irreducible.   All these representations can be realized on $V$.  Let us  formulate  the actions explicitly  in the following:

For $v=\sum_{i=1}^n c_i e_i \in V $,  $\widetilde{p}=\begin{pmatrix} 1 & \cdots & n  \\   \xi_2^{a_1} p(1) & \cdots &   \xi_2^{a_n} p(n)\end{pmatrix}, \widetilde{q}=\begin{pmatrix} 1 & \cdots & n  \\   \xi_2^{b_1} q(1) & \cdots &   \xi_2^{b_n} q(n)\end{pmatrix}   \in C_2 \wr S_n$,  $a_i, b_j \in \{1, 2\}$, $p, q\in S_n$, $a=\sum_{i=1}^na_i$.
 \begin{itemize}
 \item[(1)] ${}^{+,+}_{+,+}\Pi(\widetilde{p})(v )=\sum_{i=1}^n  c_i e_{p(i)}$;
  \item[(2)]${}^{+,+}_{-,-} \Pi(\widetilde{p})(v )=\sum_{i=1}^n  (-1)^{a}c_i e_{p(i)}$;
  \item[(3)] ${}^{-,-}_{+,+}\Pi (\widetilde{p})(v )=\sum_{i=1}^n  \chi^+(p) c_i e_{p(i)}$;
  \item[(4)]${}^{-,-}_{-,-}\Pi(\widetilde{p})(v )=\sum_{i=1}^n  (-1)^{a}\chi^+(p) c_i e_{p(i)}$.
            \end{itemize}
Now let $\{e_1=(1, n), \cdots, e_{n-1}=(n-1, n), e_n=1\}$ be a right  transversal of $  S_{n-1} \times  S_1$ in $S_n$. Then $ \widetilde{q}\widetilde{p}\widetilde{q}^{-1}= \begin{pmatrix}  p(i) \xi_2^{a_i} \\ q(p(i))\xi_2^{a_i+b_i}\end{pmatrix} \begin{pmatrix} i\\ p(i) \xi_2^{a_i}\end{pmatrix} \begin{pmatrix}q(i) \xi_2^{b_i}\\ i\end{pmatrix}= \begin{pmatrix}q(i) \xi_2^{b_i}\\ q(p(i))\xi_2^{a_i+b_{p(i)}}\end{pmatrix}= \begin{pmatrix}\widetilde{q}(i) \\ \widetilde{q}(p(i)\xi_2^{a_i})\end{pmatrix}$. For $\widetilde{p}= \widetilde{p}_0 e_k\in C_2 \wr S_n$ with $\widetilde{p}_0 \in (C_2\wr S_{n-1} \times C_2 \wr  S_1 )$, we have
$e_i  \widetilde{p}=(e_i \widetilde{p} e_{p^{-1}(i)})e_{p^{-1}(i)}$, for $ e_i \widetilde{p} e_{p^{-1}(i)} \in (C_2\wr S_{n-1} \times C_2 \wr  S_1)$. Moreover $e_i \widetilde{p} e_{p^{-1}(i)}(n)=\xi_2^{a_{p^{-1}(i)}} n$.
 \begin{itemize}
 \item[(5)] $\Pi^{+,+}_{-,+}(\widetilde{p})(v )=\sum_{i=1}^n  (-1)^{a_i}c_i e_{p(i)}$;
  \item[(6)]$\Pi^{-,-}_{-,+}(\widetilde{p})(v )=\sum_{i=1}^n  (-1)^{a_i}\chi^+(p)c_i e_{p(i)}$;
  \item[(7)] $\Pi^{+,+}_{+,-}(\widetilde{p})(v )=\sum_{i=1}^n  (-1)^{a-a_i}c_i e_{p(i)}$;
  \item[(8)]$\Pi^{-,-}_{+,-}(\widetilde{p})(v )=\sum_{i=1}^n  (-1)^{a-a_i}\chi^+(p)c_i e_{p(i)}$.
          \end{itemize}
         \begin{remark}
       The similar results can also be stated for the group $C_m \wr S_n$.
         \end{remark}

           \section{ The relative structure of finite monoid}\label{rela}

                     Fo the purpose of use,  we shall give a much  self-contained treatment of complex representations of finite monoids in the relative case.     We shall mainly follow  the books\cite{Stein},  \cite{CP1}, \cite{CP2} and the paper \cite{GMS} to treat this part. We will follow their notations and definitions.    We will  consider the localization at every element of the corresponding  monoid  by following  some exercises of \cite{Stein}.

 \subsection{Notation and conventions} Let $M$ be a finite monoid, $E(M)$ the set of idempotent elements of $M$.  Let $\mathbb{C}[M]$ denote the monoid algebra of $M$.     Let $\mathcal{R}$, $\mathcal{L}$, $\mathcal{J}$ denote the usual Green's relations.
  For $m\in M$, let $J(m)=MmM$ be the  principal  two-sided ideal generated by $m$, $J_m$ the set of all generators of $J(m)$. Let $I(m)=J(m)\setminus J_m$,  a  maximal two-side ideal of $J(m)$.  Let $L_m$(resp. $R_m$) denote the set of generators of $Mm$ (resp. $mM$). For an element  $e\in E(M)$, let $G_e$ denote the group  of the units of $eMe$.  A $\mathcal{J}$-class $J$ is called \emph{regular} if  it contains an idempotent.  For a $\mathcal{J}$-class $J$, let $I_J$ be the set of elements $m\in M$ such that $ J \nsubseteq  MmM$.

     Let $(\pi, V)$ be a complex representation of $M$.  Unless  specialized, we will write the action of $M$ on $V$ on the left side.  Then $V$ is called a (left) $\mathbb{C}[M]$-module or simply  a (left) $M$-module. By abuse of notations, we also write the commutative field $\C$-action on $V$ on the left side.  As usual, let $\Ann_M(V)=\{ m\in M\mid \pi(m) v=0, \textrm{ for all } v\in V\}$.  Let  $e\in E(M)$. Call a regular $\mathcal{J}$-class $J$ of $MeM$,  an \emph{apex} for $V$ if $\Ann_M(V)=I_J$; also call $e$ an apex for $V$.  By \cite[Thm.5]{GMS}, an irreducible complex representation of  $M$ always has an apex.      Let $\Irr(M)$ denote the set of equivalence classes of  irreducible (left)
representations of $M$.  Let    $\Rep_f(M)$  denote the set of equivalence classes of \emph{finite  dimensional} (left) complex representations of $M$.   Let $J_1, \cdots, J_s$ be a complete set of the  regular $\mathcal{J}$-classes of $M$ with a set of  fixed   idempotents $e_1, \cdots, e_s$ in each corresponding indexed  class. Set $A=\mathbb{C}[M]$.
     \begin{theorem}[Clifford, Munn, Ponizovski\^i]\label{CMP}
          There exists a bijection between $\Irr(M)$ and $\sqcup_{i=1}^s \Irr(G_{e_i})$.
     \end{theorem}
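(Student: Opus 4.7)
The plan is to construct an explicit bijection by the ``localize at the apex'' recipe: send $(\pi,V)\in\Irr(M)$ to the $G_e$-module $eV$, where $e\in\{e_1,\dots,e_s\}$ is chosen so that $MeM$ realizes the apex of $V$. The existence of such an $e$ is exactly the content of \cite[Thm.~5]{GMS}, which the proof relies on as a black box.

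For the forward direction I would first show $eV\neq0$: otherwise $e\in\Ann_M(V)$, hence $MeM\subseteq\Ann_M(V)=I_{J_i}$, contradicting the apex condition. Next I would show that the $\C[eMe]$-action on $eV$ factors through $G_e\cup\{0\}$. The key point is that every $x\in eMe\setminus G_e$ is non-invertible in the \emph{finite} monoid $eMe$, so its $\mathcal{J}$-class in $M$ lies strictly below $J_i$; therefore $x\in I_{J_i}=\Ann_M(V)$ and acts as zero on $eV$. Using this, I would verify irreducibility of $eV$ over $G_e$: given a nonzero $G_e$-submodule $U\subseteq eV$, the span $\C[M]U$ is an $M$-submodule of $V$, so by irreducibility of $V$ it equals $V$, whence $eV=e\cdot\C[M]U=(eMe)\cdot U=G_e\cdot U=U$.

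For the inverse direction I would start from $(\rho,W)\in\Irr(G_{e_i})$, extend $W$ to a left $\C[eMe]$-module by letting $eMe\setminus G_{e_i}$ act as zero (well-defined because this complement is a two-sided ideal of $eMe$), and form $\widetilde V=\C[M]e\otimes_{\C[eMe]}W$. One checks $e\widetilde V\simeq W$. Any proper $M$-submodule $V'\subsetneq\widetilde V$ must then satisfy $eV'=0$, since $eV'$ would be a proper $G_e$-submodule of $W$; so taking $N$ to be the sum of all proper submodules produces an irreducible quotient $V=\widetilde V/N$ with $eV=W$, and the apex of $V$ is $MeM=J_i$ by construction. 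Alternatively, and more conceptually, one may identify irreducibles of $M$ with apex $J_i$ with simple modules over the contracted algebra of the principal factor $M_{J_i}=(J_i\cup\{0\})$ of the regular class $J_i$; by Rees's structure theorem $M_{J_i}$ is the Rees matrix semigroup $\mathcal{M}^0(G_{e_i},I,\Lambda,P)$, and its contracted algebra is a generalized matrix algebra over $\C[G_{e_i}]$ Morita-equivalent to it, yielding the bijection with $\Irr(G_{e_i})$.

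The two constructions $V\mapsto eV$ and $W\mapsto \widetilde V/N$ are mutually inverse by a direct check, and since the apex of an irreducible is well-defined as a $\mathcal{J}$-class, the union $\sqcup_{i=1}^s\Irr(G_{e_i})$ is disjoint. The step I expect to be the genuine obstacle is the inverse direction: even after extending $W$ by zero across $eMe\setminus G_{e_i}$, the induced module $\widetilde V$ need not be irreducible, and pinning down a unique simple quotient with the correct apex requires either the submodule-collapse argument above or an appeal to Rees's structure theorem; the rest of the argument is essentially bookkeeping with Green's relations and the apex characterisation.
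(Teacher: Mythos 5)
Your proposal is correct and follows exactly the standard Clifford--Munn--Ponizovski\u{\i} construction that the paper itself invokes: the paper's ``proof'' is merely the citation to \cite[Thm.~7]{GMS}, and the localization/induction recipe you spell out ($V\mapsto eV$, $W\mapsto$ simple top of $\C[M]e\otimes_{\C[eMe]}W$, with $eMe\setminus G_e$ acting as zero because $eMe\cap J_e=G_e$ in a finite monoid) is precisely the construction the paper recalls immediately after the theorem via $\Ind_{G_e}(W)$ and $N_e$. No gaps beyond the standard facts (stability of finite monoids, existence of apexes) that both you and the paper take from the cited sources.
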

     \begin{proof}
     See \cite[Thm.7]{GMS}.
     \end{proof}
     More precisely, by  \cite[Thm.5.5]{Stein} or \cite{GMS}, one  can construct  irreducible representations of $M$ from those of $G_{e_i}$.  Let us  fix one idempotent $e=e_i$,  and write $J=J_e$. Set $A_J=A/\mathbb{C}[I_J]$.  Then $eA_Je\simeq \mathbb{C}[G_e]$.    For any $(\sigma, W) \in \Irr(G_e)$, one defines  $\Ind_{G_e} (W)=A_J e\otimes_{eA_Je} W \simeq \mathbb{C}[L_e]\otimes_{\mathbb{C}[G_e]} W $, and $\Coind_{G_e}(W)=\Hom_{G_e}(eA_J, W)\simeq \Hom_{G_e}(\mathbb{C}[R_e], W) $.
     Let $N_e\big(\Ind_{G_e} (W)\big)=\{ v\in \Ind_{G_e} (W) \mid eMv=0\}$, $T_e(\Coind_{G_e}(W))=Me (\Coind_{G_e}(W))$.   By \cite[Chap. 4]{Stein}, (1) $ N_e\big(\Ind_{G_e} (W)\big)  =\rad\big(\Ind_{G_e} (W)\big)$,  (2)  $V=\Ind_{G_e} (W)/ N_e\big(\Ind_{G_e} (W)\big)$ is an irreducible $M$-module with apex $J$, (3) $eV\simeq W$, as $G_e$-modules, (4) $\Ind_{G_e} (W)/ N_e\big(\Ind_{G_e} (W)\big) \simeq T_e(\Coind_{G_e}(W))$.

                   \begin{lemma}\label{surjection}
     For $(\pi,V)\in \Irr(M)$,  the map $\pi: \mathbb{C}[M] \longrightarrow \End_{\mathbb{C}}(V)$ is surjective.
     \end{lemma}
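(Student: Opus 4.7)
The statement is a monoid version of the classical Burnside density theorem, and the natural tool is the Jacobson density theorem applied to the action of $\C[M]$ on the irreducible finite-dimensional module $V$. The plan has three small steps, none of which I expect to cause trouble.

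First, I would establish Schur's lemma in this setting, i.e.\ that $\End_{\C[M]}(V)=\C$. Take $\phi\in\End_{\C[M]}(V)$. Since $V$ is finite-dimensional and $\C$ is algebraically closed, $\phi$ has an eigenvalue $\lambda$; the $\lambda$-eigenspace $\Ker(\phi-\lambda\cdot \Id)$ is a nonzero $M$-submodule of the irreducible module $V$, hence equals $V$. Thus $\phi=\lambda\cdot\Id$, giving $\End_{\C[M]}(V)=\C$. This step uses nothing about the monoid structure beyond the fact that $V$ is an irreducible $\C[M]$-module in the usual sense.

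Next, with the centralizer identified as $\C$, Jacobson density gives that $\pi(\C[M])$ is dense in $\End_{\C}(V)$ in the finite topology: for any linearly independent $v_1,\dots,v_k\in V$ and any $w_1,\dots,w_k\in V$ there exists $a\in \C[M]$ with $\pi(a)v_i=w_i$ for $i=1,\dots,k$. Finally, choose a basis $\{v_1,\dots,v_n\}$ of $V$ and, for a given $T\in\End_{\C}(V)$, set $w_i=Tv_i$; the resulting $a\in\C[M]$ satisfies $\pi(a)=T$. Hence $\pi$ is surjective.

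I do not anticipate a genuine obstacle. The only point worth double-checking is that Schur's lemma applies to irreducible $\C[M]$-modules in exactly the same way as for groups, but this is immediate from finite-dimensionality and algebraic closedness. If one preferred to avoid the density theorem, an alternative route is available via the Clifford--Munn--Ponizovski\u i construction just recalled: classical Burnside applied to $(\sigma,W)\in\Irr(G_e)$ yields $\C[G_e]\twoheadrightarrow \End_\C(W)$, and since $eV\simeq W$ as $G_e$-modules one can propagate the surjectivity to all of $V$ using the $\mathcal{L}$- and $\mathcal{R}$-translates of $e$ within $J$. However, the Jacobson density argument is cleaner and requires no additional input, so that is the route I would pursue.
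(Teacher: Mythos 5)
Your proof is correct, but it follows a different route from the paper's. The paper fixes a basis, views $\pi$ as a matrix representation $m\mapsto(\pi_{ij}(m))$, invokes the linear independence of the coefficient functions $\pi_{ij}$ of an irreducible representation (citing Steinberg, Cor.\ 5.2), and then argues by duality: a linear functional on $\Mm_n(\C)$ vanishing on $\pi(\C[M])$ would give a nontrivial linear relation among the $\pi_{ij}$, forcing the functional to be zero, so the image is all of $\Mm_n(\C)$. You instead prove Schur's lemma directly by the eigenvalue argument and then apply the Jacobson density theorem to the simple $\C[M]$-module $V$, which for finite-dimensional $V$ with $\End_{\C[M]}(V)=\C$ immediately gives surjectivity; this is the classical Burnside-theorem route. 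Your argument is self-contained apart from the density theorem and works verbatim for any finite-dimensional simple module over any $\C$-algebra with unit, whereas the paper's argument is a short duality computation that leans on a monoid-specific coefficient-independence result already available in its main reference. One ordering remark: the paper derives Schur's lemma as a corollary \emph{of} this surjectivity lemma, while you prove Schur first; since your eigenvalue argument does not use surjectivity, there is no circularity, but it is worth being aware that the logical order is reversed relative to the text. Your sketched alternative via the Clifford--Munn--Ponizovski\u\i{} construction is also viable but is not what the paper does.
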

     \begin{proof}
     Under a basis $v_1, \cdots, v_n$ of $V$, we get a matrix  representation $\pi: M \longrightarrow \Mm_n(\mathbb{C}); m \longmapsto (\pi_{ij}(m))$. By \cite[p.55, Coro.5.2]{Stein}, these $\pi_{ij}$  are linearly independent in $\mathbb{C}[M]\simeq \mathbb{C}^M$.  Let $p_{ij}: \Mm_n(\mathbb{C}) \longrightarrow \mathbb{C}; A=(a_{ij}) \longmapsto a_{ij}$ be the canonical projection. Clearly a linear functional on $ \Mm_n(\mathbb{C})$ is linearly generated by these  $p_{ij}$.  Let $W'=\pi(\mathbb{C}[M])$.  If $W'\neq \Mm_n(\mathbb{C})$, there exists a non-zero  linear functional $f$ on $ \Mm_n(\mathbb{C})$, vanishing  at  $W'$.  If we write $f=\sum_{1\leq i,j\leq n}  c_{ij} p_{ij}$, then $f\circ \pi=\sum_{i,j} c_{ij} \pi_{ij}=0$ as a linear functional on  $M$. Hence $c_{ij}=0$,  contradicting  to the non-vanishingness of $f$. Finally  $W'=\Mm_n(\mathbb{C})$ as required.
                         \end{proof}
\begin{corollary}[Schur's Lemma]
For $(\pi, V) \in \Irr(M)$, $\Hom_M(V,V)\simeq \C$.
\end{corollary}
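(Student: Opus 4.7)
The cleanest route is to leverage Lemma~\ref{surjection}, which has just been established: the representation map $\pi:\C[M]\to\End_{\C}(V)$ is surjective. Given any $f\in\Hom_M(V,V)$, by definition $f\circ\pi(m)=\pi(m)\circ f$ for every $m\in M$, and extending $\C$-linearly this gives $f\circ\pi(x)=\pi(x)\circ f$ for all $x\in\C[M]$. Combined with surjectivity, this means $f$ commutes with every element of $\End_\C(V)$, i.e.\ $f$ lies in the center of $\End_\C(V)$.

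The plan is then simply to invoke the classical fact that the center of the matrix algebra $\End_\C(V)\simeq\Mm_n(\C)$ (with $n=\dim V$) consists precisely of the scalar operators $\C\cdot\Id_V$. This forces $f=\lambda\,\Id_V$ for some $\lambda\in\C$, giving the inclusion $\Hom_M(V,V)\subseteq\C\cdot\Id_V$; the reverse inclusion is obvious since scalar maps are always $M$-equivariant. Hence $\Hom_M(V,V)\simeq\C$.

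There is essentially no obstacle here once Lemma~\ref{surjection} is in hand: the surjectivity of $\pi$ plays the role of Burnside/Jacobson density in the finite-dimensional semisimple setting, and Schur's lemma in its traditional eigenvalue-plus-irreducibility form is not even needed. One could alternatively give the standard eigenvalue argument—pick $\lambda\in\C$ an eigenvalue of $f$, observe that $\Ker(f-\lambda\,\Id_V)$ is a nonzero $M$-submodule of $V$, and use irreducibility to conclude $\Ker(f-\lambda\,\Id_V)=V$—but this is strictly heavier and redundant, since the whole point of presenting Lemma~\ref{surjection} immediately before the corollary is to make Schur's lemma a one-line consequence.
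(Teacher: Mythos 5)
Your proposal is correct and follows essentially the same route as the paper: both use Lemma~\ref{surjection} to identify $M$-endomorphisms of $V$ with endomorphisms commuting with all of $\End_{\C}(V)\simeq\Mm_n(\C)$, which are exactly the scalars. The paper phrases this as $\Hom_M(V,V)\simeq \Hom_{\Mm_n(\C)}(\C^n,\C^n)\simeq\C$, which is the same argument in one line.
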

\begin{proof}
Keep the above notations. Then $\Hom_M(V,V) \simeq \Hom_{\Mm_n(\mathbb{C})}(\C^n, \C^n)\simeq \C$.
\end{proof}

     For $(\pi,V)\in \Rep_f(M)$,  $(\pi', V')\in \Irr(M)$, we let $V[\pi']=\cap_{f\in \Hom_{M}(V, V')}   \ker(f)$ and  $V_{\pi'}=V/V[\pi']$ the greatest $\pi'$-isotypic quotient.
     \begin{lemma}\label{localization}
     \begin{itemize}
     \item[(1)] $\Hom_M(V, V') \simeq \Hom_M(V_{\pi'}, V')$.
     \item[(2)] If $\dim \Hom_M(V, V')=n <+\infty$, then $V_{\pi'} \simeq nV'$ as $M$-modules .
 \end{itemize}
          \end{lemma}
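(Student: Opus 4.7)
The plan is to treat the two parts together by exhibiting an explicit embedding of $V_{\pi'}$ into a direct sum of copies of $V'$, and then to use Schur's lemma (just proved) together with part (1) to count multiplicities.

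For part (1), the approach is a direct universal-property check. By the very definition of $V[\pi']=\bigcap_{f\in\Hom_M(V,V')}\ker f$, every $f\in\Hom_M(V,V')$ vanishes on $V[\pi']$, hence factors uniquely as $\bar f\circ q$ where $q:V\twoheadrightarrow V_{\pi'}$ is the quotient. Conversely, pre-composition with $q$ sends $\Hom_M(V_{\pi'},V')$ into $\Hom_M(V,V')$. These two operations are mutually inverse, giving the claimed isomorphism.

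For part (2), fix a basis $f_1,\dots,f_n$ of $\Hom_M(V,V')$. Since every $f\in\Hom_M(V,V')$ is a $\C$-linear combination $\sum c_i f_i$, its kernel contains $\bigcap_{i=1}^n\ker f_i$, and the reverse inclusion is trivial. Thus $V[\pi']=\bigcap_{i=1}^n\ker f_i$, which is exactly the kernel of the diagonal map
\[
F:V\longrightarrow V'\oplus\cdots\oplus V'=nV',\qquad v\longmapsto (f_1(v),\dots,f_n(v)).
\]
Hence the induced map $\bar F:V_{\pi'}\hookrightarrow nV'$ is an $M$-linear injection. Now $nV'$ is a sum of irreducible submodules, so it is semisimple, and every $M$-submodule of a semisimple module is semisimple and is again a direct sum of copies of $V'$ (any nonzero irreducible submodule of $nV'$ projects nontrivially onto some factor $V'$, and since $V'$ is irreducible the projection is an isomorphism). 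Therefore $V_{\pi'}\simeq kV'$ for some integer $k\geq 0$.

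To finish, apply Schur's lemma (the corollary just proved, $\End_M(V')\simeq\C$) to get $\dim\Hom_M(kV',V')=k$. Combining this with part (1), which gives $\Hom_M(V_{\pi'},V')\simeq\Hom_M(V,V')$ of dimension $n$, we conclude $k=n$, so $V_{\pi'}\simeq nV'$ as required. The only step that is not purely formal is the semisimplicity of submodules of $nV'$; this is the standard abstract fact that a sum of irreducibles is semisimple (which requires no hypothesis on $\C[M]$ itself), so no obstacle is expected beyond invoking it cleanly.
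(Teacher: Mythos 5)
Your proposal is correct and follows essentially the same route as the paper: factoring through the quotient for (1), and for (2) the diagonal map $F=(f_1,\dots,f_n)$ with kernel $V[\pi']$, giving an embedding $V_{\pi'}\hookrightarrow nV'$, semisimplicity, and then part (1) (with Schur's lemma) to pin the multiplicity at $n$. The paper merely leaves the final Schur-count implicit, which you spell out.
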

          \begin{proof}
          1)   Any $f\in \Hom_M(V, V')$ needs to factor through $V\longrightarrow V_{\pi'}$. \\
         2) Let $f_1, \cdots, f_n$ be a basis of $\Hom_M(V, V')$, then $V[\pi']=\cap_{i=1}^n \ker(f_i)$.  Then $F: V \longrightarrow \prod_{i=1}^n V'; v \longmapsto \prod_{i=1}^n f_i(v)$ is an $M$-module homomorphism. Then $\ker(F)=\cap \ker(f_i)=V[\pi']$, which induces an $M$-module monomorphism $V_{\pi'}\hookrightarrow  \prod_{i=1}^n V'\simeq \oplus_{i=1}^n V'$.  Hence $V_{\pi'}$ is a semi-simple representation. By (1), we know that $V_{\pi'} \simeq nV'$.
                                       \end{proof}
         Form now on,  let us write $m_M(V, V')=\dim \Hom_M(V, V')$.
  \subsection{Product monoid}
     Let $M_1$, $M_2$ be two finite monoids.
     \begin{lemma}\label{prdm}
    \begin{itemize}
    \item[(1)] $\mathbb{C}[M_1\times M_2] \simeq  \mathbb{C}[M_1]\otimes \mathbb{C}[ M_2] $;
    \item[(2)] Every irreducible representation $\Pi$ of $M_1\times M_2$ has a unique(up to isomorphism)  decomposition $\Pi\simeq \pi_1\otimes \pi_2$, for some $\pi_i\in \Irr(M_i)$.
    \end{itemize}
     \end{lemma}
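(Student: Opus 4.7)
For part (1), I would define the natural $\C$-linear map
$\Phi:\C[M_1\times M_2]\to \C[M_1]\otimes_{\C}\C[M_2]$ by $(m_1,m_2)\mapsto m_1\otimes m_2$ on basis elements. Both sides have $M_1\times M_2$ as a natural $\C$-basis, so $\Phi$ is a bijection of vector spaces, and a direct check on basis elements shows that it is multiplicative and unital.

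For part (2), my plan is to adapt the standard tensor-decomposition argument from finite group representation theory, with an additional reduction step since the monoid algebras $\C[M_i]$ need not be semisimple. Let $(\Pi,V)$ be irreducible and let $A_i\subset \End_{\C}(V)$ be the image of $\C[M_i]$ under the restriction of $\Pi$ to $M_1\times\{1\}$ and $\{1\}\times M_2$ respectively. These are finite-dimensional commuting subalgebras of $\End_{\C}(V)$, and by Lemma \ref{surjection} combined with part (1) they satisfy $A_1\cdot A_2=\End_{\C}(V)$.

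The crucial preliminary step is to show that each Jacobson radical $J(A_i)$ annihilates $V$. Indeed $J(A_1)\cdot V$ is $A_1$-stable (since $J(A_1)$ is a two-sided ideal in $A_1$) and $A_2$-stable (since $A_2$ commutes with $A_1$), so it is a submodule over $M_1\times M_2$; irreducibility of $V$ together with the nilpotency of $J(A_1)$ then forces $J(A_1)V=0$. Hence $V$ is semisimple as a module over $A_1/J(A_1)$, so I may decompose it into $A_1$-isotypic components. Each such component is $A_2$-stable, as endomorphisms of $A_1$-modules preserve isotypic pieces, so irreducibility of $V$ forces exactly one component to survive, giving $V\cong U\otimes W$ with $U$ an irreducible $A_1$-module. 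Schur's Lemma (the corollary to Lemma \ref{surjection}) yields $\End_{A_1}(U)=\C$, so $\End_{A_1}(U\otimes W)=\operatorname{Id}_{U}\otimes\End_{\C}(W)$ and $A_2$ acts through the second tensor factor; any proper $A_2$-submodule $W'\subsetneq W$ would produce a proper submodule $U\otimes W'\subsetneq V$, contradicting irreducibility, so $W$ is $A_2$-irreducible. Setting $\pi_1=U$ and $\pi_2=W$ gives the decomposition $\Pi\cong \pi_1\otimes\pi_2$ with $\pi_i\in\Irr(M_i)$.

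For uniqueness, restricting any such decomposition to $M_1\times\{1\}$ yields a semisimple $M_1$-module isomorphic to $(\dim\pi_2)\cdot\pi_1$, so Krull--Schmidt pins down $\pi_1$ up to isomorphism, and the argument for $\pi_2$ is symmetric. The main obstacle I anticipate is the radical-annihilation step: in the finite group case $\C[G]$ is semisimple by Maschke and this reduction is free, whereas here one must exploit the finiteness of $M_i$ (via nilpotency of $J(A_i)$) before the Schur/density ingredients can be invoked.
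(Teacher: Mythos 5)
Your argument is correct. It differs from the paper's treatment mainly in that the paper does not argue directly at all: part (1) is obtained by citing a proposition from Pierce's book on tensor products of algebras, and part (2) is deduced by citing the Lemma on p.~21 of Bernstein--Zelevinsky, which gives the tensor decomposition of irreducibles of a product in an abstract setting where Schur's lemma and a density statement are available. Your proposal in effect re-proves that external lemma inside the monoid setting: you use part (1) together with Lemma \ref{surjection} to get $A_1\cdot A_2=\End_{\C}(V)$, kill the radical of $A_1$ by combining nilpotency of $J(A_1)$ with the $A_2$-stability of $J(A_1)V$ and irreducibility, and then run the usual isotypic-component argument, with Schur's lemma (the corollary to Lemma \ref{surjection}) identifying $\End_{A_1}(U\otimes W)$ with $\operatorname{Id}_U\otimes\End_{\C}(W)$; the uniqueness step via restriction to $M_1\times\{1\}$ is the standard one. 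What the citation buys is brevity; what your argument buys is self-containedness and an explicit check that the possible non-semisimplicity of $\C[M_i]$ causes no harm --- the radical-annihilation step is precisely the ingredient that is free in the finite group case and must be supplied here, and you supply it correctly. One cosmetic point: in the irreducibility-of-$W$ step you should say ``proper \emph{nonzero} $A_2$-submodule $W'$'', so that $U\otimes W'$ is a nonzero proper $M_1\times M_2$-submodule and the contradiction with irreducibility is immediate.
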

     \begin{proof}
     1)   Applying   the result of Prop.c in \cite[p.165]{Pierce} to our situation,  we can obtain the result.\\
     2)  This result can deduce from  \cite[p.21, Lemma]{BernZ}.
     \end{proof}
     \begin{lemma}\label{isomorp}
     For $(\pi_i, V_i)$, $(\pi'_i, V'_i) \in \Rep_f(M_i)$,  $\Hom_{M_1\times M_2}(V_1\otimes_{\mathbb{C}} V_2, V'_1 \otimes_{\mathbb{C}} V'_2) \simeq \Hom_{M_1}(V_1, V'_1 ) \otimes_{\mathbb{C}} \Hom_{M_2}(V_2, V'_2 ) $.
              \end{lemma}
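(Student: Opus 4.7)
My plan is to prove the lemma by constructing the natural comparison map, checking injectivity by reducing to the analogous fact for $\Hom_{\C}$, and then handling surjectivity by exploiting the fact that $M_1\times M_2$-equivariance is equivalent to being separately equivariant for $M_1\times\{1\}$ and $\{1\}\times M_2$.

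First I would define the natural $\C$-linear map
$$\Phi\colon \Hom_{M_1}(V_1, V'_1)\otimes_{\C} \Hom_{M_2}(V_2, V'_2) \longrightarrow \Hom_{M_1\times M_2}(V_1\otimes V_2, V'_1\otimes V'_2)$$
sending a simple tensor $f_1\otimes f_2$ to the map $v_1\otimes v_2\mapsto f_1(v_1)\otimes f_2(v_2)$; this is well-defined and lands in the $M_1\times M_2$-equivariant part by direct inspection (since $(m_1,m_2)$ acts as $\pi_1(m_1)\otimes \pi_2(m_2)$). The map $\Phi$ fits into the commutative square obtained by postcomposing with the inclusion $\Hom_{M_1\times M_2}\hookrightarrow \Hom_{\C}$, and the analogous $\C$-linear map $\Hom_\C(V_1,V'_1)\otimes\Hom_\C(V_2,V'_2)\to \Hom_\C(V_1\otimes V_2, V'_1\otimes V'_2)$ is a well-known isomorphism in the finite-dimensional setting. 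Injectivity of $\Phi$ follows immediately from injectivity of this larger map.

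For surjectivity, take any $\phi\in \Hom_{M_1\times M_2}(V_1\otimes V_2, V'_1\otimes V'_2)$ and view it under the isomorphism $\Hom_\C(V_1\otimes V_2, V'_1\otimes V'_2)\simeq \Hom_\C(V_1,V'_1)\otimes\Hom_\C(V_2,V'_2)$, writing $\phi=\sum_i \alpha_i\otimes \beta_i$ with the $\beta_i$ linearly independent in $\Hom_\C(V_2,V'_2)$. Applying $\phi$ to $\pi_1(m_1)v_1\otimes v_2$ and using equivariance under $(m_1,1)$ yields $\sum_i(\alpha_i\circ\pi_1(m_1)-\pi'_1(m_1)\circ\alpha_i)\otimes\beta_i=0$; the linear independence of the $\beta_i$ forces each $\alpha_i\in \Hom_{M_1}(V_1,V'_1)$. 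Now rewrite $\phi=\sum_j \alpha'_j\otimes \beta'_j$ with the $\alpha'_j$ this time chosen linearly independent in $\Hom_{M_1}(V_1, V'_1)$ (possible since the $\alpha_i$ already lie there), and repeat the argument with $(1,m_2)$ to conclude each $\beta'_j\in\Hom_{M_2}(V_2, V'_2)$. Hence $\phi$ lies in the image of $\Phi$, completing the proof.

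The only step with any subtlety is the surjectivity argument, and its engine is really just the separation-of-variables trick using linear independence, which is standard. There is no obstacle from the monoid (rather than group) setting, because no averaging or invariant projector is needed: the identification $\Hom_\C(V_1\otimes V_2, V'_1\otimes V'_2)\simeq \Hom_\C(V_1,V'_1)\otimes\Hom_\C(V_2,V'_2)$ is purely $\C$-linear and the finite-dimensionality of all four modules is the only hypothesis that matters. Alternatively, one could invoke Lemma \ref{prdm}(1) to replace $\Hom_{M_1\times M_2}$ by $\Hom_{\C[M_1]\otimes\C[M_2]}$ and cite the corresponding ring-theoretic statement about finitely generated projective modules, but the direct argument above is self-contained and equally short.
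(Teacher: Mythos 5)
Your proof is correct, but it follows a genuinely different route from the paper. The paper disposes of Lemma \ref{isomorp} in one line: it invokes Lemma \ref{prdm}(1) to identify $\C[M_1\times M_2]$ with $\C[M_1]\otimes_{\C}\C[M_2]$ and then cites the Proposition on pp.~166--167 of Pierce for the ring-theoretic fact that $\Hom_{A\otimes B}(V_1\otimes V_2,\,V_1'\otimes V_2')\simeq \Hom_A(V_1,V_1')\otimes_{\C}\Hom_B(V_2,V_2')$ for finite-dimensional modules --- exactly the alternative you sketch in your closing paragraph. What you actually carry out instead is the self-contained argument: the natural map $\Phi$, injectivity via the purely $\C$-linear isomorphism $\Hom_{\C}(V_1,V_1')\otimes\Hom_{\C}(V_2,V_2')\simeq\Hom_{\C}(V_1\otimes V_2,V_1'\otimes V_2')$, and surjectivity by the separation-of-variables trick (write $\phi=\sum_i\alpha_i\otimes\beta_i$ with the $\beta_i$ linearly independent, use equivariance under $(m_1,1)$ to force each $\alpha_i$ to intertwine, then re-expand and repeat with $(1,m_2)$). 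This is sound; the only point worth making explicit is that the step using $(m_1,1)$ and $(1,m_2)$ tacitly uses $\pi_2(1)=\id_{V_2}$ and $\pi_1'(1)=\id_{V_1'}$, which holds here because representations are unital $\C[M]$-modules. The trade-off: your argument is elementary and makes no appeal to external references, which is pedagogically cleaner and verifies the finite-dimensionality hypothesis is the only one needed; the paper's citation-based reduction is shorter and places the lemma in the general framework of modules over tensor products of algebras, which it reuses elsewhere (e.g.\ Lemma \ref{ss} and the bimodule discussion).
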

        \begin{proof}
        It can deduce from  the above lemma \ref{prdm}, and  Proposition in \cite[pp.166-167]{Pierce}.
              \end{proof}
              \begin{lemma}[Adjoint associativity]\label{Ad}
          Let $V_1$ be an  $M_0-M_1$-bimodule, $V_2 $   an $M_1-M_2$-bimodule, and $V_3$   an $M_0-M_2$-bimodule. Then:
           \begin{itemize}
          \item[(1)] $\Hom_{M_2}(V_1 \otimes_{M_1} V_2, V_3) \simeq \Hom_{M_1}( V_1, \Hom_{M_2}(V_2, V_3))$;
          \item[(2)]  $\Hom_{M_0}(V_1 \otimes_{M_1} V_2, V_3) \simeq \Hom_{M_1}( V_2, \Hom_{M_0}(V_1, V_3))$.
          \end{itemize}
            \end{lemma}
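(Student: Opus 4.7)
The plan is to reduce both assertions to the classical tensor--Hom adjunction for bimodules over associative $\C$-algebras, which applies directly once we invoke Lemma \ref{prdm}(1). Indeed, an $M_i$--$M_j$-bimodule is the same as a $\C[M_i]$--$\C[M_j]$-bimodule (equivalently a left $\C[M_i]\otimes_{\C}\C[M_j]^{\mathrm{op}}\simeq \C[M_i\times M_j^{\mathrm{op}}]$-module), and the balanced tensor $V_1\otimes_{M_1}V_2$ is by definition $V_1\otimes_{\C[M_1]}V_2$. Thus the statement is literally the classical adjoint associativity for modules over rings, and nothing monoid-specific intervenes beyond the identification of module categories already recorded.

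For part (1), I would exhibit the isomorphism explicitly by the rule
\[
\Phi: \Hom_{M_2}(V_1\otimes_{M_1}V_2, V_3)\longrightarrow \Hom_{M_1}\bigl(V_1, \Hom_{M_2}(V_2, V_3)\bigr),\qquad \Phi(\phi)(v_1)(v_2)=\phi(v_1\otimes v_2),
\]
with inverse $\Psi(\psi)(v_1\otimes v_2)=\psi(v_1)(v_2)$. The verifications to carry out are: (a) $\Phi(\phi)(v_1)$ is indeed right $M_2$-linear (immediate from right $M_2$-linearity of $\phi$ and the right $M_2$-structure on $V_2$); (b) $v_1\mapsto \Phi(\phi)(v_1)$ is right $M_1$-linear (uses the balancing relation $v_1m_1\otimes v_2=v_1\otimes m_1v_2$ in $V_1\otimes_{M_1}V_2$ and the right $M_1$-action on $\Hom_{M_2}(V_2,V_3)$ given by $(f\cdot m_1)(v_2)=f(m_1v_2)$); (c) $\Psi$ is well-defined, which follows from the universal property of the balanced tensor once one checks that $(v_1,v_2)\mapsto \psi(v_1)(v_2)$ is $M_1$-balanced; and (d) $\Phi$ and $\Psi$ are mutually inverse, which is formal.

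Part (2) is entirely analogous, with the roles of the two sides of $V_1\otimes_{M_1}V_2$ interchanged: one defines $\Phi'(\phi)(v_2)(v_1)=\phi(v_1\otimes v_2)$ and endows $\Hom_{M_0}(V_1,V_3)$ with the left $M_1$-structure $(m_1\cdot g)(v_1)=g(v_1m_1)$ coming from the right $M_1$-action on $V_1$. The balancing relation and $M_0$-linearity of $\phi$ give well-definedness on both sides.

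The only real work is bookkeeping: tracking the six separate left/right actions (two on each of $V_1,V_2,V_3$) and confirming that both sides of each isomorphism inherit the same residual bimodule structure (over $M_0$--$M_2$ in (1), and implicitly so in (2)). I expect this to be the main obstacle --- not in the sense of being difficult, but as the place where slips are easiest. Since we work over the field $\C$, there are no flatness or projectivity subtleties, and no finiteness hypothesis is required for this lemma; the proof is essentially a transcription of the standard ring-theoretic adjunction into the monoid-algebra language supplied by Lemma \ref{prdm}.
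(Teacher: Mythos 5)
Your proposal is correct and is in substance the same as the paper's: the paper disposes of this lemma by citing Bourbaki (A II.74, Prop.~I), i.e.\ precisely the classical tensor--Hom adjunction for bimodules over the monoid algebras, which is exactly the explicit $\Phi,\Psi$ construction (with the bookkeeping of the residual actions) that you carry out. No further comment is needed.
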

           \begin{proof}
           See \cite[A II.74, Prop.I]{Bo}.
           \end{proof}
\subsection{Waldspurger's lemmas on  local radicals}

      \begin{lemma}\label{waldspurger1}
Let $(\pi_1, V_1)$ be an irreducible  representation of $M_1$, $(\pi_2, V_2)$ a finite dimensional representation of $M_2$. If a vector subspace  $W $  of $ V_1 \otimes V_2$ is  $M_1 \times M_2$-invariant, then there is a unique(up to isomorphism) $M_2$-subspace $V_2'$ of $V_2$ such that $W \simeq V_1 \otimes V_2'.$
\end{lemma}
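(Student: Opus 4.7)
The plan is to exploit the density statement in Lemma \ref{surjection}: since $(\pi_1,V_1)$ is irreducible, $\pi_1(\C[M_1]) = \End_\C(V_1)$, so $W$ is stable not merely under $M_1 \times M_2$ but under the whole action of $\End_\C(V_1) \otimes \C[M_2]$ on $V_1 \otimes V_2$. The whole argument rests on this observation; the rest amounts to contracting along the first tensor factor.

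First I would define the candidate
\[
V_2' := \sum_{\lambda \in V_1^*} (\lambda \otimes \id_{V_2})(W) \subseteq V_2,
\]
where $\lambda \otimes \id_{V_2} : V_1 \otimes V_2 \to V_2$ denotes the contraction $v \otimes v_2 \mapsto \lambda(v)\, v_2$. Since each such contraction commutes with the $M_2$-action and $W$ is $M_2$-stable, $V_2'$ is an $M_2$-submodule of $V_2$.

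Next I would verify $W = V_1 \otimes V_2'$ in both directions. For $W \subseteq V_1 \otimes V_2'$, choose a basis $\{e_i\}$ of $V_1$ with dual basis $\{e_i^*\}$; every $w \in W$ decomposes as $w = \sum_i e_i \otimes (e_i^* \otimes \id_{V_2})(w)$, and each second factor lies in $V_2'$ by definition. For the reverse inclusion $V_1 \otimes V_2' \subseteq W$, this is where the irreducibility of $V_1$ enters crucially: for arbitrary $v \in V_1$ and $\lambda \in V_1^*$, the rank-one operator $T_{v,\lambda} \in \End_\C(V_1)$ sending $u \mapsto \lambda(u) v$ equals $\pi_1(x)$ for some $x \in \C[M_1]$ by Lemma \ref{surjection}; a direct computation on elementary tensors gives
\[
(T_{v,\lambda} \otimes \id_{V_2})(w) = v \otimes (\lambda \otimes \id_{V_2})(w) \in W
\]
for every $w \in W$, and letting $\lambda$, $w$, $v$ vary yields every element of $V_1 \otimes V_2'$.

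For uniqueness, if $W = V_1 \otimes V_2''$ for another subspace $V_2'' \subseteq V_2$, pick $v_0 \in V_1$ and $\lambda \in V_1^*$ with $\lambda(v_0) = 1$; evaluating $\lambda \otimes \id_{V_2}$ on $V_1 \otimes V_2''$ recovers $V_2''$ (taking $v = v_0$ in the elementary tensors shows $V_2'' \subseteq (\lambda \otimes \id_{V_2})(W) \subseteq V_2'$, and symmetry gives the reverse inclusion), so in fact $V_2'$ is pinned down as a subspace of $V_2$, not merely up to isomorphism. The principal obstacle is conceptual rather than computational: one must recognize that the correct use of ``$V_1$ is irreducible'' is in its Burnside/Jacobson-density form supplied by Lemma \ref{surjection}, which manufactures the rank-one operators needed to pull apart the tensor; a proof using only Schur's lemma would not be able to establish $V_1 \otimes V_2' \subseteq W$.
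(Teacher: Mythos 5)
Your proof is correct and follows essentially the same route as the paper: both arguments rest on Lemma \ref{surjection} to realize arbitrary rank-one (or coordinate-projection) operators on $V_1$ inside $\pi_1(\C[M_1])$ and apply them in the form $\pi_1(x)\otimes \id$ to separate the tensor factors, the only cosmetic difference being your definition of $V_2'$ via contraction images rather than as the set of second components of nonzero elementary tensors lying in $W$. Your uniqueness step is in fact slightly stronger—literal equality of subspaces obtained by contracting, whereas the paper invokes Lemma \ref{isomorp} for uniqueness up to isomorphism—but the substance of the argument coincides with the paper's.
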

\begin{proof}
The uniqueness follows from Lmm.\ref{isomorp} by  constructing some corresponding maps and the Hom-functor. If $0\neq v_1\otimes v_2\in W$,  then $\pi_1(\mathbb{C}[M_1])v_1\otimes \pi_2(\mathbb{C}[M_2])v_2\subseteq W$, so $V_1\otimes v_2\in W$. Hence we can let $V_2'=\{ v_2\in V_2 \mid \exists 0\neq v_1\in V_1 , v_1\otimes v_2\in W\}$.  Clearly, $V_2'$ is $M_2$-stable.  For $v'_2, v_2 \in V_2'$, $c_2', c_2\in \mathbb{C}$,  $V_1\otimes c_2'v_2'+V_1\otimes c_2v_2=V_1\otimes (c_2'v_2'+c_2v_2)\subseteq  W$, so $c_2'v_2'+c_2v_2\in V_2'$. Let $0\neq v=\sum_{i=1}^n v_i\otimes w_i \in W$,  with $v_1, \cdots, v_n$  being linearly independent, and $w_i\neq 0$.  By Lmm.\ref{surjection}, there exists $\epsilon_{i}\in \mathbb{C}[M_1]$ such that $\pi_1(\epsilon_{i}) v_j=\delta_{ji} v_i$. Then $[\pi_1(\epsilon_{i}) \otimes \pi_2(1_{M_2})](v)=v_i\otimes w_i\in W$, which implies $w_i\in V_2'$. Hence $v\in V_1\otimes V_2'$, and $W=V_1\otimes V_2'$.
\end{proof}

\begin{lemma}\label{waldspurger2}
Let $(\pi_1, V_1)$ be an irreducible  representation of $M_1$, $(\sigma, W)$ a finite dimensional   representation of $M_1 \times M_2$.  Suppose that $\cap \ker(f)=0$ for all $f\in \Hom_{M_1}(W, V_1)$. Then there is a unique(up to isomorphism) representation  $(\pi_2', V_2')$ of $M_2$ such that $\sigma \simeq \pi_1 \otimes \pi_2'$.
\end{lemma}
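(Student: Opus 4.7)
The plan is to dispose of uniqueness first, then construct $V_2'$ from the hypothesis by invoking Lemma \ref{localization}, and finally exhibit an explicit $M_1\times M_2$-equivariant evaluation isomorphism.

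For uniqueness, suppose $\sigma\simeq \pi_1\otimes \pi_2'$ for some $(\pi_2',V_2')\in\Rep_f(M_2)$. Then Lemma \ref{isomorp} combined with Schur's Lemma yields
$$\Hom_{M_1}(V_1,W)\;\simeq\;\Hom_{M_1}(V_1,V_1)\otimes V_2'\;\simeq\;V_2'$$
as $M_2$-modules, where the $M_2$-action on the Hom-space is $(m_2\cdot g)(v):=m_2\, g(v)$ (this is well-defined as an $M_1$-homomorphism because the $M_1$- and $M_2$-actions on $W$ commute). Hence $V_2'$ is forced, up to isomorphism, to be $\Hom_{M_1}(V_1,W)$, which is what I would take as the definition in the existence half.

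For existence, I would first translate the hypothesis. The intersection $\bigcap_{f\in\Hom_{M_1}(W,V_1)}\ker(f)$ is precisely the subspace $W[\pi_1]$ introduced before Lemma \ref{localization}, so the assumption reads $W_{\pi_1}=W$. Lemma \ref{localization}(2) then gives $W\simeq nV_1$ as $M_1$-modules, where $n=\dim\Hom_{M_1}(W,V_1)$. Setting $V_2':=\Hom_{M_1}(V_1,W)$ with the $M_2$-action above, Schur's Lemma gives $\dim V_2'=n$. The natural evaluation map
$$\varphi\colon V_1\otimes V_2'\longrightarrow W,\qquad v\otimes g\longmapsto g(v),$$
is $M_1\times M_2$-equivariant by direct check. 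The $n$ inclusions $V_1\hookrightarrow V_1^n\simeq W$ form a basis of $V_2'$ and their images together span $W$, so $\varphi$ is surjective; since $\dim(V_1\otimes V_2')=n\dim V_1=\dim W$, it is an isomorphism, yielding $\sigma\simeq\pi_1\otimes\pi_2'$.

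The main obstacle in this plan is really just the bookkeeping for the $M_2$-module structure on $\Hom_{M_1}(V_1,W)$ and the equivariance of $\varphi$; the substantive content (that $W$ is $\pi_1$-isotypic as an $M_1$-module) is already packaged in Lemma \ref{localization}, which itself rests on the surjectivity of $\pi_1\colon \C[M_1]\to\End_\C(V_1)$ from Lemma \ref{surjection} and on the finite-dimensionality of $W$. These finiteness hypotheses are where the restrictions of the statement genuinely enter.
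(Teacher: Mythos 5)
Your argument is correct, but it takes a different route from the paper. The paper proves this lemma by building the evaluation pairing $B\colon W\times \Hom_{M_1}(W,V_1)\to V_1$, using adjoint associativity and the hypothesis $\cap\ker(f)=0$ to embed $W$ as an $M_1\times M_2$-submodule of $\Hom_{\C}\big(\Hom_{M_1}(W,V_1),\C\big)\otimes_{\C}V_1$ (with the $M_2$-action on the dual Hom space defined via $f\mapsto f^{m_2}$), and then invoking Lemma \ref{waldspurger1} to extract the subspace $V_2'$; uniqueness is inherited from that lemma. You bypass Lemma \ref{waldspurger1} entirely: you read the hypothesis as $W[\pi_1]=0$, apply Lemma \ref{localization}(2) to get $W\simeq nV_1$ as $M_1$-modules, and then exhibit the explicit equivariant evaluation map $V_1\otimes\Hom_{M_1}(V_1,W)\to W$, proved bijective by surjectivity plus a dimension count. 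What your route buys is a canonical model $V_2'=\Hom_{M_1}(V_1,W)$, which also makes uniqueness immediate; what the paper's route buys is that it needs no dimension count and keeps the two Waldspurger lemmas as a linked pair exactly as in the $p$-adic setting, with existence and uniqueness both delegated to Lemma \ref{waldspurger1}. Both arguments ultimately rest on Schur's lemma, i.e.\ on Lemma \ref{surjection}, through their respective intermediate lemmas. One cosmetic remark: in your uniqueness step the isomorphism $\Hom_{M_1}(V_1,V_1\otimes V_2')\simeq \End_{M_1}(V_1)\otimes V_2'$ is not literally Lemma \ref{isomorp} (which concerns $\Hom_{M_1\times M_2}$ of outer tensor products); it follows either by applying that lemma with the second monoid taken trivial, or directly from $V_1\otimes V_2'\simeq(\dim V_2')V_1$ as $M_1$-modules together with Schur's lemma, and you should say which you mean.
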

\begin{proof}
 Clearly there exists  a bilinear map: $$B: W \times \Hom_{M_1}(W, V_1) \longrightarrow V_1; (w, f) \longmapsto f(w). $$
Given $W  \otimes_{\mathbb{C}} \Hom_{M_1}(W, V_1)$  the $M_1$-structure induced from the first  $W$, we know $B\in \Hom_{M_1}  \big([W \otimes_{\mathbb{C}}\Hom_{M_1}(W, V_1)] , V_1\big)$. By adjoint duality, $$\Hom_{M_1}  \big([W \otimes_{\mathbb{C}}\Hom_{M_1}(W, V_1)] , V_1\big) \simeq \Hom_{M_1}\Big( W, \Hom_{\mathbb{C}}\big(\Hom_{M_1}(W, V_1),  V_1\big)\Big).$$
Since $\cap \ker(f)=0$ for all $f\in \Hom_{M_1}(W, V_1)$, $B$ induces an $M_1$-module monomorphism $$\iota: W \hookrightarrow  \Hom_{\mathbb{C}}\big(\Hom_{M_1}(W, V_1),  V_1\big)  \simeq  \Hom_{\mathbb{C}}\big(\Hom_{M_1}(W, V_1), \mathbb{C}\big)\otimes_{\mathbb{C}} V_1.$$
Now  for $T\in \Hom_{\mathbb{C}}\big(\Hom_{M_1}(W, V_1), \mathbb{C}\big)$,  $m_2\in M_2$, we can define  $m_2T: \Hom_{M_1}(W, V_1)\longrightarrow  \mathbb{C}; f \longmapsto T(f^{m_2})$, where $f^{m_2}(v)=f(m_2v) $.  In this way,  $ \Hom_{\mathbb{C}}\big(\Hom_{M_1}(W, V_1), \mathbb{C}\big)$  becomes an $M_2$-module.  Let $v_1, \cdots, v_l$ be a basis of $V_1$. Then we can write  an element $\mathcal{T}\in \Hom_{\mathbb{C}}\big(\Hom_{M_1}(W, V_1),  V_1\big) $ as $\mathcal{T}=\sum_{i=1}^l T_i \otimes v_i$, for some $T_i \in  \Hom_{\mathbb{C}}\big(\Hom_{M_1}(W, V_1), \mathbb{C}\big)$. For $v\in W$, if $\iota(v)=\mathcal{T}=\sum_{i=1}^l T_i \otimes v_i$, then for  $f\in \Hom_{M_1}(W, V_1)$,  $\iota(m_2 v)[f]= f(m_2v)=\sum_{i=1}^l T_i(f^{m_2}) \otimes v_i=m_2\sum_{i=1}^l T_i(f) \otimes v_i=m_2\iota(v)[f]$. Hence  $\iota$ induces an $M_1\times M_2$-module monomorphism.  By the above lemma \ref{waldspurger1}, there exists a unique $M_2$-module $\pi_2'$, such that $\sigma \simeq \pi_1 \otimes \pi_2'$.
\end{proof}

  For $(\Pi, V) \in \Rep_f(M_1\times M_2)$, we set $\mathcal{R}_{M_i}(\Pi)=\{ \pi_i \in \Irr(M_i) \mid \Hom_{M_i}(\Pi, \pi_i)\neq 0\}$.\footnote{!The notation $\mathcal{R}_{M_i}(\Pi)$ arises from  representation theory, and it is not the  same as  Green's relation  $\mathcal{R}_M$ from the monoid theory.  }  Then for  the greatest $(\pi_i, V_i)$-isotypic quotient $V_{\pi_i}$, $\cap \ker(f)=0$ for all $f\in \Hom_{M_i}(V_{\pi_i}, V_i)$. Hence by Waldspurger's second lemma,  $V_{\pi_i} \simeq V_i \otimes \Theta_{\pi_i}$, for some $\Theta_{\pi_i} \in \Rep_f(M_j)$, $1\leq i\neq j \leq 2$.
     \begin{lemma}
     $\pi_i \in \mathcal{R}_{M_i}(\Pi)$ iff $V_{\pi_i} \neq 0$ iff $\Theta_{\pi_i}\neq 0$  iff $\mathcal{R}_{M_j}(\Theta_{\pi_i}) \neq \emptyset$.
               \end{lemma}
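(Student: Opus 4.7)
The plan is to verify the chain of four equivalences one link at a time, since each is a short corollary of lemmas already in place.

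First I would prove $\pi_i\in\mathcal{R}_{M_i}(\Pi) \Longleftrightarrow V_{\pi_i}\neq 0$. By Lemma \ref{localization}(1), the restriction map gives $\Hom_{M_i}(V,V_i) \simeq \Hom_{M_i}(V_{\pi_i},V_i)$, so $\pi_i\in\mathcal{R}_{M_i}(\Pi)$ iff $\Hom_{M_i}(V_{\pi_i},V_i)\neq 0$. If $V_{\pi_i}\neq 0$, then Lemma \ref{localization}(2) identifies it with $n V_i$ for $n=m_{M_i}(V,V_i)\geq 1$, giving a nonzero projection to $V_i$; conversely, if $V_{\pi_i}=0$, then $\Hom_{M_i}(V,V_i)=0$ by the isomorphism above. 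Thus the first equivalence holds.

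Next I would handle $V_{\pi_i}\neq 0 \Longleftrightarrow \Theta_{\pi_i}\neq 0$. This is immediate from the Waldspurger decomposition $V_{\pi_i} \simeq V_i\otimes_{\C}\Theta_{\pi_i}$ (obtained just above the statement from Lemma \ref{waldspurger2}), combined with $V_i\neq 0$ since $\pi_i$ is irreducible: a tensor product of $\C$-vector spaces vanishes iff one of the factors does.

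Finally I would prove $\Theta_{\pi_i}\neq 0 \Longleftrightarrow \mathcal{R}_{M_j}(\Theta_{\pi_i})\neq\emptyset$. One direction is tautological: a nontrivial $\Hom$ forces $\Theta_{\pi_i}\neq 0$. For the reverse, $\Theta_{\pi_i}\in\Rep_f(M_j)$ is a nonzero module over the finite-dimensional $\C$-algebra $\C[M_j]$; being finite-dimensional it is Artinian (even of finite length), so it admits a maximal proper submodule, hence an irreducible quotient $\rho\in\Irr(M_j)$, which yields $\rho\in\mathcal{R}_{M_j}(\Theta_{\pi_i})$.

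The only point where I would exercise mild caution is the third step: here I am using that $\C[M_j]$ is Artinian (true for any finite monoid), not that it is semi-simple, because the statement makes no semisimplicity hypothesis on $M_j$. Once this is noted, the three links fuse into the required fourfold equivalence, and no further computation is necessary.
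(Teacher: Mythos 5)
Your proof is correct, and it is exactly the routine verification that the paper leaves to the reader (its proof is simply ``Straightforward''): the first link from the definition of $V_{\pi_i}$ together with the localization lemma, the second from the decomposition $V_{\pi_i}\simeq V_i\otimes\Theta_{\pi_i}$ furnished by Waldspurger's second lemma, and the third from the existence of an irreducible quotient of any nonzero finite-dimensional $\C[M_j]$-module. Your remark that only the Artinian (not semisimple) property of $\C[M_j]$ is needed is accurate and consistent with the paper, which imposes no semisimplicity hypothesis at this point.
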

               \begin{proof}
               Straightforward.
               \end{proof}
     \begin{definition}
     For $(\Pi, V) \in \Rep_f(M_1\times M_2)$, we call $(\Pi,V)$ a \textbf{theta representation} of $M_1\times M_2$ if it satisfies (1)  for any  $\pi_1\otimes \pi_2 \in \Irr(M_1\times M_2)$, $ m_{M_1\times M_2}(\Pi, \pi_1\otimes \pi_2) \leq 1$, (2) $\Theta_{\pi_i}=0$ or $\Theta_{\pi_i}$ has a unique irreducible quotient $\theta_{\pi_i}$. The  $\theta$ bimap will  define  a   \textbf{ Howe correspondence} between $ \mathcal{R}_{M_1}(\Pi)$ and $ \mathcal{R}_{M_2}(\Pi)$.
                    \end{definition}

This definition can be similarly given for other representation theory.
   \begin{example}
  Keep the notations  after   Thm.\ref{CMP}. For each $e_i$, $V=\mathbb{C}[L_{e_i}]$ is a theta representation of $M\times G_{e_i}$, which defines a Howe correspondence between  $  \mathcal{R}_{M}(\Pi)=$ the set of irreducible representations of $M$ with apex $J_{e_i}$ and     $\mathcal{R}_{G_{e_i}}(\Pi)=\Irr(G_{e_i})$.
   \end{example}
\subsection{Semi-simple monoids}\label{Semisimple}
Let $A=\mathbb{C}[M]$.  Note that $A$ is an Artinian ring. We call $A$ a semi-simple algebra if it is a semi-simple left $A$-module.  Let $A^o=\mathbb{C}[M^o]$ denote the opposed algebra of $A$.   For a left $A$-module ${}_AV$, we let $S({}_AV)$ denote the collection of all submodules of ${}_AV$, and define the radical of ${}_AV$ by   $\rad({}_AV)=\cap\{ {}_AW\in S({}_AV) \mid  {}_AV/{}_AW \textrm{ is simple}   \}  $.  Similarly, we can define the radical   $\rad(V_A)$ for a right $A$-module $V_A$.
 \begin{lemma}
Let $V$ be  a left $A$-module of finite length.   If $\mathcal{R}_{A}(V)=\{ (\sigma_i, W_i) \mid i=1, \cdots, k\}$, and $m_{A}(V, W_i)=n_i$, then:
\begin{itemize}
\item[(1)]  $V/\rad({}_AV)\simeq \oplus_{i=1}^k n_i W_i$,
\item[(2)]  there exists a surjective left $A$-module morphism $f: V \longrightarrow \oplus_{i=1}^k n_i W_i$.
\end{itemize}
\end{lemma}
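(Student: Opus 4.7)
The plan is to identify $\rad({}_AV)$ with the intersection of the local kernels $V[\sigma_i]$, embed the quotient $V/\rad({}_AV)$ into $\bigoplus_{i=1}^k n_iW_i$, and then upgrade the embedding to an isomorphism via a $\Hom$-dimension count.

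First I would observe that, because each $W_i$ is simple, any non-zero $f\in\Hom_A(V,W_i)$ is automatically surjective; conversely, every simple quotient of $V$ belongs to $\mathcal{R}_A(V)$ and is therefore isomorphic to some $W_i$. Consequently the maximal submodules of $V$ are exactly the kernels of non-zero maps $V\to W_i$ for $i=1,\dots,k$, which yields
\begin{equation*}
\rad({}_AV)=\bigcap_{i=1}^k\bigcap_{0\neq f\in\Hom_A(V,W_i)}\ker(f)=\bigcap_{i=1}^k V[\sigma_i].
\end{equation*}
Applying Lemma~\ref{localization}(2) gives $V/V[\sigma_i]=V_{\sigma_i}\simeq n_iW_i$, so the natural map $\phi:V\to\bigoplus_{i=1}^k V/V[\sigma_i]\simeq\bigoplus_{i=1}^k n_iW_i$ has kernel $\rad({}_AV)$, inducing an $A$-module injection
\begin{equation*}
\bar\phi:V/\rad({}_AV)\hookrightarrow\bigoplus_{i=1}^k n_iW_i.
\end{equation*}

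In particular $V/\rad({}_AV)$ is a submodule of a semi-simple module, hence itself semi-simple with simple components lying among $\{W_1,\dots,W_k\}$; write $V/\rad({}_AV)\simeq\bigoplus_{i=1}^k m_iW_i$. To pin down the multiplicities, note that $\rad({}_AV)\subseteq V[\sigma_i]$ forces every element of $\Hom_A(V,W_i)$ to factor through $V/\rad({}_AV)$, giving $\Hom_A(V/\rad({}_AV),W_i)\simeq\Hom_A(V,W_i)$ of dimension $n_i$. On the other hand, Schur's lemma applied to the isotypic decomposition gives $\dim\Hom_A(V/\rad({}_AV),W_i)=m_i$. Therefore $m_i=n_i$ for each $i$, so $\bar\phi$ is an isomorphism, proving part~(1); part~(2) is then obtained by composing the canonical surjection $V\twoheadrightarrow V/\rad({}_AV)$ with this isomorphism.

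I expect the only delicate point to be the identification $\rad({}_AV)=\bigcap_i V[\sigma_i]$, which rests on the observation that all simple quotients of $V$ lie in $\mathcal{R}_A(V)$ together with the fact that non-zero maps into a simple module are surjective. Once this reduction is in place, the remainder of the argument is a direct application of Lemma~\ref{localization} together with a Schur-type multiplicity count, so no further subtlety is expected.
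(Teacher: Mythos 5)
Your proof is correct and follows essentially the same route as the paper: identify $\rad({}_AV)=\bigcap_{i=1}^k V[\sigma_i]$, use Lemma~\ref{localization} to embed $V/\rad({}_AV)$ into $\bigoplus_{i=1}^k n_iW_i$, and conclude by matching multiplicities via $\Hom_A(V/\rad({}_AV),W_i)\simeq\Hom_A(V,W_i)$. The only difference is that you spell out the justification of the radical identity (every simple quotient of $V$ lies in $\mathcal{R}_A(V)$ and non-zero maps to a simple module are surjective), which the paper asserts without comment.
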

\begin{proof}
Recall $V[\sigma_i]=\cap_{f\in \Hom_M(V, W)} \ker(f)$. Thus $\rad(V)=\cap_{i=1}^k V[\sigma_i]$.   By Lmm.\ref{localization}, there exist surjective $M$-morphisms $f_i: V \longrightarrow n_i W_i$, with $\ker(f_i)= V[\sigma_i]$. Hence $F=\prod_{i=1}^k  f_i: V \longrightarrow  \prod_{i=1}^k n_i W_i$ is an $M$-module homomorphism, with $\ker(F)=\cap_{i=1}^k f_i= \cap_{i=1}^k V[\sigma_i]=\rad(V)$. Hence $V/\rad (V) \hookrightarrow \prod_{i=1}^k n_i W_i \simeq \oplus_{i=1}^k n_iW_i$, which tells us that $V/\rad(V)$ is a semi-simple representation. Moreover,  $\mathcal{R}_{M}(V/\rad(V))=\mathcal{R}_M(V)$, and $m_M(V/\rad(V), W_i)=m_M(V, W_i)=n_i$. Hence both results are right.
\end{proof}
 In particular,  $\rad({}_AA)=\rad(A_A)=\rad(A)$, the Jacobson radical of $A$.   The following result is known.
\begin{theorem}\label{semisimplealgebras}
 The following conditions are equivalent:
 \begin{itemize}
 \item[(1)] $A$ is a semi-simple algebra;
 \item[(2)] $A^o$ is  a semi-simple algebra;
 \item[(3)] $A_A$ is  a semi-simple right $A$-module;
  \item[(4)]${}_AA$ is  a semi-simple left $A$-module;
  \item[(5)]  Every right $A$-module is semi-simple;
  \item[(6)]  Every left $A$-module is semi-simple;
  \item[(7)]  $\rad(A)=0$;
 \item[(8)] $A\simeq \Mm_{n_1}(\mathbb{C}) \oplus \cdots \oplus\Mm_{n_r}(\mathbb{C}) $, for some $n_i$.
\end{itemize}
\end{theorem}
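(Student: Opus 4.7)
My plan is to prove the equivalence via a chain of implications that funnels through the left regular module ${}_AA$ and its radical, then uses the Artin-Wedderburn structure theorem to reach (8).

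First I would establish the core block consisting of (1), (4), (6), (7). The equivalence (1) $\Leftrightarrow$ (4) is just the definition given in the paragraph preceding the theorem. For (4) $\Leftrightarrow$ (6), I would use that every left $A$-module $V$ is a quotient of a free module $A^{(I)}$; if ${}_AA$ is semisimple then so is $A^{(I)}$ (arbitrary direct sums of semisimple modules are semisimple), and semisimplicity passes to quotients, giving (6); the reverse implication is trivial since ${}_AA$ is itself a left $A$-module. For (4) $\Leftrightarrow$ (7), I would invoke the finite-dimensionality of $A$ over $\mathbb{C}$ (hence the Artinian condition) together with the lemma immediately preceding the theorem applied to $V = {}_AA$: the lemma gives a surjection ${}_AA \twoheadrightarrow {}_AA/\rad({}_AA) \simeq \oplus n_i W_i$, and semisimplicity of this quotient combined with $\rad({}_AA) = 0$ forces ${}_AA$ to be semisimple; conversely, if ${}_AA$ is semisimple, the intersection of kernels of projections to simple summands vanishes, giving $\rad({}_AA) = 0$.

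Next I would handle the symmetric right-side block (2), (3), (5). Observing that right $A$-modules are the same as left $A^o$-modules, the identical argument applied to $A^o$ in place of $A$ yields (2) $\Leftrightarrow$ (3) $\Leftrightarrow$ (5) $\Leftrightarrow$ $\rad(A_A) = 0$. To bridge the two blocks, I would invoke the equality $\rad({}_AA) = \rad(A_A) = \rad(A)$ stated just before the theorem, which identifies the two one-sided radicals with the (two-sided) Jacobson radical. This immediately glues the left block to the right block through condition (7), yielding the equivalence of (1)--(7).

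Finally, for (1) $\Leftrightarrow$ (8), I would appeal to the classical Artin-Wedderburn theorem: a semisimple algebra finite-dimensional over an algebraically closed field $\mathbb{C}$ decomposes as a finite product of matrix algebras $\Mm_{n_i}(\mathbb{C})$, while conversely any such product is obviously a semisimple left module over itself (each $\Mm_{n_i}(\mathbb{C})$ is a direct sum of $n_i$ copies of its standard simple module). The main obstacle---or at least the step that does the most work---is (1) $\Rightarrow$ (8), i.e.\ the Artin-Wedderburn decomposition itself; however, since this is a standard theorem in any textbook on associative algebras (e.g.\ \cite{Pierce}), I would simply cite it rather than reprove it, noting that the algebraic closedness of $\mathbb{C}$ is what forces the division algebra factors to be $\mathbb{C}$ itself. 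All other implications are formal manipulations with modules and radicals, and the preparatory lemma on $V/\rad(V)$ handles the non-trivial directions quickly.
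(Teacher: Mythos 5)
Your argument is correct, but note that the paper itself offers no proof of this theorem: it is stated with the remark that ``the following result is known,'' immediately after the identification $\rad({}_AA)=\rad(A_A)=\rad(A)$. What you have written is the standard textbook derivation that the paper implicitly relies on, and every step checks out in the paper's setting: (1) $\Leftrightarrow$ (4) is the paper's definition of semi-simplicity; (4) $\Leftrightarrow$ (6) by writing any module as a quotient of a free module; (4) $\Leftrightarrow$ (7) via the preceding lemma applied to $V={}_AA$ (legitimate, since $A=\C[M]$ is finite-dimensional, hence of finite length, and nonzero with identity so it has simple quotients); the right-hand block (2), (3), (5) by the same argument for $A^o$; the bridge through (7) using the stated equality of the one-sided radicals with $\rad(A)$; and (1) $\Leftrightarrow$ (8) by Artin--Wedderburn over the algebraically closed field $\C$, with the easy converse that each $\Mm_{n_i}(\C)$ is a sum of copies of its column module. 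So your proposal fills in, correctly and at the right level of detail, a proof the paper chose to omit; the only thing each approach ``buys'' differently is that the paper saves space by citation, while your chain makes visible exactly where finite-dimensionality (the Artinian hypothesis) and the algebraic closedness of $\C$ enter.
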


                                         \begin{lemma}\label{ss}
                                         If $A$ is a semi-simple algebra, then $A\otimes_{\mathbb{C}}A^o $ is also a semi-simple algebra.
                                         \end{lemma}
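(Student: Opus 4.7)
The plan is to reduce to the Wedderburn-type decomposition (8) in Theorem~\ref{semisimplealgebras}. Since $A$ is semi-simple, we can write $A \simeq \Mm_{n_1}(\C) \oplus \cdots \oplus \Mm_{n_r}(\C)$. First I would observe that for each $n$, the transpose map $X \mapsto X^t$ is a $\C$-algebra isomorphism $\Mm_n(\C)^o \simeq \Mm_n(\C)$, and taking opposites commutes with finite direct sums; hence $A^o$ admits the same shape of decomposition, $A^o \simeq \Mm_{n_1}(\C) \oplus \cdots \oplus \Mm_{n_r}(\C)$.

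Next, since $\C$ is a field, the tensor product $-\otimes_{\C}-$ is exact and distributes over finite direct sums in each variable, so
\[
A \otimes_{\C} A^o \;\simeq\; \bigoplus_{1\leq i,j\leq r} \Mm_{n_i}(\C) \otimes_{\C} \Mm_{n_j}(\C).
\]
The crux is then the standard algebra isomorphism $\Mm_n(\C)\otimes_{\C} \Mm_m(\C) \simeq \Mm_{nm}(\C)$. I would establish this by identifying $\Mm_n(\C)$ with $\End_{\C}(\C^n)$ and applying the canonical $\C$-algebra isomorphism $\End_{\C}(\C^n)\otimes_{\C}\End_{\C}(\C^m)\simeq \End_{\C}(\C^n\otimes_{\C}\C^m)\simeq \End_{\C}(\C^{nm})$; explicitly, one checks that the map $E_{ij}\otimes E_{kl} \mapsto E_{(i,k),(j,l)}$ (with lexicographically ordered index pairs) is a bijective algebra homomorphism.

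Combining these, $A \otimes_{\C} A^o \simeq \bigoplus_{i,j} \Mm_{n_i n_j}(\C)$ again has the form in (8), so by Theorem~\ref{semisimplealgebras} it is semi-simple. I do not anticipate any real obstacle: the only non-formal ingredient is the matrix algebra tensor identity, which is classical, and everything else is bookkeeping with the structure theorem. If one wishes to avoid the structure theorem entirely, an alternative is to show directly that $\rad(A \otimes_{\C} A^o) = 0$ using separability of $A$ over $\C$, but the decomposition approach is shorter and fits the framework just set up.
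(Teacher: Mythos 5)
Your proposal is correct and follows essentially the same route as the paper: the paper's proof invokes the decomposition (8) of Theorem \ref{semisimplealgebras} and writes $A\otimes_{\C}A^o \simeq \oplus_{i,j}\Mm_{n_i}(\C)\otimes_{\C}\Mm_{n_j}(\C) \simeq \oplus_{i,j}\Mm_{n_in_j}(\C)$, exactly as you do. Your extra remarks (the transpose isomorphism $\Mm_n(\C)^o\simeq\Mm_n(\C)$ and the endomorphism-algebra proof of the matrix tensor identity) merely spell out steps the paper leaves implicit.
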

                                         \begin{proof}
                                         By the above (8), $A\otimes_{\mathbb{C}}A^o \simeq \oplus_{i, j}\Mm_{n_i}(\mathbb{C}) \otimes_{\mathbb{C}}\Mm_{n_j}(\mathbb{C})       \simeq  \oplus_{i,j}\Mm_{n_in_j}(\mathbb{C}) $.
                                             \end{proof}
                                             By proposition in  \cite[p.180]{Pierce}, the categories of $A-A$-bimodules and left $A\otimes_{\mathbb{C}}A^o$-modules are isomorphic.
                                             \begin{corollary}
                                            If $A$ is a semi-simple algebra, then every $A-A$-bimodule is semi-simple.
                                             \end{corollary}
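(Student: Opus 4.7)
The plan is to exploit the identification, cited from Pierce just before the statement, between the category of $A$-$A$-bimodules and the category of left $A\otimes_{\mathbb{C}}A^o$-modules. Once this translation is in place, the corollary becomes a direct application of Lemma \ref{ss} together with Theorem \ref{semisimplealgebras}.

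Concretely, I would proceed as follows. First, let $V$ be an arbitrary $A$-$A$-bimodule. Via the standard action $(a\otimes b^o)\cdot v = a v b$, the cited proposition in Pierce turns $V$ into a left module over the algebra $B := A\otimes_{\mathbb{C}} A^o$, and this passage is an isomorphism of categories (so in particular it preserves and reflects submodules and direct sum decompositions). Hence $V$ is semi-simple as an $A$-$A$-bimodule if and only if the corresponding left $B$-module is semi-simple.

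Next, I would invoke Lemma \ref{ss}: since $A=\mathbb{C}[M]$ is semi-simple by hypothesis, the algebra $B = A\otimes_{\mathbb{C}} A^o$ is also semi-simple. Then by the equivalence $(1)\Leftrightarrow(6)$ in Theorem \ref{semisimplealgebras}, every left $B$-module is semi-simple. Combining this with the categorical translation of the previous paragraph yields that $V$ is semi-simple as an $A$-$A$-bimodule, which is exactly the claim.

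No step here presents a real obstacle, since all the hard work has been carried out in Lemma \ref{ss} and in the structure Theorem \ref{semisimplealgebras}; the only thing to be mildly careful about is fixing once and for all the conventions that turn the right $A$-action on $V$ into a left $A^o$-action and checking that this bijection intertwines submodules on both sides — but this is precisely the content of the Pierce proposition already invoked in the paragraph preceding the corollary, so it can be quoted verbatim.
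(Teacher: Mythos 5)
Your argument is exactly the one the paper intends: the corollary is stated immediately after Lemma \ref{ss} and the cited Pierce category isomorphism between $A$-$A$-bimodules and left $A\otimes_{\mathbb{C}}A^o$-modules, and the conclusion follows by Theorem \ref{semisimplealgebras}. So the proposal is correct and takes essentially the same route as the paper.
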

                                             \begin{remark}
                                             $A/\rad{A}$ is a semi-simple algebra.
                                             \end{remark}
 In the rest part of this subsection, we assume $A$ is a  \emph{semi-simple algebra}. Go back to the construction of an irreducible representation of $M$ after theorem \ref{CMP}.   In the semi-simple case, $N_e\big(\Ind_{G_e} (W)\big)=0$,  and $T_e(\Coind_{G_e}(W))=\Coind_{G_e}(W)$.  Let $A_e=A_J=A/\C[I_J]$. Recall the above notations $L_e$, $R_e$.
  \begin{lemma}\label{le55}
 \begin{itemize}
 \item[(1)] $A_e e\simeq \mathbb{C}[L_e] $,  as left $M$-modules;
  \item[(2)] $eA_e \simeq \mathbb{C}[R_e] $, as right $M$-modules;
   \item[(3)] $\mathbb{C}[L_e] \simeq \Hom_{G_e}(\mathbb{C}[R_e], \mathbb{C}[G_e])$, as left $M$-modules;
 \item[(4)] $\mathbb{C}[R_e] \simeq  \Hom_{G_e}(\mathbb{C}[L_e], \mathbb{C}[G_e])$, as right $M$-modules;
  \item[(5)] $\mathbb{C}[J_e] \simeq \Mm_{s_M}(\mathbb{C}[G_e])$,  as algebras.
 \end{itemize}
 \end{lemma}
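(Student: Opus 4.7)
The plan is to exploit the Rees matrix description of the regular $\mathcal{J}$-class $J_e$, together with the standing semisimplicity hypothesis on $A=\C[M]$. Parts (1) and (2) will come directly from the definitions; part (5) will follow from the Munn algebra realization of the contracted semigroup algebra; and parts (3)--(4) will then follow from row/column duality inside that matrix algebra.

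For parts (1) and (2), I observe that for any $m\in Me$ one has $Mm\subseteq Me$, so either $Mm=Me$ (giving $m\in L_e\subseteq J_e$) or $Mm\subsetneq Me$ (giving $m\in J(e)\setminus J_e=I_J$). Hence $Me=L_e\sqcup(Me\cap I_J)$ as left $M$-sets, and consequently
\[
A_e\, e \;=\; \C[Me]\big/\C[Me\cap I_J]\;\simeq\; \C[L_e]
\]
as left $M$-modules, where the induced partial action sends $(m,\ell)$ to $m\ell$ when $m\ell\in L_e$ and to $0$ otherwise. Part (2) is the mirror image using right ideals.

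Part (5) is the heart of the matter. The regular $\mathcal{J}$-class $J_e$ admits a Rees matrix representation (Rees' theorem, as presented in Steinberg's book): $J_e\cup\{0\}\simeq\mathcal{M}^0(G_e;I,\Lambda;P)$, where $I$ indexes the $\mathcal{R}$-classes of $J_e$, $\Lambda$ indexes its $\mathcal{L}$-classes, and $P\in \Mm_{\Lambda\times I}(G_e\cup\{0\})$ is a sandwich matrix. The contracted semigroup algebra $\C[J_e]$ is then isomorphic to the Munn algebra whose underlying $\C$-space is $\Mm_{I\times\Lambda}(\C[G_e])$ equipped with the twisted product $A\ast B:=APB$. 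The semisimplicity of $A$ descends to the subquotient $\C[J_e]$, and a standard computation of the Jacobson radical of a Munn algebra then forces $P$ to be invertible in $\Mm(\C[G_e])$; in particular $|I|=|\Lambda|$, which is the integer $s_M$ appearing in the statement. The map $X\mapsto XP$ is then an algebra isomorphism from the Munn algebra onto the ordinary matrix algebra $\Mm_{s_M}(\C[G_e])$, yielding (5).

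For parts (3) and (4), I would feed (5) back into (1) and (2). Under the isomorphism of (5), the idempotent $e$ corresponds (up to a unit in $G_e$) to a rank-one idempotent of $\Mm_{s_M}(\C[G_e])$, so by (1), $\C[L_e]\simeq A_e e$ is identified with the column module $\C[G_e]^{s_M}$ of the matrix algebra; symmetrically $\C[R_e]\simeq eA_e$ is identified with the row module. The standard row/column pairing $\Hom_{G_e}(\C[G_e]^{s_M},\C[G_e])\simeq \C[G_e]^{s_M}$ then yields the duality isomorphisms in (3) and (4), and the resulting $M$-module structures match because both sides pull back from the same containing algebra $\Mm_{s_M}(\C[G_e])$. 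The one delicate point throughout is the invertibility of $P$: it is precisely where the semisimplicity hypothesis genuinely enters, and it is what forces the equality $|I|=|\Lambda|=s_M$ that makes $\Mm_{s_M}(\C[G_e])$ a well-defined full matrix algebra; I would either invoke the classical Clifford--Preston--Rhodes criterion for semisimplicity of a finite monoid algebra, or prove this in-line by exhibiting a nontrivial element of $\rad(\C[J_e])$ whenever $P$ fails to be invertible.
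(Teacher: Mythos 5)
Your proposal is correct, and it is essentially the same route the paper takes: the paper disposes of this lemma by citing exactly the classical facts you reconstruct (Steinberg for (1)--(3), Ganyushkin--Mazorchuk--Steinberg for (4), and Clifford--Preston's Munn-algebra semisimplicity criterion for (5)), so your Rees-matrix/Munn-algebra argument, with invertibility of the sandwich matrix extracted from semisimplicity and the twist $X\mapsto XP$ giving $\Mm_{s_M}(\C[G_e])$, is precisely the content behind those citations. One small precision for (3)--(4): the $M$-actions on $\C[L_e]$ and on $\Hom_{G_e}(\C[R_e],\C[G_e])$ do not literally factor through the algebra $\Mm_{s_M}(\C[G_e])$, so "both sides pull back from the same containing algebra" should be replaced by the routine check that the pairing $x\mapsto(y\mapsto yx)$ is $M$-equivariant because $\C[J_e]=\C[J(e)]/\C[I(e)]$ is simultaneously an algebra quotient and an $A$-bimodule quotient of the ideal $\C[J(e)]$ (equivalently, that the sandwich matrix intertwines the two Sch\"utzenberger representations, as in Lemma \ref{inter}), after which invertibility of $P$ gives the bijectivity you want.
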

 \begin{proof}
 For (1)(2), see  \cite[p.57]{Stein}. For (3), see \cite[p.70, Thm.5.19]{Stein}. For (4),  one can obtain this result from \cite[p.5, Thm.7]{GMS} by letting there $V$ runs through all irreducible representations of $M$. For (5), see \cite[pp.162-163, Lmm.5.17, Thm.5.19]{CP1}.
  \end{proof}
  By abuse of definition, we also say a   theta  $A$-module as well as a theta representation.  Following the definition in \cite[p.277, A.4]{Stein}, if $(\lambda, U)$ is a left $A$-module, we can define its  standard duality  $D(U)=\Hom_{\mathbb{C}}(U, \mathbb{C})$, which becomes a right $A$-module.    We shall use this notation frequently in the remaining parts.

 For $(\lambda, U) \in \Irr(A)$, by lemma \ref{Ad},  $D(U) \otimes_A U \simeq \Hom_{\C}(D(U) \otimes_AU, \C) \simeq \Hom_{A}(U, DD(U)) \simeq \C$.   For the group $G_e$,  the left   representation  $\rho_l $  of $G_e \times G_e$ on $\C[G_e]$,  given by $(g, h)  [\sum c_i x_i]=c_igx_ih^{-1}$, is isomorphic with  $\oplus_{W'\in \Irr(G_e) }W'\otimes \check{W'}$. Hence as a $G_e-G_e$-bimodule, $\C[G_e] \simeq \oplus_{W'\in \Irr(G_e) }W'\otimes D(W')$
      \begin{remark}\label{irredM1}
 \begin{itemize}
   \item[(1)] $\mathbb{C}[L_e] $ is a semi-simple theta  $M-G_e$-bimodule, with the theta  bimap  $\theta:   \Irr(M) \longleftrightarrow D(\Irr(G_e));  \Ind_{G_e}(W)  \longleftrightarrow D(W) $.
 \item[(2)] $\mathbb{C}[R_e]$ is a semi-simple theta  $G_e- M$-bimodule, with the theta bimap $\theta: \Irr(G_e) \longleftrightarrow  D(\Irr(M));   W \longleftrightarrow D(\Coind_{G_e}(W))$.
 \item[(3)] $D(\mathbb{C}[R_e]) \simeq \mathbb{C}[L_e] $ as $M-G_e$-bimodules.
    \end{itemize}
 \end{remark}
 Then   $\mathbb{C}[L_e] \simeq \oplus_{\sigma\in \Irr(G_e)}   \Ind_{G_e}(\sigma)\otimes D(\sigma)$ as $M-G_e$-bimodules, $\mathbb{C}[R_e] \simeq \oplus_{\sigma\in \Irr(G_e)}   \sigma \otimes D(\Coind_{G_e}(\sigma))$, as $G_e- M$-bimodules.
      \begin{lemma}\label{duality}
 If $\dim W=l$, then
 \begin{itemize}
 \item[(1)] $\Hom_{A}(V, A) \simeq  D(V)$, as right $M$-modules,
 \item[(2)]  $V\otimes D(V)\simeq \Mm_{s_Ml}(\mathbb{C})$, as $M-M$-bimodules.
 \end{itemize}
 \end{lemma}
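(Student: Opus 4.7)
The plan is to deduce both parts from the Wedderburn--Artin structure of the semisimple algebra $A=\C[M]$. Combining Theorem \ref{semisimplealgebras} with Lemma \ref{ss}, the canonical isotypic decomposition of $A$ as an $A$-$A$-bimodule reads
\[
A \;\simeq\; \bigoplus_{V' \in \Irr(A)} V' \otimes_{\C} D(V'),
\]
with the left $A$-action on $V'$ and the right $A$-action on $D(V')$. Both identities in the lemma are essentially the two projections of this decomposition onto the $V$-block, read in different categories.

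For part (1), I would apply the functor $\Hom_A(V,-)$ termwise. Since $\Hom_A(V,V' \otimes D(V')) \simeq \Hom_A(V,V') \otimes_{\C} D(V')$ and Schur's lemma kills the summands with $V'\not\simeq V$ while leaving a one-dimensional contribution when $V'\simeq V$, one obtains $\Hom_A(V,A) \simeq D(V)$ as vector spaces. Explicitly, the map sends $\xi \in D(V)$ to the homomorphism $\phi_\xi\colon v \mapsto v \otimes \xi \in V\otimes D(V) \hookrightarrow A$; a direct computation shows that the right $A$-action $(\phi_\xi \cdot a)(v) = \phi_\xi(v)\cdot a$ corresponds precisely to the standard right $A$-action $(\xi \cdot a)(v) = \xi(av)$ on $D(V)$, so the isomorphism is right $A$-linear, hence right $M$-equivariant.

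For part (2), the summand $V \otimes D(V)$ is one matrix block of $A$ under Wedderburn. The natural $\C$-linear map $V \otimes D(V) \to \End_{\C}(V)$, sending $v \otimes \xi$ to the rank-one operator $w \mapsto \xi(w)\,v$, is an $M$-$M$-bimodule isomorphism when $\End_{\C}(V)$ is given the bimodule structure by pre- and post-composition with $\pi(m)$; this is checked by direct substitution on pure tensors. By Lemma \ref{surjection}, $\pi\colon \C[M] \to \End_{\C}(V)$ is surjective, so the $M$-$M$-action exhausts the full matrix algebra. It remains to identify $\dim V = s_M\,l$: since $V = \C[L_e]\otimes_{\C[G_e]} W$, the isomorphism $\C[J_e] \simeq \Mm_{s_M}(\C[G_e])$ of Lemma \ref{le55}(5) forces $\C[L_e]$ to be a free right $\C[G_e]$-module of rank $s_M$ (it is the ``column'' of the Rees matrix presentation), whence $\dim V = s_M \cdot l$ and $V \otimes D(V) \simeq \Mm_{s_M l}(\C)$ as $M$-$M$-bimodules.

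The only subtlety I anticipate is the bookkeeping of module structures: the underlying vector-space isomorphisms are immediate from Wedderburn, but one must verify that the right $M$-structure in (1) and the $M$-$M$-bimodule structure in (2) are transported correctly by the natural maps, and that the rank of $\C[L_e]$ as a right $\C[G_e]$-module really is $s_M$. Both verifications are short once the Rees matrix picture of Lemma \ref{le55}(5) is in hand, so no deep obstacle is expected.
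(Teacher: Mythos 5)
Your argument is correct, but it follows a genuinely different route from the paper's. For part (1) the paper stays local: it identifies $\Hom_A(V,A)$ with $\Hom_A(V,\C[J_e])$, rewrites $\C[J_e]$ as $\Ind_{G_e}(\C[R_e])$, and passes by adjunction to $\Hom_{G_e}(W,\C[R_e])\simeq D(\Coind_{G_e}(\sigma))\simeq D(V)$; you instead invoke the global Wedderburn bimodule decomposition $A\simeq\oplus_{V'}V'\otimes D(V')$ (which the paper itself uses later, e.g.\ in Lemma \ref{EQInd}) and finish with Schur's lemma, checking that $\xi\mapsto\phi_\xi$ transports the right $A$-action. For part (2) the paper compares two decompositions of the single block $\C[J_e]$ --- the Rees-type one $\C[L_e]\otimes_{\C[G_e]}\C[R_e]$ expanded over $\Irr(G_e)$, against $\Mm_{s_M}(\C[G_e])\simeq\oplus_U\Mm_{s_M}(U\otimes D(U))$ from Lemma \ref{le55}(5) --- and matches blocks by restricting to $G_e$; you bypass this by the canonical rank-one-operator isomorphism $V\otimes D(V)\simeq\End_\C(V)$, using Lemma \ref{surjection} to see that the $M$-$M$-structure is the full matrix-algebra block (consistent with the paper's convention that the bi-action factors through $\C[M]\to\C[J_e]$, cf.\ Example \ref{leftrightV}(1)), and then only need the dimension count $\dim V=s_M l$. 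Your route is shorter and more conceptual, making part (2) essentially a formal consequence of irreducibility plus surjectivity of $\pi$; the paper's route buys the finer information of how the block structure of $\C[J_e]$ arises from $\C[L_e]$, $\C[R_e]$ and the sandwich data, which it reuses elsewhere. One small remark: the freeness of $\C[L_e]$ as a right $\C[G_e]$-module of rank $s_M$ is most directly Lemma \ref{BSteinberg}(3)(5) with $N=M$, rather than something you need to extract from Lemma \ref{le55}(5); with that citation your dimension count is immediate.
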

 \begin{proof}
  1)  $ \Hom_{A}(V, A) \simeq   \Hom_{A}(V, \mathbb{C}[J_e] ) \simeq \Hom_{A}(\Ind_{G_e}(W), \Ind_{G_e}(\mathbb{C}[R_e]) )\simeq \Hom_{G_e}(W, \mathbb{C}[R_e])\simeq
  D( \Coind_{G_e}(\sigma)) \simeq D(V)$, as right $A$-modules. \\
  2)  $\mathbb{C}[J_e] \simeq  \mathbb{C}[L_e] \otimes_{\mathbb{C}[G_e]} \mathbb{C}[R_e] \simeq  \oplus_{U\in \Irr(G_e)} \oplus_{U'\in \Irr(G_e)} [\Ind_{G_e}(U) \otimes_{ \mathbb{C}} D(U)] \otimes_{\mathbb{C}[G_e]} [U'\otimes_{ \mathbb{C}}D( \Ind_{G_e}(U') ]  \simeq \oplus_{U\in \Irr(G_e)}\Ind_{G_e}(U) \otimes_{ \mathbb{C}} D( \Ind_{G_e}(U) )$, as $M-M$-bimodules. By Lmm.\ref{le55}, $\mathbb{C}[J_e] \simeq \Mm_{s_M}(\mathbb{C}[G_e]) \simeq \oplus_{U\in \Irr(G_e)} \Mm_{s_M}(U\otimes_{\mathbb{C}} D(U))$, as $\mathbb{C}[J_e]-\mathbb{C}[J_e]$-bimodules as well as  $M-M$-bimodules.  Here the bi-action of $M$ on  $\Mm_{s_M}(U\otimes_{\mathbb{C}} D(U))$ factors through the projection $\mathbb{C}[M] \longrightarrow \mathbb{C}[J_e] \simeq \Mm_{s_M}(\mathbb{C}[G_e]) $.   Composing this two decompositions, and investigating  their restrictions to  $G_e$, we can claim that  $V\otimes D(V)\simeq \Mm_{s_M}(W\otimes D(W))  \simeq \Mm_{s_M}(\End(W) )$ as $M-M$-bimodules.
  \end{proof}

 Let us call   $\check{V}= \Ind_{G_e} (\check{W})$   the  contragredient representation  of $V=\Ind_{G_e} (W)$.  Since $A=\C[M]$ is semi-simple,  we can define a contragredient  representation $(\check{\lambda}, \check{U})$,  for any $(\lambda, U)\in \Rep_f(M)$.
\begin{lemma}\label{Conttr}
For $(\pi, V), (\pi',V') \in \Irr(M)$,
\begin{itemize}
\item[(1)] $\Hom_{M}(V\otimes V', \C) \neq 0$, then $V' \simeq \check{V}$, as $M$-modules.
\item[(2)] $m_{M}(V\otimes \check{V}, \C)\leq 1$.
\end{itemize}
\end{lemma}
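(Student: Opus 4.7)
The plan is to reduce Lemma \ref{Conttr} to classical Schur's lemma on the apex group. Since $\mathbb{C}[M]$ is semi-simple, $V \otimes V'$ decomposes into irreducibles under the diagonal $M$-action, and $\dim \Hom_M(V \otimes V', \mathbb{C})$ equals the multiplicity of the trivial representation in this decomposition, so both parts follow once this multiplicity is identified.

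First I would pin down the apex constraint imposed by the hypothesis. Since $\Ann_M(\mathbb{C}) = \emptyset$, the trivial representation has apex equal to the minimum (kernel) $\mathcal{J}$-class $J_0$ of $M$. If $\mathbb{C}$ is a quotient of $V \otimes V'$, then $\Ann_M(V \otimes V') \subseteq \Ann_M(\mathbb{C}) = \emptyset$, while the diagonal action gives $\Ann_M(V \otimes V') \supseteq \Ann_M(V) \cup \Ann_M(V') = I_{J_e} \cup I_{J_{e'}}$. Since $I_J = \emptyset$ iff $J = J_0$, both $V$ and $V'$ are forced to have apex $J_0$. Fix $e_0 \in J_0$; then Theorem \ref{CMP} gives $V \simeq \Ind_{G_{e_0}}(W)$ and $V' \simeq \Ind_{G_{e_0}}(W')$ for some irreducible $G_{e_0}$-modules $W, W'$. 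Moreover $\mathbb{C} \simeq \Ind_{G_{e_0}}(\mathbb{C}_{\operatorname{triv}})$, and since $\dim \mathbb{C} = 1$, semi-simplicity forces $[L_{e_0} : G_{e_0}] = 1$; one deduces $L_{e_0} = G_{e_0} = M e_0$.

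The next step is to establish the isomorphism
\[
\Hom_M(V \otimes V', \mathbb{C}) \simeq \Hom_{G_{e_0}}(W \otimes W', \mathbb{C}).
\]
Under the identifications $V \simeq W$ and $V' \simeq W'$ as $\mathbb{C}$-vector spaces (coming from $\mathbb{C}[L_{e_0}] = \mathbb{C}[G_{e_0}]$), the $M$-action reads $m \cdot w = (m e_0) \cdot w$ with $m e_0 \in M e_0 = G_{e_0}$, so the diagonal $M$-action on $V \otimes V'$ factors through the diagonal $G_{e_0}$-action via $m \mapsto m e_0$. As $\{m e_0 : m \in M\} = G_{e_0}$, $M$-invariance of a bilinear form on $V \otimes V'$ coincides with $G_{e_0}$-invariance of its identification on $W \otimes W'$, yielding the isomorphism. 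Classical Schur's lemma on $G_{e_0}$ then concludes the argument: the right-hand side is nonzero iff $W' \simeq \check{W}$, equivalently $V' \simeq \Ind_{G_{e_0}}(\check{W}) = \check{V}$, and in that case its dimension equals one, proving both (1) and (2).

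The main obstacle is the structural rigidity at the minimum $\mathcal{J}$-class: verifying that semi-simplicity forces $L_{e_0} = G_{e_0} = M e_0$ and that the $M$-action on $V$ factors through $m \mapsto m e_0 \in G_{e_0}$. These facts follow from a Rees matrix analysis of $J_0$ combined with Lemma \ref{le55}; once they are settled, the remainder is purely group representation theory.
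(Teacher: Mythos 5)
Your proposal is correct, but it takes a genuinely different route from the paper. You first exploit the hypothesis globally: a nonzero $M$-map $V\otimes V'\to \C$ forces $\Ann_M(V\otimes V')=\emptyset$, hence both apexes equal the minimal $\mathcal{J}$-class $J_0$, and comparing dimensions with $\C\simeq \Ind_{G_{e_0}}(\C_{\mathrm{triv}})$ forces $L_{e_0}=e_0G_{e_0}=G_{e_0}$; the remaining point you flag, that $Me_0=L_{e_0}$ and hence that the $M$-action on $V$ factors through the monoid morphism $m\mapsto me_0$ onto $G_{e_0}$, does hold, but it genuinely needs the completely simple (Rees) structure of the kernel: for $x\in Me_0\subseteq J_0$ one has $Mx=J_0x=J_0e_0=Me_0$, and $e_0me_0=me_0$ for all $m$ (since $Me_0=G_{e_0}\subseteq e_0Me_0$) makes $m\mapsto me_0$ multiplicative. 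After that, $\Hom_M(V\otimes V',\C)\simeq \Hom_{G_{e_0}}(W\otimes W',\C)$ and classical Schur theory gives both parts (with the case of apex $\neq J_0$ giving $0\leq 1$ trivially in (2)). The paper's proof is different and more local: it never identifies the apex with the kernel nor proves $s_e=1$; instead it works with a general apex $e$, uses semisimplicity only to produce $y_i\in R_e$ with $y_ix_i=e$, shows a nonzero invariant form $F$ must be nonzero on the corner $e\otimes W\times e\otimes W'$ (whence $e\,\mathcal{J}\,e'$ and $W'\simeq\check W$ by Schur on $G_e$), and gets (2) because $F$ is determined by its restriction to that corner. Your approach buys stronger structural information (it shows the hypothesis can only be satisfied by kernel-apex representations, explaining why the paper can only claim $\dim\leq 1$ rather than $=1$ in general), at the cost of invoking the completely simple structure of the minimal ideal; the paper's corner argument is more elementary, stays at an arbitrary apex, and is the template reused later in the inverse-monoid duality computations.
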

\begin{proof}
Assume $\pi=\Ind_{G_e} W$, $\pi' =\Ind_{G_{e'}} W'$.  Let $x_1=e, \cdots, x_l$(resp. $x_1'=e', \cdots, x'_{l'}$)  be the representatives of $L_e/G_e$(resp. $L_{e'}/G_{e'}$). Since $M$ is a semi-simple monoid, there exist $y_i \in R_e$, $y_j'\in R_{e'}$ such that $y_ix_i=e$, $y_j' x_j'=e'$.  \\
1) Let $0\neq F\in \Hom_M(V\otimes V', \C)$. Then $F: V \times V' \longrightarrow \C$ is an $A$-invariant bilinear from. If $F(v, v')\neq 0$, for $v=x_i\otimes w_i$, $v'=x_j'\otimes w_j'$, then $0\neq F(v, v')=F(y_iv, y_iv')=F(e\otimes w_i, y_ix_j'\otimes w_j')=F(e\otimes w_i, ey_ix_j'\otimes w_j')$. Hence $eV'\neq 0$. Dually,  $e'V\neq 0$. Then $e \mathcal{J} e'$. For simplicity, let $e=e'$, and $x_k =x_k'$.  Then $F(e\otimes w_i, ey_ix_j\otimes w_j')\neq 0$, which implies that $ey_ix_j \in G_e$. Note that the restriction of $F$ to $e\otimes W \times e\otimes W'$ is also a $G_e$-invariant bilinear form.  Now this form is not zero, and $W$, $W'$ both are irreducible $G_e$-modules. Therefore $W'\simeq \check{W}$, $\pi'\simeq \check{\pi}$.\\
2)  If $F$, $F'$ are two non-zero $A$-invariant bilinear maps  from $V\times \check{V}$, then by the above discussion,  the restrictions of $F$ and $F'$ to $e\otimes W \times e\otimes\check{W}$ both are non-zero and $G_e$-invariant. Hence by Schur's Lemma, they differ only by a constant of $\C^{\times}$ on  that subspace. Since $F$, $F'$ are $A$-invariant,  they are uniquely determined by their restrictions on the subspace $e\otimes W \times e\otimes\check{W}$. Hence $F=cF'$, for some $c\in \C^{\times}$.
 \end{proof}
                 \begin{lemma}\label{semisimplerep}
             Let   $(\lambda, U), (\lambda_i, U_i)\in \Rep_f(M), (\pi, V)\in \Irr(M)$.
              \begin{itemize}
              \item[(1)] $U \simeq \Hom_A({}_AA, U)$, as  $A$-modules.
              \item[(2)] $\Hom_A(U, V) \simeq D(U)\otimes_A V$.
     \item[(3)] $(\check{\check{\lambda}}, \check{\check{U}} )\simeq (\lambda, U)$, as $A$-modules.
     \end{itemize}
              \end{lemma}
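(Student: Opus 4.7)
The plan is to treat the three claims separately, exploiting in each case the semi-simplicity of $A=\C[M]$ established in Theorem \ref{semisimplealgebras}.

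For (1), I would simply use the standard evaluation map $\varepsilon\colon \Hom_A({}_AA,U)\to U$, $f\mapsto f(1)$, with inverse $u\mapsto (a\mapsto au)$. This isomorphism holds tautologically for any ring and any left module, so no input from the monoid theory is needed here.

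For (2), the first step is to reduce to the case $U\in\Irr(M)$. Since $A$ is semi-simple, Theorem \ref{semisimplealgebras} writes $U\simeq \oplus_{i}U_i$ with $U_i\in\Irr(M)$, and both sides of the asserted isomorphism convert this finite direct sum into a direct sum; the right-hand side does so because $D(-)$ is $\C$-linear contravariant on finite-dimensional modules and tensor product over $A$ is additive. Once $U$ is irreducible, I would invoke Lemma \ref{duality}(1) to replace $D(U)$ by $\Hom_A(U,A)$ as right $A$-modules, and then use the natural $A$-balanced map
\[
\Hom_A(U,A)\otimes_A V \longrightarrow \Hom_A(U,V), \qquad f\otimes v \longmapsto \bigl(u\mapsto f(u)\cdot v\bigr).
\]
I expect the main obstacle here to be verifying that this natural map is an isomorphism for finite-dimensional $U$. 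The cleanest approach should be to note that $U$ is projective (as every finitely generated module over the semi-simple ring $A$ is), write $U$ as a direct summand of some $A^n$, observe that the map is plainly an isomorphism for $U=A$ (both sides equal $V$) and hence for $A^n$ by additivity, and then restrict to the summand. A consistency check at the end is that on irreducibles both sides collapse, by Schur's Lemma, to $\C$ when $U\simeq V$ and to $0$ otherwise.

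For (3), I would again reduce to the irreducible case by semi-simplicity: if $U\simeq \oplus_i V_i$ with $V_i\in\Irr(M)$, then by the definition extending the contragredient from irreducibles to all finite-dimensional modules one has $\check U\simeq\oplus_i \check V_i$ and hence $\check{\check U}\simeq\oplus_i\check{\check{V_i}}$. For $V=\Ind_{G_e}(W)$ irreducible, the definition yields $\check V=\Ind_{G_e}(\check W)$, so $\check{\check V}=\Ind_{G_e}(\check{\check W})$. The classical double-dual isomorphism $\check{\check W}\simeq W$ for the finite-dimensional $G_e$-module $W$, combined with the functoriality of the construction $W\mapsto\Ind_{G_e}(W)$ described after Theorem \ref{CMP}, then yields $\check{\check V}\simeq V$, and reassembling gives the claim for general $U$. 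This final step is routine and I do not expect it to pose any serious difficulty.
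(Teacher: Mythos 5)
Your proposal is correct and follows essentially the same route as the paper: (1) is the tautological evaluation isomorphism, (2) is the chain $\Hom_A(U,V)\simeq \Hom_A(U,A)\otimes_A V\simeq D(U)\otimes_A V$ using Lemma \ref{duality}(1), and (3) reduces to irreducibles where $\check{\check{V}}=\Ind_{G_e}(\check{\check{W}})\simeq \Ind_{G_e}(W)$. The only difference is that you justify the step $\Hom_A(U,A)\otimes_A V\simeq\Hom_A(U,V)$ explicitly via projectivity of $U$ over the semi-simple ring $A$, which the paper simply asserts.
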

              \begin{proof}
              1) For each  $w\in U$, let us define a function $f_w:  A \longrightarrow U$, given by $f_w(a)=aw$, for $a\in A$.  Then $f_w(ab)=abw=af_w(b)$,  so $f_w\in \Hom_A(A, U)$. Moreover $[bf_w](a)=f_w(ab)= abw=f_{bw}(a)$, which means $bf_w=f_{bw}$. For $w_1, w_2$, $c\in \mathbb{C}$, we have $f_{w_1+w_2}=f_{w_1}+f_{w_2}$, $f_{cw_1}=cf_{w_1}$.  Hence $w\longrightarrow f_w$ defines an  $A$-module homomorphism from $U$ to $ \Hom_A(A, U)$. This map is clearly a bijection. \\
              2)  It comes from $\Hom_A(U, V) \simeq \Hom_A(U, A\otimes_A V) \simeq \Hom_A(U,A)\otimes_A V \simeq  D(U)\otimes_A V$.\\
              3)  By definition, for any irreducible representation the result is right. Then  the result follows from  the semi-simplicity.
            \end{proof}
  \subsection{Localization of monoid}\label{localizationsec}     Let $N\subseteq M$ be a submonoid with  the same identity element. Define    the Green's relations for $M$ related to  $N$  as follows:  for two elements $m_1, m_2\in M$, we say (1) $m_1 \mathcal{L}_N m_2$ if $Nm_1=Nm_2$, (2) $m_1\mathcal{R}_N m_2$ if $m_1 N=m_2 N$, (3) $m_1 \mathcal{J}_N m_2$ if $N m_1 N=Nm_2 N$.  For $m\in M$, let   $J_m^N$, $L_m^N$,  $R_m^N$ denote the generators of  $NmN$, $Nm$,  $mN$ respectively.  Let us present some lemmas analogue of  the chapter 1 in B. Steinberg's book. Most of his proofs can extend here without too much modification.
 \begin{lemma}\label{lm}
 \begin{itemize}
 \item[(1)] Let  $n\in N$, $m\in M$. Then    $NnmN=NmN$ iff $Nm=Nnm$,  $NmnN=NmN$ iff $mN=mnN$.
  \item[(2)]  $J_m^N \cap Nm=L_m^N$, $J_m^N \cap m N=R_m^N$.
   \item[(3)] $ m_1 \mathcal{L}_N m_2$ implies $| R_{m_1}^N| =| R_{m_2}^N|$, and   $ m_1 \mathcal{R}_N m_2$ implies  $|L_{m_1}^N|=| L_{m_2}^N|$.
     \end{itemize}
 \end{lemma}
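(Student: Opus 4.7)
The plan is to handle the three assertions in sequence, with part (1) providing the substantive input, part (2) following as a short corollary, and part (3) being an independent direct bijection argument.

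For (1), the implication $Nm=Nnm\Rightarrow NmN=NnmN$ is immediate by right-multiplication by $N$. For the converse $NnmN=NmN\Rightarrow Nm=Nnm$, I would extract from $m\in NmN=NnmN$ an expression $m=anmb$ with $a,b\in N$, and iterate to get $m=(an)^k m b^k$ for every $k\geq 1$. The key step is to invoke finiteness: since the submonoid $N$ of the finite monoid $M$ is itself finite, both $an$ and $b$ have idempotent powers in $N$, so one can choose a common $\ell\geq 1$ such that $e_1=(an)^{\ell}$ and $e_2=b^{\ell}$ are both idempotent. From $m=e_1 m e_2$ combined with $e_1^2=e_1$ and $e_2^2=e_2$ one reads off $e_1 m=m=me_2$ by multiplying through on the left by $e_1$ and on the right by $e_2$. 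In particular $m=(an)^{\ell}m=a(na)^{\ell-1}\cdot nm\in Nnm$, which together with the trivial inclusion $Nnm\subseteq Nm$ yields $Nm=Nnm$. The mirror equivalence $NmnN=NmN\iff mN=mnN$ is obtained by the symmetric argument (iterating on the right).

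For (2), the inclusion $L_m^N\subseteq J_m^N\cap Nm$ is immediate: if $Nm'=Nm$ then $Nm'N=NmN$ and $m'\in Nm'=Nm$. Conversely, if $m'\in J_m^N\cap Nm$, write $m'=nm$ for some $n\in N$; applying part (1) to $NnmN=Nm'N=NmN$ yields $Nnm=Nm$, i.e., $Nm'=Nm$, so $m'\in L_m^N$. The identity $J_m^N\cap mN=R_m^N$ is the mirror statement.

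For (3), assume $m_1\mathcal{L}_N m_2$ and fix $n,n'\in N$ with $m_1=nm_2$ and $m_2=n'm_1$ (which exist since $m_1\in Nm_1=Nm_2$ and symmetrically). Define $\phi:R_{m_1}^N\to R_{m_2}^N$ by $\phi(x)=n'x$ and $\psi:R_{m_2}^N\to R_{m_1}^N$ by $\psi(y)=ny$. These are well-defined: $\phi(x)N=n'\cdot xN=n'm_1N=m_2N$, and similarly for $\psi$. To check $\psi\phi=\mathrm{id}$, note any $x\in R_{m_1}^N$ lies in $xN=m_1N$, so $x=m_1 t$ for some $t\in N$; then $\psi\phi(x)=nn'x=nn'm_1 t=nm_2 t=m_1 t=x$, using the crucial identity $nn'm_1=nm_2=m_1$. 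The analogous computation gives $\phi\psi=\mathrm{id}$, so $\phi$ is a bijection and $|R_{m_1}^N|=|R_{m_2}^N|$. The statement $m_1\mathcal{R}_N m_2\Rightarrow |L_{m_1}^N|=|L_{m_2}^N|$ follows by left-right symmetry.

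The main obstacle is the finite-semigroup idempotent trick in (1); once this is in hand, parts (2) and (3) reduce to formal manipulations with the defining equations of the relative Green's relations.
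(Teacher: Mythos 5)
Your proof is correct. Parts (2) and (3) are essentially identical to the paper's argument: (2) is the same two-line reduction to part (1), and your maps $\phi(x)=n'x$, $\psi(y)=ny$ with the cancellation $nn'm_1=m_1$ are exactly the paper's $\varphi_{12},\varphi_{21}$. The only genuine divergence is in part (1), where the finiteness of $M$ enters. The paper argues by counting: from $m=n_1nmn_2$ it gets $Nm\subseteq Nnmn_2$, hence $|Nm|\leq|Nnmn_2|\leq|Nnm|$, and since also $Nnm\subseteq Nm$ the cardinalities coincide and the inclusion is an equality. You instead iterate $m=(an)^k m b^k$ and pass to a common power $\ell$ for which $(an)^{\ell}$ and $b^{\ell}$ are idempotent, extracting $e_1m=m$ and hence $m=a(na)^{\ell-1}\cdot nm\in Nnm$. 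Both routes are valid uses of finiteness; the paper's counting argument is shorter and is the device reused throughout Section 3 (e.g.\ in Lemmas \ref{BSteinberg}, \ref{mLRk}, \ref{leftrightm}), whereas your $\omega$-power argument is the more standard semigroup-theoretic idiom, gives the slightly stronger byproduct $e_1m=m=me_2$ with $e_1,e_2\in E(N)$, and would survive in any setting where elements of $N$ have idempotent powers even without full finiteness of $M$.
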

 \begin{proof}
Here we only prove the first part of each item. For (1), if $NnmN=NmN$,  then $m=n_1nmn_2$,  for some $ n_i\in N$. Hence $Nm=Nn_1nmn_2\subseteq Nnmn_2$, $|Nm| \leq  |  Nnmn_2| \leq  | Nnm|$. On the other hand, $Nm\supseteq Nnm $, which implies that they are equal. Conversely,  if $Nm=Nnm$, then $NnmN=\cup_{n_1\in N} Nnmn_1=\cup_{n_1\in N} Nmn_1=NmN$. \\
(2)  If $x\in J_m^N \cap Nm$, then $NxN=NmN$, and $x=nm$, then by  (1), $Nm=Nx$ i.e. $x\in L_m^N$.  Conversely, if $x\in L_m^N$, then $Nx=Nm$, $x=nm$, hence $x\in J_m^N \cap Nm$ by (1).\\
(3)  Assume $m_1=n_1m_2$, $m_2=n_2m_1$, for some $n_i\in N$. Similar to Exercise 1.21 in \cite[p.15]{Stein}, we can define $\varphi_{12}: R_{m_1}^N \longrightarrow R_{m_2}^N; m \longmapsto n_2m  $, and $\varphi_{21}: R_{m_2}^N \longrightarrow R^N_{m_1}; m \longmapsto n_1m  $. It is well-defined because for $m\in  R_{m_1}^N$, $mN=m_1N$ and then $n_2m N=n_2m_1N=m_2N$, hence $n_2m\in R_{m_2}^N$.  Similarly, $\varphi_{21}$ is well-defined.  For $m=m_1n\in R_{m_1}^N$,  $\varphi_{21}\circ\varphi_{12}(m)=n_1n_2m=n_1n_2m_1n=m_1n=m$, so $\varphi_{21}\circ\varphi_{12}$ is the identity map. Similarly, $\varphi_{12}\circ\varphi_{21}$ is also the identity map.  Hence $| R_{m_1}^N| =| R_{m_2}^N|$.
\end{proof}
Like the exercise 1.28  in   \cite[p.16]{Stein}, we can take in account  the   localization at \emph{every} element of $M$.  For $m\in M$, we let $N_m=mN\cap Nm$.    The set   $N_m$ can be  a monoid by giving the following binary operation $\circ_m$: for $x=x_lm=mx_r, y=y_lm=my_r\in N_m$,    with $x_l, x_r, y_l, y_r\in N$,  $x\circ_my\stackrel{\Delta}{= }x_lmy_r$.
\begin{lemma}\label{BSteinberg}
 \begin{itemize}
 \item[(1)] $(N_m, \circ_m)$ is a well-defined monoid with the identity element $m$.
 \item[(2)]  $G^N_m=L_m^N\cap R_m^N$ is the group of the units of $(N_m, \circ_m)$.
 \item[(3)]   For $x=x_lm\in L_m^N$, $y=my_r\in R^N_m$, $g=g_lm=mg_r\in G^N_m$, we define $x\circ_m g\stackrel{\Delta}{=}x_lmg_r$, and $g\circ_m y\stackrel{\Delta}{=}g_lmy_r$.  Then:
 \begin{itemize}
 \item[(a)]  The   operator $\circ_m$ gives  well-defined $G^N_m$-actions on  $L_m^N$ and  $R_m^N$.
   \item[(b)] $L_m^N$ and  $R_m^N$ both   are  free $G^N_m$-sets.
 \end{itemize}
 \item[(4)] Two elements $x, y$ of $ L_m^N$ lie in the same  $G^N_m$-orbit iff $x\mathcal{R}_N y$.  The similar result holds for two elements of $R_m^N $.
 \item[(5)] Assume $L_m^N=\sqcup_{i=1}^{s^N_m} x_i \circ_m G^N_m$, $R_m^N=\sqcup_{j=1}^{t^N_m} G^N_m\circ_m y_j$.
 \begin{itemize}
   \item[(a)] $J_m^N = L_m^N\circ_m R_m^N$.
  \item[(b)] $J_m^N=\sqcup_{i, j=1}^{s^N_m, t_m^N} x_i \circ_mG^N_m\circ_m y_j$.
  \item[(c)] $|J_m^N|=s^N_mt^N_m|G^N_m|$.
  \item[(d)]  $x_i \notin G^N_m$ implies  $x_i \notin N_m$ and $x_i\notin mN$; $y_j\notin G^N_m$ implies  $y_j \notin N_m $ and $y_j\notin Nm$.
  \item[(f)] $G^N_m= L_m^N \cap N_m=R_m^N \cap N_m=J_m^N \cap N_m= L_m^N \cap R_m^N$.
  \item[(g)]  Assume $x_1=y_1=m$. Then $x_i\circ_m G^N_m\cap G^N_m \circ_m y_j= \emptyset$, for $i,j >1$.
           \end{itemize}
       \item[(6)] For $m_1, m_2\in M$, if $Nm_1N=Nm_2N$, then
       \begin{itemize}
       \item[(a)] $Nm_1 \simeq Nm_2$ as left $N$-sets,
       \item[(b)] $m_1N\simeq m_2N$ as right $N$-sets,
       \item[(c)] $(N_{m_1}, \circ_{m_1}) \simeq (N_{m_2}, \circ_{m_2})$,
        \item[(d)] $G^N_{m_1} \simeq G^N_{m_2}$, and $| G^N_{m_1} |=| G^N_{m_2}|$.
                            \end{itemize}
              \end{itemize}
\end{lemma}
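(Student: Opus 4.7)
The plan is to verify items (1)--(6) in order, each building on the previous; the two non-routine inputs are Lemma \ref{lm}(1) (for (5)(a)) and a relative version of the stability of finite monoids (for (5)(d) and (6)). For (1) I would observe that $x\circ_m y = x_l m y_r$ admits the equivalent rewritings $x\cdot y_r = x_l\cdot y$ (via $x_l m = x$, $m y_r = y$), making the value independent of the choice of presentations; the containment $x\circ_m y \in N_m$ then follows by substituting either rewriting, associativity is a direct computation, and $m = 1\cdot m = m\cdot 1$ supplies the identity. For (2), if $g = \alpha m = m\beta \in L_m^N\cap R_m^N$, I would pick $\gamma,\delta\in N$ with $\gamma g = m = g\delta$; the identity $\gamma m = \gamma(g\delta) = (\gamma g)\delta = m\delta$ places $h := \gamma m = m\delta$ in $N_m$, and direct computation yields $g\circ_m h = g\delta = m = \gamma g = h\circ_m g$. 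Conversely, a unit $g\in N_m$ produces $gN = mN$ and $Ng = Nm$ by unwinding the inverse equations. For (3), $x\circ_m g = xg_r$ lies in $L_m^N$ because $N(xg_r) = (Nx)g_r = (Nm)g_r = Ng = Nm$, and freeness follows from $x\circ_m g = x \Rightarrow mg_r = m$ after left-multiplying by $\nu$ with $\nu x = m$.

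For (4), given $y = xu \in L_m^N$ with $u\in N$ and $v\in N$ with $x = yv$, set $g := mu$; then $Ng = N(mu) = (Nx)u = N(xu) = Ny = Nm$, and choosing $\nu\in N$ with $\nu x = m$ gives $m = \nu x = \nu(xuv) = muv \in gN$, so $gN = mN$ and $g\in G_m^N$, with $x\circ_m g = y$; the converse is immediate. For (5)(a), if $z = amb\in J_m^N$, the identity $NzN = NmN$ together with $N(amb)N\subseteq NamN\subseteq NmN$ forces $NamN = NmN$, and Lemma \ref{lm}(1) yields $Nam = Nm$; symmetrically $mbN = mN$. Thus $am\in L_m^N$, $mb\in R_m^N$, and $(am)\circ_m(mb) = amb = z$. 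Items (b), (c) combine (a) with the free $G_m^N$-orbit structure from (3); disjointness of the blocks $x_i\circ_m G_m^N\circ_m y_j$ reduces to computing $N(x_i\circ_m g\circ_m y_j) = Ny_j$ and $(x_i\circ_m g\circ_m y_j)N = x_iN$, which determine $i,j$ uniquely, after which freeness of the action forces $g = g'$.

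The main obstacle is (5)(d), the relative stability theorem for finite monoids. To show $L_m^N\cap mN\subseteq G_m^N$, take $x = mu\in L_m^N$ with $u\in N$ and $v\in N$ with $vx = m$; then $m = vmu$, and iteration gives $m = v^k mu^k$ for all $k\geq 0$. Finiteness of $M$ makes $\{mu^k\}_{k\geq 0}$ eventually periodic, so there exist $k_0\geq 0$ and $p\geq 1$ with $mu^{k_0+p} = mu^{k_0}$; right-multiplying $m = v^{k_0}mu^{k_0}$ by $u^p$ yields $mu^p = v^{k_0}mu^{k_0+p} = m$, whence $m = (mu)\cdot u^{p-1} = x\cdot u^{p-1}\in xN$ (since $u^{p-1}\in N$). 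This forces $xN = mN$ and $x\in R_m^N\cap L_m^N = G_m^N$. Items (f) and (g) are now formal: (f) intersects $L_m^N$, $R_m^N$, $J_m^N$ with $N_m$ and applies (d) together with its left-right dual; (g) notes that $x_i\circ_m g\in R_m^N$ would, by (d), force $x_i\circ_m g\in G_m^N$, and then $x_i = (x_i\circ_m g)\circ_m g^{-1}\in G_m^N$, contradicting $i > 1$.

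Finally, for (6) I would upgrade the argument of (5)(d) to the full relative statement $\mathcal{J}_N = \mathcal{D}_N$ in $M$, so that $m_1\mathcal{J}_N m_2$ produces $m'$ with $m_1\mathcal{L}_N m'\mathcal{R}_N m_2$. Then $Nm_1 = Nm'$ as sets, and Green's lemma in the relative setting --- taking $u,v\in N$ with $m'u = m_2$, $m_2 v = m'$, the right translations $z\mapsto zu$ and $z\mapsto zv$ are mutually inverse left $N$-set isomorphisms $Nm'\leftrightarrow Nm_2$ --- yields (a); (b) is the left-right dual. For (c) and (d), a Schützenberger-type construction combining the right translation above with a compensating left translation provided by $m_1\mathcal{L}_N m'$ (matching the identities $m_1,m_2$ of the two localization monoids) gives a monoid isomorphism $(N_{m_1},\circ_{m_1})\simeq(N_{m_2},\circ_{m_2})$, which restricts on units to the group isomorphism $G_{m_1}^N\simeq G_{m_2}^N$ and hence the cardinality equality.
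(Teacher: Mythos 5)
Most of your argument is sound and runs parallel to the paper's, but one step does not work as written: in (5)(b) (and hence (5)(c)) you claim that, once $i$ and $j$ are pinned down, ``freeness of the action forces $g=g'$''. The equation you actually have to cancel is $(x_i\circ_m g)\circ_m y_j=(x_i\circ_m g')\circ_m y_j$, i.e.\ $z\,y_{jr}=z'\,y_{jr}$ with $z=x_i\circ_m g$, $z'=x_i\circ_m g'\in L_m^N$ and $y_j=my_{jr}$. Freeness of the $G_m^N$-action on $L_m^N$ only applies after you know $z=z'$, and that requires injectivity of the right translation $w\mapsto wy_{jr}$ on $Nm$ --- a genuinely finiteness-based fact, not a formal consequence of freeness. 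This is exactly the point where the paper writes ``we shall apply the proof of the next (6)'' and uses the cancellation statements (I)--(IV) extracted from the cardinality identities there. The needed injectivity does follow from the relative Green-type observation you make for (6)(a): since $m\,\mathcal{R}_N\,y_j$, choosing $s\in N$ with $y_js=m$, the translations $w\mapsto wy_{jr}$ and $w\mapsto ws$ are mutually inverse between $Nm$ and $Ny_j$. So the gap is repairable inside your own framework, but you would have to establish that tool before (5)(b); as written you neither flag the issue nor supply the argument, and the same hidden cancellation is what makes each block $x_i\circ_m G_m^N\circ_m y_j$ have exactly $|G_m^N|$ elements in (5)(c).

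Apart from this, the proposal is correct and occasionally takes a different (sometimes cleaner) route than the paper: in (4) you verify $g=mu\in R_m^N$ directly via $m=muv\in gN$ instead of invoking Lemma \ref{lm}(1); in (5)(d) you reprove the relative stability statement by the eventual-periodicity trick rather than quoting Lemma \ref{lm}(2), which is fine and self-contained; and in (6)(a)(b) you pass through an intermediate element $m'$ with $m_1\,\mathcal{L}_N\,m'\,\mathcal{R}_N\,m_2$ (the relative $\mathcal{J}_N=\mathcal{D}_N$, which indeed follows from Lemma \ref{lm}(1)) and use mutually inverse right translations, which is tidier than the paper's explicit counting maps $\varphi_l,\psi_l$; your sketch for (6)(c) is in substance the paper's map $x\mapsto n^{(21)}_l x n^{(21)}_r$, with the multiplicativity check left out. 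One smaller imprecision: (5)(f) is not purely formal from (d) and its dual, because the component $J_m^N\cap N_m$ needs $J_m^N\cap Nm=L_m^N$, i.e.\ Lemma \ref{lm}(2) (or a further extension of your periodicity argument), so cite that rather than calling the step formal.
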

 \begin{proof}
 1)  Firstly $x\circ_my=xy_r=mx_ry_r=x_ly_lm\in N_m$; if $x=x_lm=x_l'm$, $y=my_r=my_r'$, then $x_lmy_r=xy_r=x_l'my_r=x_l'y=x_l'my_r'$. Secondly, if $z=z_lm=mz_r\in N_m$, then $(x\circ_my)\circ_m z=(x_ly_lm) \circ_m z=x_ly_lz_lm=x\circ_m (y\circ_m z)$. Thirdly, $x\circ_m m=x=m\circ_m x$. \\
 2)  If $x\in L_m^N\cap R_m^N=J_m^N \cap Nm\cap mN$,  in other words,  $NxN=NmN$, $x=n_1m=mn_2$. Hence $x\in N_m$,  $Nm=Nn_1m$, $mN=mn_2N$,  and then $m=n_1'n_1m=mn_2n_2'$. Let $y=n_1'm=n_1'n_1'n_1m$. Then $Ny=Nn_1'n_1'n_1m \subseteq Nn_1m=Nm$. So $y\in L^N_m$, and $NyN=NmN$.  Moreover $yn_2=n_1'mn_2=n_1'n_1m=m$. Hence $Nyn_2N=NmN=NyN$. By  Lmm.\ref{lm} (1), $mN=yn_2N=yN$, $y\in R_m^N$.  Finally $y\in L_m^N\cap R_m^N$, $x\circ_m y=n_1n_1'm=m=yn_2=y\circ_m x$, and $x$ is a unit in $N_m$. Conversely,  if $x=x_lm=mx_r, y=y_lm=my_r$, and $x\circ_m y=m =y\circ_m x$. Then $m=x_ly_lm=mx_ry_r=y_lx_lm=my_rx_r$. So $mN=mx_ry_rN=mx_rN=xN$, $Nm= Ny_lx_lm=Nx_lm=Nx$.  Therefore $x\in L_m^N\cap R_m^N$. \\
 3) (a)  Firstly   if $x=x_lm=x_l'm\in L_m^N$, $g=mg_r=mg_r'\in G^N_m$,  then $x\circ_mg=x_lmg_r=xg_r =x_l'mg_r=x_l'g=x_l'mg_r'$.   So $x\circ_mg$ only depends on $x$ and $g$. The similar result also holds for $g\circ_m y$.    Secondly $N(x\circ_m g)=Nx_lmg_r=Nxg_r=Nmg_r=Ng=Nm$, so $x\circ_m g \in L^N_m$. Similarly $g\circ_m y \in R^N_m$. \\
 (b) Thirdly, for $g=g_lm=mg_r\in G_m^N$,  $x=x_lm\in  L_m^N$,  $m=nx$,  if $x\circ_m g=x$, then $x_lmg_r=x$. Hence $g=m\circ_m g=nxg_r=nx_lmg_r=nx=m$. So $G_m^N$ acts freely on $L_m^N$.  By duality,  $R_m^N$  is a free $G_m^N$-set as well. \\
 4) Let $x=x_lm$, $y=y_lm$. If $x\circ_mG_m^N =y\circ_mG_m^N$, then $x=y\circ_m g$, $y=x\circ_m h $, with $g=g_lm=mg_r$, $h=h_lm=mh_r$. Hence $xN=y_lmg_rN=y_lgN=y_lmN= yN$.  Conversely, if $xN=yN$, then $xg_r=y$, $x=yh_r$, and $x=xg_rh_r$.  Let $g=mg_r$. Then $Ng=Nmg_r=Nxg_r=Ny=Nm$, $g\in L_m^N$; then $NgN=NmN$, and $g\in mN$, by Lmm.\ref{lm}(1), $g\in J_m^N \cap mN=R_m^N$. Finally $g\in G^N_m$, and $x\circ_m g=x\circ_m mg_r=xg_r=y$,   so $x,y$ lie in the same orbit.\\
 5)  (a) If $x\in J_m^N$, then $NxN=NmN$, and $x=n_1mn_2$. Let $x_1=n_1m$, $x_2=mn_2$. Then $x_1\circ_mx_2=x$. Moreover, $NmN\supseteq Nx_1N\supseteq NxN=NmN$. Hence $x_1\in J_m^N\cap Nm=L_m^N$. Similarly, $x_2\in J_m^N \cap mN$.  Conversely, if $x=x_1\circ_m x_2\in L_m^N\circ_m R_m^N$, then $NxN=Nx_1\circ_m x_2 N=Nm\circ_mmN=NmN$, $x\in J_m^N$.\\
 (b) Clearly $J_m^N= \cup_{i, j=1}^{s^N_m, t_m^N} x_i \circ_mG^N_m\circ_m y_j$.  If $x=x_i\circ_m g\circ_my_j=x_{i'}\circ_m g'\circ_my_{j'}$, then $Nx_iN=Nx_{i'}N$.  Then $xN=x_i\circ_m g\circ_my_jN=x_i\circ_m g\circ_mmN=x_i\circ_m gN=x_i\circ_m mN=x_iN$. Hence  $x_iN=x_{i'}N$, which implies $x_i\mathcal{R}_N x_{i'}$. By (4), $x_i$ and $x_{i'}$ are in the same $G_m^N$-orbit. Therefore $x_i=x_{i'}$. By duality, $y_j=y_{j'}$.

It reduces  to show  $g=g'$.  We shall apply the proof of the next (6).  Let $x_i=x_{il} m$, $y_j=my_{jr}$, $g=g_lm=mg_r$, $g'=g'_lm=mg'_r$. Set $m_1=x_i\circ_m g\circ_m y_j=x_i\circ_m g'\circ_m y_j$. Hence $m_1=x_{il}gy_{jr}=x_{il}g_lmy_{jr} =x_{il}g'_lmy_{jr}$ implies $x_{il}g_lm=x_{il}g'_lm$ by the similar arguments of (I)---(IV).  In other words, $x_i \circ_m g=x_i\circ_m g'$. Since the action of $G_m^N$ on $L_m^N$ is free, $g=g'$.     \\
(c) It is a consequence of (b).\\
(d)  If $x_i \in  N_m$, then $x_i \in mN\cap Nm\cap L_m^N= mN\cap Nm\cap J_m^N = L_m^N\cap R_m^N=G_m^N$.  At the same time,  $x_i \in Nm$, $x_i \notin mN$ iff  $x_i \notin N_m$. The second statement also holds similarly.\\
(f) By Lmm.\ref{lm}(2), $L_m^N \cap N_m=L_m^N\cap mN\cap Nm=J_m^N\cap mN\cap Nm=J_m^N\cap N_m=R_m^N\cap N_m=L_m^N \cap R_m^N=G^N_m$.   \\
(g)  It is a consequence of (f).\\
(6)  (a)   If  we write $m_1=n^{(12)}_lm_2n^{(12)}_r$, $m_2=n^{(21)}_lm_1n^{(21)}_r=n^{(21)}_ln^{(12)}_lm_2n^{(12)}_rn^{(21)}_r$, then $Nm_2 =Nn^{(21)}_ln^{(12)}_lm_2n^{(12)}_rn^{(21)}_r$. Hence $Nm_2 \supseteq Nn^{(21)}_ln^{(12)}_lm_2$, and $|Nm_2 |= |Nn^{(21)}_ln^{(12)}_lm_2n^{(12)}_rn^{(21)}_r| \leq |Nn^{(21)}_ln^{(12)}_lm_2  |$, so $Nm_2 = Nn^{(21)}_ln^{(12)}_lm_2$.  Moreover $Nm_2 =Nn^{(21)}_ln^{(12)}_lm_2n^{(12)}_rn^{(21)}_r=Nn^{(21)}_lm_1n^{(21)}_r \subseteq
 Nm_1n^{(21)}_r \subseteq Nm_2n^{(12)}_rn^{(21)}_r$.  Since $|Nm_2|\geq  |Nm_2n^{(12)}_rn^{(21)}_r|$, $Nm_2= Nm_2n^{(12)}_rn^{(21)}_r=Nm_1n^{(21)}_r$.  Similarly, $Nm_1= Nn^{(12)}_ln^{(21)}_lm_1= Nm_1n^{(21)}_rn^{(12)}_r=Nm_2n^{(12)}_r$. Therefore $|Nm_1|=|Nm_2n^{(12)}_r|\leq |Nm_2| =|Nm_1n^{(21)}_r| \leq|Nm_1|$.   Then   the  map $\varphi_l:
 Nm_1=Nm_2n^{(12)}_r \longrightarrow Nm_2=Nm_2n^{(12)}_rn^{(21)}_r; nm_2n^{(12)}_r \longrightarrow nm_2n^{(12)}_rn^{(21)}_r$ is a  bijective left $N$-map. Similarly, $\psi_l:   Nm_2=Nm_1n^{(21)}_r \longrightarrow Nm_1= Nm_1n^{(21)}_rn^{(12)}_r; nm_1n^{(21)}_r\longrightarrow nm_1n^{(21)}_rn^{(12)}_r$, gives another  bijective left $N$-map.  \\
 (b)  Dually, $\varphi_r: m_1N \longrightarrow m_2N; m_1n \longrightarrow n^{(21)}_l m_1n$  gives  a bijective right $N$-map,  and $\psi_r:   m_2N \longrightarrow m_1N; m_2n\longrightarrow n^{(12)}_lm_2n$, gives another  bijective right $N$-map. \\
 (c) As a consequence of (a) (b), we know that (I)  $n_1m_1n^{(21)}_r=n_2m_1n^{(21)}_r$ implies $n_1m_1=n_2m_1$, (II) $n_1m_2n^{(12)}_r=n_2m_2n^{(12)}_r$ implies $n_1m_2
=n_2m_2$, (III)  $   n^{(21)}_l m_1n_1=n^{(21)}_l m_1n_2$ implies $m_1n_1=m_1n_2$, (IV) $n^{(21)}_lm_2n_1=n^{(21)}_lm_2n_2$ implies $m_2n_1=m_2n_1$.
 Then let us define $\varphi: N_{m_1} \longrightarrow N_{m_2}; x\longmapsto n^{(21)}_lxn^{(21)}_r$, and $\psi: N_{m_2} \longrightarrow N_{m_1}; y\longmapsto n^{(12)}_lyn^{(12)}_r$. Firstly,   we verify that both maps are well-defined.  If $x=x_lm_1=m_1x_r$, then $n^{(21)}_lxn^{(21)}_r=n^{(21)}_lx_lm_1n^{(21)}_r=\varphi_{l}(n^{(21)}_lx_lm_1)\in Nm_2$, and $n^{(21)}_lxn^{(21)}_r=\varphi_r(m_1x_rn^{(21)}_r)\in m_2N$. Hence $\varphi(x)\in N_{m_2}$. Similarly, $ \psi(y)\in N_{m_1}$.   Secondly, $\varphi(m_1)=m_2$, $\psi(m_2)=m_1$. Thirdly, for $x=x_lm_1=m_1x_r$, $z=z_lm_1=m_1z_r$, $\varphi(x\circ_{m_1} z)=\varphi(x_lm_1z_r)=n^{(21)}_lx_lm_1z_rn^{(21)}_r$;  $\varphi(x)\circ_{m_2}\varphi( z)=(n^{(21)}_lxn^{(21)}_r) \circ_{m_2}(n^{(21)}_lzn^{(21)}_r)  $.
Since  $Nm_2=Nm_1 n^{(21)}_r$  and  $m_2N=n^{(21)}_lm_1N $, assume $ m_1n^{(21)}_r=n'_lm_2=n'_ln^{(21)}_lm_1n^{(21)}_r$ and $n^{(21)}_lm_1=m_2n'_r=n^{(21)}_lm_1n^{(21)}_rn'_r$.  By the above (I)(III), $m_1=n'_ln^{(21)}_lm_1=m_1n^{(21)}_rn'_r$.
 Then  $ \varphi(x)=n^{(21)}_lxn^{(21)}_r=n^{(21)}_lx_lm_1n^{(21)}_r=n^{(21)}_lx_ln'_lm_2$, and $ \varphi(z)=n^{(21)}_lzn^{(21)}_r=n^{(21)}_lm_1z_rn^{(21)}_r=m_2n'_rz_rn^{(21)}_r$. So $\varphi(x)\circ_{m_2} \varphi(z)=n^{(21)}_lx_ln'_lm_2n'_rz_rn^{(21)}_r=n^{(21)}_lx_ln'_ln^{(21)}_lm_1n^{(21)}_rn'_rz_rn^{(21)}_r=n^{(21)}_lx_lm_1z_rn^{(21)}_r=\varphi(x\circ_{m_1} z)$.
  Fourthly, since $\varphi(x)=\varphi_r\circ\varphi_l(x)$, $\varphi$ is injective. Dually, $\psi$ is also an injective monoid morphism.  Hence  $|N_{m_1}|=|N_{m_2}|$, and $\varphi$ is also surjective.\\
 (d) It is a consequence of   (c).
   \end{proof}
The  congruences on  monoids are very complicated.  For examples, one can see    \cite[Chapter 10]{CP2},   \cite{HoLa}, \cite{Na}, \cite{PaPe}, \cite{Pet} for detailed discussions.     Here we shall state a simple  result  in an   explicit form, only for later use.
  Let $G_m^Nm^{[-1]}=\{ n\in N \mid nm\in G_m^N\}$, $m^{[-1]}G_m^N=\{n\in N\mid mn\in G_m^N\}$, $L_m^Nm^{[-1]}=\{ n\in N \mid nm\in L_m^N\}$, $m^{[-1]} R_m^N=\{ n\in N \mid mn \in R_m^N\}$.   Let $I^{L}_1=\{ n\in N \mid nm\notin L_m^N\}$, $I^{R}_2=\{ n\in N \mid mn\notin R_m^N\}$.

  Assume $G_m^N=\{ g_1=m, \cdots, g_p\}$.     Let $S^l(x_i \circ_m g_i, x_j \circ_m g_j)=\{ n \in N \mid n[x_j\circ_m g_j]=nx_j\circ_m g_j=x_i\circ_m g_i\}$, $T^{r}(g_i\circ_m y_i,  g_j\circ_m y_j)=\{ n \in N \mid g_i\circ_m y_i n=g_j\circ_m y_j\}$. In particular,   $S^l( g_i,  g_j)=\{ n \in N \mid ng_j= g_i\}$, $T^{r}(g_i,  g_j)=\{ n \in N \mid g_in=g_j\}$.
      \begin{lemma}\label{duallity}
    \begin{itemize}
    \item[(1)] $N_1=G_m^Nm^{[-1]}$, $N_2=m^{[-1]}G_m^N$ are submonoids of $N$.
    \item[(2)] $I^{L}_1$ is a left $N$-set and right $N_1$-set, $I^{R}_2$ is a right $N$-set and left $N_2$-set.
       \item[(3)] $L_m^Nm^{[-1]}$ is a right $N_1$-set, $m^{[-1]} R_m^N$ is a left $N_2$-set.
    \item[(4)]  $N_1 =\sqcup_{i=1}^{k} S^l( g_i, g_1)$, $N_2=\sqcup_{j=1}^{k} T^r(g_1, g_j)$.
              \item[(5)] $S^l(x_i\circ_m g_i, x_j\circ_m g_j) \neq \emptyset$,   $T^r(g_i\circ_my_i, g_j\circ_my_j)  \neq \emptyset$. In particular, $S^l( g_i, g_j)\neq \emptyset $,  $T^{r}(g_i,  g_j)\neq \emptyset $.
                \item[(6)] $S^l(x_i\circ_m g_i, x_j\circ_m g_j)S^l(x_j\circ_mg_j, x_k\circ_m g_k)\subseteq S^l(x_i\circ_m g_i,x_k\circ_m g_k)$,  $T^r(h_i\circ_my_i, h_j\circ_my_j) T^r(h_j\circ_my_j, h_k\circ_my_k)\subseteq T^r(h_i\circ_my_i,h_k\circ_m y_k)$.
       \end{itemize}
    \end{lemma}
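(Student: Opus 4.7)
The plan is to verify the six assertions by direct manipulation of the defining equations, with the central tool being that $G_m^N$ is a group under $\circ_m$ with identity $m$ (by Lemma \ref{BSteinberg}(2)). Two observations will do most of the work: each $g\in G_m^N$ admits presentations $g=g_l m=mg_r$ with $g_l,g_r\in N$, and its $\circ_m$-inverse $g^{-1}\in G_m^N$ furnishes $h_l,h_r\in N$ with $gh_r=m$ and $h_l g=m$ in $M$; moreover, for $g=mg_r\in G_m^N$ and $n\in N$ one has $ng=nm\cdot g_r$, which allows passage between ordinary products in $M$ and $\circ_m$-translates of $nm$.

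For (1), the identity belongs to $N_1$ since $1\cdot m=m\in G_m^N$; given $n_1,n_2\in N_1$, I would write $n_2m=mh_r$, whence $n_1n_2m=(n_1m)h_r=(n_1m)\circ_m(n_2m)\in G_m^N$ because $G_m^N$ is a group. The claim for $N_2$ is dual. For (3), writing $n_1m=mh_r$ again, we have $nn_1m=nmh_r$, and combining $N(nm)=Nm$ with $N(mh_r)=N(n_1m)=Nm$ gives $N(nn_1m)=(Nm)h_r=N(mh_r)=Nm$, so $nn_1\in L_m^Nm^{[-1]}$; the statement for $m^{[-1]}R_m^N$ is dual.

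For (2), left $N$-stability of $I^L_1$ is the contrapositive of the implication $nn'm\in L_m^N\Rightarrow n'm\in L_m^N$: if $\alpha nn'm=m$ then $m\in Nn'm$, forcing $n'm\in L_m^N$. For right $N_1$-stability I would invoke the $\circ_m$-inverse $(n_1m)^{-1}=mh_r$ of $n_1m\in G_m^N$, which satisfies $n_1mh_r=m$, so $nm=nn_1mh_r$; if $nn_1m\in L_m^N$, then $N(nm)=N(nn_1m)h_r=(Nm)h_r=N(mh_r)=Nm$, hence $nm\in L_m^N$, and contrapositively $n\in I^L_1,n_1\in N_1$ imply $nn_1\in I^L_1$. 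The case of $I^R_2$ is dual.

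Parts (4)--(6) are bookkeeping. In (4), $N_1=\{n\in N:nm\in G_m^N\}$ decomposes as the disjoint union over the value of $nm$ in $G_m^N=\{g_1,\dots,g_p\}$, giving exactly $\bigsqcup_{i=1}^{p} S^l(g_i,g_1)$ (the upper index $k$ in the statement appears to be a typographical slip for $p$), and the dual argument handles $N_2$. Part (5) reduces to the fact, drawn from Lemma \ref{BSteinberg}(3)(a), that $x_i\circ_m g_i$ and $x_j\circ_m g_j$ both lie in $L_m^N$ and hence each is an $N$-multiple of the other. Part (6) is immediate by chaining the two defining equations. The main obstacle throughout is notational: one must keep the left and right presentations of elements of $G_m^N$ synchronised, particularly in~(2), where cancellation must occur on the side opposite to the one witnessing membership in $L_m^N$.
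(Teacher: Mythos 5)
Your proposal is correct and takes essentially the same route as the paper: direct coset manipulations built on the group structure of $(G_m^N,\circ_m)$ with identity $g_1=m$, with (4) obtained by partitioning $N_1$ according to the value of $nm=ng_1$ and (5)--(6) by the same one-line arguments. The only differences are cosmetic — you spell out the cancellation via the $\circ_m$-inverse that the paper leaves implicit in parts (2)--(3), and your reading of the upper index $k$ as $p=|G_m^N|$ agrees with the paper's convention $G_m^N=\{g_1=m,\dots,g_p\}$.
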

\begin{proof}
(1)  If $n_1, n_2\in N_1$, then $n_1m \circ_m n_2m=n_1n_2m$. Hence $n_1n_2\in N_1$, and $1\in N_1$, so $N_1$ is a submonoid of $N$. Dually, $N_2$ is also a submonoid of $N$.\\
(2) If $n_1\in I^{L}_1$, and $nn_1 \notin I^{L}_1$, then $nn_1 \in  L_m^Nm^{[-1]}$, which implies $Nnn_1m=Nm$. Hence $Nn_1m=Nm$, $n_1\in L_m^Nm^{[-1]}$, contradicting to $n_1\in I^{L}_1$. If $n_1' \in N_1$, then $n_1'm \in G_m^N$. If  $n_1n_1'm\in L_m^N$, then $n_1m\in L_m^N$, contradicting to $n_1\in I^{L}_1$. Dually, the second statement also holds.\\
(3) If $n_1\in L_m^N m^{[-1]}$, $n_1'\in N_1$, then $n_1'm=g\in G_m^N$, so $n_1n_1'm= n_1m\circ_m g\in  L_m^N$, $n_1n_1' \in L_m^N m^{[-1]}$. Dually, $T^r(g_i\circ_my_i, g_j\circ_my_j)  \neq \emptyset$.  \\
(3)  Since $Ng_i=Nm=Ng_j$, $S^l( g_i, g_j)\neq \emptyset $. Similarly, $T^{r}(g_i,  g_j)\neq \emptyset $.\\
(4) a)   For $n  \in S^l( g_i, g_j)$,
          $n g_j=g_i$.  Hence $ng_j\circ_m g_i^{-1}=g_1$, $ng_1=g_i\circ_m g_j^{-1}$, which mean $n \in S^l(g_i\circ_m g_j^{-1}, g_1) $, and $n\in  S^l(g_1, g_j\circ_m g_i^{-1}) $. The converse also holds.  \\
 b)   If $n\in T^r( g_i, g_j)$, then $g_in =g_j$, which is equivalent to $g_1n=g_i^{-1} \circ_m g_j$, and $g_j^{-1}\circ_m g_i n=g_1$. \\
 (5) Since $ Nx_i \circ_m g_i=Nx_j \circ_m g_j=Nm$, there  exists $n\in N$, such that $n[x_j \circ_m g_j]=x_i \circ_m g_i$. Dually, the second statement also holds.\\
  (6) For $n_1  \in S^l(x_i\circ_m g_i, x_j\circ_m g_j), n_2\in S^l(x_j\circ_m g_j, x_k\circ_m g_k)$, we have $n_1n_2 x_k\circ_m g_k=n_1x_j\circ_m g_j=x_i\circ_m g_i$, so $n_1 n_2 \in S^l(x_i\circ_m g_i, x_k\circ_m g_k)$. Dually,  if
  $n_1' \in T^r(h_i\circ_my_i, h_j\circ_my_j) $,   $n_2'\in T^r(h_j\circ_my_j, h_k\circ_my_k) $, then $[h_i\circ_my_i]n_1'n_2' =[h_j\circ_my_j]n_2'=h_k\circ_my_k$. Hence $n_1'n_2'\in T^r(h_i\circ_my_i, h_k\circ_my_k)$.
\end{proof}

Like the exercise 1.10 in \cite[p.14]{Stein}, we have:
  \begin{lemma}\label{tworegular}
Let  $n_1, n_2\in N$ be two regular elements.
 \begin{itemize}
 \item[(1)]  $n_1 \mathcal{L}_N n_2$ iff $n_1 \mathcal{L} n_2$;
   \item[(2)]   $n_1 \mathcal{R}_N n_2$ iff $n_1 \mathcal{R} n_2$;
    \item[(3)] $n_1 \mathcal{R}_N n_2$ and $n_1 \mathcal{L}_N n_2$ iff $n_1 \mathcal{R} n_2$ and $n_1 \mathcal{L} n_2$.
         \end{itemize}
  \end{lemma}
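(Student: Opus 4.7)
My plan is to split the proof into the two implications of (1) and (2) separately, with (3) falling out as the conjunction. The ``only if'' directions of (1) and (2) are almost immediate: since $1\in N\subseteq M$, an equality $Nn_1=Nn_2$ forces $n_i\in Nn_j\subseteq Mn_j$, hence $Mn_1=Mn_2$; and likewise for right ideals with $n_1N=n_2N$. So the whole content of the lemma lies in the converses to (1) and (2).

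For the ``if'' direction in (1), I will exploit regularity in $N$: fix $x_1,x_2\in N$ with $n_i=n_ix_in_i$. Assuming $n_1\mathcal{L}n_2$ in $M$, write $n_2=mn_1$ and $n_1=m'n_2$ with $m,m'\in M$. The key move is to substitute the regularity relation and then regroup so that the unknown $M$-factor collapses into an $N$-element:
\[
n_2 \;=\; mn_1 \;=\; m(n_1x_1n_1) \;=\; (mn_1)(x_1n_1) \;=\; (n_2x_1)\,n_1 \;\in\; Nn_1,
\]
because $n_2x_1\in N$. Symmetrically,
\[
n_1 \;=\; m'(n_2x_2n_2) \;=\; (m'n_2x_2)\,n_2 \;=\; (n_1x_2)\,n_2 \;\in\; Nn_2,
\]
yielding $Nn_1=Nn_2$.

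Part (2) is handled by the dual of the same device, applied on the right. Starting from $n_2=n_1m'$ and substituting $n_1=n_1x_1n_1$,
\[
n_2 \;=\; (n_1x_1n_1)m' \;=\; n_1x_1(n_1m') \;=\; n_1(x_1n_2) \;\in\; n_1N,
\]
and symmetrically $n_1\in n_2N$; so $n_1N=n_2N$. Part (3) is then just (1) together with (2).

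The only genuine obstacle is noticing the regrouping $m(n_1x_1n_1)=(mn_1)(x_1n_1)$, which turns the opaque $M$-coefficient $mn_1$ into the concrete $N$-element $n_2$; once one sees this absorption trick, both regularities of $n_1,n_2$ in $N$ suffice, and no further input beyond the definitions of $\mathcal{L}_N,\mathcal{R}_N,\mathcal{L},\mathcal{R}$ is required.
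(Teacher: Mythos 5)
Your proof is correct: the only-if directions are immediate from $1\in N\subseteq M$, and your absorption computation $n_2=mn_1=m(n_1x_1n_1)=(mn_1)(x_1n_1)=(n_2x_1)n_1$ with $n_2x_1\in N$ (together with its mirror images) does give $Nn_1=Nn_2$ and $n_1N=n_2N$, with (3) following as the conjunction; note that, exactly as in the paper, you must read ``regular'' as regular \emph{in} $N$ (your $x_i$ lie in $N$), since with inverses only in $M$ the factors $n_2x_1$, $x_1n_2$ need not land in $N$. The paper reaches the same conclusion by a slightly different route: it first replaces $n_1,n_2$ by idempotents $e,f\in E(N)$ with $n_1\mathcal{L}_N e$, $n_2\mathcal{L}_N f$ (this step already consumes the $N$-regularity, via $e=x_1n_1$, $f=x_2n_2$), then uses $n_1\mathcal{L}n_2$ in $M$ to get $e\mathcal{L}f$, writes $e=m_1f$, $f=m_2e$, and derives the classical identities $ef=e$, $fe=f$ for $\mathcal{L}$-related idempotents; since $e,f\in N$ these equalities yield $Ne=Nf$ at once. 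Your version skips the idempotent detour and handles $n_1,n_2$ symmetrically in one line each, which is arguably more elementary; the paper's version factors the argument through the reduction to idempotents, the form in which this fact is usually quoted (Steinberg, Exercise 1.10) and which fits the way idempotents are exploited elsewhere in that section. Logically both proofs hinge on the same trick of absorbing the unknown $M$-coefficient into an element of $N$, so the difference is one of packaging rather than substance.
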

  \begin{proof}
(1) Here we only show   the `if' part.  Since $n_1$, $n_2$ both are regular elements,  $n_1 \mathcal{L}_N e$, $n_2\mathcal{L}_N f$, for some $e, f\in E(N)$.    Hence $Me=Mf$,   $e=m_1f$, $f=m_2e$.  Then  $ef=m_1ff=m_1f=e$, $fe=m_2ee=f$. So  $efe=ef= e$, and $fef=fe=f$. It follows that  $Ne=Nefe=Nef \subseteq Nf= Nfe \subseteq Ne$, which implies that  $n_1 \mathcal{L}_N e\mathcal{L}_N f  \mathcal{L}_N  n_2$.   By duality, we can  show the part (2) similarly.   Part (3) is the consequence of parts (1)(2).
   \end{proof}
   \subsection{Rees quotient}\label{Reesquo}
  For $m\in M$, let $J^N(m)=NmN$, $I^N(m)=J^N(m) \setminus  J^N_m$. Then $I^N(m)$ is an $N-N$ bi-set.  The vector space $\mathbb{C}[J_m^N]$ can be an $N-N$ bimodule by giving the following actions:
\[ n\odot_l x=\left\{\begin{array}{lr}
nx, & \textrm{  if } nx\in J_m^N\\
0,& \textrm{ otherwise}\end{array}  \right. \quad\quad  y\odot_r n=\left\{\begin{array}{lr}
yn, & \textrm{  if } yn\in J_m^N\\
0,& \textrm{ otherwise}\end{array}  \right.   \]
For $n_1, n_2\in N$, $x\in J_m^N$,  $Nn_1n_2xN=NxN= NmN$ implies  $Nn_2xN=NmN$, i.e. $n_2x\in J_m^N$. Hence $\odot_l$ is well-defined. Similarly, $\odot_r$ is also well-defined.

Likewise  $\mathbb{C}[L_m^N]$   has  a left $N$-module structure by giving the action: $n\odot_l x=\left\{\begin{array}{lr}
nx, & \textrm{  if } nx\in L_m^N\\
0,& \textrm{ otherwise}\end{array}  \right.$, and  $\mathbb{C}[R_m^N]$  has  a  right $N$-module structure by giving the action  $y\odot_r n=\left\{\begin{array}{lr}
yn, & \textrm{  if } yn\in R_m^N\\
0,& \textrm{ otherwise}\end{array}  \right.$.
For $n_1, n_2\in N$, $x\in L_m^N$, $Nn_1n_2x=Nx= Nm$ implies  $Nn_2x=Nm$. Hence $\odot_l$ is well-defined. Similarly, $\odot_r$ is well-defined.
\begin{lemma}\label{leftrightm}
\begin{itemize}
\item[(1)] As left $N$-modules, $\mathbb{C}[J_m^N] \simeq t_m^N \mathbb{C}[L_m^N]$.
\item[(2)]  As right $N$-modules, $\mathbb{C}[J_m^N] \simeq s_m^N \mathbb{C}[R_m^N]$.
\end{itemize}
\end{lemma}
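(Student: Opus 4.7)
The plan is to use the decomposition $J_m^N=\sqcup_{j=1}^{t_m^N} L_m^N\circ_m y_j$ from Lemma \ref{BSteinberg}(5)(b) to partition $J_m^N$ into $t_m^N$ pieces $J_m^N(j):=L_m^N\circ_m y_j$. For each $j$, define $\phi_j:L_m^N\to J_m^N(j)$ by $\phi_j(l)=l\circ_m y_j$. Concretely, writing $l=l_lm$ and $y_j=my_{jr}$, this is $\phi_j(l)=l_lmy_{jr}=ly_{jr}=l_ly_j$, which is independent of the chosen decompositions. By Lemma \ref{BSteinberg}(5)(b), $\phi_j$ is a bijection onto $J_m^N(j)$, and extends to a $\C$-linear isomorphism $\mathbb{C}[L_m^N]\to\mathbb{C}[J_m^N(j)]$. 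It then suffices to verify: (i) each $J_m^N(j)$ is stable under the truncated action $\odot_l$ on $\mathbb{C}[J_m^N]$, so that $\mathbb{C}[J_m^N]=\oplus_{j=1}^{t_m^N}\mathbb{C}[J_m^N(j)]$ as left $N$-modules, and (ii) $\phi_j$ intertwines $\odot_l$.

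Both (i) and (ii) reduce to the same key claim: for $l\in L_m^N$ and $n\in N$,
\[
nl\in L_m^N \quad\Longleftrightarrow\quad n\phi_j(l)\in J_m^N.
\]
The forward direction is immediate: if $nl\in L_m^N$, then $n\phi_j(l)=(nl_l)y_j=\phi_j(nl)\in J_m^N(j)\subseteq J_m^N$. For the converse, assume $n\phi_j(l)=(nl_l)y_j\in J_m^N$, i.e., $N(nl_l)y_jN=Ny_jN=NmN$. Applying the first equivalence of Lemma \ref{lm}(1) to the element $nl_l\in N$ and $y_j\in M$, we obtain $Ny_j=N(nl_l)y_j$. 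Since $y_j\in R_m^N\subseteq mN$, we have $y_jN=mN$, so there exists $z\in N$ with $y_jz=m$. Writing $y_j=n'(nl_l)y_j$ for some $n'\in N$ and right-multiplying by $z$ yields $m=n'(nl_l)m$, hence $Nm\subseteq N(nl_l)m=Nnl\subseteq Nl=Nm$, so $nl\in L_m^N$.

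Once the key claim is established, (i) and (ii) follow at once. Indeed, for $x=\phi_j(l)\in J_m^N(j)$ with $nx\in J_m^N$, the claim gives $nl\in L_m^N$, and then $nx=n\phi_j(l)=\phi_j(nl)\in J_m^N(j)$, proving stability. The compatibility $\phi_j(n\odot_l l)=n\odot_l\phi_j(l)$ follows case-by-case from the same equivalence. Summing over $j$ yields $\mathbb{C}[J_m^N]\simeq t_m^N\,\mathbb{C}[L_m^N]$ as left $N$-modules, proving (1). Part (2) is entirely dual: use the decomposition $J_m^N=\sqcup_{i=1}^{s_m^N}x_i\circ_m R_m^N$ and maps $\psi_i:R_m^N\to J_m^N$, $r\mapsto x_i\circ_m r$, applying the \emph{second} equivalence of Lemma \ref{lm}(1) together with $Nx_i=Nm$ in place of $y_jN=mN$.

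The main obstacle is the ($\Leftarrow$) direction of the key equivalence, which is what makes the truncated actions compatible across the bijection. It is exactly the point where the internal structure of $J_m^N$ (via the $\circ_m$ operation and Lemma \ref{lm}(1)) interacts with the global monoid action in a way that neither piece alone sees; once this is pinned down, the rest of the argument is essentially bookkeeping.
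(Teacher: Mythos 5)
Your proof is correct and takes essentially the same route as the paper: decompose $J_m^N=\sqcup_{j=1}^{t_m^N} L_m^N\circ_m y_j$ via Lemma \ref{BSteinberg}(5) and show that $l\mapsto l\circ_m y_j$ intertwines the truncated actions, the crux being exactly your key equivalence $nl\in L_m^N \Leftrightarrow n(l\circ_m y_j)\in J_m^N$. The only (harmless) variation is in the cancellation step of the converse: the paper cancels $y_{jr}$ using the bijectivity of $Nm\to Nmy_{jr}$ coming from Lemma \ref{BSteinberg}(6), while you invoke Lemma \ref{lm}(1) for $y_j$ and then right-multiply by $z$ with $y_jz=m$ (using $y_jN=mN$); both arguments are valid.
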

\begin{proof}
By duality, here  we only  prove the first item. By Lmm.\ref{BSteinberg}(5),  $\iota: \mathbb{C}[J_m^N]\simeq \oplus_{j=1}^{t_m^N} \mathbb{C}[L_m^N\circ_m y_j]$ as vector spaces.   For $x=y\circ_m y_j \in L_m^N\circ_m y_j$, $n\in N$, if $nx\in J_m^N$, then  $NnxN=NmN=NxN$. Hence $Nnx=Nx$, in other words, $Nny_l my_{jr}=Ny_lmy_{jr}$; $y_lmy_{jr}=n'ny_l my_{jr}$. Note that $Nx=Ny\circ_m y_j=Ny_j=Nmy_{jr}$. By  Lmm.\ref{BSteinberg}(6),  $|Nm|=|Nmy_{jr}|$. Hence $Nm\longrightarrow Nmy_{jr}; nm\longrightarrow nmy_{jr}$, is a bijective map.  So $y_lmy_{jr}=n'ny_lm y_{jr}$ implies $y=y_lm=n'ny_lm$. Hence $Ny=Nn'ny_lm\subseteq Nny_lm\subseteq Ny_lm=Ny$, and then $Nny=Ny=Nm$, i.e., $ny\in L_m^N$. In this case, $n\odot_lx=(n\odot_ly)\circ_m y_j$.  If $nx\notin J_m^N$, clearly $ny \notin L_m^N$. Hence  $\iota$ is a left $N$-module isomorphism.  By Lmm.\ref{BSteinberg}(5),  $|L_m^N\circ_m y_j|=|L_m^N|$. Hence $\C[L_m^N] \simeq \C[L_m^N\circ_m y_j]$ as left $N$-modules.
\end{proof}
\begin{remark}
$\mathbb{C}[J^N_m] \simeq \mathbb{C}[L^N_m]\otimes_{\mathbb{C}[G_m^N]} \mathbb{C}[R^N_m]$ as $N-N$-bimodules.
\end{remark}
\begin{proof}
As vector spaces, $\mathbb{C}[J^N_m]  \simeq \oplus_{i=1, j=1}^{s_m^N, t_m^N} \mathbb{C}[x_i\circ_m G_m^N \circ_m y_j]\simeq \oplus_{i=1, j=1}^{s_m^N, t_m^N} \mathbb{C}[x_i\circ_m G_m^N] \otimes_{\mathbb{C}[G_m^N]} \mathbb{C}[ G_m^N\circ_m y_j]\simeq \mathbb{C}[L^N_m]\otimes_{\mathbb{C}[G_m^N]} \mathbb{C}[R^N_m]$.(cf. Lmm.\ref{BSteinberg}(5))  By considering the action of $N$ on left and right sides, we obtain the result.
\end{proof}
Assume now $Nm_1N=Nm_2N$. Keep the notations of the proofs of Lmm. \ref{BSteinberg}(6).
\begin{lemma}\label{isoLR}
Up to the isomorphisms  $\varphi$, $\psi$,  $ \mathbb{C}[L_{m_1}^N] \simeq  \mathbb{C}[L_{m_2}^N] $ as $N-G_{m_i}^N$-bimodules, and $ \mathbb{C}[R_{m_1}^N] \simeq  \mathbb{C}[R_{m_2}^N] $, as $G_{m_i}^N-N$-bimodules.
\end{lemma}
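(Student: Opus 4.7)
The plan is to reuse the four connecting maps already constructed in Lemma \ref{BSteinberg}(6), namely the bijective left $N$-maps $\varphi_l: Nm_1 \to Nm_2$ and $\psi_l: Nm_2 \to Nm_1$, the bijective right $N$-maps $\varphi_r, \psi_r$, and the monoid isomorphism $\varphi: (N_{m_1}, \circ_{m_1}) \to (N_{m_2}, \circ_{m_2})$ which restricts to $G_{m_1}^N \simeq G_{m_2}^N$. The claim will follow once I show that $\varphi_l$ restricts to a bijection $L_{m_1}^N \to L_{m_2}^N$, that the resulting linear map $\mathbb{C}[L_{m_1}^N] \to \mathbb{C}[L_{m_2}^N]$ is $N$-equivariant for the Rees left action, and that it is $G^N$-equivariant via $\varphi$. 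The $R$-statement will follow by the symmetric argument using $\varphi_r$ in place of $\varphi_l$.

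For the first step, recall $\varphi_l$ is explicitly right-multiplication by $n_r^{(21)}$, and from the proof of Lemma \ref{BSteinberg}(6)(a) one has $Nm_1 n_r^{(21)} = Nm_2$. Hence for $x \in L_{m_1}^N$ one gets $N\varphi_l(x) = Nx\, n_r^{(21)} = Nm_1 n_r^{(21)} = Nm_2$, so $\varphi_l(L_{m_1}^N) \subseteq L_{m_2}^N$; applying the same argument to $\psi_l$ and invoking $\psi_l \circ \varphi_l = \mathrm{id}$, $\varphi_l \circ \psi_l = \mathrm{id}$ yields a bijection. For left $N$-compatibility, I would show that for $n\in N$ and $x\in L_{m_1}^N$ the condition $nx \in L_{m_1}^N$ is equivalent to $n\varphi_l(x) = nx\, n_r^{(21)} \in L_{m_2}^N$, which is immediate since the bijection $Nm_1 \leftrightarrow Nm_2$ carries $Nnx$ to $Nnx\, n_r^{(21)}$; when this common condition fails both Rees products are $0$, and when it holds one has $\varphi_l(n\odot_l x) = n\, \varphi_l(x) = n\odot_l \varphi_l(x)$.

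For the right $G^N$-compatibility, I would imitate  the calculation already carried out for $\varphi$ on $N_{m_1}$ in the proof of Lemma \ref{BSteinberg}(6)(c). Writing $x = x_l m_1 \in L_{m_1}^N$ and $g = g_l m_1 = m_1 g_r \in G_{m_1}^N$, one has $\varphi_l(x \circ_{m_1} g) = x_l m_1 g_r n_r^{(21)}$. Using the relations established there, namely $m_1 n_r^{(21)} = n_l' m_2$, $n_l^{(21)} m_1 = m_2 n_r'$ and $m_1 = n_l' n_l^{(21)} m_1$, I can write $\varphi_l(x) = x_l n_l' m_2$ and $\varphi(g) = m_2 n_r' g_r n_r^{(21)}$, whence
\[
\varphi_l(x)\circ_{m_2}\varphi(g) \;=\; x_l n_l' m_2 n_r' g_r n_r^{(21)} \;=\; x_l n_l' n_l^{(21)} m_1 g_r n_r^{(21)} \;=\; x_l m_1 g_r n_r^{(21)},
\]
which coincides with $\varphi_l(x\circ_{m_1} g)$. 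Thus $\varphi_l$ is an $(N,G_{m_i}^N)$-bimodule isomorphism between $\mathbb{C}[L_{m_1}^N]$ and $\mathbb{C}[L_{m_2}^N]$ (with the right action transported along $\varphi$). The second isomorphism $\mathbb{C}[R_{m_1}^N] \simeq \mathbb{C}[R_{m_2}^N]$ is proved by the dual calculation using $\varphi_r$, which amounts to passing to the opposite monoid and repeating the argument verbatim.

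The only real obstacle is bookkeeping with the connecting elements $n_l^{(12)}, n_r^{(12)}, n_l^{(21)}, n_r^{(21)}$ and the auxiliary $n_l', n_r'$ introduced while moving between $\varphi_l$ and $\varphi$; everything else is a direct specialization of Lemma \ref{BSteinberg}. Since these manipulations were already carried out when showing $\varphi$ is a monoid morphism, the present lemma is essentially a refinement of that computation: the key identities $Nm_1 n_r^{(21)} = Nm_2$ and $m_1 = n_l' n_l^{(21)} m_1$ do all the work, and the Rees-action subtlety is handled uniformly by the observation that $\varphi_l$, being a bijection of left $N$-sets $Nm_1 \leftrightarrow Nm_2$, must also carry the $\mathcal{L}_N$-class of $m_1$ onto the $\mathcal{L}_N$-class of $m_2$.
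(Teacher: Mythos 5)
Your argument is correct and follows essentially the same route as the paper's proof: restrict $\varphi_l$ (right multiplication by $n_r^{(21)}$) to the $\mathcal{L}_N$-classes, handle the Rees zero-convention for the left $N$-action, and transport the right $G_{m_i}^N$-action along $\varphi$ using exactly the identities $m_1 n_r^{(21)} = n_l' m_2$, $n_l^{(21)} m_1 = m_2 n_r'$, $m_1 = n_l' n_l^{(21)} m_1$ from the proof of Lemma \ref{BSteinberg}(6)(c), with the dual argument for the $R$-side. One small repair: $\psi_l\circ\varphi_l$ need not be the identity pointwise (the connecting maps are bijections but not necessarily mutual inverses), yet bijectivity of $\varphi_l|_{L_{m_1}^N}$ still follows since $\varphi_l$ and $\psi_l$ are injective and carry $L_{m_1}^N$ into $L_{m_2}^N$ and back between finite sets, which is precisely how the paper argues.
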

\begin{proof}
By duality, we only verify the first statement. \\
(1) For $a\in L_{m_1}^N$, $\varphi_l(a)=an_r^{(21)}$. Then $ N\varphi_l(a)=Nan_r^{(21)}=Nm_1n_r^{(21)}=Nm_2$, which means that $\varphi_l(a) \in L_{m_2}^N$.  Similarly, $\psi_l(L_{m_2}^N) \subseteq L_{m_1}^N$. Since $\varphi_l$, $\psi_l$ both are injective maps, $\varphi_l: \mathbb{C}[L_{m_1}^N]  \longrightarrow \mathbb{C}[L_{m_2}^N]  $ is a bijective linear map.  Moreover for any $n\in N$, $\varphi_l(na)=nan_r^{(21)}=n\varphi_l(a)$, for $na \in L_{m_1}^N$ or not.  Hence $\varphi_l$ is a left  $N$-module isomorphism.\\
(2)  For $g=g_lm_1=m_1g_r\in G_{m_1}^N$, $x=nm_1 \in L_{m_1}^N$,  $\varphi_l(x\circ_{m_1} g)=nm_1g_rn^{(21)}_r=n n'_ln^{(21)}_lm_1 g_r n^{(21)}_r=n n'_l \varphi(g)=   n n'_l m_2 \circ_{m_2}\varphi(g)= nm_1n_{r}^{(21)} \circ_{m_2} \varphi(g)= \varphi_l(x) \circ_{m_2} \varphi(g)$.   Through $\varphi$, we identity $ G_{m_1}^N$ with $ G_{m_2}^N$. Hence $\varphi_l$ also defines a right $G_{m_i}$-module isomorphism.
\end{proof}
\subsection{$(N, K)$-bisets}
Let $N$, $K$ be two submonoids of $M$ with the same identity element. Let us consider $N-K$-biset $M$ by the left $N$ and  right $K$ bi-action.  By abuse of notations, we call $m_1 \mathcal{L}_N\mathcal{R}_K m_2$ or $m_1 \mathcal{J}^{(N,K)} m_2$   if $Nm_1K=Nm_2K$.  Clearly, $ \mathcal{L}_N\mathcal{R}_K$ defines an equivalence relation on $M$.  Let $J_m^{(N,K)}$ denote the set of all elements $m'$ of $M$ such that $m' \mathcal{L}_N\mathcal{R}_K m$.
\begin{lemma}
$G_m^{N\cap K} $ is a subgroup of $G_m^N\cap G_m^K$.
\end{lemma}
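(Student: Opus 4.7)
The plan is to establish the set-theoretic inclusion $G_m^{N\cap K}\subseteq G_m^N\cap G_m^K$ first and then verify that the operation $\circ_m$ is the same on both sides, so that the inclusion is automatically a group homomorphism.

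First I would take $x\in G_m^{N\cap K}=L_m^{N\cap K}\cap R_m^{N\cap K}$. From $(N\cap K)x=(N\cap K)m$ I would write $x=am$ with $a\in N\cap K$ and $m=bx$ with $b\in N\cap K$. Since $a,b\in N$, these two identities give $Nx\subseteq Nm$ and $Nm\subseteq Nx$, so $x\in L_m^N$; and since $a,b\in K$ the same argument gives $x\in L_m^K$. A dual argument applied to $x(N\cap K)=m(N\cap K)$ yields $x\in R_m^N\cap R_m^K$. Together this shows $x\in (L_m^N\cap R_m^N)\cap(L_m^K\cap R_m^K)=G_m^N\cap G_m^K$.

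Next I would check that $G_m^N\cap G_m^K$ is genuinely a group under $\circ_m$, the point being that $\circ_m$ is well-defined independently of which submonoid's decomposition one uses. For $x,y\in M_m:=mM\cap Mm$ with $x=x_lm=mx_r$, $y=y_lm=my_r$, one has $x\circ_m y=x_lmy_r=xy_r=x_ly$, which in particular depends only on the ambient product in $M$ and on any one-sided decomposition. Consequently, whether we compute $x\circ_m y$ using $N$-decompositions or $K$-decompositions we obtain the same element of $M$. Thus $G_m^N$ and $G_m^K$ are both subgroups of the group of units $G_m^M$ of $(M_m,\circ_m)$, and so is their intersection. By Lemma \ref{BSteinberg}(2), $G_m^{N\cap K}$ is already a group under $\circ_m$, so the set-theoretic inclusion established above is automatically a subgroup inclusion.

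The only potential obstacle is the compatibility of $\circ_m$ when passing between the submonoids $N$, $K$, $N\cap K$ and $M$, but this is handled by the argument in the proof of Lemma \ref{BSteinberg}(1) showing that $x\circ_m y$ is independent of the chosen decomposition: if $x=x_lm$ with $x_l\in N$ and $y=my_r^N=my_r^K$ with $y_r^N\in N$, $y_r^K\in K$, then $xy_r^N=x_l y=xy_r^K$, so the operation agrees on $N_m\cap K_m$. With this remark in place the argument above is complete.
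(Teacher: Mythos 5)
Your proof is correct and follows essentially the same route as the paper: the paper deduces $Nm'=N(N\cap K)m'=N(N\cap K)m=Nm$ (and its right-sided dual) directly from $(N\cap K)m'=(N\cap K)m$, which is just the set-level version of your element-level argument with $x=am$, $m=bx$, $a,b\in N\cap K$. The only difference is that you additionally make explicit that $\circ_m$ is independent of which submonoid's decomposition is used, so that the set inclusion is automatically a subgroup inclusion --- a point the paper leaves implicit.
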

\begin{proof}
For $m' \in G_m^{N\cap K} $, $(N\cap K) m'=(N\cap K) m$ and $m'(N\cap K)= m(N\cap K)$. Hence $Nm'=N(N\cap K) m'=N(N\cap K) m=Nm$, $m'N=m'(N\cap K)N =m(N\cap K)N=mN$, so $m'\in G_m^N$, and $G_m^{N\cap K} \subseteq G_m^N$. Similarly, $G_m^{N\cap K} \subseteq G_m^K$.
\end{proof}
\begin{lemma}\label{mLRk}
 $J_m^{(N,K)} =L_m^N \circ_m R_m^K$.
\end{lemma}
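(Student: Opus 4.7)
The plan is to prove the equality $J_m^{(N,K)} = L_m^N \circ_m R_m^K$ by mimicking the proof of Lemma \ref{BSteinberg}(5)(a), which treats the special case $N=K$. Before addressing the two inclusions, I would check that the operator $\circ_m$ extends unambiguously to $L_m^N \times R_m^K$: for $x = x_l m = x_l' m$ with $x_l, x_l' \in N$ and $y = m y_r = m y_r'$ with $y_r, y_r' \in K$, right-multiplying the first equality by $y_r$ and left-multiplying the second by $x_l'$ gives $x_l m y_r = x_l' m y_r'$, so $x \circ_m y := x_l m y_r$ is well defined.

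For the inclusion $L_m^N \circ_m R_m^K \subseteq J_m^{(N,K)}$, I would take $z = x \circ_m y = x_l m y_r$. Since $N x_l m = Nm$ as subsets of $M$, right-multiplying by $y_r$ yields $N x_l m y_r = N m y_r$; since $m y_r K = m K$, left-multiplying by $N$ yields $N m y_r K = N m K$. Therefore $N z K = N x_l m y_r K = N m y_r K = N m K$, so $z \in J_m^{(N,K)}$.

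For the reverse inclusion, given $z \in J_m^{(N,K)}$ I would choose $n_1, n_2 \in N$ and $k_1, k_2 \in K$ so that $z = n_1 m k_1$ and $m = n_2 z k_2 = n_2 n_1 m k_1 k_2$. Setting $x = n_1 m$ and $y = m k_1$ gives $z = x \circ_m y$, so the task reduces to showing $x \in L_m^N$ and $y \in R_m^K$. I would then recycle the finiteness chain used for Lemma \ref{BSteinberg}(6)(a): from $m = n_2 n_1 m k_1 k_2$ one has $N m \subseteq N m k_1 k_2$, while the surjection $n m \mapsto n m k_1 k_2$ from $Nm$ onto $N m k_1 k_2$ gives the reverse cardinality inequality, so $N m = N m k_1 k_2$ by finiteness. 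Substituting into $N m = N n_2 n_1 m k_1 k_2 \subseteq N n_1 m k_1 k_2 \subseteq N m k_1 k_2 = N m$ gives $N m = N n_1 m k_1 k_2$, and a second identical cardinality argument applied to the surjection $N n_1 m \twoheadrightarrow N n_1 m k_1 k_2$ upgrades this to $N m = N n_1 m$, i.e.\ $x \in L_m^N$. The statement $m k_1 K = m K$, i.e.\ $y \in R_m^K$, is obtained by the symmetric right-handed version.

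The main obstacle is conceptually minor: all of the finiteness and cardinality bookkeeping has already been carried out in Lemmas \ref{BSteinberg}(5)(a) and \ref{BSteinberg}(6)(a), and the only genuinely new ingredient is that left action by $N$ and right action by $K$ commute independently of one another, which is precisely what allows both inclusions to pass through verbatim.
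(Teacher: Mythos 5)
Your proposal is correct and follows essentially the same route as the paper: the forward inclusion by the direct computation $NzK = Nx_l m y_r K = NmK$, and the reverse inclusion by writing $z = n_1 m \circ_m m k_1$ and using the finiteness/cardinality chain (as in Lemma \ref{BSteinberg}) to force $Nn_1m = Nm$ and, dually, $mk_1K = mK$. The paper organizes the counting slightly differently (via $|Nm'| = |Nm|$ and $Nm' = Nmk_1$ rather than $Nm = Nmk_1k_2$), but the argument is the same in substance, and your explicit well-definedness check for $\circ_m$ is a harmless addition.
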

\begin{proof}
(i) For $m_1\in L_m^N$, $m_2\in R_m^K$, $Nm_1=Nm$, $m_2K=mK$, and $m_1=nm$, $m_2=mk$. Then $m_1\circ_mm_2=nmk=m_1k=nm_2$. Hence $Nm_1\circ_mm_2K=Nm\circ_mmK=NmK$, $m_1\circ_mm_2\in J_m^{(N,K)}$. \\
 (ii) Conversely, if $m'\in J_m^{(N,K)} $, then $m'=n_1mk_1$, $m=n_2m'k_2$.  Then $Nm'=Nn_1mk_1\subseteq Nmk_1$, $|Nm'| \leq |Nmk_1|\leq |Nm|$. Dually, $|Nm|\leq |Nm'|$. So $|Nm|=|Nm'|$. Hence $|Nm'|=|Nmk_1|$, and then $Nm'=Nmk_1$. Moreover, $|Nm|\geq |Nn_1m| \geq |Nn_1mk_1|=|Nm'|$. Hence $|Nm|=|Nn_1m|$, and then $Nm=Nn_1m$, i.e., $n_1m\in L_m^N$. Similarly, $mk_1\in R_m^K$. Hence $m'=n_1m\circ_m mk_1\in L_m^N \circ_m R_m^K$, $J_m^{(N,K)}  \subseteq L_m^N \circ_m R_m^K$.
\end{proof}
\begin{lemma}\label{eqequal}
If $m\mathcal{L}_N\mathcal{R}_K m'$, then $|Nm|=|Nm'|$, $|mK|=|m'K|$.
\end{lemma}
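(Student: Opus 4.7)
The plan is to extract and symmetrize the size-comparison argument that already appears inside the proof of Lemma \ref{mLRk}. The hypothesis $m\,\mathcal{L}_N\mathcal{R}_K\,m'$ means $NmK=Nm'K$, so we can pick witnesses $n_1,n_2\in N$ and $k_1,k_2\in K$ with $m'=n_1 m k_1$ and $m=n_2 m' k_2$. Everything else will follow from the elementary observation that for any $a\in M$, $n\in N$, $k\in K$ we have $|Nna|\leq |Na|$ and $|akK|\leq |aK|$, since left multiplication by $n$ (resp.\ right multiplication by $k$) sends $Na$ onto $Nna$ (resp.\ $aK$ onto $akK$).

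First I would prove $|Nm|=|Nm'|$. From $m'=n_1 m k_1$ we get $Nm'=Nn_1 m k_1\subseteq Nmk_1$, and the surjection $Nm\twoheadrightarrow Nmk_1,\ xm\mapsto xmk_1$ yields $|Nmk_1|\leq |Nm|$; chaining gives $|Nm'|\leq |Nm|$. Swapping the roles of $m$ and $m'$ with the witness $m=n_2 m' k_2$ gives the reverse inequality $|Nm|\leq |Nm'|$, so the two are equal. Next I would prove $|mK|=|m'K|$ by the mirror argument: $m'K=n_1 m k_1 K\subseteq n_1 mK$ and right multiplication (of $mK$) by elements of $K$ does not enlarge, while left multiplication by $n_1$ yields a surjection $mK\twoheadrightarrow n_1 mK$, so $|m'K|\leq |mK|$; reversing gives equality.

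There is no real obstacle here; the statement is essentially the $(N,K)$-biset version of Lemma \ref{lm}(3) and its proof is contained verbatim in step (ii) of the proof of Lemma \ref{mLRk}. The only thing worth checking carefully is that each surjection-between-cosets used above is genuinely onto, but this is immediate from the definitions $Nmk_1=\{xmk_1\mid xm\in Nm\}$ and $n_1 mK=\{n_1 mk\mid mk\in mK\}$. Thus the proof reduces to two short chains of inequalities together with the symmetry obtained by swapping $m\leftrightarrow m'$.
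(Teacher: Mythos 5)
Your proof is correct and is essentially the paper's own argument: the paper proves this lemma by pointing back to step (ii) of the proof of Lemma \ref{mLRk}, which is exactly the chain $Nm'=Nn_1mk_1\subseteq Nmk_1$ together with the surjectivity of left/right multiplication on cosets and the symmetry in $m\leftrightarrow m'$ that you spell out. No gaps; your treatment of the $|mK|=|m'K|$ half is the straightforward dual, just as intended.
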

\begin{proof}
It follows from the above proof.
\end{proof}
\begin{lemma}\label{LRm1}
If  $x\circ_my=x'\circ_m y'\in J_m^{(N,K)}$, for some $x,x'\in L_m^N $, $y,y'\in R_m^K$, then $x\mathcal{L}_N x'$, $x\mathcal{R}_K x'$, and $y\mathcal{L}_N y'$, $y\mathcal{R}_K y'$.
\end{lemma}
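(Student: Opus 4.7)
Two of the four relations are immediate from the hypotheses. Namely, since $x, x' \in L_m^N$ we have $Nx = Nm = Nx'$, which gives $x \mathcal{L}_N x'$; and since $y, y' \in R_m^K$ we have $yK = mK = y'K$, which gives $y \mathcal{R}_K y'$. So the content of the lemma is the crossed pair $y \mathcal{L}_N y'$ and $x \mathcal{R}_K x'$.

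For these, the plan is to write everything in concrete form and exploit the equality $x \circ_m y = x' \circ_m y'$ as an identity in $M$. Using the definition of $\circ_m$ extended to $L_m^N \circ_m R_m^K$ (as in the proof of Lemma \ref{mLRk}), I would fix $n_1, n_1' \in N$ and $k_1, k_1' \in K$ such that $x = n_1 m$, $x' = n_1' m$, $y = m k_1$, $y' = m k_1'$, so that the hypothesis becomes
\[
n_1 m k_1 \;=\; x \circ_m y \;=\; x' \circ_m y' \;=\; n_1' m k_1'
\]
as an equality in $M$. The two additional ingredients I will use are: from $Nx = Nm$, some $\alpha \in N$ with $\alpha x = m$, i.e.\ $\alpha n_1 m = m$; and from $yK = mK$, some $\beta \in K$ with $y\beta = m$, i.e.\ $m k_1 \beta = m$.

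The argument is then just symmetric left/right cancellation. Left-multiplying the displayed equation by $\alpha$ gives $\alpha n_1 m k_1 = m k_1 = y$ on the left side and $\alpha n_1' m k_1' \in N y'$ on the right, so $y \in N y'$ and hence $Ny \subseteq N y'$; the symmetric use of an analogous $\alpha' \in N$ with $\alpha' x' = m$ yields the reverse inclusion, giving $y \mathcal{L}_N y'$. Dually, right-multiplying by $\beta$ yields $n_1 m k_1 \beta = n_1 m = x$ on the left and $n_1' m k_1' \beta \in x' K$ on the right, hence $x K \subseteq x' K$; a symmetric $\beta' \in K$ with $y'\beta' = m$ closes the reverse inclusion, giving $x \mathcal{R}_K x'$.

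The only step that requires any care is making sure the cancellation elements $\alpha, \alpha' \in N$ and $\beta, \beta' \in K$ really exist from the defining properties of $L_m^N$ and $R_m^K$, and that they act on the correct side so that they consolidate the ``$n_1 m$'' (resp.\ ``$m k_1$'') factors back to $m$; once this bookkeeping is in place, no further monoid-theoretic input is needed and the lemma follows without appeal to the finite-monoid machinery developed earlier.
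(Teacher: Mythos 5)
Your proposal is correct and is essentially the paper's argument in witness form: the paper proves $y\mathcal{L}_N y'$ via the set-level chain $Ny=N(x\circ_m y)=N(x'\circ_m y')=Ny'$ (using $Nx=Nm=Nx'$) and gets $x\mathcal{R}_K x'$ dually, which is exactly what your explicit cancellation elements $\alpha,\alpha'\in N$ and $\beta,\beta'\in K$ accomplish, while the remaining two relations are immediate from $x,x'\in L_m^N$ and $y,y'\in R_m^K$ in both versions. Your existence check for $\alpha,\beta$ is fine since $N$ and $K$ contain the identity, so no gap remains.
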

\begin{proof}
$Ny=Nm \circ_m y=Nx \circ_m y=Nx' \circ_m y'=Nm \circ_m y'=Ny'$. Hence $y\mathcal{L}_N y'$. Since $y,y'\in R_m^K$, $y\mathcal{R}_K y'$. Dually, the results for $x, x'$ also hold.
\end{proof}
 \begin{lemma}\label{lmNK}
 \begin{itemize}
 \item[(1)] Let  $n\in N$, $k\in K$, $m\in M$. Then    $NnmK=NmK$ iff $Nm=Nnm$,  $NmkK=NmN$ iff $mK=mkK$.
  \item[(2)]  $J_m^{(N,K)}\cap Nm=L_m^N$, $J_m^{(N,K)} \cap m K=R_m^K$.
     \end{itemize}
 \end{lemma}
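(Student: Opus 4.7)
The plan is to imitate the proof of Lemma \ref{lm} almost verbatim, upgrading the single-monoid $N$-argument to a bi-action setting with $N$ on the left and $K$ on the right. The two halves of (1) are symmetric, and (2) is an easy consequence of (1), so I would organize the write-up as two short blocks, one per item, and only spell out the left-side argument in each case.

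For (1), I would begin with the nontrivial direction: suppose $NnmK = NmK$. Then there exist $n_1 \in N$, $k_1 \in K$ with $m = n_1 n m k_1$, so $Nm = Nn_1 n m k_1 \subseteq Nnmk_1$. Using the obvious surjection $Nnm \twoheadrightarrow Nnmk_1$ sending $n'nm$ to $n'nmk_1$, I would extract the chain of cardinalities
\[
|Nm| \;\leq\; |Nnmk_1| \;\leq\; |Nnm| \;\leq\; |Nm|,
\]
the last inequality holding because $Nnm \subseteq Nm$. Equality throughout, together with $Nnm \subseteq Nm$, forces $Nm = Nnm$. For the converse, if $Nm = Nnm$, then $NnmK = \bigcup_{k \in K} Nnmk = \bigcup_{k \in K} Nmk = NmK$. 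The second assertion of (1) is the right-sided mirror image and I would just say so, since applying the same argument to the opposed monoid $M^o$ (with $K^o$ playing the role of $N$) yields it immediately.

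For (2), the inclusion $L_m^N \subseteq J_m^{(N,K)} \cap Nm$ is obvious: if $x \in L_m^N$ then $Nx = Nm$, which gives both $x \in Nx = Nm$ and $NxK = NmK$, whence $x \in J_m^{(N,K)}$. Conversely, if $x \in J_m^{(N,K)} \cap Nm$, write $x = nm$ with $n \in N$; then $NnmK = NxK = NmK$, and applying part (1) gives $Nx = Nnm = Nm$, i.e.\ $x \in L_m^N$. The identity $J_m^{(N,K)} \cap mK = R_m^K$ is again the right-sided mirror.

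I do not expect a genuine obstacle: the argument is a routine transcription of Lemma \ref{lm}(1)(2) once one observes that the $K$-factor on the right plays no role in the cardinality inequality, which takes place entirely on the left. The only point to be checked with care is that the set map $Nnm \twoheadrightarrow Nnmk_1$, $n'nm \mapsto n'nmk_1$, is surjective (this is by definition of $Nnmk_1$) so that $|Nnmk_1| \leq |Nnm|$; this is the only place where the finiteness of $M$ is actually used.
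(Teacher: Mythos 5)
Your proposal is correct and follows essentially the same route as the paper: the cardinality chain $|Nm|\leq |Nnmk_1|\leq |Nnm|\leq |Nm|$ for the forward direction of (1), the union-over-$K$ computation for the converse, deduction of (2) from (1), and duality for the right-sided statements. Your explicit remark that the surjection $Nnm\twoheadrightarrow Nnmk_1$ and finiteness justify the middle inequality is exactly the (implicit) point the paper relies on.
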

\begin{proof}
By duality, we only prove the first part of each item. For (1), if $NnmK=NmK$,  then $m=n_1nmk_2$,  for some $ n_1\in N$, $k_2\in K$. Hence $Nm=Nn_1nmk_2\subseteq Nnmk_2$, $|Nm| \leq  |  Nnmk_2| \leq  | Nnm|\leq |Nm|$. Hence $|Nm|=|Nnm|$, and $Nm=Nnm$. Conversely,  if $Nm=Nnm$, then $NnmK=\cup_{k\in K} Nnmk=\cup_{k\in K} Nmk=NmK$. \\
(2)  If $x\in J_m^{(N,K)} \cap Nm$, then $NxK=NmK$, and $x=nm$, then by  (1), $Nm=Nx$  i.e. $x\in L_m^N$.  Conversely, if $x\in L_m^N$, then $Nx=Nm$, $x=nm$, hence $x\in J_m^{(N,K)} \cap Nm$ by (1).
\end{proof}

Let $H_{m}^{(N,K)}$ be the set of all elements $m'$ such that $m'\mathcal{L}_N m$ and $m'\mathcal{R}_K m$.
\begin{lemma}\label{NKresults}
\begin{itemize}
\item[(1)] $(H_{m}^{(N,K)}, \circ_m)$ is a monoid with the identity element $m$.
\item[(2)] $L_m^N$ is a  free left  $H_{m}^{(N,K)}$-set.
\item[(3)] $R_m^K$ is a  free right  $H_{m}^{(N,K)}$-set.
\item[(4)] For $x_1, x_2 \in L_m^N$, then $x_1\mathcal{R}_K x_2$ iff $x_2=x_1\circ_m g$, for some (unique) $g\in H_{m}^{(N,K)}$.
\item[(5)]  For $y_1, y_2 \in R_m^K$, then $y_1\mathcal{L}_N y_2$ iff $y_2=g' \circ_m y_1$, for some (unique) $g'\in H_{m}^{(N,K)}$.
\item[(6)] Assume  $L_m^N=\sqcup_{i=1}^{\alpha^N_m} x_i \circ_m H^{(N, K)}_m$, $R_m^K=\sqcup_{j=1}^{\beta^K_m} H^{(N, K)}_m\circ_m y_j$. Then:
 \begin{itemize}
 \item[(a)] If $x_i \circ_m  g\circ_m y_j = x_k \circ_m  g'\circ_m y_l$, for some above $x_i, x_l$, $y_j, y_l$ and $g,g'\in   H^{(N, K)}_m$, then $x_i=x_k$, $y_j=y_l$, $g=g'$;
 \item[(b)]  $J_m^{(N,K)} =\sqcup_{i, j=1}^{\alpha^N_m, \beta^K_m} x_i \circ_m H^{(N, K)}_m \circ_m y_j$;
 \item[(c)] $|J_m^{(N,K)}|=\alpha^N_m\beta^K_m |H^{(N, K)}_m|$;
 \item[(d)]  $H^{(N, K)}_m=L_m^N \cap R_m^K= L_m^N \cap mK= Nm\cap R_m^K=J_m^{(N,K)} \cap Nm\cap mK$.
\end{itemize}
\end{itemize}
\end{lemma}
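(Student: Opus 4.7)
The plan is to follow the template of Lemma \ref{BSteinberg}, adapted to the two-submonoid setting. For (1), I would define, for $x = x_l m = m x_r$ and $y = y_l m = m y_r$ in $H_m^{(N,K)}$ (with $x_l, y_l \in N$, $x_r, y_r \in K$), the operation $x \circ_m y := x_l m y_r = x y_r = x_l y$. Well-definedness is immediate since different decompositions of $x$ or $y$ yield equal products after left/right multiplication by $x_l$ or $y_r$. Closure in $H_m^{(N,K)}$ follows from $N(x \circ_m y) = N m y_r = N y = Nm$ (using $y \in L_m^N$) and dually $(x \circ_m y)K = x K = m K$; associativity and identity $m$ come from $M$. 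For (2) and (3), I would define the right action on $L_m^N$ by $x \circ_m g := x g_r$ where $g = m g_r$, and the left action on $R_m^K$ dually by $g \circ_m y := g_l y$, checking preservation of the sets from the containments $N(x g_r) = Nmg_r = Ng = Nm$.

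The cancellation underpinning all the freeness statements is the following: for $x \in L_m^N$, pick $n' \in N$ with $n' x = m$ (possible since $m \in Nx$); then $x g_r = x g_r'$ forces $m g_r = n' x g_r = n' x g_r' = m g_r'$, so $g = g'$. The dual argument handles $R_m^K$, and this also yields the uniqueness clauses in (4) and (5).

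The crux is part (4). Given $x_1, x_2 \in L_m^N$ with $x_2 = x_1 k$ for some $k \in K$, I would set $g := mk$ and verify $g \in H_m^{(N,K)}$. Writing $x_i = n_i m$, the equality $n_1 m k = x_1 k = x_2 = n_2 m$ yields $N m k \supseteq N m$, and $|Nmk| \le |Nm|$ from right-multiplication by $k$ forces $Nmk = Nm$, so $g \in L_m^N$. Then $N m k K = N m K$ together with Lemma \ref{lmNK}(1) forces $mk K = mK$, so $g \in R_m^K$ and hence $g \in H_m^{(N,K)}$ with $x_2 = x_1 \circ_m g$; uniqueness comes from the freeness above. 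Part (5) is the exact dual.

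For (6), I would apply Lemma \ref{LRm1} to an equality $x_i \circ_m g \circ_m y_j = x_k \circ_m g' \circ_m y_l$ (viewed through the decomposition $L_m^N \circ_m R_m^K$ of Lemma \ref{mLRk}): this yields $y_j \mathcal{L}_N y_l$ and $x_i \circ_m g \mathcal{R}_K x_k \circ_m g'$, and parts (4)--(5) combined with the transversal property of the representatives force $y_j = y_l$ and $x_i = x_k$, after which the freeness of the actions gives $g = g'$. Parts (b)--(c) then follow from Lemma \ref{mLRk} and a direct cardinality count, while (d) combines Lemma \ref{lmNK}(2) with the inclusion $L_m^N \cap m K \subseteq R_m^K$ that is already implicit in the argument for (4). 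The main obstacle is precisely this asymmetric interplay between $N$ and $K$: at every step one must use Lemma \ref{lmNK}(1) to bridge a one-sided condition such as $Nmk = Nm$ to its other-sided counterpart $mkK = mK$, and it is exactly this bridge that lets the single-submonoid machinery of Lemma \ref{BSteinberg} extend to the genuinely two-sided setting considered here.
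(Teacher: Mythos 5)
Your proposal is correct and takes essentially the same route as the paper: the same operation $\circ_m$ making $H_m^{(N,K)}$ a monoid, the same free $\circ_m$-actions on $L_m^N$ and $R_m^K$, the same element $g=mk$ in part (4), and the same combination of Lemmas \ref{mLRk}, \ref{LRm1}, \ref{lmNK} with the coset decomposition in part (6). The only differences are cosmetic: your cancellation device (choose $n'\in N$ with $n'x=m$, dually $k'\in K$ with $yk'=m$) replaces the paper's counting/bijection argument via Lemma \ref{eqequal}, in (4) you pass through $NmkK=NmK$ and Lemma \ref{lmNK}(1) where the paper checks $mkK=mK$ directly from $m=nx_1$, and in (6)(a) the phrase ``freeness gives $g=g'$'' should be read as: first strip off $x_i$ with such an $n'$ to reduce $x_i\circ_m g\circ_m y_j=x_i\circ_m g'\circ_m y_j$ to $g\circ_m y_j=g'\circ_m y_j$, then invoke freeness on $R_m^K$ (the paper's own final step is equally terse here).
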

\begin{proof}
1) If $ x, y\in H_{m}^{(N,K)}$,  we can write $x=x_lm=mx_r$, $y=y_lm=my_r$ for some $x_l,y_l\in N$, $x_r,y_r\in K$. So  $Nx\circ_my=Nm\circ_m y=Ny=Nm$, and $x\circ_m yK=x\circ_mmK=xK=mK$. Hence $x\circ_m y\in H_m^{(N,K)}$. Moreover, $x\circ_mm=x=m\circ_mx$. \\
2) (i) For $x\in L_m^N$, $y, y'\in H_{m}^{(N,K)}$, $Nx\circ_m y=Nm\circ_m y=Ny=Nm$, so $x\circ_my\in L_m^N$.  Let us write $x=x_lm$, $y=y_lm=my_r$,  $y'=y'_lm=my'_r$,  for some $x_l,y_l, y_l'\in N$, $y_r,y'_r\in K$. Then $(x\circ_my)\circ_my'=xy_ry_r'=x_lmy_ry_r'=x_l(y\circ_my')=x\circ_m(y\circ_my')$. \\
(ii) If $x\circ_my=x\circ_my'$, then $x_ly=x_ly'$. Note that $x_lyK=x_lmK=xK=x_ly'K$.  Since $y, x\circ_my \in J_{m}^{(N,K)}$, by  Lmm.\ref{eqequal}, $|x\circ_myK|=|mK|=|yK|$. Hence $x_l: yK=y'K \longrightarrow x_lyK=x_ly'K$ is a bijective map. For $y, y'\in yK=y'K$, $x_ly=x_ly'$ implies $y=y'$.\\
3) Similar to the above proof.\\
4)  If $x_2=x_1\circ_m g$, then $x_2K=x_1\circ_m gK=x_1\circ_m mK=x_1K$. Hence $x_1\mathcal{R}_K x_2$. Conversely, $Nm=Nx_1=Nx_2$, and $x_1K=x_2K$. Assume $x_2=n_{21}x_1=x_1k_{12}$, $m=nx_1$,  for some $n, n_{21}\in N$, $k_{12}\in K$. Then $x_2=x_1\circ_m mk_{12}$. We claim that $mk_{12}\in H^{(N,K)}_m$. Firstly, $Nmk_{12}=Nx_1k_{12}=Nx_2=Nm$. Secondly, $mk_{12}K=nx_1 k_{12}K=nx_2K=nx_1K=mK$. Take $g= mk_{12}$.\\
5) Similar to the above proof.\\
6) (a) $Nx_i\circ_mg\circ_m y_j=Nm\circ_mg\circ_my_j=Ny_j$, so $Ny_j=Ny_l$, $y_j\mathcal{L}_Ny_l$. By (5), $y_l=h\circ y_j$, for some $h\in H^{(N, K)}_m $. Hence $y_j=y_l$. Similarly, $x_i=x_k$. By Lmm.\ref{LRm1}, $x_i\circ_m g\mathcal{R}_K x_i\circ_mg'$. By (2)(4), $g=g'$. \\
Parts (b)(c) are  consequences of (a) and Lmm.\ref{mLRk}.\\
(d) By Lmm.\ref{lmNK}, $H^{(N, K)}_m=L_m^N \cap R_m^K=Nm \cap J_m^{(N,K)}  \cap mK=L_m^N \cap mK=Nm\cap R_m^K$.
\end{proof}

Similarly, the vector space $\mathbb{C}[J_m^{(N,K)}] $ can be an $N-K$-bimodule by giving the following actions:
\[ n\odot_l x=\left\{\begin{array}{lr}
nx, & \textrm{  if } nx\in J_m^{(N,K)}\\
0,& \textrm{ otherwise}\end{array}  \right., \quad\quad  y\odot_r k=\left\{\begin{array}{lr}
yk, & \textrm{  if } yk\in J_m^{(N,K)}\\
0,& \textrm{ otherwise}\end{array}  \right.   .\]
For $n_1, n_2\in N$, $x\in J_m^{(N,K)}$,  $Nn_1n_2xK=NxK= NmK$ implies  $Nn_2xK=NmK$, i.e. $n_2x\in J_m^{(N,K)}$. Hence it can be checked that $\odot_l$ is well-defined. Similarly, $\odot_r$ is also well-defined.

Like the lemma \ref{leftrightm}, we have:
\begin{lemma}\label{leftrightmk}
\begin{itemize}
\item[(1)] As left $N$-modules, $\mathbb{C}[J_m^{(N,K)}] \simeq  \beta^K_m\mathbb{C}[L_m^N]$.
\item[(2)]  As right $K$-modules, $\mathbb{C}[J_m^{(N,K)}]\simeq  \alpha_m^N \mathbb{C}[R_m^K]$.
\item[(3)] $\mathbb{C}[J_m^{(N,K)}] \simeq \mathbb{C}[L^N_m]\otimes_{\mathbb{C}[H_m^{(N, K)}]} \mathbb{C}[R^K_m]$ as $N-K$-bimodules.
\end{itemize}
\end{lemma}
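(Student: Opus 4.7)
The plan is to adapt the argument of Lemma \ref{leftrightm} to the two-sided relative setting, replacing the role of $G_m^N$ there by the localized monoid $H_m^{(N,K)}$, and to use the explicit decomposition furnished by Lemma \ref{NKresults}(6).

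For part (1), I would first observe that by Lemma \ref{NKresults}(6b), as a set
\[
J_m^{(N,K)} = \bigsqcup_{j=1}^{\beta^K_m} L_m^N \circ_m y_j,
\]
so at the level of vector spaces $\mathbb{C}[J_m^{(N,K)}] \simeq \bigoplus_{j=1}^{\beta^K_m} \mathbb{C}[L_m^N \circ_m y_j]$. The main step is to check that each summand $\mathbb{C}[L_m^N \circ_m y_j]$ is isomorphic to $\mathbb{C}[L_m^N]$ as a left $N$-module under the $\odot_l$-action. I would define the candidate map by $x \longmapsto x \circ_m y_j$ and argue bijectivity from Lemma \ref{NKresults}(2) (freeness of the $H_m^{(N,K)}$-action and the uniqueness in (6a)). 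To see it is $N$-equivariant, take $x = x_l m \in L_m^N$ and $n \in N$: if $nx \in L_m^N$, then $Nnx = Nm$ so $Nnx\circ_m y_j = Nm\circ_m y_j = Ny_j$, which is in $J_m^{(N,K)}$, and $n\odot_l(x\circ_m y_j) = (n\odot_l x)\circ_m y_j$; if instead $nx \notin L_m^N$, I need to verify $n(x\circ_m y_j)\notin J_m^{(N,K)}$, which follows from Lemma \ref{lmNK}(1) exactly as in the proof of Lemma \ref{leftrightm} (via the bijection $Nm \to Nmy_{jr}$ guaranteed by Lemma \ref{eqequal}).

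Part (2) is completely dual: decompose $J_m^{(N,K)} = \bigsqcup_{i} x_i \circ_m R_m^K$ and use $x_i\circ_m -$ to exhibit $\mathbb{C}[x_i\circ_m R_m^K] \simeq \mathbb{C}[R_m^K]$ as right $K$-modules.

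For part (3), using Lemma \ref{NKresults}(6a)(6b), I would decompose at the set level
\[
J_m^{(N,K)} = \bigsqcup_{i,j} x_i\circ_m H_m^{(N,K)}\circ_m y_j,
\]
so
\[
\mathbb{C}[J_m^{(N,K)}] \;\simeq\; \bigoplus_{i,j} \mathbb{C}[x_i\circ_m H_m^{(N,K)}] \otimes_{\mathbb{C}[H_m^{(N,K)}]} \mathbb{C}[H_m^{(N,K)}\circ_m y_j]
\]
as vector spaces, which reassembles into $\mathbb{C}[L_m^N] \otimes_{\mathbb{C}[H_m^{(N,K)}]} \mathbb{C}[R_m^K]$ using the free decompositions in Lemma \ref{NKresults}(6). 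The comparison map $x\otimes y \mapsto x\circ_m y$ is well defined on the balanced tensor product because $(x\circ_m h)\circ_m y = x\circ_m(h\circ_m y)$ for $h\in H_m^{(N,K)}$, and it is an $N$-$K$-bimodule morphism by the associativity of $\circ_m$ on the three factors $L_m^N$, $H_m^{(N,K)}$, $R_m^K$.

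The only non-routine point, and the one I expect to require a little care, is checking that the left $N$- and right $K$-actions on $\mathbb{C}[J_m^{(N,K)}]$ defined with the truncation rule $\odot_l, \odot_r$ transport correctly under these isomorphisms — i.e., that when $nx\notin L_m^N$ the image $n(x\circ_m y_j)$ really lies outside $J_m^{(N,K)}$, and symmetrically on the right. Once that is settled by invoking Lemma \ref{lmNK} and Lemma \ref{eqequal} to translate equalities of $\mathcal{L}_N$- and $\mathcal{R}_K$-classes between $m$ and its translates, all three statements follow mechanically from the disjoint decomposition of Lemma \ref{NKresults}(6).
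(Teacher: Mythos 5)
Your proposal is correct and follows essentially the same route as the paper: decompose $J_m^{(N,K)}$ via Lemma \ref{NKresults}(6), transport the truncated $\odot_l$ (resp. $\odot_r$) action along $x\mapsto x\circ_m y_j$ (resp. $x_i\circ_m-$) using the Lemma \ref{lmNK}(1)/Lemma \ref{eqequal} cardinality-bijection argument, and assemble part (3) from the double-coset decomposition over $\mathbb{C}[H_m^{(N,K)}]$. The compatibility issue you flag (that $nx\notin L_m^N$ forces $n(x\circ_m y_j)\notin J_m^{(N,K)}$) is exactly the point the paper's proof verifies, by the same mechanism.
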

\begin{proof}
1)  By Lmm.\ref{NKresults}(6)(b),  $\iota: \mathbb{C}[J_m^{(N,K)}]\simeq \oplus_{j=1}^{\beta_m^N} \mathbb{C}[L_m^N\circ_m y_j]$ as vector spaces.   For $x=y\circ_m y_j \in L_m^N\circ_m y_j$, $n\in N$, if $nx\in J_m^{(N,K)}$, then  $NnxK=NmK=NxK$. Assume $x=n'nxk'$. Then $Nnx\subseteq  Nx=Nn'nxk'\subseteq  Nnxk'$. Moreover, $|Nx|\leq |Nnxk'|\leq |Nnx|\leq |Nx|$. Hence $|Nx|=|Nnx|$, and $Nnx=Nx$.

 By Lmm.\ref{eqequal}, $|Nx|=|Ny|$. Note that $x=yy_{jr}$. So $y_{jr}: Ny \longrightarrow Nx; n''y \longmapsto n''yy_{jr}$, is a bijective map. In particular, $ y_{jr}:Nny\longrightarrow Nnyy_{jr}=Nnx $ is also bijective.  Hence $|Nny|=|Nnx|=|Nx|=|Ny|$.  So $Nny= Ny=Nm$,  i.e., $ny\in L_m^N$. In this case, $n\odot_lx=(n\odot_ly)\circ_m y_j$.  If $nx\notin J_m^{(N,K)}$, clearly $ny \notin L_m^N$. Hence  $\iota$ is a left $N$-module isomorphism.  By Lmm.\ref{NKresults}(6),  $|L_m^N\circ_m y_j|=|L_m^N|$. Hence $\C[L_m^N] \simeq \C[L_m^N\circ_m y_j]$ as left $N$-modules.\\
2) Similar to the above proof.\\
3) As vector spaces, $\mathbb{C}[J_m^{(N,K)}]  \simeq \oplus_{i=1, j=1}^{\alpha_m^N, \beta_m^K} \mathbb{C}[x_i\circ_m H_m^{(N, K)} \circ_m y_j]\simeq \oplus_{i=1, j=1}^{\alpha_m^N, \beta_m^K} \mathbb{C}[x_i\circ_m H_m^{(N, K)}] \otimes_{\mathbb{C}[H_m^{(N, K)}]} \mathbb{C}[ H_m^{(N, K)}\circ_m y_j]\simeq \mathbb{C}[L^N_m]\otimes_{\mathbb{C}[H_m^{(N, K)}]} \mathbb{C}[R^N_m]$.(cf. Lmm.\ref{NKresults}(6))  By considering the actions of $N$, $K$ on left and right sides respectively, we obtain the result.
\end{proof}

Assume now $Nm_1K=Nm_2K$, $m_1=n^2_{1}m_2k^2_{1}$, $m_2=n_{2}^{1}m_1k_{2}^{1}$, for some  $n_i^j\in N, k_t^s\in K$.   Let us define:
$$ \varphi^1_2: H^{(N,K)}_{m_1} \longrightarrow H^{(N,K)}_{m_2}; x\longmapsto n_{2}^{1}xk_{2}^{1}, \qquad \varphi^2_1: H^{(N,K)}_{m_2} \longrightarrow H^{(N,K)}_{m_1}; y\longmapsto n_{1}^{2}yk_{1}^{2}, $$
$$\varphi_l: L_{m_1}^N \longrightarrow L_{m_2}^N; x\longmapsto xk_{2}^{1},\qquad  \psi_l: L_{m_2}^N \longrightarrow L_{m_1}^N; y\longmapsto yk_{1}^{2}, $$
$$\varphi_r: R_{m_1}^K \longrightarrow R_{m_2}^K; x\longmapsto n_{2}^{1}x,\qquad  \psi_r: R_{m_2}^K \longrightarrow R_{m_1}^K; y\longmapsto n_{1}^{2}y. $$
They are well-defined by the next two lemmas. Note that $J_{m_1}^{(N,K)}=J_{m_2}^{(N,K)}$. As $m_1=n^2_{1}m_2k^2_{1}=n^2_{1}m_2\circ_{m_2} m_2k^2_{1}$, by the proof of Lmm. \ref{mLRk}, $n^2_{1}m_2 \in L_{m_2}^N$, $m_2k^2_{1}\in R_{m_2}^K$.  Similarly, $n^1_{2}m_1 \in L_{m_1}^N$, $m_1k^1_{2}\in R_{m_1}^K$. Moreover, $N m_1=Nn^2_{1}m_2\circ_{m_2} m_2k^2_{1}=Nm_2k^2_{1}$, $m_1K=n^2_{1}m_2K$, $Nm_2=Nm_1k_{2}^{1}$, $m_2K=n_{2}^{1}m_1K$.
\begin{lemma}
$\varphi_l$, $\varphi_r$, $\psi_l$, $\psi_r$  are well-defined bijective maps.
\end{lemma}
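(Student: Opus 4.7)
The plan is to reduce everything to the observation that within the $\mathcal{J}^{(N,K)}$-class of $m_1$, right multiplication by $k_2^1$ gives a bijection $Nm_1 \to Nm_2$, and dually for the other three translation maps. Once this is secured, well-definedness and injectivity come for free by restriction, and only the surjectivity step requires genuine work.

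First I would record the four identities $Nm_1 = Nm_2 k_1^2$, $Nm_2 = Nm_1 k_2^1$, $m_1 K = n_1^2 m_2 K$, $m_2 K = n_2^1 m_1 K$, all of which appear in the paragraph just preceding the statement. Combined with $|Nm_1| = |Nm_2|$ and $|m_1 K| = |m_2 K|$ from Lemma \ref{eqequal}, these identities force the set maps $x \mapsto x k_2^1$ and $x \mapsto x k_1^2$ (between $Nm_1$ and $Nm_2$), and likewise $y \mapsto n_2^1 y$ and $y \mapsto n_1^2 y$ (between $m_1 K$ and $m_2 K$), to be bijections: each is surjective by the identities and then bijective by the cardinality match. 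Well-definedness is then immediate. For instance, if $x \in L_{m_1}^N$ then $Nx = Nm_1$, so $N(x k_2^1) = N m_1 k_2^1 = Nm_2$, hence $\varphi_l(x) \in L_{m_2}^N$; the three remaining maps are treated symmetrically, using in the $R$-cases that $R_{m_i}^K \subseteq m_i K$. Injectivity of $\varphi_l$ then follows simply by restriction: $L_{m_1}^N \subseteq Nm_1$, and right multiplication by $k_2^1$ is injective on $Nm_1$.

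The subtle point, and the main obstacle I anticipate, is surjectivity of $\varphi_l$ (and, by symmetry, of $\psi_l$, $\varphi_r$, $\psi_r$), because the natural preimage is not visibly a generator of the $\mathcal{L}_N$-class. Given $z \in L_{m_2}^N \subseteq Nm_2 = Nm_1 k_2^1$, write $z = x k_2^1$ with $x \in Nm_1$; the task is to verify that $Nx = Nm_1$. Here I would argue that $Nx \subseteq Nm_1$ while $(Nx) k_2^1 = Nz = Nm_2 = (Nm_1) k_2^1$, and then invoke the injectivity of right multiplication by $k_2^1$ on $Nm_1$ to conclude $|Nx| = |(Nx) k_2^1| = |(Nm_1) k_2^1| = |Nm_1|$, forcing $Nx = Nm_1$ and hence $x \in L_{m_1}^N$ with $\varphi_l(x) = z$. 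The same template (translate by the appropriate $n_i^j$ or $k_i^j$, restrict the translation bijection, then recover the preimage by the cardinality squeeze) settles the other three maps.
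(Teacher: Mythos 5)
Your proposal is correct. It shares the paper's core mechanism: both proofs observe that right translation by $k_2^1$ maps $Nm_1$ onto $Nm_1k_2^1=Nm_2$, hence is a bijection $Nm_1\to Nm_2$ because $|Nm_1|=|Nm_2|$ (Lemma \ref{eqequal}), and both get well-definedness from $Nxk_2^1=Nm_1k_2^1=Nm_2$ and injectivity of $\varphi_l$ by restricting this bijection to $L_{m_1}^N\subseteq Nm_1$. Where you diverge is the final step: the paper never proves surjectivity of $\varphi_l$ directly; it notes that $\psi_l$ is injective by the symmetric argument, so the two injections $L_{m_1}^N\to L_{m_2}^N$ and $L_{m_2}^N\to L_{m_1}^N$ between finite sets force equal cardinalities and hence bijectivity. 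You instead prove surjectivity head-on: given $z\in L_{m_2}^N$, write $z=xk_2^1$ with $x\in Nm_1$, and use $Nx\subseteq Nm_1$, $(Nx)k_2^1=Nz=Nm_2=(Nm_1)k_2^1$ together with injectivity of the translation on $Nm_1$ to squeeze $Nx=Nm_1$, so the preimage really lies in $L_{m_1}^N$. Your route costs a little more work per map but treats each map self-containedly and makes explicit why preimages remain generators, a point the paper's two-injections shortcut sidesteps; the paper's version is shorter but needs both $\varphi_l$ and $\psi_l$ in hand before either is known to be bijective. Both arguments are valid for all four maps by the evident symmetry.
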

\begin{proof}
Here we only prove the result for $\varphi_l$. $Nxk_{2}^{1}=Nm_1k_2^1=Nm_2$, so it is well-defined.  Note that  $ Nm_1\longrightarrow Nm_2=Nm_1k_2^1; nm_1\longmapsto nm_1k_2^1$,  is injective, and $|Nm_1|=|Nm_2|$.  Hence $\varphi_l$ is injective, so is $\psi_l$. Therefore $\varphi_l$ is bijective.
\end{proof}
\begin{lemma}
$H_{m_1}^{(N,K)} \simeq H_{m_2}^{(N,K)}$ as monoids by $\varphi^1_2$, $ \varphi^2_1$.
\end{lemma}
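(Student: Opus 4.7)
The plan is to verify, step by step: (a) $\varphi^1_2(m_1) = m_2$ (immediate from $m_2 = n^1_2 m_1 k^1_2$, and dually $\varphi^2_1(m_2)=m_1$); (b) $\varphi^1_2$ maps $H^{(N,K)}_{m_1}$ into $H^{(N,K)}_{m_2}$; (c) $\varphi^1_2$ respects the operation $\circ$; and (d) $\varphi^2_1$ is a two-sided inverse. This mirrors the strategy of Lemma~\ref{BSteinberg}(6)(c), but adapted to the asymmetric left-$N$/right-$K$ setting.

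For (b), given $x \in H^{(N,K)}_{m_1}$ with decompositions $x = x_l m_1 = m_1 x_r$, I would write $\varphi^1_2(x) = (n^1_2 x_l)(m_1 k^1_2) \in N\cdot m_1 k^1_2 \subseteq Nm_2$, using $Nm_1 k^1_2 = Nm_2$ established in the preceding well-definedness lemma for $\varphi_l$. Dually $\varphi^1_2(x) = (n^1_2 m_1)(x_r k^1_2) \in m_2 K$. Combined with $\varphi^1_2(x) \in J^{(N,K)}_{m_2}$ (which follows from $\varphi^1_2(x) \in NxK = Nm_1 K = Nm_2 K$ together with the reverse inclusion $m_2 \in N\varphi^1_2(x)K$, obtained via $m_1\in Nx$ and $m_1 \in xK$), invoking Lemma~\ref{lmNK}(2) yields $\varphi^1_2(x) \in L^N_{m_2} \cap R^K_{m_2} = H^{(N,K)}_{m_2}$.

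For (c), writing $x \circ_{m_1} z = x_l m_1 z_r$, I would decompose $\varphi^1_2(x) = a m_2$ and $\varphi^1_2(z) = m_2 b$ using the identity $m_2 = n^1_2 m_1 k^1_2$, so that $\varphi^1_2(x)\circ_{m_2}\varphi^1_2(z) = a m_2 b$. Matching this against $\varphi^1_2(x \circ_{m_1} z) = n^1_2 x_l m_1 z_r k^1_2$ requires cancellation statements analogous to (I)--(IV) in the proof of Lemma~\ref{BSteinberg}(6)(c); these would be derived from the injectivity of $\varphi_l,\psi_l$ on $L^N$-classes and $\varphi_r,\psi_r$ on $R^K$-classes (the content of the previous lemma). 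Bijectivity of $\varphi^1_2$ itself also follows from this injectivity together with $|H^{(N,K)}_{m_1}|=|H^{(N,K)}_{m_2}|$, which can be read off from Lemma~\ref{NKresults}(6)(c) once cardinalities of $L,R,J$ are matched via Lemma~\ref{eqequal}.

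For (d), I would compute $\varphi^2_1(\varphi^1_2(x)) = (n^2_1 n^1_2)\, x\, (k^1_2 k^2_1)$ and use the relation $m_1 = (n^2_1 n^1_2) m_1 (k^1_2 k^2_1)$ together with the left-$N$ cancellation on $L^N_{m_1}$ and right-$K$ cancellation on $R^K_{m_1}$ to reduce it to $x$; the dual argument handles $\varphi^1_2 \circ \varphi^2_1$. The main obstacle is organizing these cancellation steps in (c) and (d): in the two-sided $N$-setting of Lemma~\ref{BSteinberg}(6) the bijections $\varphi_l,\psi_l$ acted on the same principal left $N$-ideal $Nm_i$, giving the crisp identities (I)--(IV), but in the present $(N,K)$-setting the left and right cancellations come from different submonoids and must be combined coherently. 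The most delicate point is proving $(n^2_1 n^1_2)\, x\, (k^1_2 k^2_1) = x$ for every $x \in H^{(N,K)}_{m_1}$, not only for $x = m_1$, since neither $n^2_1 n^1_2$ nor $k^1_2 k^2_1$ need be invertible individually in $N$ or $K$; this has to be extracted from the stability of $H^{(N,K)}_{m_1}$ under the composite left/right action, using the injectivity cancellations rather than any literal inverse.
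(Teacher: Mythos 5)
Your plan has two genuine problems, one fatal and one a missing step. The fatal one is step (d): the identity $\varphi^2_1(\varphi^1_2(x))=(n^2_1n^1_2)\,x\,(k^1_2k^2_1)=x$ for all $x\in H^{(N,K)}_{m_1}$ is simply false in general, so no amount of cancellation will extract it. Take $M$ a finite group, $N=K=H$ a nonabelian subgroup, and $m_1=m_2=1$; then $H^{(N,K)}_{m_1}=H$ with $\circ_{m_1}$ the group law, the decompositions $m_2=n^1_2m_1k^1_2$ and $m_1=n^2_1m_2k^2_1$ only force $k^1_2=(n^1_2)^{-1}$, $k^2_1=(n^2_1)^{-1}$ with $n^1_2=h$, $n^2_1=h'$ arbitrary in $H$, and then $\varphi^1_2$, $\varphi^2_1$ are conjugation by $h$ and by $h'$, so the composite is conjugation by $h'h$, a nontrivial automorphism whenever $h'h$ is not central. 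The relation $m_1=(n^2_1n^1_2)m_1(k^1_2k^2_1)$ holds, yet the composite is not the identity; the two maps are isomorphisms but need not be mutually inverse. The paper never claims they are: it proves each of $\varphi^1_2$, $\varphi^2_1$ is an injective monoid homomorphism (injectivity coming from the bijectivity of $\varphi_l$ and $\varphi_r$ on $L^N$- and $R^K$-classes) and concludes bijectivity from finiteness. Your fallback remark in (c) — injectivity plus $\lvert H^{(N,K)}_{m_1}\rvert=\lvert H^{(N,K)}_{m_2}\rvert$ via Lemma \ref{NKresults}(6)(c) and Lemma \ref{eqequal} — is the correct route and makes (d) unnecessary; you should discard (d) entirely.

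The second problem is in (b). The reverse inclusion $m_2\in N\varphi^1_2(x)K$ does not follow from $m_1\in Nx$ and $m_1\in xK$ by substitution: writing $m_1=n'x$ gives $m_2=n^1_2n'xk^1_2$, and to land in $N(n^1_2xk^1_2)K$ you would have to move $n'$ across $n^1_2$ (dually, $\kappa$ across $k^1_2$), which is unjustified. The missing ingredient is a cancellation-by-cardinality argument, which is how the paper proves well-definedness: from the set equality $xK=m_1K$ one gets $Nn^1_2xK=Nn^1_2m_1K=Nm_2K$, so $n^1_2x\in J^{(N,K)}_{m_2}$; Lemma \ref{eqequal} then gives $\lvert Nn^1_2x\rvert=\lvert Nm_1\rvert=\lvert Nx\rvert$, and since $Nn^1_2x\subseteq Nx$ this forces $Nn^1_2x=Nx=Nm_1$, whence $N\varphi^1_2(x)=Nm_1k^1_2=Nm_2$; the dual argument gives $\varphi^1_2(x)K=m_2K$. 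This is stronger than membership plus $\mathcal{J}$-class containment, so the detour through Lemma \ref{lmNK}(2) becomes superfluous. Finally, for (c) your sketch is in the right spirit but you still owe the concrete absorption identities the computation rests on: choosing $t\in K$, $s\in N$ with $n^1_2m_1=m_2t$ and $m_1k^1_2=sm_2$ and deriving $m_1=m_1k^1_2t$ and $m_1=sn^1_2m_1$ (these, not generic injectivity statements, are the $(N,K)$-analogues of (I)--(IV)); with them the identity $\varphi^1_2(x)\circ_{m_2}\varphi^1_2(z)=n^1_2x_lm_1z_rk^1_2=\varphi^1_2(x\circ_{m_1}z)$ falls out.
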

\begin{proof}
1)  For $x\in H_{m_1}^{(N,K)}$,  assume $x=n_xm_1k_x=n_xm_1\circ_{m_1}m_1k_x$. Then $Nn_{2}^{1}xK = Nn_{2}^{1}m_1K=Nm_1K$. So $n_{2}^{1}x\in J_{m_1}^{(N,K)}$, and $|Nn_{2}^{1}x|=|Nm_1|=|Nx|$.  Hence $Nn_2^1x=Nx$. Then $Nn_{2}^{1}xk_{2}^{1}=Nxk_{2}^{1}=Nm_1k_2^1=Nm_2$.  Similarly, $n_{2}^{1}xk_{2}^{1}K=m_2K$,  so $ \varphi^1_2(x)\in H^{(N,K)}_{m_2}$.  Therefore, $\varphi^1_2$ is well-defined.

For $ x, z \in H_{m_1}^{(N,K)}$, assume $x=x_lm_1=m_1x_r$, $z=z_lm_1=m_1z_r$, for $x_l, z_l\in N$, $x_r, z_r\in K$.
Assume: \[\left\{\begin{array}{lr}
 n_2^1m_1=m_2t_2^1=n_2^1m_1k_2^1t_2^1,  & \textrm{ for some } t_2^1\in K\\
 m_1k_2^1=s_2^1m_2=s_2^1n_2^1m_1k_2^1,  &\textrm{ for some } s_2^1\in N
 \end{array}\right.\]
 Then: $\left\{\begin{array}{lr}
 m_1=m_1k_2^1t_2^1\\
 m_1=s_2^1n_2^1m_1
 \end{array}\right.$. Hence:
 \begin{itemize}
 \item[(i)]  $\varphi^1_2(x)=n_2^1xk_2^1=n_2^1x_lm_1k_2^1=n_2^1x_ls_2^1m_2$, $\varphi_2^1(z)=n_2^1zk_2^1=n_2^1m_1z_rk_2^1=m_2t_2^1z_rk_2^1$,
  \item[(ii)] $\varphi^1_2(x)\circ_{m_2}\varphi^1_2(z) =n_2^1x_ls_2^1m_2t_2^1z_rk_2^1=n_2^1x_ls_2^1n_{2}^{1}m_1k_{2}^{1}t_2^1z_rk_2^1=n_2^1x_ls_2^1n_{2}^{1}m_1z_rk_2^1=n_2^1x_lm_1z_rk_2^1$,
  \item[(iii)] $\varphi^1_2(x\circ_{m_1}z)=\varphi^1_2(x_lm_1z_r)=n_2^1x_lm_1z_rk_2^1=\varphi^1_2(x)\circ_{m_2}\varphi^1_2(z)$,
  \item[(iv)] $\varphi^1_2(m_1)=n_2^1m_1k_2^1=m_2$.
 \end{itemize}
 So $\varphi^1_2$ is a monoid homomorphism. Since $\varphi_l$, $\varphi_r$ both are bijective maps, $\varphi^1_2$ is injective. Dually, $\varphi^2_1$ is also injective. Hence  $\varphi^1_2$ is a monoid isomorphism.
\end{proof}
\begin{lemma}\label{isoLRtt}
Up to the isomorphisms $\varphi_i^j$,  $ \mathbb{C}[L_{m_1}^N] \simeq  \mathbb{C}[L_{m_2}^N] $ as $N-H_{m_i}^{(N,K)}$-bimodules, and $ \mathbb{C}[R_{m_1}^K] \simeq  \mathbb{C}[R_{m_2}^K] $, as $H_{m_i}^{(N,K)}-K$-bimodules.
\end{lemma}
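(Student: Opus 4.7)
The plan is to follow exactly the template set by Lemma \ref{isoLR}, where the analogous statement was proved for the $N-G^N_m$-bimodule structures: bijectivity of $\varphi_l,\psi_l,\varphi_r,\psi_r$ on the underlying sets was already established in the lemma preceding the statement, so after $\C$-linearization the only thing left is to verify compatibility with the left $N$-action, the right $K$-action, and the transported $H^{(N,K)}$-action.

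First I would dispose of the easy half. For any $x\in L_{m_1}^N$ and $n\in N$ one has $\varphi_l(nx)=nxk_2^1=n\,\varphi_l(x)$, and similarly $\psi_l$ is left $N$-linear, so after $\C$-linearization $\varphi_l$ gives a bijective left $N$-module map $\C[L_{m_1}^N]\to\C[L_{m_2}^N]$ with inverse $\psi_l$. Dually, $\varphi_r$ is right $K$-linear, so $\C[R_{m_1}^K]\simeq\C[R_{m_2}^K]$ as right $K$-modules.

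The main content is the compatibility with the $H^{(N,K)}$-actions transported across $\varphi_1^2$ and $\varphi_2^1$. Write $x=x_lm_1\in L_{m_1}^N$ and $g=g_lm_1=m_1g_r\in H_{m_1}^{(N,K)}$, so $x\circ_{m_1}g=x_lm_1g_r$ and hence $\varphi_l(x\circ_{m_1}g)=x_lm_1g_rk_2^1$. On the other hand, $\varphi_l(x)=x_lm_1k_2^1=x_ls_2^1m_2$ and $\varphi_1^2(g)=n_2^1gk_2^1=m_2t_2^1g_rk_2^1$, using the identities $m_1k_2^1=s_2^1m_2$ and $n_2^1m_1=m_2t_2^1$ from the preceding paragraph. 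Therefore
\[
\varphi_l(x)\circ_{m_2}\varphi_1^2(g)=x_ls_2^1m_2t_2^1g_rk_2^1=x_ls_2^1n_2^1m_1k_2^1t_2^1g_rk_2^1,
\]
and applying the two identities $m_1=s_2^1n_2^1m_1$ and $m_1=m_1k_2^1t_2^1$ successively collapses the right-hand side to $x_lm_1g_rk_2^1=\varphi_l(x\circ_{m_1}g)$, as required. The dual bookkeeping with $\varphi_r$, using the identities $m_2=n_2^1m_1k_2^1=t_2^1m_1\cdot k_2^1$-type expressions obtained symmetrically, establishes the $H^{(N,K)}-K$-bimodule isomorphism for $R^K$.

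The only real obstacle is the symbol-pushing with $n_i^j,k_t^s,s_2^1,t_2^1$ in the verification of the middle step; everything else is forced by the bijectivity and the left/right linearity that essentially reduce to the case already handled in Lemma \ref{isoLR}. I would therefore state both halves in parallel and carry out the computation once on the $L$-side, indicating that the $R$-side is by the same calculation with left and right swapped.
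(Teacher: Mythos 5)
Your proposal is correct and essentially reproduces the paper's own argument: the paper likewise reduces to the $L$-side by duality, notes $\varphi_l(n\odot_l x)=n\odot_l\varphi_l(x)$ (covering also the case $nx\notin L_{m_1}^N$, which you should flag explicitly since the module action is the truncated one), and verifies $\varphi_l(x\circ_{m_1}g)=\varphi_l(x)\circ_{m_2}\varphi^1_2(g)$ using exactly the identities $m_1=s_2^1n_2^1m_1$ and $m_1=m_1k_2^1t_2^1$. The only difference is the direction of the symbol-pushing (you expand $\varphi_l(x)\circ_{m_2}\varphi^1_2(g)$ and collapse it, the paper expands $\varphi_l(x\circ_{m_1}g)$ and builds it up), which is immaterial.
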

\begin{proof}
By duality, we only verify the first statement. \\
(1) Recall that $\varphi_l: \mathbb{C}[L_{m_1}^N]  \longrightarrow \mathbb{C}[L_{m_2}^N]  $ is a bijective linear map.  Moreover for any $n\in N$, $\varphi_l(nx)=nxk_2^1=n\varphi_l(x)$, for $nx \in L_{m_1}^N$ or not.  Hence $\varphi_l$ is a left  $N$-module isomorphism.\\
(2)  For $g=g_lm_1=m_1g_r\in H_{m_1}^{(N,K)}$, $x=nm_1 \in L_{m_1}^N$,  $\varphi_l(x\circ_{m_1} g)=nm_1g_rk_2^1=n s_2^1n_2^1m_1 g_r k_2^1=n s_2^1 \varphi_2^1(g)=   n s_2^1 m_2 \circ_{m_2}\varphi_2^1(g)= nm_1k_2^1 \circ_{m_2} \varphi_2^1(g)= \varphi_l(x) \circ_{m_2}\varphi_2^1(g)$.   Through $\varphi_2^1$, we identity $ H_{m_1}^{(N,K)}$ with $ H_{m_2}^{(N,K)}$. Hence $\varphi_l$ also defines a right $H_{m_i}^{(N,K)}$-module isomorphism.
\end{proof}

 In analogy with the discussion in \cite[p.12]{Stein}, we can define a principal series of $N-K$ bi-sets in  $M$ as a chain of $N-K$ bi-sets:
\[ \emptyset=I_0 \subsetneq I_1 \subsetneq \cdots  \subsetneq I_n=M\]
such that  each $I_{i}$ is a maximal proper $N-K$ bi-set of $I_{i+1}$, for $i=0, \cdots, n-1$. Note that by induction, such chain exists.
If $x, y \in I_{i+1}\setminus I_i$, then $x  \nsubseteq  I_i, y  \nsubseteq  I_i$, and $NxK\cup I_i=I_{i+1}=NyK\cup I_i$.  Hence $x\in NyK$, $y\in NxK$. So  $NxK \subseteq NyK \subseteq NxK$, $x\mathcal{L}_N\mathcal{R}_Ky$.
  Conversely if $x\mathcal{L}_N\mathcal{R}_Kz$, then $NxK=NzK  \nsubseteq  I_i$, and $NzN \cup I_i=I_{i+1}$.  Moreover if $m\in M$, and $m\in I_k$, $m\notin I_{k-1}$, then $I_{k}\setminus I_{k-1}=J_m^{(N,K)}$. Therefore each $I_{i+1}\setminus I_i$ contains exactly one $\mathcal{L}_N\mathcal{R}_K$-class, and each $\mathcal{L}_N\mathcal{R}_K$-class appears in one such  place.

  Let $\Delta$ be a complete set of representatives for $M/{\mathcal{L}_N\mathcal{R}_K}$. For each $m$, let  $x_1, \cdots, x_{\alpha^N_m}$ be a  complete set of representatives for $L_m^N/H_m^{(N,K)}$, and   $y_1, \cdots, y_{\beta^K_m}$ a  complete set of representatives for    $H_m^{(N,K)} \setminus R_{m}^K$. Hence we can conclude the result as follows:
\begin{theorem}[Mackey formulas]\label{theta3}
\begin{itemize}
\item[(1)]  $M=\sqcup_{m\in \Delta} J_m^{(N,K)}=\sqcup_{m\in \Delta} L_m^N \otimes_{H_m^{(N,K)}} R_m^K=\sqcup_{m\in \Delta} \sqcup_{i=1, j=1}^{\alpha^N_m, \beta^K_m} x_i \circ_m H^{(N, K)}_m \circ_m y_j$.
\item[(2)] Assume that $\C[N], \C[K]$ both  are   semi-simple.  Then  as $N-K$-bimodules,  $\C[M] \simeq \oplus_{m\in \Delta} \mathbb{C}[L^N_m]\otimes_{\mathbb{C}[H_m^{(N, K)}]} \mathbb{C}[R^K_m]$.
\end{itemize}
\end{theorem}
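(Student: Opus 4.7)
The plan is to separate the two parts and note that almost everything needed has already been assembled in the preceding subsection. For part (1), the disjointness $M=\sqcup_{m\in \Delta} J_m^{(N,K)}$ is just the statement that $\mathcal{L}_N\mathcal{R}_K$ is an equivalence relation with $\Delta$ a complete set of representatives. For each $m\in\Delta$, Lemma \ref{mLRk} gives $J_m^{(N,K)}=L_m^N\circ_m R_m^K$, so the multiplication map $(x,y)\mapsto x\circ_m y$ on $L_m^N\times R_m^K$ is surjective. By Lemma \ref{NKresults}(6)(a), two pairs $(x_i\circ_m g,y_j)$ and $(x_k\circ_m g',y_l)$ have the same image iff $(i,j)=(k,l)$ and $g=g'$, which says precisely that the map descends to a \emph{bijection} from the set of $H_m^{(N,K)}$-orbits on $L_m^N\times R_m^K$ (the notation $L_m^N\otimes_{H_m^{(N,K)}}R_m^K$) onto $J_m^{(N,K)}$; unwinding this via the chosen transversals gives the third equality from Lemma \ref{NKresults}(6)(b).

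For part (2), the plan is to pass from a set-theoretic decomposition to a bimodule one via a principal series. The paragraph preceding Theorem \ref{theta3} produced a chain $\emptyset=I_0\subsetneq I_1\subsetneq\cdots\subsetneq I_n=M$ of $N$-$K$ bisets where each $I_{k+1}\setminus I_k$ is a single $\mathcal{L}_N\mathcal{R}_K$-class, say $J_{m_k}^{(N,K)}$ for some $m_k\in\Delta$. I would first lift this to a filtration of $N$-$K$-bimodules
\[0=\mathbb{C}[I_0]\subseteq\mathbb{C}[I_1]\subseteq\cdots\subseteq\mathbb{C}[I_n]=\mathbb{C}[M],\]
and identify each successive quotient $\mathbb{C}[I_{k+1}]/\mathbb{C}[I_k]$ with $\mathbb{C}[J_{m_k}^{(N,K)}]$ equipped with the Rees actions $\odot_l,\odot_r$ of Section \ref{Reesquo}: the point is that for $x\in J_{m_k}^{(N,K)}$ and $n\in N$, the condition $nx\notin J_{m_k}^{(N,K)}$ in the definition of $\odot_l$ is equivalent to $nx\in I_k$, because any element of $I_{k+1}$ not in $I_k$ lies in $J_{m_k}^{(N,K)}$. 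Then Lemma \ref{leftrightmk}(3) identifies each of these subquotients with $\mathbb{C}[L_{m_k}^N]\otimes_{\mathbb{C}[H_{m_k}^{(N,K)}]}\mathbb{C}[R_{m_k}^K]$ as an $N$-$K$-bimodule.

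It remains to see that the filtration \emph{splits}. This is where the semi-simplicity hypothesis on $\mathbb{C}[N]$ and $\mathbb{C}[K]$ is used: by Lemma \ref{ss} applied to $\mathbb{C}[N]\otimes_{\mathbb{C}}\mathbb{C}[K]^o$ (which is semi-simple since each factor is), together with the corollary that every $A$-$A$-bimodule over a semi-simple algebra $A$ is semi-simple, any short exact sequence of $N$-$K$-bimodules splits. Iterating, $\mathbb{C}[M]$ is the direct sum of the successive quotients, giving the claimed isomorphism.

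The main obstacle I anticipate is the bookkeeping step of verifying that the $N$- and $K$-actions on the filtration quotient $\mathbb{C}[I_{k+1}]/\mathbb{C}[I_k]$ coincide with the truncated Rees actions $\odot_l,\odot_r$ on $\mathbb{C}[J_{m_k}^{(N,K)}]$. This is a definition-chase: one must check that if $x\in J_{m_k}^{(N,K)}$ and $nx\notin J_{m_k}^{(N,K)}$, then $nx\in I_k$, which follows because $NnxK\subseteq Nm_kK\setminus J_{m_k}^{(N,K)}\subseteq I_k$, and symmetrically on the right. Once this compatibility is in hand, the rest is straightforward bookkeeping over $\Delta$.
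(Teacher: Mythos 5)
Your proposal is correct and follows essentially the same route the paper intends: part (1) is exactly the assembly of Lemma \ref{mLRk} with Lemma \ref{NKresults}(6), and part (2) is the principal series of $N$-$K$ bisets combined with Lemma \ref{leftrightmk}(3), with semi-simplicity of $\C[N]\otimes_{\C}\C[K]^{o}$ used to split the filtration. The paper states the theorem as a direct consequence of the preceding development without writing out these steps, and your write-up (including the check that the filtration quotients carry the truncated Rees actions) simply makes that implicit argument explicit.
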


\subsection{Two  Sch\"utzenberger representations}\label{Schrepre}
Go back to the subsection \ref{Reesquo}. We shall translate the same  results of Chapter 5.5 in \cite{Stein} to  the  relative  case.
 For any $n\in N$,  $x_1, \cdots, x_{s_m^N}$ in the lemma \ref{BSteinberg},  if $n\odot_l x_j\in L_m^N$, then $nx_j=x_i \odot_m g_{ij}$, for a unique $g_{ij} \in G_m^N$; otherwise, set $g_{ij}=0$.  Then  we can  define a left Sch\"utzenberger representation of $N$ over $\mathbb{C}[G_m^N]$ associated to $J_m^N$ by $\pi_{l}: \mathbb{C}[N] \longrightarrow \Mm_{s_m^N}(\mathbb{C}[G_m^{N}]); n \longrightarrow (g_{ij})$. Similarly,  for   any $n\in N$,  $y_1, \cdots, y_{t_m^N}$ in the lemma \ref{BSteinberg},  if $y_i\odot_r n\in R_m^N$, then $y_in=h_{ij} \odot_r y_j$, for a unique $h_{ij} \in G_m^N$; otherwise, set $h_{ij}=0$. A right Sch\"utzenberger representation of $N$ over $\mathbb{C}[G_m^N]$ is given  by $\pi_{r}: \mathbb{C}[N] \longrightarrow \Mm_{t_m^N}(\mathbb{C}[G_m^{N}]); n \longrightarrow (h_{ij})$.    For $n\in N$,  put   $\pi_l(n)=A=(g_{ij})$, according to  the above definition. Note that each column of $A$ only has at most one non-zero  entry.  Then $n(x_1, \cdots, x_{s_m^N})=(x_1, \cdots, x_{s_m^N})A=(x_1, \cdots, x_{s_m^N})\pi_l(n)$.\footnote{The product here is essentially $(x_1, \cdots, x_{s_m^N})\circ_m\pi_l(n)$.  Without confusion, we  neglect the $m$.} If let $(\Pi_l, W=\prod_{j=1}^{s_m^N} \mathbb{C}[G_m^N])$ be a representation of $N$ given by $\Pi_l(n)\begin{bmatrix}
 f_1\\
 \vdots\\
 f_{s_m^N}
 \end{bmatrix} =\pi_l(n)\begin{bmatrix}
 f_1\\
 \vdots\\
 f_{s_m^N}
 \end{bmatrix}$.    Then the map $p: \mathbb{C}[L_m^N] \longrightarrow W; v=\sum_{i=1} x_i \circ_m f_i\longrightarrow w=\begin{bmatrix}
 f_1\\
 \vdots\\
 f_{s_m^N}
 \end{bmatrix}$,  defines an isomorphism  between   $\pi_l$ and $\Pi_l$.

  Similarly, put $\pi_r(n)=(h_{ij})$. Then $\begin{bmatrix}
 y_1\\
 \vdots\\
 y_{t_m^N}
 \end{bmatrix}n=\pi_r(n)\begin{bmatrix}
 y_1\\
 \vdots\\
 y_{t_m^N}
 \end{bmatrix}$. For $n_1, n_2\in N$, $\begin{bmatrix}
 y_1\\
 \vdots\\
 y_{t_m^N}
 \end{bmatrix}n_1n_2=\pi_r(n_1)\begin{bmatrix}
 y_1\\
 \vdots\\
 y_{t_m^N}
 \end{bmatrix}n_2=\pi_r(n_1)\pi_r(n_2)\begin{bmatrix}
 y_1\\
 \vdots\\
 y_{t_m^N}
 \end{bmatrix}$. Hence $\pi_r(n_1n_2)=\pi_r(n_1)\pi_r(n_2)$.

 Let $(\Pi_r, W=\oplus_{j=1}^{t_m^N} \mathbb{C}[G_m^N])$ be a right representation of $N$ given by $(
 f_1,\cdots,
 f_{t_m^N})\Pi_r(n)=(f_1, \cdots, f_{t_m^N})\pi_r(n)$. Then by  identifying  $\mathbb{C}[R_m^N]$ with $W= \oplus_{j=1}^{t_m^N} \mathbb{C}[G_m^N]$,  sending $v=\sum_{j=1} f_j \circ_m y_j$ to $w=(f_1, \cdots, f_{t_m^N})$, we get an isomorphism between two  right representations $\pi_r$ and $\Pi_r$ of $N$.

For any representation $(\sigma, V)\in \Rep_f(G_m^N)$,  we define two local induced  representations of $N$ as follows:
\[\Ind_{G_m^N}(V)= \mathbb{C}[L_m^N] \otimes_{\mathbb{C}[G_m^N]} V, \quad\quad\quad \Coind_{G_m^N}(V)=\Hom_{G_m^N}(\mathbb{C}[R_m^N],  V).\]
Let $v_1, \cdots, v_l$ be a basis of $V$.  Under such basis, let  $\sigma: \mathbb{C}[G_m^N] \longrightarrow \Mm_l(\mathbb{C})$ be the corresponding matrix representation.  Analogue of Section 5.5 in \cite[pp.74-75]{Stein}, we present the following example for the relative case.
\begin{example}\label{leftrightV}
\begin{itemize}
\item[(1)] Under the basis $\{ x_1\otimes v_1, \cdots, x_1\otimes v_l; \cdots\cdots ; x_{s^N_m} \otimes v_1, \cdots, x_{s^N_m}\otimes v_l\}$, the matrix representation  $\Ind_{G_m^N}(\sigma): \mathbb{C}[N]\longrightarrow \Mm_{s^N_m l}(\mathbb{C}) $ is given by $\Ind_{G_m^N}(\sigma) (n)_{ij}= \sigma( \pi_l(n)_{ij})$.
\item[(2)] Let $y_1^{\ast}, \cdots, y^{\ast}_{t_m^N}$ be a basis of $ \Coind_{G_m^N}(\mathbb{C}[G_m^N])$ defined as $y_j^{\ast}(y_i)= \delta_{ij}m$, for $1\leq i, j\leq t_m^N$.  Then under the basis  $\{ y^{\ast}_1\otimes v_1, \cdots, y^{\ast}_1\otimes v_l; \cdots\cdots ; y^{\ast}_{t^N_m} \otimes v_1, \cdots, y^{\ast}_{t^N_m}\otimes v_l\}$, the matrix representation  $\Coind_{G_m^N}(\sigma): \mathbb{C}[N]\longrightarrow \Mm_{t^N_m l}(\mathbb{C}) $ is given by $\Coind_{G_m^N}(\sigma) (n)_{ij}= \sigma( \pi_r(n)_{ij})$.
\end{itemize}
\end{example}
\begin{proof}
1)  For $n\in N$, $n(x_j\otimes v_p)=\sum_{i=1}^{s_m^N} x_i\otimes  \pi_{l}(n)_{ij} v_p=\sum_{i=1}^{s_m^N} \sum_{q=1}^{s_m^N}  x_i\otimes v_q \cdot     \sigma( \pi_{l}(n)_{ij}))_{qp}$, i.e.,  $n(x_1\otimes v_1, \cdots, x_1\otimes v_l; \cdots\cdots ; x_{s^N_m} \otimes v_1, \cdots, x_{s^N_m}\otimes v_l)= (x_1\otimes v_1, \cdots, x_1\otimes v_l; \cdots\cdots ; x_{s^N_m} \otimes v_1, \cdots, x_{s^N_m}\otimes v_l)\begin{bmatrix}
\sigma(\pi_l(n)_{11}) & \sigma(\pi_l(n)_{12})&\cdots &  \sigma(\pi_l(n)_{1 s_m^N})\\
\sigma(\pi_l(n)_{21}) & \sigma(\pi_l(n)_{22})&\cdots &  \sigma(\pi_l(n)_{2 s_m^N})\\
\vdots& \vdots&\ddots & \vdots\\
\sigma(\pi_l(n)_{s_m^N1}) & \sigma(\pi_l(n)_{s_m^N2})&\cdots &  \sigma(\pi_l(n)_{s_m^N s_m^N})
\end{bmatrix}$.\\
2)  Any $ \alpha\in  \Hom_{G_m^N}(\mathbb{C}[R_m^N],  \mathbb{C}[G_m^N]) $ is uniquely determined by the values $\begin{bmatrix}
\alpha(y_1)\\
 \vdots\\
  \alpha(y_{t_m^N})
 \end{bmatrix}$; the converse  also holds.   In other words, $ \alpha=\sum_{j=1}^{t_m^N}  y_j^{\ast} \alpha(y_j)$.   For $n\in N$, let $\pi_r(n)=(h_{ij})$. Then  $[n\alpha](y_j)=\alpha(y_j n)= \alpha(\sum_{k=1}^{t_m^N}h_{jk} y_k)=\sum_{k=1}^{t_m^N}h_{jk} \alpha(y_k)$. Hence $n\alpha=\sum_{j=1}^{t_m^N}\sum_{k=1}^{t_m^N}y_j^{\ast}h_{jk} \alpha(y_k) $.
Therefore $ny_j^{\ast}\otimes v_p=\sum_{i=1}^{t_m^N}y_i^{\ast}h_{ij}  \otimes v_p=\sum_{i=1}^{t_m^N}y_i^{\ast}\otimes \pi_{r}(n)_{ij}  v_p=\sum_{i=1}^{t_m^N} \sum_{q=1}^{t_m^N}  y^{\ast}_i\otimes v_q \cdot     \sigma( \pi_{r}(n)_{ij}))_{qp} $. Similarly, we get the second statement.
\end{proof}





\subsection{Case $N=M$}\label{nmm}  In this case, let $\emptyset=I_0 \subsetneq I_1 \subsetneq \cdots  \subsetneq I_n=M$ be  a principal series of $M-M$-bi-sets in  $M$. Each $I_i$ is a bi-ideal of $M$, which is also a semigroup.   For simplicity,  in this case we shall neglect  the superscript $N$ in the above several   notations.  For $m\in M$, the Rees factor $J(m)/I(m)$ is a semigroup with zero element, called a  \emph{ principal factor} of $M$. We borrow   the notions of simple semigroup,  $0$-simple semigroup, null semigroup from \cite[Sections 2.5,2.6]{CP1} directly.
\begin{lemma}
\begin{itemize}
\item[(1)] Each principal  factor  of $M$ is simple, $0$-simple, or null.
\item[(2)]  If $\mathbb{C}[M]$ is a semisimple algebra, then  every  principal  factor is simple or $0$-simple.
\item[(3)] $\mathbb{C}[M]$  is a semisimple algebra iff all $\mathbb{C}[I_i/I_{i-1}]$ are semisimple algebras. \footnote{Here, the algebra may  not contain a unity element.}
\end{itemize}
\end{lemma}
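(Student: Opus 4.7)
The plan is to treat the three parts in turn, with (1) being a classical semigroup-theoretic trichotomy, (2) following from the structure of two-sided ideals in a semisimple algebra, and (3) being a short radical-filtration argument.

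For part (1), I would invoke Rees's classical theorem (cf.\ \cite[\S 2.5--2.6]{CP1}, already cited by the paper): in any semigroup, a principal factor $J(m)/I(m)$ is either simple (when $I(m)=\emptyset$, so no zero is present), $0$-simple (when $J_m \cdot J_m \not\subseteq I(m)$), or null (when $J_m \cdot J_m \subseteq I(m)$). The heart of the verification is that a nonzero two-sided ideal $K$ of the Rees quotient containing some $0 \ne a \in J_m$ must absorb every $b \in J_m$: writing $b = xay$ with $x, y \in M$, one checks that in the non-null case $xa$ and $xay = b$ can be arranged to remain in $J(m)$, so the expression descends to the Rees quotient and yields $b \in K$.

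For part (2), suppose $\C[M]$ is semisimple but some principal factor $J(m)/I(m)$ is null, i.e.\ $J_m \cdot J_m \subseteq I(m)$. Since $J(m) = MmM$ is a bi-ideal of $M$, the subspace $\C[J(m)] \subseteq \C[M]$ is a two-sided ideal. Decompose $\C[J(m)] = \C[I(m)] \oplus \C[J_m]$ as a vector space; each of the four summands of $\C[J(m)]^2$ lies in $\C[I(m)]$, since $\C[J_m] \cdot \C[J_m] \subseteq \C[I(m)]$ by the null hypothesis and any product involving $\C[I(m)]$ stays in $\C[I(m)]$ (because $I(m)$ is a bi-ideal of $J(m)$). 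Hence $\C[J(m)]^2 \subseteq \C[I(m)] \subsetneq \C[J(m)]$. On the other hand, in a semisimple algebra every two-sided ideal is of the form $e\C[M]$ for a central idempotent $e$ and is therefore idempotent ($B^2 = B$), a contradiction.

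For part (3), the forward implication uses the same idempotent structure: if $\C[M]$ is semisimple, each $\C[I_i]$ equals $e_i\C[M]$ for a central idempotent $e_i$ with $e_0 = 0$ and $e_{i-1}e_i = e_{i-1}$, whence $\C[I_i]/\C[I_{i-1}] \cong (e_i - e_{i-1})\C[M]$ is semisimple with unit $e_i - e_{i-1}$, and under the canonical identification this is the contracted semigroup algebra $\C[I_i/I_{i-1}]$. For the converse, suppose $R := \rad(\C[M]) \ne 0$ and let $i$ be the smallest index with $R \cap \C[I_i] \ne 0$, which satisfies $i \ge 1$ since $\C[I_0] = 0$. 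Then $R' := R \cap \C[I_i]$ is a nonzero nilpotent two-sided ideal of $\C[I_i]$ with $R' \cap \C[I_{i-1}] = 0$ by minimality of $i$. Its image in $\C[I_i]/\C[I_{i-1}] \cong \C[I_i/I_{i-1}]$ is therefore a nonzero nilpotent two-sided ideal, contradicting the assumed semisimplicity of that principal factor algebra. The main obstacle is keeping the bookkeeping around units straight: the factor algebras $\C[I_i/I_{i-1}]$ need not be unital (as the footnote flags), so ``semisimple'' must be interpreted as having vanishing Jacobson radical rather than as a direct sum of matrix algebras with a global unit; once this interpretation is fixed, both directions of (3) and the key step in (2) reduce to the standard facts that in a finite-dimensional unital semisimple algebra every two-sided ideal is cut out by a central idempotent and that a finite-dimensional algebra is semisimple iff its Jacobson radical vanishes.
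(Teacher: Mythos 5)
Your proposal is correct. The paper does not argue these statements at all: it simply cites Clifford--Preston for each item ((1) = \cite[Lemma 2.39]{CP1}, (2) = \cite[Corollary 5.15]{CP1}, (3) = \cite[Theorem 5.14]{CP1}), and the arguments you supply are essentially the standard ones underlying those citations --- the Rees trichotomy for (1), the fact that every two-sided ideal of a finite-dimensional semisimple algebra is cut out by a central idempotent (hence idempotent, ruling out the null case) for (2), and the central-idempotent filtration plus the minimal-index radical argument for (3). Your explicit handling of the contracted versus non-contracted semigroup algebra, and of ``semisimple'' meaning vanishing Jacobson radical in the non-unital case, is consistent with the paper's footnote and is the right way to read the statement.
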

\begin{proof}
(1) See \cite[p.73, Lmm.2.39]{CP1}. (2) See \cite[ p.162, Coro.5.15]{CP1}.  (3)  See \cite[p.161, Thm.5.14]{CP1}.
\end{proof}
 Let  $I_1$ be the minimal two-sided ideal of $M$. Then $I_1$ is a $\mathcal{J}$-class. If $I_1\neq 0$, then  the semigroup   $I_1\cup\{0\}$ is also  a $0$-simple semigroup by the corollary 2.38 in \cite[p.72]{CP1}.

Let us go back to  the lemma \ref{BSteinberg}. For the representative sets $\{x_1, \cdots, x_{s_m}\}$, $\{ y_1,\cdots,  y_{t_m}\}$, $y_j x_i \in mMm \subseteq M_m$. If $y_jx_i \in J_m$, then $y_jx_i \in J_m\cap M_m=G_m$.

    \begin{definition}
 For the representative sets  $\{x_1, \cdots, x_{s_m}\}$, $\{ y_1,\cdots,  y_{t_m}\}$, let $P(m)$ be the $t_m \times s_m$ matrix, with the $(j,i)$-entry $P(m)_{ji}=\left\{\begin{array}{lr}
y_jx_i , & \textrm{  if } y_jx_i\in G_m\\
 0,& \textrm{ else}\end{array}  \right.   $.  Then one calls $P(m)$ a sandwich matrix for the $\mathcal{J}$-class $J_m$.
      \end{definition}
      \begin{remark}
      \begin{itemize}
      \item[(1)] If $m=e$ is an idempotent  element, then $P(e)$ is  the  classical  sandwich matrix.
      \item[(2)] For a different representative, the  corresponding   sandwich matrix   can be   obtained   by multiplying  the $P(m)$ with  certain    invertible  matrices  over $G_m\cup \{0\}$ on the left and right sides.
      \item[(3)]  For  $m_1\in J_m$, we don't know for which $m_1$  the invertibility of $P(m_1)$  is agreed with $P(m)$.(Those $m_1$ in $G_m$ behave well? exercise)
                  \end{itemize}
                        \end{remark}

 Let us come back to the two Sch\"utzenberger representations in such case.  Following the notations in Example \ref{leftrightV}, we can define  a natural map:
 $$\varphi_V: \Ind_{G_m}(V) \longrightarrow   \Coind_{G_m}(V);  x\otimes v \longrightarrow (y  \longmapsto (y\Diamond x)v),$$
 where $y\Diamond x=\left\{\begin{array}{cl} yx, & \textrm{ if } yx \in G_m\\
 0 & else\end{array} \right.$, given as in \cite[p.70]{Stein} for the case $m=$ an idempotent element.         Let us check $\varphi_V$ is well-defined.

 (i) For $g\in G_m$, $\varphi_V(x\otimes v) (g\circ_m y)=[(g\circ_m y)\Diamond x]v$, which is equal to $ [g\circ_m (y\Diamond x)] v=g[(y\Diamond x) v]=g\varphi_V(x\otimes v) ( y)$, so $\varphi_V(x\otimes v) \in  \Coind_{G_m}(V)$.

(ii)     For $g\in G_m$, $\varphi_V((x \circ_m g)\otimes v)(y) =y\Diamond (x\circ_m g) v=[(y\Diamond x)\circ_m g]v=(y\Diamond x)(gv)=\varphi_V(x \otimes gv)(y)$.

(iii) Let $n\in M$, $x\in L_m$, $y\in R_m$. (a)  If $nx\notin L_m$, then either $y n \notin R_m$ or $yn \Diamond x=0$.  Otherwise, $yn\in R_m$, and $ynx \in G_m$. Then $Mm=Mynx \subseteq Mnx \subseteq Mx=Mm$, a contradiction.  In this case, $\varphi_V(n(x\otimes v))(y)=\varphi_V((n\odot_l x)\otimes v)(y) =0$, and $n \varphi_V(x\otimes v)(y)=\varphi_V(x\otimes v)(yn)=0$. (b)  If $yn \notin  R_m$, then either $ nx\notin L_m$ or $y \Diamond nx=0$.  Otherwise, $nx\in L_m$, and $ynx \in G_m$. Then $mM=ynxM \subseteq ynM\subseteq yM=mM$, a contradiction.  In this case, $\varphi_V(n(x\otimes v))(y)=\varphi_V((n\odot_l x)\otimes v)(y) =0$, and $n \varphi_V(x\otimes v)(y)=\varphi_V(x\otimes v)(yn)=0$.(c) If $nx\in L_m$, and  $y n \in R_m$,  then $\varphi_V(n(x \otimes v))(y)= ynx(v)=  \varphi_V((x \otimes v))(yn)=n \varphi_V(x\otimes v)(y)$. (For simplicity, we write $ynx=0$ if $ynx\notin G_m$.)

Analogous of Thm.5.29 in \cite{Stein}, we present  the following result.
\begin{lemma}\label{ISOBI}
Keep the notations of Example \ref{leftrightV}. Under the basis $\{ x_1\otimes v_1, \cdots, x_1\otimes v_l; \cdots\cdots ; x_{s_m} \otimes v_1, \cdots, x_{s_m}\otimes v_l\}$ of  $\Ind_{G_m}(V) $, and   the basis  $\{ y^{\ast}_1\otimes v_1, \cdots, y^{\ast}_1\otimes v_l; \cdots\cdots ; y^{\ast}_{t_m} \otimes v_1, \cdots, y^{\ast}_{t_m}\otimes v_l\}$ of  $\Coind_{G_m}(V)$, the matrix of $\varphi_V$ is given by $\sigma(P(m))$.
\end{lemma}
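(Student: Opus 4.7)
The statement is a direct unwinding of definitions followed by a matrix computation, so the plan is to perform that computation while keeping careful track of the identifications made in Example~\ref{leftrightV}(2). First I would make explicit the meaning of the basis element $y_j^*\otimes v_q$ inside $\Coind_{G_m}(V)=\Hom_{G_m}(\C[R_m],V)$: namely, composing the $G_m$-equivariant map $y_j^*\colon\C[R_m]\to\C[G_m]$, $y_i\mapsto\delta_{ij}m$, with the canonical $G_m$-map $\C[G_m]\otimes_{G_m}V\to V$, $g\otimes v\mapsto\sigma(g)v$, one gets that $y_j^*\otimes v_q$ represents the $G_m$-map $y_i\mapsto\delta_{ij}v_q$. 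Consequently any $\alpha\in\Coind_{G_m}(V)$ can be written as $\alpha=\sum_{j,q}c_{jq}(y_j^*\otimes v_q)$ where $\alpha(y_j)=\sum_qc_{jq}v_q$.

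Next I would compute $\varphi_V(x_i\otimes v_p)$ by evaluating it at an arbitrary $y_j$:
$$\varphi_V(x_i\otimes v_p)(y_j)=(y_j\Diamond x_i)\,v_p=P(m)_{ji}\,v_p,$$
by the very definition of $\varphi_V$ and of the sandwich matrix $P(m)$. If $P(m)_{ji}=0$ this is zero; otherwise $P(m)_{ji}=y_jx_i\in G_m$, and $P(m)_{ji}\,v_p=\sigma(P(m)_{ji})v_p=\sum_q\sigma(P(m)_{ji})_{qp}\,v_q$. Combining these with the previous paragraph,
$$\varphi_V(x_i\otimes v_p)=\sum_{j,q}\sigma(P(m)_{ji})_{qp}\,(y_j^*\otimes v_q).$$

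Reading off the column of the matrix of $\varphi_V$ indexed by $(i,p)$: its $(j,q)$-entry equals $\sigma(P(m)_{ji})_{qp}$. Grouping into $l\times l$ blocks indexed by the pairs $(j,i)$, the $(j,i)$-block is exactly $\sigma(P(m)_{ji})$, which is the same data as the $t_m\times s_m$ matrix $\sigma(P(m))$ expanded in the ordered bases of the lemma. This proves the claim. The only place where care is required is the bookkeeping between the block structure on $\Coind_{G_m}(V)$ and the matrix entries of $\sigma$; I would verify the ordering by comparing with the calculation already made in Example~\ref{leftrightV}(2), where the same convention $n\cdot(y_j^*\otimes v_p)=\sum_{i,q}\sigma(\pi_r(n)_{ij})_{qp}(y_i^*\otimes v_q)$ has been set up, so no new sign or transpose issue arises.
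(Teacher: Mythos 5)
Your proposal is correct and follows essentially the same route as the paper's own proof: evaluate $\varphi_V(x_i\otimes v_p)$ at each $y_j$ using the definition of $\varphi_V$ and of the sandwich matrix, expand $\sigma(P(m)_{ji})v_p$ in the basis $\{v_q\}$, and read off the block matrix $\bigl(\sigma(P(m)_{ji})\bigr)$ with the same ordering conventions as in Example \ref{leftrightV}(2). Your explicit identification of $y_j^{\ast}\otimes v_q$ with the map $y_i\mapsto\delta_{ij}v_q$ is a helpful piece of bookkeeping that the paper leaves implicit, but it does not change the argument.
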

\begin{proof}
For each $i$,  $\varphi_V(x_i \otimes v_p)(y_j)=\sigma(y_j\Diamond x_i)v_p$, so $\varphi_V(x_i \otimes v_p)= \sum_{j=1}^{t_m} y_j^{\ast} \otimes \sigma(y_j\Diamond x_i) v_p=\sum_{j=1}^{t_m} y_j^{\ast} \otimes \sigma(P(m)_{ji}) v_p=\sum_{j=1}^{t_m} \sum_{q=1}^l y_j^{\ast} \otimes v_q \sigma(P(m)_{ji})_{qp}$. Hence, $\varphi_V(x_1\otimes v_1, \cdots, x_1\otimes v_l; \cdots\cdots ; x_{s_m} \otimes v_1, \cdots, x_{s_m}\otimes v_l)= (y^{\ast}_1\otimes v_1, \cdots, y^{\ast}_1\otimes v_l; \cdots\cdots ; y^{\ast}_{t_m} \otimes v_1, \cdots, y^{\ast}_{t_m}\otimes v_l)\begin{bmatrix}
\sigma(P(m)_{11}) & \sigma(P(m)_{12})&\cdots &  \sigma(P(m)_{1 s_m})\\
\sigma(P(m)_{21}) & \sigma(P(m)_{22})&\cdots &  \sigma(P(m)_{2 s_m})\\
\vdots& \vdots&\ddots & \vdots\\
\sigma(P(m)_{t_m1}) & \sigma(P(m)_{t_m2})&\cdots &  \sigma(P(m)_{t_m s_m})
\end{bmatrix}$.
 \end{proof}
 \begin{lemma}\label{inter}
The sandwich matrix $P(m)$ defines an intertwining operator  between the two   Sch\"utzenberger representations $\pi_l(n)$ and $\pi_r(n)$(cf. Section \ref{Schrepre}).
  \end{lemma}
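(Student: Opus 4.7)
The plan is to extract the intertwining identity $P(m)\,\pi_l(n) = \pi_r(n)\,P(m)$ purely from associativity of multiplication in the Rees quotient algebra $A_m := \mathbb{C}[M]/\mathbb{C}[I(m)]$. The point is that $I(m) = J(m) \setminus J_m$ is a genuine two-sided ideal of $M$ (if $x \in I(m)$ then $MxM \subsetneq MmM$, so $MxM \subseteq I(m)$), hence $\mathbb{C}[I(m)]$ is a two-sided ideal of $\mathbb{C}[M]$, and associativity descends cleanly to $A_m$.

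The first step is to record three congruences modulo $\mathbb{C}[I(m)]$ that are immediate from the definitions in Section \ref{Schrepre} together with the local-structure lemmas. For every $n \in M$, $j$, one has $n x_j \equiv \sum_k x_k \circ_m \pi_l(n)_{kj}$: either $n x_j \in L_m$ and the equality holds on the nose, or $n x_j \in M x_j \subseteq Mm$ with $n x_j \notin L_m$, in which case Lemma \ref{lm}(2) (giving $J_m \cap Mm = L_m$) forces $n x_j \in I(m)$. Symmetrically $y_i n \equiv \sum_k \pi_r(n)_{ik} \circ_m y_k$. Finally $y_j x_i \equiv P(m)_{ji}$, because $y_j x_i \in mMm \subseteq M_m$ and $M_m \cap J_m = G_m$, so a failure of $y_j x_i$ to land in $G_m$ forces it into $I(m)$.

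The second step is to check the two small compatibilities
\[ (g \circ_m y_k)\,x_i \;\equiv\; g \circ_m (y_k x_i), \qquad y_j\,(x_k \circ_m g) \;\equiv\; (y_j x_k) \circ_m g \pmod{\mathbb{C}[I(m)]} \]
for $g \in G_m$. Writing $g = g_l m$ and $y_k = m y_{k,r}$ gives $g \circ_m y_k = g_l y_k$, hence $(g \circ_m y_k)\,x_i = g_l(y_k x_i)$, which coincides with $g \circ_m (y_k x_i)$ when $y_k x_i \in G_m$ and otherwise lies in $g_l\,I(m) \subseteq I(m)$. The intertwining is then a one-line consequence of associativity: compute $y_j n x_i$ in $A_m$ in the two obvious ways,
\[ (y_j n)\,x_i \;\equiv\; \sum_k (\pi_r(n)_{jk} \circ_m y_k)\,x_i \;\equiv\; \sum_k \pi_r(n)_{jk} \circ_m P(m)_{ki} \;=\; (\pi_r(n)\,P(m))_{ji}, \]
\[ y_j\,(n x_i) \;\equiv\; \sum_k y_j\,(x_k \circ_m \pi_l(n)_{ki}) \;\equiv\; \sum_k P(m)_{jk} \circ_m \pi_l(n)_{ki} \;=\; (P(m)\,\pi_l(n))_{ji}. \]
Since $(y_j n)\,x_i = y_j\,(n x_i)$ already in $\mathbb{C}[M]$, the two entries agree in $A_m$; but both sides lie in $\mathbb{C}[G_m]$, which embeds in $A_m$, so the congruence is an equality. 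Letting $j, i$ vary gives the matrix identity.

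The main obstacle is purely bookkeeping: one must ensure that the failure of $nx_j$, $y_i n$, or $y_j x_i$ to lie in $L_m$, $R_m$, or $G_m$ respectively is always matched by vanishing in $A_m$, and that the $\circ_m$-product on $G_m$ (extended by zero) is compatible with the multiplication induced from $A_m$. Once Lemmas \ref{lm} and \ref{BSteinberg} and the two-sided-ideal property of $I(m)$ are in hand, everything reduces to associativity; no further structural input is required.
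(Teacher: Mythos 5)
Your proof is correct and follows essentially the same route as the paper: the paper's one-line proof writes $P(m)=AB$ ``formally'' (the column of the $y_j$'s times the row of the $x_i$'s) and evaluates $AnB$ in two ways, which is exactly your associativity computation of $y_jnx_i$. What you add is a rigorous justification of that formal step, by working modulo $\mathbb{C}[I(m)]$ in the Rees quotient so that the zero conventions in $\pi_l$, $\pi_r$ and $P(m)$ become genuine congruences and $\mathbb{C}[G_m]$ embeds, but this is an elaboration of the same argument rather than a different approach.
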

  \begin{proof}
  Let us write $P(m)=AB$ formally.  Then for $n\in M$, $AnB=\pi_r(n) AB=AB\pi_l(n)$, i.e., $P(m) \pi_l(n)=\pi_r(n) P(m)$.
      \end{proof}
\begin{remark}\label{INV}
If $(\sigma, V)$ is a faithful representation of $G_m$,  then $P(m)$ is a non-singular  matrix over $\mathbb{C}[G_m]$ iff $\sigma(P(m))$ is a non-singular   matrix over $\mathbb{C}$.
\end{remark}
\begin{proof}
See \cite[p.164, Lmm.5.22]{CP1}.
\end{proof}


Go back to the  left $M$-module $\mathbb{C}[L_m]$, and the right $M$-module $\mathbb{C}[R_m] $.  Assume $G_m=\{g_1=m, \cdots, g_l\}$.  Let us consider $V=\mathbb{C}[G_m]$, and $\sigma$= the left regular representation of $G_m$.  Then $\Ind_{G_m}(V)  \simeq \mathbb{C}[L_m]$, $ \Coind_{G_m}(V)=\Hom_{G_m}(\mathbb{C}[R_m], \mathbb{C}[G_m])$. Let $y_j^{\ast}\in\Coind_{G_m}(V)$, which is  determined  by $y_j^{\ast}(y_i)=\delta_{ji}$.  Then $\Coind_{G_m}(V)=\sum_{j=1}^{t_m} y_j^{\ast}  \mathbb{C}[G_m]$.

\begin{lemma}\label{ISO}
Let $m=e$ be an idempotent element.   Then  $\varphi_V$ is an  isomorphism iff  $ \Ind_{G_e}(\mathbb{C}[G_e])  \simeq   \Coind_{G_e}(\mathbb{C}[G_e])$.
 \end{lemma}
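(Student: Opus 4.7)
The forward direction is trivial. For the converse, by Lemma \ref{ISOBI} the matrix of $\varphi_V$ in the natural bases is $\sigma(P(e))$ for $\sigma$ the (faithful) left regular representation of $G_e$, so by Remark \ref{INV} the map $\varphi_V$ is an isomorphism iff the sandwich matrix $P(e)$ is invertible over $\mathbb{C}[G_e]$. Assuming $\Ind_{G_e}(\mathbb{C}[G_e]) \simeq \Coind_{G_e}(\mathbb{C}[G_e])$ as $\mathbb{C}[M]$-modules, comparing dimensions gives $s_e |G_e| = t_e |G_e|$, hence $s_e = t_e$, so $P(e)$ is square; the task reduces to showing $\varphi_V$ is surjective.

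I plan to exploit the $A_J$-module structure on both sides, where $A_J := \mathbb{C}[J_e \cup \{0\}]$. Since every $x \in L_e$ has the form $me$ for some $m \in M$, the module $\Ind_{G_e}(\mathbb{C}[G_e]) \simeq \mathbb{C}[L_e] = A_J \cdot e$ is cyclic over $A_J$, generated by $e$; transporting via any $M$-module isomorphism $\phi : \Ind_{G_e}(\mathbb{C}[G_e]) \xrightarrow{\sim} \Coind_{G_e}(\mathbb{C}[G_e])$, the module $\Coind_{G_e}(\mathbb{C}[G_e]) = A_J \cdot \phi(e)$ becomes cyclic over $A_J$ with $\phi(e) \in e \cdot \Coind_{G_e}(\mathbb{C}[G_e])$. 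I identify $e \cdot \Ind_{G_e}(\mathbb{C}[G_e]) \simeq \mathbb{C}[G_e]$ via $e \otimes v \leftrightarrow v$, and $e \cdot \Coind_{G_e}(\mathbb{C}[G_e]) \simeq \mathbb{C}[G_e]$ via $e\alpha \leftrightarrow \alpha(e)$; the latter rests on Lemma \ref{BSteinberg}(5)(f), which forces $ye \in G_e$ whenever $y \in R_e$ satisfies $ye \in R_e$, combined with the left $G_e$-linearity $\alpha(ye) = (ye)\cdot \alpha(e)$. Under these identifications, the defining formula $\varphi_V(e \otimes v)(y) = (y \Diamond e)\cdot v$ evaluated at $y = e$ yields $\varphi_V(e \otimes v)(e) = v$, so $\varphi_V$ restricted to $e$-components is the identity on $\mathbb{C}[G_e]$; in particular $e \cdot \Coind_{G_e}(\mathbb{C}[G_e]) \subseteq \Im(\varphi_V)$.

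Since $\Im(\varphi_V)$ is an $A_J$-submodule of $\Coind_{G_e}(\mathbb{C}[G_e])$ containing $\phi(e)$, it contains $A_J \cdot \phi(e) = \Coind_{G_e}(\mathbb{C}[G_e])$, giving surjectivity of $\varphi_V$. Together with the dimension equality $\dim \Ind_{G_e}(\mathbb{C}[G_e]) = \dim \Coind_{G_e}(\mathbb{C}[G_e])$, this shows $\varphi_V$ is an isomorphism, equivalently $P(e)$ is invertible. The main obstacle is the identification $e \cdot \Coind_{G_e}(\mathbb{C}[G_e]) \simeq \mathbb{C}[G_e]$: one has to pin down the (left) $G_e$-action on $\mathbb{C}[R_e]$ restricted from multiplication in $M$, verify that the $G_e$-linearity built into $\Hom_{G_e}(\mathbb{C}[R_e], \mathbb{C}[G_e])$ reduces $e\alpha$ entirely to the single value $\alpha(e)$, and then confirm the tautological nature of $\varphi_V$ on $e$-components through this identification.
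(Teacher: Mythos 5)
Your proposal is correct, and every step checks out against the paper's own machinery: $\Ind_{G_e}(\C[G_e])\simeq \C[L_e]$ is cyclic as an $M$-module on the generator $e\otimes e$ (every $x\in L_e$ is $m\odot_l e$ for some $m$); any abstract $M$-isomorphism $\phi$ therefore exhibits $\Coind_{G_e}(\C[G_e])$ as generated by $\phi(e\otimes e)\in e\cdot\Coind_{G_e}(\C[G_e])$; the identification $e\cdot\Coind_{G_e}(\C[G_e])\simeq \C[G_e]$ by evaluation at $e$ is legitimate, precisely because $y\in R_e$ with $ye\in R_e$ forces $ye\in R_e\cap eM\cap Me=G_e$ (Lemma \ref{BSteinberg}(5)(f) with $N=M$) and then $G_e$-linearity gives $(e\alpha)(y)=(ye)\alpha(e)$; and since $\varphi_V(e\otimes v)(e)=(e\Diamond e)v=v$, the image of the $M$-morphism $\varphi_V$ contains $e\cdot\Coind_{G_e}(\C[G_e])$, hence all of $\Coind_{G_e}(\C[G_e])$, and equality of dimensions ($s_e|G_e|=t_e|G_e|$) upgrades surjectivity to bijectivity.

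The comparison with the paper is short: the paper gives no argument at all for this lemma, deferring entirely to Steinberg (Exercise 4.3, p.~51, or Theorem 15.6, p.~230 of \cite{Stein}), whose underlying mechanism is the adjunction $\Hom_M(\Ind_{G_e}(W),U)\simeq\Hom_{G_e}(W,eU)$ and its dual, i.e.\ the fact that morphisms out of $\Ind$ and into $\Coind$ are controlled by their $e$-components. Your proof is exactly the self-contained, hands-on version of that mechanism (cyclic generator at $e$, evaluation at $e$, $M$-linearity of $\varphi_V$), and it does not even need Lemma \ref{ISOBI} or Remark \ref{INV} except as packaging; what it buys is a proof internal to the paper's own lemmas rather than an external citation, at the cost of the routine verifications you already flagged (which, as checked above, all go through).
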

\begin{proof}
See exercise 4.3 in \cite[p.51]{Stein}, or \cite[p.230,  Thm.15.6]{Stein}.
  \end{proof}

     \begin{corollary}
$\mathbb{C}[M]$ is a semi-simple algebra iff  for each $\mathcal{J}$-class, there exists at least an  element $m$ in such class such that  the corresponding sandwich matrix $P(m)$ is a non-singular   matrix over $\mathbb{C}[G_m^M]$.
    \end{corollary}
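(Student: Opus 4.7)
The plan is to combine the principal-series reduction from the preceding lemma with the classical Munn--Ponizovski\^i criterion recorded as Remark \ref{INV}, reducing every equivalence to the case of an idempotent representative.

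First, by the preceding lemma, $\C[M]$ is semi-simple iff each $\C[I_i/I_{i-1}]$ is, and each $I_i \setminus I_{i-1}$ is exactly one $\mathcal{J}$-class $J$. If $J$ is null then $y_j x_i \in J \cdot J \subseteq I(m)$ for every $m \in J$, so $P(m) = 0$ is singular; at the same time $\C[I_i/I_{i-1}]$ has zero multiplication and is not semi-simple. Hence on both sides null $\mathcal{J}$-classes cannot arise, and it suffices to treat the regular case.

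Second, for a regular $\mathcal{J}$-class $J$, I would pick an idempotent $e \in J$ and take $m = e$. Applying Lemma \ref{ISOBI} with $\sigma$ the faithful left regular representation of $G_e$ on $V = \C[G_e]$, the natural map $\varphi_V \colon \Ind_{G_e}(V) \to \Coind_{G_e}(V)$ has matrix $\sigma(P(e))$. By Remark \ref{INV}, $\sigma(P(e))$ is invertible iff $P(e)$ is non-singular over $\C[G_e]$, and by Lemma \ref{ISO} that happens iff $\Ind_{G_e}(\C[G_e]) \simeq \Coind_{G_e}(\C[G_e])$. The remaining equivalence---between this last condition and the semi-simplicity of $\C[J/I(e)]$---comes from the Rees realisation of the completely $0$-simple semigroup $J \cup \{0\}$ as a Rees matrix semigroup over $G_e$ whose sandwich matrix is, up to permutation and $G_e$-unit scaling, $P(e)$; the corresponding twisted matrix algebra decomposes as a direct sum of full matrix algebras iff $P(e)$ is invertible.

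Chaining these equivalences yields that $\C[M]$ is semi-simple iff every regular $\mathcal{J}$-class admits an idempotent $e$ with non-singular $P(e)$, and since the statement only demands \emph{some} $m \in J$ with non-singular $P(m)$, the idempotent $e$ is a valid witness in both directions. I expect the main obstacle to be the bookkeeping identifying $\C[J/I(e)]$ with the Rees matrix algebra and matching the two notions of sandwich matrix; once that is done, Artin--Wedderburn finishes the argument. The existential formulation of the corollary is convenient precisely because it lets me always pick an idempotent representative, bypassing the subtlety flagged earlier in the paper that for non-idempotent $m$ in the same class the invertibility of $P(m)$ is not obviously comparable with that of $P(e)$.
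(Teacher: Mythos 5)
Your forward direction (semi-simplicity implies the existence of a witness) is fine: there you are proving an existential statement, so you are free to choose the idempotent $e$ as the witness, and your chain through Lemma \ref{ISOBI}, Remark \ref{INV}, Lemma \ref{ISO} and the Rees/Munn description of $\C[J\cup\{0\}]$ is essentially the standard criterion. The genuine gap is in the converse. There the existential is the \emph{hypothesis}: you are given, for each $\mathcal{J}$-class, some element $m$ with $P(m)$ non-singular, and this $m$ need not be an idempotent. Your second paragraph nevertheless begins ``I would pick an idempotent $e\in J$ and take $m=e$,'' and your closing remark that the existential formulation ``lets me always pick an idempotent representative'' confuses the two roles of the quantifier: you may pick the witness only when proving the existence, not when consuming it. As the remark following the definition of $P(m)$ in the paper explicitly warns, it is not known in general that invertibility of $P(m_1)$ for different representatives $m_1$ of the same class agree, so you cannot silently replace the given $m$ by $e$. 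What you have actually proved is the variant ``$\C[M]$ is semi-simple iff each $\mathcal{J}$-class contains an idempotent $e$ with $P(e)$ non-singular,'' which is weaker than the stated converse.

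The missing bridge is exactly the content of the paper's proof: from non-singularity of $P(m)$ for an \emph{arbitrary} $m$ in the class, Lemma \ref{ISOBI} together with Remark \ref{INV} (applied to the faithful regular representation of $G_m$) and Lemma \ref{inter} show that $\varphi_{\C[G_m]}$ is an isomorphism, i.e. $\Ind_{G_m}(\C[G_m])\simeq \Coind_{G_m}(\C[G_m])$ as $M$-modules — a statement independent of the choice of representative — and then Lemma \ref{isoLR} (with $N=M$, $m_1=m$, $m_2=e$) transports the bimodule identifications $\C[L_m]\simeq\C[L_e]$, $\C[R_m]\simeq\C[R_e]$ across the class to conclude $\Ind_{G_e}(\C[G_e])\simeq \Coind_{G_e}(\C[G_e])$, after which Steinberg's Theorem 5.21 gives semi-simplicity. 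Your handling of null classes and your reduction along the principal series are fine and match the paper; inserting this $P(m)\Rightarrow$ (module isomorphism at $e$) step is what is needed to make the converse direction correct as stated.
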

  \begin{proof}
 Notice  that  if $P(m)$ is a non-singular  matrix, then  there exists at least $y_jx_i\neq 0$, which implies that the corresponding  principal  factor  of $M$ is  not a null semigroup.  Hence $M$ is regular.   Let $e\in E(M)$, and $e\mathcal{J} m$.
 By  Lmm.\ref{inter}, Remark \ref{INV}, $\Ind_{G_m}(\mathbb{C}[G_m])  \simeq   \Coind_{G_m}(\mathbb{C}[G_m])$, as $M$-modules.  By Lmm.\ref{isoLR}, $\Ind_{G_e}(\mathbb{C}[G_e])  \simeq   \Coind_{G_e}(\mathbb{C}[G_e])$.     Combining with the theorem 5.21 in \cite[p.72]{Stein} , we get the result.
       \end{proof}

\subsection{Two Axioms}\label{Axioms}
Keep the notations of Section \ref{Schrepre}. Continue the above discussion. Let us present two axioms,    which are  not necessary for the whole purpose.
\begin{axiom*}[I]
  For every element  $m\in M$,   $\Ind_{G_m^N}(\mathbb{C}[G_m^N])\simeq  \Coind_{G_m^N}(\mathbb{C}[G_m^N])$ as $N$-modules.

      \end{axiom*}
 \begin{axiom*}[II]
            $N$ is a regular monoid.
  \end{axiom*}

\begin{lemma}\label{AxiomsI}
If   the axiom (I) holds,  then:
\begin{itemize}
\item[(1)]  $s_m^N=t_m^N$,
\item[(2)] $\Ind_{G_m^N}(\sigma)\simeq  \Coind_{G_m^N}(\sigma)$, for any $(\sigma, V)\in \Irr(\mathbb{C}[G_m^N])$.
\end{itemize}
\end{lemma}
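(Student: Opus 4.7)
Part (1) is a straightforward dimension count. By Lemma \ref{BSteinberg}(5), $\mathbb{C}[L_m^N]$ has $\mathbb{C}$-dimension $s_m^N|G_m^N|$, and $\mathbb{C}[R_m^N]$ is a free left $\mathbb{C}[G_m^N]$-module of rank $t_m^N$. Consequently $\Ind_{G_m^N}(\mathbb{C}[G_m^N]) \simeq \mathbb{C}[L_m^N]$ has dimension $s_m^N|G_m^N|$, while $\Coind_{G_m^N}(\mathbb{C}[G_m^N]) = \Hom_{G_m^N}(\mathbb{C}[R_m^N], \mathbb{C}[G_m^N])$ has dimension $t_m^N|G_m^N|$. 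The $N$-module isomorphism furnished by Axiom~(I) forces these two numbers to coincide, yielding $s_m^N = t_m^N$.

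For part (2) my plan is to mimic the sandwich-matrix argument of Section \ref{nmm} (the absolute case $N=M$). Fix the transversals $\{x_i\}$ and $\{y_j\}$ of Lemma \ref{BSteinberg}(5) and define the relative sandwich matrix $P^N(m) \in \Mm_{s_m^N}(\mathbb{C}[G_m^N] \cup \{0\})$ by $P^N(m)_{ji} = y_j x_i$ if $y_j x_i \in G_m^N$ and $0$ otherwise; by (1) this is a square matrix. One then introduces a natural candidate map $\varphi_V: \Ind_{G_m^N}(V) \to \Coind_{G_m^N}(V)$ by $\varphi_V(x \otimes v)(y) = (y \Diamond x)\, v$, where $y \Diamond x = yx$ or $0$ according as $yx \in G_m^N$ or not. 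Repeating the bookkeeping carried out before Lemma \ref{ISOBI} shows that $\varphi_V$ is well defined on the balanced tensor product and $N$-linear, and that in the bases of Example \ref{leftrightV} its matrix equals $\sigma(P^N(m))$, where $\sigma: \mathbb{C}[G_m^N] \to \Mm_{\dim V}(\mathbb{C})$ is the matrix representation afforded by $V$.

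I then apply this construction with $V = \mathbb{C}[G_m^N]$ and $\sigma$ the left regular representation, which is faithful. The relative analogue of Lemma \ref{ISO}---proved along the same lines as the absolute case---says that the mere existence of an abstract $N$-module isomorphism $\Ind_{G_m^N}(\mathbb{C}[G_m^N]) \simeq \Coind_{G_m^N}(\mathbb{C}[G_m^N])$ already forces the canonical map $\varphi_{\mathbb{C}[G_m^N]}$ to be an isomorphism. Hence $\sigma(P^N(m))$ is invertible, and Remark \ref{INV} (which only uses the faithfulness of $\sigma$) then yields that $P^N(m)$ itself is invertible in $\Mm_{s_m^N}(\mathbb{C}[G_m^N])$. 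For an arbitrary $(\sigma, V) \in \Irr(\mathbb{C}[G_m^N])$, the ring homomorphism $\sigma$ sends the unit $P^N(m)$ to a unit $\sigma(P^N(m))$, whence $\varphi_V$ is an isomorphism and part (2) follows.

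The delicate step, which I expect to be the main obstacle, is the well-definedness of $\varphi_V$ on $\mathbb{C}[L_m^N] \otimes_{\mathbb{C}[G_m^N]} V$ in the relative setting. In the absolute case this reduces to the identity $y(x \circ_m g) = (yx) \circ_m g$ for $yx \in G_m$, together with the equivalence $yx \in G_m \iff y(x \circ_m g) \in G_m$. Carrying this out for a general submonoid $N$ requires careful use of Lemma \ref{BSteinberg}(6)---in particular the bijectivity of right multiplication by the ``residue'' component $g_r$ of $g \in G_m^N$ on the orbits $Nm$ and $Nmg_r$. A closely related secondary issue is transferring Lemma \ref{ISO} itself to the relative setting; since both $\Ind_{G_m^N}(\mathbb{C}[G_m^N])$ and $\Coind_{G_m^N}(\mathbb{C}[G_m^N])$ remain free right $\mathbb{C}[G_m^N]$-modules of rank $s_m^N$, one expects the argument cited from \cite{Stein} to adapt, but it must be verified line by line.
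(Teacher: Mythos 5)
Your part (1) matches the paper's (the paper just says ``immediate''; the dimension count $s_m^N|G_m^N|=\dim\C[L_m^N]$ versus $t_m^N|G_m^N|=\dim\Hom_{G_m^N}(\C[R_m^N],\C[G_m^N])$ is the intended argument). For part (2), however, your route has a genuine gap, and it sits exactly at the two places you flag but do not verify. First, the relative analogue of Lemma \ref{ISO} --- that a mere abstract $N$-module isomorphism $\Ind_{G_m^N}(\C[G_m^N])\simeq\Coind_{G_m^N}(\C[G_m^N])$ forces the \emph{canonical} map $\varphi_{\C[G_m^N]}$ to be an isomorphism, i.e.\ forces invertibility of the relative sandwich matrix $P^N(m)$ --- is not available: in the absolute case this is Steinberg's Theorem 15.6, whose proof uses the structure of the principal factor inside $\C[M]$, none of which is at hand relative to an arbitrary submonoid $N$, and it is at least as hard as the lemma you are trying to prove. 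Second, even the well-definedness and $N$-equivariance of $\varphi_V$ break down in the relative setting: the absolute verification (step (iii) preceding Lemma \ref{ISOBI}) uses that $y\in mM$ and hence $Mynx\subseteq Mnx$, so $ynx\in G_m$ forces $nx\in L_m$; here $y\in R_m^N\subseteq mN$ is in general \emph{not} in $N$, so $Nynx\not\subseteq Nnx$, and likewise $y_jx_i\in mNm$ need not lie in $N_m=mN\cap Nm$, whereas in the absolute case $mMm\subseteq M_m$ automatically. Tellingly, in the lemma directly after Lemma \ref{AxiomsI} the paper states Axiom $(I')$ in terms of an \emph{arbitrary} invertible intertwiner $P\in\Mm_{s_m^N}(\C[G_m^N])$ rather than the sandwich matrix built from the products $y_jx_i$, a sign that the canonical-map route is not expected to survive relativization.

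The paper's own proof of (2) bypasses all of this with a short chain of isomorphisms: since $\C[R_m^N]$ is a finitely generated free left $\C[G_m^N]$-module (Lemma \ref{BSteinberg}(3)), one has $\Coind_{G_m^N}(\sigma)=\Hom_{G_m^N}(\C[R_m^N],\C[G_m^N]\otimes_{\C[G_m^N]}V)\simeq \Hom_{G_m^N}(\C[R_m^N],\C[G_m^N])\otimes_{\C[G_m^N]}V$, and Axiom (I) identifies $\Hom_{G_m^N}(\C[R_m^N],\C[G_m^N])=\Coind_{G_m^N}(\C[G_m^N])$ with $\C[L_m^N]=\Ind_{G_m^N}(\C[G_m^N])$, so the right-hand side becomes $\C[L_m^N]\otimes_{\C[G_m^N]}V=\Ind_{G_m^N}(\sigma)$. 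If you insist on your approach you would have to develop relative versions of Lemmas \ref{ISOBI} and \ref{ISO} from scratch; the direct argument above makes that machinery unnecessary.
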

\begin{proof}
Part (1)  is immediate.     Under the axiom (I), $\Coind_{G_m^N}(\sigma)\simeq  \Hom_{G_m^N}(\mathbb{C}[R_m^N], \mathbb{C}[G^N_m]\otimes_{\mathbb{C}[G^N_m]}V)
\simeq \Hom_{G_m^N}(\mathbb{C}[R_m^N],  \mathbb{C}[G_m^N]) \otimes_{\mathbb{C}[G_m^N]}V \simeq \C[ L_m^N]\otimes_{\mathbb{C}[G_m^N]}V \simeq \Ind_{G_m^N}(\sigma)$.
\end{proof}
\begin{lemma}
Axiom (I) is equivalent to Axiom $(I')$ that there exists a non-singular  matrix $ P\in \Mm_{s_m^N}(\mathbb{C}[G_m^N] )  $, which defines an intertwining operator  between the two   Sch\"utzenberger representations $\pi_l(n)$ and $\pi_r(n)$.
\end{lemma}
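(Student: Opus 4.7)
The plan is to establish both implications by passing through the natural pairing between the induced and coinduced modules realized by the relative sandwich matrix $P(m)$, and using faithfulness of the left regular representation of $G_m^N$ to transfer non-singularity between $\mathbb{C}[G_m^N]$-coefficients and $\mathbb{C}$-coefficients.

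For the direction $(\mathrm{I}')\Rightarrow(\mathrm{I})$: given a non-singular $P\in\Mm_{s_m^N}(\mathbb{C}[G_m^N])$ satisfying $P\pi_l(n)=\pi_r(n)P$ for all $n\in N$ (which in particular forces $s_m^N=t_m^N$), I would let $\sigma$ be the left regular representation of $G_m^N$, which is faithful, and apply $\sigma$ entrywise to obtain a block matrix $\sigma(P)$, which is non-singular over $\mathbb{C}$ by the analog of Remark \ref{INV}. By Example \ref{leftrightV}, $\sigma(\pi_l(n))$ and $\sigma(\pi_r(n))$ are precisely the matrices of the $N$-action on $\Ind_{G_m^N}(\mathbb{C}[G_m^N])$ and $\Coind_{G_m^N}(\mathbb{C}[G_m^N])$ in the canonical bases, so the identity $\sigma(P)\sigma(\pi_l(n))=\sigma(P\pi_l(n))=\sigma(\pi_r(n)P)=\sigma(\pi_r(n))\sigma(P)$ exhibits $\sigma(P)$ as an invertible left $N$-module intertwiner, establishing Axiom (I).

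For the direction $(\mathrm{I})\Rightarrow(\mathrm{I}')$: I would define the canonical map $\varphi\colon\mathbb{C}[L_m^N]\simeq\Ind_{G_m^N}(\mathbb{C}[G_m^N])\to\Coind_{G_m^N}(\mathbb{C}[G_m^N])$ by $\varphi(x)(y)=yx$ if $yx\in G_m^N$ and $0$ otherwise, extended by linearity. The verification that $\varphi$ is well-defined, $G_m^N$-right-equivariant, and left $N$-linear proceeds exactly as in Section \ref{nmm}, with the relative Green's relations replacing the absolute ones in cases (i)--(iii). By a direct analog of Lemma \ref{ISOBI}, the matrix of $\varphi$ in the natural bases (with $\sigma$ the left regular representation of $G_m^N$) is exactly $\sigma(P(m))$, where $P(m)$ is the relative sandwich matrix. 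The crucial step is then a relative analog of Lemma \ref{ISO}: if any left $N$-module isomorphism $\Ind_{G_m^N}(\mathbb{C}[G_m^N])\simeq\Coind_{G_m^N}(\mathbb{C}[G_m^N])$ exists, then this specific canonical $\varphi$ is itself an isomorphism. Granting this, $\sigma(P(m))$ is invertible over $\mathbb{C}$, and thus by the analog of Remark \ref{INV} with $\sigma$ faithful, $P(m)$ is non-singular over $\mathbb{C}[G_m^N]$; combining with Lemma \ref{inter} yields Axiom $(\mathrm{I}')$.

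The principal obstacle is exactly the relative version of Lemma \ref{ISO}. Both modules are free right $\mathbb{C}[G_m^N]$-modules of rank $s_m^N=t_m^N$, and a direct calculation shows that $(N,G_m^N)$-bimodule maps between them correspond bijectively to matrices $Q\in\Mm_{s_m^N}(\mathbb{C}[G_m^N])$ satisfying $Q\pi_l(n)=\pi_r(n)Q$; such a map is an isomorphism iff $Q$ is non-singular. Thus the content is that a bare left $N$-module isomorphism already implies existence of a bimodule isomorphism, equivalently that the canonical $\varphi$ is non-degenerate. I would attack this by mimicking Steinberg's $N=M$ analysis via the canonical radical/top filtration of $\Ind_{G_m^N}(W)$ adapted to the relative Green's relations of Section \ref{rela}, showing that $\varphi$ respects the Clifford--Munn--Ponizovski\u i decomposition and is non-zero on each irreducible constituent, so that any $N$-module isomorphism forces $\varphi$ itself to be invertible.
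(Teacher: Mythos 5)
Your $(\mathrm{I}')\Rightarrow(\mathrm{I})$ direction is fine and is essentially the paper's argument (apply a faithful representation entrywise and read off the two module actions from Example \ref{leftrightV}). The gap is in $(\mathrm{I})\Rightarrow(\mathrm{I}')$: you make the whole implication rest on a ``relative analog of Lemma \ref{ISO}'', namely that if \emph{some} left $N$-module isomorphism $\Ind_{G_m^N}(\C[G_m^N])\simeq\Coind_{G_m^N}(\C[G_m^N])$ exists, then the specific map $\varphi$ given by a relative sandwich matrix is itself an isomorphism. You do not prove this, and it is in fact false in the relative setting: the paper defines $P(m)$ and proves Lemma \ref{ISO} only for $N=M$ and $m=e$ idempotent, and for general $m$ the entries $y_jx_i$ (products taken in $M$) need not land in $G_m^N$ at all. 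Concretely, let $M$ be a nontrivial finite group, $N=\{1\}$, and $m\neq 1$; then $L_m^N=R_m^N=G_m^N=\{m\}$, both $\Ind_{G_m^N}(\C[G_m^N])$ and $\Coind_{G_m^N}(\C[G_m^N])$ are the one-dimensional trivial $N$-module, so Axiom (I) holds (and (I') holds, e.g.\ with $P=(m)$), yet $y_1x_1=m^2\notin G_m^N$, so the canonical $\varphi$ is the zero map. Hence the key lemma you ``grant'' cannot serve as the bridge, and the hard implication is not closed by your outline.

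The paper's proof of $(\mathrm{I})\Rightarrow(\mathrm{I}')$ avoids any canonical map. It takes $\sigma=\oplus_{\lambda\in\Irr(G_m^N)}\lambda$, which is an algebra \emph{isomorphism} $\C[G_m^N]\simeq\oplus_{\lambda}\Mm_{m_\lambda}(\C)$, hence $\Mm_{s_m^N}(\sigma)$ is an isomorphism onto $\oplus_\lambda \Mm_{s_m^N}\big(\Mm_{m_\lambda}(\C)\big)$. Axiom (I) together with Lemma \ref{AxiomsI} gives $s_m^N=t_m^N$ and $\Ind_{G_m^N}(\lambda)\simeq\Coind_{G_m^N}(\lambda)$ for every irreducible $\lambda$, i.e.\ an invertible complex matrix $A_\lambda$ with $A_\lambda\,\lambda(\pi_l(n))=\lambda(\pi_r(n))\,A_\lambda$ for all $n\in N$; then $P:=[\Mm_{s_m^N}(\sigma)]^{-1}(\oplus_\lambda A_\lambda)$ is non-singular over $\C[G_m^N]$ and satisfies $P\pi_l(n)=\pi_r(n)P$. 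The surjectivity of this Wedderburn decomposition is precisely what replaces your (false) appeal to the sandwich map: your own correct observation that $N$--$G_m^N$-bimodule maps correspond to matrices over $\C[G_m^N]$ intertwining $\pi_l$ and $\pi_r$ is exactly what the blockwise construction produces, so if you want to salvage your outline, replace the ``relative Lemma \ref{ISO}'' step by this construction rather than by a filtration argument on $\varphi$.
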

\begin{proof}
$(I') \Rightarrow (I)$: we can deduce the result  from Example \ref{leftrightV} and Remark \ref{INV}. \\
$(I) \Rightarrow (I')$: Let $\sigma=\oplus_{(\lambda, V_{\lambda})\in \Irr(G_m^N)} \lambda$, $V=\oplus_{(\lambda, V_{\lambda})\in \Irr(G_m^N)} V_{\lambda}$.  Let us choose   a basis $v_1, \cdots, v_{m_{\lambda}}$ for each $V_{\lambda}$. Then   there exists an isomorphism $\sigma=\oplus \lambda: \mathbb{C}[G_m^N] \longrightarrow \oplus_{\lambda\in \Irr(G_m^N)} \Mm_{m_{\lambda}}(\mathbb{C})$. It also implies an isomorphism $\Mm_{s_m^N}(\sigma): \Mm_{s_m^N}(\mathbb{C}[G_m^N] )   \longrightarrow \oplus_{\lambda\in \Irr(G_m^N)} \Mm_{s_m^N}\big(\Mm_{m_{\lambda}}(\mathbb{C})\big)$.  By Lmm.\ref{AxiomsI},  for each $\lambda \in \Irr(G_m^N)$,  we have $s_m^N=t_m^N$, and $\Ind_{G_m^N}(\lambda)\simeq  \Coind_{G_m^N}(\lambda)$.  By choosing the corresponding basis as given in Example \ref{leftrightV}, there exists an inverse matrix $A_{\lambda} \in \GL_{s^N_m m_{\lambda} }(\mathbb{C})$, such that $A_{\lambda} \lambda( \pi_l(n))= \lambda( \pi_r(n))A_{\lambda}  $.  Let $P=[\Mm_{s_m^N}(\sigma)]^{-1}(\oplus_{\lambda \in \Irr(G_m^N)} A_{\lambda})$, which is an inverse matrix over $\mathbb{C}[G_m^N]$. Then $\oplus_{\lambda} \lambda (P)[\oplus_{\lambda}  \lambda( \pi_l(n))]=  \oplus_{\lambda} A_{\lambda} \lambda( \pi_l(n))  =\oplus_{\lambda}  \lambda( \pi_r(n))A_{\lambda}  =[\oplus_{\lambda} \lambda(\pi_r(n))][\oplus_{\lambda} \lambda(P)]$, which implies $P \pi_l(n)=\pi_r(n)P$.

\end{proof}
\begin{remark}
\begin{itemize}
\item[(1)] If the axioms (I) (II)  both hold,  $\mathbb{C}[N]$ is a semi-simple algebra.
\item[(2)]  If only the axiom (I)   holds, $\mathbb{C}[N]$ may not be  a semi-simple algebra.
\end{itemize}
\end{remark}
\begin{proof}
(1) By considering these  $m\in N$, and  Thm.5.19 in \cite[p.70]{Stein}, we obtain the result. For (2),  see Example 5.23  in \cite[p.73]{Stein}.
\end{proof}
\subsection{Contragredient representations for  inverse monoids} Let $M$ be an inverse monoid with  the canonical involution  $\ast$.  Assume $e\in E(M)$, and $L_e=\sum_{i=1}^{s_m} x_i\circ_e G_e$. Then $G_e^{\ast}=G_e$, and $R_e=L_e^{\ast}= \sum_{i=1}^{s_e} G_e\circ_e x_i^{\ast}$, i.e. $t_e=s_e$, and we can choose $y_i=x_i^{\ast}$ in Lmm.\ref{BSteinberg} in this case.
\begin{lemma}
$x_i^{\ast}x_j\notin J_e$, for $i\neq j$.
\end{lemma}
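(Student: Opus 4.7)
The plan is to argue by contradiction. Suppose $i \neq j$ but $z := x_i^* x_j \in J_e$. I will show this forces $x_j \in x_i \circ_e G_e$, contradicting the assumption that the $x_1,\dots,x_{s_e}$ lie in pairwise distinct $G_e$-orbits.

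First I would assemble the standard facts about the inverse monoid $M$: idempotents commute, $(ab)^* = b^*a^*$, and for any $x$, the relations $x \mathcal{L} y$ and $x \mathcal{R} y$ are characterised by $x^*x = y^*y$ and $xx^* = yy^*$ respectively. In particular $x_i, x_j \in L_e$ gives $x_i^*x_i = x_j^*x_j = e$, while $x_i^* \in R_e$ gives $e x_i^* = x_i^*$ and $x_j \in L_e$ gives $x_j e = x_j$. From this $ez = ex_i^*x_j = x_i^*x_j = z$ and similarly $ze = z$, so $z \in eMe \subseteq eM \cap Me = M_e$.

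The key reduction is now: under the standing assumption $z \in J_e$, Lemma~\ref{BSteinberg}(5)(f) (applied with $N=M$) yields $z \in J_e \cap M_e = G_e$. Hence $z$ sits in the group $G_e = H_e$, so $z^{-1} = z^*$ and in particular
\[
x_j^* x_i x_i^* x_j \;=\; z^*z \;=\; e \;=\; x_j^*x_j.
\]
Multiplying on the left by $x_j$ and using $x_j x_j^* x_j = x_j$, I obtain $f_j f_i x_j = x_j$, where $f_i := x_i x_i^*$ and $f_j := x_j x_j^*$. Since all idempotents of $M$ commute and $f_j x_j = x_j$, this rearranges to
\[
f_i x_j \;=\; f_i (f_j x_j) \;=\; (f_j f_i) x_j \;=\; x_j,
\]
i.e.\ $x_i x_i^* x_j = x_j$. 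Therefore $x_i z = x_i x_i^* x_j = x_j$, and unwinding the definition of $\circ_e$ in Lemma~\ref{BSteinberg}(3) gives $x_i \circ_e z = x_j$. Thus $x_j \in x_i \circ_e G_e$, which contradicts $x_i$ and $x_j$ representing different $G_e$-orbits on $L_e$, and the proof is complete.

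The only mildly delicate step is the passage $f_j f_i x_j = x_j \Longrightarrow f_i x_j = x_j$, but it is an immediate consequence of the commutativity of idempotents in an inverse monoid; the real conceptual input is the clean observation $eMe \cap J_e = G_e$ coming from Lemma~\ref{BSteinberg}(5)(f). Everything else is formal manipulation with the involution and the characterisations of the Green relations in inverse semigroups.
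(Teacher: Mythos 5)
Your proof is correct, and while it opens exactly as the paper does, its second half takes a genuinely different route. Both arguments begin by observing that $z=x_i^{\ast}x_j$ satisfies $ez=z=ze$, so if $z\in J_e$ then $z\in J_e\cap eM\cap Me=G_e$ (the paper states this identity directly; you justify it via Lemma \ref{BSteinberg}(5)(f) with $N=M$). From there the paper sets $e_i=x_ix_i^{\ast}$, $e_j=x_jx_j^{\ast}$, shows by a chain of $\mathcal{J}$-class equalities that $e_ie_j$, $e_i$, $e_j$ are all $\mathcal{J}$-equivalent, invokes Steinberg's Proposition 3.13 to force $e_i=e_j$, i.e. $x_i\mathcal{R}x_j$, and then contradicts Lemma \ref{BSteinberg}(4). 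You instead use $z^{\ast}z=e$ (which indeed holds, since $z\in G_e\subseteq L_e$), the identity $x_jx_j^{\ast}x_j=x_j$, and commutativity of idempotents to obtain $x_ix_i^{\ast}x_j=x_j$, hence $x_i\circ_e z=x_j$ with $z\in G_e$, contradicting the disjointness of $L_e=\sqcup_i\, x_i\circ_e G_e$ directly. Your version is more self-contained (no appeal to the comparability-plus-$\mathcal{J}$-equivalence fact for idempotents) and yields the explicit group element carrying $x_i$ to $x_j$, while the paper's version is computationally lighter but leans on the cited proposition and on part (4) of Lemma \ref{BSteinberg}. The only cosmetic blemishes on your side are the undefined symbol $H_e$ (the paper only defines $H_m^{(N,K)}$ for pairs of submonoids) and the claim $z^{-1}=z^{\ast}$, which is true but unnecessary since you only use $z^{\ast}z=e$.
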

\begin{proof}
If $x_i^{\ast}x_j\in J_e$, then $x_i^{\ast}x_j\in J_e\cap eM\cap Me=G_e$. Assume $x_ix_i^{\ast}=e_i, x_jx_j^{\ast}=e_j$. Then  $MeM=Me_ix_jM=Me_ie_jM=Mx^{\ast}_iM=(Me_i)M=Mx_jM=M(e_jM)$. By \cite[p.30, Prop.3.13]{Stein}, $e_ie_j=e_i=e_j$, contradicting  to $i\neq j$ by Lmm.\ref{BSteinberg}(4).
\end{proof}
As a consequence, we can see that by choosing $y_i=x_i^{\ast}$, the sandwich matrix $P(e)=\diag(e, \cdots, e)$,  which is  the identity matrix. (Exercise 5.18 in   \cite[p.80]{Stein})
Let $(\pi, V)$ be an irreducible representation of $M$ having an apex $e$.  Recall $(D(\pi), D(V)=\Hom_{\C}(V, \C)) $ is a right representation of $M$. By composing with the involution $\ast$, we can get a left representation, denoted by $D(\pi)\circ \ast$.
\begin{lemma}
$\check{\pi} \simeq D(\pi)\circ \ast$, as left representations.
\end{lemma}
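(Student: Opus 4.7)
The plan is to reduce the identification to the Clifford--Munn--Ponizovski\u\i\ correspondence (Theorem \ref{CMP}): two irreducible left $M$-modules are isomorphic iff they share the same apex $e\in E(M)$ and their localizations at $e$ are isomorphic as $G_e$-modules. So I will check three items: (a) $D(\pi)\circ\ast$ is irreducible; (b) it has apex $J_e$; (c) its localization $e\cdot(D(V)\circ\ast)$ is isomorphic to $\check{W}=D(W)$ as $G_e$-modules, where $W=eV$. Granted all three, since the construction after Theorem \ref{CMP} gives $e\check{V}\simeq \check{W}$ for $\check{\pi}=\Ind_{G_e}(\check{W})$, the two left modules $\check{\pi}$ and $D(\pi)\circ\ast$ must coincide.

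For (a), irreducibility of $\pi$ makes $D(\pi)$ irreducible as a right $M$-module (any nontrivial left submodule of $D(V)\circ\ast$ is a right $M$-submodule of $D(V)$, whose annihilator in $V$ is a proper left $M$-submodule of $V$, hence zero). Composition with the anti-involution $\ast$ converts it to an irreducible left module. For (b), I use that on an inverse monoid $\ast$ fixes every idempotent and permutes $\mathcal{J}$-classes, with $J_e^\ast=J_e$ since $e^\ast=e$. By definition $m$ annihilates $D(\pi)\circ\ast$ iff $m^\ast$ annihilates $\pi$, i.e.\ $m^\ast\in I_{J_e}$, which translates to $J_e\not\subseteq Mm^\ast M=(MmM)^\ast$, equivalently $J_e\not\subseteq MmM$. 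Hence $\Ann_M(D(V)\circ\ast)=I_{J_e}$, so the apex is $J_e$.

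For (c), I compute directly. The space $e\cdot(D(V)\circ\ast)$ consists of those $f\in D(V)$ with $e\cdot f=f$, meaning $f(\pi(e^\ast)v)=f(\pi(e)v)=f(v)$ for all $v\in V$. Since $\pi(e)$ is the projector onto $W=eV$, this is exactly the condition that $f$ vanishes on $(1-\pi(e))V=\ker\pi(e)$, so $f$ factors through the quotient $V\twoheadrightarrow eV=W$. This gives a $\C$-linear isomorphism $e\cdot(D(V)\circ\ast)\simeq D(W)=\check{W}$. Compatibility with the $G_e$-action: for $g\in G_e$ one has $g^\ast=g^{-1}$ (the restriction of $\ast$ to any group $\mathcal{H}$-class of an inverse monoid is group inversion), whence $[g\cdot f](v)=f(\pi(g^{-1})v)$, which is precisely the contragredient $G_e$-action on $D(W)$. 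Thus $e\cdot(D(V)\circ\ast)\simeq\check{W}$ as $G_e$-modules, completing (c).

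The main obstacle I expect is bookkeeping around $\ast$: one must invoke (or briefly justify) the three facts that on the inverse monoid $M$ the involution fixes idempotents, preserves $\mathcal{J}$-classes, and restricts to inversion on each maximal subgroup $G_e$. Once these are in hand, everything above is formal, and the desired isomorphism follows by assembling (a)--(c) through Theorem \ref{CMP}.
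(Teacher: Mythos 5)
Your proof is correct, but it follows a different route from the paper. The paper argues by explicit construction: writing $V=\oplus_{i=1}^{s_e} x_i\circ_e W$, $\check{V}=\oplus_i x_i\circ_e\check{W}$ and $D(V)=\oplus_i D(W)\circ_e x_i^{\ast}$, it fixes a linear map $\mathfrak{a}:\check{W}\to D(W)$ with $\mathfrak{a}(g\check{w})=\mathfrak{a}(\check{w})g^{-1}$, defines $\mathfrak{A}(x_i\circ_e w)=\mathfrak{a}(w)\circ_e x_i^{\ast}$, and verifies by a direct matrix-coefficient computation (using that $mx_i=x_j\circ_e g_{ji}$ iff $x_i^{\ast}m^{\ast}=g_{ji}^{-1}\circ_e x_j^{\ast}$) that $\mathfrak{A}(m\check{v})=\mathfrak{A}(\check{v})m^{\ast}$, producing an explicit intertwiner. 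You instead invoke the Clifford--Munn--Ponizovski\u\i\ classification: an irreducible left $M$-module is determined up to isomorphism by its apex and its localization at the apex idempotent, so it suffices to check that $D(\pi)\circ\ast$ is irreducible (via the perp correspondence between right submodules of $D(V)$ and left submodules of $V$), that $\Ann_M(D(V)\circ\ast)=I_{J_e}$ (using $(MmM)^{\ast}=Mm^{\ast}M$ and $e^{\ast}=e$), and that $e\cdot(D(V)\circ\ast)\simeq D(W)$ with the contragredient $G_e$-action (using $g^{\ast}=g^{-1}$ on $G_e$). Both arguments rest on the same three facts about the involution (it fixes idempotents, preserves $\mathcal{J}$-classes, and restricts to inversion on maximal subgroups), and both implicitly use semisimplicity of $\C[M]$ for a finite inverse monoid so that $\Ind_{G_e}(\check{W})$ is itself irreducible with localization $\check{W}$. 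What your route buys is brevity and conceptual clarity, with no coordinate bookkeeping; what the paper's route buys is an explicit isomorphism $\mathfrak{A}$, which can be useful when one needs concrete formulas (e.g., for pairings such as the one discussed right after the lemma). Either way, make the appeal to the refined form of Theorem (Clifford--Munn--Ponizovski\u\i) explicit -- namely that $V\mapsto eV$ realizes the bijection, so apex plus localization determines the irreducible -- since the theorem as stated in the paper only records the bijection of sets.
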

\begin{proof}
Note that the result is right if $M$ is a group. Assume now that $\pi=\Ind_{G_e} \sigma, V=\Ind_{G_e} W$. Then $\check{\pi}=\Ind_{G_e} \check{\sigma}$, $D(\pi)=D(W)\otimes_{G_e} \C[R_e]$. More precisely,  $V=\oplus_{i=1}^{s_e} x_i \circ_e W$,  $\check{V}=\oplus_{i=1}^{s_e} x_i \circ_e \check{W}$, and $D(V)=\oplus_{i=1}^{s_e} D(W) \circ_e x_i^{\ast}$.  Let $\mathfrak{a}$ be a $\C$-linear map from $\check{W} $ to $D(W)$ such that $\mathfrak{a}(g_e\check{w})=\mathfrak{a}(\check{w})g_e^{-1}$, for $g_e\in G_e$, $\check{w}\in \check{W}$. Then we can define a $\C$-linear map from $\check{V} $ to $D(V)$ determined by
$\mathfrak{A}( x_i \circ_e w)=\mathfrak{a}(w)\circ_e x_i^{\ast}$. Let us check that $\mathfrak{A}$ defines a $M$-isomorphism from $\check{\pi} \simeq D(\pi)\circ \ast$.
For $m\in M$,  $mx_i\notin L_e$ iff $x_i^{\ast}m_i^{\ast}\notin R_e$. Moreover,  $mx_i=x_j\circ_e g_{ji}$ iff $x_i^{\ast} m^{\ast}=g_{ji}^{\ast} \circ_e x_j^{\ast}=g_{ji}^{-1} \circ_e x_j^{\ast}$, for some $g_{ji}\in G_e$.  If $mx_i\notin x_j\circ_e G_e$, we also write $mx_i=x_j\circ_e g_{ji}$, with $g_{ji}=0$, and  write $g_{ji}^{\ast}=0$. Then for $\check{v}=\sum_{i=1}^{s_e} x_i \circ_e w_i$, $m\check{v}=\sum_{i=1}^{s_e}  \sum_{j=1}^{s_e} x_j \circ_e g_{ji} w_i$.  Hence:
\[\mathfrak{A}(m\check{v})= \mathfrak{A}(\sum_{i=1}^{s_e}  \sum_{j=1}^{s_e} x_j \circ_e g_{ji} w_i)=  \sum_{i=1}^{s_e}  \sum_{j=1}^{s_e}   \mathfrak{a}( g_{ji} w_i) \circ_e x_j^{\ast} \]
\[=\sum_{i=1}^{s_e}  \sum_{j=1}^{s_e}   \mathfrak{a}(  w_i) g_{ji}^{\ast} \circ_e x_j^{\ast}=\sum_{i=1}^{s_e}  \sum_{j=1}^{s_e}   \mathfrak{a}(  w_i)  \circ_e g_{ji}^{\ast}x_j^{\ast}\]
\[=\sum_{i=1}^{s_e}     \mathfrak{a}(  w_i)  \circ_e x_i^{\ast} m^{\ast}=\mathfrak{A}(\check{v})m^{\ast}\]
\end{proof}
Notice that we can not claim that  $\Hom_{M}(V\otimes \check{V}, \C) \simeq \C$,  analogue of $p$-adic case. We can only get $\dim \Hom_{M}(V\otimes \check{V}, \C)\leq 1$.
\section{Centric monoid}\label{centmonoid}

   \subsection{Global induced functors}
       To compare with representations of $p$-adic groups, here we shall consider two   global induced functors.
         For any $(\sigma, W)\in \Rep_f(N)$, we define the first induced representation in the following way:
    $\Ind_N^M W=\{ f: M \longrightarrow W\mid f(nm)=\sigma(n)f(m),  n\in N, m\in M\}$, the monoid homomorphism $\Ind_N^M \sigma: M \longrightarrow \End_{\mathbb{C}}(\Ind_N^M W)$  is given by $[\Ind_N^M\sigma] (m)f(x)=f(xm)$, for $x, m\in M $.       Let $B=\mathbb{C}[N]$,$A=\mathbb{C}[M]$.  For any $f\in \Ind_N^M W$, we can extend it to be a function $\widetilde{f}: A\longrightarrow W$ by  linearization.      Hence $ \Ind_N^M W \simeq \Hom_{B}(A, W); f \mapsto \widetilde{f}$.   We also define the second induced representation as    $ \ind_N^M W=A\otimes_BW$.

      \begin{theorem}[Frobenius Reciprocity]
    For $(\sigma, W)\in \Rep_f(N)$, $(\pi, V)\in \Rep_f(M)$,  $\Hom_M(\ind_N^M W, V) \simeq \Hom_N(W, V)$, $\Hom_M(V, \Ind_N^M W) \simeq \Hom_N(V, W)$.
    \end{theorem}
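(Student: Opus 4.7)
The plan is to derive both isomorphisms from the tensor-hom adjunction of Lemma \ref{Ad}(2), together with the canonical $\C[N]$-linear identification $\Hom_M(A,V)\simeq V$ given by evaluation at $1\in A$ (with inverse $v\mapsto(a\mapsto av)$) and the analogous identification $A\otimes_M V\simeq V$ given by multiplication (with inverse $v\mapsto 1\otimes v$). Once these two building blocks are in hand, each reciprocity statement reduces to a single application of Lemma \ref{Ad}(2) with the appropriate choice of bimodules.

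For the first isomorphism, I apply Lemma \ref{Ad}(2) with $M_0=M$, $M_1=N$, $V_1=A$ viewed as an $M$-$N$-bimodule, $V_2=W$ as a left $N$-module, and $V_3=V$ as a left $M$-module. This gives
\[
\Hom_M(\ind_N^M W,V)=\Hom_M(A\otimes_B W,V)\simeq \Hom_N\!\big(W,\Hom_M(A,V)\big)\simeq \Hom_N(W,V),
\]
where the last step uses the evaluation identification.

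For the second isomorphism, I apply Lemma \ref{Ad}(2) again, now with $M_0=N$, $M_1=M$, $V_1=A$ regarded as an $N$-$M$-bimodule, $V_2=V$ as a left $M$-module, and $V_3=W$ as a left $N$-module. This produces
\[
\Hom_N(A\otimes_M V,W)\simeq \Hom_M\!\big(V,\Hom_N(A,W)\big)=\Hom_M(V,\Ind_N^M W),
\]
and the multiplication isomorphism $A\otimes_M V\simeq V$ rewrites the left-hand side as $\Hom_N(V,W)$.

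No step presents a genuine obstacle; the only care required is checking that the two evaluation/multiplication maps respect the $N$-actions and not merely the underlying $\C$-vector space structures. For evaluation at $1$, note that the left $M$-action on $\Hom_M(A,V)$ coming from right multiplication on $A$ satisfies $(n\cdot f)(1)=f(n)=n\,f(1)$ by $M$-linearity of $f$, so evaluation is $\C[N]$-linear; the check for $A\otimes_M V\to V$ is entirely analogous, using that $a\otimes v\mapsto av$ is well-defined on the tensor product over $M$ and $N$-equivariant because the $N$-action on $A$ is by left multiplication.
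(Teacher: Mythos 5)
Your proposal is correct and follows essentially the same route as the paper: both parts are obtained from the adjoint associativity of Lemma \ref{Ad}(2) applied to $A$ as an $M$-$N$ (resp.\ $N$-$M$) bimodule, combined with the canonical identifications $\Hom_M(A,V)\simeq V$ and $A\otimes_M V\simeq V$, whose $N$-equivariance you verify just as the paper does via $[af_v](b)=f_v(ba)=bav$.
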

 \begin{proof}
 1) Let us first show that $\Hom_{A}(A, V)  \simeq V$ as $A$-modules.   Each $f\in \Hom_{A}(A, V)$ is uniquely determined by $f(1)\in V$. Conversely, for any $v\in V$, we can define a unique  $f_v\in \Hom_{A}(A, V)$, given by $f_v(a)=av$.  For $a\in A$, $[af_v](b)=f_v(ba)=bav=f_{av}(b)$, which means $af_v=f_{av}$. So the result holds. Hence  $\Hom_M(\ind_N^M W, V) \simeq  \Hom_{A}( A\otimes_BW, V) \simeq \Hom_{B}(W, \Hom_{A}(A, V)) \simeq \Hom_N(W, V)$. \\
 2) $  \Hom_M(V, \Ind_N^M W) \simeq \Hom_A(V , \Hom_{B}(A, W))\simeq \Hom_{B}(A\otimes_AV, W) \simeq \Hom_N(V,W)$.
     \end{proof}
     \begin{remark}
     There exist (1) $W \longrightarrow \ind_N^M W; w \longrightarrow 1\otimes w$, as $N$-modules, (2) $\Ind_N^M W \longrightarrow W; f\longmapsto f(1)$, as $N$-modules.
               \end{remark}
      \begin{lemma}
      If $O$ is a submonoid of $N$ with the same identity, then for $(\lambda, U)\in \Rep_f(O)$, $\ind_{N}^M \ind_{O}^N \lambda \simeq \ind_{O}^M \lambda$, $\Ind_{N}^M \Ind_{O}^N \lambda \simeq \Ind_{O}^M \lambda$.
            \end{lemma}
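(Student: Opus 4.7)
The plan is to reduce both statements to module-theoretic facts about the group algebras, using the explicit descriptions $\ind_N^M W = \C[M] \otimes_{\C[N]} W$ and $\Ind_N^M W \simeq \Hom_{\C[N]}(\C[M], W)$ established immediately before the statement. Since $O \subseteq N \subseteq M$ are submonoids with the same identity, $\C[M]$ is a $\C[M]$-$\C[N]$-bimodule and $\C[N]$ is a $\C[N]$-$\C[O]$-bimodule, so the canonical multiplication map induces a $\C[M]$-$\C[O]$-bimodule isomorphism $\C[M] \otimes_{\C[N]} \C[N] \simeq \C[M]$. This is the single structural input driving both assertions.

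For the first isomorphism, I would write
\[
\ind_N^M \ind_O^N \lambda \;=\; \C[M] \otimes_{\C[N]}\bigl(\C[N] \otimes_{\C[O]} U\bigr) \;\simeq\; \bigl(\C[M] \otimes_{\C[N]} \C[N]\bigr) \otimes_{\C[O]} U \;\simeq\; \C[M] \otimes_{\C[O]} U \;=\; \ind_O^M \lambda,
\]
where the middle step is the usual associativity of tensor products of bimodules (which holds at the level of $\C$-algebras and hence for $\C[M]$-$\C[O]$-bimodules). The resulting composite sends $a \otimes (b \otimes u) \mapsto ab \otimes u$, which is manifestly a left $\C[M]$-module morphism, giving the desired $M$-equivariance.

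For the second isomorphism, I would invoke Lemma \ref{Ad} on adjoint associativity directly:
\[
\Ind_N^M \Ind_O^N \lambda \;\simeq\; \Hom_{\C[N]}\!\bigl(\C[M],\, \Hom_{\C[O]}(\C[N], U)\bigr) \;\simeq\; \Hom_{\C[O]}\!\bigl(\C[M] \otimes_{\C[N]} \C[N],\, U\bigr) \;\simeq\; \Hom_{\C[O]}(\C[M], U) \;\simeq\; \Ind_O^M \lambda.
\]
The left $\C[M]$-action on $\Hom_{\C[N]}(\C[M],-)$ in each case is by right translation on $\C[M]$, which is preserved by the adjunction and by the identification $\C[M] \otimes_{\C[N]} \C[N] \simeq \C[M]$, so $M$-equivariance comes for free.

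I do not foresee a real obstacle; the only care needed is to keep track of which side acts, so that the adjunction of Lemma \ref{Ad} applies with the correct bimodule roles ($\C[M]$ as a right $\C[N]$-module in its first slot, $\C[N]$ as a right $\C[O]$-module in its first slot, and $U$ as the terminal $\C[O]$-module). Once the bimodule bookkeeping is done correctly, both statements are immediate consequences of tensor-Hom adjunction and associativity of tensor product.
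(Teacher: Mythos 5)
Your proposal is correct and follows essentially the same route as the paper: the first isomorphism is exactly the paper's chain $A\otimes_B(B\otimes_C U)\simeq A\otimes_C U$, and the second is the same application of the adjoint associativity of Lemma \ref{Ad} together with the collapse of the redundant factor $\C[N]$. One bookkeeping correction in the second chain: since $U$ and all induced spaces here are \emph{left} modules, the adjunction yields $\Hom_{\C[N]}\bigl(\C[M],\Hom_{\C[O]}(\C[N],U)\bigr)\simeq \Hom_{\C[O]}\bigl(\C[N]\otimes_{\C[N]}\C[M],\,U\bigr)$ (the paper's $B\otimes_B A$), not $\Hom_{\C[O]}\bigl(\C[M]\otimes_{\C[N]}\C[N],U\bigr)$ as you wrote, which would be the right-module form and does not apply to the left $\C[O]$-module $U$; with the factors in the correct order the identification $\C[N]\otimes_{\C[N]}\C[M]\simeq\C[M]$ gives $\Hom_{\C[O]}(\C[M],U)=\Ind_O^M\lambda$ exactly as in the paper.
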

    \begin{proof}
    Let $C=\C[O]$. Then (1) $\ind_{N}^M \ind_{O}^N U \simeq A\otimes_B(B\otimes_C U) \simeq A\otimes_C U$,    (2) $\Ind_{N}^M \Ind_{O}^N U \simeq \Hom_{B}(A, \Hom_{C}(B, U))\simeq \Hom_{C}(B\otimes_B A, U) \simeq \Hom_{C}(A, U) \simeq  \Ind_{O}^M U$.
        \end{proof}
         In the rest of this  subsection,  we shall adopt  the  assumption that $M$,  $N$ both  are \textit{semi-simple} monoids.       In \cite{Ri1}, Marc  Rieffel discussed explicitly    the  next result   for the case that  $M/N$ is a group.   However, our objects are finite monoids  not just only  groups.  Hence here we  give a new representation-theoretic  proof of the next result,   which can be also applied in the finite  group case.

                \begin{lemma}\label{EQInd}
Under the semi-simple assumptions,  $\Ind_N^M W \simeq \ind_N^M W$ as $A$-modules.
    \end{lemma}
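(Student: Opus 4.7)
The plan is to reduce the isomorphism to a multiplicity comparison and then read off each multiplicity via the Frobenius reciprocity proved just above. First observe that both modules are finite-dimensional: $\ind_N^M W=A\otimes_B W$ is a quotient of $A\otimes_{\C}W$, while $\Ind_N^M W=\Hom_B(A,W)$ embeds in $\Hom_{\C}(A,W)$. Since $A=\C[M]$ is semi-simple, Theorem \ref{semisimplealgebras} guarantees that every finite-dimensional $A$-module is semi-simple; hence it suffices to verify
\[ m_M(\ind_N^M W, V)=m_M(\Ind_N^M W, V) \quad \text{for every } (\pi,V)\in\Irr(M). \]

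By Frobenius reciprocity,
\[ m_M(\ind_N^M W, V)=\dim\Hom_N(W,\Res^M_N V), \qquad m_M(V,\Ind_N^M W)=\dim\Hom_N(\Res^M_N V, W). \]
Next I would invoke the standard consequence of Schur's lemma: for any semi-simple $A$-module $U\simeq\oplus n_i V_i$ (with distinct irreducibles $V_i$) and any irreducible $V$, one has $m_M(U,V)=m_M(V,U)$, both equalling the multiplicity of $V$ in $U$. Applied to $U=\Ind_N^M W$, which is semi-simple by the first step, this converts the second formula into $m_M(\Ind_N^M W, V)=\dim\Hom_N(\Res^M_N V, W)$.

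Finally, since $B=\C[N]$ is semi-simple, both $W$ and $\Res^M_N V$ are semi-simple $N$-modules; decomposing into $N$-isotypic parts $W\simeq\oplus_\alpha m_\alpha W_\alpha$ and $\Res^M_N V\simeq\oplus_\alpha n_\alpha W_\alpha$ with $W_\alpha\in\Irr(N)$, Schur gives
\[ \dim\Hom_N(W,\Res^M_N V)=\sum_\alpha m_\alpha n_\alpha=\dim\Hom_N(\Res^M_N V, W). \]
Combining the three displayed equalities yields the required multiplicity identity for every $V\in\Irr(M)$, and semi-simplicity of $A$ then forces $\ind_N^M W\simeq \Ind_N^M W$ as $A$-modules. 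The argument is entirely formal once the semi-simplicity of $A$ and $B$ is in hand; the only mild subtlety is routing through $\Hom_M(V,\Ind_N^M W)$---the direction in which Frobenius reciprocity was stated for $\Ind$---and then using semi-simplicity to pass back to $m_M(\Ind_N^M W, V)$.
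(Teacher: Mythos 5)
Your argument is correct, and it is genuinely different from the paper's. You reduce the statement to a multiplicity count: since $A=\C[M]$ is semi-simple, both $\ind_N^M W$ and $\Ind_N^M W$ are finite-dimensional semi-simple $A$-modules, so they are isomorphic once they have the same multiplicities; the two Frobenius reciprocities proved just before the lemma (which need no semi-simplicity) give $m_M(\ind_N^M W,V)=\dim\Hom_N(W,\Res^M_N V)$ and $m_M(V,\Ind_N^M W)=\dim\Hom_N(\Res^M_N V,W)$, Schur's lemma for finite monoids (proved earlier in the paper) lets you convert $m_M(V,\Ind_N^M W)$ into $m_M(\Ind_N^M W,V)$, and semi-simplicity of $B=\C[N]$ makes the two $N$-Hom dimensions equal. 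There is no circularity, since you only invoke the adjunction-style reciprocity established before the lemma, not the semi-simple version proved after it. The paper's proof is of a different nature: it establishes the stronger statement that $\Hom_B({}_BA,B)\simeq A$ as $A$-$B$-bimodules, by decomposing $A$ and $B$ into $V\otimes D(V)$ and $U\otimes D(U)$ pieces, constructing an explicit map $a\mapsto ap$ from an $N$-$N$-bimodule projection $p\colon A\to B$, proving injectivity through the bimodule structure and comparing dimensions via Lemma \ref{duality}, and then deduces the lemma from $\Hom_B(A,W)\simeq\Hom_B({}_BA,B)\otimes_B W$. What the paper's route buys is an isomorphism natural in $W$ together with the Frobenius-type identity $\Hom_B(A,B)\simeq A$ itself, which is what makes $\Ind_N^M$ simultaneously an adjoint and coadjoint induction in the sense of Rieffel in the results that follow; your route is shorter and softer, but it produces an isomorphism one $W$ at a time with no canonical choice, which is nevertheless exactly what the lemma as stated asserts.
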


    \begin{proof}
Notice that $\Hom_{B}(A, W) \simeq \Hom_{B}({}_BA, B)\otimes_BW$.
So  it reduces to  show that  $\Hom_{B}({}_BA, B) \simeq A$ as   $A-B$ bimodules.  By the semi-simple assumptions,  $B \simeq \oplus_{(\sigma, U) \in \Irr(N)} U \otimes D(U)$ as $N-N$-bimodules, $A \simeq \oplus_{(\pi, V) \in \Irr(M)} V\otimes D(V)$ as $M-M$ bimodules.  Let $p_V$ be the projection from $V \otimes D(V)$ to $B$ as $N-N$-bimodules, and $p=\sum p_V$.  Then $p\in \Hom_B({}_BA, B)$.

Let us define a map     $F: A \longrightarrow \Hom_B(A, B); a \longmapsto ap$, where $[ap](a')=p(a'a)$, for $a'\in A$. It can be checked that $ap\in \Hom_B(A, B)$.  For $b\in B$, $F(ab)(a')=[abp](a')=p(a'ab)=p(a'a)b$, which means that $F(ab)=F(a) b$; for $a''\in A$, $F(aa'')(a')=[aa'']p(a')=p(a'aa'')=a[a''p](a')=aF(a'')(a')$, which means $F(aa'')=aF(a'') $. Therefore $F$ is  an $A-B$-bimodule homomorphism. Let us next  show that $F$ is injective.  If $ap=0$, then $p(a'a)=0$ for any $a'\in A$; $p(a'ab)=p(a'a)b=0$, for $b\in B$.
Hence $AaB \subseteq \ker p$.     Notice that $AaB$ is  an $A-B$-bimodule. If it is not zero,  it contains an irreducible bimodule of the form   $V \otimes D(U)$, for some  $U\subseteq V|_{B}$.  But $V|_B$ contains $ U$, and $p|_{U \otimes D(U)}$ is not a zero map.  Hence $a=0$, and $p$ is injective. Then comparing the dimensions  of $A$ and $\Hom_{B}(A, B)$ as vector spaces by Lmm.\ref{duality}, we obtain the result.
   \end{proof}
    \begin{corollary}
   Under the semi-simple assumptions,  $\Ind_N^M$ is an exact functor from $\Rep_f(N)$ to $\Rep_f(M)$.
    \end{corollary}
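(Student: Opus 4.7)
The plan is to reduce this to the tensor-product description of $\ind_N^M$ and then exploit the semisimplicity of $B=\C[N]$.

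First, by Lemma \ref{EQInd} we have a natural isomorphism $\Ind_N^M W \simeq \ind_N^M W = A \otimes_B W$ as $A$-modules, and this isomorphism is functorial in $W$ (it is built from the natural map $A \to \Hom_B({}_BA, B)$, $a \mapsto ap$, which does not depend on $W$). So it suffices to prove that the functor $A \otimes_B(-)\colon \Rep_f(N) \to \Rep_f(M)$ is exact. Equivalently, I need to show that ${}_BA$, viewed as a right $B$-module, is flat.

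Now by assumption $B = \C[N]$ is a semisimple algebra, so by Theorem \ref{semisimplealgebras} every right $B$-module is semisimple, hence projective, hence flat. In particular ${}_BA$ is flat as a right $B$-module, so tensoring with it preserves short exact sequences. A cleaner way to say the same thing: any short exact sequence $0 \to W' \to W \to W'' \to 0$ in $\Rep_f(N)$ splits because $B$ is semisimple, and any additive functor carries a split short exact sequence to a split short exact sequence, so applying $A \otimes_B(-)$ (or equivalently $\Ind_N^M$) yields a short exact sequence of $A$-modules.

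There is essentially no obstacle here; the only subtlety is to make sure that the isomorphism in Lemma \ref{EQInd} is natural in $W$ so that exactness of $\ind_N^M$ transfers to $\Ind_N^M$, which is immediate from its construction. One should also record that the image lands in $\Rep_f(M)$, which follows because $\dim_\C(A \otimes_B W) \le \dim_\C A \cdot \dim_\C W < \infty$.
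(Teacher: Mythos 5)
Your argument is correct and follows essentially the route the paper intends: the corollary is stated as an immediate consequence of Lemma \ref{EQInd}, i.e.\ $\Ind_N^M W \simeq \ind_N^M W = A\otimes_B W$ combined with the semisimplicity of $B=\C[N]$ (so that $A$ is flat as a right $B$-module, or, as you also note, every short exact sequence in $\Rep_f(N)$ splits and any additive functor preserves split exactness). Only a cosmetic remark: for the tensor functor $A\otimes_B(-)$ you should write $A_B$ (the right $B$-module structure) rather than ${}_BA$, but this does not affect the argument.
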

    \begin{theorem}[Frobenius Reciprocity]
   Under the semi-simple assumptions,  for $(\sigma, W)\in \Rep_f(N)$, $(\pi, V)\in \Rep_f(M)$,  $\Hom_M(\Ind_N^M W, V) \simeq \Hom_N(W, V)$, $\Hom_M(V, \Ind_N^M W) \simeq \Hom_N(V, W)$.
    \end{theorem}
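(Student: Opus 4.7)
The statement to prove is an ``upper'' Frobenius reciprocity for $\Ind_N^M$, and the strategy is to reduce it to the ordinary (adjoint) Frobenius reciprocity already established for $\ind_N^M$ by invoking the comparison lemma \ref{EQInd}.

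First, I would observe that the second isomorphism $\Hom_M(V, \Ind_N^M W) \simeq \Hom_N(V, W)$ is already proved in the previous Frobenius reciprocity theorem without any semi-simplicity hypothesis; that isomorphism follows purely from the Hom-tensor adjunction $\Hom_A(V, \Hom_B(A, W)) \simeq \Hom_B(B \otimes_A V, W)$ applied to the forgetful structure, so there is nothing new to do on that side.

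Second, for the isomorphism $\Hom_M(\Ind_N^M W, V) \simeq \Hom_N(W, V)$, my plan is to chain together two previously established facts. By Lemma \ref{EQInd}, under the semi-simplicity assumption on $\C[M]$ and $\C[N]$, there is an $A$-module isomorphism $\Ind_N^M W \simeq \ind_N^M W$. Applying $\Hom_M(-, V)$ and then invoking the adjoint Frobenius reciprocity $\Hom_M(\ind_N^M W, V) \simeq \Hom_N(W, V)$ proved previously, I obtain the required isomorphism $\Hom_M(\Ind_N^M W, V) \simeq \Hom_M(\ind_N^M W, V) \simeq \Hom_N(W, V)$.

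In fact, this composition is functorial in both $W$ and $V$, so the resulting identification can be regarded as a natural isomorphism. The only part requiring any care is making sure that the isomorphism in Lemma \ref{EQInd} is indeed an $A$-linear isomorphism (not merely a vector-space one), but this is already asserted there via the identification $\Hom_B({}_BA, B) \simeq A$ as $A$-$B$-bimodules. Since both non-trivial inputs---the isomorphism $\Ind \simeq \ind$ and the adjoint Frobenius reciprocity for $\ind$---have already been established in the excerpt, the proof reduces to a one-line composition, and I do not expect any genuine obstacle beyond recording this chain of identifications.
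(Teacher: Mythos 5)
Your proposal is correct and follows essentially the same route as the paper: the paper's proof also invokes Lemma \ref{EQInd} to identify $\Ind_N^M$ with $\ind_N^M$, so that it is simultaneously an adjoint and coadjoint induction (citing Rieffel), which is exactly your chain of identifications combined with the earlier Frobenius reciprocity for $\ind_N^M$.
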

    \begin{proof}
  By the above lemma \ref{EQInd}, $\Ind_N^M V $ is an adjoint as well as coadjoint induced representation, see \cite[pp.263-264]{Ri1}.
    \end{proof}

    For the  general results, in particular for  infinite groups, one can read the paper \cite{Ri1}.

\subsection{Centric  submonoid}\label{csubmonoid}

 \begin{definition}
        Let $N$ be a submonoid  of $M$ with the same identity element. If   for any element  $m\in M$,  $mN=Nm$, following  the language  of \cite[Chapter 10]{CP1}, we will call $N$ a \textbf{centric submonoid} of $M$.
             \end{definition}
   Recall the notations in Section \ref{localization}.      Until the end of this subsection, we will take the following assumption.
   \begin{axiom*}[III]
     $N$ is a centric submonoid of $M$.
     \end{axiom*}

          \begin{remark}\label{threeeqt}
For each $m\in M$, $L_m^N=R_m^N=J_m^N=G_m^N$,  $s_m^N=t_m^N=1$.
      \end{remark}
For $x\in M$, let $\dot{x}$ denote the set $Nx=xN=NxN$.  For $ \dot{x}=Nx,  \dot{y}=Ny$,  we can   define $\dot{x} \dot{y}=\dot{xy}=Nxy$.  For  $\dot{x}, \dot{y}$, we say   $\dot{x}\equiv \dot{y}$ if $x\mathcal{R}_N y$ or $x\mathcal{L}_N y$, or $x\mathcal{J}_N y$.
          Let $\frac{M}{N}=\{ \dot{x}\mid x\in M\}/\equiv$, and  denote  the equivalent class  of $\dot{x}$ by $[x]$.  Then  we can give a well-defined  binary operator on $\frac{M}{N}$  by $[x][y]=[xy]$, for $[x], [y]\in \frac{M}{N}$.  In this way $\frac{M}{N}$ becomes a
        monoid.  Let $p: M \longrightarrow \frac{M}{N}; m \mapsto [m]$  be the  canonical  momoid homomorphism.

      Let us give another definition for  the monoid $\frac{M}{N}$.       Now let $\overline{\frac{M}{N}}=\{[J_m^N] \mid m\in M\}$, with the binary operator $[J_{m_1}^N] \cdot [J_{m_2}^N]=[J_{m_1m_2}^N]$, which means  that $ J_{m_1}^N\cdot J_{m_2}^N\subseteq J_{m_1m_2}^N$. \footnote{Here it  is just  an inclusion.} If $J_{m'_i}^N= J_{m_i}^N$,
   then $Nm_i'=Nm_i$, $Nm'_1m_2'=Nm_1'Nm_2'=Nm_1Nm_2=Nm_1m_2$, which implies $J_{m'_1m'_2}^N=J_{m_1m_2}^N$. In this way, $ \overline{\frac{M}{N}}$ becomes a monoid.  It can be seen that $\frac{M}{N}\simeq  \overline{\frac{M}{N}}; [x] \longrightarrow [J_x^N]$, as monoids.
   \begin{corollary}
   $|\frac{M}{N}|=\# \{ J_m^N \mid m\in M\}$.
   \end{corollary}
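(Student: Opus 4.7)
The plan is to exhibit an explicit bijection between $\frac{M}{N}$ and the set $\{J_m^N \mid m \in M\}$ of $\mathcal{J}_N$-classes, using the centricity assumption to collapse the three Green's relations $\mathcal{L}_N$, $\mathcal{R}_N$, $\mathcal{J}_N$ into a single equivalence.

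First I would invoke the Axiom (III) setup together with the preceding Remark~\ref{threeeqt}: since $Nm = mN$ for every $m \in M$, one has $Nm = mN = NmN$, and consequently $L_m^N = R_m^N = J_m^N$. In particular, for $x, y \in M$ the conditions $x \mathcal{L}_N y$, $x \mathcal{R}_N y$, and $x \mathcal{J}_N y$ are pairwise equivalent. Therefore the relation $\equiv$ used to define $\frac{M}{N} = \{\dot x \mid x \in M\}/\equiv$ reduces, on representatives, to the single $\mathcal{J}_N$-relation.

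Next I would write down the natural map
\[
\Phi : \frac{M}{N} \longrightarrow \{J_m^N \mid m \in M\}, \qquad [x] \longmapsto J_x^N.
\]
Well-definedness and injectivity both follow from the observation above: $[x] = [y]$ in $\frac{M}{N}$ if and only if $x \mathcal{J}_N y$, if and only if $J_x^N = J_y^N$. Surjectivity is immediate from the definition of the target set. Alternatively, one can read off the conclusion directly from the monoid isomorphism $\frac{M}{N} \simeq \overline{\frac{M}{N}}$ established in the paragraph preceding the corollary, since the underlying set of $\overline{\frac{M}{N}}$ is exactly $\{J_m^N \mid m \in M\}$.

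There is no real obstacle here; the statement is essentially a bookkeeping consequence of the centricity assumption, which forces the three Green's relations relative to $N$ to coincide. The only point to verify carefully is that the reduction of $\equiv$ to $\mathcal{J}_N$-equivalence uses all of $Nm = mN = NmN$, and this is exactly what Axiom (III) and Remark~\ref{threeeqt} supply.
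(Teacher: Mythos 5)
Your proof is correct and follows essentially the same route as the paper: the corollary is an immediate consequence of the identification $\frac{M}{N}\simeq \overline{\frac{M}{N}}$, $[x]\mapsto [J_x^N]$, established just before it, and your explicit bijection $[x]\mapsto J_x^N$ (using that centricity collapses $\mathcal{L}_N$, $\mathcal{R}_N$, $\mathcal{J}_N$ into one relation) is just this identification spelled out. No gaps.
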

   \begin{lemma}
   \begin{itemize}
   \item[(1)] $\frac{N}{N}$ is also a centric submonoid of $\frac{M}{N}$.
   \item[(2)] $\frac{M}{N}/\frac{N}{N} \simeq \frac{M}{N}$.
   \end{itemize}
   \end{lemma}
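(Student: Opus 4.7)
The plan is to verify (1) directly from the definitions and then use it to deduce (2). Both parts reduce to a careful bookkeeping of the fact that under centricity the three Green's relations $\mathcal{L}_N,\mathcal{R}_N,\mathcal{J}_N$ collapse to a single one (Remark \ref{threeeqt}), so every equivalence class $[m]$ in $M/N$ is just the set $Nm = mN = NmN$, and the multiplication $[m_1][m_2]=[m_1m_2]$ is well defined.

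For (1), I will first check that $N/N$ is a submonoid of $M/N$: it contains the identity $[1]$, and for $n_1,n_2 \in N$ we have $[n_1][n_2]=[n_1n_2]\in N/N$. To see that $N/N$ is centric in $M/N$, I will show $[m]\cdot(N/N)=(N/N)\cdot[m]$ for every $[m]\in M/N$. Since $N$ is centric in $M$, for each $n\in N$ there exists $n'\in N$ with $mn=n'm$; passing to classes gives $[mn]=[n'm]\in (N/N)\cdot[m]$, so $[m]\cdot(N/N)\subseteq(N/N)\cdot[m]$, and the reverse inclusion is symmetric.

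For (2), I will consider the canonical projection $\phi\colon M/N\longrightarrow (M/N)/(N/N)$; it is clearly a surjective monoid homomorphism, so the task reduces to proving injectivity. By (1) and Remark \ref{threeeqt} applied at the level of $M/N$, the equivalence defining $(M/N)/(N/N)$ coincides with the $\mathcal{L}_{N/N}$-equivalence, namely $[m_1]\sim[m_2]$ iff $(N/N)\cdot[m_1]=(N/N)\cdot[m_2]$ as subsets of $M/N$. The key observation is then extremely simple: since $[1]\in N/N$, the element $[m_1]$ lies in $(N/N)\cdot[m_1]=(N/N)\cdot[m_2]$, so $[m_1]=[nm_2]$ for some $n\in N$, which means $Nm_1=Nnm_2\subseteq Nm_2$. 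By symmetry $Nm_2\subseteq Nm_1$, hence $Nm_1=Nm_2$ and $[m_1]=[m_2]$ in $M/N$. This establishes injectivity, so $\phi$ is a monoid isomorphism.

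I expect no real obstacle. The only point demanding slight care is confirming that the ambient equivalence on $M/N$ used to form the quotient $(M/N)/(N/N)$ is indeed $\mathcal{L}_{N/N}$, which follows as soon as (1) is known, since by Remark \ref{threeeqt} a centric submonoid makes the three Green's relations agree. The arithmetic content of the entire lemma is thus the one-line inclusion $Nnm_2\subseteq Nm_2$ combined with symmetry.
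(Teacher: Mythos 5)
Your proof is correct and follows essentially the same route as the paper: part (1) is the same one-line use of centricity ($[mn]=[n'm]$), and for part (2) your injectivity argument $Nm_1=Nnm_2\subseteq Nm_2$ plus symmetry is the same computation the paper performs with the right-sided classes $[m_1]\frac{N}{N}=[m_2]\frac{N}{N}$, just phrased as injectivity of the canonical projection rather than as the collapse of each class in the double quotient to a singleton.
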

   \begin{proof}
   For $m\in M$, $[m]\frac{N}{N}=\{ [mn]\mid n\in N\}=\{ [nm]\mid n\in N\}=\frac{N}{N}[m]$. Hence the first statement holds. For $m_1, m_2\in M$, if $[m_1]\frac{N}{N}=[m_2]\frac{N}{N}$, then $[m_1]=[m_2][n_2]$, $[m_2]=[m_1][n_1]$, which means that $Nm_1=Nm_2n_2$, $Nm_2=Nm_1n_1$. Hence $Nm_1=m_2n_2N \subseteq m_2N=m_1n_1N \subseteq m_1N$. So $[m_1]=[m_2]$.
   \end{proof}

 \subsection{Projective representations of finite monoids}\label{Projmo}
             We shall mainly follow Mackey's paper \cite{Ma} to approach this part.  Let $F^{\times}$ be a subgroup of $\mathbb{C}^{\times}$.  Let $F=F^{\times} \cup \{0\}$ be a multiplicative   monoid, which is an  abelian monoid.  Let $N=F$ or $F^{\times}$. Call $\alpha$ a multiplier \footnote{In  \cite{P1}, \cite{P2}, Patchkoria introduced several definitions of cohomology monoids (with coefficients in  semimodules). However,  we can not directly use his result of $2$-cocyle because here we allow  $0$ to appear.  } for $M$ if $\alpha$ is a function from $M\times M$ to $N$ satisfying (1)  the normalized  condition that $ \alpha(m,1)=1= \alpha(1,m)$, (2) $ \alpha (m_1, m_2)  \alpha(m_1m_2, m_3) = \alpha(m_2, m_3)\alpha(m_1, m_2m_3)$, for $m, m_i\in M$.  Two multipliers $\alpha$, $\alpha'$ are called similar  if there exists  a function $f: M \longrightarrow F^{\times}$ with $f(1)=1$, such that $\alpha(m_1, m_2)=\alpha'(m_1, m_2)f(m_1)f(m_2)f^{-1}(m_1m_2)$. Associated to  a  multiplier $\alpha$, we can define a  monoid $M^{\alpha} $  consisting of elements $(m,t) \in M\times N$, with the multiplication $[m_1, t_1][m_2,t_2]=[m_1m_2, t_1t_2\alpha(m_1,m_2)]$, for $t_i\in N$, $m_i\in M$.
                \begin{lemma}
                \begin{itemize}
                \item[(1)]  $M^{\alpha}$ is a monoid.
                \item[(2)] $p: M^{\alpha} \longrightarrow M; [m,t] \longrightarrow m$, and $\iota: F \longrightarrow M^{\alpha}; t \longrightarrow [1,t]$ both are    monoid homomorphisms.
                \item[(3)] If $\alpha$, $\alpha'$ are similar by a function $f$, then there exists a monoid isomorphism $\widetilde{f}:    M^{\alpha} \longrightarrow M^{\alpha'}$ such that
                 $\begin{CD}
                  M^{\alpha}@>p>>M\\
                  @V\wr V\widetilde{f} V    @|\\
                  M^{\alpha'}@>p>>M
                 \end{CD} $,  $\begin{CD}
                 N @>\iota>>M^{\alpha}\\
                  @| @V\wr V\widetilde{f} V   \\
                  N@>\iota>>M^{\alpha'}
                 \end{CD} $ both are  commutative.
                 \item[(4)] For two multipliers $\alpha$, $\alpha'$, if there exists the above two commutative diagrams, then $\alpha$, $\alpha'$ are similar.
                 \end{itemize}
       \end{lemma}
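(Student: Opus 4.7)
The plan is to treat the four parts in order, observing that (1) and (2) are immediate from the cocycle and normalisation conditions on $\alpha$, while (3) and (4) are dual constructions relating similarity classes of multipliers to isomorphism classes of extensions.

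For (1), I would take $[1,1]$ as the identity: the normalisation $\alpha(m,1)=\alpha(1,m)=1$ gives $[1,1][m,t]=[m,t\cdot 1\cdot \alpha(1,m)]=[m,t]$ and symmetrically on the right. Associativity comes from a direct computation of both $([m_1,t_1][m_2,t_2])[m_3,t_3]$ and $[m_1,t_1]([m_2,t_2][m_3,t_3])$; the two outputs are $[m_1m_2m_3,\,t_1t_2t_3\alpha(m_1,m_2)\alpha(m_1m_2,m_3)]$ and $[m_1m_2m_3,\,t_1t_2t_3\alpha(m_2,m_3)\alpha(m_1,m_2m_3)]$ respectively, which coincide by the $2$-cocycle identity. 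For (2), $p([m_1,t_1][m_2,t_2])=m_1m_2=p([m_1,t_1])p([m_2,t_2])$ and $p([1,1])=1$; similarly $\iota(t_1t_2)=[1,t_1t_2]=[1,t_1][1,t_2]$ using $\alpha(1,1)=1$.

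For (3), I propose to define $\widetilde{f}\colon M^{\alpha}\longrightarrow M^{\alpha'}$ by $\widetilde{f}([m,t])=[m,tf(m)]$. Both diagrams commute trivially: $p\circ\widetilde{f}=p$ by construction, and $\widetilde{f}\circ\iota(t)=[1,tf(1)]=[1,t]=\iota(t)$ since $f(1)=1$. The homomorphism property reduces to
\[
t_1f(m_1)\cdot t_2f(m_2)\cdot \alpha'(m_1,m_2)=t_1t_2\alpha(m_1,m_2)\cdot f(m_1m_2),
\]
which is exactly the similarity relation $\alpha(m_1,m_2)=\alpha'(m_1,m_2)f(m_1)f(m_2)f(m_1m_2)^{-1}$ rearranged (using that $N$ is abelian). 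The inverse $[m,s]\mapsto [m,sf(m)^{-1}]$ exists because $f$ takes values in $F^{\times}$, giving the required isomorphism.

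For (4), I would recover $f$ from $\widetilde{f}$. Commutativity of the left diagram forces $\widetilde{f}([m,t])=[m,s(m,t)]$ for some $s(m,t)\in N$; the monoid law $\widetilde{f}([m,1]\cdot[1,t])=\widetilde{f}([m,1])\widetilde{f}([1,t])$ combined with the right diagram gives $s(m,t)=ts(m,1)$, so setting $f(m):=s(m,1)$ yields $\widetilde{f}([m,t])=[m,tf(m)]$, with $f(1)=1$ from $\widetilde{f}\circ\iota=\iota$. Then reading off $\widetilde{f}([m_1,t_1][m_2,t_2])=\widetilde{f}([m_1,t_1])\widetilde{f}([m_2,t_2])$ produces the similarity relation. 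The one subtlety, and the main thing to watch, is the case $N=F=F^\times\cup\{0\}$: one must argue that $f(m)\in F^{\times}$ for every $m$. This follows from injectivity of $\widetilde{f}$, since $f(m)=0$ would collapse the entire fibre $\{[m,t]:t\in N\}$ to the single element $[m,0]$, contradicting that $\widetilde{f}$ is a bijection. Hence $f\colon M\to F^{\times}$ is a well-defined similarity datum.
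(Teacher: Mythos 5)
Your proposal is correct and follows essentially the same route as the paper: direct verification of the cocycle/normalisation identities for (1)--(2), the explicit map $[m,t]\mapsto[m,f(m)t]$ for (3), and in (4) recovering $f(m)$ from $\widetilde{f}([m,1])$ via $[m,t]=[m,1][1,t]$ and then reading off similarity by evaluating $\widetilde f$ on a product with $t_1=t_2=1$. Your fibre-collapse argument that $f(m)\in F^{\times}$ when $0\in N$ is exactly the paper's observation that $\widetilde{f}|_{\{m\}\times F}$ is bijective, so nothing further is needed.
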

       \begin{proof}
       1)  For $ [ m_i, t_i] \in M^{\alpha}$, $i=1, 2, 3$,  (a) $[ 1, 1][m_1,t_1]= [m_1,t_1]=[m_1,t_1][ 1, 1]$, (b)  $( [m_1, t_1][m_2,t_2])[m_3,t_3]=[m_1m_2, t_1t_2\alpha(m_1,m_2)][m_3, t_3]=[m_1m_2m_3, t_1t_2t_3\alpha(m_1,m_2)\alpha(m_1m_2, m_3)]=[m_1m_2m_3, t_1t_2t_3\alpha(m_2,m_3) \alpha(m_1, m_2 m_3)]= [m_1, t_1] ([m_2m_3, t_2t_3\alpha(m_2,m_3)])=[ m_1, t_1]([m_2,t_2][m_3,t_3])$.\\
       2) See the definition. \\
       3)   $\widetilde{f}: M^{\alpha} \longrightarrow M^{\alpha'};  [m,t] \longmapsto [m, f(m)t]$,  is a monoid isomorphism, because $\widetilde{f}([m_1, t_1][m_2,t_2])= \widetilde{f}( [m_1m_2, t_1t_2\alpha(m_1,m_2)])=[m_1m_2, f(m_1m_2) t_1t_2\alpha(m_1,m_2)]=[m_1m_2,  t_1t_2\alpha'(m_1, m_2)f(m_1)f(m_2)]=\widetilde{f}([m_1, t_1])\widetilde{f}([m_2,t_2])$, and $\widetilde{f}([1, 1])=[1, f(1) 1]=[1,1]$.  The two diagrams are clearly commutative.\\
       4) Assume that the   two monoids $M^{\alpha}$, $ M^{\alpha'}$ are isomorphic by a function $\widetilde{f}$. By  the first diagram,   $\widetilde{f}([m,1])=[m,f(m)]$.  Then $\widetilde{f}([m,t])=\widetilde{f}([m,1][1, t])=[m,f(m)][1,t]=[m,f(m)t]$. Since $\widetilde{f}|_{m\times F}$ is a bijective map, $f(m)\in F^{\times}$. By the identity $[1,t]=\widetilde{f}([1,t])=\widetilde{f}([1,1][1,t])=[1,f(1)][1,t]$, we obtain $f(1)=1$.  Evaluation of $\widetilde{f}$ on the  equality: $[m_1, t_1][m_2,t_2]=[m_1m_2, \alpha(m_1,m_2)t_1t_2]$, we obtain $[m_1m_2, f(m_1)f(m_2)\alpha'(m_1,m_2)t_1t_2]=[m_1m_2, f(m_1m_2)\alpha(m_1,m_2)t_1t_2]$. In particular, when $t_1=t_2=1$, we get $\alpha(m_1,m_2)=\alpha'(m_1,m_2)f^{-1}(m_1m_2)f(m_1)f(m_2)$.
                      \end{proof}
         \begin{lemma}\label{FICE}
       Assume that $F$ is a finite monoid. Then $N$ is a centric submonoid of $M^{\alpha}$, and $M^{\alpha}/N \simeq  \left\{ \begin{array}{lc} M & \textrm{ if } N=F^{\times} \\
       M\times \Z/2\Z & \textrm{ if } N=F \end{array} \right. $.
       \end{lemma}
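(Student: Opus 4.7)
The plan has two parts: verify that $\iota(N) \subseteq M^\alpha$ is centric, and then identify $M^\alpha/\iota(N)$ by computing orbits and checking multiplicativity. Both parts rest on the normalization $\alpha(1,m)=\alpha(m,1)=1$ and the commutativity of the abelian monoid $F$.

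For centricity, I would compute directly
\[ [1,t]\cdot[m,s] = [m, ts\alpha(1,m)] = [m, ts] \quad\text{and}\quad [m,s]\cdot[1,t] = [m, st\alpha(m,1)] = [m, st]. \]
Since $F$ is commutative, these are equal, so $\iota(N)$ lies in the center of $M^\alpha$; in particular $\iota(N)\cdot x = x\cdot \iota(N)$ for every $x$, giving Axiom~III. Thanks to Remark~\ref{threeeqt}, the $\mathcal{L}_N$-, $\mathcal{R}_N$- and $\mathcal{J}_N$-classes all coincide for a centric submonoid, so to identify the quotient it suffices to compute the left orbit $N\cdot[m,s] = \{[m,ts]:t\in N\}$ and its dependence on $(m,s)$.

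In the case $N = F^\times$, the group $F^\times$ acts on each non-zero scalar transitively, so $N\cdot[m,s] = \{m\}\times F^\times$ depends only on $m$. Thus $[m_1,s_1]\equiv[m_2,s_2]$ iff $m_1=m_2$, and the map $[m,s]\mapsto m$ descends to a bijection $M^\alpha/N \to M$; multiplicativity is immediate from $[m_1,s_1][m_2,s_2]=[m_1m_2,\,*]$. In the case $N = F$, the same computation gives $N\cdot[m,s] = \{m\}\times F$ when $s \in F^\times$, whereas $N\cdot[m,0] = \{[m,0]\}$; these are distinct subsets, so each $m\in M$ now contributes two classes, the generic one $\{m\}\times F^\times$ and the ``zero'' one $\{[m,0]\}$. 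Assuming the multiplier is valued in $F^\times$ (as is standard for the projective representation framework), the second coordinate of $[m_1,s_1][m_2,s_2] = [m_1m_2, s_1s_2\alpha(m_1,m_2)]$ is non-zero iff both $s_1$ and $s_2$ are, so the sign map $\chi(s) = 1$ if $s\neq 0$, $\chi(s)=0$ otherwise, yields a monoid isomorphism $M^\alpha/N \xrightarrow{\sim} M \times \Z/2\Z$, where the second factor is realized as the two-element monoid $(\{1,0\},\cdot)$.

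The main step requiring care is the $N=F$ case: one must correctly split each fiber over $m$ into the generic orbit and the degenerate singleton, and check that the induced binary operation on the second factor is the idempotent multiplicative one rather than the cyclic additive one. Once this bookkeeping is sorted, the verification reduces to routine unfolding of the definition of the product in $M^\alpha$, and the semisimplicity of $F^\times$ as a finite abelian group enters only implicitly through finiteness.
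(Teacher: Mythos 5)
Your argument is the intended one: the paper offers no details (its proof is literally ``Straightforward''), and your direct computation of the central embedding $t\mapsto[1,t]$, of the $N$-orbits $N[m,s]$, and of the induced product on classes is exactly that verification. Your reading of the second factor as the two-element \emph{multiplicative} monoid $\{1,0\}$ (the multiplicative monoid of $\Z/2\Z$), rather than the additive group, is the only reading under which the displayed isomorphism can hold, so flagging that bookkeeping point is right.

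The one place where you deviate from the statement is the parenthetical ``assuming the multiplier is valued in $F^{\times}$''. In this paper that is not automatic: a multiplier is by definition a map $M\times M\to N$, and when $N=F$ the value $0$ is expressly allowed (see the footnote in Section \ref{Projmo}) and actually occurs later, e.g.\ the lifted $2$-cocycle $\alpha\colon I_M(\sigma)\times I_M(\sigma)\to F$ in Section \ref{DRWI} is defined to be $0$ on many pairs. Your extra hypothesis is not cosmetic but genuinely necessary: if $\alpha(m_1,m_2)=0$ for some pair, then the product of the two ``generic'' classes over $m_1$ and $m_2$ lands in the ``zero'' class over $m_1m_2$, so the generic classes are not closed under multiplication and your map $\chi$ fails to be multiplicative. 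Concretely, take $M=\{1,a\}$ with $a^2=a$, $F^{\times}$ trivial, $\alpha\equiv 1$ except $\alpha(a,a)=0$ (this satisfies the normalization and cocycle identities): then the class of $[a,1]$ in $M^{\alpha}/N$ is not idempotent, whereas every element of $M\times\Z/2\Z$ is idempotent in the multiplicative reading (and the idempotent counts, $3$ versus $4$ or $2$, rule out an isomorphism under either reading). So the lemma as stated can fail for $F$-valued multipliers with zeros; your proof establishes it precisely under the restriction you impose, and that restriction should be stated as a needed hypothesis (or the lemma's conclusion adjusted), not merely described as ``standard''. With that said, the computations you do give (centricity via centrality, the orbit description in both cases, and the multiplicativity of $[m,s]\mapsto m$, resp.\ $[m,s]\mapsto(m,\chi(s))$) are all correct.
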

         \begin{proof}
        Straightforward.
         \end{proof}
          \begin{definition}\label{thedePro}
       An \emph{$\alpha$-projective representation} $(\pi, V)$ of $M$ is a map $\pi: M \longrightarrow \End_{\C}(V)$, for a finite-dimensional $\C$-vector space $V$, such that   $\pi(m_1) \pi(m_2)= \alpha(m_1, m_2) \pi(m_1m_2)$,  for a  multiplier  $\alpha$ from $M\times M$ to  $ \C$.

  \end{definition}
 Let $\mathcal{X}_{M}$ denote all  maps $f: M \longrightarrow \mathbb{C}^{\times}$, such that $f(1)=1$.
           A projective \emph{$M$-morphism} between two  projective representations $(\pi_1, V_1)$ and $(\pi_2,V_2)$ of $M$ is a $\C$-linear map $F: V_1 \longrightarrow V_2$ such that
          \begin{equation}\label{morproj}
F(\pi_1(m)v)=\mu(m) \pi_2(m) F(v)
\end{equation}
holds for  all $m\in M$,  $v\in V_1$, and some $\mu\in \mathcal{X}_{M}$.   Let $\Hom_M^{\mu}(\pi_1, \pi_2)$ or $\Hom_M^{\mu}(V_1, V_2)$ denote the $\C$-linear space of all these morphisms, and let  $\Hom^{\mathcal{X}_{M}}_M(V_1, V_2)$ or $\Hom_M(V_1, V_2)$  be the union of $\Hom_M^{\mu}(V_1, V_2)$ as $\mu$ runs over all elements in $\mathcal{X}_M$. We call $(\pi_1,V_1)$ a projective  \emph{sub-representation} of $(\pi_2,V_2)$ if there exists an injective morphism in $\Hom_M(V_1,V_2)$. If  $V_1 \neq 0$, and  $(\pi_1,V_1)$ has no nonzero proper projective sub-representation, we call  $(\pi_1, V_1)$  \emph{ irreducible}. Two irreducible smooth projective representations $(\pi_1, V_1)$, $(\pi_2, V_2)$ of $M$ are \emph{projectively equivalent},  if there  exists a bijective $\C$-linear map in $\Hom_M(\pi_1, \pi_2)$ (its inverse is also a projective $M$-morphism.).  In particular, when this bijective map lies in $\Hom_M^{1}(V_1, V_2)$,  $1$ being the  trivial map in $\mathcal{X}_M$,  we will say that  $(\pi_1, V_1)$, $(\pi_2, V_2)$  are \emph{linearly equivalent}. For two projective representations $(\pi_1, V_1), (\pi_2, V_2)$ of $M$, we can also define their inner product projective representation $(\pi_1\otimes \pi_2, V_1\otimes V_2)$ of $M$.


 \subsubsection{}   Assume now  $\Omega$ is a  multiplier  from $M \times M \longrightarrow  A$, for a finite multiplicative  monoid $A\subset \C$. Here $A=F^{\times}$ or $F$.       Every $\Omega$-projective representation $(\pi, V)$ will give rise to a monoid representation $(\pi^{\Omega},V^{\Omega}=V)$ of the  finite monoid  $M^{\Omega}$ in the following way:    $\pi^{\Omega}: M^{\Omega}\longrightarrow \End_{\C}(V);  [m, t] \longmapsto  t\pi(m) $, for $m\in M$, $t\in A$.  For two elements $[m_i,t_i] \in M^{\Omega}$,  $i=1, 2$,
         $$\pi^{\Omega}(  [m_1,t_1][m_2,t_2] )=\pi^{\Omega}([m_1m_2, t_1t_2\Omega(m_1, m_2)])=\pi(m_1m_2) t_1t_2\Omega(m_1, m_2)$$
         $$=\pi(m_1)\pi(m_2) t_1t_2=\pi^{\Omega}([  m_1,t_1])\pi^{\Omega}([m_2,t_2] ),$$
         $$\pi^{\Omega}(1, 1)=  \pi(1)$$
           so  $\pi^{\Omega}$ is well-defined.  Note that $\pi^{\Omega}|_{A}=\id_{A}$, and every such  representation of $M^{\Omega}$ arises from an $\Omega$-projective representation of $M$.

Let $(\pi_1, V_1)$, $(\pi_2, V_2)$ be two $\Omega$-projective representations of $\pi^{\Omega}$.
Let $(\pi_1^{\Omega}, V_1^{\Omega})$,  $(\pi_2^{\Omega}, V_2^{\Omega})$  be their lifting representations of $M^{\Omega}$ respectively.
\begin{lemma}
  $\Hom^1_{M}(\pi_1, \pi_2) \simeq  \Hom_{M^{\Omega}}(\pi_1^{\Omega},  \pi_2^{\Omega})$.
\end{lemma}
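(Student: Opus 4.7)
The plan is to show that the natural ``forgetful'' map, which sends a $\C$-linear map $F\colon V_1\to V_2$ to itself regarded as a morphism of $M^{\Omega}$-modules, already realizes the claimed isomorphism. First I would unpack the condition that $F\in\Hom_{M^{\Omega}}(\pi_1^{\Omega},\pi_2^{\Omega})$: by the usual definition of a module homomorphism, this means $F\bigl(\pi_1^{\Omega}([m,t])v\bigr)=\pi_2^{\Omega}([m,t])F(v)$ for every $[m,t]\in M^{\Omega}$ and $v\in V_1$. Substituting the definition $\pi_i^{\Omega}([m,t])=t\pi_i(m)$ given just above the statement, this condition becomes $F(t\pi_1(m)v)=t\pi_2(m)F(v)$, and because $F$ is $\C$-linear and $t\in A\subseteq\C$ is a scalar, the left-hand side equals $tF(\pi_1(m)v)$.

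Next I would observe that, since $1\in A$, the condition holds for every $[m,t]$ iff it holds at $t=1$, i.e.\ iff $F(\pi_1(m)v)=\pi_2(m)F(v)$ for all $m\in M$, $v\in V_1$. Comparing with the defining equation \eqref{morproj} for $\Hom^{\mu}_M$ at $\mu=1$, this is precisely the condition that $F\in\Hom^1_M(\pi_1,\pi_2)$. Thus the two spaces coincide as subspaces of $\Hom_{\C}(V_1,V_2)$, and the identity map on $\Hom_{\C}(V_1,V_2)$ restricts to a $\C$-linear bijection between them.

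I do not expect a substantive obstacle: the proof is essentially a definition chase. The only minor point worth flagging is that the $A$-equivariance of $F$ (over the image of $\iota\colon A\hookrightarrow M^{\Omega}$) is not an extra constraint but is automatic from $\C$-linearity, which is exactly why the cocycle $\Omega$ that twists the multiplication in $M^{\Omega}$ drops out when one checks the intertwining condition. I would conclude by remarking that the same argument shows, more generally, $\Hom^{\mu}_M(\pi_1,\pi_2)\simeq\Hom_{M^{\Omega}}(\pi_1^{\Omega},\mu\cdot\pi_2^{\Omega})$ for any $\mu\in\mathcal{X}_M$, which situates the stated lemma as the $\mu=1$ case of a natural parametrized family.
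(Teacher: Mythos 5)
Your proof is correct and follows essentially the same route as the paper's: a direct definition chase using $\pi_i^{\Omega}([m,t])=t\pi_i(m)$ and the $\C$-linearity of $F$, with the specialization to $t=1$ giving the converse. One small caveat on your closing aside (not needed for the lemma): $\mu\cdot\pi_2^{\Omega}$ is generally not a representation of $M^{\Omega}$ unless $\mu$ is multiplicative, since twisting by an arbitrary $\mu\in\mathcal{X}_M$ changes the multiplier by the coboundary $\mu(m_1)\mu(m_2)\mu(m_1m_2)^{-1}$.
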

\begin{proof}
Assume first that  $\varphi\in \Hom^1_{M}(V_1, V_2)$. Then $\varphi\big(\pi_1^{\Omega}([m, t])v\big)=\varphi\big( t\pi_1(m)v\big)=t\pi_2(m)\varphi(v)=\pi_2^{\Omega}([m, t]) \varphi(v)$, i.e., $\varphi\in \Hom_{M^{\Omega}}( V_1, V_2)$.  The converse also holds.
\end{proof}
Let $N$ be a submonoid of $M$ with the same identity element. Let $\omega$ be the restriction of $\Omega$ to $N\times N$.   Assume $(\sigma, W)$ is an $\omega$-projective representation of $N$, and $(\sigma^{\omega}, W^{\omega}=W)$ its lifting representation to $N^{\omega}$. It can be checked that $N^{\omega}$ is also a submonoid of $M^{\Omega}$  with the same identity element.  Then we can define two induced representations $\Ind_{N^{\alpha}}^{M^{\Omega}} \sigma^{\alpha}$ and $\ind_{N^{\alpha}}^{M^{\Omega}} \sigma^{\alpha}$.  The restrictions of them to $M$ shall give $\Omega$-projective representations of $M$. Let us denote these two $\Omega$-projective Induced  representation by $( \Ind_{N, \omega}^{M, \Omega} \sigma, \Ind_{N, \omega}^{M, \Omega} W)$ and $( \ind_{N, \omega}^{M, \Omega} \sigma, \ind_{N, \omega}^{M, \Omega} W)$ respectively.   Here we only  write down the explicit realization of $\Ind_{N, \omega}^{M, \Omega} \sigma$. We can  let $\Ind_{N, \omega}^{M, \Omega} W$ be the space of elements  $\varphi:  M \longrightarrow W$ such that $\sigma(n)\varphi(m)=\Omega(n,m)f(nm)$; the action of $M$ on $\Ind_{N, \omega}^{M, \Omega} W$ is defined as  $[\Ind_{N, \omega}^{M, \Omega} \sigma](m)[\varphi](x)=\varphi(xm)\Omega(x,m)$, for $x, m\in M$.

  \subsection{Representations associated to centric submonoids}
 \subsubsection{} Keep the notations that $N$ is a centric submonoid of $M$ with the same identity element. For $(\pi, V)\in \Irr(M)$,  let $(\sigma, W)$ be an irreducible  constituent  of $(\Res_N^M\pi, \Res_N^MV)$.
 \begin{lemma}\label{irr}
\begin{itemize}
\item[(1)] For $m\in M$,  let $mW=\{\pi(m) w\mid w\in W\}$. Then $mW=0$, or $mW$ is an irreducible $N$-module.
\item[(2)]  For $m\in M$, if  $mW \neq 0$, then $\pi(m)|_{W}: W \longrightarrow mW$ is a bijective linear map.
\item[(3)] Assume that $(\sigma', W')$ is also an irreducible  constituent  of $(\Res_N^M\pi, \Res_N^MV)$.  For $m\in M$, if $mW\neq 0$, $mW'
\neq 0$,  then  as $N$-modules,    $mW' \simeq mW $ iff $W' \simeq W$.
\item[(4)] For $e\in E(M)$, $eW=0$ or $eW\simeq W$.
\end{itemize}
\end{lemma}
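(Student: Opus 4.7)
The plan is to prove the four parts in sequence, using (1) and (2) as technical groundwork, deducing (3) by transport of structure, and obtaining (4) as a direct corollary of (3). The centricity of $N$ (Axiom III) is the key ingredient throughout: for every $n \in N$ and $m \in M$, there exist $n', n'' \in N$ with $mn = n'm$ and $nm = mn''$.

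For (1), I would first verify that $mW$ is $N$-stable: for $n \in N$ and $w \in W$, one has $n(\pi(m)w) = \pi(nm)w = \pi(m)\pi(n'')w \in mW$. To show irreducibility when $mW \neq 0$, suppose $U \subseteq mW$ is an $N$-submodule, and consider the preimage $\tilde U = \{w \in W : \pi(m)w \in U\} \subseteq W$. Centricity makes $\tilde U$ into an $N$-submodule: if $\pi(m)w \in U$ and $n \in N$, then $\pi(m)\sigma(n)w = \pi(mn)w = \pi(n'm)w = \pi(n')\pi(m)w \in \pi(n') U \subseteq U$. Irreducibility of $W$ then forces $\tilde U = 0$ or $W$, hence $U = 0$ or $U = mW$. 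For (2), the same centricity trick shows that $K = \ker(\pi(m)|_W)$ is an $N$-submodule of $W$, so by irreducibility $K = 0$ or $W$; the hypothesis $mW \neq 0$ rules out $K = W$, proving bijectivity of $\pi(m)|_W$.

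For (3), the map $\pi(m)|_W : W \to mW$ is twisted $N$-equivariant, meaning $\pi(m)\sigma(n) = \sigma(n')\pi(m)$ on $W$ whenever $mn = n'm$. Given an $N$-isomorphism $f : mW \to mW'$, I would define $\mu : W \to W'$ by the composition $\mu = (\pi(m)|_{W'})^{-1} \circ f \circ \pi(m)|_W$, where both outer maps are bijections by (2) applied to $W$ and $W'$ respectively. To verify $\mu \circ \sigma(n) = \sigma'(n) \circ \mu$, I would write $f(\pi(m)w) = \pi(m)\mu(w)$ and exploit the identity $\pi(n')\pi(m) = \pi(n'm) = \pi(mn) = \pi(m)\pi(n)$, which holds on $W'$ as well. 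The converse direction is symmetric: any $g : W \to W'$ is pushed forward to $\tilde g(\pi(m)w) = \pi(m)g(w)$, and the analogous centric identity $nm = mn''$ shows it is $N$-linear. For (4), I would apply (3) with $m = e$ and $W' = eW$: by (1), $eW$ is an irreducible $N$-constituent of $\Res_N^M V$, and $e(eW) = e^2 W = eW \neq 0$ makes the identity map of $eW$ a trivial isomorphism $eW \simeq e(eW)$, whence (3) yields $W \simeq eW$.

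The main subtlety will be in (3): the element $n' \in N$ satisfying $mn = n'm$ is not uniquely determined as an element of $N$, so one must verify that the operators $\sigma(n')$ and $\sigma'(n')$ appearing in the computation are only applied through the product $n'm \in M$, where the choice becomes irrelevant because $\pi(n')\pi(m) = \pi(n'm)$ depends only on the product. This bookkeeping, handling the fact that the conjugation relations $n \mapsto n', n''$ need not give single-valued automorphisms of $N$, is the one delicate point; once it is taken care of, the rest of the argument is transport of structure.
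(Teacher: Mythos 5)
Your proposal is correct and follows essentially the same route as the paper: centricity makes the preimage of a submodule (resp.\ the kernel) $N$-stable for (1) and (2), part (3) is obtained in both directions by conjugating the isomorphism with the bijection $\pi(m)|_{W}$ exactly as in the paper's $\varphi=[\pi(m)|_{W'}]^{-1}\circ\Psi\circ\pi(m)|_{W}$, and your explicit derivation of (4) with $W'=eW$ is just the detail the paper leaves implicit in ``consequence of part (3)''.
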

\begin{proof}
1) Clearly, $mW$ is an $N$-stable $\mathbb{C}$-vector space.  If $V_1$ is  an $N$-submodule of $mW$,  then $W_1=\{w\in W\mid \pi(m)w\in V_1\}$, is a vector subspace of $W$, and $mW_1=V_1$.  Moreover, for $n\in N$, $w\in W_1$, $mnw=n'mw\in V_1$, which implies that $nw\in W_1$.  If $V_1 \neq 0$, then $W_1\neq 0$, $W_1=W$, and $V_1=V$.\\
2) Let $V_0=\{ w\in W\mid \pi(m)w=0\}$. Clearly, $V_0$ is a $\C$-linear vector space. For $w\in V_0$, $n\in N$, and $mn=n'm$,  we have $mnw=n'mw=0$. Hence $V_0$ is $N$-stable.  Since $V_0 \neq W$,  $V_0=0$. Hence  $\pi(m)|_{W}$ is bijective. \\
3)  $(\Leftarrow)$ Let $\varphi: W\longrightarrow W'$ be the $N$-isomorphism. By (2), for $w_1, w_2\in W$, $mw_1=mw_2 $ implies $w_1=w_2$. So we can define $\varphi_{m}: mW \longrightarrow mW'; mw \longrightarrow m\varphi(w)$. For $n\in N$, write $nm=mn'$, $\varphi_m(nmw)=\varphi_m(mn'w)=m\varphi(n'w)=mn'\varphi(w)=nm\varphi(w)=n\varphi_m(mw)$. Hence $\varphi_m$ is an $N$-isomorphism.\\
$(\Rightarrow)$ It is known that $\pi(m)|_{W'}$, $\pi(m)|_{W}$ both are  bijective $\mathbb{C}$-linear maps.
Let $\Psi: mW \longrightarrow mW'; mw \longrightarrow \Psi(mw)$ be the $N$-isomorphism.  Let us write  $\Psi(mw)=m\varphi(w)$, with $\varphi(w) \in W'$. Then  $\varphi=[\pi(m)|_{W'}]^{-1} \circ  \Psi\circ \pi(m)|_{W}$, which  is also a bijective $\mathbb{C}$-linear map. For $n\in N$, if $mn=n'm$, then for $w\in W$,  $m\varphi(nw)= \Psi(mnw) =\Psi(n'mw)=n'\Psi(mw)=n'm\varphi(w)=mn\varphi(w)$, so $  \varphi(nw)= n\varphi(w)$.    \\
4) It is a consequence of part (3).
          \end{proof}
Recall $A=\mathbb{C}[M]$, $B=\mathbb{C}[N]$.
\begin{lemma}\label{biiso1}
If $\C[M]$ is  semisimple, so are $\C[N]$ and  $\C[\frac{M}{N}]$.
\end{lemma}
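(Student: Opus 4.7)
My plan is to treat the two statements separately, using induction from $N$ to $M$ for the first and inflation from $M/N$ to $M$ for the second. For $\C[N]$, the strategy is to show that every finite-dimensional $\C[N]$-module $V$ is semi-simple by realising it as a left $\C[N]$-direct summand of a module that one can independently certify to be semi-simple over $\C[N]$. The key observation is that, because $N$ is centric, the identity class $\dot 1 = N$ is itself a $\mathcal J_N$-class, and each class $\dot m = Nm = mN$ is stable under both the left and right action of $N$. Summing over $M/{\mathcal J_N}$ therefore gives a decomposition $\C[M] = \C[N] \oplus C$ of $(\C[N],\C[N])$-bimodules, where $C = \bigoplus_{\dot m \neq \dot 1} \C[\dot m]$. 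Tensoring over $\C[N]$ with $V$ then yields $\ind_N^M V = V \oplus (C \otimes_{\C[N]} V)$ as left $\C[N]$-modules, exhibiting $V$ as a direct summand of $\Res_N^M \ind_N^M V$.

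The next step is to argue that $\Res_N^M \ind_N^M V$ is semi-simple. Since $\C[M]$ is semi-simple, $\ind_N^M V$ is a direct sum of irreducible $\C[M]$-modules $V_i$, so it suffices to handle each $\Res_N^M V_i$. For this I would pick any irreducible $\C[N]$-submodule $W \subseteq \Res_N^M V_i$ (available by finite dimensionality); since $\C[M]\cdot W$ is $\C[M]$-stable and $V_i$ is irreducible, $V_i = \sum_{m \in M} mW$, and Lemma \ref{irr}(1) tells us each $mW$ is either zero or itself an irreducible $\C[N]$-submodule. Hence $V_i$ is a sum, therefore a direct sum, of irreducible $\C[N]$-submodules, and $\Res_N^M \ind_N^M V$ is semi-simple. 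Combined with the first paragraph this yields semi-simplicity of $V$ and hence of $\C[N]$.

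For $\C[M/N]$, my plan is to exploit the inflation functor along $p \colon M \twoheadrightarrow M/N$, viewed as a fully faithful embedding of $\C[M/N]$-Mod into $\C[M]$-Mod whose essential image is the full subcategory of $\C[M]$-modules on which the ideal $I = \ker\bigl(\C[M] \to \C[M/N]\bigr)$ acts as zero; crucially, any $\C[M]$-submodule of such a module automatically lies in this subcategory. Given a short exact sequence of $\C[M/N]$-modules $0 \to V_1 \to V \to V_2 \to 0$, inflation produces an exact sequence of $\C[M]$-modules, which splits by semi-simplicity of $\C[M]$; the complementary $\C[M]$-submodule then descends to a $\C[M/N]$-complement of $V_1$ in $V$, and $\C[M/N]$ is semi-simple.

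I anticipate that the main technical point is the bimodule decomposition $\C[M] = \C[N] \oplus C$, which is what extracts the summand $V$ inside $\ind_N^M V$: it requires the simultaneous left and right $N$-stability of each class $\dot m$, and here is precisely where the centric hypothesis is consumed. Everything else is essentially formal, resting on the machinery of Section \ref{rela} and the action-irreducibility result of Lemma \ref{irr}.
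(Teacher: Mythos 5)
Your second half (the quotient $\C[\frac{M}{N}]$) and your ``step B'' (semisimplicity of the restriction to $N$ of any irreducible, hence any, $\C[M]$-module, via Lemma \ref{irr}(1) and $V_i=\sum_{m\in M}mW$) are correct and are essentially the paper's own argument. The genuine gap is in your ``step A'', namely the claimed bimodule decomposition $\C[M]=\C[N]\oplus C$ with $C=\bigoplus_{\dot m\neq\dot 1}\C[\dot m]$. First, the $\mathcal{J}_N$-class of $1$ is not $N$: it is $\{m\mid Nm=N\}$, i.e.\ the group of units of $N$ (for $N=\{1,0\}$ with a zero it is just $\{1\}$). Second, you are conflating the equivalence classes of $\mathcal{J}_N$ (which are the generator sets $G_m^N$ and do partition $M$) with the sets $Nm=mN$ (which are $N$--$N$-stable but do \emph{not} partition $M$: distinct principal sets $Nm_1\neq Nm_2$ can intersect, e.g.\ both contain a zero). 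The spans $\C[G_m^N]$ are not sub-bimodules of $\C[M]$ under honest multiplication --- a product $nx$ with $x\in G_m^N$ may drop to a strictly smaller class --- which is exactly why the paper equips them only with the Rees-quotient actions $\odot_l,\odot_r$ of Section \ref{Reesquo}, giving a filtration by subquotients rather than a direct sum of submodules. So the complement $C$ you write down does not exist as described (a bimodule complement does exist a posteriori, but only because $\C[N]$ is semisimple, which is what you are trying to prove), and with it the identification $\ind_N^M V\simeq V\oplus(C\otimes_{\C[N]}V)$ collapses.

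The good news is that step A is unnecessary: you do not need every $V$ to be a summand of $\Res_N^M\ind_N^M V$. Your step B already shows that $\Res_N^M\C[M]$ is a semisimple $N$-module, and $\C[N]$ sits inside $\C[M]$ as a left $N$-submodule; a submodule of a semisimple module is semisimple, so ${}_{\C[N]}\C[N]$ is semisimple and hence so is the algebra $\C[N]$. This is precisely the route the paper takes, so after deleting the faulty decomposition your argument reduces to the paper's proof.
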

\begin{proof}
1)  We first show that $\C[N]$ is semi-simple. Since $B\hookrightarrow A$ as  $N$-modules, it suffices to show $A$ is a semi-simple $N$-module.   Finally it reduces to show the restriction of each irreducible representation $(\pi, V)$ of $M$ to $N$ is semi-simple.  We adopt the above notation ---$(\sigma, W)$. Then $V=\sum_{m\in M} mW$; each $mW$ is a left irreducible $N$-module or zero. So $\Res_{N}^M V$ is semi-simple, and we are done.\\
2)  Let $C=\mathbb{C}[\frac{M}{N}]$, and $p: A \longrightarrow C$ be the canonical projection.  Through $p$, $C$ as left $C$-module is the same as left $A$-module. So $C$  is semi-simple as left $C$-module.
 \end{proof}

 For $e\in E(M)$, let   $[e]$  be the image of $e$ in $\frac{M}{N}$, i.e., $[e]=[J_e^N]\in E(\frac{M}{N})$. Let $J_e$, $L_e$,  $R_e$ denote the generators of  $MmM$, $Mm$,  $mM$ respectively in $M$, and $G_e=L_e\cap R_e$.  Let $J_{[e]}$, $L_{[e]}$,  $R_{[e]}$ denote the generators of  $\frac{M}{N}[e]\frac{M}{N}$, $\frac{M}{N}[e]$,  $[e]\frac{M}{N}$ respectively in $\frac{M}{N}$, and $G_{[e]}=    L_{[e]}\cap R_{[e]}$.   Recall $I_e=\{ m\in M\mid e\notin MmM\} $, $I_{[e]}=\{ [m]\in \frac{M}{N} \mid [e]\notin \frac{M}{N} [m]  \frac{M}{N}\} $.
\begin{lemma}\label{Str}
\begin{itemize}
\item[(1)]  $p$ sends $I_{e}$, $L_e$, $R_e$, $ J_e$, $G_e$  of $M$ onto $I_{[e]}$, $L_{[e]}$, $R_{[e]}$, $J_{[e]}$, $G_{[e]}$ of $\frac{M}{N}$ respectively.
Moreover,   $p^{-1}(I_{[e]})=I_e$, $p^{-1}(L_{[e]})=L_e$, $p^{-1}(R_{[e]})=R_e$, $p^{-1}(J_{[e]})=J_e$, $p^{-1}(G_{[e]})=G_e$.
\item[(2)] $1 \longrightarrow G_e^N \longrightarrow G_e \stackrel{p}{\longrightarrow} G_{[e]} \longrightarrow 1$,  is an exact sequence of groups.
\end{itemize}
\end{lemma}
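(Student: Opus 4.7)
The plan is to unfold the definition of the equivalence $\equiv$ defining $\frac{M}{N}$: under the centricity hypothesis combined with Remark~\ref{threeeqt}, the three relative Green relations $\mathcal{L}_N$, $\mathcal{R}_N$, $\mathcal{J}_N$ all coincide, so $[x_1]=[x_2] \iff Nx_1=Nx_2 \iff x_1N=x_2N$, and hence $p^{-1}([m])=Nm=mN$ for every $m\in M$. With this in hand, part (1) becomes a routine double-inclusion argument. For the $\mathcal{L}$-part: if $x\in L_e$ then $x=m_1 e$ and $e=m_2 x$ in $M$, giving $[x]=[m_1][e]$ and $[e]=[m_2][x]$, hence $[x]\in L_{[e]}$. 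Conversely, if $p(x)\in L_{[e]}$, write $[x]=[m_1 e]$ and $[e]=[m_2 x]$; unraveling via $Nx=Nm_1 e$ and $Ne=Nm_2 x$ produces $n_i\in N$ with $x=n_1 m_1 e \in Me$ and $e=n_2 m_2 x \in Mx$, so $x\in L_e$. The $R_e$ case is symmetric, $J_e$ is the same trick on both sides simultaneously, and the identity $G_e=L_e\cap R_e$ gives $p^{-1}(G_{[e]})=p^{-1}(L_{[e]})\cap p^{-1}(R_{[e]})=G_e$ at once. Surjectivity $p(X)=X_{[e]}$ for $X\in\{L_e,R_e,J_e,G_e\}$ then follows automatically: any $[y]\in X_{[e]}$ admits a representative in $p^{-1}(X_{[e]})=X$.

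For the ideal statement, one needs only the equivalence $e\in MmM \iff [e]\in \frac{M}{N}[m]\frac{M}{N}$. The forward direction is immediate since $p$ is a monoid homomorphism, and for the converse $[e]=[m_1 m m_2]$ unravels to $e\in Nm_1 m m_2 \subseteq MmM$. Taking complements yields $p^{-1}(I_{[e]})=I_e$, and hence $p(I_e)=I_{[e]}$ by surjectivity of $p$.

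For part (2), I would first identify $G_e^N$ concretely. By Remark~\ref{threeeqt} and centricity, $G_e^N=L_e^N=R_e^N=Ne=eN$, and for any $x=ne\in G_e^N$ the relations $Nx=Ne$ and $xN=eN$ force $e\in Mx$ and $e\in xM$, so $Mx=Me$ and $xM=eM$, i.e., $x\in G_e$. Thus $G_e^N\subseteq G_e$. Since $p$ is a monoid morphism, its restriction $p|_{G_e}\colon G_e\to G_{[e]}$ is a group homomorphism (mapping the identity $e$ of $G_e$ to the identity $[e]$ of $G_{[e]}$); its kernel is $\{x\in G_e : Nx=Ne\}= G_e\cap Ne = Ne = G_e^N$, and its surjectivity is already the surjectivity established in part (1). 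The only slightly delicate point, which is really the main bookkeeping obstacle, is to keep careful track of where centricity is being used both to swap $Nm$ with $mN$ and to collapse the three relative Green relations into one; everything beyond that is purely definitional.
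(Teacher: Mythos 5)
Your overall strategy is the same as the paper's (unwind the definition of $\equiv$ under centricity, argue by double inclusion, then read off the kernel of $p|_{G_e}$), and the double-inclusion argument you give for part (1), as well as the implication $e\in MmM \iff [e]\in\frac{M}{N}[m]\frac{M}{N}$, are correct. But part (2) rests on a false identification. You assert that $p^{-1}([m])=Nm=mN$ and, correspondingly, that $G_e^N=L_e^N=R_e^N=Ne=eN$. This confuses the cyclic set $Nm$ with its set of generators: by definition $p^{-1}([m])=\{x\in M:\ Nx=Nm\}=L_m^N$, which sits inside $Nm$ but is in general strictly smaller, because at this point of the paper $N$ is only a finite centric submonoid, not a group (and Lemma \ref{Str} feeds into Theorem \ref{theta4}, so no semisimplicity of $\C[M]$ may be invoked either). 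A concrete counterexample: $M=N=\{1,a\}$ with $a^2=a$ and $e=1$; then $Ne=\{1,a\}$ while $L_e^N=G_e^N=\{1\}$ and $G_e=\{1\}$, so $Ne\neq G_e^N$ and $Ne\not\subseteq G_e$ (note $\C[N]$ is even semisimple here). Consequently the chain you write for the kernel, $\{x\in G_e:\ Nx=Ne\}=G_e\cap Ne=Ne=G_e^N$, contains two equalities that fail in general; in particular $G_e\cap Ne=Ne$ would force $Ne\subseteq G_e$.

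The damage is local and easily repaired, because your proof of $G_e^N\subseteq G_e$ only uses the correct facts $x\in Ne$, $Nx=Ne$, $xN=eN$, and your part (1) only uses $[x]=[y]\iff Nx=Ny$, never the identity $p^{-1}([m])=Nm$. For the kernel, argue instead: $\ker\bigl(p|_{G_e}\bigr)=\{x\in G_e:\ Nx=Ne\}=G_e\cap L_e^N=G_e\cap G_e^N=G_e^N$, where the middle equality is Remark \ref{threeeqt} ($L_e^N=G_e^N$ under centricity) and the last uses your inclusion $G_e^N\subseteq G_e$. Equivalently, the correct form of your opening claim is $p^{-1}([m])=L_m^N=G_m^N$, which for $m=e$ is precisely the kernel statement you need; with this substitution your argument coincides with the paper's proof.
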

\begin{proof}

1)    Clearly the projection $p: M \longrightarrow \frac{M}{N}$ sends $J_e$, $L_e$,  $R_e$ to  $J_{[e]}$, $L_{[e]}$,  $R_{[e]}$ respectively.  For element $[m] \in\frac{M}{N}$,   $ [m]  \mathcal{R}[e]$ iff $[m]\frac{M}{N}= [e]\frac{M}{N}$ iff  $[m]=[em_1]$, $[e]=[mm_2]$, for some $m_i\in M$ iff $mN=em_1N$, $eN=mm_2N$, for some $m_i\in M$, which implies that  $e=mm_2n_2$, $m=em_1n_1$, for some $n_i\in N$, $m_i\in M$.  Hence  $[m] \in R_{[e]}$ implies $m\in R_e$.  So $p(R_e)=  R_{[e]}$, and $p^{-1}(R_{[e]})=R_e$. Dually,    $p(L_e)=  L_{[e]}$, and $p^{-1}(L_{[e]})=L_e$.    This implies that
$p(G_e)=  G_{[e]}$, and $p^{-1}(G_{[e]})=G_e$.  If $     [m]  \mathcal{J}[e]$, then $[m]=[m_1] [e][m_2]$, $[e]=[m_3] [m][m_4]$, and then $m=m_1em_2n_1$, $e=m_3mm_4n_2$, for some $m_i\in M$, $n_j\in N$. This implies that $m\in J_e$. Hence $p(J_e)=  J_{[e]}$, and $p^{-1}(J_{[e]})=J_e$.  If $e\in MmM$, then $[e]\in \frac{M}{N}[m]\frac{M}{N}$. Conversely, if  $[e]\in \frac{M}{N}[m]\frac{M}{N}$, then $e=nm'mm'' \in MmM $.  So $p^{-1}(I_{[e]})=I_{e}$.\\
2) For $g\in G_e$, $p(g)= [e]$ iff $ Ng=Ne$, $g\in G_e^N$.
\end{proof}
\begin{lemma}\label{idbij}
 $E(M) \longrightarrow E(\frac{M}{N}); e \longmapsto [e]$, is a surjective  map. Moreover, if  $M$ is also an inverse monoid, then this map is bijective.
\end{lemma}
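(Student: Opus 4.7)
The plan is to break the statement into two halves and handle each with a short direct argument, using the definition of the quotient $\frac{M}{N}$ from Section \ref{csubmonoid} together with finiteness and, for the second half, the standard fact that idempotents commute in an inverse monoid.

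For surjectivity, I would start with an arbitrary $[m] \in E(\frac{M}{N})$ and look for an idempotent in $M$ whose class is $[m]$. Since $M$ is finite, the cyclic subsemigroup generated by $m$ contains an idempotent, so there exists $k \geq 1$ with $e := m^{k} \in E(M)$. Because $[m]$ is already idempotent in $\frac{M}{N}$, we have $[e] = [m^{k}] = [m]^{k} = [m]$, which proves surjectivity.

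For injectivity, assume $M$ is an inverse monoid and take $e_{1}, e_{2} \in E(M)$ with $[e_{1}] = [e_{2}]$. By Remark \ref{threeeqt} applied to the centric submonoid $N$, $[e_{1}] = [e_{2}]$ means $Ne_{1} = Ne_{2}$, so we can write $e_{1} = n_{1}e_{2}$ and $e_{2} = n_{2}e_{1}$ for some $n_{1}, n_{2} \in N$. The key computation is then
\[
 e_{1}e_{2} = (n_{1}e_{2})e_{2} = n_{1}e_{2} = e_{1}, \qquad e_{2}e_{1} = (n_{2}e_{1})e_{1} = n_{2}e_{1} = e_{2},
\]
using only that $e_{1}$ and $e_{2}$ are idempotent. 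Since $M$ is an inverse monoid, its idempotents commute, so $e_{1}e_{2} = e_{2}e_{1}$, and combining the two equalities above gives $e_{1} = e_{2}$.

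Neither half looks hard: the main subtlety is just remembering to invoke finiteness of $M$ to produce an idempotent power for the surjectivity part, and to invoke commutativity of $E(M)$ in the inverse case. One minor bookkeeping point is that the two identities $e_{1}e_{2} = e_{1}$ and $e_{2}e_{1} = e_{2}$ are derived purely from $e_{1} \in Ne_{2}$, $e_{2} \in Ne_{1}$, and idempotence; we do not need to use centricity of $N$ in this step, which keeps the argument clean.
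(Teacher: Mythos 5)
Your proof is correct. The surjectivity half is essentially the paper's argument: the paper also takes an idempotent power $m^{s}=f_m$ (finiteness) and checks $Nm=Nm^{s}$ by hand, which is exactly your $[m^{k}]=[m]^{k}=[m]$ phrased without the homomorphism shortcut. For injectivity you take a slightly different, more self-contained route: the paper upgrades $Ne=Nf$ to $Me=Mf$ and then invokes the standard inverse-monoid fact that an $\mathcal{L}$-class contains at most one idempotent, whereas you inline the proof of that fact, deriving $e_{1}e_{2}=e_{1}$ and $e_{2}e_{1}=e_{2}$ directly from $Ne_{1}=Ne_{2}$ and idempotence, and then using only commutativity of idempotents in an inverse monoid. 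Both arguments are sound; yours avoids the external citation at the cost of a short computation, and your identification of $[e_{1}]=[e_{2}]$ with $Ne_{1}=Ne_{2}$ via Remark \ref{threeeqt} (centricity collapses $\mathcal{L}_N$, $\mathcal{R}_N$, $\mathcal{J}_N$) is exactly the reading the paper intends.
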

\begin{proof}
 If $e\in E(M)$, $[e]\in E(\frac{M}{N})$.  If $[m]\in E(\frac{M}{N})$, then $Nm^2=Nm$. Assume $m^s=f_m\in E(M)$, for some $s\geq 2$. Then  $Nm=Nm^2=Nmm=Nm^{3}=\cdots=Nm^s=Nf_m$. Hence  $[m]=[f_m]$. So the map is surjective. If $M$ is also an inverse monoid, and   $[e]=[f]$, then $Ne=Nf$, and $Me=Mf$. Since $M$ is an inverse monoid, $e=f$.
\end{proof}
\begin{theorem}\label{theta4}
 $\C[M]$ is  semisimple iff $\C[N]$ and  $\C[\frac{M}{N}]$ both are semisimple.
\end{theorem}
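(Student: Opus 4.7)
The forward direction ($\Rightarrow$) is already established in Lemma \ref{biiso1}, so I need only prove ($\Leftarrow$): if $\C[N]$ and $\C[M/N]$ are both semisimple, then so is $\C[M]$. My strategy is to invoke the Rees-filtration criterion from Section \ref{nmm}: $\C[M]$ is semisimple iff every principal factor $\C[I_k/I_{k-1}]$ is semisimple. I would construct the principal series of $M$ by pullback. Start with a principal series $\emptyset = \bar I_0 \subsetneq \cdots \subsetneq \bar I_n = M/N$ of $M/N$ whose factors are the $\mathcal{J}$-classes $\bar J_k = \bar I_k \setminus \bar I_{k-1}$, and set $I_k = p^{-1}(\bar I_k)$. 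Each $I_k$ is a two-sided ideal of $M$, and by Lemma \ref{Str}(1) the layer $I_k \setminus I_{k-1} = p^{-1}(\bar J_k)$ is a single $\mathcal{J}$-class $J_k$ of $M$.

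Next I would verify regularity: since $\C[M/N]$ is semisimple, every $\bar J_k$ contains some idempotent $[e_k]$; by Lemma \ref{idbij} lift to $e_k \in E(M)$, and Lemma \ref{Str}(1) gives $e_k \in p^{-1}(\bar J_k) = J_k$, so $J_k$ is regular. I would then align the Rees structures: using the centricity axiom, for $x = x_\ell e \in L_e$ and $n \in N$ one has $nx = x_\ell n' e = x_\ell e \cdot (n''e)$ where $nx_\ell = x_\ell n'$ and $en' = n''e$, and $n''e \in G_e$ because $[n''e] = [e] \in G_{[e]}$. This exhibits every $N$-translate $nx$ as a $G_e$-translate $x \circ_e g$ with $g = n''e \in G_e$, forcing the natural map $L_e/G_e \to L_{[e]}/G_{[e]}$ (and dually for $R_e$) to be a bijection. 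Consequently $s_{e_k} = s_{[e_k]}$ and $t_{e_k} = t_{[e_k]}$, so the sandwich matrices $P(e_k)$ and $\bar P([e_k])$ have the same shape, and if one chooses representatives $x_i, y_j$ of $L_e/G_e$, $R_e/G_e$ whose images are representatives in $M/N$, the entries match: $p(P(e_k)_{ji}) = \bar P([e_k])_{ji}$ under the surjection $\C[G_{e_k}] \twoheadrightarrow \C[G_{[e_k]}]$ coming from Lemma \ref{Str}(2).

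The main obstacle is to deduce invertibility of $P(e_k)$ over $\C[G_{e_k}]$ from invertibility of $\bar P([e_k])$ over $\C[G_{[e_k]}]$. The kernel of $\C[G_{e_k}] \to \C[G_{[e_k]}]$ is not contained in the Jacobson radical of the finite-dimensional semisimple algebra $\C[G_{e_k}]$, so a naive lifting argument does not suffice. My plan for this step is to test invertibility one irreducible character of $G_{e_k}$ at a time, using the extension $1 \to G_{e_k}^N \to G_{e_k} \to G_{[e_k]} \to 1$ and Clifford theory. For each irreducible $\rho$ of $G_{e_k}$ that factors through $G_{[e_k]}$ one has $\rho(P(e_k)) = \bar\rho(\bar P([e_k]))$, which is invertible by hypothesis; for irreducibles non-trivial on $G_{e_k}^N$ one uses the fact (derived in the previous step) that the $N$-action on each column/row is realized through $G_{e_k}$ via a homomorphism $\phi_i \colon N \to G_{e_k}$ whose image lies in $G_{e_k}^N \cdot e_k$, together with the semisimplicity of $\C[N]$ applied to the Schützenberger representations of Section \ref{Schrepre}.

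As a fallback, I would bypass the direct invertibility verification by dimension counting: once regularity of every $\mathcal{J}$-class of $M$ and the identity $s_e = t_e$ are established, the Clifford--Munn--Ponizovski\u{\i} construction (Theorem \ref{CMP}) produces, for each pair $(J, \sigma)$ with $J$ a regular $\mathcal{J}$-class and $\sigma \in \Irr(G_e)$, an irreducible $\C[M]$-module of dimension at most $s_e \dim \sigma$. Summing the squares, $\sum_J s_e^2 |G_e| = \sum_J |J| = |M| = \dim_\C \C[M]$, so equality must hold everywhere, i.e.\ no radical and $\C[M]$ is semisimple. This dimension argument gives a clean route once the sizes $s_e = s_{[e]}$, $t_e = t_{[e]}$, $s_{[e]} = t_{[e]}$ are pinned down, and is likely the cleanest way to close the proof.
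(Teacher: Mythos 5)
Your reduction of the problem to invertibility of the sandwich matrices of $M$ is the right target, and the bookkeeping $s_e=s_{[e]}$, $t_e=t_{[e]}=s_{[e]}$ is correct, but neither of your two routes actually establishes that invertibility, and that is where the entire content of the theorem sits. In the character-by-character plan you only handle the irreducibles of $G_e$ that factor through $G_{[e]}$; for a $\rho$ nontrivial on $G_e^N$ the hypothesis on $\frac{M}{N}$ only pins down the images of the entries $P(e)_{ji}$ in $G_{[e]}\cup\{0\}$, while the $G_e^N$-parts of these entries --- exactly what $\rho$ sees --- are left uncontrolled, and the sentence ``one uses \dots{} the semisimplicity of $\C[N]$ applied to the Sch\"utzenberger representations'' is a placeholder, not an argument. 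The fallback does not close this hole: the construction following Theorem \ref{CMP} realizes each irreducible with apex $J_e$ as a \emph{quotient} of $\Ind_{G_e}(\sigma)$, so it only gives $\dim V\le s_e\dim\sigma$ and hence the upper bound $\sum(\dim V)^2\le\sum_J s_e^2|G_e|=|M|$, which is nothing but $\dim(A/\rad A)\le\dim A$ and holds for every finite monoid. It does not force equality: adjoin an identity to the $2\times 2$ rectangular band $\{(i,j)\}$ with $(i,j)(k,l)=(i,l)$; every $\mathcal{J}$-class is regular, $s_e=t_e=2$, $G_e$ is trivial and $\sum_J s_e^2|G_e|=5=|M|$, yet the sandwich matrix is the all-ones matrix and $\sum(\dim V)^2=2<5$. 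Equality is equivalent to the irreducibility of every $\Ind_{G_e}(\sigma)$, i.e.\ to precisely the statement you were trying to avoid proving, so ``equality must hold everywhere'' is a non sequitur. (It is also a warning sign that the fallback uses no hypothesis on $N$ at all.)

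For comparison, the paper closes this step constructively rather than by counting: from the isomorphism $\varphi_W$ for $W=\C[G_{[e]}]$ over $\frac{M}{N}$ (this is where semisimplicity of $\C[\frac{M}{N}]$ enters) it lifts the dual-basis elements back to $\C[L_e]$ and corrects them by elements of $G_e^N=\Ker(G_e\to G_{[e]})$ (Lemma \ref{Str}(2)) so as to obtain $f_j'\in\C[L_e]$ with $y_if_j'=\delta_{ij}e$; this is an explicit one-sided inverse of $P(e)$ over $\C[G_e]$, from which it deduces that every $\Ind_{G_e}(V)$ is irreducible and that the nonzero map $\varphi_V$ between spaces of equal dimension ($s_e=t_e$) is an isomorphism, whence $\C[M]$ is semisimple. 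Finally, your justification of $s_e=s_{[e]}$ is misstated: it is not true that every $N$-translate $nx$ of $x\in L_e$ is a $G_e$-translate ($nx$ need not remain in $L_e$), and $[n''e]=[e]$ is not automatic, because an element $n''\in N$ is not sent to the identity of $\frac{M}{N}$ unless $Nn''=N$. What you actually need, and what is true, is injectivity of the orbit map $L_e/G_e\to L_{[e]}/G_{[e]}$: if $[x_i]\in[x_j]\circ_{[e]}G_{[e]}$, then writing $x_i\in Nx_jg$ with $g\in G_e$ and using centricity together with $x_jgg^{-1}=x_je=x_j$ one gets $x_iM=x_jM$, so $x_i$ and $x_j$ lie in the same $G_e$-orbit by Lemma \ref{BSteinberg}(4).
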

\begin{proof}
By Lmm.\ref{biiso1}, it suffices to prove the `` $\Leftarrow$'' part. \\
1)  For each $m\in M$, there exists $m^{\ast}\in M$, such that $[m]=[m][m^{\ast}][m]$, i.e., $Nm=Nmm^{\ast}m=mNm^{\ast}m$. Then there exists $m'\in M $, such that $m=mm'm$. Hence $m$ is a regular element.\\
2) For $e\in E(M)$, let us write $L_e=\sqcup_{i=1}^{s_e} x_i \circ_e G_e$, $R_e=\sqcup_{j=1}^{t_e} G_e\circ_e y_j$,    $J_e=\sqcup_{i, j=1}^{s_e, t_e} x_i \circ_eG_e\circ_e y_j$ as in  Lmm.\ref{BSteinberg}.  By the above lemma \ref{Str}(1), we know that  $L_{[e]}=\sqcup_{i=1}^{s_e} [x_i] \circ_{[e]} G_{[e]}$, $R_{[e]}=\sqcup_{j=1}^{t_e} G_{[e]}\circ_{[e]} [y_j]$,    $J_{[e]}=\sqcup_{i, j=1}^{s_e, t_e} [x_i] \circ_{[e]}G_{[e]}\circ_{[e]} [y_j]$. \footnote{ The  discussion is compatible with the theorem 10.47 in \cite[p.215]{CP2}.}  Recall the map $\varphi_W:  \Ind_{G_{[e]}}(W) \longrightarrow   \Coind_{G_{[e]}}(W);  [x]\otimes w \longrightarrow ([y]  \longmapsto ([y]\Diamond [x])w),$
 where $[y]\Diamond [x]=\left\{\begin{array}{cl} [y][x], & \textrm{ if } [y][x] \in G_{[e]}\\
 0 & else\end{array} \right.$ given as in \cite[p.70]{Stein} or above Section \ref{nmm}.  Let us choose $W=\C[G_{[e]}]$. Then $\varphi_W$ is an isomorphism. Recall $[y_j^{\ast}]\in \Coind_{G_{[e]}}(\C[G_{[e]}])$ given by $[y^{\ast}_j]([y_i])=\left\{\begin{array}{lr}
 0 & i\neq j \\
 {[e]} & i=j \end{array}\right.$. Then  $\Ind_{G_{[e]}}(W)\simeq  \C[L_{[e]}]$, $ \Coind_{G_{[e]}}(W)=\oplus_{j=1}^{t_{[e]}}[y_j^{\ast}] \C[G_{[e]}] $. Hence there exists  functions  $[f_j]\in \C[L_{[e]}]$, $[h^{\ast}_i]\in \Coind_{G_{[e]}}(W)$, such that $\varphi_W([f_j])=[y_j^{\ast}]$, and $\varphi_W^{-1}([h^{\ast}_i])=[x_i]$.\\
 3) Go back to the big monoid $M$.  Let $f_j, y^{\ast}_j$, $h^{\ast}_i$ be some corresponding functions in $ \C[L_{e}]$, $ \Coind_{G_{e}}(\C[G_e])$, $\Coind_{G_{e}}(\C[G_e])$, with the images $[f_j]$, $[y^{\ast}_j]$, $[h^{\ast}_i]$ under the map $p$. Then $y_if_j=\left\{\begin{array}{lr}
 0 & i\neq j, \\
 g_j & i=j, \end{array}\right.$ for some $g_j\in G_e^N$.  Multiply by some elements of $ G_e^N$, finally, $\exists f'_j\in \C[L_e]$, such that  $y_if'_j=\left\{\begin{array}{lr}
 0 & i\neq j, \\
e & i=j. \end{array}\right.$ Assume $[h_i^{\ast}]=\sum_{j=1}[y_j^{\ast}] [g'_j]$, for some $[g'_j]=[y_j]\Diamond[x_i]\in G_{[e]}\cup\{0\}$. Notice that $\C[L_{[e]}]=\oplus_{i=1}^{s_{[e]}} [f_i] \C[G_{[e]}]=\oplus_{i=1}^{s_{[e]}} [f'_i] \C[G_{[e]}]$. Hence $[x_i]=\oplus_{i=1}^{s_{[e]}} [f'_i] [\tau_i]$, and consequently, $x_i\in \sum_{i=1}^{s_{[e]}=s_e} f_i' \C[G_e]$.\\
4) Assume $(\pi, V)\in \Irr(G_e)$. For any  element $u=\sum_{i=1}^{s_e} f_i'\otimes v_i \in \Ind_{G_e}(V)$. Then $y_ju=e\otimes v_j$, which implies that $\Ind_{G_e}(V)$ is an irreducible $M$-module.  Note that  $\dim \Ind_{G_e}(V)=\dim \Coind_{G_e}(V)$, and the canonical map $\varphi_V: \Ind_{G_e}(V) \longrightarrow  \Coind_{G_e}(V)$ is non-zero in this case. Hence $\varphi_V$ is an isomorphism of $A_e=\frac{\C[M]}{\C[I_{e}]}$-modules. Therefore $\C[M]$ is  semisimple.
\end{proof}

\begin{corollary}
Go back to Section \ref{Projmo}.  If $N=F$ or $F^{\times}$ is a finite set, then the monoid $M^{\alpha}$ is a finite semi-simple  monoid.
\end{corollary}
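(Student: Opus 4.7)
The plan is to read off the semi-simplicity of $\C[M^{\alpha}]$ as a direct application of Theorem \ref{theta4}, using Lemma \ref{FICE} to identify $N$ as a centric submonoid and to compute the quotient. First, $M^{\alpha}$ is finite since $|M^{\alpha}| = |M| \cdot |N|$ and both $M$ and $N$ are finite. By Lemma \ref{FICE}, $N$ is centric in $M^{\alpha}$ and
\[
M^{\alpha}/N \simeq \begin{cases} M & \textrm{if } N = F^{\times}, \\ M \times \Z/2\Z & \textrm{if } N = F. \end{cases}
\]
Theorem \ref{theta4} then reduces the claim to verifying that $\C[N]$ and $\C[M^{\alpha}/N]$ are both semi-simple.

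For $\C[N]$: if $N = F^{\times}$, this is the group algebra of a finite abelian group and is semi-simple by Maschke's theorem. If $N = F$, let $\theta$ denote the monoid-theoretic zero of $F$; then $\theta$ is a central idempotent with $x\theta = \theta x = \theta$ for every $x \in F$, so that $\{\theta,\,1-\theta\}$ is an orthogonal pair of central idempotents summing to $1$ in $\C[F]$, yielding
\[
\C[F] \simeq \C\theta \;\oplus\; \C[F](1-\theta) \simeq \C \oplus \C[F^{\times}],
\]
which is again semi-simple by Maschke applied to the finite abelian group $F^{\times}$.

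For $\C[M^{\alpha}/N]$: I would invoke the standing hypothesis that $M$ itself is semi-simple (indeed forced on us by the direction of Theorem \ref{theta4} already recorded in Lemma \ref{biiso1}), which settles the case $N = F^{\times}$ immediately. In the case $N = F$, one has $\C[M \times \Z/2\Z] \simeq \C[M] \otimes_{\C} \C[\Z/2\Z]$; since each tensor factor is semi-simple over $\C$, the tensor product is semi-simple by the matrix-algebra decomposition argument recorded in Lemma \ref{ss}. This completes the reduction and, with it, the proof. The only mildly non-trivial step is the decomposition of $\C[F]$ when $N = F$, where the absorbing element of $F$ has to be split off as a one-dimensional direct summand; once that is observed, no genuine obstacle remains.
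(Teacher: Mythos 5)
Your proof is correct and follows essentially the same route as the paper, whose entire proof reads ``It follows from Lemma \ref{FICE} and the above theorem'': you simply spell out the two inputs to Theorem \ref{theta4}, namely semi-simplicity of $\C[N]$ (splitting off the monoid zero of $F$ as a central idempotent, $\C[F]\simeq \C\oplus\C[F^{\times}]$) and of $\C[M^{\alpha}/N]$ via Lemma \ref{FICE} and the semi-simplicity of $M$. Your explicit flagging of the hypothesis that $\C[M]$ is semi-simple is appropriate, since the paper leaves it implicit at this point (it is only recorded as Axiom IV slightly later) and the corollary is false without it.
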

\begin{proof}
It follows from Lmm.\ref{FICE} and the above theorem.
\end{proof}
\begin{lemma}\label{agroups}
If $\frac{M}{N}$ is a group, so is $M$.
\end{lemma}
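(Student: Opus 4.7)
The plan is to establish two properties of $M$ which together force it to be a group: (a) $1$ is the only idempotent of $M$, and (b) every element of $M$ admits a left inverse. Once both hold, for $m\in M$ and $\ell\in M$ with $\ell m = 1$, the element $m\ell$ is idempotent (since $(m\ell)^2 = m(\ell m)\ell = m\ell$), so $m\ell = 1$ by (a), making $m$ two-sided invertible.

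For (a) I would take $e\in E(M)$ and push it forward under the canonical projection $p\colon M\longrightarrow \frac{M}{N}$. Since $\frac{M}{N}$ is a group, its only idempotent is $[1]$, so $[e]=[1]$, which by the definition of $\frac{M}{N}$ means $Ne=N$. In particular there exists $n_0\in N$ with $n_0 e = 1$. The centricity hypothesis $Ne = eN$ then provides $n_1\in N$ with $en_1 = 1$, so $e$ is a two-sided unit of $M$; combined with $e^2 = e$ this forces $e = 1$.

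For (b) I would lift the inverse from $\frac{M}{N}$: pick any $m'\in M$ representing $[m]^{-1}$. From $[m'm]=[1]$ one reads off $Nm'm = N$, so there exists $n\in N$ with $nm'm = 1$. Setting $\ell = nm'$ produces the desired left inverse $\ell m = 1$.

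The only substantive point is the symmetry step in (a), where the one-sided relation $n_0 e = 1$ is promoted to two-sided invertibility of $e$ in $M$. This is exactly where the centricity hypothesis is used: $Ne = eN$ converts a left inverse in $N$ into a right inverse, and without it an idempotent $e\neq 1$ could a priori survive in $M$. The remainder of the argument is purely formal.
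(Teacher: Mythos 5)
Your proof is correct, but it takes a more roundabout route than the paper. The paper's proof is a two-line direct lift: since $\frac{M}{N}$ is a group, for each $x\in M$ there are $y,z\in M$ with $xyN=N=Nzx$, so $1=xyn$ and $1=n'zx$ for some $n,n'\in N$; thus $x$ has both a right inverse $yn$ and a left inverse $n'z$ in $M$ and is therefore invertible — no idempotent analysis is needed. You instead lift only left inverses (your step (b)) and then dispose of idempotents via centricity (your step (a)) to upgrade $\ell m=1$ to $m\ell=1$. That works, but note two simplifications: first, the same computation that gave you $\ell m=1$ from $[m'm]=[1]$ gives $m r=1$ from $[mm']=[1]$ (using $mm'N=N$), since $[m']$ is a two-sided inverse of $[m]$ in the group $\frac{M}{N}$ — this is exactly the paper's shortcut and makes step (a) superfluous; second, even without that, a monoid in which every element has a left inverse is already a group (apply left-invertibility to $\ell$ itself: if $\ell'\ell=1$ then $m=\ell'(\ell m)=\ell'$, so $m\ell=1$), so the idempotent argument is not where the real content lies. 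Consequently your closing remark that centricity is used \emph{exactly} at the symmetry step of (a) is a bit misleading: centricity is already built into the definition of $\frac{M}{N}$ and into reading $[x]=[1]$ as $Nx=N=xN$, and the two-sidedness of inverses can be obtained without any appeal to idempotents.
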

\begin{proof}
For any $x\in M$, $\exists y, z\in M$, such that $xNyN=xyN=N= Nzx$,  so  $1=xyn=n'zx$,  $x$ is invertible.
\end{proof}
\begin{lemma}
$ef=fe$, for  $e, f  \in E(N)$.
\end{lemma}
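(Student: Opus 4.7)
The plan is to exploit the centric hypothesis $mN=Nm$ for all $m\in M$, combined with the idempotency $f^{2}=f$, to sandwich the two candidate products between copies of $f$ and read off the desired equality.

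First I would use that $e\in N$ together with centricity to rewrite $ef$ on the other side: since $e\in N$ gives $ef\in Nf$, and by the centric axiom $Nf=fN$, we can choose $n\in N$ with $ef=fn$. Left-multiplying by $f$ and invoking $f^{2}=f$ then yields
\[
fef \;=\; f\cdot fn \;=\; f^{2}n \;=\; fn \;=\; ef.
\]
Next I would perform the symmetric calculation on $fe$: since $e\in N$, we have $fe\in fN$, and centricity again gives $fN=Nf$, so $fe=mf$ for some $m\in N$. Right-multiplying by $f$ and using $f^{2}=f$ gives
\[
fef \;=\; mf\cdot f \;=\; mf^{2} \;=\; mf \;=\; fe.
\]
Comparing the two expressions for $fef$ produces the claim $ef=fe$.

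The argument requires no additional hypotheses — neither semisimplicity of $\C[M]$, nor that $M$ be inverse, nor even that $e$ be idempotent — and in fact establishes the stronger fact that every element of $N$ commutes with every idempotent of $E(N)$, so that $E(N)$ lies in the center of $N$. The only mildly delicate point, which is the main (very small) obstacle, is to notice that the two applications of centricity go in opposite directions: we use $Nf\subseteq fN$ to convert $ef$ into a left-factor of $f$, and $fN\subseteq Nf$ to convert $fe$ into a right-factor of $f$, so that in each case the outer multiplication by $f$ can absorb into $f^{2}=f$ and collapse $fef$ to the desired side.
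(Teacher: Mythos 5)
Your proof is correct and is essentially the paper's own argument: both use the centric property to rewrite $ef$ and $fe$ as products with $f$ on one side and then absorb the extra factor via $f^{2}=f$, reading the equality off a sandwich of the form $fef$. The only cosmetic difference is that you evaluate $fef$ in two ways and never need $e^{2}=e$, whereas the paper also passes through $efe$ and uses the idempotency of $e$; this is why your remark that every element of $N$ commutes with each $f\in E(N)$ is a correct (slight) strengthening of the stated lemma.
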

\begin{proof}
If $e, f\in E(N)$, then $ef=fn_e=n_fe$, $fe=en'_f=n_e'f$. So $ fef=f\cdot fn_e=fn_e=ef=n_fe = n_fe \cdot e=efe=fe$.
\end{proof}
\begin{corollary}\label{INVER}
Under the Axiom III, if $\C[N]$ is semi-simple, then $N$ is an inverse monoid.
\end{corollary}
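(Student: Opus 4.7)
The plan is to prove that $N$ is an inverse monoid by establishing its two defining properties separately and then invoking the classical characterization: a finite monoid is inverse iff it is regular and its idempotents commute. The commutativity of $E(N)$ is already supplied by the preceding lemma, whose proof uses Axiom~III in an essential way: for $e,f\in E(N)\subseteq E(M)$, centricity yields $eN=Ne$ and $fN=Nf$, so $ef$ simultaneously belongs to $Ne$ and to $fN$, and a short manipulation (as carried out in the previous lemma) gives $ef=efe=fef=fe$. So all that needs to be addressed in the present proof is regularity of $N$.

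For regularity, I plan to apply the lemma at the beginning of Section~\ref{nmm}, but with $N$ in place of $M$: since $\C[N]$ is assumed semi-simple, every principal factor $J^N(n)/I^N(n)$ of $N$ must be simple or $0$-simple, and in particular none is null. A principal factor being non-null is exactly the statement that the corresponding $\mathcal{J}$-class contains an idempotent, so every $\mathcal{J}$-class of $N$ is regular. Since every element of a finite monoid lies in some $\mathcal{J}$-class, this furnishes, for each $n\in N$, an element $n'\in N$ with $n=nn'n$.

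Putting the two pieces together, $N$ is a finite regular monoid whose idempotents form a commutative semilattice, hence an inverse monoid. The main obstacle is conceptual rather than computational: one has to recognize that the two halves of the Clifford--Preston criterion for inversity are delivered, respectively, by Axiom~III (via the previous lemma, securing commutativity of $E(N)$) and by the semi-simplicity of $\C[N]$ (via the structural lemma of Section~\ref{nmm}, securing regularity). Note that Axiom~III cannot be dropped: semi-simplicity of $\C[N]$ alone does not in general force the idempotents of $N$ to commute, so the centricity hypothesis is genuinely doing work in this corollary.
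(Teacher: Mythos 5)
Your proposal is correct and is essentially the paper's argument spelled out: the paper's proof is just the citation of the classical characterization of inverse monoids (regular with commuting idempotents, \cite[p.26, Thm. 3.2]{Stein}), and you supply exactly the two inputs this presupposes, namely commutativity of $E(N)$ from the preceding lemma under Axiom III and regularity of $N$ from semi-simplicity of $\C[N]$ via the simple-or-$0$-simple principal factor lemma. Your closing remark that centricity cannot be dropped is also accurate.
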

\begin{proof}
See \cite[p.26, Thm.3.2]{Stein}.
\end{proof}

 From now on, we take the following axiom in this subsection.
           \begin{axiom*}[IV]
 $M$ is a     semi-simple monoid.
   \end{axiom*}

\subsection{$G_m^N$}\label{GnN}   Let $e\in E(N)$. Recall the notation $G^N_e$ in Subsection \ref{localization}.  Then $G^N_e(\subseteq N)$  is the group of the  units of $eNe$.  Let $G_e$ be the  group of the  units of $eMe$. By Lmm.\ref{tworegular},  $G_e\cap N=G_e^N$. Notice that by Remark \ref{threeeqt}, $G_e^N=L_e^N=R_e^N$.    Let  $G_e^{N \ast}= G_e^N \cup\{0\}$ be a multiplicative   monoid.   Let
$\iota_l: N \longrightarrow G_e^{N \ast}; n \longmapsto   \left\{  \begin{array}{lr} 0 & \textrm{ if  }  ne\notin G_e^N,\\
 ne &  \textrm{ if  } ne\in G_e^N, \end{array} \right.$
 $\iota_r: N \longrightarrow G_e^{N \ast}; n \longmapsto   \left\{  \begin{array}{lr} 0 & \textrm{ if  }  en\notin G_e^N,\\
 en &  \textrm{ if  } en\in G_e^N. \end{array} \right.$
  \begin{lemma}\label{homo}
 $\iota_l$, $\iota_r$ both  are  monoid homomorphisms.
  \end{lemma}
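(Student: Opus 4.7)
The plan is to factor $\iota_l$ through the intermediate monoid $eNe$ as a composition $N \stackrel{\phi_l}{\longrightarrow} eNe \stackrel{\psi}{\longrightarrow} G_e^{N\ast}$, where $\phi_l(n)=ne$ and $\psi$ is the identity on $G_e^N$ and sends every non-unit of $eNe$ to $0$; then I would verify each factor is a monoid homomorphism. The map $\iota_r$ would be handled dually using $\phi_r\colon n\mapsto en$.

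First I would exploit centricity (Axiom III) to record that $e$ commutes with every element of $N$: since $Ne=eN$, for each $n\in N$ there exist $n',n''\in N$ with $ne=en'$ and $en=n''e$, whence $ene=e\cdot en'=en'=ne$ and $ene=n''e\cdot e=n''e=en$. In particular $ne=en$ lies in $eNe$, and the useful identity $en_2 e=n_2e$ follows. Using it, $\phi_l(n_1)\phi_l(n_2)=(n_1e)(n_2e)=n_1(en_2e)=n_1\cdot n_2 e= n_1n_2 e=\phi_l(n_1n_2)$, which together with $\phi_l(1)=e$ shows that $\phi_l$ is a monoid homomorphism $N\to eNe$. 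The map $\phi_r$ is a homomorphism by the same argument.

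Next I would check that $\psi\colon eNe\to G_e^{N\ast}$ is a homomorphism of monoids with zero. It clearly preserves the identity $e$. For $a,b\in eNe$: if both lie in $G_e^N$, then $ab\in G_e^N$ since $G_e^N$ is a group, so $\psi(a)\psi(b)=ab=\psi(ab)$; otherwise, say $a\notin G_e^N$, I must show that $ab\notin G_e^N$ as well. This is the crux: if we had $ab=u\in G_e^N$, then $a(bu^{-1})=e$, giving $a$ a right inverse inside the finite monoid $eNe$. A standard finiteness argument---left multiplication by $a$ on $eNe$ is surjective, hence bijective, producing also a left inverse---then forces $a\in G_e^N$, a contradiction. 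Thus $\psi(ab)=0=\psi(a)\psi(b)$.

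There is no real obstacle; the only ingredient beyond the centricity calculation is the finite-monoid lemma that a one-sided inverse is two-sided, which is standard and can be derived in two lines from the finiteness of $eNe$. Composing, $\iota_l=\psi\circ\phi_l$ and $\iota_r=\psi\circ\phi_r$ are both monoid homomorphisms.
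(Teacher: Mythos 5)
Your proof is correct, and it takes a somewhat different route from the paper's. The paper (treating only $\iota_l$, the other case by duality) argues directly with Green's relations: under Axiom (III) one has $G_e^N=L_e^N=R_e^N$ (Remark \ref{threeeqt}), so membership $ne\in G_e^N$ is tested by $Nne=Ne$ (equivalently $neN=eN$), and a three-case computation shows that if $n_1e\notin G_e^N$ or $n_2e\notin G_e^N$ then $Nn_1n_2e\neq Ne$, hence $n_1n_2e\notin G_e^N$, while in the remaining case $n_1n_2e=n_1e\circ_e n_2e$. You instead factor $\iota_l=\psi\circ\phi_l$ through the local monoid $eNe$: your observation that centricity forces $en=ne$ for all $n\in N$ is correct (from $eN=Ne$ one gets $ene=en$ and $ene=ne$) and is in fact sharper than the paper's unnumbered lemma that idempotents of $N$ commute; it makes $n\mapsto ne$ a homomorphism onto $eNe$, and collapsing the non-units of the finite monoid $eNe$ to $0$ is multiplicative because in a finite monoid a one-sidedly invertible element is a unit. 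What your route buys is a cleaner structural picture, the reuse of a standard finiteness lemma, and as a by-product the equality $\iota_l=\iota_r$, so the duality step becomes automatic; what it costs is precisely that extra lemma, which the paper avoids by testing unit membership through the $\mathcal{L}$- and $\mathcal{R}$-classes. Two small points to tighten, neither a gap: when exactly one of $a,b$ fails to be a unit, your reduction to the case $a\notin G_e^N$ is not literally symmetric (the right-inverse argument applies to $a$; for $b$ use the left-inverse version, or simply note $b=a^{-1}(ab)\in G_e^N$), and the step from bijectivity of $L_a$ to invertibility of $a$ deserves its one line, namely that some power $L_a^k$ is the identity permutation of $eNe$, whence $a^k=e$.
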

\begin{proof}
By duality, we only consider $\iota_l$. Let $n_1, n_2\in N$. (1)  If $n_1e , n_2e \in G_e^N$, then $n_1n_2e=n_1e\circ_e n_2e$; (2) Notice: $Nn_1n_2e=Ne$ implies that $Nn_2e=Ne$. If   $n_2e\notin G_e^N$, then $n_1n_2e \notin G_e^N$; (3) In case   $n_1e\notin G_e^N$, we assume $n_ie=en_i'$.  Then $Nn_1n_2e=Nn_1en_2'=Nen_1'n_2'=en_1'n_2'N$, and $en_1'n_2'N=eN$  implies $en_1'N=eN$.  So $n_1n_2e \notin G_e^N$.
\end{proof}
\begin{lemma}\label{symm}
Following  the notations of Lmm.\ref{duallity}, $e^{[-1]} G_e^N =G_e^N e^{[-1]}$.
\end{lemma}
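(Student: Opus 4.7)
The plan is to reduce the equality of sets to the single pointwise claim that $ne \in G_e^N$ if and only if $en \in G_e^N$, and in fact to show that under either of these hypotheses one has $ne = en$. The key ingredient is that under Axioms~III and~IV, Lemma~\ref{biiso1} plus Corollary~\ref{INVER} guarantees that $N$ is an inverse monoid; in particular every $n \in N$ has a unique inverse $n^{\ast}$, and all idempotents of $N$ commute. I will also use Lemma~\ref{tworegular}, so that $G_e^N$ coincides with the $\mathcal{H}$-class $\mathcal{H}_e^N$ of $e$ in $N$, which is the group of units of $eNe$ with identity $e$, and the involution on $N$ restricts to group inversion on this subgroup.

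First I would handle the direction $G_e^N e^{[-1]} \subseteq e^{[-1]}G_e^N$. Assume $n \in N$ with $ne \in G_e^N$. Since $G_e^N$ is the group of units of $eNe$ with identity $e$, one has $(ne)(ne)^{\ast} = e$. Using the antimultiplicativity of the involution, $(ne)^{\ast} = e^{\ast} n^{\ast} = e n^{\ast}$, so the identity becomes
\[
nen^{\ast} = e.
\]
Right-multiplying both sides by $n$ gives $nen^{\ast}n = en$. On the other hand, the element $n^{\ast}n$ is an idempotent of $N$, and since all idempotents of an inverse monoid commute it commutes with $e$; hence
\[
nen^{\ast}n \;=\; n(e n^{\ast}n) \;=\; n(n^{\ast}n\, e) \;=\; (nn^{\ast}n)\,e \;=\; ne.
\]
Comparing the two expressions yields $ne = en$, so $en = ne \in G_e^N$, and therefore $n \in e^{[-1]}G_e^N$.

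The reverse inclusion is entirely symmetric. Assume $en \in G_e^N$. Then $(en)^{\ast}(en) = e$, i.e.\ $n^{\ast}en = e$. Left-multiplying by $n$ gives $nn^{\ast}en = ne$, and since the idempotent $nn^{\ast}$ commutes with $e$ we compute $nn^{\ast}en = (nn^{\ast}e)n = (enn^{\ast})n = e(nn^{\ast}n) = en$; hence $ne = en$, so $ne \in G_e^N$ and $n \in G_e^N e^{[-1]}$. This proves the equality of the two sets.

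There is no serious obstacle: the argument is a short manipulation once the inverse-monoid structure of $N$ and the commutativity of its idempotents are invoked. The only point that requires a brief justification is the passage from $ne \in G_e^N$ to $nen^{\ast} = e$, which relies on verifying that the involution $n \mapsto n^{\ast}$ of $N$ restricts to group inversion on the maximal subgroup $G_e^N = \mathcal{H}_e^N$; this is a standard fact about inverse semigroups and follows from Lemma~\ref{tworegular} together with the identification $G_e^N = \mathcal{H}_e^N$.
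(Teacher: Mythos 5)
Your proof is correct, but it takes a genuinely different route from the paper. The paper's own argument stays at the level of modules: using semi-simplicity of $\C[N]$ it invokes the isomorphism $\Ind_{G_e^N}(\C[G_e^N])\simeq \Coind_{G_e^N}(\C[G_e^N])$, which produces a bijective linear map $\mathcal{A}$ intertwining the two monoid homomorphisms $\iota_l,\iota_r\colon N\to G_e^{N\ast}$ of Lemma \ref{homo}, and then reads off that $\iota_l(n)=0$ iff $\iota_r(n)=0$, i.e.\ that the complements of the two sets coincide. You instead convert semi-simplicity into structure: Lemma \ref{biiso1} gives that $\C[N]$ is semi-simple, Corollary \ref{INVER} (under Axiom III) gives that $N$ is an inverse monoid, and then a short computation with the involution ($x^{\ast}=x^{-1}$ on the maximal subgroup at $e$, $nn^{\ast}n=n$, idempotents commute) shows the stronger pointwise fact that $ne\in G_e^N$ forces $ne=en$, and symmetrically; the set equality is an immediate corollary. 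Your computation is sound, and it buys more than the paper's proof does (actual commutation $ne=en$, not just simultaneous membership), at the price of routing through the inverse-monoid structure — which the paper itself uses freely later in the same section, so this is perfectly consonant with its hypotheses. The paper's intertwiner argument, on the other hand, is the one that fits its Axiom (I)/Sch\"utzenberger framework and is the version it contemplates extending to general $m\in M$ (where your involution argument is unavailable, and where the paper remarks the conclusion may indeed fail). One small bookkeeping point: citing Lemma \ref{tworegular} for the fact that $\ast$ restricts to inversion on $G_e^N$ is off-target — that lemma compares Green's relations of $N$ and $M$; what you actually need is only that the group inverse of $x\in G_e^N$ is a semigroup inverse of $x$, hence equals $x^{\ast}$ by uniqueness of inverses in an inverse monoid, together with the standard identification of the group of units of $eNe$ with the maximal subgroup at $e$. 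This is a misattribution, not a gap.
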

\begin{proof}
It suffices to show that $N \setminus e^{[-1]} G_e^N =N \setminus  G_e^N e^{[-1]}$.  Since $N$ is semi-simple, $\Ind_{G^N_e}(\C[G_e^N]) \simeq \Coind_{G^N_e}(\C[G_e^N])$. It implies that there exists a $\C$-linear bijective map  $\mathcal{A}: \C[G_e^N] \longrightarrow  \C[G_e^N] $ such that $\mathcal{A}(\iota_l(n))=\iota_r(n)\mathcal{A}$, for any $n\in N$.  If $n \in  N \setminus  G_e^Ne^{[-1]}$, then $\iota_l(n)=0$, which implies $\iota_r(n)=0$, $n\in  N \setminus e^{[-1]}G_e^N $; the converse also holds.
\end{proof}

We can  replace $e$ by any element $m\in M$.  By the same proof, we  can also show that the result of Lmm.\ref{homo}  holds. But the result of Lmm.\ref{symm} may not be right for all $m$.

\begin{corollary}\label{thetabi}
   Each $\C[G_m^N]$ is a theta $N-N$-bimodule.
\end{corollary}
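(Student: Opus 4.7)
The plan is to exhibit $\C[G_m^N]$ as the pullback of the bi-regular representation of the finite group $(G_m^N,\circ_m)$ along a pair of surjective monoid homomorphisms $N\to G_m^N$ induced by left- and right-multiplication by $m$. Once this identification is made, the theta property for a finite group acting on its own group algebra transports verbatim to $N\otimes N^o$.

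First, I would extend Lemma \ref{homo} from $e\in E(N)$ to \emph{every} $m\in M$: under centricity (Axiom III) and Remark \ref{threeeqt}, $L_m^N=R_m^N=G_m^N=Nm=mN$, so the formulas $\iota_l(n)=nm$ and $\iota_r(n)=mn$ always land in $G_m^N$ (no truncation to $0$ is ever required) and are surjective. The homomorphism property $\iota_l(n_1n_2)=\iota_l(n_1)\circ_m\iota_l(n_2)$ falls out of the definition of $\circ_m$: writing $n_2m=m\cdot (n_2m)_r$ via $mN=Nm$, one has $(n_1m)\circ_m(n_2m)=n_1\cdot m\cdot(n_2m)_r=n_1\cdot n_2m$, and symmetrically for $\iota_r$. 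I would then verify that the Rees-quotient bimodule structure on $\C[G_m^N]$ agrees with the pullback through $\iota_l\otimes\iota_r^{op}$ of the canonical bi-regular $G_m^N$-$G_m^N$-bimodule: for $x=x_lm=mx_r\in G_m^N$ and $n\in N$, $n\odot_l x=nx=n\cdot m\cdot x_r=\iota_l(n)\circ_m x$, and similarly $x\odot_r n=x\circ_m\iota_r(n)$.

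Having established this, I would apply the classical group-algebra decomposition $\C[G_m^N]\simeq\bigoplus_{\sigma\in\Irr(G_m^N)}V_\sigma\otimes D(V_\sigma)$ as $G_m^N$-$G_m^N$-bimodules, with each summand irreducible and the summands pairwise inequivalent. Under Axiom IV, Theorem \ref{theta4} gives $\C[N]$ semi-simple, so $\C[N]\otimes\C[N]^o$ is semi-simple by Lemma \ref{ss}, and Lemma \ref{prdm}(2) tells us its irreducibles are exactly tensor products of irreducibles from each factor. Since $\iota_l$ and $\iota_r$ are surjective, each $\iota_l^\ast(V_\sigma)$ stays irreducible as an $N$-module, and distinct $\sigma$'s give non-isomorphic pullbacks. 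Thus $\C[G_m^N]\simeq\bigoplus_{\sigma}\iota_l^\ast(V_\sigma)\otimes\iota_r^\ast(D(V_\sigma))$ as $N\otimes N^o$-modules, multiplicity-free with irreducible summands. This yields condition (1) of the theta definition; condition (2) is automatic, because the $\pi_1$-isotypic quotient equals the single summand $\pi_1\otimes\iota_r^\ast(D(V_\sigma))$, whence $\Theta_{\pi_1}=\iota_r^\ast(D(V_\sigma))$ is already irreducible and thus equal to its unique irreducible quotient. The resulting Howe correspondence is $\iota_l^\ast(V_\sigma)\leftrightarrow\iota_r^\ast(D(V_\sigma))$.

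The only genuine obstacle is the bookkeeping involved in extending Lemma \ref{homo} to arbitrary $m\in M$ and in checking that the Rees $\odot_l,\odot_r$ actions coincide with the pullback along $\circ_m$; everything else is a transparent transport of the group case along a surjection of monoid algebras.
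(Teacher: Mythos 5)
Your overall strategy is the same as the paper's: $\C[G_m^N]$ carries the canonical theta structure as a $G_m^N$-$G_m^N$-bimodule, and the $N$-$N$-structure is the inflation along the maps $\iota_l,\iota_r$ of Lemma \ref{homo} (extended from $e\in E(N)$ to arbitrary $m$, as the paper notes just before the corollary). But your foundation contains a genuine error: under Axioms (III), (IV), Remark \ref{threeeqt} gives $L_m^N=R_m^N=J_m^N=G_m^N$, \emph{not} $G_m^N=Nm=mN$. The set $L_m^N$ consists only of the generators of $Nm$ (those $x$ with $Nx=Nm$), and in general $G_m^N\subsetneq Nm$. For instance take $N=M=\{1,f\}$ with $f$ idempotent, $f\neq 1$ (a commutative, hence centric, semisimple inverse monoid) and $m=1$: then $N\cdot 1=\{1,f\}$ while $G_1^N=\{1\}$, so $\iota_l(f)$ must be truncated to $0$. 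Consequently there is in general no surjective monoid homomorphism $N\to G_m^N$; the maps are $\iota_l,\iota_r\colon N\to G_m^N\cup\{0\}$, exactly as in Lemma \ref{homo}, and your claim that ``no truncation to $0$ is ever required'' is false.

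Because of this, the two verifications you describe as routine bookkeeping are exactly where the real content sits, and your computations only cover the untruncated case: multiplicativity of $\iota_l$ requires the zero cases (that $n_2m\notin G_m^N$ forces $n_1n_2m\notin G_m^N$, and, using centricity, that $n_1m\notin G_m^N$ forces $n_1n_2m\notin G_m^N$ --- this is the actual proof of Lemma \ref{homo}), and the agreement of the Rees actions $\odot_l,\odot_r$ with the inflated actions needs the observation that for $x\in G_m^N$ one has $nx\in G_m^N$ iff $nm\in G_m^N$. Once the maps are corrected, the remainder of your argument does survive: every element of $G_m^N$ still lies in $Nm$ and in $mN$, so the truncated action of $\C[N]$ on each $V_\sigma$ has image containing $\sigma(G_m^N)$, whence the pullbacks stay irreducible and pairwise non-isomorphic, and the multiplicity-free decomposition $\bigoplus_\sigma V_\sigma\otimes D(V_\sigma)$ gives both conditions of the theta definition --- which is precisely the paper's short inflation argument. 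So the route coincides with the paper's, but as written the proposal rests on a false identification and omits the zero-case checks that constitute the substance of the lemma it invokes.
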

\begin{proof}
First of all,  $\C[G_m^N]$  is a canonical  theta $G_m^N-G_m^N$-bimodule.  As $N-N$-bimodule, $\C[G_m^N]$ is the inflation bimodule from that $G_m^N-G_m^N$-bimodule by the above maps $\iota_l$, $\iota_r$.
\end{proof}

   Assume $(\sigma, W)=(\Ind_{G^N_e}(\chi), \Ind_{G^N_e}(U))$, for $(\chi, U) \in \Irr(G^N_e)$.   For simplicity,  we identity $W$ with $U$.  The action of $N$ on $W$, factors through the above $l_l$. Then $eW=W$, and
                for $n\in N \setminus  G_e^Ne^{[-1]}$, $nW=0$.

   \begin{lemma}\label{GEE}
   If $\Hom_N(W, \C[G_m^N]) \neq 0$, then (1)$em=m$, (2) $G_m^Nm^{[-1]} =G^N_ee^{[-1]}$, (3) $G_e^N \longrightarrow G_m^N; g\longrightarrow gm$,  is a surjective group homomorphism, with the kernel $\Stab_{G_e^N}(m)=\{ g\in G_e^N\mid gm=m\}$, (4) $\Hom_N(W, \C[G_m^N])  $ is an  irreducible right representation of $N$.
      \end{lemma}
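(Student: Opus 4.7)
The plan is to exploit the theta bimodule structure of $\C[G_m^N]$ (Corollary~\ref{thetabi}). Since $W=\Ind_{G_e^N}(\chi)$ is an irreducible $N$-module by Clifford-Munn-Ponizovski (and centricity identifies $W$ with $U$), any nonzero $\varphi:W\to\C[G_m^N]$ must be injective, so $W$ embeds as an irreducible $N$-submodule. Decomposing $\C[G_m^N]$ via the group law on $G_m^N$ and the inflation homomorphisms $\iota_l,\iota_r:N\to G_m^{N*}$ of Lemma~\ref{homo}, one obtains $\C[G_m^N]\simeq\bigoplus_{\chi'\in\Irr(G_m^N)}V_{\chi'}\otimes D(V_{\chi'})$ as $N$-$N$-bimodule, whose irreducible left $N$-constituents are exactly the $N$-inflations of the $\chi'$ via $\iota_l$. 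Hence $W$ is $N$-isomorphic to the inflation of some unique $\chi_0\in\Irr(G_m^N)$.

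To prove (2), I compare annihilators. Because $\chi$ is a group representation, $\Ann_N(W)=\{n\in N : ne\notin G_e^N\}=N\setminus G_e^Ne^{[-1]}$; similarly the annihilator of the inflation of $\chi_0$ is $N\setminus G_m^Nm^{[-1]}$. The $N$-isomorphism of Step~1 forces these to agree, which is precisely (2).

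For (1), the key step is the following idempotent argument. By (2), $e\in G_e^Ne^{[-1]}=G_m^Nm^{[-1]}$, so $em\in G_m^N$. Since $N$ is centric, write $em=me'$ for some $e'\in N$. Using the definition of $\circ_m$ (left decomposition of the first factor, right decomposition of the second),
\[(em)\circ_m(em)\;=\;e\cdot m\cdot e'\;=\;e(me')\;=\;e(em)\;=\;e^{2}m\;=\;em,\]
so $em$ is idempotent in the group $(G_m^N,\circ_m)$. A group has a unique idempotent, namely its identity, hence $em=m$.

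With (1) in hand, define $\phi:G_e^N\to G_m^N$ by $\phi(g)=gm$: well-definedness follows from (2), the homomorphism law $\phi(g_1g_2)=\phi(g_1)\circ_m\phi(g_2)$ is a direct computation using centricity, and $\phi(e)=em=m$ sends identity to identity. For surjectivity, any $g'\in G_m^N\subseteq Nm$ has the form $g'=n_0m$; then $n_0\in G_m^Nm^{[-1]}=G_e^Ne^{[-1]}$, so $n_0e\in G_e^N$, and $\phi(n_0e)=n_0em=n_0m=g'$ using (1). The kernel is $\Stab_{G_e^N}(m)$ by definition, establishing (3). Finally (4) is immediate from the bimodule decomposition: Schur's Lemma gives $\Hom_N(W,\C[G_m^N])\simeq D(V_{\chi_0})$ as a right $N$-module, and this is irreducible because $\iota_r:N\to G_m^N$ is surjective. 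The main obstacle and key insight is the purely group-theoretic observation in the third paragraph: the $N$-isomorphism $W\simeq V_{\chi_0}$ alone only yields $\chi_0(em)=\id$ (i.e.\ $em\in\ker\chi_0$); what actually forces $em=m$ is the fact that $em$ must be idempotent in the \emph{group} $G_m^N$.
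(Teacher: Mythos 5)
Your proposal is correct; it establishes all four assertions, but by a reorganized route compared with the paper. The paper proves (1) first and with minimal machinery: from $\varphi\neq 0$ and $eW=W$ it gets $e\C[G_m^N]\neq 0$, chooses $g\in G_m^N$ with $eg\in G_m^N$, observes $em=eg\circ_m g^{-1}\in G_m^N$, and forces $em=m$ by associativity/cancellation in the group $(G_m^N,\circ_m)$; it then proves (2), using (1) for the inclusion $G_e^Ne^{[-1]}\subseteq G_m^Nm^{[-1]}$ and arguing directly with $\varphi$ (plus the factorization of the $N$-action through $\iota_l$) for the reverse inclusion, and finally gets (3) from the composite map $G_m^Nm^{[-1]}=G_e^Ne^{[-1]}\to G_e^N\to G_m^N$ and (4) by citing Corollary \ref{thetabi}. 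You instead invoke the inflation (theta) bimodule decomposition of $\C[G_m^N]$ — the content of Corollary \ref{thetabi} — at the outset, identify $W$ with the inflation of a unique $\chi_0\in\Irr(G_m^N)$, deduce (2) first as an equality of annihilators of isomorphic modules, and only then obtain (1) by noting that $em\in G_m^N$ is idempotent for $\circ_m$, hence equals the identity $m$; your computation $(em)\circ_m(em)=e\,m\,e'=e^2m=em$ is legitimate by the well-definedness of $\circ_m$ in Lemma \ref{BSteinberg}. The two mechanisms for (1) are cousins (cancellation versus uniqueness of the idempotent in a group), and your annihilator proof of (2) is arguably cleaner since it does not pass through (1); the price is that you must first set up the bimodule decomposition and the identification $W\simeq$ inflation of $\chi_0$, whereas the paper extracts (1) from nothing more than the existence of $\varphi$ and $eW=W$. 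Your (3) is essentially the paper's surjectivity argument, and your (4) is exactly Schur's lemma applied to the same decomposition the paper cites, so no gap remains.
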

   \begin{proof}
   1) If $0\neq \varphi\in \Hom_N(W, \C[G_m^N])$, then for $w\neq 0$, $\varphi(w)=\varphi(ew)=e\varphi(w) \neq 0$. Hence $e \C[G_m^N] \neq 0$.  Take $g\in G_m^N$, so that  $eg\in G_m^N$. Then $em= eg\circ_m g^{-1}\in G_m^N$, and $ em\circ_m eg \circ_m (eg)^{-1}=eg\circ_m (eg)^{-1} =m$. \\
   2)  For $g\in G_e^N$, $Ngm=Nem=Nm$, which implies $gm\in G_m^N$. Moreover, for $g, g'\in G_e^N$, $gg'm=gm\circ_m g'm$. Hence $g\longrightarrow gm$ defines a group homomorphism from $G_e^N$ to $G_m^N$. For $n \notin G_e^N e^{[-1]}$,  $n\varphi(W)=\varphi(nW)=\varphi(neW)=0$. Since the action of $N$ on $\C[G_m^N]$, factors through $ l_l: N \longrightarrow G_m^N\cup \{0\}$. Hence $l_l(n)=0$, which means that $nm\notin G_m^N$, or $n\notin  G_m^N m^{[-1]}$.  Hence $N\setminus G_e^Ne^{[-1]} \subseteq N \setminus G_m^N m^{[-1]}$, $ G_m^N m^{[-1]} \subseteq G_e^Ne^{[-1]}$. On the other hand, for $n\in G_e^Ne^{[-1]}  $,  $ne\in G_e^N$, $nm=nem \in G_m^N$, so $n\in  G_m^N m^{[-1]} $. Therefore $ G_e^Ne^{[-1]}=G_m^N m^{[-1]} $.   \\
   3)  The composite  map $\kappa: G_m^N m^{[-1]} = G_e^Ne^{[-1]} \longrightarrow  G_e^N \longrightarrow G_m^N$ implies the surjection. \\
   4)  By the above corollary \ref{thetabi}, $\C[G_m^N]$ is a theta $N-N$-bimodule, so the result holds.
   \end{proof}

\begin{lemma}
As left $N$-module, $\mathcal{R}_N(\C[G_m^N])$ only contains  some irreducible representations of $N$ with the same apex.
\end{lemma}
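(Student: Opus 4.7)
The plan is to exploit Lemma \ref{GEE} directly: if $W = \Ind_{G_e^N}(\chi)$ appears in $\mathcal{R}_N(\C[G_m^N])$, then item (2) of that lemma pins down the set $G_e^N e^{[-1]}$ as being equal to the single invariant $G_m^N m^{[-1]}$ attached to $m$, independently of $W$. So the uniqueness of an apex should be forced by showing that any two idempotents $e_1, e_2 \in E(N)$ producing irreducibles in $\mathcal{R}_N(\C[G_m^N])$ must be $\mathcal{J}_N$-equivalent, and then concluding via Theorem \ref{CMP}.

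Concretely, I would proceed as follows. Suppose $W_i = \Ind_{G_{e_i}^N}(\chi_i)$, with $e_i \in E(N)$ and $\chi_i \in \Irr(G_{e_i}^N)$, both lie in $\mathcal{R}_N(\C[G_m^N])$. Applying Lemma \ref{GEE}(2) to each, I obtain
\[
G_{e_1}^N e_1^{[-1]} \;=\; G_m^N m^{[-1]} \;=\; G_{e_2}^N e_2^{[-1]}.
\]
Since $e_i \cdot e_i = e_i \in G_{e_i}^N$, we have $e_i \in G_{e_i}^N e_i^{[-1]}$. Feeding this back into the displayed equality gives $e_1 \in G_{e_2}^N e_2^{[-1]}$ and $e_2 \in G_{e_1}^N e_1^{[-1]}$, i.e.\ $e_1 e_2 \in G_{e_2}^N$ and $e_2 e_1 \in G_{e_1}^N$. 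By Remark \ref{threeeqt}, $G_{e_i}^N = L_{e_i}^N$, so in particular $N e_2 e_1 = N e_1$ and $N e_1 e_2 = N e_2$.

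From here, $N e_1 = N e_2 e_1 \subseteq N e_2 N$ and symmetrically $N e_2 \subseteq N e_1 N$, hence $N e_1 N = N e_2 N$; that is, $e_1 \mathcal{J}_N e_2$. Since the apex of an irreducible representation of $N$ in the Clifford--Munn--Ponizovski\^i correspondence (Theorem \ref{CMP}) is exactly the regular $\mathcal{J}$-class of the representing idempotent, $W_1$ and $W_2$ share the same apex, which is the desired conclusion.

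The main obstacle I anticipate is only notational: one must verify carefully that the equality in Lemma \ref{GEE}(2) is symmetric in $e_1, e_2$ and really does reduce to an assertion about $\mathcal{J}_N$-classes rather than just $\mathcal{L}_N$-classes. This is handled by using both ``sides'' of $G_{e_i}^N = L_{e_i}^N = R_{e_i}^N$ (Remark \ref{threeeqt}), or equivalently by invoking that $N$ is an inverse monoid (Corollary \ref{INVER}), in which case the idempotents commute and $e_1 e_2 = e_2 e_1$ is itself an idempotent $\mathcal{J}_N$-equivalent to each $e_i$. Either route is a short check; no new machinery is required.
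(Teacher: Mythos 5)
Your proof is correct and takes essentially the same route as the paper: both arguments pivot on Lemma \ref{GEE}(2), i.e.\ on the fact that every irreducible constituent of $\C[G_m^N]$ has the single invariant $G_e^N e^{[-1]}=G_m^N m^{[-1]}$ (equivalently, annihilator $N\setminus G_m^N m^{[-1]}$) depending only on $m$. The only difference is in the finishing step, and it is cosmetic: the paper concludes by observing that this common annihilator determines the apex, while you make the same point explicit by checking directly that $G_{e_1}^N e_1^{[-1]}=G_{e_2}^N e_2^{[-1]}$ forces $Ne_1N=Ne_2N$, hence equal apexes via Theorem \ref{CMP}.
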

\begin{proof}
For each irreducible submodule $(\sigma', W')$ of $\C[G_m^N]$, $\Ann_N(W')=N\setminus  G_m^N m^{[-1]}$, which is determined by the element $m$. Comparing with the above results, we obtain the result.
\end{proof}

   Let $ \emptyset=I_0 \subsetneq I_1 \subsetneq \cdots  \subsetneq I_n=N$ be a principal series of $N$ bi-ideals (or  ideals for short) in  $N$ such that  each $I_{i-1}$ is a maximal proper $N$ ideal of $I_{i}$, for $i=1, \cdots, n$. (By abuse of notations for the empty set)

   Each $I_{i}\setminus I_{i-1}$ contains exactly one $\mathcal{J}_N$-class of the form $G_{e_i'}^N$, for some $e_i'\in E(N)$.  Notice  that by  the result of Prop.3.7  in the page 28 of \cite{Stein} for the inverse monoid, each $G_{e_i'}^N$ contains   only one idempotent element. For  $m\in I_M(\sigma) $, we multiply the above series by $m$ on each term and  remove repeated terms, finally obtain a  principal series of $N$ bi-sets:       $ \emptyset=I_{i_0} \subsetneq I_{i_1}m \subsetneq \cdots  \subsetneq I_{i_m}m=Nm$.  Assume $ I_{i_j}m=\cdots =I_{i_{j+1}-1}m$ and $   I_{i_{j}} m \setminus       I_{i_{j}-1}m=G^N_{e'_{i_j}}m $.     Each $G^N_{m'}    \subseteq Nm$ will be  equal to one such  $G^N_{e'_{i_j}} m$.
   Hence $G_{m'}^N=G^N_{e'_{i_j}}m \subseteq G^N_{e'_{i_j}m}$, which implies that $G_{m'}^N=G^N_{e'_{i_j}m}$.  Let   $m'_{i_j}=e'_{i_j}m$.
   \begin{lemma}\label{inver}
  $ G^N_{m'_{i_j}}m_{i_j}^{'[-1]} =G_{e'_{i_j}}^Ne_{i_j}^{'[-1]}$.
  \end{lemma}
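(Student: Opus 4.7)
The plan is to verify both inclusions directly, using the principal series machinery that has been assembled just before the statement. Write $e := e'_{i_j}$ and $m' := m'_{i_j}$ for short, so $m' = em$ and, as recorded in the setup, $G^N_{m'} = G^N_e \cdot m = I_{i_j}m \setminus I_{i_j-1}m$.

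For the inclusion $G^N_e e^{[-1]} \subseteq G^N_{m'}(m')^{[-1]}$, I would observe that if $ne \in G^N_e$ then by Remark~\ref{threeeqt} one has $Nne = Ne$ (since $G^N_e = L^N_e$); multiplying on the right by $m$ yields $Nnem = Nem$, i.e.\ $nm' = nem \in L^N_{m'} = G^N_{m'}$. Thus $n \in G^N_{m'}(m')^{[-1]}$.

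For the reverse inclusion I would argue via the principal series in $N$ and in $Nm$. Assume $nm' = nem \in G^N_{m'} = I_{i_j}m \setminus I_{i_j-1}m$; in particular $nem \notin I_{i_j-1}m$. Two observations then trap $ne$ in the correct stratum of the original series in $N$. First, $ne \in I_{i_j}$, because $e \in I_{i_j}$ and $I_{i_j}$ is a two-sided ideal of $N$, so $Ne \subseteq I_{i_j}$. Second, $ne \notin I_{i_j-1}$: otherwise one would have $nem = (ne)\cdot m \in I_{i_j-1}\cdot m \subseteq I_{i_j-1}m$, contradicting $nem \notin I_{i_j-1}m$. Therefore $ne \in I_{i_j}\setminus I_{i_j-1} = G^N_{e'_{i_j}} = G^N_e$, giving $n \in G^N_e e^{[-1]}$.

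No step should present a genuine obstacle: the direction $\supseteq$ reduces to a one-line use of Remark~\ref{threeeqt}, while $\subseteq$ is just the observation that the stratifications of $N$ and of $Nm$ match exactly at the index $i_j$ by construction. The only conceivable difficulty would be controlling how multiplication by $m$ compresses the original chain, but this has been absorbed into the definition of the reduced index set $\{i_j\}$ and into the identity $I_{i_j}m\setminus I_{i_j-1}m = G^N_{e'_{i_j}}m$ already stated in the setup preceding the lemma.
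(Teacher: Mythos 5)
Your proof is correct and follows essentially the same route as the paper: the reverse inclusion is the paper's "clearly" direction, which you merely spell out via Remark \ref{threeeqt} (equivalently, $nе'\in G^N_{e'}$ gives $ne'm\in G^N_{e'}m=G^N_{m'}$), and the forward inclusion is exactly the paper's argument, namely $ne'\in I_{i_j}$ by the ideal property, and $ne'\notin I_{i_j-1}$ since otherwise $ne'm\in I_{i_j-1}m$ would contradict $nm'\in G^N_{m'}=I_{i_j}m\setminus I_{i_j-1}m$. No gaps.
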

   \begin{proof}
 Clearly  $G^N_{e'_{i_j}}e_{i_j}^{'[-1]} \subseteq  G^N_{m'_{i_j}}m_{i_j}^{'[-1]}$. If $n\in G^N_{m'_{i_j}}m_{i_j}^{'[-1]} $, then $ne'_{i_j}  \in I_{i_{j}}$.  If $ne'_{i_j}  \notin G_{e'_{i_j}}^N$, then $ne'_{i_j}  \in  I_{i_j-1}$, and then  $ne'_{i_j} m \in I_{i_j-1} m$, contradicting to  $n\in  G^N_{m'_{i_j}}m_{i_j}^{'[-1]}$.
               \end{proof}

   \begin{lemma}\label{SImilarly}
   \begin{itemize}
   \item[(1)] As left $N$-module, every irreducible sub-representation of $\C[G^N_{m'_{i_j}}]$ has the apex $e'_{i_j}$;
   \item[(2)]  If $mN=\sqcup_{k} G^N_{m'_{i_k}}$, then  as left $N$-modules, for different $k_1, k_2$, $\mathcal{R}_N(\C[G^N_{m'_{i_{k_1}}}])  \cap \mathcal{R}_N(\C[G^N_{m'_{i_{k_2}}}])  =\emptyset $;
   \item[(3)] $\C[mN]$ is a theta $N-N$-bimodule.
                     \end{itemize}
   \end{lemma}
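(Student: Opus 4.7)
The plan is to derive all three parts from three ingredients already in hand: the apex computation in the unnamed lemma right after Lemma~\ref{GEE} (``$\mathcal{R}_N(\C[G_m^N])$ only contains irreducible representations with the same apex''), the localization identity of Lemma~\ref{inver}, and the centric simplification $L_m^N=R_m^N=J_m^N=G_m^N$ from Remark~\ref{threeeqt}, together with the semi-simplicity of $\C[N]$ (Lemma~\ref{biiso1}) and the fact that $\C[G_m^N]$ is theta as an $N$-$N$-bimodule (Corollary~\ref{thetabi}).

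For (1), I would take an irreducible subrepresentation $W\hookrightarrow\C[G^N_{m'_{i_j}}]$. By the apex-computation cited above, $\Ann_N(W)=N\setminus G^N_{m'_{i_j}}m_{i_j}^{\prime[-1]}$. Lemma~\ref{inver} rewrites this as $N\setminus G^N_{e'_{i_j}}e_{i_j}^{\prime[-1]}$, which is the annihilator of every irreducible subrepresentation of $\C[G^N_{e'_{i_j}}]$; since $e'_{i_j}$ is the idempotent of its own unit group, the apex of $W$ is $e'_{i_j}$.

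For (2), recall that the principal series $\emptyset=I_0\subsetneq\cdots\subsetneq I_n=N$ was chosen so that each $I_i\setminus I_{i-1}$ is a single $\mathcal{J}_N$-class $G^N_{e'_i}$. Thus for indices $i_{k_1}\neq i_{k_2}$ that survive the refinement after multiplying by $m$, the idempotents $e'_{i_{k_1}},e'_{i_{k_2}}$ belong to distinct regular $\mathcal{J}_N$-classes of $N$. By (1) these are the apexes, and by the Clifford--Munn--Ponizovski\u\i{} parametrization (Theorem~\ref{CMP}) distinct apexes force $\mathcal{R}_N(\C[G^N_{m'_{i_{k_1}}}])\cap\mathcal{R}_N(\C[G^N_{m'_{i_{k_2}}}])=\emptyset$.

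For (3), I would promote the principal series of $N$-bi-ideals to a filtration of $N$-$N$-bimodules on $mN$. Each $I_{i_j}m$ is both left and right $N$-stable, because $N\cdot I_{i_j}m\subseteq I_{i_j}m$ and, using the centricity $Nm=mN$, $I_{i_j}m\cdot N=I_{i_j}\cdot mN=I_{i_j}\cdot Nm\subseteq I_{i_j}m$. The subquotients are exactly $\C[G^N_{m'_{i_k}}]$ with the truncated Rees action of Section~\ref{Reesquo}, which is the same action used in Corollary~\ref{thetabi}. Since $\C[N]$ is semi-simple, so is $\C[N\times N^{o}]$ (Lemma~\ref{ss}), so the filtration splits: $\C[mN]\simeq\bigoplus_k\C[G^N_{m'_{i_k}}]$ as $N$-$N$-bimodules. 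Each summand is theta by Corollary~\ref{thetabi}, and by (2) the summands share no irreducible $N\otimes N$-component; hence the two conditions defining a theta bimodule (multiplicity at most one, and existence of a unique irreducible quotient for each nonzero $\Theta_{\pi}$) transfer from the individual summands to the direct sum.

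The main obstacle I anticipate is in (3): I need to verify carefully that the bimodule structure on the subquotient $\C[I_{i_j}m]/\C[I_{i_{j-1}}m]$ under the natural (non-truncated) left--right $N$-action really coincides with the truncated left--right action on $\C[G^N_{m'_{i_j}}]$ used by Corollary~\ref{thetabi}. The point is that whenever a product $n\cdot x$ or $x\cdot n$ with $x\in G^N_{m'_{i_j}}$ leaves $G^N_{m'_{i_j}}$, it lands in $I_{i_{j-1}}m$, which is zero in the quotient; this should be the only delicate check.
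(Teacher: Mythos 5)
Your parts (1) and (2) are essentially the paper's own argument: the annihilator identity $\Ann_N(W)=N\setminus G^N_{m'_{i_j}}m_{i_j}^{\prime[-1]}$ combined with Lemma \ref{inver} pins down the apex as $e'_{i_j}$, and distinct idempotents coming from distinct steps of the principal series force the left component sets to be disjoint. For (3), your filtration by the $I_{i_j}m$, the identification of the subquotients with the truncated Rees bimodules $\C[G^N_{m'_{i_k}}]$ (the ``delicate check'' you flag is indeed the right one, and it is exactly the Rees-quotient mechanism of the paper), and the splitting via semi-simplicity of $\C[N]\otimes\C[N]^{o}$ all reproduce the paper's decomposition $\C[mN]\simeq\bigoplus_k\C[G^N_{m'_{i_k}}]$.

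The gap is in your final transfer step in (3). Knowing that each summand is a theta bimodule and that distinct summands share no irreducible $N$-$N$-bimodule component (which is all that your appeal to (2) yields, since (2) only concerns left components) does not imply that the direct sum is theta. Concretely, take $B_1=\sigma_1\otimes D(\tau)$ and $B_2=\sigma_2\otimes D(\tau)$ with $\sigma_1\not\simeq\sigma_2$: each $B_i$ is theta and the two share no irreducible bimodule component, yet for the right-side irreducible $D(\tau)$ the bimodule $B_1\oplus B_2$ gives $\Theta_{D(\tau)}\simeq\sigma_1\oplus\sigma_2$, which has two non-isomorphic irreducible quotients, so $B_1\oplus B_2$ is not theta. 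What is missing is the right-module analogue of (1)--(2): the irreducible right $N$-components of $\C[G^N_{m'_{i_k}}]$ also share a common apex depending only on $k$, and for $k_1\neq k_2$ these right component sets are disjoint. This is precisely what the paper's proof supplies in its first sentence (``dually, the above result also holds for the right $N$-module''), obtained by mirroring your argument for (1)--(2) with the right-hand versions of Lemma \ref{GEE} and Lemma \ref{inver} (together with Lemma \ref{symm}). Once both the left and the right component sets of distinct summands are known to be disjoint, every $\Theta_{\pi}$ for $\C[mN]$ is computed inside a single summand and your conclusion goes through; without the right-hand disjointness the step as you stated it fails.
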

   \begin{proof}
   1) If an  irreducible sub-representation $(\sigma_k', W_k')$ of $\C[G^N_{m'_{i_j}}]$ has an  apex $e'_{k}$, then $ G^N_{m'_{i_j}}m_{i_j}^{'[-1]} =G_{e'_k}^Ne_{k}^{'[-1]}$. So $ G_{e'_k}^Ne_{k}^{'[-1]}=G_{e'_{i_j}}^Ne_{i_j}^{'[-1]}$, and $N\setminus G_{e'_k}^Ne_{k}^{'[-1]}=N\setminus G_{e'_{i_j}}^Ne_{i_j}^{'[-1]}=\Ann_{N}(W_k')$. Hence  $k=i_j$. \\
   2) For $(\sigma_{k_t}', W_{k_t}') \in \mathcal{R}_N(\C[m'_{i_{k_t}}]) $,   $t=1, 2$, $\sigma_{k_1}' $ and $   \sigma_{k_2}' $    can not share  a      common  apex, so they are not isomorphic.     \\
   3)  Dually, the above result also holds for the right $N$-module. Notice that $ \emptyset=I_{i_0} \subsetneq I_{i_1}m \subsetneq \cdots  \subsetneq I_{i_m}m=Nm$, and $0\longrightarrow \C[I_{i_{j}-1}m]\longrightarrow   \C[   I_{i_{j}} m] \longrightarrow \C[G^N_{e'_{i_j}}m]\longrightarrow 0 $ as $N-N$-bimodules.   Hence  this result can deduce from (2) and   Coro.\ref{thetabi}.
     \end{proof}

   \subsection{$I^{lr}_M(\sigma)$}\label{ILR}   Keep the notations of Lmm.\ref{irr}. We   let $\widetilde{W}^V$ be the $\sigma$-isotypic component of $\Res_N^M V$, and  $I^V_M(\sigma)=\{ m\in M \mid \pi(m)\widetilde{W}^V     \subseteq     \widetilde{W}^V \}$.
               \begin{lemma}\label{IMS}
      \begin{itemize}
           \item[(1)] $ I^V_M(\sigma)$ is a  submonoid of $M$.
             \item[(2)]  For  any irreducible constituent   $W_1$ of $\widetilde{W}^V$,  $m\in I^V_M(\sigma)$, $\pi(m)W_1=0$, or $\pi(m)W_1\simeq  W$ as $N$-modules.
                 \item[(3)] There exist $m_1, \cdots, m_l\in I^V_M(\sigma)$, such that $\Res_{N}^{I^V_M(\sigma)}\widetilde{W}^V= \oplus^l_{i=1} \pi(m_i) W$.
                  \item[(4)]   $\widetilde{W}^V$ is an irreducible representation of $I^V_M(\sigma)$, denoted by $(\widetilde{\sigma}^V,\widetilde{W}^V)$.
         \item[(5)] $\Res_N^M V \simeq \oplus_{\textrm{ Some }(\sigma', W')\in \Irr(N)} \widetilde{W'}^V$, as $N$-modules.
          \item[(6)]  $M \setminus I^V_{M}(\sigma)=\{  m \in M \mid  \textrm{ there exists } m' \in I^V_M(\sigma),\textrm{ such that } mm' W \neq 0,  \textrm{ and }   mm' W \ncong W\}$.
          \item[(7)]     $E(M) \subseteq I^V_M(\sigma)$.
          \item[(8)] If $m\in I^V_M(\sigma)$, then $G_m^N\subseteq I^V_M(\sigma)$.
                                            \end{itemize}
      \end{lemma}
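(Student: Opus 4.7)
My plan is to dispatch the structural items (1), (2), (5), (7), (8) first by elementary means, then attack the heart of the lemma—parts (3) and (4)—through a single irreducibility argument exploiting how $\pi(m)$ permutes $N$-isotypic components, and finally read off (6) as a corollary. For (1), closure under multiplication is immediate: $\pi(m_1m_2)\widetilde{W}^V = \pi(m_1)\pi(m_2)\widetilde{W}^V \subseteq \pi(m_1)\widetilde{W}^V \subseteq \widetilde{W}^V$, and $1 \in I^V_M(\sigma)$ trivially. Part (2) follows directly from Lemma \ref{irr}(1): the image $\pi(m)W_1$ is either $0$ or an $N$-irreducible subspace of $\widetilde{W}^V$, and lying inside the $\sigma$-isotypic component forces it to be isomorphic to $W$ when nonzero. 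For (5), Axiom IV combined with Lemma \ref{biiso1} gives that $\mathbb{C}[N]$ is semi-simple, so $\Res^M_N V$ splits into its $N$-isotypic components. Part (7) uses Lemma \ref{irr}(4): for every $N$-irreducible $W_1 \subseteq \widetilde{W}^V$, one has $\pi(e)W_1 = 0$ or $\pi(e)W_1 \simeq W_1 \simeq W$, so $\pi(e)W_1 \subseteq \widetilde{W}^V$ in either case, and summing yields $e\in I^V_M(\sigma)$. For (8), the $N$-centricity in Axiom III gives $G_m^N = L_m^N \subseteq Nm$, so any $g \in G_m^N$ may be written $g = nm$ with $n\in N$; hence $\pi(g)\widetilde{W}^V = \pi(n)\pi(m)\widetilde{W}^V \subseteq \pi(n)\widetilde{W}^V \subseteq \widetilde{W}^V$ since $\widetilde{W}^V$ is $N$-stable.

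The engine behind (3) and (4) is the following dichotomy: for each $m\in M$ with $\pi(m)\widetilde{W}^V \neq 0$, Lemma \ref{irr}(1),(3) together with $N$-centricity imply that $\pi(m)\widetilde{W}^V$ is an $N$-submodule that is isotypic of a single type $\pi(m)W$. By the uniqueness of isotypic components from (5), it therefore lies inside exactly one component of $V$—namely $\widetilde{W}^V$ itself when $m\in I^V_M(\sigma)$, and a \emph{different} component (hence disjoint from $\widetilde{W}^V$) when $m \notin I^V_M(\sigma)$. I then prove (4) as follows: let $0 \neq U_0 \subseteq \widetilde{W}^V$ be an $I^V_M(\sigma)$-submodule, so that $V_0 := \sum_{m\in M} \pi(m)U_0$ is a nonzero $M$-submodule of $V$, hence $V_0 = V$ by irreducibility. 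Intersecting with $\widetilde{W}^V$ kills every contribution with $m \notin I^V_M(\sigma)$ by the dichotomy, so $\widetilde{W}^V = V_0 \cap \widetilde{W}^V = \sum_{m \in I^V_M(\sigma)} \pi(m)U_0 \subseteq U_0$, and irreducibility is established. Part (3) then drops out: the submodule $U = \sum_{m\in I^V_M(\sigma)} \pi(m)W$ is nonzero (it contains $W = \pi(1)W$) and $I^V_M(\sigma)$-stable, hence equals $\widetilde{W}^V$ by (4); semisimplicity of $\mathbb{C}[N]$ then lets me extract $m_1, \ldots, m_l \in I^V_M(\sigma)$ so that the $\pi(m_i)W$ realise a direct sum decomposition.

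For (6), the inclusion ($\supseteq$) is direct: given $m' \in I^V_M(\sigma)$ with $\pi(mm')W \neq 0$ and $\pi(mm')W \ncong W$, any $0 \neq w \in W$ satisfies $\pi(m')w \in \pi(m')W \subseteq \widetilde{W}^V$, while $\pi(m)\pi(m')w = \pi(mm')w$ sits in an $N$-irreducible of type $\neq W$, which is disjoint from $\widetilde{W}^V$ by (5), so $m \notin I^V_M(\sigma)$. For ($\subseteq$), given $m \notin I^V_M(\sigma)$, choose $v \in \widetilde{W}^V$ with $\pi(m)v \notin \widetilde{W}^V$, and by (3) write $v = \sum_i \pi(m_i)w_i$ with $m_i \in I^V_M(\sigma)$, $w_i \in W$; at least one summand $\pi(mm_i)w_i$ must be nonzero and fail to lie in $\widetilde{W}^V$, which by the dichotomy forces both $\pi(mm_i)W \neq 0$ and $\pi(mm_i)W \ncong W$, so $m' := m_i$ works. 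I expect the main obstacle to be justifying the dichotomy rigorously: specifically, that whenever $\pi(m)\widetilde{W}^V \neq 0$ and $\pi(m)W \ncong W$, the entire image $\pi(m)\widetilde{W}^V$ avoids $\widetilde{W}^V$. This requires combining Lemma \ref{irr}(3) (to see that all $\pi(m)W_i$ with $W_i \simeq W$ are mutually $N$-isomorphic) with centricity (to ensure $\pi(m)\widetilde{W}^V$ is itself an $N$-module, so comparing isotypic types of $V$ is legitimate), and then appealing to the uniqueness of isotypic decomposition from (5).
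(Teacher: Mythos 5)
Your proof is correct, and it runs on the same machinery as the paper's --- Lemma \ref{irr}(1),(3), the irreducibility of $V$, and the $N$-isotypic decomposition of $\Res_N^M V$ --- but with the core items organized in the opposite order. The paper proves (3) first, writing $V=\sum_{m\in M}\pi(m)W$, extracting the $\sigma$-isotypic part as $\oplus_i\pi(m_i)W$, and checking $m_i\in I^V_M(\sigma)$ via Lemma \ref{irr}(3); it then gets (4) ``by the similar argument'' applied to a copy $W_1$ inside a given $I^V_M(\sigma)$-submodule. You instead prove (4) directly, by intersecting $V=\sum_{m\in M}\pi(m)U_0$ with $\widetilde{W}^V$ and using your dichotomy (for $m\notin I^V_M(\sigma)$ the whole image $\pi(m)\widetilde{W}^V$ lands in isotypic components other than $\widetilde{W}^V$), and then read off (3) as a corollary; this is a valid, non-circular rearrangement. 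The dichotomy is justified exactly as you sketch: Lemma \ref{irr}(3) makes all nonzero images $\pi(m)W_j$ (with $W_j\simeq W$) mutually isomorphic, centricity (already built into Lemma \ref{irr}) makes them $N$-submodules, and uniqueness of isotypic components finishes it. The one phrasing caution is that the common type should be described as that of any \emph{nonzero} $\pi(m)W_j$ rather than literally $\pi(m)W$, since a priori $\pi(m)W$ could vanish while other copies survive; but in every place you invoke the dichotomy you have a nonzero vector in hand, so nothing breaks. Your (6) is essentially the paper's argument spelled out, and your (8) is in fact simpler than the paper's: you use only $G_m^N\subseteq Nm$ together with the $N$-stability of $\widetilde{W}^V$, whereas the paper writes $m'=nm$, $m=n'm'$ and runs a dimension count through Lemma \ref{irr}(2); both are fine, and the inclusion $G_m^N\subseteq Nm$ holds even without centricity. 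Parts (1),(2),(5),(7) match the paper's (terse) treatment.
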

\begin{proof}
Parts (1)(2)(5) are straightforward. \\
(3)  By the irreducibility of $V$,  $V=\sum_{m\in M} \pi(m) W$. Then $\widetilde{W}^V=\oplus^l_{i=1} \pi(m_i) W$, for some $m_i\in M$.  So $\pi(m_i) W \simeq W$ as $N$-modules.   If $\pi(m_jm_i) W \neq 0$,  by Lmm.\ref{irr}(3),  $ \pi(m_jm_i) W
\simeq \pi(m_j)W\simeq W$.  Therefore $m_j \in I^V_{M}(\sigma)$.  \\
(4)  For any non-zero $ I^V_M(\sigma)$-submodule $V_1$ of $\widetilde{W}^V$, $V_1|_{N}$ contains an irreducible $N$-submodule $W_1$ of $ \widetilde{W}^V$. By the similar argument as $(3)$, $\widetilde{W}^V=I^V_M(\sigma)W_1\subseteq V_1$, and then $V_1=\widetilde{W}^V$. So $\widetilde{W}^V$ is irreducible.\\
(6) If $m \notin I^V_{M}(\sigma)$,  there exists $m_i$ as in (3), such that $mm_iW \neq 0$, and $mm_iW \ncong  W$ as $N$-modules.  Conversely, $m'W \neq 0$, and $m'W \simeq W$ as $N$-modules. Then $ 0\neq m'W\subseteq \widetilde{W}^V$, but $mm'W \nsubseteq \widetilde{W}^V$. So $m\notin I^V_{M}(\sigma)$. \\
(7) This statement  follows from   Lmm.\ref{irr}(4).\\
(8) Let $m'\in G_m^N$. Assume $m=n'm'$, $m'=nm$. For any  irreducible $N$-submodule $W_1$ of $ \widetilde{W}^V$, if $mW_1=0$, then $m'W_1=nmW_1=0$. By duality, $mW_1\neq 0$ iff $m'W_1\neq 0$. Assume $mW_1 \neq 0$. Then $m'W_1=nmW_1\subseteq mW_1$. By Lmm.\ref{irr}(2), $\dim m'W_1=\dim W_1=\dim mW_1$, so $m'W_1=mW_1$.
\end{proof}
For $m\notin I^V_{M}(\sigma)$, assume $m'\in I^V_{M}(\sigma)$, such that $mm'W\neq 0$. Then $m'W \simeq W$ as $N$-modules. Moreover, $m: m'W \longrightarrow mm'W$ is a bijective map by Lmm.\ref{irr}(2).   We can let $m\otimes m'W$ be the space of elements $m\otimes m'w$. For $n\in N$, if $nm=mn' $, define $n (m\otimes m'w)=m\otimes n'm'w$. If $nm=mn''$, then $nmm'w=mn'm'w=mn''m'w$, which implies that $n'm'w=n''m'w$. Hence it is well-defined.  In this way, $m\otimes m'W$ becomes an $N$-module.
\begin{lemma}
$p: m\otimes m'W \longrightarrow mm'W$, defines an $N$-module isomorphism.
\end{lemma}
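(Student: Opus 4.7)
The plan is to verify the four standard requirements (well-definedness, surjectivity, injectivity, $N$-equivariance) directly from the definitions, all of which are essentially immediate once we invoke Lemma \ref{irr}(2).

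First, surjectivity is built in: every element of $mm'W$ is of the form $mm'w$ for some $w \in W$, and by construction $p(m \otimes m'w) = mm'w$. Next, for injectivity, I would apply Lemma \ref{irr}(2) twice: since $m'W \simeq W \neq 0$ as $N$-modules and $mm'W \neq 0$ (by the hypothesis on $m'$), the map $\pi(m')\colon W \to m'W$ is a $\C$-linear bijection, and then $\pi(m)|_{m'W}\colon m'W \to mm'W$ is also a $\C$-linear bijection. Consequently the composite $w \mapsto mm'w$ from $W$ to $mm'W$ is a bijection, so $mm'w = 0$ forces $m'w = 0$, hence $m \otimes m'w = 0$ in the space $m \otimes m'W$.

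For $N$-equivariance, I would take $n \in N$ and use the centricity axiom (Axiom III) to pick $n' \in N$ with $nm = mn'$. By the definition of the $N$-action on $m \otimes m'W$ given just above the lemma, we have
\[
p\bigl(n \cdot (m \otimes m'w)\bigr) \;=\; p(m \otimes n' m'w) \;=\; m n' m' w \;=\; nm m' w \;=\; n \cdot p(m \otimes m'w).
\]
The only subtle point worth checking is that this $N$-action is indeed well defined, i.e.\ independent of the choice of $n'$ satisfying $nm = mn'$; but this is exactly the content of the remark preceding the lemma, where two choices $n', n''$ force $n'm'w = n''m'w$ because $\pi(m)\colon m'W \to mm'W$ is injective.

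There is no serious obstacle here; the content of the statement is a clean bookkeeping lemma that packages the bijection $\pi(m)|_{m'W}$ from Lemma \ref{irr}(2) as an $N$-module isomorphism, once the formal tensor $m \otimes m'W$ has been given its (centric-submonoid-induced) $N$-structure. The main point to be careful about is not confusing the two distinct $N$-actions, namely the one on $m'W$ (through $n''m'w$ with $nm' = m'n''$) versus the one used in the definition of $m \otimes m'W$ (through $n'$ with $nm = mn'$), so that $p$ intertwines them via the identity $nm m' = m n' m'$.
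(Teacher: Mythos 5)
Your proof is correct and follows essentially the same route as the paper: bijectivity of $p$ comes from the bijection $\pi(m)|_{m'W}\colon m'W \to mm'W$ of Lemma \ref{irr}(2), and the $N$-equivariance is the same computation $nmm'w = mn'm'w$ using centricity, with well-definedness of the action on $m\otimes m'W$ already settled in the remark preceding the lemma. The extra care you take in distinguishing the two $N$-actions and in making injectivity explicit is fine but adds nothing beyond the paper's argument.
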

\begin{proof}
Firstly, $p$ is bijective. For $n\in N$, if $nm=mn'$, then $n[m\otimes m'w]=m\otimes n'm'w$, $np(m\otimes m'w)=nmm'w=mn'm'w=p(n[m\otimes m'w])$.
\end{proof}
For such $m$, if $m'' \in I^V_{M}(\sigma)$ such that $m''W\neq 0$, then we can also define the vector space $m\otimes m''W$.  In this case,  let $\mathcal{A}: m''W \longrightarrow m'W$ be an $N$-isomorphism. For $n\in N$, if $nm=mn'=mn''$, then $n'\mathcal{A}(m''w)=n''\mathcal{A}(m''w)$, which implies that $n'm''w=n''m''w$. Hence $m\otimes m''W$ is also an $N$-module.
\begin{lemma}\label{NMODULES}
$ m\otimes m'W \simeq m\otimes   m''W $, as  $N$-modules.
\end{lemma}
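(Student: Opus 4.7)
The plan is to build the isomorphism explicitly out of the $N$-isomorphism $\mathcal{A}\colon m''W \to m'W$ that was used to define the $N$-module structure on $m\otimes m''W$. Define
\[
\Phi\colon m\otimes m''W \longrightarrow m\otimes m'W, \qquad m\otimes m''w \longmapsto m\otimes \mathcal{A}(m''w).
\]
Since $\mathcal{A}$ is a bijective $\C$-linear map between $m''W$ and $m'W$, and since $m\otimes m''W$ (resp.\ $m\otimes m'W$) is, as a $\C$-vector space, canonically identified with $m''W$ (resp.\ $m'W$) via $m\otimes v\leftrightarrow v$, it is immediate that $\Phi$ is a well-defined bijective $\C$-linear map.

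Next I would check $N$-equivariance. Fix $n\in N$ and choose $n'\in N$ with $nm=mn'$ (such a choice exists by Axiom III, since $N$ is centric in $M$). By the definition of the twisted $N$-actions recalled just before the statement,
\[
n\cdot(m\otimes m''w)=m\otimes n' m''w, \qquad n\cdot(m\otimes \mathcal{A}(m''w))=m\otimes n'\mathcal{A}(m''w).
\]
Using that $\mathcal{A}$ is an $N$-module map, $\mathcal{A}(n' m''w)=n'\mathcal{A}(m''w)$, whence
\[
\Phi\bigl(n\cdot(m\otimes m''w)\bigr)=m\otimes \mathcal{A}(n' m''w)=m\otimes n'\mathcal{A}(m''w)=n\cdot\Phi(m\otimes m''w).
\]
The verification does not depend on the particular $n'$ chosen, since the well-definedness of both $N$-module structures (already established in the setup, via the identity $n'm''w=n''m''w$ when $nm=mn'=mn''$, and analogously on $m'W$) guarantees independence of the representative.

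There is no real obstacle here: the whole point of constructing $m\otimes m''W$ via the auxiliary isomorphism $\mathcal{A}$ was to encode exactly the twisted $N$-structure on $m'W$ transported through $\mathcal{A}$, so $\Phi$ is tautologically the intertwiner. The only thing one must take care of is to separate (i) the bijectivity, which is a formal consequence of $\mathcal{A}$ being bijective, from (ii) the $N$-equivariance, which is a formal consequence of $\mathcal{A}$ being $N$-equivariant together with the chosen definitions of the two twisted actions. Both reduce to one-line checks, so the lemma follows at once.
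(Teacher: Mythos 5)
Your proof is correct and is exactly the paper's argument: the paper's proof is the one-liner ``use the map $\id\otimes\mathcal{A}$,'' and your $\Phi$ is precisely that map, with the bijectivity and $N$-equivariance checks spelled out.
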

\begin{proof}
Just use the map $\id \otimes \mathcal{A}$.
\end{proof}
\begin{lemma}\label{MISO}
$V  \simeq \ind_{I^V_M(\sigma)}^M \widetilde{W}^V $ as $M$-modules.
\end{lemma}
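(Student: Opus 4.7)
My strategy follows the classical Clifford theorem, adapted to the monoid setting. I would first construct the canonical $M$-morphism $\Phi : \ind_{I^V_M(\sigma)}^M \widetilde{W}^V \to V$ determined by $m \otimes w \mapsto \pi(m) w$. This is well defined because $\pi(i) w \in \widetilde{W}^V$ for $i \in I^V_M(\sigma)$ and $w \in \widetilde{W}^V$, by the very definition of $I^V_M(\sigma)$; and it is $M$-equivariant by construction. The image of $\Phi$ is then an $M$-invariant subspace of $V$ containing $\widetilde{W}^V \neq 0$, so the irreducibility of $V$ forces $\Phi$ to be surjective. (This is also the map produced by Frobenius reciprocity from the inclusion $\widetilde{W}^V \hookrightarrow V$ of $I^V_M(\sigma)$-modules.)

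The delicate step is injectivity, which I plan to establish by comparing $N$-module structures on both sides. On the target, Lemma \ref{IMS}(5) gives $\Res^M_N V = \bigoplus_{\sigma'} \widetilde{W'}^V$, and Lemma \ref{irr}(3) guarantees that $m \widetilde{W}^V$ sits inside a single isotypic component for every $m \in M$. Hence I can choose representatives $x_1 = 1, x_2, \ldots, x_r \in M$ such that $V = \bigoplus_{i=1}^r x_i \widetilde{W}^V$ with pairwise distinct nonzero summands. On the source, I would partition $M$ by the equivalence $m_1 \sim m_2 \iff m_1 I^V_M(\sigma) = m_2 I^V_M(\sigma)$; using the irreducibility of $\widetilde{W}^V$ as $I^V_M(\sigma)$-module from Lemma \ref{IMS}(4), every $i \in I^V_M(\sigma)$ satisfies $i \widetilde{W}^V \in \{0, \widetilde{W}^V\}$, so the rule $m I^V_M(\sigma) \mapsto m \widetilde{W}^V$ descends to these classes. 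Combining the Mackey-type decomposition of Theorem \ref{theta3} (with trivial left factor and right factor $I^V_M(\sigma)$) with Lemma \ref{NMODULES} and the bijectivity in Lemma \ref{irr}(2), I expect a matching direct-sum decomposition of $\ind_{I^V_M(\sigma)}^M \widetilde{W}^V$ whose nonzero summands are carried isomorphically by $\Phi$ onto the respective $x_i \widetilde{W}^V$.

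The main obstacle will be the monoid (rather than group) structure in this coset-type decomposition: I must rule out spurious tensors $m \otimes w$ with $\pi(m) \widetilde{W}^V = 0$, which cannot arise in the classical group setting but could a priori inflate the dimension of the induced module in ours. My plan is to exploit the fact that $\Ann_{I^V_M(\sigma)}(\widetilde{W}^V)$ is a two-sided ideal of $I^V_M(\sigma)$, so that the $\C[M]$-action on $\C[M] \otimes_{\C[I^V_M(\sigma)]} \widetilde{W}^V$ factors through the appropriate quotient and collapses precisely the classes whose representatives annihilate $\widetilde{W}^V$. Once this bookkeeping is in place, the dimension equality $\dim \ind_{I^V_M(\sigma)}^M \widetilde{W}^V = r \dim \widetilde{W}^V = \dim V$ yields injectivity and hence the desired $M$-isomorphism.
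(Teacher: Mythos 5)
Your first paragraph is sound and matches half of the paper's argument: the canonical morphism $\Phi:\ind_{I^V_M(\sigma)}^M \widetilde{W}^V\to V$ obtained by Frobenius reciprocity from $\widetilde{W}^V\hookrightarrow V$ is well defined and surjective by irreducibility of $V$ (the paper records this as $\Hom_M(\ind_{I^V_M(\sigma)}^M \widetilde{W}^V, V)\simeq \Hom_{I^V_M(\sigma)}(\widetilde{W}^V,V)\neq 0$, indeed one-dimensional). The gap is in your injectivity step, which rests on two group-theoretic facts that are exactly what fails (or at least is unproven) for monoids. First, the decomposition $V=\bigoplus_{i=1}^r x_i\widetilde{W}^V$ with every summand of dimension $\dim\widetilde{W}^V$ is not available: $\pi(x_i)|_{\widetilde{W}^V}$ need not be injective — Lemma \ref{irr}(2) only gives bijectivity on each irreducible constituent that is not killed, constituents can be annihilated individually, and images of distinct constituents may coincide (the paper itself warns, right after Section \ref{re}, that possibly $mm_iW=mm_{i'}W\neq 0$); moreover distinct $N$-isotypic components of $\Res_N^M V$ need not have equal dimension, so even a direct-sum decomposition into translates would not give $\dim V=r\dim\widetilde{W}^V$. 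Second, the equality $\dim\ind_{I^V_M(\sigma)}^M\widetilde{W}^V=r\dim\widetilde{W}^V$ presupposes that $\C[M]$ is free as a right $\C[I^V_M(\sigma)]$-module over a set of coset representatives, which is a group fact with no monoid analogue here; Theorem \ref{theta3}(1) only partitions the set $M$, part (2) would require $\C[I^V_M(\sigma)]$ to be semisimple (not known), and in any case the pieces $m\C[I^V_M(\sigma)]\otimes_{\C[I^V_M(\sigma)]}\widetilde{W}^V$ are merely quotients of $m\otimes\widetilde{W}^V$ of uncontrolled dimension. Quotienting by the ideal $\Ann_{I^V_M(\sigma)}(\widetilde{W}^V)$ does not repair this: the problematic tensors are $m\otimes w$ with $m\notin I^V_M(\sigma)$ and $\pi(m)\widetilde{W}^V=0$, and an ideal of $I^V_M(\sigma)$ acting on the right says nothing about them; their vanishing in the induced module is equivalent to the injectivity you are trying to prove.

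The way the paper closes this gap is to avoid all dimension bookkeeping and use semisimplicity of $\C[M]$ (Axiom IV, in force in this subsection): it suffices to show that $\ind_{I^V_M(\sigma)}^M\widetilde{W}^V$ has a single irreducible $M$-constituent. One writes $\ind_{I^V_M(\sigma)}^M\widetilde{W}^V=1\otimes\widetilde{W}^V+\sum_{m\notin I^V_M(\sigma)}m\C[I^V_M(\sigma)]\otimes\widetilde{W}^V$ as $N$-modules, and uses the surjection $m\otimes\widetilde{W}^V\twoheadrightarrow m\C[I^V_M(\sigma)]\otimes\widetilde{W}^V$ together with Lemma \ref{NMODULES} (all $m\otimes m_iW$ are mutually isomorphic $N$-modules, and for $m\notin I^V_M(\sigma)$ they are zero or not of type $\sigma$) to see that the $\sigma$-isotypic component of the induced module is exactly $1\otimes\widetilde{W}^V$. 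Then, by Frobenius reciprocity, every irreducible $M$-summand would have to contain $\widetilde{W}^V$ upon restriction to $I^V_M(\sigma)$, hence contribute its own $\sigma$-isotypic space; the bound above forces a unique summand, so the induced module is irreducible and $\Phi$ is an isomorphism. If you want to keep your two-step outline, replace the dimension count by this isotypic-multiplicity argument.
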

     \begin{proof}
       $\Hom_{M}(\ind_{I^V_M(\sigma)}^M \widetilde{W}^V, V) \simeq \Hom_{I^V_M(\sigma)}(\widetilde{W}^V, V)$. Any $f\in   \Hom_{I^V_M(\sigma)}(\widetilde{W}^V, V)$ also belongs to $   \Hom_{N}(\widetilde{W}^V, V)$. So its image sits in the subspace  $\widetilde{W}^V$ of $V$, which implies  that   $\dim \Hom_{M}(\ind_{I^V_M(\sigma)}^M \widetilde{W}^V, V) =1$.  Moreover $\ind_{I^V_M(\sigma)}^M \widetilde{W}^V \simeq  \sum_{m\in M}  m\mathbb{C}[I^V_M(\sigma)]\otimes_{ \mathbb{C}[I^V_M(\sigma)]}  \widetilde{W}^V =1\otimes \widetilde{W}^V  + \sum_{m\notin I^V_M(\sigma)} m\mathbb{C}[I^V_M(\sigma)]\otimes_{ \mathbb{C}[I^V_M(\sigma)]}   \widetilde{W}^V $ as $N$-modules.

       Given $m_i$ in Lmm.\ref{IMS}(3), following lemma \ref{NMODULES} we let $m \otimes \widetilde{W}^V=\oplus_{i} m \otimes  m_i W$ be an $N$-module. Then there exists a surjective $N$-module homomorphism $p:m \otimes \widetilde{W}^V \longrightarrow  m\mathbb{C}[I^V_M(\sigma)]\otimes_{ \mathbb{C}[I^V_M(\sigma)]}   \widetilde{W}^V $.   Note that $ m \otimes  m_i W \simeq  m \otimes  m_jW$ as $N$-modules. Hence as $N$-modules, $  m\mathbb{C}[I^V_M(\sigma)]\otimes_{ \mathbb{C}[I^V_M(\sigma)]}   \widetilde{W}^V $ is zero or contains no more $\sigma$-isotypic component. Hence the $\sigma$-isotypic component of  $\ind_{I^V_M(\sigma)}^M \widetilde{W}^V$ is  isomorphic with  $\widetilde{W}^V$.  If $\ind_{I^V_M(\sigma)}^M \widetilde{W}^V$ contains another irreducible component $(\pi_1, V_1)\in \Irr(M)$, then $\Hom_{M}(\ind_{I^V_M(\sigma)}^M \widetilde{W}^V, V_1) \simeq \Hom_{I^V_{M}(\sigma)}( \widetilde{W}^V,V_1)$, which implies that  $V_1|_{I^V_{M}(\sigma)}$ also contains $\widetilde{W}^V$ as a sub-representation. Thus $\ind_{I^V_M(\sigma)}^M \widetilde{W}^V$ is irreducible.
    \end{proof}

        \subsubsection{}\label{structure}
        Assume that   $(\sigma, W)$ has an apex   $e=e_0^N\in E(N)$.
    Assume  $(\pi, V)$ has an apex $f\in E(M)$, and $(\pi, V)=(\Ind_{G_f}(\lambda), \Ind_{G_f}(S))\in \Irr(M)$, for $(\lambda, S) \in \Irr(G_f)$.  By Frobenius reciprocity, $ \Hom_N(W,V) \simeq \Hom_{N}(\mathbb{C}[L_e^N]\otimes_{\C[G_e^N]} U, V) \simeq \Hom_{G_e^N}(U,  \Hom_{N}(\mathbb{C}[L_e^N], V)) \simeq \Hom_{G_e^N}(U, \mathbb{C}[R_e^N] \otimes_{N} V)$.
            Notice that  $R_e^N=G_e^N=L_e^N$, $\mathbb{C}[R_e^N] \otimes_{\C[N]} V \simeq \mathbb{C}[G_e^N] \otimes_{\C[N]}  \mathbb{C}[L_f] \otimes_{\C[G_f]} S \simeq e \otimes_{}  e\mathbb{C}[L_f] \otimes_{\C[G_f]} S $.     By Lmm.\ref{symm},  the above $e \otimes_{}  e\mathbb{C}[L_f] \otimes_{\C[G_f]} S  \simeq  e\mathbb{C}[L_f] \otimes_{\C[G_f]} S $ as $G_e^N e^{[-1]} $-modules or as $N$-modules.  Recall the notion  $I_e=\{ m\in M \mid e\notin MmM\}$.
                If $f\notin MeM$, then $e\in I_f$, $ e\mathbb{C}[L_f] =0$. Hence $e\mathbb{C}[L_f] \neq 0$ only if $f\in MeM$ or $MfM \subseteq MeM$. Assume $L_f=\sqcup_{i=1}^{s_f} x_i \circ_f G_f=\sqcup_{i=1}^{s_f} x_i G_f$.
                              \begin{lemma}\label{grophom}
              \begin{itemize}
              \item[(1)]   If $ex_i \in L_f$, then $ex_i=x_if=x_i$. In this situation,  for $h\in G_e^N$, $hx_i=x_i g_h$, for some $g_h\in G_f$.
              \item[(2)]   $h\longrightarrow g_h$, gives a group homomorphism from  $G_e^N$ to $G_f$, with the kernel $\Stab_{G_e^N}(x_i)=\{ h\in G_e^N\mid hx_i=x_i\}$.
              \item[(3)] $ \{ hx_i=x_ig_h  \mid h\in G_e^N \} \subseteq  G_{x_i}^N$.  Moreover $h \longrightarrow hx_i$, gives a  group homomorphism   from  $G_e^N$ to $G_{x_i}^N$, with the kernel  $\Stab_{G_e^N}(x_i)$.                                                                      \end{itemize}
                                  \end{lemma}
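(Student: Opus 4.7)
The plan is to proceed part by part, invoking throughout the centricity of $N$ in $M$ (Axiom~III), the idempotency $e^{2}=e$, and the fact from Lemma~\ref{BSteinberg}(3)(b) that the right $G_{f}$-action on $L_{f}$ is free.

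For part~(1), assume $ex_{i}\in L_{f}$. The key observation is that $e\cdot(ex_{i})=ex_{i}$, so $ex_{i}\in eM\cap L_{f}$. Choosing the representatives of the $G_{f}$-orbits so that each orbit meeting $eM$ is represented by its $e$-fixed element (such an element exists by the above remark applied to any chosen representative), I may assume $ex_{i}\in x_{i}\circ_{f}G_{f}$. Writing $ex_{i}=x_{i}\circ_{f}g$ with $g\in G_{f}$ and applying $e$ on the left again, $e^{2}=e$ gives $ex_{i}=e(ex_{i})=(ex_{i})\circ_{f}g=x_{i}\circ_{f}g^{2}$, so freeness forces $g=g^{2}$. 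Since $G_{f}$ is a group, $g=f$, hence $ex_{i}=x_{i}\circ_{f}f=x_{i}$. For the last sentence of part~(1), fix $h\in G_{e}^{N}$ with inverse $h^{-1}\in G_{e}^{N}$ satisfying $hh^{-1}=h^{-1}h=e$. Then $Mhx_{i}\subseteq Mx_{i}=Mf$ and $Mhx_{i}\supseteq Mh^{-1}(hx_{i})=Mex_{i}=Mf$, so $hx_{i}\in L_{f}$; since $eh=h$ gives $e(hx_{i})=hx_{i}$, the same argument produces $hx_{i}=x_{i}\circ_{f}g_{h}$ for a unique $g_{h}\in G_{f}$.

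For part~(2), multiplicativity of $h\mapsto g_{h}$ follows by direct computation: for $h_{1},h_{2}\in G_{e}^{N}$,
\[
x_{i}\circ_{f}g_{h_{1}h_{2}}=(h_{1}h_{2})x_{i}=h_{1}(h_{2}x_{i})=h_{1}(x_{i}\circ_{f}g_{h_{2}})=(h_{1}x_{i})\circ_{f}g_{h_{2}}=x_{i}\circ_{f}g_{h_{1}}g_{h_{2}},
\]
so $g_{h_{1}h_{2}}=g_{h_{1}}g_{h_{2}}$ by freeness, and the kernel is $\{h:g_{h}=f\}=\{h:hx_{i}=x_{i}\}=\Stab_{G_{e}^{N}}(x_{i})$. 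For part~(3), Remark~\ref{threeeqt} under Axiom~III gives $L_{x_{i}}^{N}=R_{x_{i}}^{N}=G_{x_{i}}^{N}$, so it suffices to show $hx_{i}\in L_{x_{i}}^{N}$. Since $h\in R_{e}^{N}$ (the generators of $eN$), $hN=eN$; combined with part~(1)'s identity $ex_{i}=x_{i}$, one computes $N(hx_{i})=(Nh)x_{i}=(Ne)x_{i}=N(ex_{i})=Nx_{i}$, hence $hx_{i}\in G_{x_{i}}^{N}$. The map $h\mapsto hx_{i}$ is multiplicative because $(h_{1}h_{2})x_{i}=h_{1}(h_{2}x_{i})$ unpacks to $(h_{1}x_{i})\circ_{x_{i}}(h_{2}x_{i})$ after expanding $\circ_{x_{i}}$ via the centricity identity $x_{i}N=Nx_{i}$; the kernel is $\Stab_{G_{e}^{N}}(x_{i})$.

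The main obstacle is the opening step of part~(1), namely securing $ex_{i}\in x_{i}\circ_{f}G_{f}$ rather than merely $ex_{i}\in L_{f}$: \emph{a priori} the $G_{f}$-orbit containing $ex_{i}$ could differ from that of $x_{i}$. I resolve this by the choice-of-representatives argument above (selecting $e$-fixed elements as orbit representatives whenever possible), which is forced by the idempotency of $e$ and is compatible with the implicit normalization in the statement. Once this hurdle is cleared, the remainder of the proof is bookkeeping with the free $G_{f}$-action on $L_{f}$ and with the identities defining $\circ_{f}$ and $\circ_{x_{i}}$.
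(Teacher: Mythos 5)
Your parts (2) and (3) are correct and essentially coincide with the paper's proof, but part (1) has a genuine gap at exactly the point you flag as the main obstacle, and your proposed resolution does not work. Re-choosing the $G_f$-orbit representatives cannot deliver $ex_i\in x_i\circ_f G_f$: whether $ex_i$ and $x_i$ lie in the same orbit is, by Lemma \ref{BSteinberg}(4) (case $N=M$), the intrinsic statement $ex_iM=x_iM$, which is independent of how representatives are chosen; and the lemma is asserted for the representatives $x_i$ already fixed in the ambient discussion, so you are not free to relabel them (replacing $x_i$ by an $e$-fixed representative of the orbit containing $ex_i$ changes both the hypothesis and the conclusion to a different element, while parts (2)--(3) and the later applications still concern the original $x_i$). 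Knowing that the orbit of $ex_i$ contains an $e$-fixed element (true, since every element of $eM$ is $e$-fixed) says nothing about whether that orbit equals the orbit of $x_i$. The same unproved same-orbit claim is invoked a second time when you say ``the same argument produces $hx_i=x_i\circ_f g_h$''.

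What closes the gap --- and what the paper actually uses --- is Axiom III (centricity of $N$), which your part (1) never touches. Since $e\in N$ and $Nx_i=x_iN$, write $ex_i=x_ie'$ with $e'\in N$; from $x_i\in L_f$ and $ex_i\in L_f$ one has $x_if=x_i$ and $(ex_i)f=ex_i$, hence $ex_i=x_i(fe'f)$, and $g_e:=fe'f$ satisfies $g_e\in fM$ and $Mg_e=Mfg_e=Mx_ig_e=Mex_i=Mf$, so $g_e\in L_f\cap fM=G_f$. This is precisely the same-orbit statement you needed; from there your idempotency-plus-freeness computation (equivalently the paper's $ex_i=e^nx_i=x_ig_e^n$) forces $g_e=f$ and $ex_i=x_i$. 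The existence of $g_h$ for general $h\in G_e^N$ is obtained the same way, again via centricity ($hx_i=x_ih'$, $g_h:=fh'f\in G_f$), not by a representative choice. With part (1) repaired in this way, your multiplicativity and kernel computations in (2), and your use of Remark \ref{threeeqt} and of $Nh=Ne$ together with $ex_i=x_i$ in (3), go through exactly as in the paper.
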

                      \begin{proof}
                   1)    Assume $ex_i=x_i e'=x_ife'f$, for some $e'\in N$. Since $e$ is an idempotent element, we assume $e'\in E(N)$. Put $g_e=fe'f=fe'=e'f$. Then $Mf=Mex_i=Mx_ig_e=Mfg_e=Mg_e$,  so $g_e\in L_f\cap fM=G_f$. By Lmm.\ref{BSteinberg}(5), $g_e$ is  uniquely determined by  $e$. Moreover $ex_i=e^nx_i=x_ig_e^n$, so $g_e=g_e^n=f=e'f=fe'$. For other $h\in G_e^N$, $Mhx_i=Mex_i=Mx_i=Mf$, so $hx_i\in L_f$, and $hx_i=x_ifh'f$, for some $h'\in N$. Put $g_h=fh'f$. Similarly, $g_h\in G_f$. \\
                   2) For $h, h'\in G_e^N$, $hh'x_i=hx_i g_{h'}=x_ig_hg_{h'}$, which implies $g_{hh'}=g_hg_{h'}$.  So it is a group homomorphism. The kernel equals to the subgroup $\{h\in G_e^N \mid hx_i=x_if=x_i\}$.\\
                   3)   $x_ig_h=hx_i$. Then $Nx_ig_h=Nhx_i=Nex_i=Nx_i$, which implies $x_ig_h\in L^N_{x_i}=G^N_{x_i}$.     For $h, h'\in G_e^N$, $hx_i\circ_{x_i} h'x_i =hh'x_i$, so it is a group homomorphism.
                  \end{proof}
                  Let $T_{x_i}$ denote the subgroup of $G_f$, such that $G_{x_i}^N=x_i\circ_f T_{x_i}$. Note that $(G_{x_i}^N, x_i)  \simeq  (T_{x_i}, f)$ as groups.  Thus  $ \Hom_N(W,V) \simeq \Hom_{G_e^N}(U, \mathbb{C}[R_e^N] \otimes_{N} V)\simeq \oplus_{i=1}^{k_f}  \Hom_{G_e^N}(U, \mathbb{C}[ex_iG_f] \otimes_{\C[G_f]} S )$, for some $k_f \leq s_f$. For simplicity,  we identity $W$ with $U$.  Assume $0\neq  \Hom_{N}(W, \mathbb{C}[ex_iG_f] \otimes_{\C[G_f]} S )$.  Then $\mathbb{C}[x_iG_f]=\C[x_i\circ T_{x_i}]\otimes_{\C[ T_{x_i}] }\C[G_f]$ as $N-G_f$-bimodules.  So $\Hom_{N}(W, \mathbb{C}[G_{x_i}^N]) \neq 0$ as left $N$-modules.
             By  Lmm.\ref{GEE},   the image of  $G_e^N \longrightarrow x_i G_f; h\longrightarrow hx_i$ is the whole $G^N_{x_i}$. Moreover, $G_{e}^Ne^{[-1]}=G_{x_i}^N x_i^{[-1]}$ and $(\sigma, W)\in \mathcal{R}_{N}(\C[G_{x_i}^N])$.   Note that $ \Hom_{G_e^N}(W, \mathbb{C}[x_iG_f] \otimes_{\C[G_f]} S )=0$ unless $\chi(\Stab_{G_e^N}(x_i))=1$; in the later case, $\Hom_{G_e^N}(W, \mathbb{C}[x_iG_f] \otimes_{\C[G_f]} S )\simeq \Hom_{G_e^N/\Stab_{G_e^N}(x_i)}(W, S )$.  So far, we understand how to embed $W$ in $V$ as $N$-module.

              Assume now $0\neq F \in\Hom_N(W,V)$, and $\Im(F)=x_i \otimes W'$, with $W'\subseteq S$. Denote $\mathcal{W}=\Im(F)$.  Then $V=\sum_{m\in M} m \mathcal{W}$.
                          \begin{lemma}\label{semisi}
                       \begin{itemize}
                        \item[(1)]  If $m \mathcal{W} \neq 0$, then there exists $m'\in M$, such that $m'm$ acts on $\mathcal{W}$ trivially.
                        \item[(2)]  If  $m \mathcal{W} \neq 0$,  $m' \mathcal{W} \neq 0$, then   there exists $m''\in M$, such that $m''m\mathcal{W}=m'\mathcal{W}$.
                        \end{itemize}
                                                                    \end{lemma}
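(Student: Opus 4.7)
The plan is to prove (1) directly by computing inside the induced representation $V = \Ind_{G_f}(S) = \mathbb{C}[L_f] \otimes_{\mathbb{C}[G_f]} S$, and then to get (2) as a quick corollary of (1).

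For (1), I first invoke Axiom (IV) together with Lemma \ref{le55}(1) so that the $M$-action on $V$ is the Sch\"utzenberger action $m \cdot (x\otimes s) = (mx)\otimes s$ whenever $mx \in L_f$, and $0$ otherwise. Starting from $\mathcal{W} = x_i\otimes W'$ with $m\mathcal{W}\ne 0$, this forces $mx_i \in L_f$, and I expand $mx_i = x_j\circ_f g$ uniquely via the decomposition $L_f = \sqcup_k x_k \circ_f G_f$ of Lemma \ref{BSteinberg}(5), so that $m\cdot(x_i\otimes w) = x_j\otimes gw$ for every $w\in W'$. The main step will be to locate some $m'\in M$ satisfying $m' x_j = x_i g^{-1}$; this follows because $x_i g^{-1}$ still lies in $L_f$ (it sits in the $G_f$-orbit of $x_i$), so $M x_j = M f = M(x_i g^{-1})$ exhibits $x_i g^{-1}$ as $m' x_j$ for some $m'\in M$. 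A one-line computation using the tensor relation $(xh)\otimes s = x\otimes hs$ over $G_f$ then gives $m' m \cdot (x_i\otimes w) = x_i\otimes w$, which is (1).

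Assertion (2) is then immediate: given $m,m'$ with $m\mathcal{W}, m'\mathcal{W}\ne 0$, apply (1) to $m$ to obtain $m_1\in M$ with $m_1 m|_{\mathcal{W}} = \mathrm{id}_{\mathcal{W}}$, in particular $m_1 m\mathcal{W} = \mathcal{W}$; then $m'' := m' m_1$ satisfies $m'' m \mathcal{W} = m' (m_1 m \mathcal{W}) = m' \mathcal{W}$.

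The only subtlety I expect to address carefully is ensuring that the Sch\"utzenberger formula really is the action on $V$ itself (and not on a proper quotient) -- this is exactly where the semi-simplicity of $\mathbb{C}[M]$ is needed, since it forces $N_f\bigl(\Ind_{G_f}(S)\bigr) = 0$ -- together with checking that $x_i g^{-1} = x_i \circ_f g^{-1}$ is already in canonical form with respect to the fixed partition $L_f = \sqcup_k x_k G_f$, so that no relabeling is required when rewriting the tensor. No deeper obstacle is anticipated; the whole argument is essentially bookkeeping inside $\mathbb{C}[L_f]\otimes_{G_f} S$.
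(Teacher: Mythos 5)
Your argument is correct, and part (2) is word-for-word the paper's own reduction to part (1) ($m''=m'm_1$ with $m_1m$ acting as the identity on $\mathcal{W}$). For part (1) you take a genuinely different (and somewhat more economical) route at the key existence step. The paper also works inside $V=\mathbb{C}[L_f]\otimes_{\mathbb{C}[G_f]}S$ with the decomposition $L_f=\sqcup_k x_k\circ_f G_f$ and writes $mx_i=x_jg_m$, but it then invokes semi-simplicity through the non-singularity of the sandwich matrix $P(f)$: this yields an element $y_j\in R_f$ with $y_jx_j=g\in G_f$, and the paper exhibits $m'$ explicitly as $m'=x_i g_m^{-1}g^{-1}y_j$, checking $m'mx_i=x_i$ in $M$. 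You instead observe that $x_i\circ_f g_m^{-1}\in L_f$ (here one should note $g_m^{-1}=fg_m^{-1}$ and $x_if=x_i$, so $x_i\circ_f g_m^{-1}$ is literally the product $x_ig_m^{-1}$ in $M$), hence $M(x_ig_m^{-1})=Mf=Mx_j$ and $x_ig_m^{-1}=m'x_j$ for some $m'\in M$; the tensor relation over $\mathbb{C}[G_f]$ then gives $m'm(x_i\otimes w)=x_i\otimes w$ (and in fact $m'mx_i=x_ig_m^{-1}g_m=x_if=x_i$, so your $m'$ fixes $x_i$ on the nose, just like the paper's). What each approach buys: the paper's construction is explicit and makes the use of semi-simplicity visible (via $P(f)$ being non-singular, i.e.\ via $y_j\in R_f$ hitting $G_f$), whereas your argument only needs that $L_f$ consists of generators of $Mf$ and is stable under the right $\circ_f$-action of $G_f$, with semi-simplicity entering solely through the standing identification $V=\Ind_{G_f}(S)$ (zero radical), which is already part of the setup of Section \ref{structure}. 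Both are valid; yours is the more elementary existence argument, the paper's the more constructive one.
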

                    \begin{proof}
                   1) Since $M$ is semi-simple, the sandwich matrix $P(f)$ is non-singular. Assume $mx_i=x_j g_m$, for some $g_m\in G_f$.  Then for $x_j$, there exists $y_j\in R_f$, such that $y_jx_j=g\in G_f$, and $g^{-1}y_jx_j=f$. Hence $x_i  g_m^{-1}g^{-1}y_jmx_i=x_i  g_m^{-1}g^{-1}y_jx_jg_m=x_ig_m^{-1} g^{-1}gg_m=x_if=x_i$. Then put $m'=x_i  g_m^{-1}g^{-1}y_j$.    \\
                   2)  By part (1), $\exists m'''$, such that $m'''m \mathcal{W}= \mathcal{W}$. Hence $(m'm''')m \mathcal{W} =m'  \mathcal{W}$.
                                                                    \end{proof}
Let us write  $ \mathcal{W}'=m' \mathcal{W}\neq 0$.   Assume   $ \mathcal{W}=m''m' \mathcal{W}  $.  Assume  the equivalence class  of   $\mathcal{W}'$ in $\Irr(N)$ is $\sigma'$.
\begin{lemma}
 $ I^{V}_M (\sigma') \supseteq m'  I^{V}_M (\sigma) m''$.
  \end{lemma}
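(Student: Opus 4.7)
The plan is to reduce the containment $I^V_M(\sigma') \supseteq m' I^V_M(\sigma) m''$ to two transport claims, and then derive each one from Lemma \ref{irr} by decomposing each isotypic component via Lemma \ref{IMS}(3).

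More concretely, for any $x \in I^V_M(\sigma)$ I will show that
\begin{equation*}
\pi(m'')\widetilde{W'}^V \subseteq \widetilde{W}^V \qquad \text{and} \qquad \pi(m')\widetilde{W}^V \subseteq \widetilde{W'}^V.
\end{equation*}
Once these are established, the conclusion is immediate: $\pi(m'xm'')\widetilde{W'}^V \subseteq \pi(m'x)\widetilde{W}^V \subseteq \pi(m')\widetilde{W}^V \subseteq \widetilde{W'}^V$, where the middle step uses that $x \in I^V_M(\sigma)$ stabilizes $\widetilde{W}^V$.

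To prove the second containment, I apply Lemma \ref{IMS}(3) to write $\widetilde{W}^V = \oplus_j \pi(n_j)\mathcal{W}$, where $n_j \in I^V_M(\sigma)$ and each $\pi(n_j)\mathcal{W}$ is either zero or $N$-isomorphic to $\mathcal{W}$ (of type $\sigma$). Applying $\pi(m')$ to each summand gives $\pi(m'n_j)\mathcal{W}$. By Lemma \ref{irr}(1), each such space is either zero or an irreducible $N$-module; and by Lemma \ref{irr}(3), when non-zero it is $N$-isomorphic to $\pi(m')\mathcal{W} = \mathcal{W}'$, which has type $\sigma'$. Since $\widetilde{W'}^V$ is by definition the full $\sigma'$-isotypic component of $\Res^M_N V$, each $\pi(m'n_j)\mathcal{W}$ lies in $\widetilde{W'}^V$, proving the containment. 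The first containment is completely symmetric: decompose $\widetilde{W'}^V = \oplus_i \pi(m'_i)\mathcal{W}'$ using Lemma \ref{IMS}(3) for $\sigma'$, and apply $\pi(m'')$; the identity $\pi(m'')\mathcal{W}' = m''\mathcal{W}' = \mathcal{W}$ (which is of type $\sigma$) together with Lemma \ref{irr}(3) forces every non-zero piece $\pi(m''m'_i)\mathcal{W}'$ to be $N$-isomorphic to $\mathcal{W}$, hence to sit inside $\widetilde{W}^V$.

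I do not anticipate any genuine obstacle: the argument is essentially a two-sided book-keeping using $m' \colon \mathcal{W} \to \mathcal{W}'$ and $m'' \colon \mathcal{W}' \to \mathcal{W}$ as invertible $\C$-linear maps (from Lemma \ref{irr}(2)), together with the ``isotypic transport'' given by Lemma \ref{irr}(3). The only minor point is verifying the hypotheses of Lemma \ref{irr}(3), namely that one is comparing two non-zero images of $N$-isomorphic irreducible submodules under the same element of $M$; this is handled by the $\oplus$-decompositions above, so the reasoning is direct and requires no further input.
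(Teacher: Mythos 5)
Your proof is correct and follows essentially the same route as the paper: establish $\pi(m'')\widetilde{W'}^V\subseteq \widetilde{W}^V$ and $\pi(m')\widetilde{W}^V\subseteq \widetilde{W'}^V$ via Lemma \ref{irr}(3) applied to irreducible constituents of the isotypic components, then chain these with the stability of $\widetilde{W}^V$ under $I^V_M(\sigma)$. Your use of the Lemma \ref{IMS}(3) decompositions is just a slightly more explicit bookkeeping of what the paper does with arbitrary irreducible components, so no substantive difference.
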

\begin{proof}
 Let $\widetilde{\mathcal{W}}$ (resp. $\widetilde{\mathcal{W}'}$)  be the $\sigma$(resp.$\sigma'$)-isotypic components of $V|_H$. For any irreducible component $W''$ of  $\widetilde{\mathcal{W}'}$, $m'' W''=0$, or $m''W'' \simeq m''  \mathcal{W}'\simeq  \mathcal{W}$. Hence $ m''\widetilde{\mathcal{W}'} \subseteq \widetilde{\mathcal{W}}$, and $I^{V}_M (\sigma)m''\widetilde{\mathcal{W}'} \subseteq \widetilde{\mathcal{W}}$. Similarly, we obtain $m'I^{V}_M (\sigma)m''\widetilde{\mathcal{W}'} \subseteq \widetilde{\mathcal{W}'}$. So $I^{V}_M (\sigma')\supseteq m'  I^{V}_M (\sigma) m''$, and $\# I^{V}_M (\sigma')\geq \# m'  I^{V}_M (\sigma) m''$.
\end{proof}
\section{Clifford-Mackey-Rieffel theory for monoids}\label{DRWI}
          In  \cite{Da}, \cite{Ri2}, \cite{W}, Dade, Rieffel, Witherspoon   successfully  generated  the  Clifford-Mackey theory from the  group cases to   the ring and  algebra cases.   For later use,    in this section we shall  present  their explicit forms   for some semi-simple monoid cases. Our main purpose is to find out some proper semi-simple  monoids  to represent those  algebras.   The final results  indicate  that we can find some desired proper  monoids  locally. However, we can't ensure that these monoids are semi-simple globally.  Hence in the last part of this section, we present some results for inverse monoids.
             Keep the  notations of the above section and take the previous   Axioms (III), (IV) in  this section.

       \subsection{Clifford-Mackey-Rieffel theory I}
               Let  $(\sigma_0,W_0 )$, $(\sigma_1,W_1 )$,  $\cdots$, $(\sigma_k, W_k)$ denote the set  of all pairwise inequivalent irreducible  representations of $N$, and let $e^{W_0}, e^{W_1},  \cdots, e^{W_k} $ \footnote{! These $e^{W_i}$ are different from those idempotent elements in $E(N)$.} be the corresponding minimal central   idempotents  of $\End_B(B)\simeq B$ such that $Be^{W_i}=e^{W_i}B \simeq m(\sigma_i)\sigma_i$ as left $N$-modules, where $m(\sigma_i)=\dim W_i$.  Let $(\Pi_l, A)$ resp. $(\Pi_r, A)$   denote the left resp. right regular representation of   $M$.     Let $\widetilde{W_{i, l}}$  resp. $\widetilde{W_{i, r}}$  be the $\sigma_i$ resp. $D(\sigma_i)$ isotypic components of $ (\Pi_l, A)$  resp. $ (\Pi_r, A)$ of  $M $.
     Let $I^l_{M}(\sigma_i)=\{ m\in M \mid \Pi_l(m)\widetilde{W_{i,l}}   \subseteq     \widetilde{W_{i,l}}  \}$, $I^r_{M}(D(\sigma_i))=\{ m\in M \mid \widetilde{W_{i,r}} \Pi_r(m)   \subseteq     \widetilde{W_{i,r}}  \}$.
For simplicity of notations, we write  $\sigma=\sigma_0$, $e^{W}=e^{W_0}$, $\widetilde{W_{0,l}}=\widetilde{W_{l}}$.

                             \begin{lemma}\label{leftm}
               \begin{itemize}
               \item[(1)] $I^l_{M}(\sigma)=\cap_{V'} I^{V'}_{M}(\sigma)$,  for   all $(\pi', V') \in \mathcal{R}_M(\Ind_{N}^M \sigma)$.
               \item[(2)]  If $ x\in  I^V_{M}(\sigma)  \setminus I^l_{M}(\sigma)$, then $\pi(x) \widetilde{W}^V=0$.
               \item[(3)] $\widetilde{W}^V$ is also an irreducible representation of $I^l_{M}(\sigma)$.
               \item[(4)] $I^l_{M}(\sigma)=\{ m\in M \mid m \in   e^{W}Ae^{W} \oplus \oplus_{i=1}^k Ae^{W_i}\}$.
                 Then   $\mathbb{C}[I^l_{M}(\sigma)]\subseteq e^{W}Ae^{W} \oplus \oplus_{i=1}^k Ae^{W_i} $.
                \item[(5)]  $V  \simeq \ind_{I^l_M(\sigma)}^M \widetilde{W}^V $ as $M$-modules.
                \end{itemize}
               \end{lemma}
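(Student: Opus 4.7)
The plan is to prove the five parts in the order (4), (1), (2), (3), (5), since (4) and (1) are bookkeeping consequences of the semi-simple structure while (2) is the essential new input from which (3) and (5) then fall out. I begin by identifying $\widetilde{W_{l}}$ with the right ideal $e^{W} A$: the central idempotent $e^{W}\in B$ projects, via left multiplication, onto the $\sigma$-isotypic subspace of $A$ regarded as a left $B$-module, so $\widetilde{W_{l}}=e^{W} A$. Then $m\cdot \widetilde{W_{l}}\subseteq \widetilde{W_{l}}$ is equivalent to $m e^{W}\in e^{W} A$, i.e.\ $(1-e^{W}) m e^{W}=0$; expanding $1-e^{W}=\sum_{i\geq 1} e^{W_{i}}$ yields exactly the description in (4). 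For (1), I use the Wedderburn decomposition $A\simeq \bigoplus_{V} V\otimes D(V)$ as an $M$-bimodule (Lemma \ref{duality}), which restricts to $\widetilde{W_{l}}\simeq \bigoplus_{V}\widetilde{W}^{V}\otimes D(V)$. Since left multiplication by $m$ respects each $V$-summand and acts there as $\pi_{V}(m)\otimes \id$, the stabilization condition splits summand by summand into $\pi_{V}(m)\widetilde{W}^{V}\subseteq \widetilde{W}^{V}$ for every $V$ with $\widetilde{W}^{V}\neq 0$, which is precisely $V\in \mathcal{R}_{M}(\Ind_{N}^{M}\sigma)$.

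Part (2) is the crux. For $x\in I^{V}_{M}(\sigma)$ the element $\pi_{V}((1-e^{W}) x e^{W})\in \End(V)$ vanishes, while $x\notin I^{l}_{M}(\sigma)$ forces $(1-e^{W}) x e^{W}$ to be nonzero in $A$, hence nonzero on some summand $\End(V')$ with $V'\neq V$ by part (1). My approach is to argue that the monoid-theoretic constraint---namely, that $x$ is a single element of $M$ and not a general linear combination---forces the $V$- and $V'$-contributions of $\pi(x) e^{W}$ to be coupled: concretely, invoking the centric relation $xN=Nx$ and the Sch\"utzenberger structure from Section \ref{Schrepre}, and using Lemma \ref{irr}(2) together with Lemma \ref{IMS}(2), which say that $\pi(x)$ either kills each simple $N$-submodule of $\widetilde{W}^{V}$ or sends it isomorphically to another, one expects the ``off-diagonal'' failure on $V'$ to obstruct any nonzero ``diagonal'' coupling on $V$. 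I regard this coupling argument as the main obstacle and anticipate needing the apex-level structure from Section \ref{centmonoid} together with Lemma \ref{IMS}(7)(8), which give $E(M)\subseteq I^{V}_{M}(\sigma)$ and $G^{N}_{m}\subseteq I^{V}_{M}(\sigma)$ whenever $m\in I^{V}_{M}(\sigma)$.

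With (2) in hand, parts (3) and (5) are corollaries. For (3), $\widetilde{W}^{V}$ is irreducible over $I^{V}_{M}(\sigma)$ by Lemma \ref{IMS}(4), and (2) says the extra elements in $I^{V}_{M}(\sigma)\setminus I^{l}_{M}(\sigma)$ act as zero on $\widetilde{W}^{V}$; thus the $I^{V}_{M}(\sigma)$-submodule lattice coincides with the $I^{l}_{M}(\sigma)$-submodule lattice of $\widetilde{W}^{V}$, and irreducibility is preserved. For (5), I repeat the proof of Lemma \ref{MISO} with $I^{V}_{M}(\sigma)$ replaced by $I^{l}_{M}(\sigma)$: Frobenius reciprocity gives $\Hom_{M}(\ind_{I^{l}_{M}(\sigma)}^{M}\widetilde{W}^{V}, V)\simeq \Hom_{I^{l}_{M}(\sigma)}(\widetilde{W}^{V}, V)$, which is one-dimensional because any such morphism factors through the embedding $\widetilde{W}^{V}\hookrightarrow V$; an $N$-isotypic count then rules out extra copies of $\sigma$ in $\ind_{I^{l}_{M}(\sigma)}^{M}\widetilde{W}^{V}$, so the canonical map is an isomorphism.
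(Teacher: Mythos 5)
Your parts (4) and (1) are sound (your route to (1), via the bimodule decomposition $A\simeq\oplus_{V'}V'\otimes D(V')$ and the observation that left multiplication acts as $\pi_{V'}\otimes\id$ on each summand, is if anything cleaner than the paper's), and (3) is a formal consequence of (2). But part (2), which you yourself identify as the crux and on which (3) and (5) rest, is not proved: you only say that you ``expect'' the off-diagonal failure on $V'$ to obstruct a nonzero diagonal action on $\widetilde{W}^V$ and that you ``anticipate'' needing the apex-level and Sch\"utzenberger machinery. That is a gap, not an argument, and the block decomposition $(1-e^{W})xe^{W}$ cannot by itself close it, since a single monoid element having zero $V$-block and nonzero $V'$-block is exactly the configuration that must be excluded. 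The missing idea is the transport of $N$-isomorphisms along the centric relation, i.e.\ Lemma \ref{irr}(3): its proof (define $\varphi_x(xu)=x\varphi(u)$, well defined by Lemma \ref{irr}(2), $N$-equivariant because $nx=xn'$) uses nothing about the ambient module, so it applies to simple $N$-constituents sitting in two \emph{different} $M$-modules. Granting this, (2) is immediate by contraposition: if $x\in I^V_M(\sigma)$ and $\pi(x)\widetilde{W}^V\neq 0$, pick a simple $U_1\subseteq\widetilde{W}^V$ with $xU_1\neq 0$; by Lemma \ref{IMS}(2), $xU_1\simeq W$; then for every simple constituent $U'\simeq W$ of $\widetilde{W_{l}}\subseteq A$ one has $xU'=0$ or $xU'\simeq xU_1\simeq W$, hence $x\widetilde{W_{l}}\subseteq\widetilde{W_{l}}$, i.e.\ $x\in I^l_M(\sigma)$, contradicting $x\notin I^l_M(\sigma)$. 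None of the apex or Sch\"utzenberger structure you list is needed.

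A secondary point: even with (2) in hand, ``repeat the proof of Lemma \ref{MISO}'' does not dispose of (5). The delicate elements are $x\in I^V_M(\sigma)\setminus I^l_M(\sigma)$: part (2) says they annihilate $\widetilde{W}^V$ inside $V$, but that does not make the subspace $x\,\C[I^l_M(\sigma)]\otimes_{\C[I^l_M(\sigma)]}\widetilde{W}^V$ of the induced module vanish, and your ``$N$-isotypic count'' must show it contains no $\sigma$-component. The paper does this by choosing $V'\in\mathcal{R}_M(\Ind_N^M\sigma)$ with $x\notin I^{V'}_M(\sigma)$ and a simple $W'\subseteq\widetilde{W}^{V'}$ with $xW'\neq 0$ and $xW'\ncong W$, and then computing $x\C[N]\otimes_{\C[N]}\widetilde{W}^V\simeq m(\sigma,V)\,xW'$ as $N$-modules, which surjects onto the space in question; only after that does the Lemma \ref{MISO} argument apply. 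You should supply this step (or an equivalent one) explicitly.
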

                       \begin{proof}
                       1)  For $m \in I^l_{M}(\sigma)$,  if $\Pi_l(m) \widetilde{W_l}=0$, clearly, $m\in I^{V'}_{M}(\sigma)$.  If  $\Pi_l(m)  \widetilde{W_l} \neq 0$, then there exists an irreducible $N$-module $U_1 \subseteq  \widetilde{W_l} $, such that $\Pi_l(m) U_1 \simeq W$. We can treat $(\pi', V')$ as a subrepresentation of $(\Pi_l, A)$. For every irreducible submodule $\pi'(m_i) W$  of  $\widetilde{W}^{V'}$, $\pi'(m) \pi'(m_i) W \simeq\Pi_l(m) U_1  \simeq W$, or $ \pi'(m) \pi'(m_i) W=0$. So in this case, $m\in  I^{V'}_{M}(\sigma)$.  Conversely, assume $\Pi_l \simeq \oplus_{\pi'\in\Irr(M)} m_{\pi'} \pi'$.  By investigating  the $\sigma$-isotypic components on both sides, we obtain the result.\\
                       2) If $ x\in  I^V_{M}(\sigma)  \setminus I^l_{M}(\sigma)$, and  $\pi(x) \widetilde{W}^V\neq 0$, then there exists an irreducible  $N$-component $U_1 \subseteq \widetilde{W}^V$, such that $xU_1\neq 0$. Hence for any  irreducible component $U'$ of $\widetilde{W_l}$, $xU' \simeq xU_1\simeq W$, or $xU'=0$; this implies that $x\in I^l_{M}(\sigma)$. \\
                       3)  It arises from (2) and Lmm.  \ref{IMS}(4).\\
                       4) $1= \sum_{i=0}^k e^{W_i}$, and $e^{W_i} B\simeq \sigma_i \otimes D(\sigma_i)$, as $N-N$-bimodules.  Notice  that as   right $N$-modules,  $e^{W_i} B=Be^{W_i} \simeq m(\sigma_i) D(\sigma_i)$.  Then the canonical $N$-morphism $e^{W_i}B\otimes_BA \longrightarrow  e^{W_i}A $, implies that $   e^{W_i}A \subseteq \widetilde{W_{i,l}}$.  Moreover $A=\oplus_{i=0}^k e^{W_i}A$. Hence $e^{W_i}A=\widetilde{W_{i,l}}$.  In particular, $e^{W_0}A=\widetilde{W_l}$.  Let us also write  $A=\oplus_{i=0}^k Ae^{W_i}$. For $0\neq i$, $Ae^{W_i}e^{W_0}A=0\subseteq e^{W_0}A$.  For $i=0$, $e^{W_0}=e^{W}$, $Ae^{W}=\oplus_{i=1}^k e^{W_i}Ae^{W} \oplus e^{W}Ae^{W}$, $Ae^{W} e^{W}A=\oplus_{i=1}^k e^{W_i}Ae^{W}A \oplus e^{W}Ae^{W}A$.   By \cite[p.95, corollary b]{Pierce}, $\Hom_A(e^{W}A, e^{W_i}A) \simeq e^{W_i}Ae^{W}$. It implies that for $e^{W_i}ae^{W}\neq 0$, $e^{W_i}ae^{W} (e^{W}A)=e^{W_i}ae^{W}A \neq 0$, and $e^{W_i}ae^{W}A \subseteq \widetilde{W_{i,l}}$.      Therefore the set $\{a\in A\mid ae^{W}A\subseteq e^{W}A\}= e^{W}Ae^{W} \oplus \oplus_{i=1}^k Ae^{W_i} $. \\
                       5) For   $ x\in  I^V_{M}(\sigma)  \setminus I^l_{M}(\sigma)$,  there exists $(\pi', V') \in \mathcal{R}_M(\Ind_{N}^M \sigma)$, such that $x\notin  I^{V'}_{M}(\sigma)$. Then there exists an irreducible  $N$-submodule $W' \subseteq \widetilde{W}^{V'}$, such that $xW'\neq 0$, and $xW' \ncong  W$ as $N$-modules. Then $ xB\otimes_{ B} \widetilde{W}^V \twoheadrightarrow x\mathbb{C}[ I^l_{M}(\sigma)]\otimes_{\mathbb{C} [ I^l_{M}(\sigma)]}\widetilde{W}^V $ as $N$-modules.  Moreover, $ xB\otimes_{ B} \widetilde{W}^V  \simeq xB\otimes_B m(\sigma, V) W \simeq xB\otimes_B m(\sigma, V)W' \simeq  m(\sigma, V) xW'\otimes D(W') \otimes_B W' \simeq  m(\sigma, V) xW'  $, as $N$-modules, where $m(\sigma, V)=\dim \Hom_N(W,V)$.  The remaining  proof is similar to that of Lmm. \ref{MISO}.
                        \end{proof}
          Dually, we have:

                         \begin{lemma}\label{rightm}
               \begin{itemize}
               \item[(1)] $I^r_{M}(D(\sigma))=\cap_{V} I^{V}_{M}(D(\sigma))$,  for   all $(D(\pi), D(V)) \in \mathcal{R}_M(D(\Ind_{N}^M\sigma))$.
               \item[(2)]  $\mathbb{C}[I^r_{M}(D(\sigma))]\subseteq e^{W}Ae^{W} \oplus \oplus_{i=1}^k e^{W_i}A  $.
                \end{itemize}
               \end{lemma}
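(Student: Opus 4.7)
The plan is to mirror the proof of Lemma \ref{leftm} with the roles of left and right systematically interchanged, since the statement of Lemma \ref{rightm} is just the dual assertion for the right regular representation $(\Pi_r,A)$ in place of $(\Pi_l,A)$. The basic ingredients needed for the dualization all sit in Section \ref{Semisimple}: under the semi-simplicity Axiom (IV) and Theorem \ref{semisimplealgebras}, $A_A$ is a semisimple right $A$-module; Lemma \ref{duality} gives the decomposition $A \simeq \oplus_{(\pi',V')\in\Irr(M)} V'\otimes D(V')$ as $M$--$M$-bimodules; and the restriction to $N$ on the right is controlled by the same family of central idempotents $\{e^{W_i}\}$ of $B = \mathbb{C}[N]$ that were used on the left, since each $e^{W_i}$ lies in the centre of $B$.

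For part~(1), I would first identify the $D(\sigma_i)$-isotypic component $\widetilde{W_{i,r}}$ of $(\Pi_r,A)|_N$ explicitly as $Ae^{W_i}$. Indeed, as a right $N$-module, $Be^{W_i} = e^{W_i}B \simeq m(\sigma_i)\,D(\sigma_i)$, and tensoring the canonical right $N$-map $A\otimes_B(Be^{W_i}) \longrightarrow Ae^{W_i}$ shows $Ae^{W_i}\subseteq \widetilde{W_{i,r}}$, after which the decomposition $A = \oplus_i Ae^{W_i}$ forces equality. Having this, one runs through the analogue of Lemma~\ref{leftm}(1): every $(D(\pi'),D(V'))\in\mathcal{R}_M(D(\Ind_N^M\sigma))$ embeds as a right $M$-submodule of $A$ via Lemma \ref{duality}, and preservation of the global $D(\sigma)$-isotypic component $Ae^W$ is equivalent to preservation on each such $D(V')$; this proves the inclusions in both directions.

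For part~(2), I would translate the condition $m\in I^r_M(D(\sigma))$ into the algebraic condition $Ae^W m \subseteq Ae^W$, again using that $\widetilde{W_r}=Ae^W$. Writing $1 = \sum_{j=0}^k e^{W_j}$ on the right, $Ae^W m = \sum_j Ae^W m e^{W_j}$; for each $j\neq 0$ one has $Ae^W m e^{W_j}\subseteq Ae^{W_j}$, whose intersection with $Ae^W$ is zero, so the containment forces $e^W m e^{W_j}=0$ for every $j\geq 1$. Equivalently $m - e^W m e^W = \sum_{i\neq 0}e^{W_i}m \in \oplus_{i=1}^k e^{W_i}A$, giving the desired inclusion $\mathbb{C}[I^r_M(D(\sigma))]\subseteq e^W A e^W \oplus \oplus_{i=1}^k e^{W_i}A$.

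There is no serious obstacle here: the whole argument is a routine transposition of the proof of Lemma \ref{leftm}. The one small point to watch is that $e^W A$ and $Ae^W$ are different subspaces of $A$ in general, so one must be careful to pick $Ae^W$ (not $e^W A$) as the $D(\sigma)$-isotypic component for the right $N$-action — and correspondingly, the ``defect'' in the decomposition lands on the opposite side, which is precisely why the summand $\oplus_{i=1}^k Ae^{W_i}$ of Lemma \ref{leftm}(4) gets replaced by $\oplus_{i=1}^k e^{W_i}A$ in Lemma \ref{rightm}(2).
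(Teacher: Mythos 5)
Your proposal is correct and is essentially the paper's own argument: the paper proves Lemma \ref{rightm} simply "dually" to Lemma \ref{leftm}, and what you write out — identifying $\widetilde{W_{i,r}}=Ae^{W_i}$ via $Be^{W_i}\simeq m(\sigma_i)D(\sigma_i)$ and the direct sum $A=\oplus_i Ae^{W_i}$, then transposing the idempotent computation so the condition $Ae^{W}m\subseteq Ae^{W}$ forces $e^{W}me^{W_j}=0$ for $j\neq 0$ — is exactly that dualization, including the correct observation that the defect summand moves to $\oplus_{i=1}^k e^{W_i}A$.
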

           Let $\widetilde{W_0}$ or $\widetilde{W}$ be the $\sigma\otimes D(\sigma)$-isotypic component of  the left-right regular representation $ (\Pi_l\otimes \Pi_r, A)$ as  $N - N $-bimodues, and $I^{lr}_{M}(\sigma)=\{ m\in M \mid \Pi_l(m)\widetilde{W}\subseteq     \widetilde{W}  ,   \widetilde{W}\Pi_r(m)  \subseteq     \widetilde{W}  \}$.  Then
        $\widetilde{W}=\widetilde{W_{l}}   \cap\widetilde{W_{ r}}$.
         \begin{lemma}\label{threeeq}
        \begin{itemize}
        \item[(1)]   $\C[I^{lr}_M(\sigma)]= \C[ I^l_{M}(\sigma)]  \cap \C[I^r_{M}(D(\sigma))]=\C[ I^l_{M}(\sigma) \cap I^r_{M}(D(\sigma))] $.
        \item[(2)] $e^{W}Ae^{W}+ B \subseteq   \C[I^{lr}_M(\sigma)] \subseteq e^{W}Ae^{W}\oplus (1-e^{W})A(1-e^{W})$
          \end{itemize}
                 \end{lemma}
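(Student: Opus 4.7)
The plan is as follows. For part (1), my approach is to prove the equality of sets $I^{lr}_M(\sigma)=I^l_M(\sigma)\cap I^r_M(D(\sigma))$, after which the algebra identity follows from the elementary observation that $\C[S_1\cap S_2]=\C[S_1]\cap\C[S_2]$ for any subsets $S_1,S_2\subseteq M$. The key point for the set equality is that each of $\widetilde{W_l}=e^WA$, $\widetilde{W_r}=Ae^W$, and $\widetilde{W}=e^WAe^W=e^WA\cap Ae^W$ \emph{contains} the idempotent $e^W$ itself. Consequently, testing each stabilization condition on $e^W$ collapses it to a single algebraic identity. For example $m\widetilde{W}\subseteq\widetilde{W}$ forces $me^W\in e^WAe^W$, which is equivalent to $(1-e^W)me^W=0$; this is precisely the condition cutting out $I^l_M(\sigma)$. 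Dually the right-hand conditions all reduce to $e^Wm(1-e^W)=0$, and so $m\in I^{lr}_M(\sigma)$ iff $m\in I^l_M(\sigma)\cap I^r_M(D(\sigma))$.

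For part (2), the right-hand inclusion is direct: for $m\in I^{lr}_M(\sigma)$ the two identities $(1-e^W)me^W=0$ and $e^Wm(1-e^W)=0$ kill the two off-diagonal Peirce corners, so $m=e^Wme^W+(1-e^W)m(1-e^W)\in e^WAe^W\oplus(1-e^W)A(1-e^W)$; extending linearly gives the asserted inclusion.

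For the left-hand inclusion I treat the two summands separately. The containment $B\subseteq\C[I^{lr}_M(\sigma)]$ reduces to checking $N\subseteq I^{lr}_M(\sigma)$, and this in turn follows from $e^W$ being central in $B$: for $n\in N$ one has $n\cdot e^WAe^W=e^W(nA)e^W\subseteq e^WAe^W$, and dually on the right. For $e^WAe^W\subseteq\C[I^{lr}_M(\sigma)]$ I plan to exploit the $M/N$-grading $A=\bigoplus_{[m]\in M/N}\C[Nm]$, which is well defined because $N$ is centric (so $Nm\cdot Nm'\subseteq Nmm'$) and which respects the $B$-bimodule structure. This reduces the problem to showing, coset by coset, that $e^W\C[Nm]e^W\subseteq\C[I^{lr}_M(\sigma)\cap Nm]$. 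Within each graded piece I would use the semi-simplicity of $B$ and the bi-isotypic decomposition of the $B$-bimodule $\C[Nm]$ to analyze the support of $e^Wme^W=\sum c_{n_1}c_{n_2}\,n_1mn_2$ inside the coset $Nm$.

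The main obstacle will be exactly this last step. Individual monomials $n_1mn_2$ arising in $e^Wme^W$ need not lie in $I^{lr}_M(\sigma)$, so the containment can only hold because of cancellations engineered by $e^W$ being the central block projector of $B$. Making these cancellations precise — showing that after summation the surviving support is forced into the inertia submonoid — is the technical heart, and it will require a careful analysis of how the left and right translation actions of $N$ interact on each coset $Nm$ under the centric and semi-simple hypotheses. I expect that this argument also explains why the containment $e^WAe^W\subseteq\C[I^{lr}_M(\sigma)]$ is generally strict, which in turn is what prevents the sandwich in part (2) from being an equality in the monoid setting.
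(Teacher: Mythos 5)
Your part (1) and the two outer pieces of part (2) are correct, and in fact cleaner than the paper's own treatment. Once one knows $\widetilde{W_l}=e^WA$, $\widetilde{W_r}=Ae^W$ (this is established in the proof of Lemma \ref{leftm}(4)) and $\widetilde{W}=e^WA\cap Ae^W=e^WAe^W$, all three of these spaces contain $e^W$, and each stabilization condition is equivalent to the vanishing of an off-diagonal Peirce corner: $(1-e^W)me^W=0$ on the left, $e^Wm(1-e^W)=0$ on the right. This gives the set equality $I^{lr}_M(\sigma)=I^l_M(\sigma)\cap I^r_M(D(\sigma))$, the inclusion $\C[I^{lr}_M(\sigma)]\subseteq e^WAe^W\oplus(1-e^W)A(1-e^W)$, and $N\subseteq I^{lr}_M(\sigma)$ (centrality of $e^W$ in $B$) in a few lines, whereas the paper argues these points through isotypic $N$-submodules and Lemma \ref{irr}.

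The genuine gap is the remaining inclusion $e^WAe^W\subseteq\C[I^{lr}_M(\sigma)]$, which is the only substantive assertion of the lemma, and which you explicitly leave open ("making these cancellations precise ... is the technical heart"). As it stands this is a plan, not a proof, and the plan has two problems. First, the claimed decomposition $A=\bigoplus_{[m]\in M/N}\C[Nm]$ is false: the subsets $Nm$ are nested rather than disjoint (if $m'\in Nm$ lies in a lower $\mathcal{L}_N$-class then $Nm'\subsetneq Nm$), so $A=\sum_m\C[Nm]$ is only a filtered sum; the honest vector-space decomposition of $M$ is by the classes $G_{m'}^N$, and on those the $N$-$N$-action is filtered, not graded. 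Second, analysing the monomial support of $e^Wme^W$ and hoping for cancellations is not how the containment is obtained, and it is unlikely to go through directly. The paper's argument avoids any such analysis: (i) by your own Peirce characterization, $I^{lr}_M(\sigma)$ is stable under left and right multiplication by $N$, hence is a union of $\mathcal{J}_N$-classes and $\C[I^{lr}_M(\sigma)]$ is an $N$-$N$-sub-bimodule of $A$; (ii) each class carries a theta $N$-$N$-bimodule $\C[G_{m'}^N]$ (Corollary \ref{thetabi}, Lemma \ref{SImilarly}), and whenever this bimodule contains $W\otimes D(W)$ the entire class is shown to lie in $I^{lr}_M(\sigma)$; (iii) since $B$ is semisimple, isotypic components are additive along a principal series of $N$-$N$-bisets, so the $W\otimes D(W)$-isotypic component of $A$ is carried precisely by those classes and therefore lies inside the sub-bimodule $\C[I^{lr}_M(\sigma)]$, i.e.\ $e^WAe^W\subseteq\C[I^{lr}_M(\sigma)]$. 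If you want to keep your framework, replace the coset-by-coset support analysis by these three representation-theoretic steps; without them (or an equivalent), the left-hand inclusion in (2) remains unproven.
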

         \begin{proof}
              1) The second   equality is clearly right.  Since   $\widetilde{W}=\widetilde{W_{l}}   \cap\widetilde{W_{ r}}$,  $I^l_{M}(\sigma) \cap I^r_{M}(D(\sigma))
              \subseteq       I^{lr}_M(\sigma)$. Conversely,   $e^W Ae^W= \oplus W'\otimes D(W')$, where $W'\simeq W$ as $N$-modules. If $m  \in I^{lr}_M(\sigma)$, $\Pi_l(m)  W' \simeq W$ or $\Pi_l(m)  W'=0$.
                        Hence   $ \Pi_l(m) e^W Ae^W  \subseteq     e^W Ae^W$.  Dually,  $ e^W Ae^W\Pi_r(m)\subseteq     e^W Ae^W$.

               2) Let us write  $A=\oplus_{i=0}^k e^{W_i}A$, $W_0=W$, and $Ae^W=e^WAe^W + \sum_{i=1}^k e^{W_i} Ae^W$.         If there exists $e^{W_i} me^W \neq 0$, then  $0\neq e^{W_i} me^We^WA=e^{W_i} me^W [\oplus_{V\in \mathcal{R}_M(\Ind_N^M(W))}e^W(V\otimes D(V))]$. Hence $\exists V$, such that $e^{W_i}  me^W V\otimes D(V) \neq 0$,  $e^{W_i}  me^W V\neq0$. It implies that $e^{W_i} me^W e^W Ae^W \neq 0$. But   $e^{W_i} me^W e^W Ae^W\subseteq e^{W_i}Ae^W $, contradicting to $m \in I^{lr}_M(\sigma)$.    Hence $e^{W_i} me^W =0$, for $1\leq i\leq k$. Dually,  $ e^Wme^{W_i} =0$, for $1\leq i\leq k$.  Thus   the second inclusion is  right.
        Clearly,  $B \subseteq \C[I^{lr}_M(\sigma)]$. Note that  $A\simeq \oplus \C[G_m^N]$, as $N-N$-bimodules, and $A\simeq \oplus V\otimes D(V)$, as $M-M$-bimodules. Hence we can gather all $G_m^N$, such that $\C[G_m^N]$ contains $W\otimes D(W)$ as $N-N$-bimodules. By Lmm.\ref{SImilarly}, for such $m$, the projection of $\C[mN] $ lies in $e^{W}Ae^{W}\oplus (1-e^{W})A(1-e^{W})$; thus $m\in I^{lr}_M(\sigma)$, and $G_m^N \subseteq I^{lr}_M(\sigma)$. Hence as $N-N$-bimodules, $\C[I^{lr}_M(\sigma)] $ contains   $W\otimes D(W)$-isotypic component of $A$, i.e. $e^{W}Ae^{W} \subseteq \C[I^{lr}_M(\sigma)]$.
                                            \end{proof}

              By the above lemma,  $m\in I^{lr}_M(\sigma)$ iff $  m \in e^{W}Ae^{W}\oplus (1-e^{W})A(1-e^{W})$;  this condition  is  equivalent to say that $  \C[mN] \subseteq  e^{W}Ae^{W}\oplus (1-e^{W})A(1-e^{W})$. Hence $m \in I^{lr}_M(\sigma)$ implies $mN \subseteq   I^{lr}_M(\sigma)$.
              \begin{corollary}
              $E(M)\subseteq I^{lr}_M(\sigma)$.
              \end{corollary}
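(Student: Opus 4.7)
The plan is to deduce this corollary quickly by combining three earlier results: the fact that idempotents always stabilize isotypic components of individual irreducible representations, the characterization of $I^l_M(\sigma)$ as an intersection over irreducible constituents, and the identification of $I^{lr}_M(\sigma)$ with $I^l_M(\sigma)\cap I^r_M(D(\sigma))$.

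First, I would fix $e\in E(M)$ and invoke Lemma \ref{IMS}(7), which states that $E(M)\subseteq I^V_M(\sigma)$ for every $(\pi',V')\in \Rep_f(M)$ containing $\sigma$ upon restriction to $N$. The key reason is simply Lemma \ref{irr}(4): for any irreducible $N$-submodule $W_1$ of the $\sigma$-isotypic subspace $\widetilde{W}^{V'}$, the piece $eW_1$ is either $0$ or again isomorphic to $W_1\simeq W$, hence $e\widetilde{W}^{V'}\subseteq\widetilde{W}^{V'}$. Applying Lemma \ref{leftm}(1), which expresses $I^l_M(\sigma)=\bigcap_{V'\in\mathcal{R}_M(\Ind_N^M\sigma)}I^{V'}_M(\sigma)$, one obtains $E(M)\subseteq I^l_M(\sigma)$.

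Next, by the same argument applied to the right regular representation $(\Pi_r,A)$ and the contragredient $D(\sigma)$ — where $e^*=e$ still acts as an idempotent and the dual of Lemma \ref{irr}(4) (which holds since $N$ is centric and the roles of left and right are symmetric via the involution $\ast$ available on the inverse monoid $N$, cf.\ Corollary \ref{INVER}) — I get $E(M)\subseteq I^r_M(D(\sigma))$. Finally, Lemma \ref{threeeq}(1) gives $I^{lr}_M(\sigma)=I^l_M(\sigma)\cap I^r_M(D(\sigma))$, so intersecting the two containments yields $E(M)\subseteq I^{lr}_M(\sigma)$, as desired.

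The only potentially subtle point, and the one step I would double-check carefully, is the dual statement $E(M)\subseteq I^r_M(D(\sigma))$: one must verify that the analogue of Lemma \ref{IMS}(7) for right $M$-modules really does go through with the same proof, i.e.\ that for every irreducible right $N$-submodule $W_1'$ of the $D(\sigma)$-isotypic subspace of a right module, $W_1' e$ is either $0$ or $\simeq W_1'$. This is the mirror image of Lemma \ref{irr}(4) and should follow by the exact same argument (using $ne=en'$ to show the kernel and image are $N$-stable), but it is worth writing out explicitly since most of the text has worked on the left. Once this is confirmed, the corollary is immediate.
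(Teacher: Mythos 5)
Your proposal is correct and follows essentially the same route as the paper, whose proof is exactly the combination of Lemma \ref{IMS}(7), Lemma \ref{leftm}(1), Lemma \ref{rightm}(1) (the dual statement you flag, which the paper obtains ``dually'') and Lemma \ref{threeeq}(1). Your extra care in spelling out the right-module analogue of Lemma \ref{irr}(4) is a reasonable elaboration of what the paper leaves implicit, but it is not a different argument.
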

              \begin{proof}
              This can deduce from Lmms. \ref{IMS}(7), \ref{leftm}(1), \ref{rightm}(1) and \ref{threeeq}(1).
              \end{proof}
               We can not  ensure  that $I^{lr}_M(\sigma) $   is a semi-simple monoid, but $e^W \C[ I^{lr}_M(\sigma)]= \C[I^{lr}_M(\sigma)] e^W= e^{W}Ae^{W}$.  Hence the results of \cite[pp.370-372, Props.  2.14, 2.15]{Ri2} also hold for  $\C[I^{lr}_M(\sigma)]$. Here we shall   give a much detailed discussion. By Lmm.\ref{EQInd}, $\ind_{N}^M \sigma \simeq \Ind_{N}^M \sigma$.
                                                                         \begin{lemma}\label{semisi1}
              \begin{itemize}
              \item[(1)]   For $(\pi, V) \in \mathcal{R}_M(\ind_{N}^M \sigma)$, $\widetilde{W}^V$ is also an irreducible representation of $I^{lr}_M(\sigma)$.
              \item[(2)]  For $(\pi, V) \in \mathcal{R}_M(\ind_{N}^M \sigma)$, $V  \simeq \ind_{I^{lr}_M(\sigma)}^M \widetilde{W}^V  $,  as $M$-modules.
            \item[(3)] There exists a bijective map  $\ind_{I^{lr}_M(\sigma)}^M: \mathcal{R}_{I^{lr}_M(\sigma)}(\ind^{I^{lr}_M(\sigma)}_N\sigma) \longrightarrow \mathcal{R}_M(\ind_{N}^M \sigma)$.
            \item[(4)] For $(\pi, V) \in \mathcal{R}_M(\ind_{N}^M \sigma)$, $V  \simeq   \Ind_{I^{lr}_M(\sigma)}^M \widetilde{W}^V  $,  as $M$-modules.
                                                        \end{itemize}
              \end{lemma}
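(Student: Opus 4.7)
The key input is Lemma \ref{threeeq}(2), giving $e^W A e^W \subseteq \C[I^{lr}_M(\sigma)] \subseteq e^W A e^W \oplus (1-e^W)A(1-e^W)$, together with the centrality of $e^W$ in $\C[I^{lr}_M(\sigma)]$ (both direct summands commute with $e^W$). The strategy is a corner-idempotent Morita argument at $e^W$, viewing $\widetilde{W}^V$ as $e^W V$.

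For (1), the summand $(1-e^W)\C[I^{lr}_M(\sigma)](1-e^W)$ annihilates $\widetilde{W}^V = e^W V$, so the $\C[I^{lr}_M(\sigma)]$-action factors through $e^W A e^W$; for any $0 \neq w \in e^W V$ the $A$-irreducibility of $V$ yields $(e^W A e^W)w = e^W A w = e^W V$, so $\widetilde{W}^V$ is simple over $e^W A e^W$ and hence over $\C[I^{lr}_M(\sigma)]$. For (2), since $e^W \in \C[I^{lr}_M(\sigma)]$ acts as the identity on $\widetilde{W}^V$,
\[
\ind_{I^{lr}_M(\sigma)}^M \widetilde{W}^V = A \otimes_{\C[I^{lr}_M(\sigma)]} e^W V = Ae^W \otimes_{e^W A e^W} e^W V \simeq V,
\]
the final isomorphism being the classical Morita identity for simple $A$-modules with $e^W V \neq 0$; the middle equality uses that the right $\C[I^{lr}_M(\sigma)]$-action on $Ae^W$ kills the complementary summand $(1-e^W)A(1-e^W)$.

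For (3), the same corner-ring argument shows $\ind_{I^{lr}_M(\sigma)}^M U \simeq Ae^W \otimes_{e^W A e^W} U$ is a simple $A$-module whenever $U$ is a simple $\C[I^{lr}_M(\sigma)]$-submodule of $\ind_N^{I^{lr}_M(\sigma)}\sigma$, since $U$ is also simple over $e^W A e^W$ (the $(1-e^W)$-part of $\C[I^{lr}_M(\sigma)]$ annihilates the $\sigma$-isotypic $U$). The round trips $V \mapsto e^W V \mapsto \ind_{I^{lr}_M(\sigma)}^M(e^W V) \simeq V$ and $U \mapsto \ind_{I^{lr}_M(\sigma)}^M U \mapsto e^W \ind_{I^{lr}_M(\sigma)}^M U \simeq e^W A e^W \otimes_{e^W A e^W} U \simeq U$ yield the bijection.

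For (4), coadjoint Frobenius reciprocity (which does not require semi-simplicity of $I^{lr}_M(\sigma)$) produces a non-zero map $V \to \Ind_{I^{lr}_M(\sigma)}^M \widetilde{W}^V$ from the $\C[I^{lr}_M(\sigma)]$-equivariant projection $V \twoheadrightarrow e^W V$, which is injective by irreducibility of $V$. For surjectivity one computes
\[
\Ind_{I^{lr}_M(\sigma)}^M \widetilde{W}^V = \Hom_{\C[I^{lr}_M(\sigma)]}(A, e^W V) \simeq \Hom_{e^W A e^W}(e^W A, e^W V) \simeq V,
\]
where the first isomorphism is restriction to $e^W A$ (any such $\phi$ vanishes on $(1-e^W)A$ and any $e^W A e^W$-linear $\psi: e^W A \to e^W V$ extends uniquely via $\phi(a) := \psi(e^W a)$), and the second is the Hom-side Morita isomorphism over the semi-simple corner $e^W A e^W$. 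The main obstacle is precisely here: Lemma \ref{EQInd} does not apply directly since $I^{lr}_M(\sigma)$ need not be semi-simple, so the identification of $\ind$ with $\Ind$ cannot be imported as a black box. The resolution is that the effective action on $\widetilde{W}^V$ factors through the semi-simple corner $e^W A e^W$, where Morita duality between $Ae^W$ and $e^W A$ provides the needed translation.
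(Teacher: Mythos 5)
Your argument is correct, but it runs on a genuinely different track from the paper's, even though both hinge on the same key inputs: the sandwich of Lemma \ref{threeeq}(2), $e^WAe^W \subseteq \C[I^{lr}_M(\sigma)] \subseteq e^WAe^W\oplus(1-e^W)A(1-e^W)$, and the semi-simplicity of $A=\C[M]$. You treat all four parts uniformly by the corner-idempotent formalism at the central idempotent $e^W$ of $\C[I^{lr}_M(\sigma)]$: simplicity of $e^WV$ over $e^WAe^W$ for (1), the identities $A\otimes_{\C[I^{lr}_M(\sigma)]}e^WV\simeq Ae^W\otimes_{e^WAe^W}e^WV\simeq V$ for (2)--(3), and $\Hom_{\C[I^{lr}_M(\sigma)]}(A,e^WV)\simeq\Hom_{e^WAe^W}(e^WA,e^WV)\simeq V$ for (4). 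The paper instead proves (1) through the monoid-level Lemmas \ref{leftm}(3) and \ref{IMS}(4) (showing $m\widetilde{W}^V=0$ for $m\in I^l_M(\sigma)\setminus I^{lr}_M(\sigma)$), proves (2) and (3) by computing the $W$-isotypic component of the induced module ($1\otimes\widetilde{W}^V$) and arguing that every irreducible constituent must contain $\widetilde{W}^V$, and proves (4) via Steinberg's isomorphism $e^W\Ind_{I^{lr}_M(\sigma)}^M\widetilde{W}^V\simeq\Hom_{\C[I^{lr}_M(\sigma)]}(Ae^W,\widetilde{W}^V)$ combined with the bimodule decomposition $e^WAe^W\simeq\oplus_{V'}\widetilde{W}^{V'}\otimes D(\widetilde{W}^{V'})$, which depends on part (3). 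What your route buys: it is shorter and uniform, it makes (4) independent of (3), and it isolates that only semi-simplicity of $\C[M]$ (never of $\C[I^{lr}_M(\sigma)]$) is used, which is exactly why Lemma \ref{EQInd} cannot be invoked as a black box; what the paper's route buys is that it stays within the already-developed monoid-level machinery. Two points to tighten in your write-up: in (3) the set $\mathcal{R}_{I^{lr}_M(\sigma)}(\ind_N^{I^{lr}_M(\sigma)}\sigma)$ is defined via irreducible quotients, not submodules, and your parenthetical claim that the $(1-e^W)$-part annihilates $U$ presupposes that $U$ is $\sigma$-isotypic over $N$; both are settled by the observation (the paper's appeal to Lemma \ref{threeeq}) that $\C[I^{lr}_M(\sigma)]e^W=e^WAe^W$, so $\ind_N^{I^{lr}_M(\sigma)}\sigma$ is a module over the semi-simple corner, hence semi-simple and $\sigma$-isotypic over $N$ -- this should be stated explicitly, but it is supplied by your own corner observation, so it is a gap of exposition rather than of substance.
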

              \begin{proof}
              1)  If $m\in I^l_{M}(\sigma) \setminus   I^{lr}_M(\sigma)$, then as $N-N$-bimodules, $\C[mN]$ contains $W\otimes D(W_i)$, for some $1\leq i\leq k$; it contains  no more $W\otimes D(W)$ component. Hence  $m \widetilde{W}^V=0$.  By Lmm.\ref{leftm}(3),  $\widetilde{W}^V$ is also an  irreducible representation of $I^{lr}_M(\sigma)$. \\
            2) $ \Hom_M(\ind_{I^{lr}_M(\sigma)}^M \widetilde{W}^V, V) \simeq  \Hom_{I^{lr}_M(\sigma)}( \widetilde{W}^V, V) \hookrightarrow \Hom_{N}( \widetilde{W}^V, V)$.  Moreover the $W$-isotypic component $e^W \ind_{I^{lr}_M(\sigma)}^M \widetilde{W}^V =1\otimes  \widetilde{W}^V \simeq \widetilde{W}^V $,   which implies  $V  \simeq \ind_{I^{lr}_M(\sigma)}^M \widetilde{W}^V$ because any  irreducible component of $\ind_{I^{lr}_M(\sigma)}^M \widetilde{W}^V$ needs to contain $ \widetilde{W}^V$ as $I^{lr}_M(\sigma)$-modules.
          \\
      3) For $(\pi, V) \in \mathcal{R}_M(\ind_{N}^M \sigma)$,  part (1) shows that $\widetilde{W}^V \in \Irr (I^{lr}_M(\sigma))$.  Moreover, $\Hom_{I^{lr}_M(\sigma)} (\ind^{I^{lr}_M(\sigma)}_N\sigma,  \widetilde{W}^V) \simeq \Hom_{N} (W, \widetilde{W}^V)\neq 0$, $\widetilde{W}^V \in  \mathcal{R}_{I^{lr}_M(\sigma)}(\ind^{I^{lr}_M(\sigma)}_N\sigma)$. Conversely, for any  $\widetilde{W}^{\ast} \in \mathcal{R}_{I^{lr}_M(\sigma)}(\ind^{I^{lr}_M(\sigma)}_N\sigma) $, $\widetilde{W}^{\ast}|_{N}$ only contains  $\sigma$-isotypic component by Lmm.\ref{threeeq}. Then  the proof of (2) also shows that   $\ind_{I^{lr}_M(\sigma)}^M \widetilde{W}^{\ast}$ is irreducible, and $\ind_{I^{lr}_M(\sigma)}^M \widetilde{W}^{\ast}  \in \mathcal{R}_M(\ind_{N}^M \sigma)$. \\
      4) By \cite[p.43,Prop.4.4]{Stein}, $e^W \Ind_{I^{lr}_M(\sigma)}^M \widetilde{W}^V \simeq \Hom_{\C[I^{lr}_M(\sigma)]}(Ae^W, \widetilde{W}^V)$. For any $f \in \Hom_{\C[I^{lr}_M(\sigma)]}(Ae^W, \widetilde{W}^V)$, $f(e^W a)=f(a)$, which means $f((1-e^W)a)=0$. Hence $\Hom_{\C[I^{lr}_M(\sigma)]}(Ae^W, \widetilde{W}^V) \simeq \Hom_{\C[I^{lr}_M(\sigma)]}(e^W Ae^W,   \widetilde{W}^V)$.  Let us write $A=\oplus V'\otimes D(V')$ as $M-M$-bimodules. By part (3),   $e^W Ae^W \simeq  \oplus \widetilde{W}^{V'} \otimes D( \widetilde{W}^{V'})$ as $V'$ runs through all  elements in  $\mathcal{R}_M(\ind_{N}^M \sigma)$. Hence  $\Hom_{\C[I^{lr}_M(\sigma)]}(e^W Ae^W,   \widetilde{W}^V) \simeq  \widetilde{W}^V$ as left $\C[I^{lr}_M(\sigma)]$-modules.    Then  the $W$-isotypic component of  $\Ind_{I^{lr}_M(\sigma)}^M \widetilde{W}^V$  is isomorphic to $\widetilde{W}^V$. By Frobenius reciprocity, any  irreducible component of $\Ind_{I^{lr}_M(\sigma)}^M \widetilde{W}^V$ needs to contain $ \widetilde{W}^V$ as $I^{lr}_M(\sigma)$-modules; this  implies that $V\simeq  \Ind_{I^{lr}_M(\sigma)}^M \widetilde{W}^V  $.
                 \end{proof}
\begin{lemma}\label{semisi2}
\begin{itemize}
\item[(1)] $\ind^{I^{lr}_M(\sigma)}_N\sigma  $ is a semi-simple representation.
\item[(2)] $\ind^{I^{lr}_M(\sigma)}_N\sigma \simeq \Ind^{I^{lr}_M(\sigma)}_N\sigma$.
\end{itemize}
\end{lemma}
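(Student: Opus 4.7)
The plan is to leverage the structural containment from Lemma \ref{threeeq}(2), namely $\mathbb{C}[I^{lr}_M(\sigma)] \subseteq e^W A e^W \oplus (1-e^W)A(1-e^W)$, to show that $e^W$ is a central idempotent in $\mathbb{C}[I^{lr}_M(\sigma)]$. Indeed, writing $H = I^{lr}_M(\sigma)$, for any $h \in H$ the containment forces $e^W h = e^W h e^W = h e^W$, giving an algebra decomposition $\mathbb{C}[H] = e^W A e^W \oplus (1-e^W)\mathbb{C}[H](1-e^W)$. The first summand $e^W A e^W$ is a corner of the semi-simple algebra $A$ (by Axiom IV and Theorem \ref{semisimplealgebras}), hence is itself semi-simple.

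For part (1), I will check that $\ind^H_N \sigma = \mathbb{C}[H] \otimes_B W$ is actually an $e^W A e^W$-module. The key computation is $e^W(h \otimes w) = h e^W \otimes w = h \otimes e^W w = h \otimes w$ (using the centrality of $e^W$ in $\mathbb{C}[H]$, and the fact that $e^W$ acts as identity on $W = e^W W$); dually $(1-e^W)$ acts as zero. Hence the $\mathbb{C}[H]$-action factors through $\mathbb{C}[H]/(1-e^W)\mathbb{C}[H] \simeq e^W A e^W$, and semi-simplicity of $e^W A e^W$ forces semi-simplicity of $\ind^H_N \sigma$.

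For part (2), the same calculation, applied to $f \in \Hom_B(\mathbb{C}[H], W) = \Ind^H_N \sigma$ with the $H$-action $(h \cdot f)(h') = f(h'h)$, shows that $\Ind^H_N \sigma$ is also an $e^W A e^W$-module, hence semi-simple. To identify them, I will compute multiplicities against each simple $e^W A e^W$-module, which by the bimodule decomposition $e^W A e^W \simeq \oplus_V \widetilde{W}^V \otimes D(\widetilde{W}^V)$ (running over $V \in \mathcal{R}_M(\ind_N^M \sigma)$) is precisely $\{\widetilde{W}^V\}$ (compatible with Lemma \ref{semisi1}(3)). By the left-adjoint Frobenius reciprocity for $\ind$ and the right-adjoint property for $\Ind$,
\[
\dim \Hom_H(\ind^H_N \sigma, \widetilde{W}^V) = \dim \Hom_N(W, \widetilde{W}^V) = \dim \Hom_N(\widetilde{W}^V, W) = \dim \Hom_H(\widetilde{W}^V, \Ind^H_N \sigma),
\]
where the middle equality uses that $\widetilde{W}^V|_N$ is $\sigma$-isotypic and $\sigma$ is absolutely irreducible. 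Since both modules are semi-simple over $e^W A e^W$ with the same simple constituents and matching multiplicities, they must be isomorphic.

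The main obstacle I anticipate is keeping the left/right actions straight when verifying the centrality of $e^W$ in $\mathbb{C}[H]$ and the resulting factorization properties for $\ind$ and $\Ind$; in particular, one must carefully confirm that the $\mathbb{C}[H]$-action on $\Ind^H_N \sigma$ (where the $H$-action is by right-translation on arguments) interacts correctly with the central idempotent. The reliance on $e^W$ being two-sided central in $\mathbb{C}[H]$ (not merely in $B$) is crucial, and this is precisely the content of Lemma \ref{threeeq}(2).
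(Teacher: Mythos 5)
Your proof is correct and follows essentially the same route as the paper: both reduce everything to the semisimple corner $e^WAe^W$ via Lemma \ref{threeeq}(2) and then identify $\ind^{I^{lr}_M(\sigma)}_N\sigma$ with $\Ind^{I^{lr}_M(\sigma)}_N\sigma$ using Frobenius reciprocity together with the decomposition $e^WAe^W\simeq \oplus_{V}\widetilde{W}^{V}\otimes D(\widetilde{W}^{V})$. The only cosmetic differences are that you obtain the factoring of the action through $e^WAe^W$ from the centrality of $e^W$ in $\C[I^{lr}_M(\sigma)]$ (the paper instead kills $p_2(\C[I^{lr}_M(\sigma)])\otimes_{\C[N]}W$ by a right $N$-isotypic argument), and in part (2) you match multiplicities via the two adjunctions rather than computing $\Hom_{\C[N]}(\C[I^{lr}_M(\sigma)],\sigma)$ directly.
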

   \begin{proof}
   1)  Let $p_2$ resp. $p_1$  be the projection from $A$ to $(1-e^W)A(1-e^W)$ resp. $e^W A e^W$.  Since $p_1(\C[I^{lr}_M(\sigma)])=e^WAe^W$,  $\C[I^{lr}_M(\sigma)] \simeq e^WAe^W + p_2(\C[I^{lr}_M(\sigma)])$, as  $N-N$-modules. Here, $p_2(\C[I^{lr}_M(\sigma)])$ can not   contain $D(\sigma)$-isotypic component as right $N$-module.  Hence $\ind^{I^{lr}_M(\sigma)}_N\sigma \simeq  e^WAe^W \otimes_{\C[N]} \sigma$. The action of $ I^{lr}_M(\sigma)$ on  $ e^WAe^W \otimes_{\C[N]} \sigma $ factors through $p_1$.  So it is a semi-simple representation. \\
   2) If  $\ind^{I^{lr}_M(\sigma)}_N\sigma \simeq \sum_{i=1}^l n_i \widetilde{\sigma}^i$,  for $ \widetilde{\sigma}^i \in \Irr(I^{lr}_M(\sigma))$, then by Frobenius reciprocity, $m_{N}(\sigma, \widetilde{\sigma}^i )=m_{I^{lr}_M(\sigma)}(\ind^{I^{lr}_M(\sigma)}_N\sigma, \widetilde{\sigma}^i)=n_i$.  Hence  $\Ind^{I^{lr}_M(\sigma)}_N\sigma \simeq \Hom_{\C[N]}(\C[I^{lr}_M(\sigma)], \sigma) \simeq \Hom_{\C[N]}(e^W\C[I^{lr}_M(\sigma)], \sigma)   \simeq   \Hom_{\C[N]}(e^WAe^W, \sigma) \simeq  \Hom_{\C[N]}(\sum_{i=1}^l \widetilde{\sigma}^i \otimes D(\widetilde{\sigma}^i ), \sigma)  \simeq  \sum_{i=1}^l n_i\widetilde{\sigma}^i \simeq \ind^{I^{lr}_M(\sigma)}_N\sigma$.
     \end{proof}

                                                         \subsection{Clifford-Mackey-Rieffel theory II}    In this subsection, we will interpret the second part of Clifford's theory  for semi-simple centric  monoid case as done for normal  group case  in the pages 372-373 of \cite{Ri2} or in the paper \cite{W}.

             \subsubsection{}    For $m\in I^{lr}_M(\sigma)$,    $G_m^N \subseteq  I^{lr}_M(\sigma)$.
                              By Lmm.\ref{threeeq},  if  $\C[G_m^N] \otimes_N W\neq 0$, then  $\C[G_m^N] \otimes_N W\simeq W$ as $N$-modules.          Let $J_M^{0}(\sigma) =\{ m\in  I^{lr}_M(\sigma)  \mid\C[G_m^N] \otimes_N W=0\}$, $J_M^{1}(\sigma) =\{ m\in  I^{lr}_M(\sigma)  \mid\C[G_m^N] \otimes_N W  \simeq W\}$.

                                     \begin{lemma}
    If $m\in J_M^i(\sigma)$, then $G_m^N \subseteq J_M^i(\sigma)$.
 \end{lemma}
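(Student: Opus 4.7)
The plan is to verify, for each $m'\in G_m^N$, the two clauses defining $J_M^i(\sigma)$: namely that $m'\in I^{lr}_M(\sigma)$ and that $\C[G_{m'}^N]\otimes_N W$ carries the same $N$-module class as $\C[G_m^N]\otimes_N W$.

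For the first clause, the remark immediately after Lemma \ref{threeeq} states that $m\in I^{lr}_M(\sigma)$ forces $mN\subseteq I^{lr}_M(\sigma)$ (because membership in $I^{lr}_M(\sigma)$ only depends on the projection of $\C[mN]$ into $e^WAe^W\oplus(1-e^W)A(1-e^W)$). Under Axiom III, Remark \ref{threeeqt} tells us that $G_m^N = R_m^N\subseteq mN$, so $G_m^N\subseteq mN\subseteq I^{lr}_M(\sigma)$, giving $m'\in I^{lr}_M(\sigma)$ for free.

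For the second clause, I would exploit the fact that $m'\in G_m^N$ means $Nm'=Nm$ and $m'N=mN$, hence $Nm'N=NmN$. Lemma \ref{BSteinberg}(6) then supplies a monoid isomorphism $\varphi^1_2\colon (N_m,\circ_m)\stackrel{\sim}{\to}(N_{m'},\circ_{m'})$ restricting to $G_m^N\simeq G_{m'}^N$, and Lemma \ref{isoLR} (with $K=N$, applied in the centric setting, cf. also Lemma \ref{isoLRtt}) upgrades this to an $N$-$G_m^N$-bimodule isomorphism $\C[L_m^N]\simeq \C[L_{m'}^N]$. Since Remark \ref{threeeqt} gives $L_m^N=G_m^N$ and $L_{m'}^N=G_{m'}^N$, and since Corollary \ref{thetabi} places an $N$-$N$-bimodule structure on $\C[G_m^N]$ compatible with these identifications (via $\iota_l,\iota_r$ of Subsection \ref{GnN}), we conclude $\C[G_m^N]\simeq \C[G_{m'}^N]$ as $N$-$N$-bimodules. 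Tensoring with $W$ over $N$ preserves this isomorphism, so
\[
\C[G_{m'}^N]\otimes_N W \;\simeq\; \C[G_m^N]\otimes_N W
\]
as left $N$-modules; hence one side vanishes iff the other does, and one side is isomorphic to $W$ iff the other is. This places $m'$ in the same $J_M^i(\sigma)$ as $m$.

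The only real friction is verifying that the right $N$-action on $\C[G_{m'}^N]$ transported through $\varphi^1_2$ agrees with the native right $N$-action on $\C[G_m^N]$. In the centric case the witnesses $n_l^{(21)},n_r^{(21)}\in N$ expressing $m'=n_l^{(21)}m n_r^{(21)}$ can be chosen so that right-multiplication by $n_r^{(21)}$ intertwines the two right $N$-actions, and this unwinds directly from Corollary \ref{thetabi} (where the right $N$-action is defined via the homomorphism $\iota_r$ into $G_m^{N\ast}$). I expect this compatibility check to be the only delicate step; everything else is a direct appeal to results already established in Section \ref{rela}.
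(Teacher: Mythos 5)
Your argument is correct and essentially the paper's: the paper records $G_m^N\subseteq I^{lr}_M(\sigma)$ just before defining $J_M^i(\sigma)$ and then deduces the lemma from the $N$-$N$-bimodule isomorphism $\C[G_{m'}^N]\simeq \C[G_m^N]$ supplied by Lemma \ref{isoLR}, which are exactly the two steps you carry out. The compatibility check you flag as the delicate point is in fact vacuous: $m'\in G_m^N$ gives $Nm'=Nm$ and $m'N=mN$, hence $L_{m'}^N=L_m^N$, $R_{m'}^N=R_m^N$, so $G_{m'}^N=G_m^N$ as subsets of $M$ and the two $N$-$N$-bimodules (with the actions of Corollary \ref{thetabi}) coincide on the nose, no transport by $\varphi^1_2$ being needed.
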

             \begin{proof}
              For $m_1 \in G_m^N$, by Lmm.\ref{isoLR}, $\C[G_{m_1}^N] \simeq \C[G_{m}^N] $ as $N-N$-bimodules. So the result holds.
                 \end{proof}

                             \begin{lemma}
            For $m   \in J_M^1(\sigma)$, $m^{[-1]} G_m^N=G_m^N m^{[-1]}$.
                        \end{lemma}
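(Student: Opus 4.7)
The plan is to mirror the proof of Lemma \ref{symm} for the idempotent case: produce a $\C$-linear bijection $\mathcal{A}\colon\C[G_m^N]\to\C[G_m^N]$ that intertwines the two $N$-actions on $\C[G_m^N]$, namely left multiplication by $\iota_l(n)$ and left multiplication by $\iota_r(n)$. Once such an $\mathcal{A}$ is available, evaluating the identity $\mathcal{A}\circ\iota_l(n)=\iota_r(n)\circ\mathcal{A}$ at the identity $m\in G_m^N$ and invoking the bijectivity of $\mathcal{A}$ shows at once that $\iota_l(n)=0\Longleftrightarrow\iota_r(n)=0$, which is precisely the statement $G_m^N m^{[-1]}=m^{[-1]}G_m^N$.

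First I would set up the $N\otimes N$-bimodule picture. Because $\C[M]$ is semi-simple, Theorem \ref{theta4} gives semi-simplicity of $\C[N]$, and by Lemma \ref{ss} the algebra $\C[N]\otimes_\C\C[N]^o$ is again semi-simple, so every $N$-$N$-bimodule splits into simple bimodules $\tau\boxtimes D(\rho)$. Applied to $\C[G_m^N]$ (which, since $G_m^N\subseteq I^{lr}_M(\sigma)$ by the preceding lemma, is a theta $N$-$N$-bimodule by Corollary \ref{thetabi}), this decomposition exists. On the other hand $\C[G_m^N]$ is also the regular $G_m^N$-$G_m^N$-bimodule $\bigoplus_{V\in\Irr(G_m^N)}V\otimes_\C D(V)$, inflated to an $N$-$N$-bimodule through the monoid homomorphisms $\iota_l$, $\iota_r$ of Lemma \ref{homo} (the proof of which extends to arbitrary $m$ as remarked after Lemma \ref{symm}). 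Hence each summand becomes $V^{\iota_l}\boxtimes D(V)^{\iota_r}$ as $N$-$N$-bimodule.

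Next I would exploit the defining condition $m\in J_M^1(\sigma)$, namely $\C[G_m^N]\otimes_N W\simeq W$. Computing the tensor product through the above decomposition gives
\[
W\ \simeq\ \bigoplus_{V\in\Irr(G_m^N)} V^{\iota_l}\otimes_\C\bigl(D(V)^{\iota_r}\otimes_N W\bigr),
\]
and by semi-simplicity of $N$ the right factor is $\Hom_N\bigl(V^{\iota_r},W\bigr)$. The irreducibility of $W$ forces exactly one $V_0\in\Irr(G_m^N)$ to contribute, with $V_0^{\iota_l}\simeq W$, $V_0^{\iota_r}\simeq W$ and matching multiplicity one. This pairs off the $W$-component on the left and on the right, and the remaining summands all live in the complementary part $(1-e^W)A(1-e^W)$ in the sense of Lemma \ref{threeeq}(2). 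Using this matching together with semi-simplicity, one can now build, summand by summand of the $G_m^N$-bimodule decomposition of $\C[G_m^N]$, an $N$-linear isomorphism between the left $N$-module $\Ind_{G_m^N}(\C[G_m^N])$ (whose $N$-action is via $\iota_l$) and the left $N$-module $\Coind_{G_m^N}(\C[G_m^N])$ (whose $N$-action is via $\iota_r$), exactly as in the idempotent case of Lemma \ref{symm}.

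The main obstacle will be the last step: promoting the component-wise matching coming from the $J_M^1(\sigma)$ condition into a global $N$-module isomorphism between the two inflations of $\C[G_m^N]$. Unlike the idempotent case, $G_m^N$ is not a subgroup of $N$, so the intertwiner cannot be written down from classical induction-coinduction formalism and one must instead assemble it from the isotypic pieces of the bimodule; the role played by Axiom (I) in Lemma \ref{AxiomsI} has to be supplied here by $J_M^1(\sigma)$ together with the centricity of $N$. Once this $\mathcal{A}$ is in hand, the closing argument is the same two-line deduction as in Lemma \ref{symm}, giving $m^{[-1]}G_m^N=G_m^N m^{[-1]}$.
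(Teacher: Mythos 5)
Your middle step is sound and is in fact the same input the paper uses: from $m\in J_M^1(\sigma)$, i.e. $\C[G_m^N]\otimes_N W\simeq W$, the theta-bimodule decomposition of $\C[G_m^N]$ (Corollary \ref{thetabi}) forces a single irreducible $G_m^N$-constituent $V_0$ whose $\iota_l$-inflation is $\simeq W$ and whose dual $\iota_r$-inflation is $\simeq D(W)$ (your phrase ``$V_0^{\iota_r}\simeq W$'' should really be the right-module statement $D(V_0)^{\iota_r}\simeq D(W)$). The genuine gap is your last step: you never produce the global bijective intertwiner $\mathcal{A}$ on $\C[G_m^N]$, and it does not follow from what you have. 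The hypothesis $m\in J_M^1(\sigma)$ says nothing about the constituents $V\neq V_0$: for those there is no reason that $V^{\iota_l}$ and $V^{\iota_r}$ are isomorphic $N$-modules (the two maps $n\mapsto nm$ and $n\mapsto mn$ may permute the non-$\sigma$ constituents, and even the equality of their supports is exactly the statement being proved), so a ``summand by summand'' assembly of $\mathcal{A}$ is circular; you acknowledge this obstacle but do not supply the missing argument, which is where the proof actually lives.

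The fix is that the intertwiner is unnecessary: only annihilators matter, and one matched constituent already determines them. Since every nonzero element of the group $G_m^N$ acts invertibly on each irreducible $G_m^N$-module, the set $\{n\in N\mid \iota_l(n)=0\}$ equals the annihilator of the single left constituent $V_0^{\iota_l}\simeq W$, giving $G_m^Nm^{[-1]}=G_e^Ne^{[-1]}$ --- this is precisely Lemma \ref{GEE}(2), whose hypothesis $\Hom_N(W,\C[G_m^N])\neq 0$ you have already verified. The dual statement for right modules, applied to $D(V_0)^{\iota_r}\simeq D(W)$, gives $m^{[-1]}G_m^N=e^{[-1]}G_e^N$, and Lemma \ref{symm} (which is where semisimplicity of $\C[N]$ enters, for the idempotent $e$ only) identifies $e^{[-1]}G_e^N=G_e^Ne^{[-1]}$ and closes the argument. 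This is the paper's proof; your proposal reaches the right intermediate data but then detours into an unproved (and avoidable) global isomorphism claim instead of the two annihilator computations.
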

                         \begin{proof}
                         By Lmm.\ref{GEE}(2),  $G_m^Nm^{[-1]} =G^N_ee^{[-1]}$.  Also, as right $N$-modules, $\Hom_N(D(W), \C[G_m^N])\neq 0$. Dually, $m^{[-1]} G_m^N=e^{[-1]} G_e^N$. Hence the result holds by Lmm.\ref{symm}.
                         \end{proof}

                             For $m   \in J_M^1(\sigma)$, let $m\otimes W$ denote a $\C$-linear space, spanned by the vectors $m\otimes w$, for $w\in W$.   For $n\in N$, if $nm=mn'$, we define $n(m\otimes w)=m\otimes n'w$.  Let us check that  it is well-defined. Assume $nm=mn'=mn''$.  If $n\notin G_m^Nm^{[-1]}$, then $n', n''\notin m^{[-1]}G_m^N=e^{[-1]}G_e^N=G_e^Ne^{[-1]}$, $n'w= n'ew=0=n''w$.   If $n\in G_m^Nm^{[-1]}$, then $n', n''\in m^{[-1]}G_m^N=e^{[-1]}G_e^N=G_e^Ne^{[-1]}$. So $n'e, n''e, en', en'' \in G_e^N$. By the duality of
          Lmm.\ref{GEE}, $G_e^N \longrightarrow G_m^N; g \longrightarrow mg$, is a group homomorphism, with the kernel $\Stab^r_{G_e^N}(m)=\{ g\in G_e^N\mid mg=m\}$. Hence $mn'=men'=men''$, $en' \circ_e (en'')^{-1} \in \Stab^r_{G_e^N}(m)$.

           Follow the notations of Section \ref{structure}. Recall $(\sigma, W)=(\Ind_{G^N_e} (\chi), \Ind_{G^N_e} (U))$. We identity $W$ with $U$. Let $\Stab^l_{G_e^N}(m)=\{ g\in G_e^N\mid gm=m\}$.  Note that $\sigma(g)=1$ iff $D(\sigma) (g)=1$.
           Hence, $D(\sigma)|_{\Stab^r_{G_e^N}(m)}=1$, implies $\sigma|_{\Stab^r_{G_e^N}(m)}=1$.  So  $en' \circ_e (en'')^{-1} w=w$,  $n'w=en'w=en''w=n''w$.  Finally, the above action of $N$ on $m\otimes W$, defines a representation of $N$.
          \begin{lemma}
          $m\otimes W \simeq \C[G_{m}^N] \otimes_N W$, for  $m   \in J_M^1(\sigma)$.
          \end{lemma}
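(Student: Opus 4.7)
The plan is to exhibit a natural $N$-equivariant linear map $\Phi: m\otimes W \to \C[G_m^N]\otimes_N W$ sending $m\otimes w$ to $m\otimes_N w$, where on the target the symbol $m$ is the basis element of $\C[G_m^N]$ corresponding to $m\in G_m^N$, and then show $\Phi$ is bijective. Well-definedness as a $\C$-linear map is immediate, since $m\otimes W$ is $W$ decorated by the symbol $m\otimes -$, and $w\mapsto m\otimes_N w$ is $\C$-linear.

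For $N$-equivariance I would split into two cases according to the well-definedness analysis already carried out just before the statement. If $n\in G_m^N m^{[-1]}$, then $nm\in G_m^N$, so the left action on $\C[G_m^N]$ gives $n\cdot m=nm$; writing $nm=mn'$ via centricity and noting that then $n'\in m^{[-1]}G_m^N$, one computes $n\,\Phi(m\otimes w)=nm\otimes_N w=mn'\otimes_N w=m\otimes_N n'w=\Phi(m\otimes n'w)$, which matches the definition of the $N$-action on $m\otimes W$. If instead $n\notin G_m^N m^{[-1]}$, then $n\cdot m=0$ in $\C[G_m^N]$, so $n\,\Phi(m\otimes w)=0$, and the well-definedness discussion preceding the statement shows that $n'w=0$ for any $n'$ with $nm=mn'$, whence $\Phi(n(m\otimes w))=m\otimes_N n'w=0$ as required.

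For surjectivity I would invoke the right-module dual of Lemma~\ref{GEE}. The hypothesis $m\in J_M^1(\sigma)$ says $\C[G_m^N]\otimes_N W\simeq W\neq 0$, which is equivalent to $\Hom_N^{\mathrm{right}}(D(W),\C[G_m^N])\neq 0$; running the argument of Lemma~\ref{GEE}(1)(3) on the right-module side (as already sketched in the text just above the statement, where $me=m$ and $m^{[-1]}G_m^N=e^{[-1]}G_e^N$ are deduced) yields that $G_e^N\to G_m^N$, $g\mapsto mg$, is a surjective group homomorphism with kernel $\Stab^r_{G_e^N}(m)$. Thus every $g\in G_m^N$ has the form $g=mg_r$ with $g_r\in G_e^N$, and in the tensor product $g\otimes_N w=mg_r\otimes_N w=m\otimes_N g_r w=\Phi(m\otimes g_r w)$, so $\Phi$ is surjective.

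To finish, I would count dimensions: $\dim(m\otimes W)=\dim W$ by construction, while $\dim\big(\C[G_m^N]\otimes_N W\big)=\dim W$ by the defining property of $J_M^1(\sigma)$. A surjective linear map between finite-dimensional vector spaces of the same dimension is an isomorphism, so $\Phi$ is the desired $N$-module isomorphism. The main (in fact only) delicate point is the dualization of Lemma~\ref{GEE}: one must run the right-module version to obtain the surjectivity of $g\mapsto mg$ and the triviality of $\chi$ on $\Stab^r_{G_e^N}(m)$, which is what legitimises rewriting every basis element of $\C[G_m^N]$ as $mg_r$ with $g_r\in G_e^N$ and makes the compatibility $g_rw=g_r'w$ (when $mg_r=mg_r'$) hold in $W$; everything else is routine bookkeeping.
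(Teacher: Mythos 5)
Your proof is correct and follows essentially the same route as the paper: you define the same natural map $m\otimes w\mapsto m\otimes_N w$, verify it is a surjective $N$-morphism (the paper asserts this without the explicit rewriting $g=mg_r$, which is legitimate detail on your part), and conclude via $\C[G_m^N]\otimes_N W\simeq W$ together with a dimension count. The reliance on the right-module dual of Lemma~\ref{GEE} is exactly what the paper's preceding discussion already sets up, so no new gap is introduced.
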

       \begin{proof}
       Let $\alpha: m\otimes W \longrightarrow \C[G_{m}^N] \otimes_N W; m\otimes w \longmapsto  m\otimes_N w$.  It is a surjective $N$-morphism.  Since $ \C[G_{m}^N] \otimes_N W \simeq W$, $\alpha$ is an $N$-module isomorphism.
                  \end{proof}

                         Recall the lemma \ref{threeeq},    $ \C[I^{lr}_M(\sigma)]\subseteq  e^WAe^W\oplus (1-e^W)A(1-e^W)$.  Let $p_1: \C[I^{lr}_M(\sigma)] \longrightarrow e^WAe^W$, $p_2: \C[I^{lr}_M(\sigma)] \longrightarrow (1-e^W)A(1-e^W)$.

                        \begin{remark}\label{Proj}
                        $p_1\oplus p_2: \C[I^{lr}_M(\sigma)] \longrightarrow   e^WAe^W\oplus (1-e^W)A(1-e^W)$, is an algebraic homomorphism.
                                                \end{remark}
                                                \begin{proof}
                                                Assume $a, b\in I^{lr}_M(\sigma)$. Then $p_1(a)=e^Wae^W$, $p_2(a)=(1-e^W)a(1-e^W)$, $a=p_1(a)+p_2(a)$, similarly,  $b=p_1(b)+p_2(b)$. Hence
                                                $[p_1(ab)+p_2(ab)]=ab=[p_1(a)+p_2(a)][p_1(b)+p_2(b)]=p_1(a)p_1(b)+p_2(a)p_2(b)$, and $p_1(ab)=p_1(a)p_1(b)$, $p_2(ab)=p_2(a)p_2(b)$.
                                                By linearlization, the result holds.
                                                \end{proof}
                         In particular, $\C[mN]=p_1(\C[mN])\oplus p_2(\C[mN])$, as $N-N$-bimodules, for $m\in I^{lr}_M(\sigma)$.  However, for $\C[G_m^N]$ this result is not always right.
                                                 \subsubsection{}  Let  $I_M^{0}(\sigma) =\{ m\in  I^{lr}_M(\sigma)  \mid mJ_{M}^1(\sigma)   \subseteq J_{M}^0(\sigma) \}$, $I_M^{1}(\sigma) =\{ m\in  I^{lr}_M(\sigma)  \mid \exists m_i \in J_{M}^1(\sigma), mm_i \in J_{M}^1(\sigma)  \}$.
                                                \begin{lemma}
                 $ I^1_M(\sigma) =\{ m\in  I^{lr}_M(\sigma) \mid p_1(m)\neq 0\}$, and $I_M^{0}(\sigma) =\{ m\in  I^{lr}_M(\sigma) \mid p_1(m)=0\}$.
                                    \end{lemma}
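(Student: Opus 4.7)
The plan is to reduce the membership conditions defining $I_M^1(\sigma)$ and $I_M^0(\sigma)$ to a pointwise statement about $p_1$, and then to use that $p_1$ is an algebra homomorphism (Remark \ref{Proj}) to chase products through it. First I would establish the ``level-$J$'' statement: for $m\in I^{lr}_M(\sigma)$, one has $p_1(m)\neq 0$ if and only if $m\in J_M^1(\sigma)$. Since $e^W$ lies in the centre of $B=\mathbb{C}[N]$, the identity $e^W(n_1 m n_2)e^W = n_1(e^W m e^W)n_2$ for $n_1,n_2\in N$ shows that $p_1$ either kills all of $\mathbb{C}[G_m^N]=\mathbb{C}[NmN]$ simultaneously or none of it. By Lemma \ref{threeeq}(2), $\mathbb{C}[G_m^N]\subseteq e^W A e^W\oplus (1-e^W)A(1-e^W)$, so the only $N$--$N$-bimodule summands of $\mathbb{C}[G_m^N]$ that involve $D(W)$ on the right are necessarily of the form $W\otimes D(W)$; therefore $e^W\mathbb{C}[G_m^N]e^W=0$ if and only if no such summand appears, which by the paragraph preceding the definition of $J_M^i(\sigma)$ is the same as $\mathbb{C}[G_m^N]\otimes_N W=0$.

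Next I would write $e^W$ as an explicit $p_1$-image on $J_M^1(\sigma)$-elements. Because $e^W A e^W\subseteq \mathbb{C}[I^{lr}_M(\sigma)]$ (Lemma \ref{threeeq}(2)) and $p_1$ restricts to the identity on $e^W A e^W$, the idempotent $e^W$, which a priori sits in $B$, can be expanded as $e^W=\sum_{n\in N}c_n n$ and then $e^W=p_1(e^W)=\sum_n c_n p_1(n)$; discarding the terms for which $p_1(n)=0$ and applying the first step yields
\[ e^W = \sum_i c_i\, p_1(m_i)\qquad\text{with } m_i\in N\cap J_M^1(\sigma). \]
This sum is nonempty: the apex $e\in E(N)$ of $W$ lies in $N\cap J_M^1(\sigma)$, since Corollary \ref{thetabi} together with the structure of $\mathbb{C}[G_e^N]$ as an $N$--$N$-bimodule gives the summand $W\otimes D(W)$ (for $\chi=U$), so $\mathbb{C}[G_e^N]\otimes_N W\neq 0$.

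Finally I would pass to the ``level-$I$'' statement. If $p_1(m)\neq 0$, then
\[ p_1(m) = p_1(m)e^W = \sum_i c_i\, p_1(m)p_1(m_i) = \sum_i c_i\, p_1(mm_i)\neq 0, \]
so some $p_1(mm_i)\neq 0$; Step~1 then gives $mm_i\in J_M^1(\sigma)$ with $m_i\in J_M^1(\sigma)$, showing $m\in I_M^1(\sigma)$. Conversely, any witness $m_i\in J_M^1(\sigma)$ with $mm_i\in J_M^1(\sigma)$ yields $p_1(m)p_1(m_i)=p_1(mm_i)\neq 0$, so $p_1(m)\neq 0$. The parallel statement for $I_M^0(\sigma)$ is then automatic: if $p_1(m)=0$, then $p_1(mm_i)=p_1(m)p_1(m_i)=0$ for every $m_i$, i.e.\ $mm_i\in J_M^0(\sigma)$; conversely if $p_1(m)\neq 0$ the argument just given constructs an $m_i\in J_M^1(\sigma)$ with $mm_i\in J_M^1(\sigma)$, placing $m$ outside $I_M^0(\sigma)$.

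The delicate point is Step~1. One must exclude $N$--$N$-bimodule summands of $\mathbb{C}[G_m^N]$ of the form $U\otimes D(W)$ with $U\not\simeq W$, which would contribute to $\mathbb{C}[G_m^N]\otimes_N W$ without being visible to $e^W(\cdot)e^W$. The stabilizing condition $m\in I^{lr}_M(\sigma)$, via the direct sum decomposition of Lemma \ref{threeeq}(2), rules out precisely this pathology and is exactly where the left--right-stability assumption is used. Once this localization is secured, the remainder is a routine exercise in pushing an expansion of $e^W$ through the algebra homomorphism $p_1$.
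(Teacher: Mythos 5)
Your Step~1 is where the argument breaks down. The asserted equivalence --- for $m\in I^{lr}_M(\sigma)$, $p_1(m)\neq 0$ iff $m\in J_M^1(\sigma)$ --- is false; the correct statement is exactly the lemma itself ($p_1(m)\neq 0$ iff $m\in I_M^1(\sigma)$), and $J_M^1(\sigma)$ is in general a \emph{proper} subset of $I_M^1(\sigma)$. Concretely, take $m=1$ in a situation where the apex $e$ of $\sigma$ is not the identity of $N$: then $1\in I^{lr}_M(\sigma)$ and $p_1(1)=e^W\neq 0$, yet $G_1^N$ is the group of units of $N$, so every irreducible right constituent of the bimodule $\C[G_1^N]$ has apex $1\neq e$, whence $\C[G_1^N]\otimes_N W=0$ and $1\in J_M^0(\sigma)$ (while $1\in I_M^1(\sigma)$, since $1\cdot e=e\in J_M^1(\sigma)$). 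The root cause is your identification ``$\C[G_m^N]=\C[NmN]$'': under Axiom III, $G_m^N=L_m^N$ is only the set of \emph{generators} of $Nm=NmN$ (Remark \ref{threeeqt}), and the bimodule $\C[G_m^N]$ of Section \ref{Reesquo} carries the truncated $\odot$-action, i.e.\ it is a Rees-quotient composition factor of the honest sub-bimodule $\C[Nm]\subseteq A$, not the sub-bimodule itself. Since $e^W$ is central in $B$, $p_1(\C[Nm])=\C[N]\,p_1(m)\,\C[N]$, so $p_1(m)\neq 0$ only tells you that \emph{some} factor $\C[G^N_{m'}]$, $m'\in Nm$, contains $W\otimes D(W)$; it does not locate that factor at $G_m^N$. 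Your computation $e^W(n_1mn_2)e^W=n_1(e^Wme^W)n_2$ is carried out with the genuine multiplication of $A$ and cannot detect the truncation, which is precisely the gap. (Were Step~1 true it would force $I_M^1(\sigma)=J_M^1(\sigma)$, contradicting the example above and rendering Lemma \ref{Ker1} vacuous.)

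The gap then propagates through your level-$I$ step: the inference ``$p_1(mm_i)\neq 0\Rightarrow mm_i\in J_M^1(\sigma)$'' is unjustified, and so is the claim that the surviving terms in the expansion $e^W=\sum_n c_n\,p_1(n)$ have $n\in N\cap J_M^1(\sigma)$ (for $n\in N$, $p_1(n)\neq 0$ only says $e\in NnN$, not $NnN=NeN$; again $n=1$ is a counterexample). Only the easy direction of your Step~1 is correct, namely $m\in J_M^1(\sigma)\Rightarrow p_1(m)\neq 0$, which the paper proves separately ($J_M^1(\sigma)\subseteq I_M^1(\sigma)$) by noting that $p_1(m)=0$ forces $\C[Nm]\subseteq (1-e^W)A(1-e^W)$. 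For the converse, the paper supplies exactly the ingredient you are missing: from $p_1(m)\neq 0$ one finds $m_i\in J_M^1(\sigma)$ with $W\otimes D(W)$ occurring in $\C[mm_iN]$, then decomposes $\C[mm_iN]\simeq\oplus_n\C[G^N_{mnm_i}]$ to locate a factor carrying $W\otimes D(W)$, and finally runs an apex argument (the equalities $(nm_i)^{[-1]}G^N_{nm_i}=e^{[-1]}G_e^N$ together with Lemma \ref{SImilarly}) to conclude $G^N_{nm_i}=G^N_{m_i}$, so that the witness $nm_i$ lies in $G^N_{m_i}\subseteq J_M^1(\sigma)$ and the definition of $I_M^1(\sigma)$ is met. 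Any repair of your outline has to include this filtration-plus-apex step; the homomorphism property of $p_1$ alone does not suffice.
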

                                    \begin{proof}
                                   It suffices to prove the first statement.
 Assume  $m\in I^1_M(\sigma), m_i\in  J^1_M(\sigma)$, and $mm_i \in J^1_M(\sigma)$.  Since $\C[mm_iN]$ contains $W\otimes D(W)$ as $N-N$-modules, $p_1(mm_i)=p_1(m)p_1(m_i) \neq 0$. Hence $p_1(m)\neq 0$.

 Conversely, assume $p_1(m)\neq 0$. Then $p_1(m)p_1(\sum_{m_i\in J^1_M(\sigma) }  \C[m_i N]) \neq 0$. Hence $\exists m_i\in J^1_M(\sigma) $, such that $\C[mm_i N] \simeq W\otimes D(W) \oplus \mathcal{W}$ as $N-N$-bimodules.      Since $\C[mm_i N] \simeq \oplus_{\textrm{ some } n\in N}  \C[G_{mnm_i}^N]$,  as $N-N$-bimodules,  there exists $mnm_i \in  J^1_M(\sigma)$.  Then $e^{[-1]} G_e^N =    m_i^{[-1]}G_{m_i}^N\subseteq      (nm_i)^{[-1]}G_{nm_i}^N     \subseteq (mnm_i)^{[-1]}G_{mnm_i}^N=e^{[-1]} G_e^N $, which implies $   (nm_i)^{[-1]}G_{nm_i}^N=      e^{[-1]} G_e^N $. As right $N$-modules, any irreducible sub-representation   of $\C[G_{nm_i}^N]$ has an apex $e$.  Since $G_{m_i}^N, G_{nm_i}^N \subseteq m_iN$,  by Lmm.\ref{SImilarly},   $ G_{m_i}^N=G_{nm_i}^N$, $nm_i\in    J^1_M(\sigma) $.                                                                    \end{proof}

                                         Hence    $I_M^{0}(\sigma) $ is an $I^{lr}_M(\sigma)$-ideal. 

                  \begin{lemma}\label{IJJ}
     For $m\in I_M^1(\sigma), m_i\in  J_M^1(\sigma)$,  if $mm_i \in J_M^1(\sigma)$, then  $G_{m}^N G_{m_i}^N=G_{mm_i}^N$.
          \end{lemma}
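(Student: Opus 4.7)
The plan is to prove the two inclusions $G_m^N G_{m_i}^N \subseteq G_{mm_i}^N$ and $G_{mm_i}^N \subseteq G_m^N G_{m_i}^N$ separately. The first is relatively routine and rests only on the centricity of $N$; the second is the substantive direction and requires using the hypotheses $m_i, mm_i \in J_M^1(\sigma)$ through Lemma \ref{GEE}.

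First I would establish the forward inclusion. Given $g \in G_m^N$ and $g_i \in G_{m_i}^N$, by definition $Ng = Nm$ and $Ng_i = Nm_i$. Writing $g = n_0 m$ with $n_0 \in N$, I would use the centric property (so $mN = Nm$ and hence $Nmg_iN = NmNm_i = Nmm_i$) to compute $Ngg_i = Nn_0 m g_i = Nmg_i = Nmm_i$, which shows $gg_i \in G_{mm_i}^N$.

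Next I would attack the reverse inclusion, which is the heart of the lemma. Given $h \in G_{mm_i}^N$, the hypothesis $mm_i \in J_M^1(\sigma)$ together with Lemma \ref{GEE}(3) provides a surjection $G_e^N \twoheadrightarrow G_{mm_i}^N$, $g_0 \mapsto g_0 \cdot mm_i$, so I may lift $h = g_0 \cdot mm_i$ for some $g_0 \in G_e^N$. The natural candidate factorization is $h = (g_0 m)(m_i)$, with $g_i := m_i$ serving as the identity element of $G_{m_i}^N$ (using $em_i = m_i$ from Lemma \ref{GEE}(1) applied to $m_i \in J_M^1(\sigma)$). It then remains to verify $g := g_0 m \in G_m^N$, i.e., $Ng_0 m = Nm$. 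One direction $Ng_0 m \subseteq Nm$ is immediate from $g_0 \in N$; for the reverse, invoking the group inverse $g_0^{-1} \in G_e^N$ with $g_0^{-1} g_0 = e$ yields $g_0^{-1} g_0 m = em$, so $Nm \subseteq Ng_0 m$ would follow once $em = m$ is established.

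The main obstacle will be precisely this step $em = m$. Lemma \ref{GEE}(1) applied to $mm_i \in J_M^1(\sigma)$ gives $emm_i = mm_i$, but this cannot be cancelled on the right by $m_i$ in general. The strategy for upgrading it to $em = m$ is to exploit that $m \in I_M^1(\sigma)$ means $p_1(m) \neq 0$, while $m_i \in J_M^1(\sigma)$ ensures that the right-multiplication map $\C[Nm] \to \C[Nmm_i]$ restricts to an isomorphism on the $\sigma$-isotypic components (via the analysis of Section \ref{GnN} and Lemma \ref{SImilarly}, since $\C[G_{m_i}^N] \otimes_N W \simeq W$). Since $em$ and $m$ have the same image under this right multiplication by $m_i$ and both carry nontrivial $\sigma$-isotypic content traceable through $p_1$, the injectivity on the $\sigma$-part should force $em = m$, closing the proof. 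Alternatively, one might bypass the explicit identity $em = m$ by arguing directly on the $\mathcal{L}_N$-classes inside $Nm$ using the principal series of Section \ref{GnN} and the theta-bimodule property of $\C[Nm]$ in Lemma \ref{SImilarly}(3), to conclude that $Ng_0 m$ and $Nm$ must coincide as the only $\mathcal{L}_N$-class in $Nm$ whose image in $Nmm_i$ is $Nmm_i$ itself.
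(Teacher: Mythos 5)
Your forward inclusion is fine and matches what the paper dismisses as ``clear''. The reverse inclusion, however, has a genuine gap, and it is located exactly where you placed your hopes. After lifting $h=g_0\,mm_i$ with $g_0\in G_e^N$ (the left-handed surjection of Lemma \ref{GEE}(3)), your factorization $h=(g_0m)(m_i)$ forces you to prove $g_0m\in G_m^N$, which you reduce to $em=m$. But $em=m$ is simply false for general $m\in I_M^1(\sigma)$: the identity $em=m=me$ is available only for elements of $J_M^1(\sigma)$, while $m\in I_M^1(\sigma)$ need not satisfy it. A concrete counterexample is $m=1$ (which lies in $I_M^1(\sigma)$, since $1\cdot m_i=m_i\in J_M^1(\sigma)$): if $e\neq 1$, then $e\cdot 1\neq 1$, and lifting $h=e\,m_i=m_i$ gives $g_0=e$, so $g_0m=e$, which is not in $G_1^N$ (the unit group of $N$). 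So your candidate factorization does not exhibit membership in $G_m^N G_{m_i}^N$, and neither of your proposed rescues can work: $p_1(m)\neq 0$ does not imply $em=m$ (indeed $p_1(1)=e^W\neq 0$ while $e\cdot 1\neq 1$), and the $\mathcal{L}_N$-class argument is again an attempt to prove $Ng_0m=Nm$, which the same example refutes ($Ne\neq N$ when $e$ is not a unit of $N$).

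The repair is to lift on the other side, which is what the paper does. Apply the right-handed (dual) form of Lemma \ref{GEE}(3) to $mm_i\in J_M^1(\sigma)$: the map $G_e^N\longrightarrow G_{mm_i}^N$, $g\longmapsto mm_ig$, is surjective, so any $h\in G_{mm_i}^N$ is $h=mm_ig=m\cdot(m_ig)$. Now $m\in G_m^N$ holds trivially (no idempotent identity on $m$ is needed), and $m_ig\in G_{m_i}^N$ because $m_i\in J_M^1(\sigma)$ does satisfy $m_ie=m_i$, so the dual of Lemma \ref{GEE} applies to $m_i$ and the element carrying the group datum is attached to $m_i$, not to $m$. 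In short: the asymmetry of the hypotheses ($m_i,mm_i\in J_M^1(\sigma)$ but only $m\in I_M^1(\sigma)$) dictates that the lifted element of $G_e^N$ must be absorbed on the $m_i$ side; placing it on the $m$ side, as you do, cannot be made to work.
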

               \begin{proof}
               Clearly, $G_{m}^N G_{m_i}^N \subseteq G_{mm_i}^N$.  Moreover $ G_e^N \longrightarrow G_{mm_i}^N; g \longmapsto mm_ig$, is surjective, and $ m\in G_{m}^N, m_ig\in G_{m_2}^N$, the result holds.
                                             \end{proof}

                   For $m\in  I_M^{1}(\sigma) \setminus J_M^1(\sigma)$, $\C[mN] \simeq \C[G_m^N] \oplus \mathcal{W}$, as $N-N$-bimodules. In this case, $ \C[G_m^N]  $ is a submodule of $(1-e^W)A(1-e^W)$, as $N-N$-bimodules, and $\mathcal{W}$  contains $W\otimes D(W)$ as $N-N$-bimodules.
                                                    \begin{lemma}\label{Ker1}
               If $m\in I_M^{1}(\sigma) $, then $me\in J_M^1(\sigma)$.
                              \end{lemma}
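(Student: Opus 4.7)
The plan is to show, from the hypothesis $m\in I_M^{1}(\sigma)$, equivalently $p_1(m)=e^Wme^W\neq 0$, that the $\mathcal{J}_N$-classes of $em$ and $me$ in $M$ coincide, and that this common class $G_{me}^N=G_{em}^N$ contains $W\otimes D(W)$ as an $N$–$N$-bimodule summand. From there, the theta property of $\C[G_{me}^N]$ (Corollary~\ref{thetabi}) immediately yields $\C[G_{me}^N]\otimes_N W\simeq W$, so $me\in J_M^{1}(\sigma)$.

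First I would dispatch the easy steps. Since $m\in I^{lr}_M(\sigma)$ forces $mN\subseteq I^{lr}_M(\sigma)$, we get $me\in I^{lr}_M(\sigma)$. Because $e$ is the apex of $\sigma$ and $L_e^N=G_e^N$ in the centric case, $\sigma(e)$ acts as the identity on $W$, from which $e\cdot e^W=e^W=e^W\cdot e$ in $\C[N]$. Hence
\[
p_1(me)=e^W(me)e^W=e^Wme^W=p_1(m)\neq 0,
\]
so $me\in I_M^{1}(\sigma)$, and $p_1(m)$ lies in the $W\otimes D(W)$-isotypic subspace $e^W\C[mN]e^W$.

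The heart of the argument is a two-sided apex analysis. Since $\C[N]$ is semisimple, the left principal series filtration $\emptyset\subsetneq I_{i_1}m\subsetneq\cdots\subsetneq I_{i_r}m=mN$ splits as an $N$–$N$-bimodule direct sum $\C[mN]\simeq\bigoplus_j\C[G_{m'_{i_j}}^N]$, and each summand is a theta $N$–$N$-bimodule. Thus $e^W\C[G_{m'_{i_j}}^N]e^W\neq 0$ forces the theta pair $(W,W)$ to appear in that summand, which by Lemma~\ref{SImilarly}(1) requires the left apex $e'_{i_j}=e$; since the idempotents of the principal series of the inverse monoid $N$ are pairwise distinct, a unique index $j=k_0$ satisfies this, and the corresponding class is $G_{em}^N$. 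So $\C[G_{em}^N]$ contains $W\otimes D(W)$ as a bimodule summand. Running the symmetric argument with the right principal series $mI_0\subsetneq\cdots\subsetneq mI_n=mN$, and using that $D(W)$ also has apex $e$, identifies the unique $\mathcal{J}_N$-class in $mN$ with right apex $e$ as $G_{me}^N$. Combining gives $G_{em}^N=G_{me}^N$; the theta correspondence then forces every other bimodule summand $V\otimes D(V')$ of $\C[G_{me}^N]$ to satisfy $V'\not\simeq W$, so tensoring over $N$ with $W$ yields $\C[G_{me}^N]\otimes_N W\simeq W$.

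The main obstacle is the right-sided analogue of Lemma~\ref{SImilarly}(1) describing the apex of right $N$-irreducibles of $\C[G_{m'_{i_j}}^N]$; this should follow by rerunning the original proof with the right principal series $mI_0\subsetneq\cdots\subsetneq mI_n=mN$ in place of the left one, using that $N$ is an inverse monoid and hence left-right symmetric. One should also verify carefully that the $(W,W)$-isotypic summand of $\C[G_{em}^N]$ is precisely $e^W\C[G_{em}^N]e^W$, but this is the canonical isotypic projection in a semisimple bimodule.
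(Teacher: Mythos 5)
Your argument is correct, and it reaches the conclusion by a route that differs from the paper's in the key identification step. The paper first uses the defining witness $m_i\in J^1_M(\sigma)$ with $mm_i\in J^1_M(\sigma)$ to run an element-by-element computation showing $mG_e^N=G_{me}^N$ and $(me)^{[-1]}G_{me}^N=e^{[-1]}G_e^N$ (hence pinning the right apex of $\C[G_{me}^N]$ as $e$), and then locates $W\otimes D(W)$ inside $\C[meN]$ via the identity $me^WB=e^Wm Be^W\simeq W\otimes D(W)$; the right-module apex argument (the dual of Lemma \ref{SImilarly}) together with $me\in I^{lr}_M(\sigma)$ then forces $W\otimes D(W)$ into $\C[G_{me}^N]$. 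You instead bypass the witness and the auxiliary module $\C[meN]$ entirely: starting from $p_1(m)=e^Wme^W\neq 0$ (the characterization of $I^1_M(\sigma)$ proved just before this lemma, so no circularity) you find $W\otimes D(W)$ inside $\C[mN]$, split $\C[mN]$ into its $\mathcal{J}_N$-classes using the two one-sided principal-series decompositions, and use the left apex statement of Lemma \ref{SImilarly}(1) to identify the carrying class as $G_{em}^N$ and its right-sided dual to identify the same class as $G_{me}^N$; the theta property of $\C[G_{me}^N]$ (Corollary \ref{thetabi}, or directly $me\in I^{lr}_M(\sigma)$ via Lemma \ref{threeeq}) then gives $\C[G_{me}^N]\otimes_N W\simeq W$. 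What your approach buys is a cleaner, purely module-theoretic proof with no element chase; what it costs is explicit reliance on the right-sided duals of Lemmas \ref{inver}, \ref{GEE} and \ref{SImilarly}, but these dualize verbatim (and the paper itself invokes them without proof, e.g.\ ``Dually, the above result also holds for the right $N$-module''), so this is not a gap. Two small remarks: your observation $ee^W=e^W=e^We$ and hence $p_1(me)=p_1(m)\neq 0$ is correct but not actually needed for the conclusion (only $p_1(m)\neq 0$ and the apex bookkeeping are used), and in the right-sided step the existence of a class of $mN$ with right apex $e$ (namely the one containing $D(W)$) is what guarantees that the $e$-layer survives, so the phrase ``the unique class with right apex $e$ is $G_{me}^N$'' is justified exactly as you use it.
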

                              \begin{proof}
                              Assume   $mm_i \in J_M^1(\sigma)$, for some $m_i\in  J_M^1(\sigma)$. Then $G^N_{mm_i}=G^N_{mem_i}=mem_i G_e^N=mG_e^Nm_i \subseteq G_{me}^N m_i\subseteq G^N_{mem_i}$.   It is known that $m G_e^N \subseteq  G_{me}^N
                              $, so $e^{[-1]}G_e^N  \subseteq   (me)^{[-1]}G_{me}^N $. If  $n    \in  (me)^{[-1]}  G_{me}^N\setminus  e^{[-1]} G_e^N $,  then     $men   \in G_{me}^N $, $men m_i \in  G_{mem_i}^N$. At the same time,   $nm_i=nem_i=m_ien'$, for some $n' \notin e^{[-1]}G_e^N$. Then $menm_i=mem_i n' \notin G_{mem_i}^N$, a contradiction.     Hence $(me)^{[-1]} G_{me}^N =  e^{[-1]} G_e^N $, $mG_e^N=G_{me}^N$.     The map $m: \C[G_e^N] \longrightarrow \C[G_{me}^N]$ is a right $N$-morphism because for $n  \notin(me)^{[-1]} G_{me}^N =  e^{[-1]} G_e^N$, $v \in \C[G_e^N] $, $m(vn)=0=(mv)n$. For $n  \in(me)^{[-1]} G_{me}^N =  e^{[-1]} G_e^N$, $m(vn)=m(ven)=mven=[m(v)]en=m(v)n$.

                              On the other hand,  $p_1(m)=e^W m e^W\neq 0$, $p_1(m)B=p_1(m)e^W B=e^Wme^{W}B=e^WmBe^W \neq 0$.  Note that $W=\Ind_{G_e}(\chi)$.  Then $e^WBe^W=e^WB=Be^W \simeq W\otimes D(W)$ as $N-N$-bimodules.  $ \C[G_e^N]  $ contains $W\otimes D(W)$  as $N-N$-bimodules.  Hence $ e^WB \subseteq \C[eN]$ as vector spaces.   Then $me^WB \subseteq \C[meN]$, and $me^WB=p_1(m)e^W B =e^WmBe^W \simeq W\otimes D(W)$ as $N-N$-bimodules.  Therefore $\C[meN]$ contains $W\otimes D(W)$ as $N-N$-bimodules.   Notice that as right $N$-modules, the irreducible component of  $\C[G_{me}^N]$ has an apex $e$. By Lmm.\ref{SImilarly}, $\C[G_{me}^N]  \supseteq D(W)$, as right $N$-modules.     Since $me\in I^{lr}_M(\sigma)$,
                 $\C[G_{me}^N]$ contains $W\otimes D(W)$, as $N-N$-bimodules.
                              \end{proof}
                \begin{remark}\label{TWOEONEE}
                         If $m_1, m_2\in  I_M^{1}(\sigma)$, and $m_1m_2\in  I_M^{1}(\sigma)$, then $m_1m_2e=(m_1e)(m_2e)$.
                         \end{remark}
                            \begin{proof}
                             Note that for $m\in J_M^1(\sigma)$, $em=m=me$.  Then the result is right.
                                                         \end{proof}

            \begin{lemma}
            \begin{itemize}
           \item[(1)]  If $m\in I_M^i(\sigma)$, then $G_m^N \subseteq I_M^i(\sigma)$.
           \item[(2)]  $J_M^{1}(\sigma) \subseteq I_M^{1}(\sigma) $, $I_M^{0}(\sigma) \subseteq J_M^{0}(\sigma) $.
                                                             \end{itemize}
    \end{lemma}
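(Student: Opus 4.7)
The proof hinges on the characterization established in the preceding lemma that $I_M^{1}(\sigma) = \{m \in I^{lr}_M(\sigma) \mid p_1(m) \neq 0\}$ and $I_M^{0}(\sigma) = \{m \in I^{lr}_M(\sigma) \mid p_1(m) = 0\}$, together with the fact that $e^W$ is a \emph{central} idempotent in $B = \C[N]$. For part (1), given $m \in I_M^i(\sigma)$ and $m_1 \in G_m^N$, the relation $Nm_1 = Nm$ provides elements $g, g' \in N$ with $m_1 = gm$ and $m = g'm_1$. Centrality of $e^W$ in $\C[N]$ yields
\[
p_1(m_1) = e^W g m e^W = g\,e^W m e^W = g\,p_1(m), \qquad p_1(m) = g'\,p_1(m_1).
\]
Consequently $p_1(m) = 0$ if and only if $p_1(m_1) = 0$, so $m_1$ sits in the same piece as $m$. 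The inclusion $G_m^N \subseteq I^{lr}_M(\sigma)$ follows from the remark after Lemma~\ref{threeeq} that $m \in I^{lr}_M(\sigma)$ forces $mN \subseteq I^{lr}_M(\sigma)$.

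For the first inclusion of part (2), I plan to exhibit the explicit witness $m_i = e \in J_M^{1}(\sigma)$ (where $e$ is the apex idempotent of $\sigma$). First, $e \in E(N) \subseteq E(M) \subseteq I^{lr}_M(\sigma)$. To show $\C[G_e^N] \otimes_N W \simeq W$, I use the $N$-$N$-bimodule decomposition from the group-algebra structure of $G_e^N$: via the monoid maps $\iota_l, \iota_r \colon N \to G_e^N \cup \{0\}$ of Section~\ref{GnN}, the inflation of $\oplus_{\tau \in \Irr(G_e^N)} \tau \otimes D(\tau)$ yields $\C[G_e^N] \simeq \oplus_\tau \Ind_{G_e^N}\tau \otimes D(\Ind_{G_e^N}\tau)$ as $N$-$N$-bimodule. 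Then by Lemma~\ref{semisimplerep}(2),
\[
\C[G_e^N] \otimes_N W \simeq \bigoplus_\tau \Ind_{G_e^N}\tau \otimes \Hom_N(\Ind_{G_e^N}\tau,\sigma) \simeq \sigma \otimes \C \simeq W,
\]
the only surviving summand being $\tau = \chi$. Thus $e \in J_M^{1}(\sigma)$. Now given any $m \in J_M^{1}(\sigma)$, Lemma~\ref{GEE} (and its right-module dual) gives $em = m = me$, so $me = m \in J_M^{1}(\sigma)$. With the witness $m_i = e$, the definition of $I_M^{1}(\sigma)$ is fulfilled and $m \in I_M^{1}(\sigma)$.

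The second inclusion $I_M^{0}(\sigma) \subseteq J_M^{0}(\sigma)$ is then formal. By the remark at the start of the subsection, $I^{lr}_M(\sigma) = J_M^{0}(\sigma) \sqcup J_M^{1}(\sigma)$, since $\C[G_m^N] \otimes_N W$ is either zero or isomorphic to $W$. On the other hand, the $p_1$-characterization gives $I^{lr}_M(\sigma) = I_M^{0}(\sigma) \sqcup I_M^{1}(\sigma)$. Taking complements within $I^{lr}_M(\sigma)$, the inclusion $J_M^{1}(\sigma) \subseteq I_M^{1}(\sigma)$ is equivalent to $I_M^{0}(\sigma) \subseteq J_M^{0}(\sigma)$, completing the proof. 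The only mildly delicate step is the computation $\C[G_e^N] \otimes_N W \simeq W$; everything else is a direct consequence of centrality of $e^W$ and the partition structure.
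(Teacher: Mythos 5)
Your proof is correct, but it follows a genuinely different route from the paper's on both points. For part (1) you go straight through the characterization $I_M^1(\sigma)=\{m\in I^{lr}_M(\sigma)\mid p_1(m)\neq 0\}$: writing $m_1=gm$, $m=g'm_1$ with $g,g'\in N$ and using that $e^W$ is central in $\C[N]$, you get $p_1(m_1)=g\,p_1(m)$ and $p_1(m)=g'\,p_1(m_1)$, so the whole $G_m^N$-class has simultaneously vanishing or non-vanishing $p_1$; the paper instead stays at the monoid level, invoking Lemma \ref{IJJ} ($G_m^N G_{m_i}^N=G_{mm_i}^N$) together with the $G_m^N$-saturation of $J_M^i(\sigma)$. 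For the inclusion $J_M^1(\sigma)\subseteq I_M^1(\sigma)$ the paper gives a short contradiction argument: if $p_1(m)=0$ then $\C[Nm]$, hence $\C[G_m^N]$, lies in $(1-e^W)A(1-e^W)$, contradicting $\C[G_m^N]\otimes_N W\simeq W$, and then it quotes the $p_1$-characterization; you instead verify the definition of $I_M^1(\sigma)$ directly by producing the universal witness $m_i=e$, i.e.\ you prove $e\in J_M^1(\sigma)$ via the bimodule decomposition of $\C[G_e^N]$ and Lemma \ref{semisimplerep}(2), and then use $em=m=me$ for $m\in J_M^1(\sigma)$. That last identity needs Lemma \ref{GEE} and its right-module dual, whose hypothesis $\Hom_N(W,\C[G_m^N])\neq 0$ you should note follows from $\C[G_m^N]\otimes_N W\simeq W$ by semisimplicity (the paper itself uses exactly this fact just before, in Remark \ref{TWOEONEE}), so this is a point to make explicit rather than a gap. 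What each approach buys: the paper's argument for (2) is shorter and purely algebraic, while yours is more constructive and establishes along the way that $e\in J_M^1(\sigma)$ — a fact the paper needs anyway later (e.g.\ the corollary that $J_M^1(\sigma)$ is a monoid with identity $e$, and the choice $m_1=e$ in Section \ref{threeno}); the closing complement argument $I_M^0(\sigma)\subseteq J_M^0(\sigma)$ is the same formal step in both.
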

\begin{proof}
1)  Take the notations from Lmm.\ref{IJJ}.  If $m' \in G_m^N $, then $G_{m'}^N=G_m^N$. By the above lemma \ref{IJJ}, $G_{m'}^N G^N_{m_i}=G_{mm_i}^N\subseteq J_M^1(\sigma)$. So $m' \in I_M^1(\sigma)$. Consequently,  if $m\in I_M^0(\sigma)$, then $G_m^N \subseteq I_M^0(\sigma)$.\\
2)  Assume  $m\in J_M^{1}(\sigma)$.   If $p_1(m) =0$, then $m\in (1-e^W)A(1-e^W)$, $\C[Nm] \subseteq (1-e^W)A(1-e^W)$. Hence $ \C[G_{m}^N]  \subseteq  (1-e^W)A(1-e^W)$ as $N-N$-bimodules; this contradicts to  $m\in  J^1_{M}(\sigma)  $. Hence $p_1(m) \neq 0$.  Consequently, $I_M^{0}(\sigma) \subseteq J_M^{0}(\sigma) $.
\end{proof}

            For $m\in  I_M^{1}(\sigma) \setminus J_M^1(\sigma)$, $\C[mN] \simeq \C[G_m^N] \oplus \mathcal{W}$, as $N-N$-bimodules. In this case, $ \C[G_m^N]  $ is a submodule of $(1-e^W)A(1-e^W)$, as $N-N$-bimodules, and $\mathcal{W}$  contains $W\otimes D(W)$ as $N-N$-bimodules.  For other $m'\in  I_M^{1}(\sigma) \setminus J_M^1(\sigma)$,  $\C[m'mN] \simeq m'\C[G_m^N] \oplus m'\mathcal{W} $, as right $N$-modules, and $m'\C[G_m^N]=p_2(m')  \C[G_m^N] \subseteq (1-e^W)A(1-e^W)$, as right $N$-modules.
 \begin{lemma}\label{threeresults}
If $m_1, m_2, m_3 \in  J_M^1(\sigma)$,   $m_1m_2m_3\in  J_M^1(\sigma)$ iff $m_1m_2 \in J_M^1(\sigma), m_2m_3\in J_M^1(\sigma) $.
\end{lemma}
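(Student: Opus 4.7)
The plan is to exploit the algebra homomorphism $p_1 : \C[I^{lr}_M(\sigma)] \to e^W A e^W$ from Remark \ref{Proj} together with the characterization of $I^1_M(\sigma)$ as $\{m \in I^{lr}_M(\sigma) : p_1(m) \neq 0\}$ and Lemma \ref{Ker1}. The crucial preliminary fact (used throughout) is that for any $m \in J^1_M(\sigma)$ we have $em = m = me$, so $p_1(m) = e^W m e^W$ and products like $(m_1m_2)e = m_1(m_2e) = m_1m_2$ are automatic.

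For the \emph{only if} direction, I would argue as follows. If $m_1m_2m_3 \in J^1_M(\sigma)$, then multiplicativity of $p_1$ gives
\[
0 \neq p_1(m_1m_2m_3) = p_1(m_1)p_1(m_2)p_1(m_3),
\]
so necessarily $p_1(m_1)p_1(m_2) = p_1(m_1m_2) \neq 0$ and $p_1(m_2)p_1(m_3) = p_1(m_2m_3) \neq 0$. Hence $m_1m_2, m_2m_3 \in I^1_M(\sigma)$. Combined with $(m_1m_2)e = m_1m_2$ (using $m_2 e = m_2$) and the corresponding equation for $m_2m_3$, Lemma \ref{Ker1} upgrades $I^1$ to $J^1$ and yields $m_1m_2, m_2m_3 \in J^1_M(\sigma)$.

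For the \emph{if} direction, first observe that $m_1m_2m_3 \in I^{lr}_M(\sigma)$ by closure and satisfies $e(m_1m_2m_3) = m_1m_2m_3 = (m_1m_2m_3)e$ (using $em_1=m_1$ and $m_3 e = m_3$); by Lemma \ref{Ker1}, it therefore suffices to prove $m_1m_2m_3 \in I^1_M(\sigma)$, equivalently that $\C[G^N_{m_1m_2m_3}]$ contains $W \otimes D(W)$ as an $N$-$N$-bimodule. Using Lemma \ref{IJJ} twice, along with $G^N_m = m G^N_e = G^N_e m$ for $m \in J^1$ (from the proof of Lemma \ref{Ker1}), one computes
\[
m_1m_2m_3\, G^N_e \;=\; m_1m_2\, G^N_{m_3} \;=\; m_1\,G^N_{m_2}\,m_3 \;=\; G^N_{m_1m_2}\,m_3 \;\subseteq\; G^N_{m_1m_2m_3}.
\]
The plan is to show that the resulting surjection $G^N_e \twoheadrightarrow G^N_{m_1m_2m_3}$ has kernel contained in $\ker\sigma$, so that $\sigma$ factors through the quotient and $W \otimes D(W)$ appears in $\C[G^N_{m_1m_2m_3}]$.

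The hard part is verifying this kernel containment: one needs $\Stab_{G^N_e}(m_1m_2m_3) \subseteq \ker\sigma$ using only the two hypotheses $\Stab_{G^N_e}(m_1m_2) \subseteq \ker\sigma$ and $\Stab_{G^N_e}(m_2m_3) \subseteq \ker\sigma$. The approach here is to factor the map $G^N_e \to G^N_{m_1m_2m_3}$ as the composite $G^N_e \twoheadrightarrow G^N_{m_1m_2} \to G^N_{m_1m_2m_3}$ given by $g \mapsto gm_1m_2 \mapsto (gm_1m_2)m_3$, which (via the $\circ$-operations on each local monoid and the centricity of $N$) is a well-defined group homomorphism; one then analyzes its kernel by moving $m_3$ across using $N$-centricity and Lemma \ref{IJJ} applied to the pair $(m_2, m_3)$, reducing to the two stabilizer conditions already known. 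With this kernel containment in hand, $\sigma$ descends to a representation of $G^N_{m_1m_2m_3}$, $W\otimes D(W)$ appears in $\C[G^N_{m_1m_2m_3}]$, and the argument is complete via Lemma \ref{Ker1}.
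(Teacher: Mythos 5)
Your ``only if'' half is fine, and it is in fact a shorter route than the paper's: the paper proves $m_1m_2\in J^1_M(\sigma)$ directly by a structural analysis of $\C[G^N_{m_1m_2}]=m_1\C[G^N_{m_2}]$, whereas you simply use multiplicativity of $p_1$ (Remark \ref{Proj}), the characterization $I^1_M(\sigma)=\{m\in I^{lr}_M(\sigma): p_1(m)\neq 0\}$, the inclusion $J^1_M(\sigma)\subseteq I^1_M(\sigma)$, and Lemma \ref{Ker1} together with $(m_1m_2)e=m_1m_2$; since all of these precede the present lemma, there is no circularity and this is a legitimate simplification.

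The ``if'' half, however, has a genuine gap exactly at the step you yourself flag as ``the hard part.'' Two things are asserted without proof: that the map $g\mapsto gm_1m_2m_3$ is \emph{onto} $G^N_{m_1m_2m_3}$ (your computation only gives $m_1m_2m_3G^N_e\subseteq G^N_{m_1m_2m_3}$, and surjectivity in Lemma \ref{GEE} is only available once one already knows $\Hom_N(W,\C[G^N_{m_1m_2m_3}])\neq 0$, i.e.\ essentially the conclusion, and similarly for the apex equality $G^N_{m_1m_2m_3}(m_1m_2m_3)^{[-1]}=G^N_ee^{[-1]}$); and, more seriously, that $\Stab_{G^N_e}(m_1m_2m_3)\subseteq\ker\sigma$ can be obtained by ``moving $m_3$ across using centricity and Lemma \ref{IJJ}.'' Concretely, from $gm_1m_2m_3=m_1m_2m_3$ you would need to cancel $m_3$ on the right (or $m_1$ on the left), but right multiplication by $m_3$ need not be injective on $Nm_1m_2$, and the only control the hypotheses give on it is through $m_2m_3\in J^1_M(\sigma)$, i.e.\ on the $\sigma$-isotypic part only; your sketch never explains how the two stabilizer conditions for $m_1m_2$ and $m_2m_3$ interact to bound the stabilizer of the triple product. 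This interaction is precisely what the paper's proof supplies: it writes the $W\otimes D(W)$-part $\mathcal{W}_1$ of $\C[m_2N]$ as $W'\otimes D(W'')$ inside a single irreducible $M$-$M$-bimodule summand $V'\otimes D(V')$ of $A$, so that left multiplication by $m_1$ and right multiplication by $m_3$ act on independent tensor factors; then $m_1W'\neq 0$ (from $m_1m_2\in J^1$) and $D(W'')m_3\neq 0$ (from $m_2m_3\in J^1$) give $p_1(m_1m_2m_3)\neq 0$, and Lemma \ref{Ker1} finishes. Without this passage to isotypic components (or an equivalent device), the stabilizer-level reduction you propose does not go through; if you repair it, you will essentially be reproducing the paper's $p_1$-argument, so I would advise replacing the second half of your proof by that argument outright.
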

\begin{proof}
  ($\Rightarrow$)  In this case,  $G^N_ee^{[-1]}=G_{m_1m_2m_3}^N(m_1m_2m_3)^{[-1]} \supseteq G^N_{m_1m_2}(m_1m_2)^{[-1]}   \supseteq   G^N_{m_1}m_1^{[-1]} =G^N_ee^{[-1]}$.  So $ G^N_{m_1m_2}(m_1m_2)^{[-1]} =     G^N_ee^{[-1]}$. Then $ G^N_{m_1m_2}= G^N_{m_1m_2}(m_1m_2)^{[-1]} m_1m_2=G^N_ee^{[-1]}m_1m_2=G_e^Nm_1m_2=G_{m_1}^Nm_2=m_1G_e^Nm_2=m_1G_{m_2}^N$. Let us treat $\C[m_2N]$,  $\C[m_1m_2m_3N]$ as $N-N$-sub-bimodules  of $\C[I^{lr}_M(\sigma)]$.

  Assume $m_2N=I^N(m_2) \sqcup G_{m_2}^N$.   Assume $\C[m_2N]\simeq [W\otimes D(W)] \oplus \mathcal{W}$,   $\C[G_{m_2}^N] \simeq [W\otimes D(W)] \oplus \mathcal{W}_1$, $\C[I^N(m_2)] \simeq \mathcal{W}_2$,  as  $N-N$-bimodules. Then  right irreducible submodules of $ \C[G_{m_2}^N]$ have the apex $e$, but those  of $\C[I^N(m_2)]$ have  the  different apexes from $e$.  Then $m_1m_2N=m_1I^N(m_2) \cup m_1G_{m_2}^N=m_1I^N(m_2) \cup G_{m_1m_2}^N$. If $G_{m_1m_2}^N  \subseteq m_1I^N(m_2) $, then $\C[m_1m_2N]=\C[m_1I^N(m_2)]$, and right irreducible submodules of $ \C[m_1m_2N]$ have the  different apexes from $e$. Since $m_1m_2N\subseteq I^{lr}_M(\sigma)$, left irreducible submodules of $ \C[m_1m_2N]$ can not contain $W$ as a subrepresentation. Hence $\C[m_1m_2Nm_3]=\C[m_1m_2m_3N]$ can not contain $W$ as left $N$-modules; this contradicts to $m_1m_2m_3\in J_m^1(\sigma)$. Hence $m_1m_2N=m_1I^N(m_2) \sqcup m_1G_{m_2}^N$. Similarly, $m_1m_2m_3N=m_1I^N(m_2)m_3 \sqcup m_1G_{m_2}^Nm_3$. Note that $m_1I^N(m_2)m_3$ is $N$-stable, and it contains no left $\sigma$-component, also no right $D(\sigma)$-component.

 Since  $\C[G_{m_1m_2m_3}^N]=\C[m_1 G_{m_2}^N m_3] \simeq [W\otimes D(W)]\oplus \mathcal{W''}$, as $N-N$-modules, $p_1(m_1) $ acting on the $W\otimes D(W)$-part of $\C[G_{m_2}^N]$ is not zero.     Therefore $\C[G_{m_1m_2}^N] (= m_1\C[G_{m_2}^N])$,  contains the $D(W)$-part as  right $N$-modules.   Since $m_1m_2 \in I^{lr}_M(\sigma)$,  $\C[G_{m_1m_2}^N]$ contains $W\otimes D(W)$ as $N-N$-bimodules. Hence $m_1m_2\in  J_M^1(\sigma)$. Similarly, $m_2m_3\in        J_M^1(\sigma)$.\\
($\Leftarrow$) Recall  that $0 \longrightarrow \C[I^N(m_2)] \longrightarrow \C[m_2N] \longrightarrow \C[G^N_{m_2}] \longrightarrow 0$,  is  an exact sequence of $N-N$-bimodules and  $I^N(m_2)$ is an $N-N$-biset. Then
$m_2\in J_M^1(\sigma)$ iff $p_1(\C[m_2N]) \neq 0$, $p_1(\C[I^N(m_2)] )=0$.  Since $m_1m_2\in J_M^1(\sigma)$, $p_1(\C[m_1m_2N])  \neq 0$.  As $p_1$ is an algebraic homomorphism, $p_1(\C[m_1m_2N])  =p_1(m_1) p_1(\C[m_2N])$. Let us write $\mathcal{A}: A\simeq \oplus V'\otimes D(V')$ as $M-M$-bimodules, as $V'$ runs through all irreducible representations of $M$.
Let us write $\C[m_2N]= \mathcal{W}_1 \oplus \mathcal{W}_2$, with $\mathcal{W}_i=p_i(\C[m_2N])$. Then $\mathcal{A}(\mathcal{W}_1)\simeq W\otimes D(W)$ as $N-N$-bimodules.  Hence we assume $\mathcal{A}(\mathcal{W}_1) \subseteq V'\otimes D(V')$, and $\mathcal{A}(\mathcal{W}_1)=W' \otimes D(W'')$, with $W' \subseteq V'$, $D(W'') \subseteq D(V')$, $W' \simeq W$, $D(W'') \simeq D(W)$.
Since $m_1m_2\in  J_M^1(\sigma)$, $0\neq p_1(\C[m_1m_2N])= p_1(m_1)  p_1(\C[m_2N])=p_1(m_1)   \mathcal{W}_1=m_1\mathcal{W}_1$.
Therefore    $\mathcal{A}(m_1\mathcal{W}_1)=m_1W'\otimes D(W'') \neq 0$, so $m_1W'\neq 0$. Similarly, $   D(W'') m_3\neq 0$. Hence $0\neq    m_1W'\otimes   D(W'') m_3 =\mathcal{A}(m_1  \mathcal{W}_1m_3)=\mathcal{A}(p_1(m_1\C[m_2N]m_3))$.
So $p_1(m_1m_2m_3) \neq 0$,  $m_1m_2m_3   \in I_M^1(\sigma) $. By Lmm.\ref{Ker1}, $m_1m_2m_3=m_1m_2m_3e\in         J_M^1(\sigma)$.
                                                      \end{proof}

                  \begin{corollary}
                  \begin{itemize}
                  \item[(1)] $ I_M^1(\sigma)$ is a monoid.
                  \item[(2)] $ J_M^1(\sigma)$ is a monoid with the identity element $e$.
                                    \end{itemize}
                  \end{corollary}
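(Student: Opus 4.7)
The plan is to establish (2) first, and then to derive (1) as a consequence. For (2), I first need to verify that $e$ itself lies in $J_M^1(\sigma)$. Since $e \in E(M) \subseteq I^{lr}_M(\sigma)$ and $p_1(e) = e^W e\, e^W = e\, e^W \neq 0$ (as $e^W$ is central in $B = \mathbb{C}[N]$ and $eW = W$), we have $e \in I_M^1(\sigma)$; then Lemma \ref{Ker1} gives $e \cdot e = e \in J_M^1(\sigma)$. That $e$ behaves as a two-sided identity on $J_M^1(\sigma)$ is exactly the property $em = m = me$ for $m \in J_M^1(\sigma)$ noted in the discussion preceding Lemma \ref{threeresults} (it follows from Lemma \ref{GEE}(1) and its dual, once one verifies that $J_M^1(\sigma)$ is left-right symmetric, which comes from Lemma \ref{symm}).

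The decisive step for closure of $J_M^1(\sigma)$ under the multiplication of $M$ is a slick application of Lemma \ref{threeresults}. Given $m_1, m_2 \in J_M^1(\sigma)$, apply that lemma to the triple $(m_1, e, m_2)$, whose entries all lie in $J_M^1(\sigma)$. The two pairwise products $m_1 \cdot e = m_1$ and $e \cdot m_2 = m_2$ are automatically in $J_M^1(\sigma)$, so the lemma yields $m_1 e m_2 \in J_M^1(\sigma)$; using $em_2 = m_2$ again, this product equals $m_1 m_2$, completing the proof of (2).

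For (1), note $1 \in I^{lr}_M(\sigma)$ with $p_1(1) = e^W \neq 0$, so $1 \in I_M^1(\sigma)$. For closure, take $m_1, m_2 \in I_M^1(\sigma)$. By definition of $I_M^1(\sigma)$, choose $m' \in J_M^1(\sigma)$ with $m_2 m' \in J_M^1(\sigma)$. By Lemma \ref{Ker1}, $m_1 e \in J_M^1(\sigma)$. Since part (2) tells us $J_M^1(\sigma)$ is a monoid, the product $(m_1 e)(m_2 m')$ lies in $J_M^1(\sigma)$. But $m_2 m' \in J_M^1(\sigma)$ gives $e(m_2 m') = m_2 m'$, hence $(m_1 e)(m_2 m') = m_1 m_2 m'$. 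Thus $m' \in J_M^1(\sigma)$ is a witness showing $m_1 m_2 \in I_M^1(\sigma)$.

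The main conceptual obstacle is that the clean characterization $I_M^1(\sigma) = \{m : p_1(m) \neq 0\}$ is \emph{not} directly amenable to showing closure, because $e^W A e^W$ is a product of matrix algebras in which two nonzero elements can multiply to zero. The argument must instead route through the local structure provided by Lemma \ref{threeresults}, where the insertion of the idempotent $e$ between two elements of $J_M^1(\sigma)$ is precisely what makes the three-element criterion collapse to a two-element closure statement. Once (2) is in hand, closure of $I_M^1(\sigma)$ is a short manipulation that leverages the identity $em = m$ only for elements that already lie in $J_M^1(\sigma)$, which is exactly where that identity is available.
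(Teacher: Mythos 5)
Your proof is correct and follows essentially the same route as the paper: part (2) by applying Lemma \ref{threeresults} to the triple $(m_1,e,m_2)$ via $m_1e=m_1$, $em_2=m_2$, and part (1) by reducing to part (2) through Lemma \ref{Ker1} and the identity $em=m=me$ on $J_M^1(\sigma)$. The only cosmetic differences are that you verify $e\in J_M^1(\sigma)$ explicitly and use a general witness $m'$ in (1), where the paper simply takes $e$ itself as the witness, computing $m_1m_2e=(m_1e)(m_2e)\in J_M^1(\sigma)$.
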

                  \begin{proof}
                  2)  If $m_1, m_2\in J_M^1(\sigma)$, then $m_1=m_1e$, $em_2=m_2$. By the above lemma, $m_1m_2=m_1em_2 \in J_M^1(\sigma)$.\\
                  1) Clearly, $1\in  I_M^1(\sigma)$.  If $m_1, m_2\in I_M^1(\sigma)$, then $m_ie\in J_M^1(\sigma)$. Hence $m_1m_2e=m_1em_2e\in J_M^1(\sigma)$.  By definition, $m_1m_2\in I_M^1(\sigma)$.
   \end{proof}

   \begin{definition}
   Let $I_{M}(\sigma)=I^1_{M}(\sigma)N=NI^1_{M}(\sigma)$, $J_{M}(\sigma)=J^1_{M}(\sigma)N=NJ^1_{M}(\sigma)$
   \end{definition}
 Notice that $I^{lr}_M(\sigma) \supseteq I_M(\sigma) \supseteq J_M(\sigma)$, and $I_{M}(\sigma) \setminus I^1_{M}(\sigma)\subseteq I^0_{M}(\sigma)$. Moreover,  $p_1(\C[I_M^1(\sigma)])=p_1(\C[I_M(\sigma)]) \supseteq p_1(\C[J_M(\sigma)])= p_1(\C[J_M^1(\sigma)]) \supseteq e^W Ae^W$. Hence they are all equal.   If we replace  $I^{lr}_M(\sigma)$ by $I_M(\sigma)$ or $J_M(\sigma)$ in Lmms. \ref{semisi1}, \ref{semisi2}, the two  results also hold.

 \subsubsection{}\label{threeno}
   By abuse of notations, we let $\frac{J_M^1(\sigma)}{N}=\{ G_m^N \mid G_m^N \subseteq J_M^1(\sigma)\}$, and         $\frac{I_M^1(\sigma)}{N}=\{ G_m^N \mid G_m^N \subseteq I_M^1(\sigma)\}$.
                                    Let $\{m_1, \cdots, m_{\alpha}\}$     be a complete representatives of $\frac{  J_M^1(\sigma)}{N}$ in $J_M^1(\sigma)$, and  assume $m_1=e$.   Let $\{m_1, \cdots, m_{\alpha+\beta}\}$ resp. $\{m_{1}, \cdots,
                                    m_{\alpha+\beta+ \gamma}\}$, be  complete representatives of $\frac{  I_M^1(\sigma)}{N}$ in $I_M^1(\sigma)$ resp.  of $\frac{  I_M(\sigma)}{N}$ in $I_M(\sigma)$.   For simplicity, we may assume $1=m_i$, for some $i$.  Then:
                                     $$\C[I_M(\sigma)] =\oplus_{i=1}^{\alpha+\beta+\gamma}\C[G_{m_i}^N] (\textrm{ as right } N-\textrm{modules}),$$
                                    $$\C[I^1_M(\sigma)] =\oplus_{i=1}^{\alpha+\beta}\C[G_{m_i}^N] (\textrm{ as right } N-\textrm{modules}),$$
                                    $$\Ind^{I_{M}(\sigma)}_N \sigma\simeq \ind^{I_{M}(\sigma)}_N \sigma= \oplus_{1\leq i\leq \alpha } \C[G_{m_i}^N]\otimes_{\C[N]} W =  \oplus_{1\leq i\leq \alpha }  m_i\otimes W.$$
For  $1\leq i \leq \alpha$,  $e\otimes  W \simeq  m_i\otimes W$, as $N$-modules.   For  $1\leq i\leq \alpha$, as  $m_i^{[-1]}G_{m_i}^N=G_{m_i}^Nm_i^{[-1]}=e^{[-1]}G_{e}^N=G_{e}^Ne^{[-1]}$; for $n\notin G_{e}^Ne^{[-1]}$,  $ n\C[G_{m_i}^N]=0$, and for $n\in G_{e}^Ne^{[-1]}$, $ n\C[G_{m_i}^N] \subseteq \C[G_{m_i}^N]$. For $\alpha+\beta+1\leq j\leq \alpha+\beta+\gamma$, $n\in N$, $nm_j  \C[G_{m_i}^N]=0$.

\subsection{}\label{ii}
For $1\leq i \leq \alpha+\beta$, let   $\epsilon_{m_i}$ be an     $N$-isomorphism from $e\otimes  W \simeq  m_ie \otimes W$, i.e. $\epsilon_{m_i}(ne\otimes w)=n\epsilon_{m_i}(e\otimes w)$, for  any  $n\in N$, $w\in W$.  If $m_i=1$, or $m_i=e$, we will let $\epsilon_{m_i}=\Id$. Notice that  two different   $N$-isomorphisms  will differ by a constant  number of  $ \C^{\times}$. More precisely, let us write $\epsilon_{m_i}(e\otimes w)=m_i e\otimes \mathfrak{e}_{m_i}(w)$. Recall
$l_l: G_e^N e^{[-1]} \twoheadrightarrow G_e^N \twoheadrightarrow  G_{m_ie}^N; n \longmapsto ne,  g\longmapsto gm_ie $, and $l_r: e^{[-1]} G_e^N \twoheadrightarrow G_e^N \twoheadrightarrow  G_{m_ie}^N; n \longmapsto en,  g\longmapsto m_i eg$.     For $w\in W$, $g=ne=en'\in G_e^N $, $gw=new=nw=en'w=n'w$.       For $g\in G_e^N$, we write $gm_ie=m_ie g^{m_i}$, for some $g^{m_i} \in G_e^N$.
\begin{lemma}\label{fmm}
$\mathfrak{e}_{m_i}\in \End_{\C}(W)$ and $\mathfrak{e}_{m_i}(gw)=g^{m_i} \mathfrak{e}_{m_i}(w)$, for $g\in G_e^N$.
\end{lemma}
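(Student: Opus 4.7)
The proof should be essentially a direct unwinding of the definitions combined with the $N$-equivariance of $\epsilon_{m_i}$, so I would structure it as follows.

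First, I would establish $\C$-linearity. By construction, $\epsilon_{m_i}\colon e\otimes W \to m_ie\otimes W$ is a $\C$-linear map (being an $N$-module isomorphism), and the canonical identifications $e\otimes W \simeq W$ and $m_ie\otimes W\simeq W$ (inherent in the definition of the spaces $m\otimes W$ given just before the lemma) are $\C$-linear bijections. Since $\mathfrak{e}_{m_i}(w)$ is defined as the unique element of $W$ such that $\epsilon_{m_i}(e\otimes w)=m_ie\otimes \mathfrak{e}_{m_i}(w)$, it is $\C$-linear as a composition of three $\C$-linear maps. This gives $\mathfrak{e}_{m_i}\in \End_{\C}(W)$.

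Next, I would verify the equivariance formula by applying $\epsilon_{m_i}$ to the two sides of the equation $g\cdot(e\otimes w)=e\otimes gw$ and comparing. The key point is to establish this last equality using the module structure recalled in Subsection \ref{ii}: for $g\in G_e^N\subseteq eNe$ we have $ge=g=eg$, so by the rule $n(m\otimes w)=m\otimes n'w$ applied with $n=g$, $m=e$, $n'=g$, one gets $g\cdot(e\otimes w)=e\otimes gw$. Similarly, since $gm_ie=m_ieg^{m_i}$ by the definition of $g^{m_i}$, one obtains $g\cdot(m_ie\otimes w)=m_ie\otimes g^{m_i}w$. Then $N$-equivariance of $\epsilon_{m_i}$ yields
\[
m_ie\otimes \mathfrak{e}_{m_i}(gw)=\epsilon_{m_i}(e\otimes gw)=\epsilon_{m_i}(g\cdot(e\otimes w))=g\cdot \epsilon_{m_i}(e\otimes w)=m_ie\otimes g^{m_i}\mathfrak{e}_{m_i}(w),
\]
and dropping the formal $m_ie\otimes$ gives the claim.

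I do not expect any serious obstacle here; the only subtlety is making sure the rule $n(m\otimes w)=m\otimes n'w$ is unambiguous when $n=g\in G_e^N$, which was already handled in the well-definedness discussion preceding the lemma (the potential ambiguity in choice of $n'$ is absorbed because $\Stab^r_{G_e^N}(m_i)$ acts trivially on $W$ through $\sigma$). So the main care in the writeup is simply to point to those earlier verifications rather than re-prove them.
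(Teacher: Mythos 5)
Your proposal is correct and follows essentially the same route as the paper: both sides compare $\epsilon_{m_i}$ applied to the $g$-translate of $e\otimes w$, using the $N$-equivariance of $\epsilon_{m_i}$ together with the module rules $g\cdot(e\otimes w)=e\otimes gw$ and $g\cdot(m_ie\otimes v)=m_ie\otimes g^{m_i}v$. The only cosmetic difference is that the paper writes $g=ne=en'$ and acts through $n$, whereas you act by $g$ directly via $ge=g=eg$; the linearity remark is a harmless addition the paper leaves implicit.
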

\begin{proof}
$\epsilon_{m_i}(ge\otimes w)=\epsilon_{m_i}(e\otimes n'w)=\epsilon_{m_i}(e\otimes nw)=m_i e\otimes \mathfrak{e}_{m_i}(nw)=m_ie \otimes \mathfrak{e}_{m_i}(gw)$; $\epsilon_{m_i}(ge\otimes w)=n (m_i e \otimes \mathfrak{e}_{m_i}(w))=gm_ie \otimes \mathfrak{e}_{m_i}(w)=m_ie \otimes g^{m_i}\mathfrak{e}_{m_i}(w)$. Hence the equality holds.
\end{proof}
 If  $\mathfrak{e}_{m_i}$ satisfies the above conditions,  then it will give a corresponding $  \epsilon_{m_i}$.

 \begin{lemma}\label{isom}
 For the above $m_i$, $g \longrightarrow  g^{m_i}$,  defines a group isomorphism from  $\frac{G_e^N}{\Stab^l_{G_e^N}(m_ie)}$ onto $\frac{G_e^N}{\Stab^r_{G_e^N}(m_ie)}$ .
  \end{lemma}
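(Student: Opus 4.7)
The plan is to interpret the map $g\mapsto g^{m_i}$ as a comparison between two surjective group homomorphisms $G_e^N\twoheadrightarrow G_{m_ie}^N$, one on each side. Concretely, applying Lemma \ref{GEE} (together with its right-sided dual) to $m_ie$ in place of $m$ gives two surjective group homomorphisms
\[
l_l: G_e^N\twoheadrightarrow G_{m_ie}^N,\ g\mapsto gm_ie,\qquad l_r: G_e^N\twoheadrightarrow G_{m_ie}^N,\ g\mapsto m_ie\, g,
\]
whose kernels are precisely $\Stab^l_{G_e^N}(m_ie)$ and $\Stab^r_{G_e^N}(m_ie)$. The defining equation $gm_ie=m_ie\, g^{m_i}$ from the paragraph preceding Lemma \ref{fmm} is exactly the equation $l_l(g)=l_r(g^{m_i})$, so existence of $g^{m_i}$ is given by the surjectivity of $l_r$, and $g^{m_i}$ is determined modulo $\ker(l_r)=\Stab^r_{G_e^N}(m_ie)$. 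This already provides a well-defined map $\phi:G_e^N\longrightarrow G_e^N/\Stab^r_{G_e^N}(m_ie)$.

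Second, I would verify that $\phi$ is a group homomorphism by the short computation $(g_1g_2)m_ie=g_1(m_ie\, g_2^{m_i})=m_ie\, g_1^{m_i}g_2^{m_i}$, which shows $(g_1g_2)^{m_i}\equiv g_1^{m_i}g_2^{m_i}\pmod{\Stab^r_{G_e^N}(m_ie)}$. Third, I would compute the kernel: $g\in\ker\phi$ iff $g^{m_i}\in\Stab^r_{G_e^N}(m_ie)=\ker l_r$, iff $l_r(g^{m_i})=m_ie$, iff $l_l(g)=m_ie$, iff $g\in\Stab^l_{G_e^N}(m_ie)$. By the first isomorphism theorem, $\phi$ therefore descends to an injective group homomorphism
\[
\bar\phi:\tfrac{G_e^N}{\Stab^l_{G_e^N}(m_ie)}\hookrightarrow \tfrac{G_e^N}{\Stab^r_{G_e^N}(m_ie)}.
\]

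Finally, surjectivity is immediate in two ways. Either appeal to a cardinality argument: both quotients have cardinality $|G_{m_ie}^N|$ by the first isomorphism theorem applied to $l_l$ and $l_r$, so $\bar\phi$ is a bijection. Or construct a preimage directly: given any $h\in G_e^N$, the surjectivity of $l_l$ produces $g\in G_e^N$ with $gm_ie=l_l(g)=l_r(h)=m_ie\, h$, which is exactly $g^{m_i}\equiv h\pmod{\Stab^r_{G_e^N}(m_ie)}$.

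There is no real obstacle here; the only thing to be careful about is invoking Lemma \ref{GEE} in the right form to guarantee that $l_l$ and $l_r$ land inside (and surject onto) the \emph{same} group $G_{m_ie}^N$, which is why the statement is phrased with the element $m_ie$ rather than a general $m$—the hypothesis $em=m$ in Lemma \ref{GEE}(1) is automatic for $m_ie$, and Lemma \ref{Ker1} (which shows $m_ie\in J_M^1(\sigma)$ whenever $m_i\in I_M^1(\sigma)$) ensures the right-sided analog of Lemma \ref{GEE} applies symmetrically. Once these identifications are in place, the lemma is essentially the statement that the two quotients are two presentations of the same group $G_{m_ie}^N$.
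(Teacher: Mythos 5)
Your proposal is correct and takes essentially the same route as the paper: both treat $g\mapsto g^{m_i}$ through the two homomorphisms $l_l(g)=gm_ie$ and $l_r(g)=m_ie\,g$ onto $G_{m_ie}^N$ with kernels $\Stab^l_{G_e^N}(m_ie)$ and $\Stab^r_{G_e^N}(m_ie)$ (the paper cites Lemma \ref{grophom}(3) for this), and verify multiplicativity by the same computation $g_1g_2m_ie=m_ie\,g_1^{m_i}g_2^{m_i}$. Your write-up simply makes explicit the well-definedness, kernel identification, and surjectivity bookkeeping that the paper's terse proof leaves implicit.
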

  \begin{proof}
  By Lmm.\ref{grophom}(3),  $l_l: G_e^N \longrightarrow G_{m_ie}^N; g \longmapsto gm_ie$,  $l_r: G_e^N \longrightarrow G_{m_ie}^N; g \longmapsto m_ieg$, both are group homomorphisms with the  kernels $\Stab^l_{G_e^N}(m_ie)$, $\Stab^r_{G_e^N}(m_ie)$ respectively.
  For $g_1, g_2 \in G_e^N$, $g_1g_2m_ie=g_1m_i eg_2^{m_i}=m_ieg_1^{m_i} g_2^{m_i}$. Therefore the result holds.
  \end{proof}
\begin{lemma}
If $m_1, m_1'\in J_M^1(\sigma)$, and $m_1'=nm_1=nem_1=m_1en'$, for some $n, n'\in N$, then $ne, en'\in J_M^1(\sigma)$.
\end{lemma}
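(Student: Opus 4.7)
The plan is to handle $ne$ and $en'$ separately, combining Lemma~\ref{Ker1} with the symmetric characterization of $I_M^1(\sigma)$. First note that both $ne$ and $en'$ lie in $I^{lr}_M(\sigma)$, since $N\subseteq I^{lr}_M(\sigma)$, $E(M)\subseteq I^{lr}_M(\sigma)$, and $I^{lr}_M(\sigma)$ is a submonoid.

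For $ne$, the identity $(ne)\cdot m_1=nem_1=m_1'\in J_M^1(\sigma)$ together with $m_1\in J_M^1(\sigma)$ shows, directly from the definition of $I_M^1(\sigma)$, that $ne\in I_M^1(\sigma)$. Lemma~\ref{Ker1} then yields $(ne)e\in J_M^1(\sigma)$, and since $(ne)e=ne^2=ne$ we conclude $ne\in J_M^1(\sigma)$.

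For $en'$, the definition of $I_M^1(\sigma)$ uses right-hand products $mm_i$, which does not apply directly to the identity $m_1\cdot(en')=m_1'$. I would instead invoke the characterization $I_M^1(\sigma)=\{m\in I^{lr}_M(\sigma):p_1(m)\neq 0\}$ established just before Lemma~\ref{IJJ}. Since $p_1$ is an algebra homomorphism on $\C[I^{lr}_M(\sigma)]$ by Remark~\ref{Proj}, applying it to $m_1\cdot(en')=m_1'$ gives $p_1(m_1)p_1(en')=p_1(m_1')$, which is nonzero because $m_1'\in J_M^1(\sigma)\subseteq I_M^1(\sigma)$; hence $p_1(en')\neq 0$ and $en'\in I_M^1(\sigma)$. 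To upgrade this to $en'\in J_M^1(\sigma)$ I would then invoke the left--right dual of Lemma~\ref{Ker1}, namely that $m\in I_M^1(\sigma)$ implies $em\in J_M^1(\sigma)$, and apply it to $en'$ to obtain $e(en')=en'\in J_M^1(\sigma)$.

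The main obstacle is that Lemma~\ref{Ker1} is stated only in the asymmetric form ``$m\mapsto me$'', so the argument for $en'$ hinges on its dual. I expect this dual to go through by repeating the proof of Lemma~\ref{Ker1} with all left $N$-actions and the map $\iota_l$ of Section~\ref{GnN} replaced by their right-hand counterparts: Lemma~\ref{symm} ($e^{[-1]}G_e^N=G_e^Ne^{[-1]}$) and the right-module half of Lemma~\ref{SImilarly} supply the required symmetry, and the verification that $\C[G_{em}^N]$ contains a $W\otimes D(W)$-summand reduces to the mirror image of the calculation $me^WB=e^WmBe^W\simeq W\otimes D(W)$ carried out in the proof of Lemma~\ref{Ker1}.
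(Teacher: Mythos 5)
Your argument for $ne$ is complete and correct: $ne\in N\subseteq I^{lr}_M(\sigma)$, the identity $(ne)m_1=m_1'$ with $m_1,m_1'\in J_M^1(\sigma)$ puts $ne$ in $I_M^1(\sigma)$ by definition, and Lemma \ref{Ker1} with $(ne)e=ne$ finishes. For $en'$ you are also fine up to $en'\in I_M^1(\sigma)$: using Remark \ref{Proj} and the characterization $I_M^1(\sigma)=\{m\in I^{lr}_M(\sigma)\mid p_1(m)\neq0\}$ is legitimate. The genuine gap is the last step, where you invoke a left--right dual of Lemma \ref{Ker1} that the paper never proves. Dualizing it is not purely mechanical: the proof of Lemma \ref{Ker1} takes as input the right-handed membership criterion for $I_M^1(\sigma)$ (some $m_i\in J_M^1(\sigma)$ with $mm_i\in J_M^1(\sigma)$) to establish $mG_e^N=G_{me}^N$ and $(me)^{[-1]}G_{me}^N=e^{[-1]}G_e^N$ before the $p_1$-calculation, so a mirror-image proof would also require dualizing the lemma preceding Lemma \ref{IJJ} that identifies $I_M^1(\sigma)$ with $\{p_1\neq0\}$; your input for $en'$ is only $p_1(en')\neq0$. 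The dual is presumably true, but as written your proof defers the decisive step to an unproved statement.

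The gap closes easily without any dualization, precisely because $en'\in N$: apply the original Lemma \ref{Ker1} to $en'\in I_M^1(\sigma)$ to get $(en')e=en'e\in J_M^1(\sigma)\cap N=G_e^N$ (this intersection is the identification used in Section \ref{struc}); then $en'\in G_e^Ne^{[-1]}=e^{[-1]}G_e^N$ by Lemma \ref{symm}, hence $en'=e(en')\in G_e^N\subseteq J_M^1(\sigma)$. Note also that the paper's own proof is a different and much shorter route: since $m_1'=m_1en'\in m_1N$ and both $\C[G_{m_1}^N]$, $\C[G_{m_1'}^N]$ have left components of apex $e$, Lemma \ref{SImilarly} forces $G_{m_1'}^N=G_{m_1}^N$, so $n\in G_{m_1}^Nm_1^{[-1]}=G_e^Ne^{[-1]}$ and $en'\in m_1^{[-1]}G_{m_1}^N=e^{[-1]}G_e^N$ (Lemma \ref{GEE}(2) and Lemma \ref{symm}), giving $ne,en'\in G_e^N\subseteq J_M^1(\sigma)$ directly, without the $I_M^1$/$p_1$ machinery.
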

\begin{proof}
Note that $m'_1=nm_1\in m_1N$. Then left irreducible   components of  $\C[G_{m_1'}]$, $\C[G_{m_1}]$  both have apexes $e$. Hence $G_{m'_1}^N=G_{m_1}^N$.  So $m'_1=nm_1\in G_{m_1}^N$, and $n\in G_e^N e^{[-1]}$. Hence $ne\in G_e^N \subseteq J_M^1(\sigma)$.  Similarly, $en'\in J^1_M(\sigma)$.
\end{proof}
  Recall that $W$ is indeed an irreducible representation of $\frac{G_e^N}{\Stab^l_{G_e^N}(m_ie)}$. Let $\kappa_1$ denote the   order of the group $\Aut(G_{m_ie}^N)$, and $\kappa_0=\dim W$, $\kappa=\kappa_1\kappa_0$.  \footnote{Here we use two integer numbers, which is a slight different from  the discussion in  \cite[p.372]{Ri2}, where  one integer  is hided in the other integer by group representation theory.} Let  $F^{\times}=\{ \frac{2\pi i k}{\kappa}  \mid 0\leq k\leq \kappa-1\} \subseteq \C^{\times}$.
   \begin{lemma}
  Each  $\epsilon_{m_i}$ can be extended uniquely to an element $\mathfrak{E}_{m_i}\in \End_{I_{M}(\sigma)}(\Ind^{I_{M}(\sigma)}_N \sigma) $, given by $ \mathfrak{E}_{m_i}(m_j \otimes w)=  m_j \epsilon_{m_i}(e\otimes w)=m_jm_ie\otimes \mathfrak{e}_{m_i}(w)=
  m_q \otimes n_{ji}\mathfrak{e}_{m_i}(w) $, for $m_jm_ie=m_q n_{ji}$, $1\leq j, q\leq \alpha$, $1\leq i\leq \alpha+\beta$.
  \end{lemma}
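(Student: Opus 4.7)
The plan is to construct $\mathfrak{E}_{m_i}$ via Frobenius reciprocity for the induced representation $\ind^{I_M(\sigma)}_N \sigma$, handling existence and uniqueness simultaneously, and then to read off the stated explicit formula.

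First I will invoke the general adjunction $\Hom_{I_M(\sigma)}(\ind^{I_M(\sigma)}_N W, X) \simeq \Hom_N(W, X)$, which requires no semisimplicity hypothesis. Combined with the identification $\ind^{I_M(\sigma)}_N \sigma \simeq \Ind^{I_M(\sigma)}_N \sigma$ recorded just above the statement (proved as in Lemma \ref{semisi2}, with $I^{lr}_M(\sigma)$ replaced by $I_M(\sigma)$), this yields a natural isomorphism
\[
\End_{I_M(\sigma)}(\Ind^{I_M(\sigma)}_N \sigma) \;\simeq\; \Hom_N\bigl(W, \Ind^{I_M(\sigma)}_N \sigma\bigr),
\]
under which an endomorphism $\Phi$ corresponds to $w \mapsto \Phi(1\otimes w)$. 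Identifying $W$ with $e\otimes W = 1\otimes W \subset \Ind^{I_M(\sigma)}_N \sigma$ (legitimate because $\sigma(e)$ acts as the identity on $W$), the given $N$-isomorphism $\epsilon_{m_i}: e\otimes W \to m_ie\otimes W$, viewed as a map into $\Ind^{I_M(\sigma)}_N \sigma$, corresponds under the bijection to a unique $\mathfrak{E}_{m_i} \in \End_{I_M(\sigma)}(\Ind^{I_M(\sigma)}_N \sigma)$. This settles existence and uniqueness, and the extension property $\mathfrak{E}_{m_i}|_{e\otimes W} = \epsilon_{m_i}$ holds by construction.

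Next I will unwind the Frobenius correspondence to recover the stated formula. The adjunction forces $\mathfrak{E}_{m_i}(m \otimes w) = m \cdot \epsilon_{m_i}(e\otimes w)$ for every $m \in I_M(\sigma)$ and $w\in W$. Specialising to $m = m_j$ with $1\leq j \leq \alpha$, we have $m_j \in J_M^1(\sigma)$, so $m_je = m_j$ and therefore $m_j \otimes w = m_j(e\otimes w)$; $I_M(\sigma)$-equivariance then gives
\[
\mathfrak{E}_{m_i}(m_j \otimes w) \;=\; m_j\,\epsilon_{m_i}(e\otimes w) \;=\; m_j\bigl(m_ie \otimes \mathfrak{e}_{m_i}(w)\bigr) \;=\; m_j m_i e \otimes \mathfrak{e}_{m_i}(w).
\]
When $m_j m_i e \in J_M^1(\sigma)$ it lies in a unique orbit $G^N_{m_q}$ with $1\leq q \leq \alpha$; writing $m_j m_i e = m_q n_{ji}$ and using the tensor relation $m_q n_{ji} \otimes w' = m_q \otimes n_{ji} w'$ yields the displayed identity.

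The main obstacle to watch for is the consistency of the formula in edge cases. The apparent ambiguity of $n_{ji}$ (determined only modulo the kernel of $n \mapsto m_q n$) is automatically absorbed by the tensor relation, since $m_q n_{ji} = m_q n'_{ji}$ in $I_M(\sigma)$ forces $m_q \otimes (n_{ji} - n'_{ji})\mathfrak{e}_{m_i}(w) = 0$ in $\Ind^{I_M(\sigma)}_N \sigma$. When $m_j m_i e \notin J_M^1(\sigma)$ (equivalently $m_j m_i \in I_M^0(\sigma)$), the corresponding summand $\C[G^N_{m_j m_i e}] \otimes_{\C[N]} W$ of $\Ind^{I_M(\sigma)}_N \sigma$ vanishes by the decomposition $\oplus_{1\leq k \leq \alpha} \C[G_{m_k}^N] \otimes_{\C[N]} W$ recorded before the lemma, so the right-hand side of the formula evaluates to zero, consistent with $p_1(m_j m_i e) = 0$. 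No further ingredient beyond tracking these bookkeeping points is needed.
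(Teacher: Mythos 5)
Your proof is correct, and it reaches the lemma by a genuinely different route from the paper's. The paper defines $\mathfrak{E}_{m_i}$ by the displayed formula and then verifies by hand that it is well defined and $I_M(\sigma)$-equivariant, via a case analysis ($m=m_t$ with $m_tm_j=n_1m_s=m_sn_1'$, then $m=nm_t$), with uniqueness read off from equivariance on the generators $m_j\otimes w=m_j(e\otimes w)$. You instead get existence and uniqueness in one stroke from the tensor--hom adjunction $\Hom_{I_M(\sigma)}(\ind^{I_M(\sigma)}_N W, X)\simeq \Hom_N(W,X)$ (already proved in the paper's Frobenius reciprocity for $\ind$), combined with the identification $\ind^{I_M(\sigma)}_N\sigma\simeq\Ind^{I_M(\sigma)}_N\sigma$ recorded before the lemma, applied to the $N$-map $w\mapsto\epsilon_{m_i}(e\otimes w)$ (legitimate since $\sigma(e)=\id_W$ gives $e\otimes w=1\otimes w$, and the abstract module $m_ie\otimes W$ sits inside the induced module via $m\otimes W\simeq\C[G_m^N]\otimes_N W$); the explicit formula then falls out by unwinding, exactly as you do. What your approach buys is that $I_M(\sigma)$-equivariance and well-definedness come for free, with no element-level case analysis; what the paper's direct computation buys is explicitness and independence from the $\ind\simeq\Ind$ identification. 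Your uniqueness step coincides with the paper's. One remark: the final ``edge case'' $m_jm_ie\notin J^1_M(\sigma)$ is actually vacuous in the stated range, since $m_j\in J^1_M(\sigma)\subseteq I^1_M(\sigma)$ and $m_i\in I^1_M(\sigma)$ give $m_jm_i\in I^1_M(\sigma)$ (a monoid), hence $m_jm_ie\in J^1_M(\sigma)$ by Lemma \ref{Ker1}; including that case is harmless but unnecessary.
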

  \begin{proof}
Part (1):   the uniqueness.   Since $m_j \otimes w=m_j (e\otimes w)$, $\mathfrak{E}_{m_i}(m_j (e\otimes w))=m_j\mathfrak{E}_{m_i}(e\otimes w)=m_j \epsilon_{m_i}(e\otimes w)$.

Part (2):  it is well-defined. Note that $I_{M}(\sigma) \setminus I^1_{M}(\sigma)  \subseteq I^0_{M}(\sigma)$. Hence it reduces to consider elements in  $I^1_{M}(\sigma)$.

(a) If  $m=m_t $,  for $1\leq t\leq \alpha+\beta$,   we assume $m_tm_j=m_tem_j=n_1 m_s=m_sn_1'$,  for some $1\leq s\leq \alpha$.
$$\mathfrak{E}_{m_i}(m m_j\otimes w)=\mathfrak{E}_{m_i}(m_tm_j\otimes w)=\mathfrak{E}_{m_i}(m_s\otimes n_1' w)$$
$$=m_s \epsilon_{m_i}(e\otimes n_1' w)=m_s \epsilon_{m_i}(en_1'\otimes  w)=m_sen_1' \epsilon_{m_i}(e\otimes w)$$
$$=m_tm_j \epsilon_{m_i}(e\otimes w)=m\mathfrak{E}_{m_i}(m_j \otimes w).$$

(b) If  $m=nm_t=m_tn' $, $n'm_j=m_jn''$,   for $1\leq t\leq \alpha+\beta$, $\mathfrak{E}_{m_i}(m m_j\otimes w)=\mathfrak{E}_{m_i}(m_t m_j\otimes n''w)=m_t\mathfrak{E}_{m_i}( m_j\otimes n''w)=m_tn'\mathfrak{E}_{m_i}( m_j\otimes w)=m\mathfrak{E}_{m_i}( m_j\otimes w)$.
   \end{proof}

   For $m_i\in  I^1_{M}(\sigma)$,   $1\leq i\leq \alpha+\beta$, we choose $\mathfrak{E}_{m_i}$ and $\mathfrak{e}_{m_i}$ such that $\mathfrak{e}^{\kappa_1}_{m_i}=\id_W \in \End_{\C}(W)$. By Lmm.\ref{fmm}, such $\mathfrak{e}_{m_i}$ exists.
   For $1\leq i, j\leq \alpha+\beta$, assume $m_im_j=m_t n=n'm_t$, for $1\leq t\leq \alpha+\beta$. Then $m_iem_je=m_im_je=n'm_t e$, with $m_t e\in J_M^1(\sigma)$.  Then $[\mathfrak{e}_{m_i} \circ \mathfrak{e}_{m_j}]^{\kappa_1}( gw)=((g^{m_j})^{{m_i}\cdots})[\mathfrak{e}_{m_i} \circ \mathfrak{e}_{m_j}]^{\kappa_1}(w)=g[\mathfrak{e}_{m_i} \circ \mathfrak{e}_{m_j}]^{\kappa_1}(w)$.
   Therefore $[\mathfrak{e}_{m_i} \circ \mathfrak{e}_{m_j}]^{\kappa_1}=c \Id_W$. Since $\mathfrak{e}_{m_i}^{\kappa_1}=\Id_W$, $\mathfrak{e}_{m_j}^{\kappa_1}=\Id_W$, by considering  their determinants, we get $c^{\kappa_0}=1$. Note that $\mathfrak{E}_{m_i} \circ           \mathfrak{E}_{m_j}$ is determined by $\mathfrak{e}_{m_i} \circ           \mathfrak{e}_{m_j}$, which is different from $ \mathfrak{e}_{m_t}$ by a constant of $F^{\times}$. Therefore:
  \begin{equation}\label{muti}
   \mathfrak{E}_{m_i} \circ           \mathfrak{E}_{m_j}=\alpha(m_i, m_j)   \mathfrak{E}_{m_t}
   \end{equation}
   for some $\alpha(m_i, m_j)  \in F^{\times}$.  Moreover, we choose $ \mathfrak{E}_{1}$ to be the identity map.  Hence   $\alpha(1, m_j)=\alpha(m_j, 1)=1$. For each $[m_i]\in \frac{I_M^1(\sigma)}{N}$, we can let $\mathfrak{E}_{[m_i]}=\mathfrak{E}_{m_i}$, $\alpha([m_i], [m_j])=\alpha(m_i, m_j)$.
    \begin{lemma}
 \begin{itemize}
 \item[(1)] $ \mathfrak{E}_{[m_i]} \circ           \mathfrak{E}_{[m_j]}=\alpha([m_i], [m_j])   \mathfrak{E}_{[m_im_j]}  $.
\item[(2)]  $\alpha(-,-)$ defines a     multiplier from $\frac{I^1_M(\sigma)}{N} \times \frac{I^1_M(\sigma)}{N}$ to $F^{\times}$.
\end{itemize}
  \end{lemma}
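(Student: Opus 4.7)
The plan is to derive both claims from the definition $\mathfrak{E}_{m_i}\circ \mathfrak{E}_{m_j}=\alpha(m_i,m_j)\mathfrak{E}_{m_t}$ already established at the level of chosen representatives in equation (\ref{muti}), by first passing to classes and then using associativity of composition in $\End_{I_M(\sigma)}(\Ind^{I_M(\sigma)}_N\sigma)$.

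For part (1), I first need to check that $\mathfrak{E}_{[m_i]}:=\mathfrak{E}_{m_i}$ is a legitimate definition, i.e., that the formula descends to classes. Given any other representative $m_i'$ of $[m_i]=G^N_{m_i}$, one has $m_i' = gm_i$ for some $g\in G^N_e e^{[-1]}$ acting through $l_l$, so $m_i'e = g^{m_i}\circ_e m_ie$ lies in the same orbit as $m_ie$, and both associated $\mathfrak{e}_{m_i'}$ and $\mathfrak{e}_{m_i}$ are $\kappa_1$-th roots of $\Id_W$ intertwining $G_e^N$-actions in the sense of Lemma \ref{fmm}, hence differ only by a constant in $F^\times$ (Schur's Lemma applied through the quotient $G_e^N/\Stab_{G_e^N}^l(m_ie)$, cf.\ Lemma \ref{isom}). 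Since our construction in Section \ref{ii} fixed a distinguished representative per class, $\mathfrak{E}_{[m_i]}$ is unambiguous, and so $\alpha([m_i],[m_j]):=\alpha(m_i,m_j)$ is well-defined; moreover $[m_im_j]$ is itself well-defined in $\frac{I_M^1(\sigma)}{N}$ because $N$ is centric (Axiom III gives $Nm_im_j=m_im_jN$, so $[m_im_j]$ only depends on $[m_i],[m_j]$).  Equation (\ref{muti}) is then exactly statement (1).

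For part (2), normalization $\alpha([1],[m_j])=1=\alpha([m_j],[1])$ comes free from the choice $\mathfrak{E}_1=\Id$. The $2$-cocycle identity
\[
\alpha([m_i],[m_j])\,\alpha([m_im_j],[m_k]) \;=\; \alpha([m_j],[m_k])\,\alpha([m_i],[m_jm_k])
\]
is obtained by associativity: on the one hand,
\[
(\mathfrak{E}_{[m_i]}\circ \mathfrak{E}_{[m_j]})\circ \mathfrak{E}_{[m_k]} = \alpha([m_i],[m_j])\,\mathfrak{E}_{[m_im_j]}\circ\mathfrak{E}_{[m_k]} = \alpha([m_i],[m_j])\,\alpha([m_im_j],[m_k])\,\mathfrak{E}_{[m_im_jm_k]},
\]
and on the other,
\[
\mathfrak{E}_{[m_i]}\circ (\mathfrak{E}_{[m_j]}\circ \mathfrak{E}_{[m_k]}) = \alpha([m_j],[m_k])\,\alpha([m_i],[m_jm_k])\,\mathfrak{E}_{[m_im_jm_k]}.
\]
Comparing the two scalars (noting $\mathfrak{E}_{[m_im_jm_k]}\neq 0$, hence cancellable) yields the cocycle identity. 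Finally, $\alpha$ takes values in $F^\times$ by the computation preceding (\ref{muti}), where $c^{\kappa_0}=1$ forced $c\in F^\times$.

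The one technical step to be careful with is the well-definedness on classes: I must verify that the scalar $\alpha(m_i,m_j)$ really depends only on $[m_i],[m_j]$ and not on the arbitrary element $n$ (or $n'$) appearing in $m_im_j=m_tn=n'm_t$. This follows since $n\in G_e^N e^{[-1]}$ acts on $m_t\otimes W$ through $\mathfrak{e}_{m_t}$-compatible automorphisms and is absorbed into the definition of $\mathfrak{E}_{m_t}$; any ambiguity is an element of $\Stab_{G_e^N}^l(m_te)$, on which $\sigma$ acts trivially, and so is invisible. This is the one place where the careful setup in Lemma \ref{GEE} and Section \ref{ii} is indispensable.
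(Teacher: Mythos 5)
Your proof is correct and takes essentially the paper's route: part (2) is exactly the associativity-plus-cancellation argument with $\mathfrak{E}_{[m_im_jm_k]}\neq 0$, and part (1) is the reduction to equation (\ref{muti}). The only difference is one of emphasis: where you discuss independence of representatives (not really needed, since the $m_i$ are fixed once and for all), the paper instead spends its effort on the one bookkeeping step required to read (\ref{muti}) as statement (1), namely verifying $[m_im_j]=[m_t]$ via Remark \ref{TWOEONEE} (i.e. $m_im_je=(m_ie)(m_je)$), which you pass over with the centricity remark.
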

  \begin{proof}
  1)  By Remark \ref{TWOEONEE},  $m_im_je=m_iem_je$.  Assume $m_im_j=m_tn=n'm_t$, for $1\leq t\leq \alpha+\beta$, then $m_iem_je=m_im_je=n'm_te$. So $[m_im_j]=[m_t]$. Hence the result  follows from the above equation (\ref{muti}).

2) It suffices to verify that $\alpha([m_i], [m_j])\alpha([m_im_j], [m_k])=  \alpha([m_j], [m_k])\alpha([m_i], [m_jm_k])$, for $[m_i], [m_j], [m_k]\in  \frac{I^1_M(\sigma)}{N}$.  As $\mathfrak{E}_{[m_i]} \circ           \mathfrak{E}_{[m_j]} \circ   \mathfrak{E}_{[m_k]}= \alpha([m_i], [m_j])   \alpha([m_im_j], [m_k])  \mathfrak{E}_{[m_im_jm_k]}=   \alpha([m_i], [m_jm_k])   \alpha([m_j], [m_k])  \mathfrak{E}_{[m_im_jm_k]}$, and    $\mathfrak{E}_{[m_im_jm_k]}\neq 0$, the equality holds.   \end{proof}

    We can also lift $\alpha(-,-)$ to be a $2$-cocycle from $I^1_{M}(\sigma) \times  I^1_{M}(\sigma)$ to $F^{\times}$.
              According to Section \ref{Projmo}, this gives rise  to a central extension of monoids such that  $\frac{I^1_M(\sigma)^{\alpha}}{ F^{\times}}\simeq  I^1_M(\sigma)$.

\subsubsection{} Let us  lift $\alpha(-,-)$ to be a $2$-cocyle from    $I_{M}(\sigma)\times I_{M}(\sigma) $ to $F$ by assigning
$$\alpha(m, m')=\left\{\begin{array}{lr}
\alpha(m, m') & \textrm{ if }  m, m'\in  I^1_M(\sigma)\\
 0 & \textrm{ if }   m  \in I_M(\sigma)\setminus I^1_M(\sigma), m'\neq 1\\
0 & \textrm{ if  }  m'  \in I_M(\sigma)\setminus I^1_M(\sigma), m\neq 1\\
1 & \textrm{ if }  m=1 \textrm{ or }m'= 1
\end{array}\right.$$
 By  convention,  for $m \in I_M(\sigma)\setminus I^1_M(\sigma)$,  put $\mathfrak{E}_{m}=0$.
\begin{lemma}
\begin{itemize}
 \item[(1)] $ \mathfrak{E}_{m} \circ           \mathfrak{E}_{m'}=\alpha(m, m')   \mathfrak{E}_{mm'}  $.
\item[(2)] $\alpha(-,-)$ is a well-defined multiplier on $I_{M}(\sigma)$.
\end{itemize}
\end{lemma}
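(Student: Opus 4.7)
The plan is to prove (1) by a direct case analysis following the piecewise definition of the extended $\alpha$, and then to derive the cocycle identity in (2) from (1) via associativity of composition of endomorphisms of $\Ind^{I_M(\sigma)}_N \sigma$.

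For (1), the four clauses defining $\alpha$ correspond to four cases. If $m, m' \in I_M^1(\sigma)$, then $mm' \in I_M^1(\sigma)$ since $I_M^1(\sigma)$ is a submonoid, and the identity is exactly the one already established in equation (\ref{muti}) and the preceding lemma for elements of $I_M^1(\sigma)$. If $m = 1$ or $m' = 1$, then $\mathfrak{E}_1 = \id$ together with $\alpha(1, -) = \alpha(-, 1) = 1$ makes both sides agree trivially. Otherwise exactly one of $m, m'$ lies in $I_M(\sigma) \setminus I_M^1(\sigma)$ with the other $\neq 1$; the convention $\mathfrak{E}_{(\cdot)} = 0$ off $I_M^1(\sigma)$ forces the left-hand side to vanish, the definition gives $\alpha(m, m') = 0$, and the ideal property of $I_M^0(\sigma)$ in $I^{lr}_M(\sigma)$ yields $mm' \in I_M^0(\sigma) \cap I_M(\sigma)$, hence $\mathfrak{E}_{mm'} = 0$; so both sides vanish.

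For (2), the normalization $\alpha(m, 1) = \alpha(1, m) = 1$ is immediate from the last clause of the definition. For the cocycle identity, associativity of composition together with two applications of (1) in each bracketing of $\mathfrak{E}_{m_1} \circ \mathfrak{E}_{m_2} \circ \mathfrak{E}_{m_3}$ yields the operator equality
\[
\alpha(m_1, m_2)\alpha(m_1 m_2, m_3)\,\mathfrak{E}_{m_1 m_2 m_3} = \alpha(m_2, m_3)\alpha(m_1, m_2 m_3)\,\mathfrak{E}_{m_1 m_2 m_3}.
\]
When $m_1 m_2 m_3 \in I_M^1(\sigma)$, the endomorphism $\mathfrak{E}_{m_1 m_2 m_3}$ is a nonzero scalar multiple of an invertible map on $\Ind^{I_M(\sigma)}_N \sigma$ (by construction via the isomorphism $\epsilon_{m_{i}}$ of section \ref{ii}), so cancellation gives the desired identity in $F^{\times}$. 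When $m_1 m_2 m_3 \notin I_M^1(\sigma)$, one reduces via the normalization to the case that all $m_i \neq 1$, and then uses the ideal property of $I_M^0(\sigma)$ in $I^{lr}_M(\sigma)$: whichever $m_i$ lies in $I_M(\sigma) \setminus I_M^1(\sigma)$ propagates outward through multiplication, so on each bracketing at least one of the two $\alpha$-factors contains an argument in $I_M(\sigma) \setminus I_M^1(\sigma)$ paired with a factor $\neq 1$, hence evaluates to $0$ by clauses (2)--(3) of the definition.

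The main obstacle will be executing cleanly the degenerate case of the cocycle in part (2), where $\mathfrak{E}_{m_1 m_2 m_3} = 0$ and associativity no longer cancels. The systematic tool is to track membership of the six partial products $m_1, m_2, m_3, m_1 m_2, m_2 m_3, m_1 m_2 m_3$ in $I_M^1(\sigma)$; the fact that $I_M^0(\sigma)$ is a two-sided ideal inside $I^{lr}_M(\sigma)$ (and that $I_M(\sigma)$ itself is a submonoid of $I^{lr}_M(\sigma)$ by Axiom III and the centricity of $N$) ensures that on each side of the cocycle identity, at least one $\alpha$-factor is forced to $0$, so the identity reduces to $0 = 0$ in $F$.
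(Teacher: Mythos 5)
Your proof is correct, and for part (1) it is the paper's argument: when both factors lie in $I^1_M(\sigma)$ the identity is the relation already established there, and otherwise every term vanishes (only note that ``exactly one of $m,m'$'' should read ``at least one''; your vanishing argument covers that case anyway, and the observation $mm'\in I^0_M(\sigma)$ is not even needed since $\alpha(m,m')=0$ already kills the right-hand side). For part (2) you take a mildly different, though equivalent, route: the paper verifies the cocycle identity $\alpha(m,m')\alpha(mm',m'')=\alpha(m',m'')\alpha(m,m'm'')$ purely at the level of values, splitting into the normalized case, the case where some entry lies in $I_M(\sigma)\setminus I^1_M(\sigma)$ (both sides zero), and the case of all entries in $I^1_M(\sigma)$, which it refers back to the multiplier property already proved on $\frac{I^1_M(\sigma)}{N}$; you instead re-derive that nondegenerate case from part (1) together with associativity of composition in $\End_{I_M(\sigma)}(\Ind^{I_M(\sigma)}_N\sigma)$, cancelling $\mathfrak{E}_{m_1m_2m_3}$. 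That is exactly the computation by which the paper proved the earlier multiplier lemma on $\frac{I^1_M(\sigma)}{N}$, so your approach re-runs it rather than citing it --- both are fine. For the cancellation you only need $\mathfrak{E}_{m_1m_2m_3}\neq 0$ (its restriction $e\otimes W\to m_t\otimes W$ is bijective); calling it a scalar multiple of an invertible endomorphism of the whole induced module claims more than has been established and is not needed. In the degenerate case your zero-bookkeeping coincides with the paper's; both you and the paper tacitly place the zero factor against an element of $I_M(\sigma)\setminus I^1_M(\sigma)$ paired with a partner assumed $\neq 1$ (e.g.\ $m_2m_3$ or $m_1m_2$), so no rigor is lost relative to the paper's own proof.
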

\begin{proof}
1) If $m$ or $m'$ in $I_M(\sigma)\setminus I^1_M(\sigma) $, then both sides are zero.  Otherwise, it reduces to the known case on $I^1_M(\sigma)$.

2) It suffices to verify that $\alpha(m, m')\alpha(mm', m'')=  \alpha(m', m'')\alpha(m, m'm'')$, for $m, m', m''\in I_M(\sigma)$. If  one element  of $m, m',m''$ is the identity  element,  by the normalized property, this equality needs to hold.
Otherwise,  let us divide it  into two cases.  One case that one element belongs to $ I_M(\sigma)\setminus I^1_M(\sigma)$, then both sides are zero.  Another case that all elements  belong to $I^1_M(\sigma)$, and none is the identity  element,  then it reduces to the known case on $I^1_M(\sigma)$.
\end{proof}
Note that $\alpha(-,-)$ factors through $I_{M}(\sigma) \longrightarrow \frac{I_{M}(\sigma)}{N}$.
 \subsubsection{}\label{struc} Let us write     $\pi_{[\sigma]} =\ind_{N}^{I_M(\sigma)} \sigma \simeq \Ind_{N}^{I_M(\sigma)} \sigma$.      Let $\mathcal{N}= \{ \varphi \in  \End_{I_{M}(\sigma)}(\Ind^{I_{M}(\sigma)}_N \sigma) \}$,  $\mathcal{W}=e\otimes W$.  We shall write the map  of $\End_{I_{M}(\sigma)}(\ind^{I_{M}(\sigma)}_N \sigma) $ on the right-hand side.   Following \cite[\S 11]{CuRe}, we define two projective  representations $(\rho_1, \mathcal{W})$,  $(\rho_2, \mathcal{N})$ of $I_M(\sigma)$ as follows:

 \begin{itemize}
\item[(1)]  $\rho_1(m)v :=\left\{ \begin{array}{ll}
0 & \textrm{ if } m\in I_M(\sigma)\setminus I^1_M(\sigma) \subseteq I^0_{M}(\sigma) \\
  ( \pi_{[\sigma]}(m) v)\mathfrak{E}^{-1}_{m_i}|_{m_ie\otimes W}
& \textrm{ if } m=nm_i \in I^1_{M}(\sigma) \end{array} \right. $, for $v\in e\otimes W$, $\mathfrak{E}^{-1}_{m_i}: m_ie\otimes W \longrightarrow e\otimes  W$.
\item[(2)] $\rho_2$ factors through $\frac{I_M(\sigma)}{N}$, and $(v)[ \rho_2(m_i)\varphi ]:= ( (v)\mathfrak{E}_{m_i})\varphi$, for $m_i \in   I_{M}(\sigma)$, $v \in \Ind^{I_{M}(\sigma)}_N  W$, $\varphi \in \mathcal{N}$.
\end{itemize}
 \begin{lemma}
 \begin{itemize}
 \item[(1)] $(\rho_1,  \mathcal{W})$ is a   projective representation   of $I_{M}(\sigma)$   associated to   the multiplier  $\alpha^{-1}(-,-)$,  in the sense that $\rho_1(m)\rho_1(m') \alpha(m, m')= \rho_1(mm')$, for $m,m' \in I_{M}(\sigma)$.
    \item[(2)]  $\rho_1|_{N} \simeq \sigma$.
 \end{itemize}
     \end{lemma}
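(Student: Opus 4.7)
My plan is to prove the lemma by direct computation from the defining formula for $\rho_1$, relying on three ingredients already in hand: the $I_M(\sigma)$-equivariance of each $\mathfrak{E}_{m_i}$ (so that it commutes with $\pi_{[\sigma]}(m)$ for every $m\in I_M(\sigma)$), the identity (\ref{muti}) $\mathfrak{E}_{m_i}\circ\mathfrak{E}_{m_j}=\alpha(m_i,m_j)\mathfrak{E}_{m_t}$ where $[m_t]=[m_im_j]$, and the fact from Remark~\ref{Proj} that $p_1$ is an algebra homomorphism on $\C[I^{lr}_M(\sigma)]$.

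For part (1), I will treat two cases. If either of $m,m'$ lies in $I_M(\sigma)\setminus I^1_M(\sigma)$, then the corresponding $\rho_1$ is zero and $\alpha(m,m')=0$ by definition, so the left-hand side vanishes; the right-hand side also vanishes because $p_1(mm')=p_1(m)p_1(m')$ is zero, forcing $mm'\notin I^1_M(\sigma)$ and hence $\rho_1(mm')=0$. In the main case $m,m'\in I^1_M(\sigma)$, I write $m=nm_i$ and $m'=n'm_j$ with $m_i,m_j$ in the chosen system of representatives of $\frac{I^1_M(\sigma)}{N}$, and let $m_t$ be the representative with $[m_t]=[m_im_j]=[mm']$. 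For $v\in e\otimes W$, I first verify that $\pi_{[\sigma]}(m)v\in m_ie\otimes W$---this follows from the $N$-stability of $m_ie\otimes W$, itself a consequence of the centricity of $N$---so that $\mathfrak{E}_{m_i}^{-1}|_{m_ie\otimes W}$ is defined on it. Using that $\mathfrak{E}_{m_j}^{-1}$ commutes with $\pi_{[\sigma]}(m)$, I obtain
\begin{align*}
\rho_1(m)\bigl(\rho_1(m')v\bigr)
&=\mathfrak{E}_{m_i}^{-1}\bigl(\pi_{[\sigma]}(m)\,\mathfrak{E}_{m_j}^{-1}\pi_{[\sigma]}(m')v\bigr)\\
&=\mathfrak{E}_{m_i}^{-1}\mathfrak{E}_{m_j}^{-1}\bigl(\pi_{[\sigma]}(mm')v\bigr).
\end{align*}
Inverting (\ref{muti}) on the subspace $m_te\otimes W$ which contains $\pi_{[\sigma]}(mm')v$, and using $\alpha(m,m')=\alpha(m_i,m_j)$ (since $\alpha$ factors through $\frac{I_M(\sigma)}{N}$), the right-hand side becomes $\alpha(m,m')^{-1}\mathfrak{E}_{m_t}^{-1}\pi_{[\sigma]}(mm')v=\alpha(m,m')^{-1}\rho_1(mm')v$, which gives the desired identity.

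For part (2), since we have arranged $1$ to appear among the representatives $m_i$ and chose $\mathfrak{E}_1=\Id$, for any $n\in N$ the formula collapses to $\rho_1(n)v=\pi_{[\sigma]}(n)v$ for $v\in e\otimes W$. The canonical injection $W\hookrightarrow\ind^{I_M(\sigma)}_N\sigma$, $w\mapsto e\otimes w$, identifies $W$ with $e\otimes W$ as $N$-modules and intertwines $\sigma$ with $\rho_1|_N$; thus $\rho_1|_N\simeq\sigma$.

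The main obstacle is the bookkeeping in Case~2 of part~(1), where several things have to be matched carefully: the composition convention in (\ref{muti}) must be reconciled with the order in which $\mathfrak{E}_{m_i}^{-1}$ and $\mathfrak{E}_{m_j}^{-1}$ appear in the computation, and when $m_i\in I^1_M(\sigma)\setminus J^1_M(\sigma)$ one has $m_ie\ne m_i$, so $m_ie\otimes W$ sits inside a larger summand $m_{i'}\otimes W$ with $[m_{i'}]=[m_ie]$. Verifying that $\pi_{[\sigma]}(m)v$ really lands in $m_ie\otimes W$ and that the composed operator $\mathfrak{E}_{m_i}^{-1}\mathfrak{E}_{m_j}^{-1}$ picks out the representative $m_t$ with $[m_t]=[m_im_j]$ on this subspace are the main technical points; both reduce to the centricity of $N$ combined with the identities $em=m=me$ for $m\in J^1_M(\sigma)$ and Lemma~\ref{Ker1}.
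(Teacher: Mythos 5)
Your proposal is correct and follows essentially the same route as the paper: verify the degenerate case where one factor lies in $I_M(\sigma)\setminus I^1_M(\sigma)$, and in the main case use the $I_M(\sigma)$-equivariance of the $\mathfrak{E}_{m_i}$'s together with the relation $\mathfrak{E}_{m_i}\circ\mathfrak{E}_{m_j}=\alpha(m_i,m_j)\mathfrak{E}_{m_t}$ and the bijectivity of $\mathfrak{E}_{m_t}$ on $e\otimes W$, while part (2) rests on the choices $\mathfrak{E}_1=\Id$ (resp.\ $\epsilon_e=\Id$) and the vanishing of both $\sigma(n)$ and $\rho_1(n)$ for $n\in N\cap I^0_M(\sigma)$. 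The only difference is cosmetic: the paper composes with the maps $\mathfrak{E}_{m_i}$, $\mathfrak{E}_{m_j}$ on the right instead of commuting the restricted inverses past $\pi_{[\sigma]}(m)$, which sidesteps the injectivity-on-summands bookkeeping you correctly flag as the technical point.
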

     \begin{proof}
     1) Let  $m=nm_i, m' =n'm_j\in  I_{M}(\sigma)$.  If $m\in I_M(\sigma)\setminus I^1_M(\sigma)$,  $\pi_{[\sigma]}(m) v=0$, for $v=e\otimes w\in e\otimes W$. If $m\in I^1_{M}(\sigma)$, $\pi_{[\sigma]}(m) v= nm_ie\otimes w \in m_ie\otimes W$.
          If $m_i, m_j  \in I^1_{M}(\sigma)$, and $m_iem_je=m_im_j e=n''m_te\in J^1_{M}(\sigma)$.   Let $v_1=( \pi_{[\sigma]}(m') v)\mathfrak{E}^{-1}_{m_j}$. Then $\rho_1(m)\rho_1(m')v=\rho_1(m)v_1=( \pi_{[\sigma]}(m) v_1)\mathfrak{E}^{-1}_{m_i}$. Hence $(\rho_1(m)\rho_1(m')v )  \mathfrak{E}_{m_i}=\pi_{[\sigma]}(m) v_1$, $(\rho_1(m)\rho_1(m')v )  \mathfrak{E}_{m_i} \circ \mathfrak{E}_{m_j}=(\pi_{[\sigma]}(m) v_1)\mathfrak{E}_{m_j}=\pi_{[\sigma]}(m) (v_1)\mathfrak{E}_{m_j}= \pi_{[\sigma]}(m)  \pi_{[\sigma]}(m') v=\pi_{[\sigma]}(mm')v =(\rho_1(mm')v)\mathfrak{E}_{m_t}=(\rho_1(mm')v)\mathfrak{E}_{m_im_j}$. As $\mathfrak{E}_{m_i} \circ \mathfrak{E}_{m_j}= \mathfrak{E}_{m_im_j} \alpha(m,m')$, and $\mathfrak{E}_{m_im_j}:  e\otimes W \longrightarrow  m_t \otimes W $ is a bijective linear map. Hence,
$\alpha(m,m')\rho_1(m)\rho_1(m') =  \rho_1(mm')$.

     2) If $n \in N \cap  I^0_{M}(\sigma)$,  $ne \notin G_e^N$, $n\notin G_e^N e^{[-1]}$. Hence $\sigma(n)=0=\rho_1(n)$. If $n\in N \cap  I^1_{M}(\sigma), ne\in J^1_{M}(\sigma) \cap N=G_e^N$.    By our choice,   $\epsilon_{e}$ as well as
      $\mathfrak{E}^{-1}_{e}|_{e\otimes W}$  is the identity map.      Hence $\rho_1\simeq \sigma$.
        \end{proof}
         \begin{lemma}
        $(\rho_2,  \mathcal{N})$ is a   projective representation   of $I_{M}(\sigma)$   associated to  the multiplier $\alpha(-,-)$.
                   \end{lemma}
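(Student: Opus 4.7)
The plan is a direct computation using the associativity identity $\mathfrak{E}_{m}\circ \mathfrak{E}_{m'}=\alpha(m,m')\,\mathfrak{E}_{mm'}$ established in the preceding lemma. Observe first that for $\rho_{2}$ to even make sense one needs $\rho_{2}(m)\varphi\in\mathcal{N}$ and the map to factor through $\tfrac{I_{M}(\sigma)}{N}$. The former is immediate: each $\mathfrak{E}_{m}$ lies in $\mathcal{N}=\End_{I_{M}(\sigma)}(\Ind^{I_{M}(\sigma)}_{N}\sigma)$ by construction, and $\mathcal{N}$ is closed under composition, so the endomorphism $v\mapsto((v)\mathfrak{E}_{m})\varphi$ is again in $\mathcal{N}$. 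The latter holds because $\mathfrak{E}_{m}$ was actually defined on equivalence classes $[m]\in\tfrac{I_{M}(\sigma)}{N}$ (Section~\ref{struc}), and the chosen multiplier $\alpha$ already factors through the quotient.

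With the setup in place I would next check the normalization: since $\mathfrak{E}_{1}=\Id$, one has $(v)[\rho_{2}(1)\varphi]=((v)\Id)\varphi=(v)\varphi$, hence $\rho_{2}(1)=\Id_{\mathcal{N}}$, which matches the normalized convention $\alpha(1,m)=\alpha(m,1)=1$.

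The core step is the multiplier identity. For $m,m'\in I_{M}(\sigma)$, $\varphi\in\mathcal{N}$ and $v\in\Ind^{I_{M}(\sigma)}_{N}W$, unpacking the definition twice and then applying the equation $\mathfrak{E}_{m}\circ\mathfrak{E}_{m'}=\alpha(m,m')\,\mathfrak{E}_{mm'}$ yields
\[
(v)\bigl[\rho_{2}(m)\rho_{2}(m')\varphi\bigr]
=\bigl(((v)\mathfrak{E}_{m})\mathfrak{E}_{m'}\bigr)\varphi
=\bigl((v)(\mathfrak{E}_{m}\circ\mathfrak{E}_{m'})\bigr)\varphi
=\alpha(m,m')\,(v)\bigl[\rho_{2}(mm')\varphi\bigr],
\]
so $\rho_{2}(m)\rho_{2}(m')=\alpha(m,m')\rho_{2}(mm')$, which is precisely the projective identity with multiplier $\alpha$. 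Note that the case $m\in I_{M}(\sigma)\setminus I^{1}_{M}(\sigma)$ is handled uniformly by the convention $\mathfrak{E}_{m}=0$: both sides vanish when $\alpha(m,m')=0$, consistent with the definition of $\alpha$ on such elements.

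The only delicate point, and the place I would spend the most care, is not any computation but a consistency check: one must verify that the two definitions pasted together (the nonzero case on $I^{1}_{M}(\sigma)$ and the zero convention outside it) remain compatible with the multiplier extension of $\alpha$ from $I^{1}_{M}(\sigma)\times I^{1}_{M}(\sigma)$ to $I_{M}(\sigma)\times I_{M}(\sigma)$. Concretely, if $m\in I^{1}_{M}(\sigma)$ but $mm'\in I_{M}(\sigma)\setminus I^{1}_{M}(\sigma)$, one must ensure that $\mathfrak{E}_{m}\circ\mathfrak{E}_{m'}=0$ and that $\alpha(m,m')=0$; this follows from the previous lemma (the factor $\mathfrak{E}_{mm'}$ is $0$ while $\alpha$ matches the zero convention) and from the fact that $I^{1}_{M}(\sigma)\cdot I^{1}_{M}(\sigma)\subseteq I^{1}_{M}(\sigma)$ established via Lemma~\ref{threeresults}. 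Once this bookkeeping is settled the displayed identity above gives the result.
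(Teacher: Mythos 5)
Your proposal is correct and follows essentially the same route as the paper: unpack the definition of $\rho_2$ twice and invoke the identity $\mathfrak{E}_{m}\circ\mathfrak{E}_{m'}=\alpha(m,m')\,\mathfrak{E}_{mm'}$, with the degenerate case $m$ or $m'\in I_M(\sigma)\setminus I^1_M(\sigma)$ handled by the convention $\mathfrak{E}_m=0$ so that both sides vanish. Your extra consistency checks (well-definedness of $\rho_2(m)\varphi\in\mathcal{N}$, normalization at $1$, and closure of $I^1_M(\sigma)$ under multiplication via Lemma~\ref{threeresults}) are sound and merely make explicit what the paper leaves implicit.
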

          \begin{proof}
          For $m_i, m_j \in   I^1_{M}(\sigma)$, assume $m_im_j=nm_t$. Then $(v)[\rho_2(m_i) \rho_2(m_j)\varphi ]=( (v)\mathfrak{E}_{m_i})[\rho_2(m_j)\varphi ] =( (v)\mathfrak{E}_{m_i}\circ \mathfrak{E}_{m_j})\varphi=\alpha(m_i, m_j) ( (v)\mathfrak{E}_{m_t})\varphi=(v)[\rho_2(m_im_j)\varphi ]\alpha(m_i, m_j)$. Hence  $\rho_2(m_i) \rho_2(m_j)=\alpha(m_i, m_j) \rho_2(m_im_j)$. If $m_i$ or $m_j \in I_{M}(\sigma)\setminus   I^1_{M}(\sigma)$, $\rho_2(m_i) \rho_2(m_j)=0=\alpha(m_i, m_j) \rho_2(m_im_j)$.
                                                                      \end{proof}

    \begin{lemma} \label{thetensorprojectivereps}
  $(\pi_{[\sigma]} , \Ind_{N}^{I_M(\sigma)} W)$ of $I_M(\sigma)$ is linearly isomorphic with the tensor projective representation $\rho_1\otimes \rho_2$ of $I_M(\sigma)$.
\end{lemma}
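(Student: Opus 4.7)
The plan is to construct an explicit intertwiner
$$\Phi: (e\otimes W) \otimes_{\C} \mathcal{N} \longrightarrow \Ind_N^{I_M(\sigma)} W, \qquad v \otimes \varphi \longmapsto (v)\varphi,$$
and then verify in turn that (a) $\Phi$ is $I_M(\sigma)$-equivariant with respect to $\rho_1 \otimes \rho_2$ on the left and $\pi_{[\sigma]}$ on the right, and (b) $\Phi$ is a linear bijection. Note first that because $\rho_1$ carries the multiplier $\alpha^{-1}$ and $\rho_2$ carries the multiplier $\alpha$, the multipliers cancel in the tensor product, so $\rho_1\otimes \rho_2$ is already a genuine (linear) representation of $I_M(\sigma)$; this is what makes ``linear isomorphism'' meaningful here.

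For equivariance, I would perform the key computation on $m=nm_i$ with $m_i\in I^1_M(\sigma)$:
$$\Phi\bigl((\rho_1(m)\otimes\rho_2(m))(v\otimes\varphi)\bigr) = (\rho_1(m)v)(\rho_2(m)\varphi) = \bigl((\rho_1(m)v)\,\mathfrak{E}_{m_i}\bigr)\varphi = \bigl(\pi_{[\sigma]}(m)v\bigr)\varphi = \pi_{[\sigma]}(m)\,\Phi(v\otimes\varphi),$$
where the second equality uses the definition of $\rho_2$, the third uses that $\mathfrak{E}_{m_i}^{-1}\circ \mathfrak{E}_{m_i}$ collapses on $e\otimes W$, and the last uses that $\varphi\in\mathcal{N}$ commutes with the $I_M(\sigma)$-action. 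For $m\in I_M(\sigma)\setminus I^1_M(\sigma)$, the left-hand side vanishes by the conventions $\rho_1(m)=0$ and $\rho_2(m)=0$; the right-hand side also vanishes because $m\in I^0_M(\sigma)$ (an $I_M(\sigma)$-ideal) forces $me\in I^0_M(\sigma)\subseteq J^0_M(\sigma)$, and hence $me\otimes w=0$ in the induced module by the very definition of $J^0_M(\sigma)$.

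For bijectivity, I would first compute the dimension of $\mathcal{N}$. Frobenius reciprocity (available through Lemma \ref{EQInd} and Lemma \ref{semisi2}) yields $\mathcal{N}\simeq \Hom_N(W,\Ind_N^{I_M(\sigma)}W)$, and from the $N$-decomposition $\Ind_N^{I_M(\sigma)} W = \oplus_{i=1}^{\alpha} m_i\otimes W$ with each summand $N$-isomorphic to $W$, Schur's lemma gives $\dim\mathcal{N}=\alpha$. The endomorphisms $\mathfrak{E}_{m_1},\ldots,\mathfrak{E}_{m_\alpha}$ are linearly independent in $\mathcal{N}$ because $(e\otimes w)\mathfrak{E}_{m_i}=m_i\otimes \mathfrak{e}_{m_i}(w)$ lives in the distinct summand $m_i\otimes W$, so they form a basis. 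Thus $\dim\bigl[(e\otimes W)\otimes_\C \mathcal{N}\bigr]=\alpha\cdot\dim W=\dim\Ind_N^{I_M(\sigma)} W$. Surjectivity follows from the observation that $(e\otimes w)\mathfrak{E}_{m_i}=m_i\otimes \mathfrak{e}_{m_i}(w)$ exhausts $m_i\otimes W$ as $w$ ranges over $W$ (since $\mathfrak{e}_{m_i}\in\End_\C(W)$ is invertible), and combined with the dimension count this yields bijectivity.

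The main obstacle is the degenerate stratum $I_M(\sigma)\setminus I^1_M(\sigma)$, where every operator in sight is zero; one must check that both sides of the equivariance identity genuinely vanish on this locus, which reduces to the ideal property $I^0_M(\sigma)\subseteq J^0_M(\sigma)$ established in the preceding lemmas. The rest of the argument is bookkeeping about the centric action of $N$ and the fact that the $\mathfrak{E}_{m_i}$ fail to commute with $I_M(\sigma)$ only up to the scaling encoded in the multiplier $\alpha$, which is precisely why introducing the projective pair $(\rho_1,\rho_2)$ with opposite multipliers restores linear equivariance after taking the tensor product.
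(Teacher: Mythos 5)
Your proposal is correct and follows essentially the same route as the paper's own proof: the intertwiner $v\otimes\varphi\mapsto(v)\varphi$ is exactly the paper's map $\digamma$, the cancellation of the multipliers $\alpha^{-1}$ and $\alpha$, the key computation $((\pi_{[\sigma]}(m)v)\mathfrak{E}_{m_i}^{-1})\mathfrak{E}_{m_i}=\pi_{[\sigma]}(m)v$, and the basis of $\mathcal{N}$ coming from the $\alpha$ operators attached to the representatives of $\frac{J^1_M(\sigma)}{N}$ all coincide with the paper's argument. The only cosmetic differences are that you establish bijectivity by surjectivity onto each summand $m_i\otimes W$ plus a dimension count (the paper argues injectivity directly and deduces surjectivity from equivariance) and that you spell out the vanishing on $I_M(\sigma)\setminus I^1_M(\sigma)$, which the paper leaves implicit; both are fine.
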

\begin{proof}
1)    For $m=nm_i$, $m'=n'm_j$, $mm'=n''m_t$,  $[\rho_1\otimes \rho_2](m)[\rho_1\otimes \rho_2](m')= \rho_1(m)\rho_1(m') \otimes \rho_2(m) \rho_2(m')=  \rho_1(m)\rho_1(m') \otimes \alpha(m,m') \rho_2(mm')=\rho_1(mm')\otimes \rho_2(mm')=[\rho_1\otimes \rho_2](mm') $. Hence $\rho_1\otimes \rho_2$ is a honest representation of $I_M(\sigma)$.

2)   $\Ind_{N}^{I_M(\sigma)} W \simeq  \ind_{N}^{I_M(\sigma)} W=\oplus_{i=1}^{\alpha} m_i \otimes W$. Let  $\varphi_i\in \mathcal{N}$, corresponding to $\epsilon_{m_i}: W\longrightarrow e\otimes W \longrightarrow m_i \otimes  W$ by Frobenius reciprocity. Then  $\{\varphi_1, \cdots, \varphi_{\alpha}\}$  forms a basis of $\mathcal{N}$.  Let   $\digamma: \mathcal{W}  \otimes \mathcal{N} \longrightarrow \ind_{N}^{I_M(\sigma)} W; \sum_{i=1}^{\alpha} e\otimes w_i \otimes\varphi_i  \longmapsto \sum_{i=1}^m(e\otimes w_i)\varphi_i$. Firstly, if $\sum_{i=1}^{\alpha} e\otimes w_i \otimes \varphi_i \neq 0$, and $\sum_{i=1}^{\alpha}(e\otimes w_i)\varphi_i=0$, then $(e\otimes w_i)\varphi_i=0$, which implies that $e\otimes w_i=0$, a contradiction.
 So   $\digamma$ is an injective map. Secondly, letting $m=nm_i$ with $n\in N$, we then have
$$\digamma\big(\rho_1 \otimes \rho_2(m) (v\otimes \varphi )\big)=   ([( \pi_{[\sigma]}(m) v)\mathfrak{E}^{-1}_{m_i}] \mathfrak{E}_{m_i})\varphi=
(\pi_{[\sigma]}(m) v)\varphi=\pi_{[\sigma]} (m) (v) \varphi=\pi_{[\sigma]}(m) \digamma(v\otimes \varphi ), $$
for $  v=e\otimes w \in e\otimes W$, which shows that $\digamma$ is an $I_{M}(\sigma)$-morphism, and then the surjectivity follows.
\end{proof}

\subsubsection{}\label{fr}
With the help of the above result,  we can  interpret \cite[p.372, Prop.]{Ri2} or \cite[p.523, Coro.3.7]{W} in our semi-simple monoid cases.  Notice that $e^W \C[ I_M(\sigma)]= \C[ I_M(\sigma)] e^W= e^W Ae^W$, which is a semi-simple algebra.   By the discussion in \cite[p.372]{Ri2},  we let $C$ be the commutant of $e^W B$ in  $e^W Ae^W$. Then by \cite[6.2]{Da},   $\C[ I_M(\sigma)] e^W=e^W Ae^W \simeq C \otimes e^W B$. Let  $E=  \End_{I_M(\sigma)}(\Ind_{N}^{I_M(\sigma)} W)^o$ be the opposed algebra as defined in \cite{W}.   Then $E\simeq \Hom_N(\C[I_M(\sigma)]\otimes_{N} \sigma, \sigma) \simeq  \Hom_N(e^W A e^W \otimes_{N} \sigma, \sigma)\simeq \Hom_N(C \otimes \sigma \otimes D(\sigma) \otimes_N \sigma, \sigma) \simeq C$. Let us consider the composite operator.  Let $ \varphi $ be the map in  $\Hom_N(\Ind_{N}^{I_M(\sigma)} \sigma, \sigma) $ corresponding to the identity map in  $\End_{I_M(\sigma)}(\Ind_{N}^{I_M(\sigma)} W)$. Then $\Hom_N(\Ind_{N}^{I_M(\sigma)} \sigma, \sigma) $ consists of elements $\varphi^c$, for all $c\in C$. For $c_1, c_2\in C$, let $F_1$, $F_2$ be their  corresponding elements in $\End_{I_M(\sigma)}(\Ind_{N}^{I_M(\sigma)} W)$ respectively. Then for $v\in \Ind_{N}^{I_M(\sigma)} W$, $m\in I_M(\sigma)$,   $F_i(v)(m)=\varphi^{c_i}(mv)$, and $[F_1\circ F_2](v)(m)=\varphi^{c_1}(mF_2(v))=\varphi^{c_1}(F_2(mv))=c_1c_2mv(1)=v(mc_2c_1)=\varphi^{c_2c_1}(mv)$. Hence $F_1\circ F_2$ corresponds to $\varphi^{c_2c_1}$.  Moreover, since  $ \Ind_{N}^{I_M(\sigma)} W \simeq \rho_1\otimes \rho_2$, $  \End_{I_M(\sigma)}(\Ind_{N}^{I_M(\sigma)} W) \simeq \Hom_N( \rho_1 \otimes \rho_2, \sigma) \simeq  \rho_2(\C[I_M(\sigma)])$. Therefore the image of $\rho_2(I_M(\sigma))$ generates $C$. By the results of \cite[p.372, Prop.]{Ri2} or \cite[p.523, Coro.3.7]{W}, $\rho_2$ is a semi-simple projective representation of $\frac{I_M(\sigma)}{N}$.
                  If let $  \mathcal{R}_{I_M(\sigma)}(\Ind^{I_M(\sigma)}_N\sigma)=\{ \widetilde{\sigma}^{(1)}, \cdots, \widetilde{\sigma}^{(k)}\}$,
                  $\mathcal{R}_{\frac{I_M(\sigma)}{N}}(\rho_2)=\{ \rho_2^{(1)}, \cdots,  \rho_2^{(l)}\}$, then $k=l$, and by renumbering the indices,  there exists a correspondence  between this two sets, given by $\rho_2^{(i)}    \longleftrightarrow \widetilde{\sigma}^{(i)} \simeq \rho_1 \otimes \rho_2^{(i)}$.
\subsubsection{}\label{re}  Let us go back to  Section \ref{ILR}.  Follow the notations there.
\begin{lemma}
For the $m_i$ in Lmm.\ref{IMS}(3), $m_i \in I^1_M(\sigma)$.
\end{lemma}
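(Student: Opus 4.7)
The strategy is to verify, in turn, that $m_i$ lies in $I^{lr}_M(\sigma)$ and that its projection $p_1(m_i)=e^W m_i e^W$ is non-zero; then the characterization $I^1_M(\sigma)=\{m\in I^{lr}_M(\sigma)\mid p_1(m)\neq 0\}$ proved just before the statement closes the argument.

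First, I will normalize. By Lemma \ref{IMS}(2) we have $\pi(m_i)W\simeq W$. Since the apex of $\sigma$ is $e$, the element $e$ acts as the identity on every $N$-irreducible isomorphic to $\sigma$, and hence on both $W$ and $\widetilde{W}^V\supseteq\pi(m_i)W$. Thus $\pi(em_ie)W=\pi(e)\pi(m_i)\pi(e)W=\pi(m_i)W$, and by Lemmas \ref{IMS}(1) and \ref{IMS}(7), $em_ie\in I^V_M(\sigma)$. So I may assume $m_i=em_ie$ without affecting the decomposition in Lemma \ref{IMS}(3).

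Second, I show $m_i\in I^{lr}_M(\sigma)=I^l_M(\sigma)\cap I^r_M(D(\sigma))$ (Lemma \ref{threeeq}(1)). The inclusion $m_i\in I^l_M(\sigma)$ is immediate from the contrapositive of Lemma \ref{leftm}(2): indeed $m_i\in I^V_M(\sigma)$ and $\pi(m_i)\widetilde{W}^V\supseteq\pi(m_i)W\neq 0$. For the right-hand side I apply the dual of Lemma \ref{leftm}(2) to $D(V)$. The normalization $m_i=em_ie$ is essential here: it ensures that right multiplication by $m_i$ on $D(V)$ sends $D(\widetilde{W}^V)$ into itself, so that there is a right-$N$-module analog of $I^V_M(\sigma)$ containing $m_i$, and the non-vanishing of $\pi(m_i)|_W$ dualizes to the non-vanishing of the right action on $D(\widetilde{W}^V)$. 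The argument of Lemma \ref{leftm}(2), transposed, then yields $m_i\in I^r_M(D(\sigma))$.

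Third, I verify $p_1(m_i)\neq 0$. The element $e^W m_ie^W\in A$ maps under $\pi$ to $e^W\pi(m_i)e^W\in\End(V)$, where $e^W$ acts as the projection of $V$ onto $\widetilde{W}^V$. Restricted to $W\subseteq\widetilde{W}^V$, this operator equals $\pi(m_i)|_W$, which is a non-zero $N$-isomorphism $W\to\pi(m_i)W\subseteq\widetilde{W}^V$. Hence $e^W\pi(m_i)e^W\neq 0$ in $\End(V)$, so $e^W m_ie^W\neq 0$ in $A$, i.e.\ $p_1(m_i)\neq 0$. Combined with $m_i\in I^{lr}_M(\sigma)$, this gives $m_i\in I^1_M(\sigma)$.

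The main obstacle is the right-hand membership in the second step. The left-hand condition drops out of Lemma \ref{leftm}(2) for essentially free, but the right-hand analog is not stated explicitly in the paper and must be derived by dualizing the proof of Lemma \ref{leftm}(2); the duality only becomes clean after the normalization $m_i=em_ie$, because the flanking idempotent $e$ is what ensures that the non-$\sigma$-isotypic complement of $\widetilde{W}^V$ is not mixed into $\widetilde{W}^V$ by $m_i$ on either side.
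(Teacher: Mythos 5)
Your overall strategy --- verify $m_i\in I^{lr}_M(\sigma)$ and $p_1(m_i)\neq 0$, then invoke the characterization $I^1_M(\sigma)=\{m\in I^{lr}_M(\sigma)\mid p_1(m)\neq 0\}$ --- is viable and genuinely different from the paper's proof, which instead shows directly that $m_ie\in J^1_M(\sigma)$ by analysing $G_{m_ie}^N$ and the maps $l_l,l_r$. Your left-hand membership (contrapositive of Lemma \ref{leftm}(2)) and your computation that $p_1(m_i)=e^Wm_ie^W\neq 0$ are correct, and both work for the original $m_i$ with no normalization. The gap is at the pivotal right-hand step. You justify the stability of the $D(\sigma)$-isotypic part of $D(V)$ under right multiplication by $m_i$ by the normalization $m_i=em_ie$, claiming that the flanking idempotent prevents the non-$\sigma$-isotypic complement $C$ of $\Res_N^M V$ from being mixed into $\widetilde{W}^V$. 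That mechanism is not valid: $e$ acts as the identity not only on $\widetilde{W}^V$ but on every irreducible constituent of $\Res_N^M V$ whose apex is $e$, and constituents with apex $e$ not isomorphic to $\sigma$ occur in $C$ whenever $G_e^N$ has more than one irreducible character; for a general element of $eMe$ nothing stops it from carrying such a constituent onto a copy of $\sigma$. What actually gives $\pi(m_i)C\subseteq C$ (equivalently, that $D(\widetilde{W}^V)$ is stable under the right action of $m_i$) is Lemma \ref{irr}(1),(3) combined with $\pi(m_i)W\simeq W\neq 0$: if $W_1\subseteq C$ is irreducible and $\pi(m_i)W_1\neq 0$, then $\pi(m_i)W_1\simeq \pi(m_i)W$ would force $W_1\simeq W$, so $\pi(m_i)W_1$ is an irreducible not isomorphic to $\sigma$ and hence lies in $C$. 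This argument uses the defining property of $m_i$, not the sandwiching idempotents, and it needs no normalization.

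The normalization also creates a second, related problem: replacing $m_i$ by $em_ie$ proves at best $em_ie\in I^1_M(\sigma)$, whereas the lemma is about the original $m_i$. Membership does not transfer back automatically: by Lemma \ref{leftm}(4) and its dual, $m\in I^{lr}_M(\sigma)$ amounts to $(1-e^W)me^W=0=e^Wm(1-e^W)$ in $A$, and $e^W(em_ie)(1-e^W)=0$ does not imply $e^Wm_i(1-e^W)=0$; only $p_1$ transfers, since $p_1(em_ie)=e\,p_1(m_i)\,e$. Once you replace the flanking-idempotent reasoning by the Lemma \ref{irr}(3) argument above, applied to $m_i$ itself, the normalization becomes superfluous, both issues disappear, and your route (dualized Lemma \ref{leftm}(2) for $I^r_M(D(\sigma))$, plus $p_1(m_i)\neq 0$) closes as a legitimate alternative to the paper's local analysis of $G_{m_ie}^N$.
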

\begin{proof}
1) Let $\mathcal{A}: W \longrightarrow m_i W$ be an $N$-isomorphism. Then for $n \in N$,  assume $nm_i=m_in'$. If $n\notin G_e^N e^{[-1]}$,  $nW=0$ and $nm_iW=m_in'W=0$, which implies $n'\notin G_e^N e^{[-1]}$. If $n\in  G_e^N e^{[-1]}$, then $nW=neW=W$, $nm_iW=m_in'W=m_iW$, so $n'\in G_e^N e^{[-1]}$.\\
2) If $em_i=m_ie'=m_i e^{'s}$, we assume $e'\in E(N)$. Then $e'\in G_e^N e^{[-1]}$, $e'e\in G_e^N $. Since $N$ is an inverse monoid, $e'e=ee'=e$. Similarly, $e''m_i=m_ie$, for some $e''\in E(N)$, and $e''e=ee''=e$.   So $em_i=ee''m_i=em_ie=m_ie'e=m_ie$. \\
3) Note that $m_i W=m_ieW \hookrightarrow  V$. For $g=nm_ie\in G_{m_ie}^N$, $g^{-1} \circ_{m_ie}gW=m_iW$, so $gW\neq 0$, and $\dim gW=\dim W$.  So  $gW=nm_iW=m_in'W\subseteq m_iW$, and then $nm_iW=m_iW$, $n\in G_{e}^Ne^{[-1]}$. Hence $G_{m_ie}^N (m_ie)^{[-1]} \subseteq G_e^N e^{[-1]}$. For  $n\in G_e^N e^{[-1]} $, $nm_ie=nem_i\in  G_{m_ie}^N$, $n\in G_{m_ie}^N (m_ie)^{[-1]} $. Hence $G_{m_ie}^N (m_ie)^{[-1]} =G_e^N e^{[-1]}$. Dually, clearly, $ e^{[-1]} G_{e}^N \subseteq (m_ie)^{[-1]} G_{m_ie}^N $. If $n\in (m_ie)^{[-1]} G_{m_ie}^N $, then $m_ienW=m_ieW=m_iW$. Hence $nW=W$, $n\in G_e^N e^{[-1]} $. Hence $ e^{[-1]} G_{e}^N=(m_ie)^{[-1]} G_{m_ie}^N$. \\
4) Recall $l_l: G_e^N \longrightarrow G_{m_ie}^N; g\mapsto gm_ie$. Hence  we can define an action of  $ G_{m_ie}^N$ on $m_iW$ as follows:
 for $h=gm_ie\in G_{m_ie}^N$, $m_iw\in m_iW$ by $h[m_iw]=gm_iew$. It is well-defined, and gives a representation of $ G_{m_ie}^N$, which factors through $l_l$. Hence as left $N$ modules, $\C[G_{m_ie}^N]$ contains $W\simeq m_iW$. Similarly, $\C[G_{m_ie}^N]$ contains $D(W)$ as right $N$-modules. Therefore $m_ie\in J^1_M(\sigma)$. Consequently, $m_i\in I^1_M(\sigma)$.
\end{proof}
\begin{corollary}
Those $m_i$  of  Lmm.\ref{IMS}(3) can be chosen in $J^1_M(\sigma)$.
\end{corollary}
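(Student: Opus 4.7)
The plan is to take the elements $m_1,\dots,m_l$ produced by Lemma \ref{IMS}(3) and replace each $m_i$ by $m_ie$, then argue that these new elements still realize the same decomposition and now lie in $J^1_M(\sigma)$.

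First, recall that $W$ has apex $e\in E(N)$, so $\pi(e)|_W=\mathrm{id}_W$ and in particular $eW=W$. Consequently for every $m_i$ produced in Lemma \ref{IMS}(3) we have
\[
\pi(m_ie)W=\pi(m_i)\pi(e)W=\pi(m_i)W,
\]
so replacing $m_i$ by $m_ie$ does not change any summand, and $\Res_{N}^{I^V_M(\sigma)}\widetilde{W}^V=\oplus_{i=1}^{l}\pi(m_ie)W$ still holds. Note also that $e\in E(N)\subseteq E(M)\subseteq I^V_M(\sigma)$ by Lemma \ref{IMS}(7), so $m_ie\in I^V_M(\sigma)$.

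Next, the preceding lemma established that each $m_i$ lies in $I^1_M(\sigma)$. Step (4) of that lemma's proof (using the surjection $G_e^N\twoheadrightarrow G_{m_ie}^N$, the equalities $G_{m_ie}^N(m_ie)^{[-1]}=G_e^Ne^{[-1]}$ and $(m_ie)^{[-1]}G_{m_ie}^N=e^{[-1]}G_e^N$ of the earlier step, and the inflation of $\sigma$ through $l_l$) showed precisely that $\C[G_{m_ie}^N]$ contains $W\otimes D(W)$ as an $N$-$N$-bimodule, which by definition means $m_ie\in J^1_M(\sigma)$. Equivalently, this is the content of Lemma \ref{Ker1} applied to $m_i\in I^1_M(\sigma)$.

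Thus the new representatives $m_i':=m_ie$ satisfy $m_i'\in J^1_M(\sigma)$ and realize the same decomposition of $\widetilde{W}^V$. There is no substantial obstacle here since all the work has been done in the preceding lemma; the only point to verify is the compatibility $\pi(m_ie)W=\pi(m_i)W$, which is automatic from $eW=W$. Hence the $m_i$ of Lemma \ref{IMS} can indeed be chosen in $J^1_M(\sigma)$.
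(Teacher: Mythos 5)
Your proposal is correct and follows essentially the same route the paper intends: the preceding lemma's proof already establishes $m_iW=m_ieW$ and $m_ie\in J^1_M(\sigma)$ (equivalently via Lemma \ref{Ker1}), so replacing each $m_i$ by $m_ie$ preserves the decomposition of $\widetilde{W}^V$ while landing the representatives in $J^1_M(\sigma)$. The paper states the corollary as an immediate consequence of that lemma, and your argument is exactly that consequence spelled out.
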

Assume $ \widetilde{W}^V=\pi(m_1)W\oplus \cdots \oplus \pi(m_l) W$, for some $m_i\in J_M^1(\sigma)$.
\begin{lemma}
$m\in I^1_M(\sigma)$ iff  $mm_i W\simeq W$, for all $i$ iff $mm_i W \simeq W$, for some $i$ .
\end{lemma}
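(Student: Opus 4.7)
The chain of equivalences consists of three implications; the implication ``for all $i$'' $\Rightarrow$ ``for some $i$'' is immediate since $l\geq 1$. I sketch the two nontrivial directions.

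For $m\in I^1_M(\sigma)\Rightarrow mm_iW\simeq W$ for all $i$: By Lemma \ref{Ker1}, $me\in J^1_M(\sigma)$. Since each $m_i\in J^1_M(\sigma)$ and $\Hom_N(W,\C[G^N_{m_i}])\neq 0$, Lemma \ref{GEE}(1) gives $em_i=m_i$. Hence $mm_i=(me)\cdot m_i$. By the corollary following Lemma \ref{threeresults}, $J^1_M(\sigma)$ is a monoid, so $mm_i\in J^1_M(\sigma)$. By the definition of $J^1_M(\sigma)$, $\C[G^N_{mm_i}]\otimes_N W\simeq W$, and via the $N$-equivariant map $m'\otimes w\mapsto \pi(m')w$ (which is an $N$-module isomorphism for $m'\in J^1_M(\sigma)$, since both source and target are $N$-irreducible of type $\sigma$), this yields $mm_iW\simeq W$ as an $N$-submodule of $V$.

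For $mm_iW\simeq W$ for some $i\Rightarrow m\in I^1_M(\sigma)$: Here $\pi(mm_i)W$ is a non-zero irreducible $N$-submodule of $V$ isomorphic to $W$, so lies in $\widetilde{W}^V$. The plan is to show $mm_i\in J^1_M(\sigma)$, from which $m\in I^1_M(\sigma)$ will follow by the defining property of $I^1_M(\sigma)$ together with $m_i\in J^1_M(\sigma)$, provided also that $m\in I^{lr}_M(\sigma)$. The left-side condition that $\C[G^N_{mm_i}]$ contains $W$ as a left $N$-module is encoded by $mm_iW\simeq W$, via the isomorphism of the preceding paragraph. The matching right-side condition (appearance of $D(W)$ on the right) follows by dualizing under the centric hypothesis $N(mm_i)=(mm_i)N$, which allows the left and right Sch\"utzenberger apexes of $\C[G^N_{mm_i}]$ to be paired (cf.\ Lemma \ref{SImilarly} together with its right-sided dual in Section \ref{GnN}). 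Hence $mm_i\in J^1_M(\sigma)\subseteq I^{lr}_M(\sigma)$. To obtain $m\in I^{lr}_M(\sigma)$, one uses Lemma \ref{threeeq}: since $m_i\in J^1_M(\sigma)\subseteq I^{lr}_M(\sigma)$ satisfies $e^Wm_i=m_ie^W=e^Wm_ie^W\in e^WAe^W$, and $e^W(mm_i)=(mm_i)e^W=e^W(mm_i)e^W$ is nonzero (from $mm_i\in J^1_M(\sigma)$), the compatibility $e^Wm\cdot m_ie^W=e^Wmm_ie^W$ propagates to the desired identity $e^Wm=me^W=e^Wme^W$.

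The principal obstacle is the final implication: bridging the pointwise statement ``$mm_iW\simeq W$'' to the bimodule condition ``$\C[G^N_{mm_i}]$ contains $W\otimes D(W)$'' required for $J^1_M(\sigma)$-membership. The hard part lies in leveraging the centricity hypothesis $Nm=mN$ to transfer left-sided information about Sch\"utzenberger apexes into the corresponding right-sided statement, and then extracting the global algebraic condition $m\in I^{lr}_M(\sigma)$ from the single nontrivial product $mm_i$.
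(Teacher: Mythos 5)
Your forward direction coincides with the paper's own argument: Lemma \ref{Ker1} gives $me\in J^1_M(\sigma)$, $em_i=m_i$, and the corollary to Lemma \ref{threeresults} gives $mm_i=(me)m_i\in J^1_M(\sigma)$, whence $mm_iW\simeq W$; the implication ``for all $i$'' $\Rightarrow$ ``for some $i$'' is trivial. (Your parenthetical claim that $m'\otimes w\mapsto \pi(m')w$ is an isomorphism silently assumes $\pi(mm_i)W\neq 0$, but the paper asserts this at the same level of detail, so I do not count it against you.)

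The converse, which you yourself flag as the ``principal obstacle,'' is a genuine gap, and the bridges you sketch do not carry it. (i) The step ``hence $mm_i\in J^1_M(\sigma)\subseteq I^{lr}_M(\sigma)$'' is circular: by definition $J^1_M(\sigma)$ is a subset of $I^{lr}_M(\sigma)$, so certifying $mm_i\in J^1_M(\sigma)$ already requires the two-sided stability $mm_i\in I^{lr}_M(\sigma)$ that you were trying to derive from it. (ii) The duality you invoke does not produce the right-hand condition: Corollary \ref{thetabi} and Lemma \ref{SImilarly} only say that $\C[G^N_{mm_i}]$ is a theta $N$-$N$-bimodule whose left (resp.\ right) constituents share a common apex, and a theta bimodule pairs a left constituent $\sigma'$ with some right constituent $D(\Theta_{\sigma'})$ which need not be $D(\sigma')$ --- indeed the paper's proof of Lemma \ref{semisi1}(1) exhibits elements whose $\C[G^N_m]$ contains $W\otimes D(W_i)$ with $W_i\not\simeq W$ --- so ``apex pairing'' cannot convert the appearance of $W$ on the left into the appearance of $D(W)$ on the right; even the $\ast$-duality developed later (Section \ref{Inversemonoidcase}) relates the left structure of $\C[G^N_x]$ to the right structure of $\C[G^N_{x^{\ast}}]$, not of $\C[G^N_x]$ itself. (iii) For $m\in I^{lr}_M(\sigma)$ the identity $e^Wm=me^W=e^Wme^W$ is indeed the correct target (it is equivalent to $m\in e^WAe^W\oplus(1-e^W)A(1-e^W)$, cf.\ Lemma \ref{threeeq}(2)), but nothing in your sketch forces the cross components $e^Wm(1-e^W)$ and $(1-e^W)me^W$ to vanish: the nonvanishing of $e^Wmm_ie^W$ only concerns $e^Wme^W$ and is perfectly compatible with nonzero cross terms. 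For comparison, the paper's converse has the same skeleton you intend but simply asserts $m\in I^{lr}_M(\sigma)$ at this point (the left-handed half can be extracted from Lemma \ref{irr}(3) together with Lemma \ref{leftm}, since $m$ carries the constituent $\pi(m_i)W$ of $\widetilde{W}^V$ into a copy of $W$; the right-handed half is the substantive claim), and then concludes via $mm_i=mem_i\in J^1_M(\sigma)$ and the definition of $I^1_M(\sigma)$. So your instinct about where the difficulty sits is correct, but the proposal does not close it, and the specific mechanisms offered would not succeed as stated.
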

\begin{proof}
If $m\in I^1_M(\sigma)$, $mm_i=mem_i\in J^1_M(\sigma)$, so $0\neq mm_iW\simeq W$, for all $i$.   Conversely, if $mm_iW \simeq W$, for some $i$, then $mm_i\in I^{l}_M(\sigma)$ firstly.  Then $ mm_i \in   e^{W}Ae^{W} \oplus \oplus_{i=1}^k Ae^{W_i}$. Moreover, the image of $ mm_i$ in $e^{W}Ae^{W}$ is not zero. If the image of $ mm_i$ in $e^{W}Ae^{W_j}$ is also  not zero for some $j$. Then $\C[mm_i N]$ contains $W\otimes D(W)$ and $W\otimes D(W_j)$, contradicting to Lemma \ref{SImilarly}. Hence $mm_i\in I^{lr}_M(\sigma)$.
Consequently,  $mm_i=mem_i\in J^1_M(\sigma)$. Hence  $m\in I^1_M(\sigma)$.
\end{proof}
Notice that for two different $m_i, m_{i'}$, maybe $mm_i W=mm_{i'}W\neq 0$. It means that finally it reduces to understand well complex representations of full transformation monoids.
\begin{corollary}
For $m\in I^{lr}_M(\sigma)$, $m\in I^1_M(\sigma)$ iff $m\widetilde{W}^V\neq 0$.
\end{corollary}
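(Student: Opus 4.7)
The plan is to deduce this corollary as an essentially immediate consequence of the preceding lemma together with Lemma \ref{IMS}(2) and the tower of inclusions $I^{lr}_M(\sigma) \subseteq I^l_M(\sigma) \subseteq I^V_M(\sigma)$. Recall that by the corollary just above, we may take the representatives $m_1,\dots,m_l$ appearing in the decomposition $\widetilde{W}^V=\pi(m_1)W\oplus\cdots\oplus\pi(m_l)W$ to lie in $J^1_M(\sigma)$, and the preceding lemma then characterizes $I^1_M(\sigma)$ in terms of the condition $mm_iW\simeq W$.

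For the forward implication, I would simply invoke the preceding lemma: if $m\in I^1_M(\sigma)$, then $mm_iW\simeq W\neq 0$ for all $i$, so in particular $\pi(m)\pi(m_1)W=mm_1W\neq 0$, which gives $m\widetilde{W}^V\neq 0$.

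For the reverse implication, suppose $m\in I^{lr}_M(\sigma)$ and $m\widetilde{W}^V\neq 0$. Then there exists some index $i$ with $\pi(m)\pi(m_i)W=mm_iW\neq 0$. The first key point is that $m\in I^V_M(\sigma)$: by Lemma \ref{threeeq}(1) we have $I^{lr}_M(\sigma)\subseteq I^l_M(\sigma)$, and by Lemma \ref{leftm}(1) $I^l_M(\sigma)=\cap_{V'}I^{V'}_M(\sigma)\subseteq I^V_M(\sigma)$. The second key point is that $\pi(m_i)W$ is an irreducible $N$-submodule of $\widetilde{W}^V$ isomorphic to $W$ (this is where we use $m_i\in J^1_M(\sigma)$, and follows from Lemma \ref{irr} together with the definition of $J^1_M(\sigma)$). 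Applying Lemma \ref{IMS}(2) to the irreducible constituent $\pi(m_i)W$ of $\widetilde{W}^V$ and to $m\in I^V_M(\sigma)$, we conclude that $\pi(m)\pi(m_i)W$ is either zero or isomorphic to $W$ as an $N$-module; since the former is ruled out, $mm_iW\simeq W$. Invoking the preceding lemma (the ``for some $i$'' clause) then yields $m\in I^1_M(\sigma)$.

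I do not anticipate a real obstacle here: the statement is bookkeeping on top of the preceding lemma, and the main thing to be careful about is citing the correct inclusion chain $I^{lr}_M(\sigma)\subseteq I^l_M(\sigma)\subseteq I^V_M(\sigma)$ so that Lemma \ref{IMS}(2) becomes applicable, rather than re-proving the dichotomy $mm_iW=0$ or $mm_iW\simeq W$ from scratch.
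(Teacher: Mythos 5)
Your proof is correct and follows exactly the route the paper intends: the corollary is stated there without proof as an immediate consequence of the preceding lemma, and your argument (the inclusion chain $I^{lr}_M(\sigma)\subseteq I^l_M(\sigma)\subseteq I^V_M(\sigma)$ making Lemma \ref{IMS}(2) applicable, so that $mm_iW\neq 0$ forces $mm_iW\simeq W$, then the ``for some $i$'' clause of the preceding lemma) simply supplies the bookkeeping the paper leaves implicit. No gaps.
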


            \begin{proposition}\label{theta5}
 $B=\C[N]$  is a normal subring of $A=\C[M]$ in the sense of Rieffel   in \cite{Ri2}.
    \end{proposition}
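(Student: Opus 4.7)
The plan is to verify the definition of ``normal subring'' from Rieffel \cite{Ri2} by reducing it to the sliding identity $aB=Ba$ (for every $a\in A$) together with a compatibility statement on minimal central idempotents of $B$. Recall from \cite{Ri2} that, in the semi-simple setting, a subring $B\subseteq A$ (sharing the unit) is normal if for every $a\in A$ we have $aB=Ba$, and consequently the finite set of minimal central idempotents of $B$ is permuted by the natural action induced by $A$.

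The first step is immediate from Axiom (III). Since $N$ is a centric submonoid of $M$, we have $mN=Nm$ for every $m\in M$. Linearizing over $\C$, this yields $mB=Bm$ for each $m\in M$, and then $aB=Ba$ for every $a\in A$ by linearity in $a$. This is the fundamental sliding property; in Rieffel's language it already expresses that $B$ sits inside $A$ in a ``two-sided'' fashion.

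The second step is to translate this sliding into an action of $A$ on the minimal central idempotents $e^{W_0},\dots,e^{W_k}$ of $B$, which exist by Lemma \ref{biiso1} and the semi-simplicity assumption (Axiom (IV)). Fix such an $e^{W_i}$. Given $m\in M$, the inclusion $me^{W_i}\in mB=Bm$ lets us write $me^{W_i}=b\,m$ for some $b\in B$. To identify $b$ with a sum of minimal central idempotents, I will use the fact established in Lemma \ref{irr} that for any irreducible $M$-module $V$ and any $N$-irreducible constituent $W$ of $\Res_N^M V$, the subspace $\pi(m)W$ is either zero or itself an irreducible $N$-module. Consequently, $m$ sends each $\sigma_i$-isotypic component of $V$ into a sum of other isotypic components of $V$; globally, this means left multiplication by $m$ permutes (with zeros allowed) the blocks $e^{W_j}A$ of $A$. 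Therefore $me^{W_i}=e^{W_{i'}}m$ for a uniquely determined $e^{W_{i'}}$ (or $0$), which is exactly the statement that $A$ permutes the minimal central idempotents of $B$.

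The third step is to assemble these observations into the precise form of Rieffel's definition. The compatibility on idempotents just obtained, together with $aB=Ba$ for all $a\in A$, gives the two required hypotheses of \cite{Ri2}. Semi-simplicity of $B$ (Lemma \ref{biiso1}) and of $A$ (Axiom (IV)) ensure that one is genuinely in the semi-simple case treated in \cite{Ri2}, so no further technical conditions have to be checked. The main obstacle, if any, is to rule out ambiguity in the sliding $me^{W_i}=e^{W_{i'}}m$: the element $n'$ in the relation $mn=n'm$ is not uniquely determined by $m$ and $n$ in general, so I must argue that the ambiguity is absorbed by the idempotent structure, which is exactly what Lemma \ref{irr} (combined with semi-simplicity) provides by identifying $n'$ up to elements acting trivially on the relevant isotypic component.
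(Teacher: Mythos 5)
Your proposal rests on a mis\-remembered definition, and the step you use to verify it is itself invalid. Rieffel's normality is \emph{not} the condition that $aB=Ba$ for every $a\in A$. The statement of \cite{Ri2} that is actually relevant here (and that the paper uses) is the characterization, valid in the semi-simple case, that $B$ is normal in $A$ iff for all $\pi_1,\pi_2\in\Irr(M)$ one has $\mathcal{R}_N(\pi_1)\cap\mathcal{R}_N(\pi_2)\neq\varnothing$ exactly when $\mathcal{R}_N(\pi_1)=\mathcal{R}_N(\pi_2)$. Your condition is strictly stronger and already fails in the paradigm case covered by this proposition: take $M=S_3$, $N=A_3$ (so $N$ is centric, both algebras are semi-simple, and $B$ must come out normal), and $a=1+(12)$. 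Then $aB=\{b+\theta(b)(12)\,:\,b\in B\}$ with $\theta$ the conjugation automorphism of $\C[A_3]$, while $Ba=\{b+b(12)\,:\,b\in B\}$; these two $3$-dimensional subspaces are different. So even if your two steps were carried out correctly, they would be proving a false intermediate statement rather than the proposition. Moreover the verification itself has a genuine error: from $mN=Nm$ you do get $mB=Bm$ for each monoid element $m$, but the equality $aB=Ba$ is not linear in $a$ --- for $a=\sum_i c_im_i$ each $m_ib$ lies in $Bm_i$ with a \emph{different} left factor, so $ab$ need not lie in $Ba$ --- hence ``by linearity'' does not work, as the $S_3$ example shows. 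Your second step is also not a permutation statement (in a monoid, left multiplication by $m$ can annihilate blocks), and in any case permuting the central idempotents of $B$ is not the condition Rieffel requires.

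What the proposition actually needs, and what the paper proves, is the restriction condition above: if $(\sigma,W)\in\mathcal{R}_N(\pi_1)\cap\mathcal{R}_N(\pi_2)$ and $(\sigma',W')\in\mathcal{R}_N(\pi_1)$, then $(\sigma',W')\in\mathcal{R}_N(\pi_2)$. Its verification is where the real work lies and uses the machinery of the Clifford--Mackey--Rieffel sections: one writes the $\sigma$-isotypic component of $V_i$ as $\oplus_j\pi_i(m_{ij})W_i$ with $m_{ij}\in J^1_M(\sigma)$, produces from $m_1'm_{1j}W_1\simeq W'$ an element $m_1''$ with $m_1''m_1'm_{1j}W_1\simeq m_{1j}W_1$ (Lemma \ref{semisi}(2)), concludes $m_1''m_1'\in I^1_M(\sigma)$ and $m_1''m_1'e\in J^1_M(\sigma)$ so that $m_1'm_{2j}W_2\neq 0$, and then transports the isomorphism $m_{1j}W_1\simeq m_{2j}W_2$ through $\pi(m_1')$ using the bijectivity statement of Lemma \ref{irr}(2). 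None of this appears in your proposal, so the gap is not cosmetic: the target condition is different, and the tools needed to reach it are the ones you did not invoke.
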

              \begin{proof}
         According to \cite[p.369, Prop.]{Ri2},  $B$  is a normal subring of $A$ iff for any $(\pi_1, V_1), (\pi_2,V_2) \in \Irr(M)$,  $\mathcal{R}_N(\pi_1)\cap \mathcal{R}_N(\pi_2) \neq \varnothing  \Leftrightarrow  \mathcal{R}_N(\pi_1)= \mathcal{R}_N(\pi_2)$. Let us check the later condition.  Assume $(\sigma, W)  \in \mathcal{R}_N(\pi_1)\cap \mathcal{R}_N(\pi_2) $.  Let $ W_i$ be a subspace of $V_i$, and $W_i \simeq W$ as $N$-modules. Assume $\widetilde{W}^{V_i} \simeq \oplus \pi_i(m_{ij}) W_i$, for $i=1, 2$, and $m_{ij}\in J^1_M(\sigma)$.

          If $(\sigma', W') \in \mathcal{R}_N(\pi_1)$, then $\exists m_1' \in M$, $m_1' m_{1j} W_1\simeq W'$. By Lmm.\ref{semisi}(2), there exists $m_1'' \in M$, such that $m_1''m_1'm_{1j} W_1\simeq m_{1j}W_1$.  Hence $m_1''m_1'\in I^1_M(\sigma)$, and further $m_1''m_1'e\in J^1_M(\sigma)$. Hence $m_1''m_1'm_{2j}=m_1''m_1'em_{2j} \in  J^1_M(\sigma)$, and $m_1''m_1'm_{2j} W_2 \simeq W_2\simeq W$. So $m_1'm_{2j} W_2\neq 0$. Let $\mathcal{A}: m_{1j}W_1 \longrightarrow m_{2j} W_2$ be an $N$-isomorphism. By Lmm.\ref{irr}(2), $m_1'\mathcal{A}$ also induces an $N$-isomorphism from $m_1' m_{1j}W_1$ to $m_1' m_{2j}W_2$.
          Hence $(\sigma', W') \in \mathcal{R}_N(\pi_2)$. It implies $\mathcal{R}_N(\pi_1) \subseteq \mathcal{R}_N(\pi_2)$. By duality, $\mathcal{R}_N(\pi_1) = \mathcal{R}_N(\pi_2) $.
          \end{proof}

      \subsection{Inverse monoid case}\label{Inversemonoidcase}
  Keep the above notations.     Assume  now  $M$  is an inverse monoid(cf. \cite[Chapter 3]{Stein}).  For  $m\in M$, let  $m^{\ast}$ be the inverse of $m$. By Coro.\ref{INVER}, $N$ is also an inverse monoid.   $\ast: \C[M] \longrightarrow \C[M]$ is a $\C$-linear map. Since $M$ is a semi-simple monoid, $\C[M] \simeq \prod_{ \textrm{ some } f_i\in E(M)} {M_{n_i}}(\C[G_{f_i}])$ as algebras by \cite[p.77, Thm.5.31]{Stein}.  Moreover ${M_{n_i}}(\C[G_{f_i}]) \simeq \oplus_{V'} V'\otimes D(V')$,  as $V'$ runs through all irreducible representations of  $M$ having apexes $f_i$.
  \begin{lemma}
  Let $(\pi', V')$ be an irreducible constituent  of $\C[M]$ as left $M$-modules. Assume $V'$ has an apex $f_i$.  Let $V^{'\ast}$ denote the image of $V'$ in $\C[M]$ under the map $\ast$. Then $V^{'\ast}$ is an irreducible right  $M$-submodule of $\C[M]$, and $V^{'\ast} \simeq D(V')$.
    \end{lemma}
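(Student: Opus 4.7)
My plan is to establish the three claims in sequence: $V^{'\ast}$ is right $M$-stable, it is irreducible, and it is isomorphic to $D(V')$ as a right $M$-module. The first two steps are formal consequences of the fact that the $\C$-linear extension of the involution $m \mapsto m^{\ast}$ to $\C[M]$ is a $\C$-algebra anti-involution (both $(mn)^{\ast} = n^{\ast}m^{\ast}$ and $(m^{\ast})^{\ast} = m$ hold in any inverse monoid). Indeed, for $v \in V' \subseteq \C[M]$ and $m \in M$ we have $v^{\ast} m = (m^{\ast} v)^{\ast} \in V^{'\ast}$, so $V^{'\ast}$ is right $M$-stable; and because $\ast$ is a bijection carrying left submodules of $V'$ to right submodules of $V^{'\ast}$, irreducibility transfers from one side to the other.

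For the identification $V^{'\ast} \simeq D(V')$, my plan is to exploit the bimodule decomposition $\C[M] \simeq \bigoplus_{V \in \Irr(M)} V \otimes D(V)$ afforded by semisimplicity (Theorem \ref{semisimplealgebras} and Lemma \ref{duality}). Each summand $I_V := V \otimes D(V)$ is a minimal two-sided ideal isomorphic to $\Mm_{\dim V}(\C)$, and any irreducible right submodule of $I_V$ is isomorphic to $D(V)$ as a right $M$-module. Since $\ast$ is an anti-algebra automorphism, it permutes the $I_V$, so $V^{'\ast} \subseteq I_{V''}$ for some $V'' \in \Irr(M)$ and $V^{'\ast} \simeq D(V'')$ as right $M$-modules; the task reduces to showing $V'' = V'$.

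To pin down $V''$ I would use that $\ast$ fixes every idempotent of $M$, in particular the apex $f_i$, and restricts on $f_i \C[M] f_i \simeq \C[G_{f_i}]$ to the group inverse of $G_{f_i}$. The observation recorded at the start of this subsection, namely that choosing $y_i = x_i^{\ast}$ makes the sandwich matrix $P(f_i)$ the identity, then says that the Sch\"utzenberger right representation on $\C[R_{f_i}]$ is literally the image under $\ast$ of the Sch\"utzenberger left representation on $\C[L_{f_i}]$. Combined with the preceding lemma $\check{\pi} \simeq D(\pi) \circ \ast$ of this subsection (which dualizes a right structure into the contragredient of a left structure), this should force the block $I_{V'}$ to be $\ast$-stable and deliver the desired isomorphism of right $M$-modules.

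The main obstacle will be this block-preservation step: an anti-algebra involution on $\C[M]$ may in principle permute the blocks nontrivially, and verifying that $\ast$ genuinely preserves each $I_V$ requires carefully tracking conventions for $D(V)$, $\check{V}$, and the left/right module structures. The hard part is matching these conventions so that the isomorphism $V^{'\ast} \simeq D(V')$ is literal as right $M$-modules, not merely up to a twist by $\ast$ or by passage to the contragredient.
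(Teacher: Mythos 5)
Your first two steps (right $M$-stability via $v^{\ast}m=(m^{\ast}v)^{\ast}$, and transfer of irreducibility through the bijection $\ast$ on submodules) coincide with the paper's and are fine. The gap is in the identification $V^{'\ast}\simeq D(V')$: after correctly reducing the problem to showing that $\ast$ carries the block $I_{V'}=V'\otimes D(V')$ to itself (equivalently $V''=V'$ in your notation), you only say that the listed ingredients ``should force'' this. That is not a postponable verification — deciding which block receives $V^{'\ast}$ \emph{is} the lemma — and the tools you cite do not settle it. There is no general principle making an algebra anti-involution preserve the simple blocks, and the lemma you lean on, $\check{\pi}\simeq D(\pi)\circ\ast$, when composed with $\ast$ identifies the right module obtained from $V'$ by twisting with $\ast$ (which is exactly $V^{'\ast}$) with $D(\check{V'})$; so, read naively, it places $V^{'\ast}$ in the block of $\check{V'}$, not of $V'$. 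Far from forcing block stability, it produces precisely the contragredient/convention tension your last paragraph worries about, and the sandwich-matrix remark does not remove it. As written, the proposal therefore stops exactly where the content of the statement begins.

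The paper's own proof never argues block stability abstractly: it localizes at the apex. Since $V'$ and $V^{'\ast}$ have the same apex $f_i$ (the annihilating ideal is $\ast$-stable), $f_iV'$ is an irreducible $G_{f_i}$-module and $V^{'\ast}f_i=V^{'\ast}f_i^{\ast}$ is an irreducible right $G_{f_i}$-module; choosing a $G_{f_i}$-isomorphism $\mathcal{A}\colon f_iV'\to U'\subseteq\C[G_{f_i}]$, it defines the explicit map $\mathcal{A}^{\ast}(v^{\ast}f_i)=(\mathcal{A}(f_iv))^{\ast}$, checks by the one-line computation with $g^{\ast}=g^{-1}$ that it is right $G_{f_i}$-equivariant onto $U^{'\ast}$, uses that $\ast$ restricted to $G_{f_i}$ is group inversion to identify $U^{'\ast}$ with $D(U')$, and only then passes back to $M$, using that an irreducible module with apex $f_i$ is determined by its $G_{f_i}$-localization. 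To complete your argument you must supply this (or an equivalent) explicit equivariance computation at the level of $G_{f_i}$ and, in doing so, pin down the convention for $D$ there so that no twist by the contragredient survives — this is the step your sketch defers, and it is where all the work lies.
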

    \begin{proof}
    $V'\subseteq \C[M]$. For $m\in M$, $m^{\ast\ast}=m$. Hence for any $v^{\ast}\in V^{'\ast}$, $v^{\ast} m=v^{\ast} m^{\ast\ast}=(m^{\ast}v)^{\ast}\in V^{'\ast}$.  Moreover $W'\subseteq V'$ iff $W^{'\ast} \subseteq V^{'\ast}$. Hence $V^{'\ast}$ is  an irreducible right $M$-module.   Notice that  $V'$, $V^{'\ast}$ have the same apex $f_i$.   So $V^{'\ast} \subseteq \oplus_{V''} V''\otimes D(V'')$, as $V''$ runs through all irreducible representations of  $M$ having apexes $f_i$.  Note that $f_i V'$ is an irreducible representation of $G_{f_i}$, so is $V^{'\ast} f_i^{\ast}=V^{'\ast} f_i$.  Let $\mathcal{A}: f_i V' \longrightarrow U' \subseteq \C[G_{f_i}]$ be a $G_{f_i}$-isomorphism.  Since the restriction of $\ast$ on $G_{f_i}$ is the inverse map, $U^{'\ast} \simeq D(U')$ as right $G_{f_i}$-modules.
    Let $\mathcal{A}^{\ast}$ be the $\C$-linear map from $V^{'\ast} f_i$ to $U^{'\ast}$ induced by $\mathcal{A}$, and $\ast$. Then $\mathcal{A}^{\ast}(v^{\ast}f_i)= (\mathcal{A}(f_i v))^{\ast}$. For $g\in G_{f_i}$, $g^{\ast}=g^{-1}$, $\mathcal{A}^{\ast}(v^{\ast}f_ig^{\ast})= [\mathcal{A}(gf_iv)]^{\ast}= [g\mathcal{A}(f_iv)]^{\ast}=[ \mathcal{A}(f_iv)]^{\ast} g^{\ast}=[\mathcal{A}^{\ast}(v^{\ast}f_i)]g^{\ast}$. Hence $\mathcal{A}^{\ast}$ is a right $G_{f_i}$-isomorphism. Hence  $V^{'\ast}f_i \simeq D(U'_i)$ as $G_{f_i}$-modules, and $V^{'\ast} \simeq D(V')$ as right $M$-modules.
                             \end{proof}

  Our next purpose is to generate the above result to the relative case. According to \cite[p.28, Coro.3.6]{Stein}, for $e'\in E(N)$, $m\in G_{e'}^N$ iff $m^{\ast} \in G_{e'}^N$.
  \begin{lemma}\label{mmm}
  For $m\in M$, $(G_m^N)^{\ast}=G_{m^{\ast}}^N$.
  \end{lemma}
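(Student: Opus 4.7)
The plan is to prove the equality by showing both inclusions via a direct application of the involution to the defining equalities of $G_m^N$, then obtain the reverse inclusion by applying $\ast$ a second time.

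First, I would recall the relevant structure: by definition $G_m^N = L_m^N \cap R_m^N = \{m' \in M : Nm' = Nm \text{ and } m'N = mN\}$ (cf.\ Lemma \ref{BSteinberg}(2) and the centric condition of Axiom III/Remark \ref{threeeqt}). Since $M$ is an inverse monoid, the canonical involution satisfies $(xy)^\ast = y^\ast x^\ast$ and $x^{\ast\ast} = x$; moreover $N$ is an inverse submonoid by Corollary \ref{INVER}, so $N^\ast = N$. In particular, for any subset $S \subseteq M$ and any $x \in M$ we have $(Sx)^\ast = x^\ast S^\ast$, and $(NS)^\ast = S^\ast N$ when $S \subseteq M$ and $N$ is inverse.

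Next I would show $(G_m^N)^\ast \subseteq G_{m^\ast}^N$. Take $m' \in G_m^N$, so $Nm' = Nm$ and $m'N = mN$. Applying $\ast$ to the first equality yields $(m')^\ast N^\ast = m^\ast N^\ast$, i.e.\ $(m')^\ast N = m^\ast N$, so $(m')^\ast \in R_{m^\ast}^N$. Applying $\ast$ to the second gives $N^\ast (m')^\ast = N^\ast m^\ast$, i.e.\ $N(m')^\ast = N m^\ast$, so $(m')^\ast \in L_{m^\ast}^N$. Therefore $(m')^\ast \in L_{m^\ast}^N \cap R_{m^\ast}^N = G_{m^\ast}^N$.

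For the reverse inclusion, I would apply the previous step to $m^\ast$ in place of $m$: the inclusion gives $(G_{m^\ast}^N)^\ast \subseteq G_{m^{\ast\ast}}^N = G_m^N$, and then applying $\ast$ to both sides (using $\ast\ast = \mathrm{id}$) yields $G_{m^\ast}^N \subseteq (G_m^N)^\ast$. Combining the two inclusions gives the equality. There is no serious obstacle here; the only point to be careful about is using that $N$ is an inverse submonoid (so that $N^\ast = N$), which is already guaranteed by Corollary \ref{INVER} under the standing Axioms III and IV.
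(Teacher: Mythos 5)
Your proof is correct, and it takes a more direct route than the paper. The paper's argument is structural: it writes $G_m^N=mG_{e'}^N$ for an idempotent $e'\in E(N)$ with $me'=m$, invokes the $\ast$-stability of the maximal subgroup $G_{e'}^N$ (the fact quoted from Steinberg just before the lemma), deduces $(G_m^N)^{\ast}=G_{e'}^N m^{\ast}\subseteq G_{e'm^{\ast}}^N=G_{m^{\ast}}^N$, and then closes by duality together with the bijectivity of $\ast$ (a cardinality argument). You instead apply the involution directly to the defining equalities $Nm'=Nm$ and $m'N=mN$, which needs nothing beyond $\ast$ being an anti-automorphism and $N^{\ast}=N$, and you get the reverse inclusion cleanly by substituting $m^{\ast}$ for $m$ and using $\ast\ast=\mathrm{id}$; no counting and no appeal to the decomposition $G_m^N=mG_{e'}^N$ is needed. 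The one point you state rather quickly is $N^{\ast}=N$: Corollary \ref{INVER} only says that $N$ is an inverse monoid in its own right, so you should add the one-line argument that, by uniqueness of inverses in the inverse monoid $M$, the inverse of $n\in N$ computed inside $N$ coincides with $n^{\ast}$, whence $N^{\ast}\subseteq N$ and, applying $\ast$ again, $N^{\ast}=N$. (Alternatively this is exactly what the paper's quoted fact gives, since under Axiom III the inverse monoid $N$ is the disjoint union of the $\ast$-stable groups $G_{e'}^N$, $e'\in E(N)$.) With that sentence added, your proof is complete and arguably more elementary than the paper's.
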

  \begin{proof}
  Assume $G_m^N=mG_{e'}^N$, with $me'=m$. Then $(G_m^N)^{\ast}=G_{e'}^N m^{\ast} \subseteq G_{e'm^{\ast}}^N=G_{m^{\ast}}^N$. Dually, $(G_{m^{\ast}}^N)^{\ast} \subseteq G_{m}^N$.  Since $\ast$ is a bijective map,  $|G_m^N|=| G_{m^{\ast}}^N |  $, and then $(G_m^N)^{\ast}=G_{m^{\ast}}^N$.
     \end{proof}
  \begin{lemma}
  Let $W'$ be an irreducible constituent  of $\C[M]$ as left $N$-module. Assume $W'$ has an apex $e_i\in E(N)$.  Let $W^{'\ast}$ denote the image of $W'$ in $\C[M]$ under the map $\ast$. Then $W^{'\ast}$ is an irreducible right  $N$-submodule of $\C[M]$, and $W^{'\ast} \simeq D(W')$.
  \end{lemma}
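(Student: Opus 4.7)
The plan is to mirror the proof of the preceding lemma (which treats the $M$-case) by replacing $M$ with its inverse submonoid $N$ and exploiting both the semisimplicity of $\C[N]$ (established in Lemma \ref{biiso1}, and which guarantees $N$ is itself an inverse monoid by Corollary \ref{INVER}) and the compatibility $(G_m^N)^{\ast}=G_{m^{\ast}}^N$ of Lemma \ref{mmm}.

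First I would verify that $W^{'\ast}$ is right $N$-stable. For $n\in N$, since $N$ is an inverse submonoid we have $n^{\ast}\in N$, and for $v^{\ast}\in W^{'\ast}$ one computes $v^{\ast}n=v^{\ast}n^{\ast\ast}=(n^{\ast}v)^{\ast}$. Since $W'$ is left $N$-stable, $n^{\ast}v\in W'$, hence $v^{\ast}n\in W^{'\ast}$. Because $\ast$ is an involutive $\C$-linear bijection of $\C[M]$ reversing multiplication, $U'\subseteq W'$ is a left $N$-submodule if and only if $U^{'\ast}\subseteq W^{'\ast}$ is a right $N$-submodule, so the irreducibility of $W'$ transfers to $W^{'\ast}$.

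Next I would pin down the apex. Since $e_i^{\ast}=e_i$ and Lemma \ref{mmm} gives $(G_m^N)^{\ast}=G_{m^{\ast}}^N$, the $\mathcal{J}_N$-class structure of $\Ann_N(W^{'\ast})$ matches that of $\Ann_N(W')$, so $W^{'\ast}$ has apex $e_i$ as well. By the Clifford-Munn-Ponizovski\^i theorem, $W'\simeq\Ind_{G_{e_i}^N}(\chi)$ for some $(\chi,U)\in\Irr(G_{e_i}^N)$, with $e_iW'\simeq U$ as left $G_{e_i}^N$-modules. Using the semisimplicity of $\C[N]$ to embed $e_iW'$ as a $G_{e_i}^N$-module isomorphic to some $U'\subseteq\C[G_{e_i}^N]$, say by $\mathcal{A}\colon e_iW'\to U'$, I would then apply $\ast$: since the restriction of $\ast$ to $G_{e_i}^N$ is the group inverse, $U^{'\ast}\simeq D(U')$ as right $G_{e_i}^N$-modules. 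Because $e_i^{\ast}=e_i$, we have $W^{'\ast}e_i=(e_iW')^{\ast}$, and the induced map $\mathcal{A}^{\ast}\colon W^{'\ast}e_i\to U^{'\ast}$ given by $\mathcal{A}^{\ast}(v^{\ast}e_i)=(\mathcal{A}(e_iv))^{\ast}$ intertwines the $G_{e_i}^N$-actions since for $g\in G_{e_i}^N$ one has $g^{\ast}=g^{-1}$ and
\[
\mathcal{A}^{\ast}(v^{\ast}e_ig^{\ast})=(g\mathcal{A}(e_iv))^{\ast}=\mathcal{A}^{\ast}(v^{\ast}e_i)g^{\ast}.
\]
Thus $W^{'\ast}e_i\simeq D(U')$ as right $G_{e_i}^N$-modules.

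Finally, I would promote this $G_{e_i}^N$-isomorphism to a right $N$-isomorphism $W^{'\ast}\simeq D(W')$ by invoking the Sch\"utzenberger construction: as $W'\simeq\Ind_{G_{e_i}^N}(U)$ and $D(W')\simeq\Coind_{G_{e_i}^N}(D(U))\simeq D(U)\otimes_{G_{e_i}^N}\C[R_{e_i}^N]$ as right $N$-modules (using the identification from Remark \ref{irredM1}), one checks that $\ast$ carries the realization of $W'$ through $L_{e_i}^N$ onto the realization of $D(W')$ through $R_{e_i}^N=(L_{e_i}^N)^{\ast}$, compatibly with the already-established $G_{e_i}^N$-isomorphism on the $e_i$-component. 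The main obstacle, and the step that requires the most care, is precisely this last compatibility check: namely verifying that $\ast$ swaps the $W''\otimes D(W'')$ bimodule components in the decomposition $\C[N]\simeq\bigoplus_{W''}W''\otimes D(W'')$ in a way that matches the Sch\"utzenberger identifications $\mathbb{C}[L_{e_i}^N]\simeq\Hom_{G_{e_i}^N}(\mathbb{C}[R_{e_i}^N],\mathbb{C}[G_{e_i}^N])$ of Lemma \ref{le55}, so that the isomorphism on the apex-component propagates to the whole module.
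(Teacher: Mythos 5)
Your proposal is correct, and its first three steps coincide with the paper's own proof: the stability computation $v^{\ast}n=(n^{\ast}v)^{\ast}$, the transfer of irreducibility through the involution, the $\ast$-stability of the annihilator $I_{e_i}$ giving the same apex, and the construction of $\mathcal{A}^{\ast}(v^{\ast}e_i)=(\mathcal{A}(e_iv))^{\ast}$ with the intertwining identity $\mathcal{A}^{\ast}(v^{\ast}e_ig^{\ast})=[\mathcal{A}^{\ast}(v^{\ast}e_i)]g^{\ast}$ are all exactly what the paper does. The only divergence is the very last step, which you single out as the main obstacle: you propose to promote the right $G_{e_i}^N$-isomorphism $W^{'\ast}e_i\simeq D(U)$ to $W^{'\ast}\simeq D(W')$ by tracking how $\ast$ interacts with the Sch\"utzenberger realizations $\C[L_{e_i}^N]\simeq\Hom_{G_{e_i}^N}(\C[R_{e_i}^N],\C[G_{e_i}^N])$ and the bimodule decomposition of $\C[N]$. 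That check is not needed (and the paper does not perform it): since $\C[N]$ is semi-simple, the Clifford--Munn--Ponizovski\^i correspondence (Theorem \ref{CMP}, applied on the right, i.e.\ to $N^{o}$) says an irreducible right $N$-module with apex $e_i$ is determined up to isomorphism by its $e_i$-component as a right $G_{e_i}^N$-module; as $D(W')$ is irreducible with apex $e_i$ and $D(W')e_i\simeq D(e_iW')\simeq D(U)\simeq W^{'\ast}e_i$, the isomorphism $W^{'\ast}\simeq D(W')$ follows at once. Your route through the coinduced realization would also work, but it buys nothing over this one-line appeal to the classification, and is the only place where your write-up is heavier than the argument actually requires.
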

  \begin{proof}
  1) For $n\in N$, $n^{\ast\ast}=n$,  $w^{\ast}\in W^{'\ast}$, $w^{\ast} n=w^{\ast} n^{\ast\ast}=(n^{\ast}w)^{\ast}\in W^{'\ast}$. So $W^{'\ast}$ is $N$-stable. Moreover, $W''\subseteq W'$ iff $W^{''\ast} \subseteq W^{'\ast}$. Hence $W^{'\ast}$ is an irreducible $N$-module.\\
  2)   $\Ann_N(W')=I_{e_i} \subseteq N$.  Note that $I_{e_i}$ is a union of $G_{e'}^N$, which is $\ast$-stable. Hence $\Ann_N(W^{'\ast})=I_{e_i}$, $W^{'\ast}$ has an apex $e_i$. \\
  3) Assume $\C[G_{e_i}^N] =\oplus U\otimes D(U)$, as $G_{e_i}^N-G_{e_i}^N$-bimodules. Assume $ e_i W' \simeq  U \subseteq \C[G_{e_i}^N]$, as   $G_{e_i}^N$-modules.  Let $\mathcal{A}: e_i W' \longrightarrow   U $ be a $G_{e_i}^N$-isomorphism.    Let $\mathcal{A}^{\ast}$ be the $\C$-linear map from $W^{'\ast} e_i$ to $U^{\ast}$ induced by $\mathcal{A}$, and $\ast$.  In other words,  $\mathcal{A}^{\ast}(w^{\ast}e_i)= (\mathcal{A}(e_i w))^{\ast}$.  For $g\in G^N_{e_i}$, $g^{\ast}=g^{-1}$, $\mathcal{A}^{\ast}(w^{\ast}e_ig^{\ast})= [\mathcal{A}(ge_iw)]^{\ast}= [g\mathcal{A}(e_iw)]^{\ast}=[ \mathcal{A}(e_iw)]^{\ast} g^{\ast}=[\mathcal{A}^{\ast}(w^{\ast}e_i)]g^{\ast}$.   Hence $W^{'\ast}e_i \simeq U^{\ast} \simeq D(U)$, and $W^{'\ast} \simeq D(W')$.
           \end{proof}
           For $m\in M$, by Lmm.\ref{SImilarly}, $\C[Nm] \simeq \oplus_{i=1}^k U_i\otimes D(V_i)$ is a theta bimodule.  Let $\mathcal{R}_N^{l}(\C[Nm])=\{ U_i\}$, $\mathcal{R}_N^{r}(\C[Nm])=\{ D(V_i)\}$. Let $\mathcal{R}_{N}^l(\C[G_m^N])$ (resp. $\mathcal{R}_{N}^r(\C[G_m^N])$)  denote the set of all irreducible quotients of  left $N$-module $\C[G_m^N]$ (resp. right $N$-module $\C[G_m^N]$).

        \begin{lemma}
         \begin{itemize}
        \item[(1)]         $\mathcal{R}_N^{l}(\C[Nm])=\mathcal{R}_N^r(\C[m^{\ast}N])$,  $\mathcal{R}_N^{r}(\C[Nm])=\mathcal{R}_N^l(\C[m^{\ast}N])$, up to the canonical $D$-maps.
        \item[(2)]  $\mathcal{R}_N^{l}(\C[G_m^N])=\mathcal{R}_N^r(\C[G_{m^{\ast}}^N]])$,  $\mathcal{R}_N^r(\C[G_{m^{\ast}}^N]])=\mathcal{R}_N^l(\C[G_m^N])$, up to the canonical $D$-maps.
                 \end{itemize}
                           \end{lemma}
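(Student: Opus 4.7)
The plan is to view both statements as direct corollaries of the involution $\ast$ acting on $\C[M]$. First I would observe that for any element $m\in M$, one has the set-theoretic identity $(Nm)^{\ast}=m^{\ast}N^{\ast}=m^{\ast}N$ (using that $N$ is closed under $\ast$, which follows from $N$ being an inverse monoid via Corollary \ref{INVER}). Linearizing, the $\C$-linear involution $\ast\colon\C[M]\to\C[M]$ restricts to a bijective $\C$-linear map $\C[Nm]\to\C[m^{\ast}N]$ which converts left $N$-action into right $N$-action, i.e.\ $(nv)^{\ast}=v^{\ast}n^{\ast}$.

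Next I would use the previous lemma verbatim: if $W'\subseteq\C[Nm]$ is an irreducible left $N$-submodule, then $W'^{\ast}\subseteq\C[m^{\ast}N]$ is an irreducible right $N$-submodule with $W'^{\ast}\simeq D(W')$. Since $N$ is semi-simple (Lemma \ref{biiso1} together with Axiom IV and Axiom III), every constituent of $\C[Nm]$ occurs as a submodule, so every element of $\mathcal{R}_N^l(\C[Nm])$ has the form of such a $W'$. The map $W'\mapsto W'^{\ast}$ therefore implements a bijection $\mathcal{R}_N^l(\C[Nm])\to\mathcal{R}_N^r(\C[m^{\ast}N])$ sending $W'$ to $D(W')$, which is exactly the statement of the first identity in part (1). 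Applying the same argument to $m^{\ast}$ in place of $m$ (noting $m^{\ast\ast}=m$) yields the second identity.

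For part (2), the only extra input needed is Lemma \ref{mmm}, which tells us $(G_m^N)^{\ast}=G_{m^{\ast}}^N$. Thus $\ast$ restricts to a $\C$-linear isomorphism $\C[G_m^N]\to\C[G_{m^{\ast}}^N]$ interchanging the left and right $N$-actions. Combined with the observation (from Section \ref{GnN} and Corollary \ref{thetabi}) that $\C[G_m^N]$ is a theta $N$-$N$-bimodule, every left irreducible constituent $W'$ sits inside $\C[G_m^N]$ as a submodule, and $W'^{\ast}\simeq D(W')$ is a right irreducible submodule of $\C[G_{m^{\ast}}^N]$. The bijection of parts (2) then follows by the same argument as in (1).

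I do not expect any real obstacle here: the structural content is entirely carried by the involution $\ast$ and by the preceding lemma on $\C[M]$, whose proof already handled the subtlety that $\ast$ restricts to inversion on each group $G_{e_i}$ and therefore converts a left $G_e^N$-module into the contragredient right $G_e^N$-module. The only mild care needed is to verify that $W'\mapsto W'^{\ast}$ respects the filtration/localization at $m$ (which it does because $(Nm)^{\ast}=m^{\ast}N$ and $(G_m^N)^{\ast}=G_{m^{\ast}}^N$), but both facts are already established.
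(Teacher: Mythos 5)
Your part (1) is essentially the paper's own argument: identify $(Nm)^{\ast}=m^{\ast}N$, use semi-simplicity of $\C[N]$ to realize constituents as submodules of $\C[Nm]\subseteq\C[M]$, and invoke the preceding lemma ($W'\mapsto W'^{\ast}$ with $W'^{\ast}\simeq D(W')$) to get the bijection. For part (2), however, you take a genuinely different route. The paper does not argue directly on $\C[G_m^N]$: it chooses a principal series of honest $N$-$N$ bi-sets $\emptyset=I_0\subseteq\cdots\subseteq I_k=Nm$ with top quotient $G_m^N$, applies Lemma \ref{mmm} to get the $\ast$-image series in $m^{\ast}N$, uses Lemma \ref{SImilarly}(3) (theta bimodules), and then deduces the statement for the Rees quotient from the two short exact sequences $0\to\C[I_{k-1}]\to\C[I_k]\to\C[G_m^N]\to 0$ and its $\ast$-image; in other words, (2) is reduced to (1) applied to the honest sub-bimodules $\C[I_i]\subseteq\C[M]$. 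You instead transport structure directly: $\ast$ restricts to a linear bijection $\C[G_m^N]\to\C[G_{m^{\ast}}^N]$ which intertwines the \emph{truncated} (Rees-quotient) left and right $N$-actions, because $nx\in G_m^N$ iff $x^{\ast}n^{\ast}\in(G_m^N)^{\ast}=G_{m^{\ast}}^N$. This is a shorter and cleaner argument, but note the one point where you cannot cite the preceding lemma verbatim: that lemma concerns irreducible constituents of $\C[M]$ with the genuine restricted action, whereas the subspace $\C[G_m^N]$ is not $N$-stable in $\C[M]$ and carries only the truncated action, so the duality $W'^{\ast}\simeq D(W')$ has to be re-run (apex is preserved since annihilators are unions of $\ast$-stable sets $G_{e'}^N$, and the restriction-to-$G_{e'}^N$ computation goes through unchanged once the intertwining identity above is in place). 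You flag exactly this and the needed set-level inputs ($(Nm)^{\ast}=m^{\ast}N$, Lemma \ref{mmm}, $N^{\ast}=N$ via Corollary \ref{INVER}), so I regard your proof as correct; the paper's filtration argument buys the convenience of never leaving the category of honest submodules of $\C[M]$, while yours avoids the bookkeeping with principal series altogether.
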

                               \begin{proof}
                               1)  Let $(\sigma', W')$ be an irreducible constituent of $\C[mN]$ as left $N$-modules. Then $D(W') \simeq W'^{\ast} \subseteq \C[Nm^{\ast}]$ as right $N$-modules. Hence by duality, $D: \mathcal{R}_N^{l}(\C[Nm]) \longrightarrow   \mathcal{R}_N^r(\C[m^{\ast}N]); \sigma' \longmapsto D(\sigma')$ is a bijective map.\\
                               2)  Let $\emptyset =I_0 \subseteq I_1 \subseteq \cdots  \subseteq  I_k=Nm$ be a principal series of $N-N$ bi-sets such that $I_{i}\setminus I_{i-1}= G_{n_im}^N$.   By Lmm.\ref{mmm}, $\emptyset =I_0 \subseteq I^{\ast}_1 \subseteq \cdots  \subseteq  I^{\ast}_k=m^{\ast}N$ is  also a principal series of $N$ bi-sets. By Lmm.\ref{SImilarly}(3), both $\C[mN]$, $\C[m^{\ast}N]$ are theta bimodules. By the above lemma,  the map $\ast:  \C[I_i] \longrightarrow \C[I_i^{\ast}]$ will introduce  a bijective map $D: \mathcal{R}_N^{l}(\C[I_i] )\longrightarrow \mathcal{R}_N^r(\C[I_i^{\ast}])$.        Since                        $0\longrightarrow \C[I_{k-1}] \longrightarrow \C[I_k] \longrightarrow \C[G_m^N] \longrightarrow 0$, $0\longrightarrow \C[I^{\ast}_{k-1}] \longrightarrow \C[I^{\ast}_k] \longrightarrow \C[G_{m^{\ast}}^N] \longrightarrow 0$,  both are   short exact sequences of $N$-modules, $D: \mathcal{R}_N^{l}(\C[G_m^N]) \longrightarrow \mathcal{R}_N^r(\C[G_{m^{\ast}}^N])$ is  a bijective map.
                               \end{proof}
                                   \begin{lemma}
      \begin{itemize}
  \item[(1)]    $m\in I^{lr}_M(\sigma)$ iff $m^{\ast} \in I^{lr}_M(\sigma)$.
  \item[(2)]  $m\in I^{i}_M(\sigma)$ iff $m^{\ast} \in I^i_M(\sigma)$, for $i=0,1$.
  \item[(3)]  $m\in J^{i}_M(\sigma)$ iff $m^{\ast} \in J^i_M(\sigma)$, for $i=0,1$.
  \item[(4)] $m\in I_M(\sigma)$ iff $m^{\ast} \in I_M(\sigma)$, and $m\in J_M(\sigma)$ iff $m^{\ast} \in J_M(\sigma)$.
   \end{itemize}
                \end{lemma}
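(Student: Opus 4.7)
The plan is to reduce all four statements to two facts: that the central idempotent $e^W \in B$ is fixed by $\ast$, and that the two preceding lemmas identify how $\ast$ interchanges the left- and right-$N$-module structures on $\C[G_m^N]$ and $\C[mN]$.

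First I would verify that $(e^W)^{\ast}=e^W$. Since $\ast$ is an anti-automorphism of $B$ preserving the center, $(e^W)^{\ast}$ is again a primitive central idempotent of $B$, so it equals some $e^{W_i}$. By the preceding lemma $W^{'\ast}\simeq D(W')$ as right $N$-modules; applied to $W'=W \subseteq e^W B$ this gives $B\,(e^W)^{\ast}=(e^W B)^{\ast}\simeq m(\sigma)D(\sigma)$ as right $N$-modules, which is exactly the summand $B e^W$. Hence $(e^W)^{\ast}=e^W$, and consequently each of $e^W A e^W$, $(1-e^W)A(1-e^W)$, $e^W A(1-e^W)\oplus(1-e^W)A e^W$ is stable under $\ast$. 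Part (1) is then immediate from Lemma~\ref{threeeq}(2) together with the membership characterization $m\in I^{lr}_M(\sigma)\Leftrightarrow m\in e^W Ae^W\oplus(1-e^W)A(1-e^W)$: applying $\ast$ to $m$ keeps it in this $\ast$-stable sum.

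For part (2), recall that $I^1_M(\sigma)=\{m\in I^{lr}_M(\sigma)\mid p_1(m)\neq 0\}$ and $I^0_M(\sigma)=\{m\in I^{lr}_M(\sigma)\mid p_1(m)=0\}$, where $p_1(m)=e^W m e^W$. Since $(e^W m e^W)^{\ast}=e^W m^{\ast}e^W$, i.e.\ $p_1(m^{\ast})=p_1(m)^{\ast}$, we have $p_1(m)\neq 0\Leftrightarrow p_1(m^{\ast})\neq 0$, combined with (1) this yields (2). For part (3), the key observation is that $m\in J^1_M(\sigma)$ iff $m\in I^{lr}_M(\sigma)$ and $\C[G_m^N]$ contains a copy of $W\otimes D(W)$ as an $N$–$N$-bimodule; indeed, under the $I^{lr}_M$ constraint the only bimodule component meeting the $W$-isotypic direction is $W\otimes D(W)$, and $\C[G_m^N]\otimes_N W\simeq W$ iff $D(W)\in\mathcal{R}_N^r(\C[G_m^N])$. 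Now by Lemma~\ref{mmm} and the preceding $\mathcal{R}$-duality lemma, $\ast$ sends $G_m^N$ bijectively to $G_{m^{\ast}}^N$ and induces a bijection $\mathcal{R}_N^l(\C[G_m^N])\leftrightarrow \mathcal{R}_N^r(\C[G_{m^{\ast}}^N])$ via $D$, and vice versa. Hence $W\otimes D(W)$ appears in $\C[G_m^N]$ iff $W\in\mathcal{R}_N^l(\C[G_m^N])$ and $D(W)\in\mathcal{R}_N^r(\C[G_m^N])$, which by the duality transfers to $D(W)\in\mathcal{R}_N^r(\C[G_{m^{\ast}}^N])$ and $W\in\mathcal{R}_N^l(\C[G_{m^{\ast}}^N])$, i.e.\ $W\otimes D(W)$ appears in $\C[G_{m^{\ast}}^N]$. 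Together with (1) this gives $m\in J^1_M(\sigma)\Leftrightarrow m^{\ast}\in J^1_M(\sigma)$, and the complementary statement for $J^0_M(\sigma)$ follows since $J^0_M(\sigma)\sqcup J^1_M(\sigma)=I^{lr}_M(\sigma)$.

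Finally, (4) is a formal consequence: $N$ is $\ast$-stable (it is an inverse submonoid by Corollary~\ref{INVER}), so $I_M(\sigma)^{\ast}=(I^1_M(\sigma)N)^{\ast}=N^{\ast}(I^1_M(\sigma))^{\ast}=NI^1_M(\sigma)=I_M(\sigma)$ by part (2), and similarly $J_M(\sigma)^{\ast}=J_M(\sigma)$ by part (3). The main technical point, and the only step that is not purely formal, is the bimodule-level tracking in (3): one must be sure that the identification of $(W\otimes D(W))^{\ast}$ with $W\otimes D(W)$ (swapping left and right) is compatible with the $D$-correspondence produced by the preceding lemma, rather than landing in a different bimodule component. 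Once that compatibility is nailed down via $W^{\ast}\simeq D(W)$ (and its dual $D(W)^{\ast}\simeq W$), the whole argument collapses into the straightforward verifications above.
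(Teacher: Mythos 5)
Your proposal is correct on the paper's own terms, but parts (1) and (2) take a genuinely different route from the paper. The paper argues directly with the sets $\mathcal{R}_N^{l}$, $\mathcal{R}_N^{r}$: it characterizes $m\notin I^{lr}_M(\sigma)$ by the ``exactly one of $\sigma\in\mathcal{R}_N^{l}(\C[G_m^N])$, $D(\sigma)\in\mathcal{R}_N^{r}(\C[G_m^N])$'' condition, characterizes $I^{1}_M(\sigma)$ by $\sigma\in\mathcal{R}_N^{l}(\C[mN])$, and then transfers these conditions from $m$ to $m^{\ast}$ via Lemma \ref{mmm} and the duality lemma $\mathcal{R}_N^{l}(\C[G_m^N])=\mathcal{R}_N^{r}(\C[G_{m^{\ast}}^N])$ (up to $D$). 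You instead pivot on the single claim $(e^W)^{\ast}=e^W$, and then read (1) off the $\ast$-stability of $e^WAe^W\oplus(1-e^W)A(1-e^W)$ together with the membership criterion after Lemma \ref{threeeq}, and (2) off $p_1(m^{\ast})=p_1(m)^{\ast}$ and the criterion $I^1_M(\sigma)=\{m\in I^{lr}_M(\sigma)\mid p_1(m)\neq 0\}$; this is cleaner and more algebraic, and it buys you (1)--(2) without touching $\C[mN]$ at all. Your part (3) is essentially the paper's argument (you work with $J^1$ and complement to $J^0$, the paper does the reverse), and (4) is identical, with the useful extra remark that $N^{\ast}=N$ follows from Corollary \ref{INVER}. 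One caveat you should make explicit: the identity $(e^W)^{\ast}=e^W$ is not a formal fact about anti-automorphisms --- with the standard conventions $\ast$ sends $e^W$ to the idempotent of the contragredient of $\sigma$ --- so your derivation of it stands or falls with the paper's lemma $W^{\prime\ast}\simeq D(W^{\prime})$, which is precisely the same duality input the paper's own proof consumes through its $\mathcal{R}$-set lemma; in other words, your proof carries the same dependence as the paper's, just concentrated in one idempotent computation rather than spread over the chain of ``iff''s.
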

                \begin{proof}
                1)  $m\notin I^{lr}_M(\sigma)$ iff (1) $\sigma \in \mathcal{R}_N^{l}(\C[mN])$ and $D(\sigma) \notin \mathcal{R}_N^r(\C[mN])$, or (2)  $D(\sigma) \in \mathcal{R}_N^r(\C[mN])$ and $\sigma \notin \mathcal{R}_N^{l}(\C[mN])$. By the above lemma (1) ,  $m\notin I^{lr}_M(\sigma)$ iff $m^{\ast}\notin I^{lr}_M(\sigma)$.\\
                 2)  $m\in I^{1}_M(\sigma)$ iff $m\in I^{lr}_M(\sigma)$ and  $\sigma \in \mathcal{R}_N^{l}(\C[mN]) $  iff  $m^{\ast}\in I^{lr}_M(\sigma)$ and  $D(\sigma) \in \mathcal{R}_N^{r}(\C[Nm^{\ast}]) $  iff  $m^{\ast}\in I^{lr}_M(\sigma)$ and  $\sigma \in \mathcal{R}_N^{l}(\C[Nm^{\ast}]) $   iff  $m^{\ast}\in I^1_M(\sigma)$. Consequently, $m\in I^{0}_M(\sigma)$ iff $m^{\ast} \in I^0_M(\sigma)$.\\
                                 3)  $m\in J^{0}_M(\sigma)$ iff  $m\in  I^{lr}_M(\sigma)$ and  $\sigma \notin \mathcal{R}_N^{l}(\C[G_m^N])$,  $D(\sigma) \notin \mathcal{R}_N^{r}(\C[G_m^N])$ iff  $m^{\ast}\in  I^{lr}_M(\sigma)$ and  $\sigma \notin \mathcal{R}_N^{l}(\C[G_{m^{\ast}}^N])$,  $D(\sigma) \notin \mathcal{R}_N^{r}(\C[G_{m^{\ast}}^N])$ iff $m^{\ast}\in J^{0}_M(\sigma)$. Consequently,    $m\in J^{1}_M(\sigma)$    iff $m^{\ast}\in J^{1}_M(\sigma)$.\\
                                 4) Recall  $I_M(\sigma)=I^{1}_M(\sigma)N=NI^{1}_M(\sigma)$, and $J_M(\sigma)=J^{1}_M(\sigma)N=NJ^{1}_M(\sigma)$. So both are $\ast$-stable.
              \end{proof}

Hence $I^{lr}_M(\sigma)$,  $I^1_M(\sigma)$,  $J^1_M(\sigma)$, $I_M(\sigma)$,  $J_M(\sigma)$  all are   inverse monoids. By \cite[Coro.9.4]{Stein}, for a finite inverse monoid, its $\C$-algebra  is  semi-simple.

For representations of compact inverse monoids, one can also read the  paper \cite{Haj}.
\subsubsection{Example 1}Assume now $N$ is also a subgroup of $M$.
\begin{lemma}
\begin{itemize}
\item[(1)] $G_m^N=Nm$, for any $m\in M$,
\item[(2)] $I_M(\sigma)=I^1_M(\sigma)=J^1_M(\sigma)=J_M(\sigma)$.
\end{itemize}
\end{lemma}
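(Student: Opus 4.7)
The plan is to exploit two features that arise from the extra hypothesis that $N$ is a subgroup of $M$: first, every element of $N$ is left (and right) invertible in $N$, so multiplying an element of $M$ by $n\in N$ does not change its left or right $N$-orbit; and second, since $N$ has only one idempotent ($e=1$), the representation $W$ is simply $\sigma$ itself and the apex of $\sigma$ is $1$. With these in hand, both parts reduce to rather short bookkeeping.

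For part (1), I would start by observing that because $N$ is a group, $Nn=N$ for every $n\in N$, so $N(nm)=Nm$ for every $m\in M$ and every $n\in N$. This exactly says that every element of $Nm$ is a generator of the left $N$-set $Nm$, i.e.\ $L_m^N = Nm$. Now invoke Remark \ref{threeeqt} (a consequence of Axiom (III)) which gives $L_m^N=R_m^N=J_m^N=G_m^N$. Combining the two, $G_m^N=Nm$ as claimed; note en passant that $NmN=Nm=mN$ also follows.

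For part (2), I would establish the chain of inclusions
\[
J_M(\sigma)\subseteq J^1_M(\sigma)\subseteq I^1_M(\sigma)\subseteq J^1_M(\sigma),\qquad J^1_M(\sigma)\subseteq I_M(\sigma)\subseteq I^1_M(\sigma).
\]
The inclusion $J^1_M(\sigma)\subseteq I^1_M(\sigma)$ is immediate from the definition (take $m_i=1$; note $1\in J^1_M(\sigma)$ since $G_1^N=N$ and $\C[N]\otimes_N W\simeq W$). For the key inclusion $I^1_M(\sigma)\subseteq J^1_M(\sigma)$, let $m\in I^1_M(\sigma)$; by Lemma (the one characterizing $I^1_M(\sigma)$ in subsection \ref{ILR}) this is equivalent to $p_1(m)\neq 0$. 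Using (1), $\C[G_m^N]=\C[Nm]$, and by centricity $Nm=NmN$, so $p_1(\C[G_m^N])=p_1(\C[NmN])\supseteq N\cdot p_1(m)\cdot N\neq 0$. This says that $\C[G_m^N]$ contains $\sigma\otimes D(\sigma)$ as an $N$--$N$-bimodule, which is precisely the condition $m\in J^1_M(\sigma)$. Finally, for $I_M(\sigma)=I^1_M(\sigma)$ and $J_M(\sigma)=J^1_M(\sigma)$, use (1) again: for any $n\in N$ and $m_1\in M$, $G_{nm_1}^N=Nnm_1=Nm_1=G_{m_1}^N$, so the properties ``$m\in I^1_M(\sigma)$'' and ``$m\in J^1_M(\sigma)$'' depend only on the coset $G_m^N$. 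Therefore multiplying on either side by an element of $N$ preserves membership in $I^1_M(\sigma)$ and in $J^1_M(\sigma)$, which forces $I_M(\sigma)=NI^1_M(\sigma)=I^1_M(\sigma)$ and similarly $J_M(\sigma)=J^1_M(\sigma)$.

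There is no real obstacle here beyond keeping track of the several equivalent characterizations of $I^{lr}_M(\sigma)$, $I^1_M(\sigma)$ and $J^1_M(\sigma)$ used in subsection \ref{ILR}; the step that deserves a little care is the identification $p_1(\C[G_m^N])\neq 0 \Leftrightarrow m\in J^1_M(\sigma)$, which relies on $p_1$ being the projection onto the $\sigma\otimes D(\sigma)$-isotypic component of the $N$--$N$-bimodule $\C[G_m^N]$. Once that is pinned down, the whole chain of equalities collapses cleanly.
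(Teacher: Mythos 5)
Your proposal is correct and follows essentially the same route as the paper's (much terser) proof: part (1) is the observation that $Nn=N$ for $n\in N$ together with Remark \ref{threeeqt}, and part (2) rests on exactly the two facts the paper uses, namely that $G_m^N=Nm=mN$ makes membership in $I^1_M(\sigma)$ (via $p_1(m)\neq 0$) and in $J^1_M(\sigma)$ (via the $\sigma\otimes D(\sigma)$-component of $\C[G_m^N]$) the same condition for $m\in I^{lr}_M(\sigma)$, and that multiplication by $N$ does not change the class $G_m^N$, so $I_M(\sigma)=I^1_M(\sigma)$ and $J_M(\sigma)=J^1_M(\sigma)$. The only cosmetic differences are that you spell out the $p_1$-isotypic argument where the paper could instead invoke Lemma \ref{Ker1} with $e=1$, and you use coset-invariance where the paper uses $N\subseteq I^1_M(\sigma)$ plus closure of $I^1_M(\sigma)$ under multiplication; these are the same ideas.
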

\begin{proof}
1) It is clear right.\\
2) For any $n\in N$, $\C[nN]=\C[N]$ contains $W$ as left $N$-module. Hence $N\subseteq I^1_M(\sigma)$, and $ I_M(\sigma)=I^1_M(\sigma)$, $J^1_M(\sigma)=J_M(\sigma)$. By (1), for $m\in I_M^{lr}(\sigma)$,  $m\in J^1_M(\sigma)$ iff $m\in I^1_M(\sigma)$.
\end{proof}
In this case, $E(N)=\{1\}$.  Recall $(\pi, V)\in \Irr(M)$, and $(\sigma, W)\in \mathcal{R}_{N}(\pi)$, and $\pi=\Ind_{G_f} (\lambda)$, $V=\Ind_{G_f} (S)$, $W=\Ind_{G_e^N}(U)$. For simplicity we  identity $W$ with $U$. Follow the notations of Section \ref{structure}. Recall $T_f=f\circ_fT_f=Nf=fNf=fN$, which  is a normal subgroup of $G_f$. Let $L_f=\oplus_{i=1}^{s_f} x_i\circ_f G_f$. Let $\Stab_{N}(x_i)=\{ g\in N\mid gx_i=x_i\}$. For $g\in N$, write $gx_i=x_i\circ_f g^{x_i}$, for $g^{x_i} \in T_f$. Let   $\tau_{x_i}: \frac{N}{\Stab_{N}(x_i)} \stackrel{\sim}{\longrightarrow} T_f; g\longrightarrow g^{x_i}$. By the discussion in Section \ref{structure}, we have:
\begin{lemma}
$\Hom_N(W, \C[x_i G_f] \otimes_{\C[G_f]} S) \neq 0$ iff (1)  $\sigma|_{\Stab_{N}(x_i)}=1$, (2) $\Hom_{T_f}( \sigma \circ \tau_{x_i}^{-1}, \lambda)\neq 0$.
\end{lemma}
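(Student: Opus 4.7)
The plan is to reduce the computation of $\Hom_N(W, \C[x_i G_f] \otimes_{\C[G_f]} S)$ to a Hom between two $T_f$-representations, and to track the obstruction coming from the stabilizer $\Stab_{N}(x_i)$. Since $N$ is now a group and $e=1$, the apex of $\sigma$ is the identity, so $G_e^N = N$ and $W \simeq U \simeq \sigma$ is already an irreducible $N$-module; in particular we do not have to worry about the $\Ind_{G_e^N}$ construction. First, I would analyse $\C[x_i G_f] \otimes_{\C[G_f]} S$ as a left $N$-module.

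Concretely, $x_i G_f$ is a free right $G_f$-set on the single generator $x_i$, so as a vector space
\[
\C[x_i G_f] \otimes_{\C[G_f]} S \;=\; x_i \otimes_{\C[G_f]} S \;\simeq\; S,
\qquad x_i g \otimes s \longleftrightarrow \lambda(g)s.
\]
Because $N$ is a group, for every $n\in N$ we have $Mnx_i = Mx_i = Mf$ (using invertibility of $n$), so $nx_i$ stays inside $L_f$ and Lemma \ref{grophom} gives a unique element $n^{x_i}\in T_f$ with $nx_i = x_i \circ_f n^{x_i}$. Hence the $N$-action transports to $n \cdot s = \lambda(n^{x_i})s$; that is, $\C[x_i G_f] \otimes_{\C[G_f]} S$ is the inflation of the $T_f$-module $\lambda|_{T_f}$ along the surjection $N \twoheadrightarrow N/\Stab_N(x_i) \xrightarrow{\;\tau_{x_i}\;} T_f$. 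This identification is the key computation and should be written out carefully, since it pins down exactly how $N$ acts on the target.

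With the module structure in hand, the lemma is immediate. Indeed, on the target $\Stab_N(x_i)$ acts trivially by construction, so any nonzero $N$-morphism from the irreducible $W = \sigma$ forces $\sigma|_{\Stab_N(x_i)}=1$: this is condition (1) and is the necessary obstruction. Conversely, once (1) holds, $\sigma$ descends to an irreducible representation of $N/\Stab_N(x_i)$, which via the isomorphism $\tau_{x_i}$ is exactly the $T_f$-representation $\sigma \circ \tau_{x_i}^{-1}$. Therefore
\[
\Hom_N\bigl(W,\; \C[x_iG_f]\otimes_{\C[G_f]} S\bigr)
\;\simeq\; \Hom_{N/\Stab_N(x_i)}\bigl(\sigma,\; \lambda|_{T_f}\circ \tau_{x_i}\bigr)
\;\simeq\; \Hom_{T_f}\bigl(\sigma \circ \tau_{x_i}^{-1},\; \lambda\bigr),
\]
which is nonzero precisely when (2) holds.

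The main obstacle, and essentially the only nontrivial step, is verifying the precise bimodule identification in the first paragraph: one must check that the formula $n\cdot(x_i\otimes s) = x_i\otimes \lambda(n^{x_i})s$ respects the relation $x_ig\otimes s = x_i\otimes \lambda(g)s$, and that no element of $N$ is killed (which is where the group hypothesis on $N$ is used). Everything else is a direct application of Frobenius-type reciprocity for groups combined with Schur's lemma, so the write-up should be short once this structural description is established.
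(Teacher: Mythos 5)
Your proof is correct and follows essentially the same route as the paper, whose own justification is the discussion in Section \ref{structure}: there the target $\C[x_iG_f]\otimes_{\C[G_f]}S$ is likewise identified as an $N$-module pulled back through $h\mapsto g_h\in T_f$ (so that $\Stab_{N}(x_i)$ acts trivially), which forces condition (1) and reduces the Hom-space to $\Hom_{G_e^N/\Stab_{G_e^N}(x_i)}(W,S)\simeq\Hom_{T_f}(\sigma\circ\tau_{x_i}^{-1},\lambda)$. Your write-up simply makes the inflation structure and the use of irreducibility of $W$ (for the necessity of (1)) explicit in the group case $e=1$, which is exactly the specialization the paper intends.
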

 Note that the above result does not depend on the choice of $x_i$ because: if $x_i'=x_i h$, then $gx_i'=gx_ih=x_ig^{x_i}h=x_i' h^{-1}g^{x_i}h$; $h^{-1}T_fh=T_f$, $h^{-1}G_fh=G_f$.
\subsubsection{Example 2 }  Assume now $N$, $M$ both are centric submonoids of a semi-simple monoid $M$. By Coro.\ref{INVER}, $M$, $N$ both are inverse monoids.
Go back to Section \ref{structure}. Recall $(\pi, V)\in \Irr(M)$, and $(\sigma, W)\in \mathcal{R}_{N}(\pi)$, and $V=\Ind_{G_f} (S)$, $W=\Ind_{G_e^N}(U)$. Since $L_f=G_f$, $L_e^N=G_e^N$, for simplicity we  identity $V$ with $S$, and $W$ with $U$. If $\Hom_N(W, V)\neq 0$, then $ef=f$. So $I_f\subseteq I_e$. Recall $T_f=f\circ_fT_f=G_e^Nf=fG_e^Nf=fG_e^N$, which  is a normal subgroup of $G_f$. Moreover, $G_e^Nf=G_f^N$, $G_e^N e^{[-1]}=G_f^N f^{[-1]} \subseteq N$. Let $\Stab_{G_e^N}(f)=\{ g\in G_e^N\mid gf=f\}$, and  $\tau_f: \frac{G_e^N}{\Stab_{G_e^N}(f)} \stackrel{\sim}{\longrightarrow} T_f; g\longrightarrow gf$. By the discussion in Section \ref{structure}, we have:
\begin{lemma}
$\Hom_N(W, V) \neq 0$ iff (1) $ef=f$, (2) $\sigma|_{\Stab_{G_e^N}(f)}=1$, (3) $\Hom_{T_f}( \sigma \circ \tau_f^{-1}, \pi)\neq0$.  Moreover, in this case, $m_N(W,V)=m_{T_f}(\sigma\circ \tau^{-1}, \pi)$.
\end{lemma}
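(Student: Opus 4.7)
My plan is to derive this lemma directly as a specialization of the general machinery developed in Section~\ref{structure} to the centric setting, where almost all bookkeeping collapses dramatically.

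First, I would invoke the general Frobenius-reciprocity computation from Section~\ref{structure}:
\[
\Hom_N(W,V)\simeq \bigoplus_{i=1}^{k_f}\Hom_{G_e^N}\!\bigl(U,\ \C[ex_i G_f]\otimes_{\C[G_f]}S\bigr),
\]
where $L_f=\sqcup_{i=1}^{s_f}x_i\circ_f G_f$. Under the centric hypothesis on $M$ we have $L_f=G_f$, so $s_f=1$ and we may take $x_1=f$. Hence the sum collapses to the single term $\Hom_{G_e^N}(U,\C[ef\cdot G_f]\otimes_{\C[G_f]}S)$. By Lemma~\ref{grophom}(1), $ef\in L_f$ forces $ef=f$; conversely if $ef\ne f$ the space $\C[ef\cdot G_f]$ is zero in $A_f$, and the Hom vanishes. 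This yields the necessity and sufficiency of condition~(1).

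Next, under the assumption $ef=f$, the tensor product simplifies: $\C[f\cdot G_f]\otimes_{\C[G_f]}S\cong S$ via $f\otimes s\mapsto s$ (and under the centric hypothesis on $N$ we identify $W$ with $U$). I would then trace through how $G_e^N$ acts on this copy of $S$: for $g\in G_e^N$, one computes $g\cdot(f\otimes s)=gf\otimes s=f\otimes(gf)s$, where $gf\in G_e^N f=G_f^N\subseteq T_f$ by the computation preceding Lemma~\ref{grophom}. Thus the $G_e^N$-action on $S$ is the pull-back of $\pi|_{T_f}$ along the homomorphism $G_e^N\to T_f,\ g\mapsto gf$, whose kernel is precisely $\Stab_{G_e^N}(f)$ and whose induced map $\tau_f\colon G_e^N/\Stab_{G_e^N}(f)\xrightarrow{\sim}T_f$ is a group isomorphism (as already noted in the statement of the lemma, following Lemma~\ref{grophom}(2)--(3)).

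Finally, $\Hom_{G_e^N}(\sigma,\pi\circ\tau_f)$ is nonzero only if $\sigma$ is trivial on $\Stab_{G_e^N}(f)$ (condition~(2)), and in that case $\sigma$ descends to a representation $\sigma\circ\tau_f^{-1}$ of $T_f$, giving
\[
\Hom_N(W,V)\cong \Hom_{T_f}(\sigma\circ\tau_f^{-1},\pi),
\]
which is nonzero precisely under condition~(3) and yields the multiplicity formula $m_N(W,V)=m_{T_f}(\sigma\circ\tau_f^{-1},\pi)$. The only technical care needed is in verifying that on both sides we are indeed comparing the same restriction of the $G_f$-action (via $\tau_f$) to $T_f$; once the identification $V\cong S$ and $W\cong U$ is made clear, this is routine. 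I do not anticipate a serious obstacle here: the proof is essentially a specialization plus a kernel/quotient bookkeeping, and the heavy lifting has already been done in Lemma~\ref{grophom} and the setup of Section~\ref{structure}.
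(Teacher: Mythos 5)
Your proposal is correct and takes essentially the same route as the paper: the paper gives no separate argument for this lemma beyond the phrase ``By the discussion in Section \ref{structure}'', and your specialization of that discussion (the collapse $L_f=G_f$, $s_f=1$, $x_1=f$, Lemma \ref{grophom}, and the factorization of the $G_e^N$-action through $\tau_f$) is exactly what is intended. The only cosmetic slip is describing condition (1) as ``necessary and sufficient'' on its own; as your later paragraphs make clear, (1) is necessary, and conditions (2)--(3) complete the equivalence together with the multiplicity formula.
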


As a right $N$-module, assume the apex of $\C[G_f^N]$ is  $e'\in E(N)$(cf. Lmm.\ref{SImilarly}). Then $f=fe'=e'f$, $e'\in G_f^N f^{[-1]}=G_e^N e^{[-1]}$. So $e'e\in G_e^N$, and $G_e^N$ only contains one   idempotent $e$. Hence $e'e=e$. Dually, $ee'=e'=e'e=e$.
\begin{lemma}
If $(\pi', V')$ is an irreducible representation of $M$ with an  apex $f$,  and $(\sigma', W') \in \mathcal{R}_N(\pi')$, then $\sigma'$ has an apex $e$.
\end{lemma}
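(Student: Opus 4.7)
The plan is to observe that the argument of the paragraph immediately preceding the statement is uniform in the irreducible $N$-constituent, and then to apply it to $(\sigma',W')$ in place of $(\sigma,W)$, extracting the apex from data that depends only on $f$ and $N$.

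First, since $(\pi',V')$ has apex $f$, I write $V'\simeq\Ind_{G_f}(\lambda')$ for some $(\lambda',S')\in\Irr(G_f)$, and let $e_\sigma\in E(N)$ be the apex of $\sigma'$, so that $W'\simeq\Ind_{G_{e_\sigma}^N}(U')$ for some $(\chi',U')\in\Irr(G_{e_\sigma}^N)$. Running the Frobenius reciprocity computation of Section \ref{structure} with $(\sigma',W')$ in place of $(\sigma,W)$, I obtain
\[
\Hom_N(W',V')\simeq\Hom_{G_{e_\sigma}^N}\bigl(U',\,e_\sigma\,\C[L_f]\otimes_{\C[G_f]}S'\bigr),
\]
and the nonvanishing of this space, combined with Lemma \ref{GEE}, forces $e_\sigma f=f$ and the key identity $G_f^N f^{[-1]}=G_{e_\sigma}^N e_\sigma^{[-1]}$.

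Next, let $e'\in E(N)$ be the apex of $\C[G_f^N]$ viewed as a right $N$-module; by construction this $e'$ depends only on the pair $(f,N)$ and not on the chosen irreducible constituent. The paragraph immediately before the lemma statement shows that $e'\in G_f^N f^{[-1]}$, and hence, via the identity just obtained, $e'\in G_{e_\sigma}^N e_\sigma^{[-1]}$. Thus $e'e_\sigma\in G_{e_\sigma}^N$. Since (by Corollary \ref{INVER} and the fact that a group has a unique identity) the group $G_{e_\sigma}^N$ contains the unique idempotent $e_\sigma$, and since idempotents in the inverse monoid $N$ commute, the same chain of reasoning used in the preceding paragraph---$e'e_\sigma=e_\sigma$ and dually $e_\sigma e'=e'$---yields $e_\sigma=e'$. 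The very same reasoning applied to the original pair $(\sigma,W)$ (which is exactly the content of the preceding paragraph) gives $e=e'$. Therefore $e_\sigma=e$.

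The only mildly delicate point in this plan is confirming that $e'$ is an invariant of $(f,N)$ alone, so that the two equalities $e'=e$ and $e'=e_\sigma$ can legitimately be matched; but this is automatic from the definition of $e'$ purely in terms of the right $N$-module $\C[G_f^N]$, with no input from any chosen irreducible $N$-constituent. Accordingly, the proof amounts to a second, verbatim application of the argument already carried out just above the statement, for the arbitrary constituent $(\sigma',W')$ in place of the preferred one $(\sigma,W)$.
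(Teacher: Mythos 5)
Your proof is correct and takes essentially the same route as the paper's: run the Section \ref{structure}/Lemma \ref{GEE} computation for $(\sigma',W')$ to obtain $G_{e_\sigma}^N e_\sigma^{[-1]}=G_f^N f^{[-1]}=G_e^N e^{[-1]}$, and then conclude $e_\sigma=e$ from commuting idempotents and the uniqueness of the idempotent in each group $G_x^N$. The paper states this in one line without your extra detour through the right-module apex of $\C[G_f^N]$, but the content is identical.
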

\begin{proof}
Assume that  $\sigma'$ has an apex $e'$. Then $G_e^N e^{[-1]}=G_f^N f^{[-1]}=G_{e'}^N e^{'[-1]}$. Hence $e=e'$.
\end{proof}
\begin{lemma}
Assume $(\sigma', W')\in \mathcal{R}_N(\pi)$.
\begin{itemize}
\item[(1)] $m_{N}(V, W)=m_N(V, W')$.
\item[(2)] $I_e\cap N=I_f\cap N$.
\end{itemize}
\end{lemma}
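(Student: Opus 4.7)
The plan is to handle part~(1) via the multiplicity formula of the preceding Example~2 lemma combined with classical Clifford theory for the pair $T_f \triangleleft G_f$, and to handle part~(2) by chaining together two annihilator computations.

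For (1), since $\C[M]$ is semisimple by Axiom~(IV), Lemma~\ref{biiso1} makes $\C[N]$ semisimple, so $V|_N$ is a semisimple $N$-module and Schur's Lemma gives $m_N(V,W)=m_N(W,V)$, and similarly for $W'$. The preceding Example~2 lemma then supplies
\[
m_N(W,V)=m_{T_f}(\sigma\circ\tau_f^{-1},\,S|_{T_f}),\qquad m_N(W',V)=m_{T_f}(\sigma'\circ\tau_f^{-1},\,S|_{T_f}),
\]
and both multiplicities are positive because $W$ and $W'$ belong to $\mathcal{R}_N(\pi)$. The key step is to verify that $T_f=fG_e^N=G_f^N$ is a normal subgroup of $G_f$: for any $g\in G_f$ and $t\in T_f\subseteq N$, the centricity relation $gN=Ng$ gives $gt=n''g$ for some $n''\in N$, hence $gtg^{-1}=n''\in N$; and $gtg^{-1}$ manifestly lies in $G_f$, so $gtg^{-1}\in G_f\cap N=G_f^N=T_f$. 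Because $S$ is irreducible as a $G_f$-module and $T_f\triangleleft G_f$, classical Clifford theory forces $S|_{T_f}$ to decompose as a direct sum of irreducible $T_f$-modules lying in a single $G_f$-conjugacy orbit, each appearing with a common multiplicity. Both $\sigma\circ\tau_f^{-1}$ and $\sigma'\circ\tau_f^{-1}$ sit in that orbit and therefore share the same multiplicity, which yields $m_N(V,W)=m_N(V,W')$.

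For (2), I plan to establish the cyclic chain of inclusions $I_f\cap N\subseteq I_e\cap N\subseteq I_e^N\subseteq I_f\cap N$, which forces all three sets to coincide. The first inclusion uses $f=ef$: if $n\in N$ satisfies $e\in MnM$, then $f=ef\in M\cdot MnM\subseteq MnM$ since $MnM$ is a two-sided $M$-ideal, so by contraposition $I_f\cap N\subseteq I_e\cap N$. The second is immediate from $NnN\subseteq MnM$. For the third, I would invoke the lemma immediately preceding the target statement: every irreducible constituent $\sigma'$ of $V|_N$ has apex $e$, so $\Ann_N(\sigma')=I_e^N$ uniformly across these constituents. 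Intersecting over all of them in the semisimple module $V|_N$ gives $\Ann_N(V|_N)=I_e^N$, while $\Ann_N(V|_N)=\Ann_M(V)\cap N=I_f\cap N$ because $\pi$ has apex $f$. The three sets therefore coincide.

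The only subtle step is the normality of $T_f$ in $G_f$, which is dispatched in a single line using the centricity of $N$ in $M$; the rest is routine bookkeeping with the multiplicity formula and the standard Clifford decomposition for a normal subgroup of a finite group, and I do not anticipate any further obstacle.
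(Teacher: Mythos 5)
Your proposal is correct and follows essentially the same route as the paper's (very terse) proof. For (1) this is the chain $m_N(V,W)=m_{T_f}(V,W)=m_{T_f}(V,W')=m_N(V,W')$, where the middle equality is Clifford's theorem for the normal subgroup $T_f$ of $G_f$ applied to the two irreducible constituents $\sigma\circ\tau_f^{-1}$ and $\sigma'\circ\tau_f^{-1}$ of $S|_{T_f}$ (the paper leaves this implicit); for (2) it is exactly the paper's two inclusions, with your intermediate ideal $I_e^N=\{n\in N\mid e\notin NnN\}$ and the annihilator identities $\Ann_N(V|_N)=I_e^N$ and $\Ann_N(V|_N)=\Ann_M(V)\cap N=I_f\cap N$ making explicit what the paper compresses into ``the $W_i$ all share the apex $e$''.

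The one defective step is your one-line verification that $T_f$ is normal in $G_f$ --- fortunately a fact the paper asserts when it introduces $T_f$, so the lemma itself is unaffected, but your argument for it does not work as written. Concretely: (a) $T_f=G_e^Nf=G_f^N$ is contained in $Nf$, not in $N$; since $f\in E(M)$ need not lie in $N$, the claim ``$t\in T_f\subseteq N$'' is unjustified; (b) even for $t\in N$, from $gt=n''g$ one gets $gtg^{-1}=n''gg^{-1}=n''f$, not $n''$; (c) the identification $G_f\cap N=G_f^N$ is nowhere established (elements of $G_f^N$ have the form $nf$, $n\in N$). A correct argument stays inside the relative Green classes: for $t\in T_f=L_f^N\cap R_f^N$ one has $Nt=Nf$ and $tN=fN$, so for $g\in G_f$ centricity ($Nx=xN$ for every $x\in M$) gives $N(gtg^{-1})=g(Nt)g^{-1}=g(Nf)g^{-1}=N(gfg^{-1})=Nf$, and likewise $(gtg^{-1})N=g(tN)g^{-1}=fN$, whence $gtg^{-1}\in L_f^N\cap R_f^N=T_f$. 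With that repaired (or simply cited from the paper's setup), the rest of your proof stands.
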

\begin{proof}
1)  $m_{N}(V, W)=m_{T_f}(V, W)=m_{T_f}(V, W')=m_N(V, W')$.\\
2) Firstly $I_f\cap N\subseteq I_e\cap N$. Assume now  $V=\oplus_{i=1}^k W_i$ as $N$-modules. Since $W_i$ all share the same apex $e$, $I_e\cap N\subseteq I_f\cap N$.
\end{proof}

Note that $T_f \unrhd G_f$, and $(\sigma, W)$ is an irreducible representation of $T_f$. Let $I_{G_f}(\sigma)$ be the usual stability subgroup of $\sigma$ in $G_f$.
\begin{lemma}
\begin{itemize}
\item[(1)] $I_M^V(\sigma)=I_{G_f}(\sigma) f^{[-1]} \cup (M\setminus G_f f^{[-1]})$.
\item[(2)] $M\setminus I_M^V(\sigma)=(G_f  \setminus I_{G_f}(\sigma))  f^{[-1]}$.
\item[(3)] $I_{G_f}(\sigma) f^{[-1]} = I^1_M(\sigma)$.
\item[(4)] $I_{G_f}(\sigma) \subseteq J^1_M(\sigma)$.
\end{itemize}
\end{lemma}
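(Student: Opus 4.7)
The plan is to reduce the entire analysis to ordinary Clifford theory for the normal subgroup $T_f = G_f^N$ of $G_f$. Because $M$ and $N$ are both centric (hence, as already shown, inverse), we have $L_f = R_f = G_f$ and $L_e^N = R_e^N = G_e^N$, so $V \simeq S$ and $W \simeq U$ under the identifications of Section \ref{structure}. The action of any $m \in M$ on $V = S$ is given by $\pi(m) s = (mf)\cdot s$ when $mf \in G_f$ and $\pi(m) = 0$ otherwise; in particular $\pi$ vanishes on $M \setminus G_f f^{[-1]}$, while on $G_f f^{[-1]}$ it factors through the partial map $m \mapsto mf \in G_f$.

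Next I would identify $\widetilde{W}^V$ as a $T_f$-isotypic summand. The restriction $V|_N$ factors through $N \to T_f$, $n \mapsto nf$, which in turn factors through the isomorphism $\tau_f \colon G_e^N/\Stab_{G_e^N}(f) \stackrel{\sim}{\to} T_f$. Under this identification the $\sigma$-isotypic $N$-component $\widetilde{W}^V$ of $S$ coincides with the $\tilde{\sigma}$-isotypic $T_f$-component, where $\tilde{\sigma} = \sigma \circ \tau_f^{-1}$. Since $N$ is centric in $M$, conjugation by any $g \in G_f$ preserves $N$, so $T_f \unlhd G_f$; standard Clifford theory then gives $\pi(g) \widetilde{W}^V = $ the $\tilde{\sigma}^g$-isotypic $T_f$-summand of $S$, which equals $\widetilde{W}^V$ exactly when $\tilde{\sigma}^g \simeq \tilde{\sigma}$, equivalently $g \in I_{G_f}(\sigma)$ (pulled back along $\tau_f$).

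Parts (1) and (2) now follow by splitting $M = (M \setminus G_f f^{[-1]}) \sqcup G_f f^{[-1]}$: elements of the first piece act as zero on $V$ and lie trivially in $I_M^V(\sigma)$; elements $m$ of the second piece act as $\pi(mf)$, hence lie in $I_M^V(\sigma)$ iff $mf \in I_{G_f}(\sigma)$. Complementation yields (2). For (4), fix $g \in I_{G_f}(\sigma)$. By centricity $G_g^N = Ng = gN$, and the map $Ng \to Nf$, $ng \mapsto nf$, is a well-defined left-$N$-equivariant bijection, since $\Stab_N^l(g) = \Stab_N^l(f)$ (multiply by $g^{\pm 1}$ in $G_f$ to pass between them). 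Therefore $\C[G_g^N] \simeq \C[Nf] = \C[T_f]$ as left $N$-modules, and $\C[G_g^N] \otimes_N W$ reduces to the $\Stab_N^l(f)$-coinvariants of $W$, which equal $W$ because $\sigma|_{\Stab_{G_e^N}(f)} = 1$ (one of the standing hypotheses for $\Hom_N(W,V) \neq 0$ in Example 2). A dual argument for the right structure places $g$ in $J^1_M(\sigma)$.

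Part (3) combines all of the above: the inclusion $I_{G_f}(\sigma) f^{[-1]} \subseteq I^1_M(\sigma)$ follows from (4) together with the fact that $I^1_M(\sigma)$ is $N$-stable on the right (Section \ref{threeno}) and that any $n \in G_e^N e^{[-1]}$ lies in $J^1_M(\sigma)$. For the reverse inclusion, use $I^1_M(\sigma) \subseteq I_M^V(\sigma) = (M \setminus G_f f^{[-1]}) \cup I_{G_f}(\sigma) f^{[-1]}$, and show that no element outside $G_f f^{[-1]}$ can contribute to $p_1$. The main obstacle is exactly this final step: an element $m \notin G_f f^{[-1]}$ acts as zero on $V$, but could a priori still act non-trivially on some other $V' \in \mathcal{R}_M(\Ind_N^M \sigma)$ whose apex $f' \neq f$ satisfies $ef' = f'$. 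Ruling this out requires the rigidity of the apex structure in the centric-inverse setting — specifically Lemma \ref{idbij}, the apex-matching $\sigma' \mapsto e$ proved just before the statement, and the identification $G_{e'}^N e^{'[-1]} = G_e^N e^{[-1]} \Rightarrow e' = e$ already used in Example 2 — to conclude that $mf \notin G_f$ forces $mf' \notin G_{f'}$ for every admissible apex $f'$, hence $p_1(m) = 0$ on every block of $e^W A e^W$.
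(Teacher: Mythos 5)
Your treatment of (1) and (2) is essentially the paper's: split $M$ into $M\setminus G_ff^{[-1]}$ (which kills $V$, since $mf\notin G_f=L_f$ puts $mf$ in $I_f=\Ann_M(V)$) and $G_ff^{[-1]}$ (which acts through $mf\in G_f$), then apply ordinary Clifford theory for $T_f\unlhd G_f$. The problems are in (3) and (4). Your argument for (4) never actually uses the hypothesis $g\in I_{G_f}(\sigma)$, and as written it would prove $G_f\subseteq J^1_M(\sigma)$; combined with $J^1_M(\sigma)\subseteq I^1_M(\sigma)$ and part (3) this would force $I_{G_f}(\sigma)=G_f$, which is false in general, so something must be wrong. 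Concretely: first, $G_g^N\neq Ng$ in general (by Remark \ref{threeeqt} it is only the set of \emph{generators} of $Ng$, i.e.\ $G_g^N=T_fg$); second, and more seriously, the bijection $T_fg\to T_f$, $tg\mapsto t$, is an isomorphism only for the \emph{left} truncated $N$-structure. The tensor $\C[G_g^N]\otimes_N W$ contracts the \emph{right} $N$-structure of $\C[G_g^N]$ with $W$, and transporting that right structure along your bijection twists it by conjugation by $g$ (from $gn=n^{g}g$): one gets $\C[T_fg]\simeq\bigoplus_\chi \chi\otimes D(\chi^{g})$ rather than $\bigoplus_\chi \chi\otimes D(\chi)$. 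So $\C[G_g^N]\otimes_N W\simeq W$ (and $g\in I^{lr}_M(\sigma)$, which is also part of the definition of $J^1_M(\sigma)$ and which you never verify) holds exactly when $\tilde\sigma^{g}\simeq\tilde\sigma$, i.e.\ when $g\in I_{G_f}(\sigma)$ — this is where the inertia condition must enter, and your ``coinvariants'' computation suppresses it. The paper avoids this computation entirely: it deduces (4) from (3) via $J^1_M(\sigma)=I^1_M(\sigma)e$ (Lemma \ref{Ker1}), noting $m=mf\in I_{G_f}(\sigma)\subseteq I^1_M(\sigma)$ and $me=mfe=mf=m$, using $fe=ef=f$.

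For (3) you prove only the easy inclusion (and even there you need $f\in J^1_M(\sigma)$ and $m\in I^{lr}_M(\sigma)$ to invoke the definition of $I^1_M(\sigma)$, neither of which is addressed), and you explicitly leave the reverse inclusion open. The rigidity you propose to fill it, namely $G^N_{e'}e'^{[-1]}=G^N_ee^{[-1]}\Rightarrow e'=e$, was proved for idempotents $e,e'\in E(N)$ (it uses $e'e\in G_e^N$ and the uniqueness of the idempotent in $G_e^N$); it does not apply to apexes $f,f'\in E(M)$, so it does not yield ``$mf\notin G_f$ forces $mf'\notin G_{f'}$ for every admissible apex $f'$,'' and the claim that $p_1(m)=0$ for $m\notin G_ff^{[-1]}$ remains unproved in your write-up. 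The paper sidesteps the multi-constituent issue by quoting Section \ref{re}, where everything is tied to the fixed $V$: there $m\in I^1_M(\sigma)$ iff $mm_iW\simeq W$ for the chosen $m_i\in J^1_M(\sigma)$ with $\widetilde W^V=\oplus\pi(m_i)W$ (equivalently, for $m\in I^{lr}_M(\sigma)$, iff $m\widetilde W^V\neq 0$); this forces $mV\neq 0$, hence $mf\in G_f$, and then $mf\widetilde W=\widetilde W$ gives $mf\in I_{G_f}(\sigma)$ by the Clifford-theoretic step you already have. To repair your proof you should either import those Section \ref{re} results as the paper does, or redo your bimodule computation of $\C[G_g^N]$ keeping track of the $g$-conjugation twist so that the inertia condition genuinely appears.
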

\begin{proof}
Let $\widetilde{W}$ be the $\sigma$-isotrypic component of $\Res_{T_f}^{G_f} V$.\\
(1) $\&$ (2):  For $m \in M\setminus G_f f^{[-1]} $, $m V=0$, in particular, $m\widetilde{W}=0$. Hence those $m$ belong to  $ I_M^V(\sigma)$. For $m\in G_f f^{[-1]}$, $mf\in G_f$, and $m\widetilde{W}=mf\widetilde{W}$, so $mf\widetilde{W} \subseteq \widetilde{W}$ iff $mf \in I_{G_f}(\sigma) $. Hence $(1)$ and $(2)$ are right.\\
3) By Section \ref{re}, $m\in I^1_M(\sigma)$ iff $mf\in G_f$, and $mf\widetilde{W}=\widetilde{W}$. Hence $ I^1_M(\sigma)=I_{G_f}(\sigma) f^{[-1]} $.\\
4) $J^1_M(\sigma)=I^1_M(\sigma)e$.  In particular, for any $m\in I_{G_f}(\sigma)$, $mf=m\in I_{G_f}(\sigma) $, so $m\in  I^1_M(\sigma)$, and $me=mfe=mf=m\in J^1_M(\sigma)$. So $I_{G_f}(\sigma) \subseteq J^1_M(\sigma)$.
\end{proof}
\section{Free extension}\label{se3}
  Let $G$ be a finite group.   Assume now $(\pi, V)$ is an \emph{irreducible} representation of $G$ of dimension $n$.  Let $G\ast S_n$ be the free product group of  $G$,  $ S_n$.  Keep the   notations of section  \ref{twisted}.    Let $\pi_n$ be the  representation of $S_n$ introduced  there.
If $\{ e_1, \cdots, e_n\}$ is  a  basis of $V$,   there exists a  group morphism $\Pi=\pi\ast \pi_n: G\ast S_n \longrightarrow \GL_n(\mathbb{C})$, induced by
$\pi:  G \longrightarrow \GL_n(\mathbb{C})$,  $\pi_n:  S_n  \longrightarrow \GL_n(\mathbb{C})$.      Our next purpose is to prove  the following  lemma \ref{Zd}.  It can be seen as an application  of  Platonov-Rapinchuk\cite{PR}, Prasad-Rapinchuk \cite{PR1}, \cite{PR3}, \cite{PR2}.   Let $m$ be an \emph{even} natural number such that $g^m=1$, for any $g\in G$.
 \begin{lemma}\label{serre}
For the above representation $(\pi, V)$ of $G$,  there exists a basis $\{ f_1, \cdots, f_n\}$ of $V$ such that  the image of $\pi: G \longrightarrow M_n(\mathbb{C})$ lies in the unitary group $U_n(\mathbb{Q}(\mu_m))$.
 \end{lemma}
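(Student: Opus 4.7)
The plan is a standard unitary-averaging argument performed carefully over the cyclotomic field $K = \Q(\mu_m)$. Three ingredients are needed: (i) realizability of $\pi$ over $K$, (ii) existence of a $K$-valued $G$-invariant Hermitian form, and (iii) reduction of that form to the standard one over $K$.

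First I would invoke the classical realizability theorem proved in Serre's text \cite{Serre}: every irreducible complex representation of a finite group $G$ is realizable over $\Q(\mu_m)$ whenever $m$ is a multiple of the exponent of $G$. This supplies a basis $\{e_1', \ldots, e_n'\}$ of $V$ in which every matrix $\pi(g)$ has entries in $K$. The field $K$ carries the involution $\tau$ induced by complex conjugation on $\C$, acting on roots of unity by $\zeta_m \mapsto \zeta_m^{-1}$; since $m$ is even, $\tau$ is non-trivial and fixes the maximal totally real subfield $K^{+} \subset K$.

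Next I would average the standard $\tau$-Hermitian form $h_0(v,w) = \sum_i v_i\, \tau(w_i)$ associated with that basis:
$$h(v, w) \;:=\; \frac{1}{|G|}\sum_{g \in G} h_0\bigl(\pi(g) v,\, \pi(g) w\bigr).$$
Because every $\pi(g) \in \GL_n(K)$ and $\tau$ preserves $K$, the form $h$ is $K$-valued; it is positive definite (after scalar extension to $V = V_K \otimes_K \C$) and by construction $G$-invariant. Applying Gram--Schmidt over $K$ to $h$ produces a $K$-basis $\{f_1', \ldots, f_n'\}$ of $V$ with $h(f_i', f_j') = c_i\, \delta_{ij}$, where each $c_i$ lies in $K^{+}$ and is positive as a real number.

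The main obstacle is the final step: trivializing $h$ so as to obtain a basis $\{f_1, \ldots, f_n\}$ with respect to which the image of $\pi$ lies in the \emph{split} unitary group $\U_n(K)$. Setting $f_i = f_i'/\sqrt{c_i}$ directly would require $\sqrt{c_i} \in K$, which is not automatic. I would resolve this via Schur's lemma --- since $\pi$ is irreducible, any $G$-invariant $\tau$-Hermitian form is a positive real scalar multiple of $h$, which gives flexibility in the initial choice of $h_0$ --- together with the classification of Hermitian forms over cyclotomic CM fields, which allows any positive definite rank-$n$ Hermitian form over $K$ to be brought into the standard shape by a $\GL_n(K)$ change of basis. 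Carrying out this reduction cleanly, thereby absorbing the constants $c_i$ into the basis without enlarging the field, is the step I expect to require the most delicate argument; this is where the precise hypothesis that $m$ is even and a multiple of the exponent of $G$ enters in its full strength.
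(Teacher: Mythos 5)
Your steps (i) and (ii) coincide with the paper's proof of Lemma \ref{serre}: it quotes \cite[p.94, Coro.]{Serre} to realize $\pi$ on a $\Q(\mu_m)$-form $V_0$ of $V$, averages a Hermitian form over $G$ to obtain a $G$-invariant one on $V_0$, and then simply takes an orthonormal basis of $(V_0,\langle\,,\rangle_0)$. In particular the paper never discusses the point you isolate in step (iii), namely whether an orthonormal basis can be found inside $V_0$ itself, i.e.\ whether the invariant form becomes the standard one after a $\GL_n(K)$ change of basis with $K=\Q(\mu_m)$; up to that point your proposal reproduces the argument the paper actually writes.

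The problem is that the tool you invoke to close step (iii) does not exist: it is false that every positive definite Hermitian form over the CM field $K$ is equivalent over $K$ to the standard form. By Landherr's Hasse principle, Hermitian forms for $K/K^{+}$ are classified by rank, the signatures at the real places of $K^{+}$, and the determinant class in $K^{+\times}/N_{K/K^{+}}(K^{\times})$, and positive definiteness does not force the determinant to be a norm: already for $K=\Q(\mu_4)=\Q(i)$ (an admissible choice, $m=4$) the rank-one form $\langle 3\rangle$ is positive definite but not equivalent to $\langle 1\rangle$, since $3$ is not a sum of two rational squares. The Schur-lemma rescaling you mention multiplies the determinant by $\lambda^{n}$ with $\lambda\in K^{+\times}$ totally positive, which settles the determinant class when $n$ is odd but gives nothing when $n$ is even, and the evenness of $m$ does not supply the missing norm condition. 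So to finish along your lines one must actually show that the determinant of the invariant form of this particular irreducible representation is a norm from $K$ (for instance by a local analysis at the finite places, or by exhibiting a model of $\pi$ in which unitarity over $K$ is visible, as for monomial representations); appealing to a general classification of definite Hermitian forms cannot do it. To be fair, the paper's own proof is silent on exactly the same point (it asserts the orthonormal basis without argument), so the gap lies in the justification you added rather than in any divergence from the paper's route, but as written that justification would fail.
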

 \begin{proof}
  By \cite[p.94, Coro.]{Serre},  (1) $\pi$ can be realized in $\mathbb{Q}(\mu_m)$, (2) we can find  a  decomposition $V=V_0 \otimes_{\mathbb{Q}(\mu_m)}\mathbb{C}$ such that $\pi: G \longrightarrow \GL(V_0)$. Let $\langle, \rangle_0'$ be a non-degenerate  Hermitian form on $V_0$.
Then we can define a $G$-invariant  Hermitian form $\langle, \rangle_0$ on $V_0$  by  $\langle v, w\rangle_0=\sum_{g\in G} \frac{1}{|G|} \langle \pi(g)v, \pi(g)w\rangle_0'$, for $w, v\in V_0$. Hence   any orthonormal basis  $\{f_1, \cdots, f_n\}$ of $(V_0, \langle, \rangle_0)$  satisfies the condition.
 \end{proof}
Let $K_0=\mathbb{\Q}(\mu_{nm}) \subseteq \C$, and let $K$ be a field extension over $K_0$  such that there  exists at least $n$ elements which are algebraically independent over $K_0$. Let  $\overline{K}$  a fixed algebraic closure of $K$. For simplicity,  assume $\overline{K} \subseteq \C$. For $g\in G$, $\pi(g)$ is a semi-simple element of $\GL_n(K)$, and $\pi(g)^m=1\in \GL_n(K)$. So the eigenvalues of  $\pi(g)$ belong to $\mu_m$. Hence $\pi(g)$ is conjugate to a diagonal matrix under the $\GL_n(K)$-action.
\begin{lemma}\label{semmatrix}
Let  $h\in \GL_n(K)$ ($n\geq 2$) be a semi-simple matrix.
\begin{itemize}
\item[(1)] The conjugate class $\mathcal{C}_h=\{xhx^{-1} \mid x \in \GL_n(\overline{K})\}$ is a Zariski closed variety of $\GL_n(\overline{K})$, which is  defined over  $K$.
\item[(2)]  $\mathcal{Z}_h=\{ x\in \GL_n(\overline{K})\mid xhx^{-1}=h\}$  is a Zariski closed variety of $\GL_n(\overline{K})$, which is  defined over  $K$.
\end{itemize}
\end{lemma}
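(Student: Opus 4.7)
The plan is straightforward for part (2) and requires a bit more care for part (1). First I would handle $Z_h$ by direct inspection: the matrix condition $xh = hx$ unfolds into $n^2$ linear equations in the $n^2$ entries of $x$, with coefficients coming from the entries of $h \in \GL_n(K)$. Thus the solution set in $\Mm_n(\overline{K})$ is a linear subspace defined over $K$, and $Z_h$ is simply its intersection with the standard open subvariety $\GL_n(\overline{K}) \subset \Mm_n(\overline{K})$. Since a closed subvariety of an ambient affine variety remains closed after restriction to an open subvariety for the subspace topology, $Z_h$ is Zariski closed in $\GL_n(\overline{K})$ and defined over $K$.

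For part (1), the key idea is to give an explicit system of polynomial equations with coefficients in $K$ whose vanishing locus is exactly $\mathcal{C}_h$. Let $c(t) = \det(tI - h) \in K[t]$ be the characteristic polynomial of $h$, and let $p(t) \in K[t]$ be its minimal polynomial (recall that the minimal polynomial of a matrix is insensitive to enlarging the base field). Because $h$ is semi-simple, the roots of $p$ in $\overline{K}$ are pairwise distinct; write $p(t) = \prod_{i=1}^{k}(t - \lambda_i)$. I claim that
\[
\mathcal{C}_h = \bigl\{ g \in \GL_n(\overline{K}) : p(g) = 0 \text{ and } \det(tI - g) = c(t) \bigr\}.
\]
One inclusion follows from the conjugation invariance of both conditions. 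For the other, if $p(g) = 0$ then $g$ is annihilated by a separable polynomial, so $g$ is semi-simple with eigenvalues drawn from $\{\lambda_1,\dots,\lambda_k\}$; combined with the characteristic polynomial condition, this pins down the eigenvalues with their correct multiplicities, whence $g$ and $h$ become $\GL_n(\overline{K})$-conjugate simultaneously diagonalizable matrices. The right-hand side is cut out by the vanishing of the entries of the matrix-valued polynomial $p(X)$ together with the coefficients of $\det(tI - X) - c(t)$, all of which are polynomials in the entries of $X = (x_{ij})$ with coefficients in $K$; this exhibits $\mathcal{C}_h$ as Zariski closed in $\Mm_n(\overline{K})$ (a fortiori in $\GL_n(\overline{K})$) and defined over $K$.

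The main (mild) obstacle I anticipate is the verification that this explicit system really cuts out the \emph{entire} conjugacy class and nothing more. This is precisely where semi-simplicity of $h$ is essential: for non-semi-simple $h$ the polynomial $p$ would fail to be separable, so the equation $p(g) = 0$ would admit $g$ with strictly smaller minimal polynomial, and the characteristic polynomial alone would no longer determine the Jordan form. It is worth noting that the closedness of $\mathcal{C}_h$ is also a manifestation of the general Matsushima/Richardson principle that conjugacy classes of semi-simple elements in a reductive group are closed (the centralizer $Z_h$ being reductive); however, for the bookkeeping on the field of definition, the explicit polynomial description above is cleaner and avoids invoking heavier machinery.
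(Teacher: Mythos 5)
Your proof is correct, but it takes a genuinely different route from the paper: the paper disposes of both parts by citation to Springer's \emph{Linear Algebraic Groups} (the general facts that the conjugacy class of a semisimple element of a connected linear algebraic group is closed and defined over the ground field, and that centralizers of rational points are defined over the ground field), whereas you give a self-contained, elementary argument special to $\GL_n$. Your part (2) is the standard observation that $xh=hx$ is a system of linear equations over $K$, which is exactly what underlies the cited result. Your part (1) replaces the general Matsushima/Richardson-type closedness statement by the explicit description $\mathcal{C}_h=\{g : p(g)=0,\ \det(tI-g)=c(t)\}$ with $p$ the (separable, since $h$ is semisimple and $\operatorname{char}K=0$) minimal polynomial and $c$ the characteristic polynomial; the verification that these equations cut out exactly the conjugacy class is correct, since a matrix annihilated by a separable polynomial is diagonalizable and then the characteristic polynomial fixes the eigenvalue multiplicities, so conjugacy over $\overline{K}$ follows. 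What your approach buys is transparency and an immediate handle on the field of definition (the equations visibly have coefficients in $K$, and $K$ is perfect here, so cutting out the set by $K$-equations does give definedness over $K$); what the paper's approach buys is brevity and validity for arbitrary reductive groups and semisimple elements, without relying on the matrix-specific minimal-polynomial trick. Both are fine for the use made of the lemma later in Section \ref{se3}.
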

\begin{proof}
(1) See  \cite[p.89, 5.4.5, Coro., and p.208, 12.1.2. Prop.]{Springer};  (2) See    \cite[p.209, 12.1.4. Coro.]{Springer}.
 \end{proof}

\subsection{Bruhat decomposition for $\GL_n$}\label{BUGL}  To later use, let us first recall some notations of  reductive groups.  Here we shall consider the algebraic group $\GL_n= \GL_n(\overline{K})$.  Let $T$ be its diagonal torus, $B$ its standard Borel subgroup of upper triangular matrices, $N\subseteq B$ the subgroup of unipotent matrices.  Let $X(T)$,  $Y(T)$  denote the character group, resp. cocharacter group of $T$.  For $1\leq i\neq j\leq n$, let $\alpha_{ij}$ be the character of $T$ given by $\alpha_{ij} (t_1, \cdots, t_n) = t_it_{j}^{-1}$;  let  $\alpha^{\vee}_{ij}$ be the cocharacter of $T$ given by $t\in
\overline{K}^{\times} \longrightarrow \alpha^{\vee}_{ij}(t)\in  T$,  with $ \alpha^{\vee}_{ij}(t)$ a diagonal matrix with the $i$th entry $t$, the $j$th entry $t^{-1}$ and the other diagonal entries $1$.   Let $\Phi(T)=\{ \alpha_{ij} \mid 1\leq i\neq j\leq n\}$,  $\Phi^{\vee}(T)=\{ \alpha^{\vee}_{ij} \mid 1\leq i\neq j\leq n\}$. Then $\Psi=(X(T), \Phi(T), Y(T), \Phi^{\vee}(T))$ forms  a root datum for $\GL_n$ relative to $T$.  Let $e_1, \cdots, e_n$ be a canonical basis of $\mathbb{R}^n$. By  identifying $\alpha_{ij}$ to $e_i-e_j$, $\alpha_{ij}^{\vee}=e_i-e_j$  to the coroot of $\alpha_{ij}=e_i-e_j$, $\Phi=\{e_i-e_j \mid 1\leq i\neq j\leq n\} $  forms  a root system in $X(T)\otimes_{\mathbb{Z}} \mathbb{R}$. Let $\Delta=\{ e_i-e_{i+1} \mid 1\leq i\leq n-1\}$ be a basis of $X(T)\otimes_{\mathbb{Z}} \mathbb{R}$, $\Phi^{+}=\{ e_i-e_{j} \mid 1\leq i < j \leq n\}$ a system of positive roots.  Let   $W$  be  the corresponding  Weyl group.  In this case, $W$ is isomorphic to  $S_n$.  The Bruhat decomposition   yields $\GL_n=\sqcup_{w\in W}  \Bn \dot{w}\Bn$.  For a subset $I \subseteq\Delta$,  let $W_I$ be the subgroup of $W$ generated by the  reflections $s_{\alpha}$, $\alpha\in I$.  Let $P_I=\Bn W_I \Bn$ be the corresponding standard parabolic subgroup of $\GL_n$. Every parabolic subgroup is conjugate to one such  $P_I$. Recall that for $w\in W$, we can define the Bruhat length $l(w)=\# \{ \alpha \in \Phi^+ \mid w(\alpha)  \in -\Phi^+\}$.  For $w\in W$, let $C(w)=B\dot{w}B$, and $\overline{C(w)}$ its Zariski closure in $\GL_n$.
Then  $C(w)$ is an open sub-variety of $\overline{C(w)}$, and $B\dot{w}B\simeq  \mathbb{A}^{l(w)}  \times B$.  It is known that $\dim ( \mathbb{A}^{l(w)}  \times B)=l(w)+\dim B=l(w)+ \frac{n^2+n}{2}$. One can define the Bruhat-Chevalley order on $W$ by saying $w_1\leq w_2$ if $C(w_1)\subseteq \overline{C(w_2)}$.  Let $S=\{ s_{\alpha} \mid \alpha \in \Delta\}$.
\begin{example}\label{exWGL}
\begin{itemize}
\item[(1)] Let $w_{[n]}=(12\cdots n)$, a   cyclic permutation of order $n$.  Then $l(w_n)=n-1$.
\item[(2)]  If $n=2m$,  $w_{[\frac{n}{2}]}=\big(1n2 (n-1)\cdots  (m-1) (n-m+2) m(n-m+1)\big)$, a   cyclic permutation of order $n$, then $l(w_{[\frac{n}{2}]})=\frac{n^2-2n+2}{2}$.
\item[(3)] If $n=2m+1$, $w_{[\frac{n}{2}]}=\big(1n2 (n-1)\cdots  (m-1) (n-m+2) m(n-m+1)(m+1)\big)$, a   cyclic permutation of order $n$, then $l(w_{[\frac{n}{2}]})=\frac{n^2-2n+1}{2}$.
\item[(4)] If $n=2m$ or $n=2m+1$ ,   $w_{0}=(1 n)\big(2 (n-1)\big) \cdots  \big(m  (n-m+1)\big)$, then $w_0$ has the maximal Bruhat length $\frac{n(n-1)}{2}$.
\end{itemize}
\end{example}
\begin{proof}
Parts (1)(4) are the classical results. For (2),  we let $S_1=\{1, \cdots, m\}$, $S_2=\{m+1, \cdots,n \}$. Then $w_{[\frac{n}{2}]}$ interchanges   $S_1$ with  $S_2$.   For any $(ij) \in \Phi^+$ with $i<j$, (a)  if $i\in S_1$, $j\in S_2$, then $w_{[\frac{n}{2}]}(ij) \in -\Phi^+$; (b) if $i,j \in S_1$, then $ w_{[\frac{n}{2}]}(i)= n-i+1>w_{[\frac{n}{2}]}(j)=n-j+1$, $w_{[\frac{n}{2}]}(ij) \in -\Phi^+$; $(c_0)$ if $i,j \in S_2 \setminus \{m+1\}$, then  $w_{[\frac{n}{2}]}(i)= n-i+2> w_{[\frac{n}{2}]}(j)=n-j+2$, $w_{[\frac{n}{2}]}(ij) \in -\Phi^+$; $(c_1)$ if $ i=m+1,  j>m+1$, then $w_{[\frac{n}{2}]}(i)=1$, $w_{[\frac{n}{2}]}(j)=n-j+2>w_{[\frac{n}{2}]}(i)=1$,  $w_{[\frac{n}{2}]}(ij) \in \Phi^+$.  Hence $l(w_{[\frac{n}{2}}])=\frac{n(n-1)}{2}-(\frac{n}{2}-1)=\frac{n^2-2n+2}{2}$.  \\
For (3), similarly  we let $S_1=\{1, \cdots, m+1\}$, $S_2=\{m+2, \cdots,n \}$. Then $ w_{[\frac{n}{2}]}(S_2)= S_1\setminus \{1\}$, and $w_{[\frac{n}{2}]}(S_1\setminus \{m+1\})=S_2$. For any $(ij) \in \Phi^+$, $(a_0)$  if $i\in S_1\setminus \{m+1\}$, $j\in S_2$, then $w_{[\frac{n}{2}]}(ij) \in -\Phi^+$; $(a_1)$  if $i=m+1$, $j\in S_2$, then $w_{[\frac{n}{2}]}(i)=1< w_{[\frac{n}{2}]}(j)$, $w_{[\frac{n}{2}]}(ij)\in \Phi^+$; $(b_0)$ if $i,j \in S_1\setminus \{m+1\}$, then $ w_{[\frac{n}{2}]}(i)= n-i+1>w_{[\frac{n}{2}]}(j)=n-j+1$,$w_{[\frac{n}{2}]}(ij)\in -\Phi^+$ ; $(b_1)$ if $i, j=m+1\in S_1$, then $ w_{[\frac{n}{2}]}(i)= n-i+1>w_{[\frac{n}{2}]}(j)=1$,$w_{[\frac{n}{2}]}(ij)\in -\Phi^+$ ; (c) if $i,j \in S_2$, then  $w_{[\frac{n}{2}]}(i)= n-i+2>
w_{[\frac{n}{2}]}(j)=n-j+2$, $w_{[\frac{n}{2}]}(ij)\in -\Phi^+$.  Hence $l(w_{[\frac{n}{2}]})=\frac{n(n-1)}{2}-\frac{n-1}{2}=\frac{n^2-2n+1}{2}$.
 \end{proof}
\begin{lemma}
Keep the above notations.
\begin{itemize}
\item[(1)]$C(w_0)$ is an open Zariski-dense subvarity of $\GL_n$.
\item[(2)] For $s\in S$, $w\in W$, $C(s)C(w)=\left\{ \begin{array}{lc}  C(sw) & \textrm{ if } l(w) < l(sw),\\
C(w) \cup   C(sw) & \textrm{ if } l(sw) < l(w)\end{array} \right.$.\\
\item[(3)] $\omega C(w_0)$ with $\omega \in W$, form an open covering of $\GL_n$.
\end{itemize}
\end{lemma}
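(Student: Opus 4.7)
The plan is to handle the three parts by (1) a dimension count combined with the Bruhat closure order, (2) the classical rank-one Bruhat multiplication, and (3) Gauss elimination with row pivoting.

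For part (1), I would use the isomorphism $C(w) \simeq \mathbb{A}^{l(w)} \times B$ recorded just before the lemma to obtain $\dim C(w_0) = l(w_0) + \dim B = \frac{n(n-1)}{2} + \frac{n(n+1)}{2} = n^2 = \dim \GL_n$. Since $\GL_n$ is irreducible and $C(w_0)$ is an irreducible locally closed subvariety of full dimension, $C(w_0)$ is Zariski-dense. Openness follows from the Bruhat closure order $\overline{C(w)} = \sqcup_{w' \leq w} C(w')$: because $w_0$ is the unique maximal element, the complement $\GL_n \setminus C(w_0) = \sqcup_{w \neq w_0} C(w) \subseteq \cup_{w \neq w_0} \overline{C(w)}$ is closed.

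For part (2), the plan is to invoke the classical Bruhat multiplication lemma for a simple reflection by reducing to the rank-one subgroup $L_\alpha = \langle T, U_\alpha, U_{-\alpha}\rangle$ attached to the simple root $\alpha$ of $s$. Writing $B\dot sB = U_\alpha \dot s B$, the product becomes $B\dot sB \cdot B\dot w B = U_\alpha \dot s \dot w \cdot (\dot w^{-1} B \dot w) B$, and the key dichotomy comes from whether $w^{-1}\alpha \in \Phi^+$ or $w^{-1}\alpha \in -\Phi^+$. If $w^{-1}\alpha \in \Phi^+$ (equivalently $l(sw) > l(w)$), the conjugation absorbs cleanly into $B$ and the product lands entirely in $C(sw)$. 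If $w^{-1}\alpha \in -\Phi^+$ (equivalently $l(sw) < l(w)$), the $\SL_2$-identity $\dot s u = u' \dot{s}^{-1} u'' t$ for $u \in U_{-\alpha}\setminus\{e\}$ (with $u', u'' \in U_\alpha$, $t \in T$) produces an additional contribution in $C(w)$, yielding $C(w) \cup C(sw)$.

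For part (3), each $\omega C(w_0)$ is open because left translation by $\omega$ is a Zariski automorphism and $C(w_0)$ is open by part (1). For the covering, I would use Gauss elimination with partial pivoting: for any $g \in \GL_n$, there exists a permutation matrix $\sigma \in S_n \subseteq N(T)$ such that $\sigma g$ has all leading principal minors nonzero and therefore admits an LU factorization $\sigma g = LU$ with $L$ lower unitriangular and $U$ upper triangular invertible. Hence $\sigma g \in U^- \cdot B = B^- B$. Using $B^- = w_0 B w_0^{-1} = w_0 B w_0$ (with $w_0^2 = e$ by Example \ref{exWGL}(4)), we compute $B^- B = w_0 B w_0 \cdot B = w_0 \cdot B w_0 B = w_0 \, C(w_0)$. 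Therefore $g \in \sigma^{-1} w_0 \, C(w_0)$, and setting $\omega = \sigma^{-1} w_0 \in W$ gives $g \in \omega C(w_0)$; as $\sigma$ ranges over $S_n \simeq W$, so does $\omega$, so the $\omega C(w_0)$ exhaust $\GL_n$. The main difficulty I anticipate is the commutation bookkeeping in part (2), where one must carefully distinguish the two length cases and correctly apply the $\SL_2$-identity; parts (1) and (3) are comparatively routine, reducing to a dimension count and the familiar Gauss-elimination-with-row-pivoting algorithm, respectively.
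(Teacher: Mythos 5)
Your argument is correct, but note that the paper does not actually prove this lemma: its ``proof'' is the single citation to Springer's book (p.~145), where these facts are established for arbitrary reductive groups, so your write-up is a genuine self-contained alternative rather than a reproduction of the paper's argument. Parts (1) and (2) follow the standard Tits-system/Bruhat-cell proofs: the dimension count $l(w_0)+\dim B=\frac{n(n-1)}{2}+\frac{n(n+1)}{2}=n^2$ plus irreducibility gives density, and for openness you should make explicit that $C(w')\subseteq\overline{C(w_0)}$ for \emph{every} $w'$ (i.e.\ $w_0$ is the unique maximum for the Bruhat--Chevalley order, not merely of maximal length), so that the complement of $C(w_0)$ equals the closed set $\cup_{w\neq w_0}\overline{C(w)}$; in part (2) your first case is fine ($B\dot sB\dot wB=U_\alpha\dot s\dot w(\dot w^{-1}U_\alpha\dot w)B$ with $U_{w^{-1}\alpha}\subseteq B$), while the second case is most cleanly finished not by handling $U_{w^{-1}\alpha}$ for a negative root directly but by writing $w=s\cdot(sw)$ with $l(w)=l(sw)+1$, using the first case to get $C(s)C(w)=C(s)C(s)C(sw)$, and applying the $\SL_2$ identity once in the form $C(s)C(s)=B\cup C(s)$. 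Part (3) is where you genuinely diverge from the cited source: instead of the general reductive-group argument you use the $\GL_n$-specific factorization $\sigma g=LU$ from Gaussian elimination with partial pivoting, together with $U^{-}B=B^{-}B=w_0Bw_0B=w_0C(w_0)$ (valid because $w_0^2=e$ by Example (4), so $\dot w_0^2\in T\subseteq B$, and any representative of $w_0$ in $N(T)$ gives the same double coset); this is a perfectly correct and more elementary proof of the covering for $\GL_n$, which is all the paper uses, at the cost of not generalizing verbatim beyond $\GL_n$ (for $\SL_n$, as the paper's subsequent remark notes, one only has to adjust the representative matrices, which is harmless since only the class in $W=N(T)/T$ matters).
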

\begin{proof}
See \cite[p.145]{Springer}.
\end{proof}
  For each $w\in W$, let $w=s_{1}\cdots s_{l}$, for $l=l(w)$. By \cite[p.150, 8.5.5]{Springer}, if $w'\leq w$, then $w'=s_{i_1}\cdots s_{i_k}$ by deleting some $s_j$ from the product of  $s_i$'s in $w$.
  \begin{example}
  Consider $w_{[n]}=(12\cdots n)$.  Then $(12\cdots n)=(12) (23) \cdots (n-2,n-1) (n-1,n)$.  If $w' \leq w$, and $l(w')=n-2$, then $w'$ is one of $(12\cdots n-1)$, $(12\cdots n-2) (n-1,n)$, $\cdots$,  $(2\cdots n)$.
  \end{example}

  \begin{lemma}[Exchange condition]
For $w\in W$, $s=s_{\alpha}\in S$,
\begin{itemize}
\item[(1)] $l(ws) <l(w)$ iff $w$ has a reduced expression ending in $s$,
\item[(2)]  $l(sw) <l(w)$ iff $w$ has a reduced expression beginning from $s$.
\end{itemize}
\end{lemma}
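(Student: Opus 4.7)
The plan is to prove (1) directly and then deduce (2) from (1) by passing to inverses. The crucial preparatory step is to establish the dichotomy $l(ws) = l(w) \pm 1$ for every simple reflection $s = s_\alpha \in S$, with $l(ws) = l(w) - 1$ precisely when $w(\alpha) \in -\Phi^+$. For this I would use the paper's definition $l(w) = \#\Phi^+(w)$, where $\Phi^+(w) := \{\beta \in \Phi^+ : w(\beta) \in -\Phi^+\}$, together with the standard fact that, for a simple root $\alpha$, the reflection $s_\alpha$ permutes $\Phi^+ \setminus \{\alpha\}$ and sends $\alpha$ to $-\alpha$. A short bookkeeping argument on $\Phi^+(ws)$ versus $\Phi^+(w)$, separating the contribution of $\alpha$ from that of the other positive roots (via the bijection $\beta \mapsto s_\alpha(\beta)$ of $\Phi^+ \setminus \{\alpha\}$), then gives the dichotomy.

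With this in hand, the $(\Leftarrow)$ direction of (1) is immediate: if $w$ admits a reduced expression $w = s_{i_1} \cdots s_{i_{k-1}} s$ with $k = l(w)$, then $ws = s_{i_1} \cdots s_{i_{k-1}}$ is an expression of length $k - 1$, forcing $l(ws) \leq k - 1 < l(w)$. For the $(\Rightarrow)$ direction, suppose $l(ws) < l(w)$; by the dichotomy, $l(ws) = l(w) - 1$, say equal to $k - 1$. Choose any reduced expression $ws = t_1 \cdots t_{k-1}$ and multiply on the right by $s$. The resulting expression $w = t_1 \cdots t_{k-1} s$ has exactly $k = l(w)$ letters, so it is automatically reduced, and by construction it ends in $s$.

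Part (2) is then a formal consequence of (1) via the involution $w \mapsto w^{-1}$. The identity $l(w) = l(w^{-1})$ arises from the bijection $\Phi^+(w) \leftrightarrow \Phi^+(w^{-1})$ given by $\beta \mapsto -w(\beta)$. Hence $l(sw) < l(w)$ is equivalent to $l(w^{-1} s) = l((sw)^{-1}) < l(w^{-1})$; applying (1) to $w^{-1}$ produces a reduced expression of $w^{-1}$ ending in $s$, and inverting it (and using $s^{-1} = s$) yields a reduced expression of $w$ beginning with $s$.

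The only substantive point is the $\pm 1$ dichotomy for $l(ws)$ versus $l(w)$; once that is secured, both directions of (1) reduce to a one-line multiplication, and (2) is purely formal. So the main obstacle is the combinatorial bookkeeping in the initial step, rather than any deeper structural issue.
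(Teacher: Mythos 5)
The paper does not actually prove this lemma: its ``proof'' is the citation to \cite[p.14]{Hu2}, so there is no in-paper argument to compare with; your proposal supplies the standard textbook proof, and in substance it is the right one. The dichotomy $l(ws)=l(w)\pm 1$, with $l(ws)=l(w)-1$ exactly when $w(\alpha)\in-\Phi^+$, is indeed the whole engine; your derivation of it from the fact that $s_\alpha$ permutes $\Phi^+\setminus\{\alpha\}$ and sends $\alpha$ to $-\alpha$ is correct, and so is the reduction of (2) to (1) via $w\mapsto w^{-1}$, using $l(w)=l(w^{-1})$ through the bijection $\beta\mapsto -w(\beta)$.

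One point is used silently and deserves a line, because the paper defines $l$ geometrically, as $\#\{\beta\in\Phi^+ \mid w(\beta)\in-\Phi^+\}$, while ``reduced expression'' refers to minimal word length in $S$. Your dichotomy yields only the inequality $l(w)\le(\text{number of letters in any expression})$; that is enough to say a $k$-letter expression with $k=l(w)$ is automatically reduced, but in the $(\Rightarrow)$ direction you also ``choose a reduced expression $ws=t_1\cdots t_{k-1}$'', i.e.\ you assume $ws$ admits an expression with exactly $l(ws)$ letters, and in the $(\Leftarrow)$ direction you assume a reduced expression of $w$ has exactly $l(w)$ letters. Both are instances of the identity between geometric length and minimal word length, which does not follow from the dichotomy alone: it needs the complementary induction (if $w\ne e$ then $w$ sends some simple root to a negative root, so right multiplication by that simple reflection strictly decreases $l$; iterate). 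This is a standard fact proved alongside the exchange property in Humphreys, so the gap is minor and easily patched, but as written your argument presupposes part of what that agreement of the two length functions provides.
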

\begin{proof}
See \cite[p.14]{Hu2}.
\end{proof}
  \begin{lemma}\label{iird}
  If $X$ is an irreducible  subvariety of $\overline{C(w)}$ of codimension $1$, then:
  \begin{itemize}
  \item[(1)] $X\cap  BswB\neq \emptyset$, for some transposition $s=(ij)$,
  \item[(2)] $X\cap  BsBwB\neq \emptyset$, for some transposition $s=(ij)$.
  \end{itemize}
  \end{lemma}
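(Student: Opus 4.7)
The plan is to exploit the Bruhat stratification $\overline{C(w)} = \bigsqcup_{w' \leq w} C(w')$ together with the strong exchange condition on the Bruhat order of $S_n$. Recall that the boundary $\partial := \overline{C(w)} \setminus C(w) = \bigcup_{w' \lessdot w} \overline{C(w')}$ is a union of irreducible closed Schubert subvarieties of codimension one, indexed by the Bruhat covers $w' \lessdot w$ (i.e.\ $l(w')=l(w)-1$ and $w' \leq w$). These boundary components are the only candidates of codimension one into which a closed irreducible $X$ can degenerate.

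First I would treat the case $X \subseteq \partial$. Since $X$ is closed and irreducible, it must lie in a single component $\overline{C(w')}$ with $w' \lessdot w$; the dimension identity $\dim X = \dim \overline{C(w)} - 1 = l(w') + \dim B = \dim \overline{C(w')}$ then forces $X = \overline{C(w')}$. The strong exchange condition in the Bruhat order on $S_n$ states that every cover $w' \lessdot w$ is realized by a reflection, and reflections in $S_n$ are precisely the transpositions, so there exists $s = (i,j)$ with $w' = sw$. Then $X \supseteq C(sw) = BswB$, which gives (1). For (2), since $sw < w$, the product identity $BsBwB = BwB \cup BswB$ yields $X \cap BsBwB \supseteq X \cap BswB \neq \emptyset$.

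The main obstacle is the case $X \cap C(w) \neq \emptyset$, where one must show $X$ nonetheless meets $\partial$. My plan is to pass to the projective flag variety via $\pi : G \to G/B$, under which $\overline{C(w)}$ surjects onto the projective Schubert variety $X_w := \overline{BwB/B}$ with boundary $\partial X_w = \bigcup_{w' \lessdot w} X_{w'}$. The Zariski closure $\overline{\pi(X)} \subseteq X_w$ is complete, being closed in the projective $X_w$; if $X$ were confined to the affine open cell $C(w)$, then $\overline{\pi(X)}$ would be a complete subvariety of the affine open $BwB/B \subseteq X_w$, hence zero-dimensional, so $X$ would lie in a single $B$-fiber of $\pi$, contradicting $\dim X = l(w) + \dim B - 1 > \dim B$ for $l(w) \geq 2$ (the length-one borderline case is handled directly using the two-cell stratification of $\overline{BwB}$). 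Once $X \cap \partial \neq \emptyset$ is secured, irreducibility together with a dimension comparison forces $X$ to meet the open stratum $C(w') = BswB$ of some cover $w' = sw$, and strong exchange delivers the transposition $s=(i,j)$. The central technical hurdle is precisely this projectivity-versus-affineness argument that compels $X$ to touch the boundary; once it is in place, the combinatorics of strong exchange finishes both (1) and (2).
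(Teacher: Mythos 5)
Your handling of the boundary case is fine and is, in substance, the paper's own argument: the paper's proof starts from the (unproved) assertion that $X$ meets some cell $B\hat{w}_iB$ obtained by deleting one letter from a reduced word of $w$, and then rewrites $\hat{w}_i=(s_1\cdots s_{i-1})s_i(s_1\cdots s_{i-1})^{-1}w=sw$ with $s$ a transposition — exactly the ``covers are realized by reflections'' fact you invoke; and for (2) both you and the paper only need the trivial inclusion $BswB\subseteq BsBwB$ (your identity $BsBwB=BwB\cup BswB$ is false when the transposition $s$ is not simple, but you never actually use it).

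The genuine gap is the case $X\cap C(w)\neq\emptyset$, and your projectivity argument does not close it. If $X\subseteq C(w)$, the closure $\overline{\pi(X)}$ is taken inside $X_w$ and has no reason to remain inside the open cell $BwB/B$; completeness of that closure yields no contradiction. To argue as you do you would need $\pi(X)$ itself to be closed in $G/B$, i.e.\ $X$ complete or $\pi|_X$ proper, neither of which holds for a quasi-affine $X$. Worse, no repair is possible, because the claim you are trying to prove in that case is false: take $n=2$, $w=s=(12)$, so $\overline{C(w)}=\GL_2(\overline{K})$, and $X=\{g\in \GL_2(\overline{K}) \mid g_{21}=1\}$. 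This $X$ is closed, irreducible, of codimension $1$, contained in $BsB$, and disjoint from $B=BswB$, which is the only cell of the form $Bs'wB$ lying in $\overline{C(w)}$; hence conclusion (1) fails for it (conclusion (2) survives only because $BsBsB=\GL_2(\overline{K})$). So the open-cell case is not merely the ``central technical hurdle'' — without an extra hypothesis (for instance $X$ being $B\times B$-stable, or $X$ contained in $\overline{C(w)}\setminus C(w)$) part (1) is simply not true there. The paper's proof tacitly confines itself to the boundary situation through its unjustified opening assertion; your write-up, read as a proof of the lemma as stated, inherits that same omission and adds the incorrect compactness step.
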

  \begin{proof}
  1)  Keep the above notations. Let $\hat{w}_i=s_{1}\cdots s_{i-1} s_{i+1} \cdots s_{l}$. Then $X \cap B\hat{w}_iB \neq \emptyset$, for some $i$.  Let $w_1=s_{1}\cdots s_{i-1}$,$w_2=s_{i+1} \cdots s_{l}$. Then $\hat{w}_i=w_1w_2$, $w=w_1s_iw_2$, so $ \hat{w}_i=w_1s_iw_1^{-1} w=sw$, for $s=w_1s_iw_1^{-1}$, a transposition. \\
  2) Assume that $s=s_{j_1} \cdots s_{j_k}$ is a reduced decomposition, for some $s_{j_t}\in S$. By the above lemma, $BsB=Bs_{j_1}B  \cdots Bs_{j_k}B$, and $Bs_{j_k} BwB \supseteq Bs_{j_k} wB$. Hence $BsBwB \supseteq Bs_{j_1} \cdots s_{j_k}wB=BswB$. This result holds.
  \end{proof}
\begin{remark}
Keep the above notations. If  $w=(i_1\cdots i_n)$ is an $n$-cycle, and $s=(i_1i_l)$, then $sw=(i_1\cdots i_{l-1}) (i_l\cdots i_n)$.
\end{remark}
\begin{proof}
$sw=s\begin{pmatrix} i_1 & \cdots & i_{l-1} & i_l &i_{l+1} & \cdots & i_{n-1} & i_n  \\ i_2 & \cdots & i_{l} & i_{l+1} &i_{l+2} & \cdots & i_{n} & i_1 \end{pmatrix}=\begin{pmatrix} i_1 & \cdots & i_{l-1} & i_l &i_{l+1} & \cdots & i_{n-1} & i_n  \\ i_2 & \cdots & i_{1} & i_{l+1} &i_{l+2} & \cdots & i_{n} & i_l \end{pmatrix}=(i_1\cdots i_{l-1}) (i_l\cdots i_n)$.
\end{proof}

\begin{remark}
If we replace $\GL_n$ by $\SL_n$, the above results also  hold, although the representative matrices for the Weyl group $S_n$ for $\GL_n$ and $\SL_n$  may not be the same.
\end{remark}

\subsection{Application of generic elements}\label{age}
 Let $\textbf{H}$ denote  a connected algebraic group over $K$.  Recall in \cite[p.61, Section 2.1.11]{PR1}  or Steinberg's \cite{St}, a semi-simple element $g\in \textbf{H}(\overline{K})$ is called  \emph{regular}, if the dimension  of its  centralizer $Z_{\textbf{H}}(g)$ equals  the rank of $\textbf{H}$.
\begin{example}\label{exx}
Let $\textbf{H}(\overline{K})=\textbf{SL}_n(\overline{K})$, $\textbf{T}(\overline{K})$  its diagonal torus, $\textbf{B}(\overline{K})$ its standard Borel subgroup of upper triangular matrices, $\textbf{N}(\overline{K})\subseteq \textbf{B}(\overline{K})$ the subgroup of unipotent matrices.
\begin{itemize}
\item[(1)] A diagonal matrix $g=\diag(\alpha_1, \cdots, \alpha_n)\in  \textbf{T}(\overline{K})$ is regular if $\alpha_i\neq \alpha_j$, for $i\neq j$.
\item[(2)] For  a  regular element $g\in \textbf{T}(\overline{K}) $, and $n \in \textbf{N}(\overline{K}) $,  (a) $ng$ is   $\textbf{SL}_n(\overline{K})$-conjugate to $g$, (b) $ng$ is  also a regular element.
\end{itemize}
\end{example}
\begin{proof}
See \cite[p.53, 2.11(e)]{St}, and \cite[p.54, 2.13, Coro.]{St}.
\end{proof}

\begin{remark}\label{cojuagte}
In the above example (2), if the   regular element $g\in \textbf{T}(K)$, and $n \in \textbf{N}(K) $,  then $ng$ is   $\textbf{SL}_n(K)$-conjugate to $g$.
\end{remark}
\begin{proof}
By Linear Algebra.
\end{proof}
   In   \cite[Section 9.4]{PR2}, a regular semi-simple element $g$ is called \emph{generic} if the connected  torus $Z_{\textbf{H}}(g)^0$ is  generic  over $K$(cf.\cite[p.23, Section 9]{PR2}). In the rest of this subsection, we will keep the notations of Example \ref{exx}.

    For simplicity, we follow  the   notations as in \cite[Thm.9.1]{PR2};   let  $\textbf{T}(v_1), \cdots, \textbf{T}(v_r)$  be the corresponding maximal $K_{v_i}$-torus for  the group $\textbf{H}={\SL_n}_{/K}$. Let $\textbf{T}(v_i)_{reg}$ be the Zariski-open $K_{v_i}$-subvariety of regular elements in $\textbf{T}(v_i)$.  Let $\mathcal{U}(\textbf{T}(v_i), K_{v_i})=\{  yty^{-1} \mid y\in \SL_n(K_{v_i}), t \in \textbf{T}(v_i)_{reg}(K_{v_i})\}$. By \cite[p.126, Lmm.3.4]{PR3}, $\mathcal{U}(\textbf{T}(v_i), K_{v_i})$ is a solid open subset of $\SL_n(K_{v_i})$. (cf. \cite[p.126]{PR3}) Note that for each $\SL_n(K_{v_i})$ or $\GL_n(K_{v_i})$, we endow it with the $v_i$-adic topology.
\begin{lemma}\label{denseGS}
\begin{itemize}
\item[(1)] $\SL_n(K)   \hookrightarrow \prod_{i=1}^r \SL_n(K_{v_i})$ is dense.
\item[(2)] $K^{\times}  \hookrightarrow \prod_{i=1}^r K_{v_i}^{\times}$  is dense.
\item[(3)] $\GL_n(K)   \hookrightarrow \prod_{i=1}^r \GL_n(K_{v_i})$ is dense.
\end{itemize}
\end{lemma}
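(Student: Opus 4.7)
The plan is to deduce all three density assertions from weak approximation for $K$ at the independent places $v_1,\ldots,v_r$ (which is a standing hypothesis of the Prasad--Rapinchuk framework set up in Section \ref{age}, i.e.\ the diagonal image of $K$ is dense in $\prod_{i=1}^r K_{v_i}$ in the product of the $v_i$-adic topologies), together with the $K$-rationality of the varieties $\SL_n$, $\mathbb{G}_m$ and $\GL_n$.

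First I will dispatch (2), which is the simplest. Given $(\alpha_1,\ldots,\alpha_r)\in\prod K_{v_i}^{\times}$ and an open neighborhood $U=\prod U_i$ around it, each $K_{v_i}^{\times}$ is open in $K_{v_i}$, so I can shrink $U_i$ to a neighborhood of $\alpha_i$ in $K_{v_i}$ that avoids $0$. Weak approximation for $K$ in the ambient $\prod K_{v_i}$ then produces $\beta\in K$ with $\beta\in U_i$ for every $i$; since each $U_i$ misses $0$, automatically $\beta\in K^{\times}$.

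For (1) I will use the Bruhat decomposition from Section \ref{BUGL} applied to $\SL_n$. The big cell $N^{-}TN$ (opposite unipotent times torus times unipotent) is an open $K$-subvariety of $\SL_n$ that is $K$-biregular to a Zariski-open subset of $\mathbb{A}^{n^2-1}$, and weak approximation on affine space follows coordinate-wise from the scalar case. Weyl translates of the big cell form an open cover of $\SL_n(K_{v_i})$ for every $i$, so I can first perturb a given target tuple $(g_1,\ldots,g_r)\in\prod\SL_n(K_{v_i})$, using openness in the $v_i$-adic topologies, into a tuple whose components all lie in a single $K$-rational Weyl translate $wC(w_0)$; multiplying through by $w^{-1}\in\SL_n(K)$ reduces to approximation inside the big cell, which is handled by affine weak approximation applied to the coordinates of $N^{-}$, $T$ and $N$ (the torus factor uses (2) coordinate by coordinate under the parametrization by $K^{\times}$'s with product-determinant constraint). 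Assertion (3) then follows from (1) and (2) combined through the $K$-isomorphism of varieties $\GL_n\simeq \SL_n\times\mathbb{G}_m$ given by $(g,t)\mapsto g\cdot\mathrm{diag}(t,1,\ldots,1)$, under which density of the product of $K$-points in the product of local points is immediate from density of each factor.

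The main technical point, and where I expect most of the work to sit, is the Weyl-translate refinement step in (1): one needs to move all coordinates of an approximating tuple into a \emph{common} translate of the big cell before invoking affine weak approximation, and although this is a standard openness-and-compactness manoeuvre, some care is required to keep the perturbation within the prescribed neighborhood and to ensure the Weyl translate chosen is defined over $K$ so that post-multiplication by $w^{-1}$ stays inside $\SL_n(K)$. Once this reduction is in place, the rest of the argument is routine transfer of weak approximation through birational $K$-isomorphisms.
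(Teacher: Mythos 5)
Your proof is correct, but for part (1) it takes a genuinely different route from the paper. The paper handles (1) by citation, quoting weak approximation for $\SL_n$ from \cite[p.188]{K}, proves (2) from the weak approximation theorem for $K$ (citing \cite[p.48, Lemma]{Cas}), and then deduces (3) from (1) and (2); your treatment of (2) (shrinking the neighborhoods so they avoid $0$ and invoking weak approximation in $\prod K_{v_i}$) and of (3) (via the $K$-variety isomorphism $\SL_n\times\mathbb{G}_m\simeq\GL_n$, $(g,t)\mapsto g\cdot\diag(t,1,\dots,1)$) matches the paper's intent. For (1), however, you give a self-contained argument: $K$-rationality of $\SL_n$ through the big cell of the Bruhat decomposition of Section \ref{BUGL}, coordinate-wise weak approximation on the unipotent parameters and on the torus parameters via (2), and translation back by a Weyl representative in $\SL_n(K)$. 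This buys independence from the external citation and reuses machinery the paper has already set up, at the cost of one input you should state explicitly: moving the tuple $(g_1,\dots,g_r)$ into a \emph{common} translate $wC(w_0)$ is not justified by the open covering alone, since the translates containing the various $g_i$ may differ with $i$; what is actually needed is that $wC(w_0)(K_{v_i})$ is $v_i$-adically dense in $\SL_n(K_{v_i})$, which holds because $wC(w_0)$ is Zariski-open and dense in the smooth connected group $\SL_n$ — precisely the density statement \cite[p.114, Lemma 3.2]{PR} that the paper itself invokes in the proof of Lemma \ref{tbkv}. Once that is in place, the Weyl translates are in fact superfluous: one can perturb every component directly into the single big cell $C(w_0)$ and then apply the affine approximation step, so your "openness-and-compactness manoeuvre" should be replaced by this density argument rather than elaborated.
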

\begin{proof}
For (1), see \cite[p.188]{K}.   The weak approximation theorem tells us  that the image of $K$ in $\prod_{i=1}^r K_{v_i}$ is dense, see \cite[p.48, Lmm.]{Cas} for the proof.  Since $K$ is an infinite additive group,  by following that proof, one can see that the image of $K^{\times}  $  in $\prod_{i=1}^r K_{v_i}^{\times}$  is also  dense. The last result can deduce from (1) and (2).
\end{proof}

Let $\omega_0$ be the  element of  maximal Bruhat length  in  the Weyl group for $\textbf{SL}_{n/K}$.
\begin{lemma}\label{tbkv}
$B(K_{v_i})\dot{\omega_0}B(K_{v_i}) \dot{\omega}\cap \mathcal{U}(\textbf{T}(v_i), K_{v_i}) \neq \emptyset$, for any $w\in W$.
\end{lemma}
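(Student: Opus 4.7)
The plan is to produce a single point $x \in \SL_n(K_{v_i})$ lying in both the translate $B(K_{v_i})\dot{\omega_0}B(K_{v_i})\dot{\omega}$ of the big Bruhat cell and the conjugation locus $\mathcal{U}(\textbf{T}(v_i), K_{v_i})$, by viewing the construction as a Zariski-open subscheme of the smooth $K_{v_i}$-variety $\SL_n \times \textbf{T}(v_i)_{reg}$. Concretely, I would introduce the $K_{v_i}$-morphism
$$ \psi : \SL_n \times \textbf{T}(v_i)_{reg} \longrightarrow \SL_n, \quad (y, t) \longmapsto yty^{-1}\dot{\omega}^{-1}, $$
and set $\mathcal{O} := \psi^{-1}(B\dot{\omega_0}B)$, a Zariski-open $K_{v_i}$-subscheme of the source. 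Any $K_{v_i}$-point $(y, t) \in \mathcal{O}(K_{v_i})$ automatically produces $x := yty^{-1} \in \mathcal{U}(\textbf{T}(v_i), K_{v_i})$ with $x\dot{\omega}^{-1} \in B(K_{v_i})\dot{\omega_0}B(K_{v_i})$, so the problem reduces to showing $\mathcal{O}(K_{v_i}) \neq \emptyset$.

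Next I would verify this in two steps. Step one: check that $\mathcal{O}$ is non-empty over $\overline{K_{v_i}}$. Since all maximal tori of $\SL_n$ are $\SL_n(\overline{K_{v_i}})$-conjugate, as $(y,t)$ ranges over the source the element $yty^{-1}$ ranges over all regular semisimple elements of $\SL_n(\overline{K_{v_i}})$, which form a Zariski-open dense subset; thus $\psi$ is dominant, and since $B\dot{\omega_0}B$ is Zariski-open dense in $\SL_n$ by the big Bruhat cell structure recalled in Section \ref{BUGL}, these two open dense subsets meet, giving $\mathcal{O} \neq \emptyset$. Step two: promote to $K_{v_i}$-points. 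The factor $\textbf{T}(v_i)_{reg}(K_{v_i})$ is non-empty because $\textbf{T}(v_i)$ is a connected $K_{v_i}$-torus in characteristic zero, so its $K_{v_i}$-points are $v_i$-adically dense and hence meet the Zariski-open regular locus; consequently $\SL_n \times \textbf{T}(v_i)_{reg}$ is a smooth irreducible $K_{v_i}$-variety with non-empty $K_{v_i}$-points. By the implicit function theorem over the local field $K_{v_i}$, such $K_{v_i}$-points are $v_i$-adically dense in the ambient variety, so every non-empty Zariski-open $K_{v_i}$-subscheme (in particular $\mathcal{O}$) has non-empty $K_{v_i}$-points.

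The main obstacle, as I anticipate it, is Step one: for any single fixed $t$, the orbit $\SL_n(\overline{K_{v_i}}) \cdot t$ has dimension $n^2 - n$, which for $n \geq 3$ is strictly smaller than the dimension $n^2 - 2$ of the complement of the big Bruhat cell, so a naive dimension argument applied to one orbit cannot force intersection with $B\dot{\omega_0}B\dot{\omega}$. Varying $t$ is therefore essential, and the key input is the conjugacy of all maximal tori of $\SL_n$ over $\overline{K_{v_i}}$, which makes $\psi$ dominant. Finally, the identification $(B\dot{\omega_0}B)(K_{v_i}) = B(K_{v_i})\dot{\omega_0}B(K_{v_i})$, needed to rewrite the conclusion in the form stated in the lemma, follows from the standard big-cell factorization $B\dot{\omega_0}B \simeq U^{-} \times T \times \{\dot{\omega_0}\}$ as $K_{v_i}$-schemes, where $U^{-}$ denotes the opposite unipotent radical.
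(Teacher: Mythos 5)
Your proof is correct, but it takes a genuinely different route from the paper's. The paper stays inside $\SL_n(K_{v_i})$ with its $v_i$-adic topology: it identifies $B(K_{v_i})\dot{\omega_0}B(K_{v_i})\dot{\omega}$ with the set of $K_{v_i}$-points of the open dense subvariety $B\dot{\omega_0}B\dot{\omega}$, quotes \cite[p.114, Lemma 3.2]{PR} to conclude that this set is $v_i$-adically dense in $\SL_n(K_{v_i})$, and then intersects it with $\mathcal{U}(\textbf{T}(v_i), K_{v_i})$, which is a nonempty $v_i$-adically open (``solid'') subset by \cite[p.126, Lemma 3.4]{PR2}; a dense set meets a nonempty open set. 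You never use that openness: instead you parametrize $\mathcal{U}(\textbf{T}(v_i), K_{v_i})$ by the $K_{v_i}$-points of the smooth, geometrically irreducible variety $\SL_n\times\textbf{T}(v_i)_{reg}$, pull the big cell back along $\psi$, show the pullback $\mathcal{O}$ is nonempty over $\overline{K_{v_i}}$ by dominance of the conjugation map (conjugates of a single maximal torus exhaust the open dense regular semisimple locus --- and your remark that varying $t$ is indispensable for the dimension count is exactly right), and finish with Zariski-density of $K_{v_i}$-points of a smooth variety possessing a rational point. The paper's route is shorter because the two quoted lemmas of Platonov--Rapinchuk and Prasad--Rapinchuk carry the load; yours is more self-contained and avoids the solidity of $\mathcal{U}$, at the price of the dominance check. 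Two phrasings should be repaired, though neither is a genuine gap: what you actually need in both places is Zariski-density rather than ``$v_i$-adic density'' --- for $\textbf{T}(v_i)(K_{v_i})$ meeting the regular locus (standard, since a torus over an infinite field has Zariski-dense rational points; in the Prasad--Rapinchuk setup $\textbf{T}(v_i)_{reg}(K_{v_i})\neq\emptyset$ is in any case built into the choice of $\textbf{T}(v_i)$, as $\mathcal{U}(\textbf{T}(v_i), K_{v_i})$ is nonempty), and for the parameter variety meeting $\mathcal{O}$ (a smooth geometrically irreducible $K_{v_i}$-variety with a $K_{v_i}$-point has Zariski-dense $K_{v_i}$-points, which is where the implicit function theorem really enters); and the big-cell factorization you invoke at the end is misstated: it should read $B\dot{\omega_0}B\simeq U\times T\times U^{-}\times\{\dot{\omega_0}\}$, equivalently $\mathbb{A}^{l(\omega_0)}\times B$ as recalled in Section \ref{BUGL} (with $U$, $U^{-}$ the unipotent radicals of $B$ and of the opposite Borel subgroup), not $U^{-}\times T\times\{\dot{\omega_0}\}$, although the conclusion $(B\dot{\omega_0}B)(K_{v_i})=B(K_{v_i})\dot{\omega_0}B(K_{v_i})$ does follow from the correct rational factorization and is the same identity the paper asserts without proof.
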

\begin{proof}
Let $\overline{K_{v_i}}$ be an algebraic closure of $K_{v_i}$. Then $B(\overline{K_{v_i}})\dot{\omega_0}B(\overline{K_{v_i}}) \cap \SL_n( K_{v_i})=B(K_{v_i})\dot{\omega_0}B(K_{v_i})$, and  then $B(\overline{K_{v_i}})\dot{\omega_0}B(\overline{K_{v_i}})\dot{\omega}\cap \SL_n( K_{v_i})=B(K_{v_i})\dot{\omega_0}B(K_{v_i}) \dot{\omega}$. By \cite[PP.160-161]{Jan}, $B\dot{\omega_0}B \dot{\omega}$ is open and   Zariski-dense in  $\textbf{SL}_{n/K_{v_i}}$. By \cite[p.114, Lmm.3.2]{PR}, $B(K_{v_i})\dot{\omega_0}B(K_{v_i}) \dot{\omega}$ is dense in $\SL_n(K_{v_i})$. So the result holds.
\end{proof}
We shall follow the section 9 in  \cite{PR2} to  prove the next result:
\begin{lemma}
$B(K)\dot{\omega_0}B(K) \dot{\omega}$ contains a generic element,  for any $w\in W$.
\end{lemma}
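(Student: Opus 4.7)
The plan is to combine the two preceding lemmas by a density/approximation argument in the semi-local topology on $\SL_n$. First I would observe that since $\omega_0$ is the longest element of the Weyl group, the Bruhat cell $B\dot{\omega_0}B$ is Zariski-open in $\SL_n$, and hence so is $B\dot{\omega_0}B\dot{\omega}$ for any $\omega \in W$. Consequently, for each place $v_i$, the set $B(K_{v_i})\dot{\omega_0}B(K_{v_i})\dot{\omega}$ is open in $\SL_n(K_{v_i})$ for the $v_i$-adic topology.

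Next, for each $i$, set
\[
\mathcal{W}_i \;=\; B(K_{v_i})\dot{\omega_0}B(K_{v_i})\dot{\omega}\;\cap\;\mathcal{U}(\textbf{T}(v_i), K_{v_i}).
\]
By Lemma \ref{tbkv}, $\mathcal{W}_i \neq \emptyset$, and being the intersection of a $v_i$-adically open set with the solid open subset $\mathcal{U}(\textbf{T}(v_i), K_{v_i})$, it is a non-empty open subset of $\SL_n(K_{v_i})$. Hence $\mathcal{W} = \prod_{i=1}^r \mathcal{W}_i$ is a non-empty open subset of $\prod_{i=1}^r \SL_n(K_{v_i})$. By Lemma \ref{denseGS}(1), $\SL_n(K)$ embeds densely in this product, so there exists $g \in \SL_n(K)$ whose diagonal image lies in $\mathcal{W}$.

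Such a $g$ automatically lies in $B(K)\dot{\omega_0}B(K)\dot{\omega}$: indeed, $B\dot{\omega_0}B\dot{\omega}$ is a locally closed $K$-subvariety of $\SL_n$, so $B(\overline{K})\dot{\omega_0}B(\overline{K})\dot{\omega}\cap \SL_n(K) = B(K)\dot{\omega_0}B(K)\dot{\omega}$, and membership in any one $B(K_{v_i})\dot{\omega_0}B(K_{v_i})\dot{\omega}$ already puts $g$ in the left-hand side. Furthermore, because $g \in \mathcal{U}(\textbf{T}(v_i), K_{v_i})$ for every $i$, the connected centralizer $Z_{\SL_n}(g)^0$ is $\SL_n(K_{v_i})$-conjugate to $\textbf{T}(v_i)$ over $K_{v_i}$, and this is precisely the local condition that makes $Z_{\SL_n}(g)^0$ a \emph{generic} $K$-torus in the sense of \cite[\S 9]{PR2}.

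The main obstacle in this plan is the final identification: checking that simultaneously prescribing, at each $v_i$, that the centralizer of $g$ be $K_{v_i}$-conjugate to the chosen $\textbf{T}(v_i)$ really does force $Z_{\SL_n}(g)^0$ to be generic over $K$ in the Galois-theoretic sense. This is exactly the criterion underlying Theorem 9.1 of \cite{PR2}, and the tori $\textbf{T}(v_i)$ were selected a priori so that this criterion becomes applicable; thus in the full write-up I would only need to invoke that result, while the approximation and open-Bruhat-cell arguments above are essentially formal.
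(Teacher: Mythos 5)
Your proposal is correct and takes essentially the same route as the paper: both arguments combine Lemma \ref{tbkv} with weak approximation (Lemma \ref{denseGS}) and then invoke the genericity machinery of \cite{PR2} (the setup of Theorem 9.1 and the proof of Theorem 9.6) to conclude that an element lying in every $\mathcal{U}(\textbf{T}(v_i), K_{v_i})$ is generic. The only cosmetic difference is that you approximate inside the open sets $B(K_{v_i})\dot{\omega_0}B(K_{v_i})\dot{\omega}\cap \mathcal{U}(\textbf{T}(v_i), K_{v_i})$ using the density of $\SL_n(K)$ in $\prod_{i=1}^r \SL_n(K_{v_i})$ together with the rationality of the Bruhat cell, whereas the paper phrases the same step as density of $B(K)\dot{\omega_0}B(K)\dot{\omega}$ in the product of the local cells; the ingredients and structure are identical.
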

\begin{proof}
Note that $B(K)=N(K) T(K)$ is dense in  $\prod_{i=1}^r B(K_{v_i})=\prod_{i=1}^rN(K_{v_i}) \prod_{i=1}^rT(K_{v_i})  $ in $v_i$-adic  topologies. So $B(K)\dot{\omega_0}B(K)\dot{\omega}$ is dense  $\prod_{i=1}^r B(K_{v_i})\dot{\omega_0}B(K_{v_i}) \dot{\omega}$  in $v_i$-adic  topologies. Hence the closure of the   image of $B(K)\dot{\omega_0}B(K) \dot{\omega}  $ in  $\prod_{i=1}^r \SL_n(K_{v_i})$  contains $\prod_{i=1}^r B(K_{v_i})\dot{\omega_0}B(K_{v_i}) \dot{\omega}$. By the above lemma \ref{tbkv}, $B(K)\dot{\omega_0}B(K) \dot{\omega} \cap \prod_{i=1}^r \mathcal{U}(\textbf{T}(v_i), K_{v_i})  \neq \emptyset$. By the proof of Thm.9.6 in \cite{PR2}, an element in the intersection of above two sets is a generic element.
\end{proof}
Let $T_{reg}(K)$ denote the set of  regular elements of diagonal matrices.
\begin{corollary}
 $N(K) \dot{\omega_0} T_{reg}(K)N(K) \dot{\omega}$ contains a generic element, for any $\omega\in W$.
\end{corollary}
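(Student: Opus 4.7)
The plan is to adapt the proof of the preceding lemma by inserting the regularity condition $t \in T_{reg}$ from the outset. First I would record the structural input from the Bruhat decomposition: for the longest element $\omega_0$, the multiplication map
\[
N \times T \times N \longrightarrow B \dot{\omega_0} B, \qquad (n_1, t, n_2) \longmapsto n_1 \dot{\omega_0} t n_2,
\]
is an isomorphism of varieties, and right-translation by $\dot{\omega}$ identifies $B \dot{\omega_0} B \dot{\omega}$ with $N \times T \times N$. Under this identification, $N \dot{\omega_0} T_{reg} N \dot{\omega}$ corresponds to the Zariski-open dense subset $N \times T_{reg} \times N$, since $T_{reg}$ is open dense in $T$.

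Passing to $K_{v_i}$-points, this will imply that $N(K_{v_i}) \dot{\omega_0} T_{reg}(K_{v_i}) N(K_{v_i}) \dot{\omega}$ is $v_i$-adically open and dense inside $B(K_{v_i}) \dot{\omega_0} B(K_{v_i}) \dot{\omega}$. Combining this with Lemma~\ref{tbkv}, which produces an element of the latter lying in the $v_i$-adic open set $\mathcal{U}(\textbf{T}(v_i), K_{v_i})$, I obtain that
\[
\mathcal{O}_i := N(K_{v_i}) \dot{\omega_0} T_{reg}(K_{v_i}) N(K_{v_i}) \dot{\omega} \cap \mathcal{U}(\textbf{T}(v_i), K_{v_i})
\]
is a nonempty open subset of $\SL_n(K_{v_i})$ for each $i$.

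Next I would invoke weak approximation. Using Lemma~\ref{denseGS} together with the facts that $N$ is isomorphic to affine space and $T$ is a split torus, the diagonal embeddings $N(K) \hookrightarrow \prod_i N(K_{v_i})$ and $T_{reg}(K) \hookrightarrow \prod_i T_{reg}(K_{v_i})$ are dense. Transporting this through the variety isomorphism $N \times T_{reg} \times N \cong N \dot{\omega_0} T_{reg} N \dot{\omega}$ obtained above, I conclude that $N(K) \dot{\omega_0} T_{reg}(K) N(K) \dot{\omega}$ sits densely in $\prod_i N(K_{v_i}) \dot{\omega_0} T_{reg}(K_{v_i}) N(K_{v_i}) \dot{\omega}$. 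Intersecting with the nonempty open set $\prod_i \mathcal{O}_i$ then produces an element $g \in N(K) \dot{\omega_0} T_{reg}(K) N(K) \dot{\omega}$ whose image in each $\SL_n(K_{v_i})$ lies in $\mathcal{U}(\textbf{T}(v_i), K_{v_i})$; by the criterion used in the proof of the preceding lemma (Theorem~9.6 of~\cite{PR2}), such a $g$ is generic.

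The only real bookkeeping point is keeping track of the open-density of $N \dot{\omega_0} T_{reg} N \dot{\omega}$ across the three stages — Zariski, then $v_i$-adic, and then under the global weak-approximation embedding. I do not expect a genuinely new obstacle, since the argument merely refines that of the preceding lemma; the crucial observation is that the condition $t \in T_{reg}$ cuts out a Zariski-open dense subvariety inside the Bruhat cell, which lets all the density/openness inputs be recycled with essentially no modification.
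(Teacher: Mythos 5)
Your proposal is correct and follows essentially the same route as the paper: the paper's one-line proof rests on exactly the fact you establish, namely that the closure of the image of $N(K)\dot{\omega_0}T_{reg}(K)N(K)\dot{\omega}$ in $\prod_{i=1}^r \SL_n(K_{v_i})$ is all of $\prod_{i=1}^r B(K_{v_i})\dot{\omega_0}B(K_{v_i})\dot{\omega}$, after which one intersects with $\prod_i \mathcal{U}(\textbf{T}(v_i),K_{v_i})$ via Lemma \ref{tbkv} and invokes the genericity criterion from the proof of Theorem 9.6 of \cite{PR2}, just as in the preceding lemma. You merely make explicit the cell identification $N\times T\times N\simeq B\dot{\omega_0}B$ and the weak-approximation bookkeeping that the paper leaves implicit.
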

\begin{proof}
It comes from the fact that closure of the image of $N(K) \dot{\omega_0} T_{reg}(K)N(K) \dot{\omega}$ in $\prod_{i=1}^r \SL_n(K_{v_i})$ also contains $\prod_{i=1}^r B(K_{v_i})\dot{\omega_0}B(K_{v_i}) \dot{\omega}$.
\end{proof}
Note that if $g$ is a generic element, its $\SL_n(K)$-conjugation is also a generic element. Note that a generic element is also regular.  Hence  there exists a generic element of Frobenius matrix $H^{\ast}_1=\begin{pmatrix}
0 &                &            &    & (-1)^{n+1}\\
1 &  \ddots        &            &    & -c_1\\
  & \ddots         & \ddots     &    & \vdots\\
  &                & \ddots     & 0  & -c_{n-2}\\
  &                &            &  1  &-c_{n-1}
\end{pmatrix}$.

\subsection{Zariski dense set}
Go back to Section \ref{BUGL}. For each $g\in G$, let us choose certain $ k_g\in K^{\times}$ ($K^{\times} \supseteq \mu_{mn}$) such that $\widetilde{\pi}(g)=k_g\pi(g)$ belongs to $\SL_n(K)$.
  \begin{assumption*}[D]
  There exists an element  $g^{\ast}\in G$, such that $\pi(g^{\ast})$ is a regular element in  $\GL_n(\overline{K})$.
  \end{assumption*}

From now on we fix one such element $h^{\ast}=\pi(g^{\ast})$.  Let $\widetilde{ h^{\ast}}=k_{g^{\ast}}\pi(g^{\ast})\in \SL_n(K)$. Assume the characteristic polynomial of $\widetilde{ h^{\ast}}$ is given by  $P(X)=(X-a_1)\cdots (X-a_n)$, for some different $a_i\in \mu_{mn}\subseteq  K$.

\begin{lemma}\label{dia}
Let $\omega=(i_1 i_2 \cdots i_n)\in S_n$ be a cyclic permutation of length  $n$. For any $g=\diag(\alpha_1, \cdots, \alpha_n) \in \SL_n(K)$,  there exists a diagonal  matrix $t=( t_1, \cdots, t_n) \in \GL_n(K)$, such that $t^{-1} t^{\omega}=\diag(t^{-1}_1 t_{\omega(1)}, \cdots,  t^{-1}_nt_{\omega(n)})=g$. Moreover if each $\alpha_i \in K^n$, we can assume $t\in \SL_n(K)$.
\end{lemma}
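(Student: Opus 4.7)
The plan is to unwind the matrix equation into a scalar system along the cycle, solve it recursively using the determinant condition $\det g=1$, and then analyze when the resulting $t\in \GL_n(K)$ can be rescaled to lie in $\SL_n(K)$.

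First I would observe that since $t=\diag(t_1,\dots,t_n)$ is diagonal, the equation $t^{-1}t^{\omega}=g$ reduces to the coordinatewise system $t_{\omega(i)}=\alpha_i t_i$ for $i=1,\dots,n$. Writing the cycle as $\omega=(i_1 i_2 \cdots i_n)$, this becomes the chain
\[
t_{i_2}=\alpha_{i_1}t_{i_1},\quad t_{i_3}=\alpha_{i_2}t_{i_2},\quad \dots,\quad t_{i_1}=\alpha_{i_n}t_{i_n}.
\]
So I would fix $t_{i_1}=1$ and define $t_{i_{k+1}}=\alpha_{i_1}\alpha_{i_2}\cdots\alpha_{i_k}$ for $k=1,\dots,n-1$. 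All $\alpha_j$ are nonzero (since $g\in\GL_n$), so the $t_{i_k}$ lie in $K^{\times}$. The only potential inconsistency is the closing equation $t_{i_1}=\alpha_{i_n}t_{i_n}$, which reads $1=\alpha_{i_1}\cdots \alpha_{i_n}=\det g$; this holds by the hypothesis $g\in \SL_n(K)$, so $t\in \GL_n(K)$ exists.

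Next I would describe the ambiguity in the construction. The kernel of the map $t\mapsto t^{-1}t^{\omega}$ on the torus of diagonal matrices consists of $\omega$-fixed diagonal matrices; since $\omega$ is an $n$-cycle, these are exactly the scalar matrices $cI$ with $c\in K^{\times}$. Thus every solution has the form $ct$, and $\det(ct)=c^n\det t$. So we can force $ct\in\SL_n(K)$ precisely when $\det t$ admits an $n$-th root in $K^{\times}$.

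Finally, I would verify the refinement under the assumption that each $\alpha_i$ is an $n$-th power in $K$, say $\alpha_i=\beta_i^n$ with $\beta_i\in K^{\times}$. Then every $t_{i_k}$, being a product of such $\alpha_j$'s, is itself an $n$-th power, whence $\det t=\prod_k t_{i_k}=d^n$ for some $d\in K^{\times}$. Choosing $c=d^{-1}$ gives $\det(ct)=1$, so $ct\in\SL_n(K)$ solves the original equation. The argument involves no real obstacle beyond correctly interpreting ``$\alpha_i\in K^n$'' as ``each $\alpha_i$ is an $n$-th power in $K^{\times}$''; all the algebra is routine and the determinant compatibility is exactly what makes the cyclic recursion close.
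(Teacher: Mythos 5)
Your proposal is correct and follows essentially the same route as the paper, which also reduces to the cyclic chain of equations $t_{\omega(i)}=\alpha_i t_i$ (the paper does this for $\omega=(12\cdots n)$ and simply asserts the solution "by calculation"). Your write-up just makes explicit the two points the paper leaves implicit: the closing consistency condition is $\det g=1$, and when each $\alpha_i$ is an $n$-th power the scalar rescaling freedom lets one land in $\SL_n(K)$.
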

\begin{proof}
Without loss of generality, assume $\omega=(12\cdots n)$. Then $t^{-1} t^{\omega}=(t_1^{-1}t_2, t_2^{-1}t_3, \cdots, t_{n-1}^{-1}t_n, t_n^{-1} t_1)$. By  calculation,  $t^{-1} t^{\omega}=g$ has a solution $t\in\GL_n(K)$;  moreover  if  all $\alpha_i \in K^n$, $t$ can be  chosen from $\SL_n(K)$.
\end{proof}
\begin{lemma}
There exists $t\in T(K) \subseteq \GL_n(K)$, $\omega\in S_n$ such that $\dot{\omega}tH^{\ast}_1t^{-1} \in \mathcal{C}_{\widetilde{h^{\ast}}}$.
\end{lemma}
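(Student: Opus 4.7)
The plan is to reduce the statement to a direct triangularization computation and then invoke Lemma \ref{dia}. Since $\widetilde{h^{\ast}} \in \SL_n(K)$ is regular semisimple with distinct eigenvalues $a_1, \ldots, a_n \in \mu_{mn} \subseteq K$, it is already conjugate in $\SL_n(K)$ to $D = \diag(a_1, \ldots, a_n)$ by Remark \ref{cojuagte}, so $\mathcal{C}_{\widetilde{h^{\ast}}} = \mathcal{C}_D$. It therefore suffices to find $\omega$ and $t$ so that $\dot{\omega}\,t\,H^{\ast}_1\,t^{-1}$ is upper triangular with distinct diagonal entries forming the multiset $\{a_1, \ldots, a_n\}$, since any such matrix is semisimple with characteristic polynomial $\prod_i(X - a_i)$, and is hence conjugate to $D$ in $\GL_n(\overline{K})$.

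Next I would pin down $\omega$ by inspection of the sparsity pattern of the companion matrix $H^{\ast}_1$. The crucial observation is that there is essentially a \emph{unique} $n$-cycle $\omega$ for which the row permutation induced by $\dot{\omega}$ sends $H^{\ast}_1$ to an upper triangular matrix, namely $\omega = (1, n, n-1, \ldots, 2)$ (equivalently, $\omega^{-1} = (1\,2\,\cdots\,n)$). Indeed, row $i$ of $\dot{\omega}\,H^{\ast}_1$ equals row $\omega^{-1}(i)=i+1$ of $H^{\ast}_1$ for $i < n$ and row $1$ of $H^{\ast}_1$ for $i = n$; these rows assemble into an upper triangular matrix with diagonal $(1, 1, \ldots, 1, (-1)^{n+1})$. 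Conjugating by $t = \diag(t_1, \ldots, t_n)$ preserves upper triangularity, since $\dot{\omega}\,t = t^{\omega}\dot{\omega}$ with $t^{\omega}$ diagonal; a short calculation then shows that the diagonal of $\dot{\omega}\,t\,H^{\ast}_1\,t^{-1}$ is
\[
\bigl(t_2/t_1,\; t_3/t_2,\; \ldots,\; t_n/t_{n-1},\; (-1)^{n+1}\, t_1/t_n\bigr).
\]

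Finally I would apply Lemma \ref{dia} to the cycle $\omega^{-1} = (1\,2\,\cdots\,n)$: for any diagonal element of $\SL_n(K)$, there exists $t \in \GL_n(K)$ realizing it as $t^{-1}t^{\omega^{-1}} = \diag(t_2/t_1, t_3/t_2, \ldots, t_1/t_n)$. Taking the target to be $(a_1, a_2, \ldots, a_{n-1}, (-1)^{n+1} a_n)$, after a suitable reordering of the $a_i$'s chosen to absorb the sign $(-1)^{n+1}$, the resulting $t$ makes the diagonal of $\dot{\omega}\,t\,H^{\ast}_1\,t^{-1}$ equal to $(a_1, \ldots, a_n)$ as a multiset, so the matrix is conjugate to $D$ and lies in $\mathcal{C}_{\widetilde{h^{\ast}}}$. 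The hard part will be the sign bookkeeping in this last step: the product of the diagonal entries of $\dot{\omega}\,t\,H^{\ast}_1\,t^{-1}$ is $\det(\dot{\omega}) = \mathrm{sgn}(\omega) = (-1)^{n+1}$, which agrees with $\det\widetilde{h^{\ast}} = \prod_i a_i = 1$ automatically when $n$ is odd, while for even $n$ one must either compose $\omega$ with an auxiliary even permutation to balance the sign, or absorb the $(-1)^{n+1}$ factor into the choice of scaling $k_{g^{\ast}}$ that defines $\widetilde{h^{\ast}}$, both of which are permitted by the freedom in the setup.
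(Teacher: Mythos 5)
Your overall route---triangularize the companion matrix by an $n$-cycle, conjugate by a diagonal $t\in T(K)$, and use Lemma \ref{dia} to force the characteristic polynomial to be $P(X)$---is the same as the paper's. The genuine gap is the even-$n$ sign step, and neither of your two proposed fixes works. With $\dot{\omega}$ taken to be the plain permutation matrix of an $n$-cycle, $\det(\dot{\omega}\,tH^{\ast}_1t^{-1})=\sign(\omega)\det H^{\ast}_1=(-1)^{n+1}$, while every element of $\mathcal{C}_{\widetilde{h^{\ast}}}$ has determinant $\det\widetilde{h^{\ast}}=1$; so for even $n$ membership is impossible no matter how you reorder the $a_i$. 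Composing $\omega$ with an auxiliary \emph{even} permutation does not change $\sign(\omega)$, and an odd correction destroys the cyclic structure on which your triangularization and the application of Lemma \ref{dia} rest. Rescaling $k_{g^{\ast}}$ cannot help either: any admissible choice differs by an $n$-th root of unity, so $\widetilde{h^{\ast}}$ stays in $\SL_n(K)$ and $\prod_i a_i=1$ is unchanged, whereas the diagonals you can achieve, $(t_2/t_1,\ldots,t_n/t_{n-1},(-1)^{n+1}t_1/t_n)$, always have product $(-1)^{n+1}$.

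The missing idea---and the paper's resolution---is to spend the sign in the choice of representative $\dot{\omega}$ of the Weyl element rather than in the target diagonal: take the monomial matrix $\dot{\omega}_{[n]}$ with subdiagonal $1$'s and $(-1)^{n+1}$ in the upper-right corner, which differs from the permutation matrix by an element of $T(K)$ and lies in $\SL_n(K)$. Then $tH^{\ast}_1t^{-1}=\dot{\omega}_{[n]}U$ with $U$ upper triangular of diagonal $(t_2/t_1,\ldots,t_n/t_{n-1},t_1/t_n)$, whose product is $1$, so Lemma \ref{dia} applies directly with target $\diag(a_1,\ldots,a_n)\in\SL_n(K)$ and no sign bookkeeping remains. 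For odd $n$ your argument is already correct and coincides with this; for even $n$ you need the signed representative, not a modification of $\omega$ or of $k_{g^{\ast}}$.
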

\begin{proof}
Let $t=\diag(t_1, \cdots, t_n)\in \GL_n(K)$, $tH^{\ast}_1t^{-1}=\begin{pmatrix}
0 &                &            &    &(-1)^{n+1} t_1t_n^{-1}\\
t_2t_1^{-1}&  \ddots        &            &    & -t_2t_n^{-1}c_1\\
  & \ddots         & \ddots     &    & \vdots\\
  &                & \ddots     & 0  & -t_{n-1}t_n^{-1}c_{n-2}\\
  &                &            &  t_n t_{n-1}^{-1} &-c_{n-1}
\end{pmatrix}=\dot{\omega}_{[n]}\begin{pmatrix}
t_2t_1^{-1}&                &            &      & -t_2t_n^{-1}c_1\\
           & \ddots         &            &       & \vdots\\
           &                & \ddots     &      & -t_{n-1}t_n^{-1}c_{n-2}\\
           &                &            &   t_n t_{n-1}^{-1} &-c_{n-1}\\
           &                &            &                  & t_1t_n^{-1}
\end{pmatrix}$, where  we choose $\dot{\omega}_{[n]}=\begin{pmatrix}
0 &                &            &    &(-1)^{n+1}\\
1&  \ddots        &            &    & \\
  & \ddots         & \ddots     &    & \\
  &                & \ddots     & 0  & \\
  &                &            & 1 &0
\end{pmatrix}$.
By the above lemma \ref{dia}, there exists an element  $t$ such that the characteristic polynomial of  $\dot{\omega}^{-1}_{[n]} tH^{\ast}_1t^{-1} $ is the above $P(X)$. Hence the result holds.
\end{proof}
Finally there exist $x, y\in \GL_n(K)$, $\omega\in W$, such that $y^{-1}x^{-1} \widetilde{ h^{\ast}}x\dot{\omega} y=H^{\ast}_1$.  Set  $(x_1, \cdots, x_n)=(f_1, \cdots, f_n) x$. Under such basis, recall the representation $\Pi=\pi\ast \pi_n$ at the beginning.  Let $\Pi^{y}=y^{-1}\circ \Pi\circ y$ be the twisted representation by $y$. We  shall follow the proof of  \cite[Thm.9.10]{PR2} to show the  result below.
\begin{lemma}\label{Zd1}
Under the above    basis $\{ x_1, \cdots, x_n\}$ of $V$,  the set $ K^{\times} \Im(\Pi^{y}) \cap \SL_n(K)$ is Zariski dense in $\SL_n(\overline{K})$.
\end{lemma}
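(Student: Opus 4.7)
The plan is to set $\Gamma := K^{\times}\Im(\Pi^y) \cap \SL_n(K)$ and let $H \subseteq \SL_n(\overline{K})$ be its Zariski closure; since $\Gamma \subseteq \SL_n(K)$, $H$ is a closed $K$-defined subgroup of $\SL_n$, and the goal is to prove $H = \SL_n(\overline{K})$. The first step I would carry out is to observe that the target element $h^{\ast}_1$ already lies in $\Gamma$: under the basis $\{x_1,\ldots,x_n\}$, the matrix of $\Pi(g^{\ast}\cdot \omega) = \pi(g^{\ast})\pi_n(\omega)$ equals $x^{-1}h^{\ast}x \cdot \dot{\omega}$, so $\Pi^y(g^{\ast}\cdot \omega) = y^{-1}x^{-1}h^{\ast}x\dot{\omega}y = k_{g^{\ast}}^{-1}h^{\ast}_1$ by the defining identity of $h^{\ast}_1$. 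As $k_{g^{\ast}} \in K^{\times}$ and $h^{\ast}_1 \in \SL_n(K)$, this yields $h^{\ast}_1 \in \Gamma \subseteq H$.

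Next I would extract two crucial properties of $H$. First, \emph{absolute irreducibility}: since $\pi$ is absolutely irreducible on $V$, Burnside's theorem implies the $K$-linear span of $\pi(G)$ is the full matrix algebra $\Mm_n(K)$; conjugation by $y$ preserves this, so the span of $\Pi^y(G)\subseteq \Gamma$ is all of $\Mm_n(K)$, forcing the $H$-action on $V = \overline{K}^n$ to be absolutely irreducible. Second, \emph{$H$ contains a $K$-generic element}: by the construction in Section \ref{age}, $h^{\ast}_1$ is a $K$-generic regular semisimple element of $\SL_n$, so its connected centralizer $T_1 := Z_{\SL_n}(h^{\ast}_1)^0$ is a $K$-generic maximal torus.

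I would then conclude by combining the structural results of Prasad--Rapinchuk with the exercises from \cite{KrPr} referenced in the introduction: a closed $K$-subgroup of $\SL_n$ containing a $K$-generic element and acting absolutely irreducibly on the standard representation must equal $\SL_n(\overline{K})$. Concretely, any proper closed $K$-subgroup $H \subsetneq \SL_n$ containing $h^{\ast}_1$ would, by the rigidity of the Galois action on $T_1$ (cf.\ Section 9 of \cite{PR2}), be forced into the normalizer $N_{\SL_n}(T_1)$. But elements of $N_{\SL_n}(T_1)$ act on $V$ by monomial matrices in the eigenbasis of $T_1$, permuting the $n$ eigenlines; such a subgroup has dimension at most $n-1 < n^2-1 = \dim\SL_n$, and the $K$-span of its $K$-points cannot be all of $\Mm_n(K)$, in contradiction with the irreducibility step above. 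This contradiction forces $H = \SL_n(\overline{K})$.

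The main obstacle will be making the implication ``$h^{\ast}_1 \in H$ together with $K$-structure of $H$ forces $T_1\subseteq H$, or $H = \SL_n$'' precise. Note that $h^{\ast}_1$ has finite multiplicative order (its eigenvalues lie in $\mu_{mn}$), so $\langle h^{\ast}_1\rangle$ is itself not Zariski dense in $T_1$ and a direct containment is unavailable. The workaround will be to exploit the transcendental parameters in $K\supseteq K_0$ together with the freedom of the free product $G\ast S_n$: one conjugates $h^{\ast}_1$ by well-chosen elements of $\Gamma$ arising from $\Pi^y(S_n)$ and from products involving $\Pi^y(G)$ to produce a rich supply of regular semisimple elements whose centralizer tori, together with the rank-one root subgroups they span, generate $\SL_n$. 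This is precisely where the genericity results of \cite{PR1}, \cite{PR3}, \cite{PR2} are used essentially, and where the flexibility in the choice of Weyl element $\omega$ afforded by the Bruhat decomposition in Section \ref{BUGL} is crucial.
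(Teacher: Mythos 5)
Your opening is consistent with the paper: you correctly exhibit $h^{\ast}_1$ inside $\Gamma=K^{\times}\Im(\Pi^{y})\cap\SL_n(K)$ and correctly isolate the two ingredients (genericity of $h^{\ast}_1$, irreducibility of $\Pi^{y}$). The proposal then breaks on a factual error that removes its central step. You assert that $h^{\ast}_1$ has finite multiplicative order because ``its eigenvalues lie in $\mu_{mn}$,'' conclude that $\langle h^{\ast}_1\rangle$ is not Zariski-dense in $T_1$, and therefore declare the containment $T_1\subseteq H$ unavailable. This confuses $h^{\ast}_1$ with $\widetilde{h^{\ast}}=k_{g^{\ast}}\pi(g^{\ast})$: the $a_i\in\mu_{mn}$ are the eigenvalues of $\widetilde{h^{\ast}}$, while $h^{\ast}_1$ is (a conjugate of) the companion-type matrix $H^{\ast}_1$ of Section \ref{age}, i.e.\ the \emph{product} of a conjugate of $\widetilde{h^{\ast}}$ with the Weyl representative $\dot{\omega}$ --- the whole point of working in the free product and multiplying by $\dot{\omega}$ is to escape the finite-order element $\pi(g^{\ast})$. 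Indeed your own statement that $h^{\ast}_1$ is generic is incompatible with finite order: a torus split by a cyclotomic (abelian) extension cannot be generic for $n\geq 3$, and the weak-approximation construction following \cite[Thm.\ 9.6]{PR2} produces an element of infinite order. The paper's proof rests precisely on the step you discard: $\langle h^{\ast}_1\rangle$ is Zariski-dense in $T=Z_{\SL_n}(h^{\ast}_1)$ (a generic torus has no proper $K$-subtori, since the Galois action on $X(T)\otimes\Q$ contains the irreducible Weyl-group action), so the $K$-defined closure $H$ of $\Gamma$ contains $T$; genericity then forces $H$ to contain either no root subgroup (so $H\subseteq N_{\SL_n}(T)$) or all of them. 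Without $T_1\subseteq H$, your claim that a proper closed $K$-subgroup containing $h^{\ast}_1$ must lie in $N_{\SL_n}(T_1)$ is unsupported, and your proposed workaround (conjugating $h^{\ast}_1$ by ``well-chosen'' elements to produce ``a rich supply'' of tori and root subgroups) is not carried out and is not needed.

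There is a second, smaller gap at the end. A subgroup of $N_{\SL_n}(T_1)$ is monomial only in the eigenbasis over $\overline{K}$ (the eigenbasis of a generic torus is not $K$-rational), and monomial groups can act irreducibly and can span $\Mm_n(\overline{K})$; so neither your dimension count nor your span argument yields a contradiction as stated. What actually excludes $H\subseteq N_{\SL_n}(T_1)$ --- and what the paper's terse ``then $g\in T$'' together with its appeal to the proof of \cite[Thm.\ 9.10]{PR2} encodes --- is that a generic torus admits no nontrivial $K$-rational Weyl elements (for $n\geq 3$, $N_{\SL_n}(T_1)(K)=T_1(K)$ because the centralizer in $W$ of the Galois image, which contains $W$, is $Z(W)=1$); hence $\Gamma$ would be commutative, so simultaneously diagonalizable, contradicting irreducibility. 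So your skeleton is the right one, but the proposal has a genuine gap at its center (density of $\langle h^{\ast}_1\rangle$ in the generic torus) and an incorrect justification of the final contradiction.
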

\begin{proof}
We may assume $n\geq 2$. The set $ K^{\times} \Im(\Pi^{y}) \cap \SL_n(K)$ contains the  generic element $H^{\ast}_1$. Let $T=Z_{\SL_n(\overline{K})}(H^{\ast}_1)$. It is known that the cyclic group $\langle H^{\ast}_1 \rangle$ is Zariski-dense in $T$.  Let $V_0=\overline{K}^n$. Then $\Pi^{y}: G \ast S_n \times V_0  \longrightarrow  V_0$  is also an irreducible representation.
If for all $g\in G\ast S_n $,  $\Pi^{y}(g)T\Pi^{y}(g)^{-1}=T$, then $g\in T$, contradicting to the irreducibility. By following the proof of  \cite[Thm.9.10]{PR2} and the structure of $\SL_n$(cf. \cite[Ch.VI, 24.A]{KnMeRoTi}), we can get the result.
\end{proof}
\begin{corollary}
Under the above    basis $\{ x_1, \cdots, x_n\}$ of $V$, the set $ K^{\times} \Im(\Pi^{y}) $ is Zariski dense in $\GL_n(\overline{K})$.
\end{corollary}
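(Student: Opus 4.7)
The plan is to deduce the density of $K^{\times}\Im(\Pi^{y})$ in $\GL_n(\overline{K})$ from the density statement for $\SL_n$ already established in Lemma \ref{Zd1}, by exploiting the almost-product decomposition $\GL_n = \overline{K}^{\times}\cdot \SL_n$. Concretely, my intended route is to factor through the surjective multiplication morphism
\[
\mu:\ \overline{K}^{\times}\times \SL_n(\overline{K})\ \longrightarrow\ \GL_n(\overline{K}),\qquad (\lambda,g)\mapsto \lambda g,
\]
and to supply dense subsets in each factor whose product lands inside $K^{\times}\Im(\Pi^{y})$.

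First I would note that $K^{\times}$ is Zariski-dense in the algebraic torus $\overline{K}^{\times}$: since $K$ contains a transcendence basis of size $\geq n$ over $\Q(\mu_{nm})$, $K$ is infinite, and an infinite subset of the affine line is Zariski-dense. Next, by Lemma \ref{Zd1} the set $S:=K^{\times}\Im(\Pi^{y})\cap \SL_n(K)$ is Zariski-dense in $\SL_n(\overline{K})$. Because both $\overline{K}^{\times}$ and $\SL_n(\overline{K})$ are irreducible varieties, their product $\overline{K}^{\times}\times \SL_n(\overline{K})$ is irreducible, and a standard argument (a basic open rectangle $U\times V$ meets $K^{\times}\times S$ whenever $U\cap K^{\times}$ and $V\cap S$ are both nonempty) shows that $K^{\times}\times S$ is Zariski-dense in $\overline{K}^{\times}\times \SL_n(\overline{K})$.

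Then I would apply $\mu$, which is surjective (indeed a quotient by the finite group $\mu_n$ embedded anti-diagonally). Because $\mu$ is continuous in the Zariski topology and surjective, the image of a dense set is dense; so $\mu(K^{\times}\times S)$ is Zariski-dense in $\GL_n(\overline{K})$. But
\[
\mu(K^{\times}\times S)\ =\ K^{\times}\cdot S\ \subseteq\ K^{\times}\cdot (K^{\times}\Im(\Pi^{y}))\ =\ K^{\times}\Im(\Pi^{y}),
\]
since $K^{\times}$ is multiplicatively closed. This inclusion then forces $K^{\times}\Im(\Pi^{y})$ to be Zariski-dense in $\GL_n(\overline{K})$, which is exactly what is claimed.

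There is really no serious obstacle: the content is packaged inside Lemma \ref{Zd1}, and the corollary is a formal dilation from $\SL_n$ to $\GL_n$ by scalars. The only point that deserves explicit attention is verifying that one is entitled to pass density through $\mu$ and through a product of irreducible varieties, which I would settle by the elementary topological arguments above rather than invoking any geometric invariant theory. If desired, one could also argue more directly: the Zariski closure of $K^{\times}\Im(\Pi^{y})$ is closed under multiplication by the dense subgroup $K^{\times}$ of the center, hence contains $\overline{K}^{\times}\cdot \SL_n(\overline{K})=\GL_n(\overline{K})$ once it is known to contain $\SL_n(\overline{K})$, which is guaranteed by Lemma \ref{Zd1}.
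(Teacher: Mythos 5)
Your argument is correct and is essentially the paper's own: both rest on Lemma \ref{Zd1} together with the two facts that $K^{\times}$ is Zariski-dense in $\overline{K}^{\times}$ and that scalars times $\SL_n(\overline{K})$ exhaust $\GL_n(\overline{K})$, and your closing ``more direct'' variant (the closure of $K^{\times}\Im(\Pi^{y})$ contains $\SL_n(\overline{K})$ by Lemma \ref{Zd1} and is stable under the dense central subgroup $K^{\times}$, hence contains $\overline{K}^{\times}\SL_n(\overline{K})=\GL_n(\overline{K})$) is exactly the proof given in the paper. One cosmetic caveat: open rectangles $U\times V$ are not a base for the Zariski topology on a product variety, so the density of $K^{\times}\times S$ in $\overline{K}^{\times}\times\SL_n(\overline{K})$ should be justified by the standard one-coordinate-at-a-time closure argument rather than by rectangles; this does not affect the correctness of your route.
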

\begin{proof}
 Note that $\SL_n(\overline{K})$ is a Zariski closure subgroup of $\GL_n(\overline{K})$. Then the Zariski closure of $ K^{\times} \Im(\Pi^{y}) \cap \SL_n(K)$ in $\GL_n(\overline{K})$ is $\SL_n(\overline{K})$. Then the Zariski closure of $K^{\times} \Im(\Pi^{y})$ contains $K^{\times }\SL_n(\overline{K})$, which is Zariski-dense in $\GL_n(\overline{K})$.
\end{proof}
\begin{lemma}\label{Zd}
There exists    a basis $\{ x_1, \cdots, x_n\}$ of $V$, such that the set $ K^{\times} \Im(\Pi) $ is Zariski dense in $\GL_n(\overline{K})$.
\end{lemma}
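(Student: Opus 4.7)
The plan is to obtain Lemma \ref{Zd} as an essentially immediate consequence of the corollary of Lemma \ref{Zd1}, by observing that the twist $\Pi^{y}$ differs from $\Pi$ only by conjugation by an element of $\GL_n(K)$, and Zariski density of a subset of $\GL_n(\overline{K})$ is preserved by such conjugation.

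More precisely, keep the basis $\{x_1,\dots,x_n\}$ of $V$ constructed in Section \ref{age}, and recall that by definition $\Pi^{y}(g)=y^{-1}\Pi(g)y$ for all $g\in G\ast S_n$, where $y\in\GL_n(K)$. As subsets of $\GL_n(\overline{K})$ this gives the equality
\[
K^{\times}\Im(\Pi^{y})\;=\;y^{-1}\bigl(K^{\times}\Im(\Pi)\bigr)y.
\]
The inner automorphism $\mathrm{int}(y^{-1})\colon h\mapsto y^{-1}hy$ is a $K$-regular automorphism of the algebraic group $\GL_{n/\overline{K}}$, hence a homeomorphism for the Zariski topology. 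It therefore sends Zariski-dense subsets to Zariski-dense subsets and commutes with taking closures. Consequently the Zariski closure of $K^{\times}\Im(\Pi)$ equals the whole $\GL_n(\overline{K})$ if and only if the Zariski closure of $K^{\times}\Im(\Pi^{y})$ does. The corollary to Lemma \ref{Zd1} supplies the latter, and we conclude that in the very same basis $\{x_1,\dots,x_n\}$ the set $K^{\times}\Im(\Pi)$ is already Zariski dense in $\GL_n(\overline{K})$.

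Equivalently, and perhaps more conceptually, one may change the basis once more by setting $(x'_1,\dots,x'_n):=(x_1,\dots,x_n)\,y$: then the matrix of $\Pi(g)$ in the new basis is precisely the matrix $y^{-1}\Pi(g)y=\Pi^{y}(g)$ written in the old basis, so the image of $\Pi$ in the basis $\{x'_1,\dots,x'_n\}$ coincides with $\Im(\Pi^{y})$ in the basis $\{x_1,\dots,x_n\}$, and Lemma \ref{Zd} follows at once from the corollary.

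There is no real obstacle in this step: the nontrivial work has all been carried out in Lemmas \ref{serre}, \ref{denseGS}, \ref{tbkv} and \ref{Zd1}, where the generic-element technology of Prasad--Rapinchuk was invoked to produce the regular element $h_1^{\ast}$ sitting in the open Bruhat cell and to propagate Zariski density from $\SL_n$ to $\GL_n$. The final Lemma \ref{Zd} is then simply a matter of book-keeping: recording that the auxiliary twist by $y$ used in the construction can be absorbed either by conjugating back or by adjusting the choice of basis.
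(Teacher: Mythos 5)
Your first argument is correct and is exactly the deduction the paper intends: the paper's proof of Lemma \ref{Zd} is a one-line reference to the corollary of Lemma \ref{Zd1}, the point being precisely that $K^{\times}\Im(\Pi^{y})=y^{-1}\bigl(K^{\times}\Im(\Pi)\bigr)y$ with $y\in\GL_n(K)$, so Zariski density is preserved under the inner automorphism and the same basis $\{x_1,\dots,x_n\}$ already works. One small caution about your ``equivalent'' reformulation: if one re-defines the free product representation with respect to the new basis $\{x'_i\}=(x_i)y$, the $S_n$-part is again given by standard permutation matrices in that basis rather than by $y^{-1}\pi_n(p)y$, so its image is not literally $\Im(\Pi^{y})$; the conjugation-invariance argument is the one to keep.
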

\begin{proof}
It is a consequence of the above result.
\end{proof}

\section{Symmetric extension}\label{syex}
        Keep the  notations of Section \ref{se3}.    For $g_1, \cdots, g_n \in G$,   let $g_1  \odot  g_2 \odot \cdots \odot g_n=\sum_{p\in S_n} \frac{1}{ n!}g_{p(1)} \otimes g_{p(2)} \otimes \cdots \otimes g_{p(n)}$ be the symmetric tensor of $g_i$'s; for simplicity we will write this element by $(g_1, \cdots, g_n)^{\odot n}$, or $g_{\underline{i}}^{\odot n}$.  
        Clearly $ g_{\underline{i}}^{\odot n} \in \mathbb{C}[G] \otimes \mathbb{C}[G] \otimes \cdots \otimes \mathbb{C}[G]$. The  product of two such elements is given as follows:
        $$[g_{\underline{i}}^{\odot n}]\ast [h_{\underline{i}}^{\odot n}]=\sum_{p, q\in S_n}  \frac{1}{(n!)^2} g_{p(1)} h_{q(1)} \otimes g_{p(2)}h_{q(2)} \otimes \cdots \otimes g_{p(n)}h_{q(n)}$$
        $$=
        \sum_{q\in S_n}  \frac{1}{n!} (g_{\underline{i} } h_{q(\underline{i})})^{\odot n}=
        \sum_{q\in S_n}  \frac{1}{n!} (g_{q(\underline{i})} h_{\underline{i}})^{\odot n}.$$
    Let $G^{\odot n}$ denote  the semigroup generated by those $g_{\underline{i}}^{\odot  n}$.  Then  there exists an embedding $G \hookrightarrow  G^{\odot }; g \longmapsto g^{\odot n}$.  Let $ S^n(V)$ or $V^{\odot n}$ denote  the space of all symmetric tensors of order $n$ defined on $V$.  Let $(\Pi=\pi^{\otimes n}, V^{\otimes n})$ be the canonical tensor representation of $\underbrace{G \times \cdots \times G}_n$ as well as $\mathbb{C}[G]^{ \otimes  n}$.
        \begin{lemma}\label{quanquan}
  \begin{itemize}
  \item[(1)]  $G^{\odot n }$ is a  semigroup with an identity element $1_G^{\odot n}$;
  \item[(2)] The restriction of $(\Pi, V^{\otimes n})$ to $(\Pi, S^n(V))$ will give a representation of   $G^{\odot n}$, defined  by
  $\Pi(g_{\underline{i}}^{\odot n})(v_{\underline{i}}^{\odot n} )=\sum_{q\in S_n}\frac{1}{ n!} (\pi(g_{\underline{i}}) v_{q(\underline{i})})^{\odot n} $ for $v_{\underline{i}}^{\odot n} =\sum_{q\in S_n} \frac{1}{ n!}v_{q(1)} \otimes v_{q(2)} \otimes \cdots \otimes v_{q(n)}  \in S^n(V) $ ; we will denote this representation by
  $(\pi^{\odot n},  V^{\odot n})$ from now on.
    \end{itemize}
        \end{lemma}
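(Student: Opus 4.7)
The plan is to realize $G^{\odot n}$ as a subsemigroup of the associative algebra $\C[G]^{\otimes n}$ and then transport everything through the obvious $\C[G]^{\otimes n}$-module structure on $V^{\otimes n}$. Concretely, I would begin by observing that $\C[G]^{\otimes n}$ is an associative unital algebra under the componentwise product, with unit $1\otimes\cdots\otimes 1$, and that the symmetric group $S_n$ acts on $\C[G]^{\otimes n}$ by permutation of tensor factors via algebra automorphisms. Hence the fixed subspace $(\C[G]^{\otimes n})^{S_n}$ is a unital subalgebra. Each generator $g_{\underline i}^{\odot n}$ is by construction the normalized sum over the $S_n$-orbit of $g_1\otimes\cdots\otimes g_n$, so it lies in $(\C[G]^{\otimes n})^{S_n}$. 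Consequently the semigroup $G^{\odot n}$ that they generate is contained in this subalgebra; in particular the binary operation $\ast$ is simply the restriction of the algebra multiplication, which is automatically associative.

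For part (1), the element $1_G^{\odot n}$ is a generator (take $g_1=\cdots=g_n=1$) and equals the unit $1\otimes\cdots\otimes 1$ of $\C[G]^{\otimes n}$; hence it acts as a two-sided identity on any subset closed under the product. As a sanity check one can also use the product formula displayed before the lemma:
\[ 1_G^{\odot n}\ast g_{\underline i}^{\odot n}=\sum_{q\in S_n}\frac{1}{n!}\bigl(1\cdot g_{q(\underline i)}\bigr)^{\odot n}=\sum_{q\in S_n}\frac{1}{n!}g_{\underline i}^{\odot n}=g_{\underline i}^{\odot n}, \]
where I used the fact that $g_{q(\underline i)}^{\odot n}=g_{\underline i}^{\odot n}$ since the symmetrization is invariant under permutations of the entries. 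The same computation on the right shows $g_{\underline i}^{\odot n}\ast 1_G^{\odot n}=g_{\underline i}^{\odot n}$. So $G^{\odot n}$ is a monoid with identity $1_G^{\odot n}$.

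For part (2), the key observation is that the canonical action $\Pi=\pi^{\otimes n}$ of $\C[G]^{\otimes n}$ on $V^{\otimes n}$ is $S_n$-equivariant: for $p\in S_n$, $a\in\C[G]^{\otimes n}$ and $v\in V^{\otimes n}$ one has $p\cdot(a\cdot v)=(p\cdot a)\cdot(p\cdot v)$, because both $\C[G]$- and $V$-permutations simply reshuffle the tensor slots in the same way. Therefore if $a\in(\C[G]^{\otimes n})^{S_n}$ and $v\in(V^{\otimes n})^{S_n}=S^n(V)$, then $p\cdot(a\cdot v)=a\cdot v$ for every $p$, so $a\cdot v\in S^n(V)$. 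Applying this to each generator $g_{\underline i}^{\odot n}$ (which I already showed lies in the $S_n$-fixed subalgebra) proves that $\Pi$ restricts to a representation of $G^{\odot n}$ on $S^n(V)$.

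It remains to verify the explicit formula. This is a bookkeeping calculation: expanding both sides in terms of elementary tensors and relabelling permutations. On the left,
\[ \Pi(g_{\underline i}^{\odot n})\,v_{\underline i}^{\odot n}=\sum_{p,r\in S_n}\frac{1}{(n!)^2}\,\pi(g_{p(1)})v_{r(1)}\otimes\cdots\otimes\pi(g_{p(n)})v_{r(n)}, \]
while on the right, expanding $(\pi(g_{\underline i})v_{q(\underline i)})^{\odot n}$ as $\sum_{p}\frac{1}{n!}\pi(g_{p(1)})v_{q(p(1))}\otimes\cdots\otimes\pi(g_{p(n)})v_{q(p(n))}$ and substituting $r=q\circ p$ yields exactly the same double sum. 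No step is genuinely hard; the only thing that requires care is being scrupulous about which permutation index runs over which set when reindexing, and keeping the two $\frac{1}{n!}$ normalizations straight.
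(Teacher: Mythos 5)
Your proposal is correct, and its computational core is the same as the paper's: the check that $1_G^{\odot n}$ acts as a two-sided identity via the product formula, and the verification of the displayed action formula by expanding both sides into a double sum over permutations and reindexing $r=q\circ p$, are precisely the manipulations carried out in the paper's proof. What you do differently is to make explicit a structural frame the paper leaves implicit: you place $G^{\odot n}$ inside the fixed subalgebra $(\C[G]^{\otimes n})^{S_n}$ and note that the $\C[G]^{\otimes n}$-module structure on $V^{\otimes n}$ is $S_n$-equivariant, so that associativity of $\ast$, the fact that $1_G^{\odot n}=1\otimes\cdots\otimes 1$ is an identity for \emph{all} elements of $G^{\odot n}$ (not only the generators), and the stability of $S^n(V)=(V^{\otimes n})^{S_n}$ under $\Pi(G^{\odot n})$ all follow without computation, leaving only the bookkeeping identity to verify. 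The paper instead reads membership in $S^n(V)$ off the end of its explicit calculation and checks the identity property only on generators, which suffices since $G^{\odot n}$ is by definition the semigroup they generate inside $\C[G]^{\otimes n}$. Your framing buys a cleaner conceptual reason for parts (1) and (2) and is consonant with the paper's later passage to $A^{S_n}$, $A=\C[G]^{\otimes n}$, in the semisimplicity lemma for $\C[G^{\odot n}]$; the paper's version is shorter and entirely self-contained at the level of the generators. Both arguments are complete, and your reindexing (pairing $g_{p(i)}$ with $v_{q(p(i))}$ and substituting $r=q\circ p$) is stated with the correct composition convention.
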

     \begin{proof}
     1) For $g_{\underline{i}}^{\odot n}  \in G^{\odot n}$,  $g_{\underline{i}}^{\odot n} \ast 1_G^{\odot n}=
        \sum_{q\in S_n}  \frac{1}{n!} (g_{\underline{i}}1_G)^{\odot n}=g_{\underline{i}}^{\odot n}$, similarly $1_G^{\odot n} \ast g_{\underline{i}}^{\odot n} =g_{\underline{i}}^{\odot n}$.

     2) For  pure tensors   $v_{\underline{i}}^{\odot n}   \in S^n(V) $,  $g_{\underline{i}}^{\odot n} \in G^{\odot n}$,
     $$\Pi( g_{\underline{i}}^{\odot  n}) v_{\underline{i}}^{\odot   n}
     = \sum_{p,q \in S_n} \frac{1}{ (n!)^2}\pi( g_{p(1)} )v_{q(1)} \otimes \pi( g_{p(2)})v_{q(2)} \otimes \cdots \otimes \pi(g_{p(n)}) v_{q(n)}$$
     $$=\sum_{p\in S_n} \frac{1}{ n!} \sum_{q\in S_n} \frac{1}{ n!}\pi( g_{p(1)} ) v_{pq(1)} \otimes \pi( g_{p(2)})v_{pq(2)} \otimes \cdots \otimes \pi(g_{p(n)}) v_{pq(n)}$$
     $$=\sum_{q\in S_n}\frac{1}{ n!} (\pi(g_{\underline{i}}) v_{q(\underline{i})})^{\odot n} \in S^n(V). $$
            \end{proof}

            \begin{remark}
                          The monoid $G^{\odot n}$ contains $G$ as a subgroup.
                                                    \end{remark}
                   Let $H=\underbrace{G\times \cdots \times G}_n$, with a left $S_n$-action given by $(p, h=(g_1, \cdots, g_n))  \longmapsto p(h)=(g_{p(1)}, \cdots, g_{p(n)})$, for $g_i\in G$, $p\in S_n$.  Let $H \rtimes S_n=\{ (h, p) \mid h \in H, p\in S_n\}$, with the law given by $(h_1, p_1)(h_2, p_2)=(h_1 p_1(h_2), p_1p_2)$.  Let $A= \C[H]=\{ f: H \longrightarrow \C\} \simeq \mathbb{C}[G]  \otimes \cdots \otimes \mathbb{C}[G]$.  Then $\C[H]$ is a left $H \rtimes S_n$-module, defined as $(h_1, p_1) f(h_2)=f(p_1^{-1}(h_2h_1))$, for $h_i\in H$, $p_1\in S_n$.  Then $A\simeq \End_A(A)$. Moreover $ A^{S_n} \simeq \End_{\C[H \rtimes S_n]}(A)$.   Since the endomorphism algebra of a completely reducible module is    semi-simple by \cite[p.29]{Gr}, $ A^{S_n}$ is  semi-simple.
                       \begin{lemma}\label{Ker}
          There exists a surjective algebra homomorphism  $\varphi: \C[G^{\odot n}] \longrightarrow A^{S_n} $.
                 \end{lemma}
                 \begin{proof}
                 We only need to treat elements of $G^{\odot n}$ as elements of $A$.
                 \end{proof}
              One can also  cut $G^{\odot n}$ to be a finite monoid by adding the zero, using  the results from \cite[p.84, Exercise 35]{MKS}; this is not our purpose in this text.
\begin{lemma}\label{BIMODULE}
$ A^{S_n}$ is a theta $ A^{S_n}-A^{S_n}$-bimodule.
\end{lemma}
\begin{proof}
It follows from Thm.\ref{semisimplealgebras}.
\end{proof}
 \section{Theta representations of  finite monoids I}\label{finitemonoidsI}
 Let $M$, $M_1, M_2$ be  finite    monoids and assume  their $\C$-algebras semi-simple.
\begin{lemma}
Let $(\pi, V)$  be  a  finite  dimensional representation of $M$.  Then $(\pi, V)$ is a multiplicity-free representation of $M$ iff $\End_M(V)$ is a commutative algebra.
\end{lemma}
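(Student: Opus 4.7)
The plan is to reduce the claim to a direct computation of $\End_M(V)$ via an isotypic decomposition of $V$, using the tools already established.

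First, since $\C[M]$ is semi-simple by assumption, Theorem \ref{semisimplealgebras} together with Lemma \ref{localization} (applied to each irreducible constituent that appears) ensures that every finite dimensional representation of $M$ is completely reducible. Hence I can write
\[ V \simeq \bigoplus_{i=1}^{k} n_i V_i \]
as $M$-modules, where $V_1,\dots,V_k$ are pairwise non-isomorphic irreducible representations of $M$ and $n_i \geq 1$ is the multiplicity of $V_i$ in $V$. By definition, $(\pi,V)$ is multiplicity-free precisely when $n_i = 1$ for every $i$.

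Next I would compute $\End_M(V)$ block by block. Schur's Lemma (the corollary to Lemma \ref{surjection}) gives $\End_M(V_i) \simeq \C$. Moreover, for distinct irreducibles $V_i\not\simeq V_j$, Schur's Lemma again yields $\Hom_M(V_i,V_j)=0$ because any nonzero morphism would be an isomorphism. Combining these,
\[ \End_M(V) \simeq \bigoplus_{i=1}^{k} \End_M(n_i V_i) \simeq \bigoplus_{i=1}^{k} \Mm_{n_i}(\End_M(V_i)) \simeq \bigoplus_{i=1}^{k} \Mm_{n_i}(\C). \]
(The middle isomorphism is the standard identification of endomorphisms of a direct sum with a matrix algebra of block entries in $\Hom$-spaces; this is just Lemma \ref{isomorp} applied componentwise, or equivalently a direct check.)

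Finally I would observe that a direct sum $\bigoplus_i \Mm_{n_i}(\C)$ is commutative if and only if $\Mm_{n_i}(\C)$ is commutative for every $i$, which happens if and only if $n_i=1$ for all $i$, since $\Mm_n(\C)$ for $n\geq 2$ contains non-commuting elementary matrices (e.g. $E_{12}E_{21}\neq E_{21}E_{12}$). Putting the two equivalences together gives the stated iff. There is no real obstacle here: the only subtlety is making sure to invoke the earlier semi-simplicity result so that the decomposition of $V$ into irreducibles is available, and the rest is a straightforward application of Schur's Lemma.
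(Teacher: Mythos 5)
Your proof is correct and follows essentially the same route as the paper: decompose $V$ into isotypic components using semi-simplicity, identify $\End_M(V)$ with a direct sum of matrix algebras $\Mm_{n_i}(\C)$ via Schur's Lemma, and note that commutativity forces every $n_i=1$. The extra detail you supply (vanishing of $\Hom$ between distinct irreducibles, the non-commuting elementary matrices) merely fills in steps the paper leaves implicit.
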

\begin{proof}
Assume $\pi\simeq \oplus_{\sigma\in \Irr(M)} m_{\sigma} \sigma$, for $m_{\sigma}=m_{M}(\pi, \sigma)$. Then $ \End_M(V) \simeq \oplus_{\sigma\in \Irr(M)} M_{m_{\sigma}}(\mathbb{C})$, where $ M_{m_{\sigma}}(\mathbb{C})$ designates  the matrix algebra over $\mathbb{C}$ of degree $m_{\sigma}$.   Hence $ \End_M(V)$ is a commutative algebra iff all $m_{\sigma}=1$.
\end{proof}
Let  $\Delta_{M_i}=\{ (h, h) \mid h\in M_i\}$ be the diagonal submonoid of $M_i\times M_i$. Let $(\rho, W)$ be a finite-dimensional  $M_1- M_2$-bimodules.  Let $C=\End_{\mathbb{C}}(W)$, and let  $A$ be a subalgebra of $C$ generated by all $\rho([h_1, 1])$, $B$ a subalgebra  of $C$ generated by all $\rho([1,h_2])$, for $h_1\in M_1$, $h_2\in M_2$.   Then the commutant $Z_{A}(C)= \{f\in C \mid fg=gf, \textrm{ for all } g\in A\}=\{f\in C \mid f\rho(h_1)=\rho(h_1)f, \textrm{ for all } h_1 \in M_1\}=\End_{M_1}(\rho)$, and $Z_B(C)=\End_{M_2}(\rho)$. Let us write $\rho\simeq \oplus_{\sigma \in\mathcal{R}_{M_1}(\rho)} \sigma \otimes D(\Theta_{\sigma}) \simeq \oplus_{D(\delta) \in\mathcal{R}_{M_2}(\rho)} \Theta_{D(\delta)} \otimes D(\delta)$,  as $M_1-M_2$-bimodules.
  \begin{proposition}\label{theta}
The following statements are equivalent:
\begin{itemize}
\item[(1)] $\rho$ is a theta $M_1 \times M_2$-bimodule,
\item[(2)] $B=Z_{A}(C)$,
\item[(3)]  $A=Z_{B}(C)$,
\item[(4)] $\mathcal{R}_{M_{\beta} \times M_{\beta}}(\End_{M_{\alpha}}(\rho))=\{ \delta_{\beta}\otimes D(\delta_{\beta}) \mid \textrm{ some } \delta_{\beta} \in \Irr(M_{\beta})\}$, for $1\leq \alpha\neq \beta\leq 2$,
\item[(5)]  $\End_{M_{\alpha}}(\rho)$ is a multiplicity-free  $M_{\beta}- M_{\beta}$-bimodule, for $1\leq \alpha\neq \beta\leq 2$,
\item[(6)] $m_{ M_{1} \times M_{1}}( \rho\otimes_{M_{2}}D(\rho) , \sigma \otimes D(\sigma) ) \leq 1 $ and  $m_{ M_{2} \times M_{2}}( D(\rho)\otimes_{M_{1}} \rho,  \delta \otimes D(\delta)) \leq 1 $, for all $\sigma \in \Irr(M_{1})$, $\delta   \in \Irr(M_{2})$.
    \end{itemize}
\end{proposition}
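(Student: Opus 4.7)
The plan is to convert the six conditions into combinatorial statements about the multiplicities in the isotypic decomposition of $\rho$, and then interlock them via the double commutant theorem together with dimension counting.

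First I would apply the semi-simplicity of $\mathbb{C}[M_1\times M_2]$ (Lemma \ref{prdm} plus Lemma \ref{ss}) and Waldspurger's lemmas (Lemmas \ref{waldspurger1}, \ref{waldspurger2}) to write
\[
\rho \;\simeq\; \bigoplus_{\sigma\in\Irr(M_1),\,\delta\in\Irr(M_2)} m(\sigma,\delta)\,\sigma\otimes D(\delta),
\]
so that $\Theta_{\sigma}\simeq\bigoplus_{\delta}m(\sigma,\delta)\,\delta$ and $\Theta_{D(\delta)}\simeq\bigoplus_{\sigma}m(\sigma,\delta)\,\sigma$.  The theta condition (1) is precisely: $m(\sigma,\delta)\in\{0,1\}$ and the support $\{(\sigma,\delta):m(\sigma,\delta)=1\}$ is the graph of a partial bijection between subsets of $\Irr(M_1)$ and $\Irr(M_2)$.

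Next I would compute the key bimodules from this decomposition.  Using Schur's lemma on the $M_1$-isotypic blocks $\sigma\otimes D(\Theta_\sigma)$, one gets
\[
Z_A(C)=\End_{M_1}(\rho)\;\simeq\;\bigoplus_{\sigma}\End_{\mathbb{C}}(D(\Theta_\sigma))\;\simeq\;\bigoplus_{\delta,\delta'}\!\Bigl(\sum_{\sigma} m(\sigma,\delta)m(\sigma,\delta')\Bigr)\,\delta\otimes D(\delta')
\]
as $M_2\times M_2$-bimodule, and dually for $Z_B(C)=\End_{M_2}(\rho)$.  Similarly one verifies
\[
\rho\otimes_{M_2}D(\rho)\;\simeq\;\bigoplus_{\sigma,\sigma'}\Bigl(\sum_{\delta}m(\sigma,\delta)m(\sigma',\delta)\Bigr)\,\sigma\otimes D(\sigma'),
\]
and $B\simeq\bigoplus_{\delta:\Theta_{D(\delta)}\neq 0}\End_{\mathbb{C}}(D(\delta))$, realised inside $C$ as the commutant of the multiplicity $M_1$-spaces.

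With these formulas in hand, each equivalence becomes elementary.  The equivalence (2)$\Leftrightarrow$(3) is the double commutant theorem applied to the semi-simple subalgebras $A,B\subseteq C$ (Lemma \ref{ss} guarantees semi-simplicity).  For (1)$\Leftrightarrow$(6): the multiplicity of $\sigma\otimes D(\sigma)$ in $\rho\otimes_{M_2}D(\rho)$ is exactly $\sum_\delta m(\sigma,\delta)^2$, and imposing this (together with the symmetric version over $M_1$) to be $\leq 1$ is clearly equivalent to the partial bijection with unit multiplicities.  For (1)$\Leftrightarrow$(5): the diagonal multiplicity in $\End_{M_1}(\rho)$ is $\sum_\sigma m(\sigma,\delta)^2$; multiplicity-freeness on both sides forces at most one $\sigma$ (resp.\ $\delta$) per $\delta$ (resp.\ $\sigma$), all with $m(\sigma,\delta)\leq 1$, and conversely theta makes the above bimodule multiplicity-free by inspection.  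For (1)$\Leftrightarrow$(4): theta makes the decomposition of $\End_{M_\alpha}(\rho)$ manifestly of the form $\bigoplus_{\delta\in S}\delta\otimes D(\delta)$; conversely, vanishing of all off-diagonal coefficients $\sum_\sigma m(\sigma,\delta)m(\sigma,\delta')$ for $\delta\neq\delta'$ (on both sides) forces the support to be a partial bijection, and the unit multiplicity is forced by comparing the diagonal decomposition on both sides simultaneously.  Finally (1)$\Leftrightarrow$(2) can be read off a dimension inequality: the chain
\[
\dim Z_A(C)=\sum_\sigma\Bigl(\sum_\delta m(\sigma,\delta)\dim\delta\Bigr)^{\!2}\;\geq\;\sum_{\delta:\Theta_{D(\delta)}\neq 0}(\dim\delta)^{2}=\dim B
\]
(expanding the square and using $B\subseteq Z_A(C)$) becomes an equality exactly when $m(\sigma,\delta)\in\{0,1\}$ with partial-bijection support, i.e.\ theta.

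The main obstacle will be bookkeeping the $M_2\times M_2$-bimodule structure on $\End_{M_1}(\rho)$ carefully: one must fix a convention for how $(h,h')\in M_2\times M_2$ acts on $f\in\End_{M_1}(\rho)$ so that the identification $\Hom_{\mathbb{C}}(D(\delta'),D(\delta))\simeq\delta\otimes D(\delta')$ is canonical, and then track this through both the $\rho\otimes_{M_2}D(\rho)$ computation and the direction (4)$\Leftrightarrow$(1).  A secondary subtlety is to check that the ``only-diagonal'' content of $\mathcal{R}_{M_\beta\times M_\beta}(\End_{M_\alpha}(\rho))$ combined with the same condition on the opposite side really does force the unit multiplicity; this will rely on applying the diagonal Cauchy--Schwarz-type inequality $\bigl(\sum_\delta m(\sigma,\delta)\bigr)^2\geq\sum_\delta m(\sigma,\delta)^2$ symmetrically, analogous to the dimension inequality used for (1)$\Leftrightarrow$(2).
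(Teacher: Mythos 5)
Most of your reduction is sound, and in several places it takes a genuinely different (and cleaner) route than the paper: you phrase everything through the multiplicity matrix $m(\sigma,\delta)$, you get (2)$\Leftrightarrow$(3) from the double commutant theorem for the semi-simple subalgebras $A,B\subseteq C$, and your dimension-count proof of (1)$\Leftrightarrow$(2) is correct, whereas the paper anchors everything on (1)$\Leftrightarrow$(2) via the algebra generated by the $M_2$-action on $\oplus_\sigma D(\Theta_\sigma)$ and then runs the cycle (1)$\Rightarrow$(4)$\Rightarrow$(5)$\Rightarrow$(6)$\Rightarrow$(1) instead of proving each item equivalent to (1) directly. Your treatment of (5) and (6) through the coefficients $\sum_\sigma m(\sigma,\delta)m(\sigma,\delta')$ and $\sum_\delta m(\sigma,\delta)m(\sigma',\delta)$ is also fine, as is (1)$\Rightarrow$(4).

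The genuine gap is your converse direction for (4), exactly the point you flag as a ``secondary subtlety''. Since $\mathcal{R}_{M_\beta\times M_\beta}$ is by the paper's definition only the \emph{set} of irreducible constituents, condition (4) is blind to isotypic multiplicities: take $\rho\simeq(\sigma\otimes D(\delta))^{\oplus 2}$ with $\sigma\in\Irr(M_1)$, $\delta\in\Irr(M_2)$. Then $\End_{M_1}(\rho)\simeq(\delta\otimes D(\delta))^{\oplus 4}$ and $\End_{M_2}(\rho)\simeq(\sigma\otimes D(\sigma))^{\oplus 4}$, so (4) holds for both choices of $(\alpha,\beta)$, while $m_{M_1\times M_2}(\rho,\sigma\otimes D(\delta))=2$, so $\rho$ is not theta. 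In your notation all off-diagonal sums $\sum_\sigma m(\sigma,\delta)m(\sigma,\delta')$ and $\sum_\delta m(\sigma,\delta)m(\sigma',\delta)$ vanish here, yet $m(\sigma,\delta)=2$; hence no Cauchy--Schwarz-type inequality between these quantities can recover unit multiplicity, and ``comparing the diagonal decompositions on both sides'' does not close the argument. (The paper's own step (4)$\Rightarrow$(5) makes the same jump: it asserts that (4) forces each $D(\Theta_\sigma)$ to be irreducible, which fails when $\Theta_\sigma$ is isotypic of multiplicity greater than one.) To repair your proof you must either read (4) as the stronger statement that $\End_{M_\alpha}(\rho)$ decomposes \emph{with multiplicity one} into diagonal pieces $\delta\otimes D(\delta)$ -- in which case (4)$\Rightarrow$(1) goes through exactly like your (5)$\Rightarrow$(1) argument -- or restrict yourself to the implication (1)$\Rightarrow$(4), since the set-level version of (4) simply does not imply (1).
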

\begin{proof}
(1)$\Leftrightarrow $(2)    For $(\sigma, U) \in \Irr(M_1)$,  let us write $d_{\sigma}=\dim_{\mathbb{C}} U$.   Let  $D(\pi_2^{\#})=\oplus_{\sigma\in \mathcal{R}_{M_1}(\rho)} d_{\sigma}D(\Theta_{\sigma})$, and  $D(\pi_2)=\oplus_{\sigma\in \mathcal{R}_{M_1}(\rho)} D(\Theta_{\sigma})$, two right  representations of $M_2$. Then  $B$ is isomorphic to the algebra   generated by all $D(\pi^{\#}_2)(h_2)$ in $\End_{\mathbb{C}}\big(D(\pi^{\#}_2)\big)$, or even to the algebra generated by all $D(\pi_2)(h_2)$ in $\End_{\mathbb{C}}\big(D(\pi_2)\big)$. Therefore  the condition $B=Z_A(C)$ implies that (1) $D(\Theta_{\sigma_i}) \in D(\Irr(M_2))$, for $\sigma_i \in \mathcal{R}_{M_1}(\rho)$,  (2)   $D(\Theta_{\sigma_i}) \ncong D(\Theta_{\sigma_j})$, for  $\sigma_i \ncong\sigma_j \in \mathcal{R}_{M_1}(\rho)$;  the converse also holds.  \\
(2)$\Leftrightarrow $(3)  It can be seen as a consequence of  (1)$\Leftrightarrow $(2).\\
 (1)$\Rightarrow $(4)   For $\sigma \in \mathcal{R}_{M_1}(\rho)$,  $D(\Theta_{\sigma})$ is  irreducible, and uniquely determined by  $\sigma$.  Hence $\End_{M_1}(\rho)\simeq \oplus_{\sigma\in \mathcal{R}_{M_1}(\rho)} \Theta_{\sigma} \otimes D( \Theta_{\sigma})$ as left-right representations of $M_2 \times M_2$.  By symmetry  the  (4) holds.\\
  (4)$\Rightarrow $(5)  $\End_{M_1}(\rho)\simeq \oplus_{\sigma\in \mathcal{R}_{M_1}(\rho)} \Theta_{\sigma} \otimes D(\Theta_{\sigma})$. Hence the condition implies   $D(\Theta_{\sigma})\in D(\Irr(M_2))$.  Similarly $\Theta_{D(\delta)}\in \Irr(M_1)$, for $D(\delta)\in \mathcal{R}_{M_2}(\rho)$.  If $D(\Theta_{\sigma_1}) \simeq D(\Theta_{\sigma_2})  \simeq D(\delta)\in D(\Irr(M_2))$, for different $\sigma_1, \sigma_2\in \mathcal{R}_{M_1}(\rho)$, then $\sigma_1\oplus \sigma_2 \preceq \Theta_{D(\delta)}$, a contradiction.\\
  (5)$\Rightarrow $(6)  Assume $\alpha=1$, $\beta=2$.  Let us write  $\rho\simeq \oplus_{j=1}^l n_j \Theta_{D(\delta_j)} \otimes D(\delta_j)$, for some $n_j\geq 1$,  as $M_1\times M_2$-bimodules. Then $\End_{M_1}(\rho) = \Hom_{M_1}( \oplus_{j=1}^l n_j \Theta_{D(\delta_j)} \otimes D(\delta_j), \oplus_{k=1}^l n_k\Theta_{D(\delta_k)} \otimes D(\delta_k)) \simeq \oplus_{j, k} \Hom_{M_1}(\Theta_{D(\delta_j)}, \Theta_{D(\delta_k)})  \otimes_{\C} n_jn_k  \delta_j \otimes_{\C} D(\delta_k) $.  Hence the condition (5) implies that all $n_j = 1$, and $m_{M_1}(\Theta_{D(\delta_j)}, \Theta_{D(\delta_k)})=\delta_{jk}$, the Kronecker delta notation.  In particular, $\Theta_{D(\delta_j)}$  is  irreducible. Then  $ D(\rho) \otimes \rho  \simeq  \oplus_{j, k=1}^{l,l}   D(\Theta_{D(\delta_k)}) \otimes \Theta_{D(\delta_j)} \otimes   \delta_k  \otimes D(\delta_j) $. Since   $D(\Theta_{D(\delta_k)})\otimes_{M_1} \Theta_{D(\delta_j)} \simeq \Hom_{M_1}(\Theta_{D(\delta_j)},\Theta_{D(\delta_k)})\simeq \delta_{jk}\C$.  So by duality, part (6) is right. \\
   (6)$\Rightarrow $(1)  If $D(\delta_1)\oplus D(\delta_2) \preceq D(\Theta_{\sigma})$, for  some $\sigma\in \mathcal{R}_{M_1}(\rho)$, then $  [ \sigma \otimes D(\sigma)   \otimes D(\delta_1)\otimes \delta_1   ]\oplus [ \sigma\otimes  D(\sigma) \otimes D(\delta_2) \otimes  \delta_2  ] \preceq \rho \otimes  D(\rho)$; this contradicts to $m_{M_1\times M_1 } (\rho \otimes_{M_2} D(\rho), \sigma \otimes D(\sigma))\leq 1$.  Similarly, the other side is also right.
               \end{proof}

           If $M_1$, $M_2$  are finite groups, one can replace the above right representations by the corresponding contragredient  left representations.   Recall the definition of a strong Gelfand pair in \cite{AAG}.
                \begin{lemma}
Assume $M_i$ are finite groups.
\begin{itemize}
\item[(1)] If  $\rho\otimes \check{\rho}|_{\Delta_{M_1} \times (M_2\times M_2)}$, $\rho\otimes \check{\rho}|_{(M_1\times M_1) \times \Delta_{M_2} }$ both are multiplicity-free representations, then $\rho$ is a theta representation of $M_1\times M_2$.
      \item[(2)] Assume that each  $(\Delta_{M_i}, M_i\times M_i)$ is  a strongly   Gelfand pair,  for   $i=1,2$.  Then  $\rho\otimes \check{\rho}|_{\Delta_{M_1} \times (M_2\times M_2)}$, $\rho\otimes \check{\rho}|_{(M_1\times M_1) \times \Delta_{M_2} }$ both are multiplicity-free  iff $\rho$ is a theta representation.
\end{itemize}
 \end{lemma}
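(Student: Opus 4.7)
The plan is to derive both parts from Proposition~\ref{theta} and from the identification $D(\pi)\simeq\check\pi$ valid for representations of finite groups.

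For part~(1), I would first observe that, because $M_2$ is a finite group, semisimplicity yields
$$\rho\otimes_{\C[M_2]}D(\rho)\simeq (\rho\otimes\check\rho)^{\Delta_{M_2}}$$
as $M_1\times M_1$-modules, where $\Delta_{M_2}$ acts diagonally on the two tensor factors (invariants equal coinvariants by averaging over the finite group). The hypothesis that $\rho\otimes\check\rho|_{(M_1\times M_1)\times\Delta_{M_2}}$ is multiplicity-free says that every irreducible summand $\alpha\otimes\beta\otimes\gamma$, with $\alpha,\beta\in\Irr(M_1)$ and $\gamma\in\Irr(M_2)$, occurs with multiplicity at most~$1$. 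Specialising to $\gamma=1$ isolates the $\Delta_{M_2}$-invariants and gives $m_{M_1\times M_1}\bigl((\rho\otimes\check\rho)^{\Delta_{M_2}},\alpha\otimes\beta\bigr)\leq 1$ for all pairs; in particular, taking $\alpha=\sigma$ and $\beta=D(\sigma)$ produces the first half of Proposition~\ref{theta}(6). The second half follows by the mirror argument from the symmetric hypothesis, and Proposition~\ref{theta} then forces $\rho$ to be a theta representation.

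For part~(2), the new content is the direction ``theta $\Rightarrow$ multiplicity-free restrictions''. Since $\C[M_1\times M_2]$ is semi-simple and $\rho$ is theta, I can write $\rho\simeq\bigoplus_{\sigma\in\mathcal{R}_{M_1}(\rho)}\sigma\otimes\theta(\sigma)$ for an injection $\theta\colon\mathcal{R}_{M_1}(\rho)\to\Irr(M_2)$, whence
$$\rho\otimes\check\rho\simeq\bigoplus_{\sigma_1,\sigma_2}\bigl(\sigma_1\otimes D(\sigma_2)\bigr)\otimes\bigl(\theta(\sigma_1)\otimes D(\theta(\sigma_2))\bigr).$$
Injectivity of $\theta$ makes the $M_2\times M_2$-types $\theta(\sigma_1)\otimes D(\theta(\sigma_2))$ pairwise distinct as $(\sigma_1,\sigma_2)$ varies, so multiplicity-freeness of the restriction to $\Delta_{M_1}\times(M_2\times M_2)$ reduces to each summand's $\Delta_{M_1}$-restriction $(\sigma_1\otimes D(\sigma_2))|_{\Delta_{M_1}}$ being multiplicity-free---which is precisely the strong Gelfand property of $(\Delta_{M_1},M_1\times M_1)$. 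The other restriction is handled symmetrically via the strong Gelfand assumption on the $M_2$-side, and the converse implication in~(2) is supplied by~(1).

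The one mildly delicate point is the identification, in the first paragraph, of $\rho\otimes_{\C[M_2]}D(\rho)$ with $(\rho\otimes\check\rho)^{\Delta_{M_2}}$: the right-$\C[M_2]$-structure on $D(\rho)=\check\rho$ transposes its left-$\C[M_2]$-structure via inversion, so the tensor product over $\C[M_2]$ computes the diagonal $M_2$-coinvariants, which finite-group averaging identifies with invariants. All other moves are routine semi-simple bookkeeping, and the main ``hard'' input---Proposition~\ref{theta}---is already in hand.
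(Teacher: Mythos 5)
Your proof is correct and follows essentially the same route as the paper: part (1) is deduced from Proposition \ref{theta}(6) by identifying $\rho\otimes_{\C[M_2]}D(\rho)$ with the $\Delta_{M_2}$-invariants of $\rho\otimes\check{\rho}$ (and symmetrically for the other factor), and part (2) uses the decomposition $\rho\simeq\oplus_{\sigma}\sigma\otimes\theta_{\sigma}$ together with the strong Gelfand hypotheses, with the converse supplied by (1). If anything, you are more careful than the paper's terse argument, which only displays the diagonal summands $\sigma\otimes\check{\sigma}\otimes\theta_{\sigma}\otimes\check{\theta}_{\sigma}$; your explicit treatment of the cross terms and of the injectivity of $\sigma\mapsto\theta_{\sigma}$ (which indeed holds for a theta representation in the semisimple case) fills in exactly what is left implicit there.
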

\begin{proof}
The first statement  follows from the above (6).  For the second statement,  $\rho\otimes \check{\rho}\simeq \oplus_{\sigma \in \mathcal{R}_{M_1}(\rho) } \sigma\otimes \check{\sigma} \otimes \theta_{\sigma} \otimes \check{\theta}_{\sigma}$,  for $\theta_{\sigma} \in \mathcal{R}_{M_2}(\rho)$. Under the assumption, $ [\theta_{\sigma} \otimes \check{\theta}_{\sigma}]|_{\Delta_{M_2}}$ is  multiplicity-free, so  $m_{M_1\times M_1 \times \Delta_{M_2}}(\rho\otimes \check{\rho}, \sigma \otimes \check{\sigma} \otimes \eta)\leq 1$, for any $\eta\in \Irr(M_2)$. By symmetry, the second statement holds.
\end{proof}

 \subsection{ One result}
\begin{assumption}
 \begin{itemize}
 \item[(1)] $M_1$, $M_2$ both are semi-simple  monoids,
 \item[(2)] for each $i$, $N_i$, $M_i$ are centric submonoids of  $M_i$,
 \item[(3)] for each $i$, $N_i$ is also a subgroup of $M_i$,
 \item[(4)] $\iota:  \frac{M_1}{N_1} \simeq \frac{M_2}{N_2}$.
\end{itemize}
\end{assumption}
Let  $\overline{\Gamma}  \subseteq \frac{M_1}{N_1}\times \frac{M_2}{N_2}$ be the graph of $\iota$. Let $p: M_1\times M_2 \longrightarrow    \frac{M_1\times M_2}{N_1\times N_2}   \simeq \frac{M_1}{N_1}\times \frac{M_2}{N_2} $, and  $\Gamma=p^{-1}(\overline{\Gamma})$. Clearly, $\Gamma\supseteq N_1\times N_2$.
 \begin{lemma}
$\overline{\Gamma} $, $\Gamma$ both are centric submonoids of themselves.
\end{lemma}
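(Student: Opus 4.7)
The plan is to verify in turn (a) that $\overline{\Gamma}$ and $\Gamma$ really are submonoids of the ambient product monoids, and (b) that each satisfies the centric condition in itself, namely $xX=Xx$ for every $x\in X$. Step (a) is quick: $\overline{\Gamma}$ is the graph of the monoid isomorphism $\iota$, hence closed under componentwise multiplication and contains $(1,1)$; then $\Gamma=p^{-1}(\overline{\Gamma})$ is the preimage of a submonoid under the monoid homomorphism $p$.

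For $\overline{\Gamma}$ in step (b), a typical element has the form $x=([a],\iota([a]))$, and both $x\overline{\Gamma}$ and $\overline{\Gamma}x$ already lie inside $\overline{\Gamma}$, so $x\overline{\Gamma}=\overline{\Gamma}x$ reduces to the single identity $[a]\tfrac{M_1}{N_1}=\tfrac{M_1}{N_1}[a]$ in the first coordinate (the second coordinate then follows because $\iota$ is a monoid homomorphism). By assumption (2), $M_1$ is a centric submonoid of itself, i.e.\ $aM_1=M_1a$, and pushing this identity through $M_1\twoheadrightarrow\tfrac{M_1}{N_1}$ yields exactly what is needed.

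The substantive half is $\Gamma$. Given $(m_1,m_2)\in\Gamma$ and $(m_1',m_2')\in\Gamma$, I will produce $(m_1'',m_2'')\in\Gamma$ with $(m_1,m_2)(m_1',m_2')=(m_1'',m_2'')(m_1,m_2)$. Using centricity of $M_1$ in itself, choose $m_1''\in M_1$ with $m_1m_1'=m_1''m_1$. Any representative $\tilde m_2''$ of the class $\iota([m_1''])\in\tfrac{M_2}{N_2}$ then satisfies only the class-level identity $[m_2m_2']=[\tilde m_2''\,m_2]$, obtained by applying $\iota$ to $[m_1m_1']=[m_1''m_1]$. The delicate step is to lift this to an honest equality in $M_2$: since $N_2$ is centric in $M_2$ (assumption (2)), the classes of $\tfrac{M_2}{N_2}$ are exactly the common cosets $N_2m=mN_2$, so $m_2m_2'=n\,\tilde m_2''\,m_2$ for some $n\in N_2$. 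Setting $m_2'':=n\tilde m_2''$ keeps us in the class $\iota([m_1''])$ (absorption of $n$ on either side is allowed by centricity of $N_2$) and gives the on-the-nose identity $m_2m_2'=m_2''m_2$. Hence $(m_1'',m_2'')\in\Gamma$ and the inclusion $(m_1,m_2)\Gamma\subseteq\Gamma(m_1,m_2)$ holds; the reverse inclusion follows by the symmetric argument.

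The main subtle point is precisely this coset-to-element lift in $M_2$, and that is where the double centricity packaged in assumption (2) (both $N_2$ in $M_2$ and $M_2$ in $M_2$) is used in an essential way. No further ingredients beyond assumptions (2)--(4) appear to be required.
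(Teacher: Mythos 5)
Your argument is correct in substance but follows a genuinely different route from the paper's. The paper disposes of $\Gamma$ in one line by taking preimages of cosets: writing $N=N_1\times N_2$, it observes $m\Gamma=(N)m\Gamma=p^{-1}([m]\overline{\Gamma})=p^{-1}(\overline{\Gamma}[m])=\Gamma m(N)=\Gamma m$, so the centricity of $\overline{\Gamma}\simeq\frac{M_i}{N_i}$ in itself is simply lifted through $p^{-1}$. You instead construct the commuting element $(m_1'',m_2'')$ explicitly, which is longer but makes visible exactly where each hypothesis enters; both proofs ultimately rest on the same two facts (centricity of $M_i$ in itself, and the good behaviour of the fibres of $p$).

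One caveat at the step you yourself flag as delicate: the absorption $[n\tilde m_2'']=[\tilde m_2'']$ for $n\in N_2$ is \emph{not} a consequence of centricity of $N_2$ in $M_2$. In general $N_2 n\tilde m_2''\subseteq N_2\tilde m_2''$ may be strict (take $N=M=\{1,0\}$, $n=0$), and likewise the fibre $p^{-1}([m])$ is only the set of generators $G_m^{N_2}$ of $N_2m$, not the whole coset $N_2m=mN_2$. What saves you is assumption (3): $N_2$ is a subgroup of $M_2$, so $n$ is invertible, $N_2n\tilde m_2''=N_2\tilde m_2''$, and the fibres really are the full cosets. Since (3) is part of the standing assumptions (and the paper's own proof uses it implicitly in the equality $(N)\Gamma=\Gamma$), your conclusion stands; you should just re-attribute that step to assumption (3) rather than to centricity.
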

\begin{proof}
Since $\overline{\Gamma} \simeq \frac{M_i}{N_i}$, $[m]\overline{\Gamma}=\overline{\Gamma}[m]$, for any $m\in \Gamma$, so $m\Gamma=(N_1\times N_2)m\Gamma=\Gamma m(N_1\times N_2)=\Gamma m$.
\end{proof}
Consequently, $\overline{\Gamma} $, $\Gamma$ both are inverse monoids and semi-simple monoids. Recall the results from Lmms. \ref{Str}, \ref{idbij}. For simplicity, we identity $\frac{M_1}{N_1}$ with  $\frac{M_2}{N_2}$, and use the same notations for this two monoids.  By Lmm.\ref{idbij}, $\iota$ defines a bijection map from $ E(M_1)=E(\frac{M_1}{N_1})$ to  $E(M_2)=E(\frac{M_2}{N_2})$. For simplicity, we use the same notation $E$ for $E(M_1)$ and $E(M_2)$.

 Let $\Irr^{(f,f)}(M_1\times M_2)$ denote the set of irreducible representations of $M_1\times M_2$ having the apexes of the form $(f, f)$, and $\Irr^{E}(M_1\times M_2)=\cup_{f\in E} \Irr^{(f,f)}(M_1\times M_2)$.  By Lmm.\ref{Str}(2), $1\longrightarrow N_{\alpha} \longrightarrow G_{f}^{M_{\alpha}} \longrightarrow G_{[f]}^{\frac{M_{\alpha}}{N_{\alpha}}} \longrightarrow 1$,  is an exact sequence of groups.  Hence $ \iota: \frac{G_{f}^{M_1}}{ N_{1}}\simeq \frac{G_{f}^{M_2}}{ N_2}$.

\begin{lemma}
\begin{itemize}
\item[(1)] $\Gamma \cap [ G_{f}^{M_1}\times G_{f}^{M_2}]=G^{\Gamma}_{(f,f)}$.
\item[(2)] For $(\rho, W) \in \Irr^{(f,f)}(\Gamma)$, $\mathcal{R}_{M_1\times M_2}(\Ind_{\Gamma}^{M_1 \times M_2} \rho) \cap \Irr^E(M_1\times M_2) \subseteq \Irr^{(f, f)}(M_1\times M_2) $.
\end{itemize}
\end{lemma}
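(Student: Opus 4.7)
The plan is to deduce (1) as a direct application of the structural result Lemma \ref{Str} to two compatible centric-submonoid pairs, and to handle (2) by Frobenius reciprocity together with an apex-compatibility argument.

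For (1), I would first observe that all monoids involved are semisimple inverse monoids: each $M_i$ is inverse by Corollary \ref{INVER} applied to the pair $(M_i, M_i)$; $N_i$ and $M_i/N_i$ are semisimple by Theorem \ref{theta4}; and $\Gamma$ is semisimple (hence inverse) because $\overline{\Gamma} \simeq M_i/N_i$ is semisimple and $N_1 \times N_2$ is a centric submonoid of $\Gamma$ (inherited from the centricity of each $N_i$ in $M_i$). Applying Lemma \ref{Str}(1) to the centric pair $(N_1 \times N_2, \Gamma)$ identifies $G^{\Gamma}_{(f,f)}$ with $p^{-1}(G^{\overline{\Gamma}}_{[(f,f)]})$ inside $\Gamma$, while applying the same lemma to each $(N_i, M_i)$ identifies $G^{M_i}_f$ with $p^{-1}(G^{M_i/N_i}_{[f]})$ inside $M_i$. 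Via the isomorphism $\iota$, the projection $\overline{\Gamma} \to M_i/N_i$ sends $[(f,f)]$ to $[f]$, so chasing an element $(m_1,m_2) \in \Gamma$ through the commutative diagram of projections yields $G^{\Gamma}_{(f,f)} = \Gamma \cap (G^{M_1}_f \times G^{M_2}_f)$.

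For (2), Frobenius reciprocity converts $\Pi \in \mathcal{R}_{M_1 \times M_2}(\Ind_{\Gamma}^{M_1 \times M_2} \rho)$ into a non-zero embedding $\rho \hookrightarrow \Res_{\Gamma} \Pi$. Write $\Pi = \pi_1 \otimes \pi_2$ with each $\pi_i$ of apex $f'$. The easy direction is immediate: $\rho(f,f) \neq 0$ (since $(f,f)$ is an apex of $\rho$), so $\Pi(f,f) = \pi_1(f) \otimes \pi_2(f) \neq 0$, hence $\pi_i(f) \neq 0$ for each $i$, which by the defining property of the apex of $\pi_i$ gives $f' \leq_{\mathcal{J}} f$ in each $M_i$. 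For the reverse direction I would exploit that the element $(f',f')$ belongs to $\Gamma$ (because $\iota([f']) = [f']$ under the identification of $E$), and that the apex subspace $(f',f')(V_1 \otimes V_2)$ generates $V_1 \otimes V_2$ as an $M_1 \times M_2$-module by the apex property of $\Pi$; combined with the semisimplicity of $\Res_{\Gamma} \Pi$ and the Clifford-Mackey-Rieffel analysis of Section \ref{DRWI}, this should force $\rho(f',f') \neq 0$, giving $(f,f) \in \Gamma(f',f')\Gamma$ and hence $f \leq_{\mathcal{J}} f'$ in each $M_i$. The two inequalities together yield $(f,f) \mathcal{J}_{M_1 \times M_2} (f',f')$, so $(f,f)$ is an apex representative of $\Pi$ and $\Pi \in \Irr^{(f,f)}(M_1 \times M_2)$.

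The main obstacle is the reverse inequality in (2): establishing $\rho(f',f') \neq 0$ for the specific embedded copy of $\rho$. The kernel of the projection $\Pi(f',f')$ is not $M_1 \times M_2$-stable, so one cannot directly invoke the irreducibility of $\Pi$; one must instead use the chain $G^{\Gamma}_{(f,f)} \subseteq G^{M_1 \times M_2}_{(f,f)} \subseteq M_1 \times M_2$ from part (1) together with the structural analysis of apexes under monoid induction developed in Section \ref{DRWI} to pin down where the $\sigma$-isotypic component of $\Pi$ (relative to the chosen apex data of $\rho$) must sit. This is the delicate step, and is precisely the kind of apex book-keeping that the Clifford-Mackey-Rieffel framework of the preceding section was developed to handle.
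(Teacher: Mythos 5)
Your treatment of part (1) is correct and is in substance the paper's own argument: the paper performs the same chase of an element $(m_1,m_2)$ through the projection $p$, using exactly the correspondences of Lemma \ref{Str}(1) for the pairs $(N_i,M_i)$ and $(N_1\times N_2,\Gamma)$, so nothing further is needed there.

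Part (2), however, contains a genuine gap, and it is precisely the step you flag yourself. Your reverse direction is never proved: the claim that the apex property of $\Pi$ together with ``the Clifford--Mackey--Rieffel analysis'' \emph{should force} $\rho(f',f')\neq 0$ is a hope rather than an argument, and, as you correctly note, it cannot be extracted from the irreducibility of $\Pi$ because $\ker \Pi(f',f')$ is not $M_1\times M_2$-stable. Since your easy direction only yields $f'\leq_{\mathcal{J}} f$, the inclusion asserted in the lemma is not established. Note also that even if you did obtain $\rho(f',f')\neq 0$, you would only conclude $f\,\mathcal{J}\,f'$, whereas the paper pins down $f=f'$ on the nose, which is the form actually used in the proof of Proposition \ref{theta6}.

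The missing idea --- and the route the paper takes --- is to argue on the restriction rather than on $\rho$: every irreducible $\Gamma$-constituent of $\Res_{\Gamma}(\pi_1\otimes\pi_2)$ has apex $(f',f')$; this is where the centric-submonoid apex analysis (Remark \ref{threeeqt}, Lemma \ref{SImilarly}, and the computations of Section \ref{Inversemonoidcase}), together with the fact that $(f',f')\in\Gamma$, enters. Frobenius reciprocity gives a nonzero map from $\rho$, all of whose constituents have apex $(f,f)$, into this restriction, so the two apexes must agree; the paper then passes through $f\,\mathcal{L}_{M_1}\,f'$ and concludes $f=f'$ because an $\mathcal{L}$-class of an inverse monoid contains at most one idempotent. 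In other words, the delicate bookkeeping you postpone is bypassed entirely by a statement about which $\Gamma$-apexes can occur in $\Res_{\Gamma}\Pi$ at all; without that observation (or a completed proof of $\rho(f',f')\neq 0$), your part (2) does not go through.
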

\begin{proof}
1) If $(m_1,m_2)\in \Gamma \cap [ G_{f}^{M_1}\times G_{f}^{M_2}]$, then $M_1m_1=M_1f$, $M_2m_2=M_2f$. Hence $\frac{M_1}{N_1}[f]=\frac{M_1}{N_1}[m_1]$, $\frac{M_2}{N_2}[f]=\frac{M_2}{N_2}[m_2]$, $[m_i]\in G_{[f]}^{\frac{M_i}{N_i}}$. Since $\iota([m_1])=[m_2]$, $([m_1], [m_2]) \in \frac{\Gamma}{N_1\times N_2}$. Assume $\frac{\Gamma}{N_1\times N_2} ([m_1],[m_2])=\frac{\Gamma}{N_1\times N_2}([f'],[f'])$. Then $\frac{M_1}{N_1}[f']=\frac{M_1}{N_1}[m_1]=\frac{M_1}{N_1}[f]$, $\frac{M_2}{N_2}[f']=\frac{M_2}{N_2}[m_2]=\frac{M_2}{N_2}[f]$. Hence $([m_1], [m_2])\in G^{\frac{\Gamma}{N_1\times N_2}}_{([f'],[f'])}= G^{\frac{\Gamma}{N_1\times N_2}}_{([f],[f])}$, $(m_1,m_2)\in G^{\Gamma}_{(f,f)}$.   Conversely, $(m_1,m_2)\in G^{\Gamma}_{(f,f)}$, $\Gamma(m_1, m_2) =\Gamma(f,f)$, so $M_i m_i=M_i f$. Hence $m_i\in G_f^{M_i}$, $(m_1,m_2)\in \Gamma \cap [ G_{f}^{M_1}\times G_{f}^{M_2}]$.\\
2) Assume $(\pi_1,\pi_2) \in \Irr^{(f',f')}(M_1\times M_2)$, and $0\neq \Hom_{M_1\times M_2}(\Ind_{\Gamma}^{M_1 \times M_2} \rho, \pi_1\otimes \pi_2)\simeq \Hom_{\Gamma}( \rho, \pi_1\otimes \pi_2)$. Note that $\pi_1\otimes \pi_2|_{\Gamma}$ only contains irreducible components of apex $f'$. Hence $f\mathcal{L}_{M_1} f'$. Since $M_1$ is an inverse monoid, $f=f'$.
\end{proof}
Let  $(\rho, W)$ be a representation of $\Gamma$ of finite dimension. Assume that its  irreducible components share  the same apex $(f, f)$.
  \begin{proposition}\label{theta6}
   $\Res^{\Gamma}_{N_1\times N_2} \rho$ is a theta representation of $N_1 \times N_2$ iff $\pi=\Ind_{\Gamma}^{M_1 \times M_2} \rho$ is a theta representation of $M_1\times M_2$ with respect to  $\Irr^{E}(M_1\times M_2)$.
 \end{proposition}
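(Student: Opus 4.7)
\medskip

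\noindent\textbf{Proof plan.} The plan is to prove both implications by translating the theta property, in the form given by Proposition~\ref{theta}, across the adjoint pair $(\Ind_\Gamma^{M_1\times M_2},\Res^{M_1\times M_2}_\Gamma)$, and then across the further restriction $\Res^\Gamma_{N_1\times N_2}$. The crucial structural input is that $N_1\times N_2$ is a centric submonoid of both $\Gamma$ and $M_1\times M_2$, with $\Gamma/(N_1\times N_2)\simeq\overline\Gamma$ equal to the graph of $\iota$ inside $\tfrac{M_1}{N_1}\times\tfrac{M_2}{N_2}$, so that the Clifford-Mackey-Rieffel machinery of Section~\ref{DRWI} applies verbatim at both the $\Gamma$- and $(M_1\times M_2)$-levels.

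\medskip

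First I would pin down the Clifford-type correspondence. For $\pi_1\otimes\pi_2\in\Irr^{(f,f)}(M_1\times M_2)$, Frobenius reciprocity gives
\[
m_{M_1\times M_2}(\Ind_\Gamma^{M_1\times M_2}\rho,\pi_1\otimes\pi_2)=m_\Gamma(\rho,(\pi_1\otimes\pi_2)|_\Gamma).
\]
Since each $N_i$ is a (normal) subgroup, Clifford theory decomposes $\pi_i|_{N_i}$ as a sum of $M_i$-conjugates of a single $\sigma_i\in\Irr(N_i)$, and the isotypic components form the bundle governed by the stability submonoid $I_{M_i}(\sigma_i)$ described in Section~\ref{ii}. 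The hypothesis $\iota\colon M_1/N_1\simeq M_2/N_2$ identifies stabilisers on the two sides, so when restricting $\pi_1\otimes\pi_2$ further to $\Gamma$, only those pairs $(\sigma_1,\sigma_2)$ whose $(M_1,M_2)$-stabilisers are compatible under $\iota$ contribute, and each such pair enters exactly once because $\Gamma/(N_1\times N_2)$ is the \emph{graph} of $\iota$, not the full product. Combined with Lemmas~\ref{semisi1}, \ref{semisi2} and Proposition~\ref{theta5}, this yields a bijection, via $\ind_\Gamma^{M_1\times M_2}$, between irreducible representations of $\Gamma$ with apex $(f,f)$ and members of $\Irr^{(f,f)}(M_1\times M_2)$.

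\medskip

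Next I would convert the equivalence of theta-properties into the multiplicity-free condition of Proposition~\ref{theta}(6). For the forward direction, if $\Res^\Gamma_{N_1\times N_2}\rho$ is theta then $\rho\otimes D(\rho)$ is multiplicity-free under $\Delta_{N_1}\times(N_2\times N_2)$ and $(N_1\times N_1)\times\Delta_{N_2}$. Inducing from $\Gamma\times\Gamma$ to $(M_1\times M_2)\times(M_1\times M_2)$ and restricting to $\Delta_{M_1}\times(M_2\times M_2)$ can be computed by a Mackey decomposition along the double cosets $\Gamma\times\Gamma\backslash (M_1\times M_2)^2/\Delta_{M_1}\times(M_2\times M_2)$; because $N_i$ is centric and $\overline\Gamma$ is a graph, these double cosets are indexed by $\overline\Gamma$-orbits, and each contribution is controlled by the corresponding $N_i$-multiplicity. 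Multiplicity-freeness on $N_i$ therefore transports to multiplicity-freeness on $M_i$ within $\Irr^E$. Conversely, restricting a theta $\pi$ to $\Gamma$ and then to $N_1\times N_2$ preserves all multiplicities at the level of isotypic components since the quotient map $\Gamma\to\overline\Gamma$ is $N_1\times N_2$-trivial, giving the reverse implication.

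\medskip

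Finally, the unique-quotient condition on $\Theta_{\pi_i}$ (condition (2) of the definition of theta representation) follows from the correspondence via $\ind_\Gamma^{M_1\times M_2}$ established above: the greatest $\pi_i$-isotypic quotient of $\pi$ decomposes through $\rho$, so its unique irreducible quotient $\theta_{\pi_i}$ is $\Ind$ of the unique irreducible quotient of the corresponding $\Theta$ on the $\Gamma$-side, which in turn is $\Ind$ of the one on the $N_i$-side; the upper bounds $m_{N_i}\le 1$ and $m_{M_i}\le 1$ match exactly. The principal obstacle in carrying this out is the \emph{apex bookkeeping}: one must show that the apex $(f,f)$ is genuinely preserved by both $\Ind_\Gamma^{M_1\times M_2}$ and $\Res^{M_1\times M_2}_\Gamma$, restricted to $\Irr^E$. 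This rests on Lemma~\ref{idbij} (which identifies $E(M_i)$ with $E(M_i/N_i)$ since $M_i$ is inverse by Corollary~\ref{INVER}) and on the fact, established in Section~\ref{Inversemonoidcase}, that the stability submonoids $I_{M_i}(\sigma_i)$ and $J_{M_i}(\sigma_i)$ are $\ast$-stable; only after this apex-matching is verified does the induction/restriction bijection on irreducible components yield the desired equivalence of the two theta conditions.
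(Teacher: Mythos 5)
Your proposal does not follow the paper's route, and as written it has genuine gaps. The most concrete one is the claimed ``bijection, via $\ind_{\Gamma}^{M_1\times M_2}$, between irreducible representations of $\Gamma$ with apex $(f,f)$ and members of $\Irr^{(f,f)}(M_1\times M_2)$'': this is false already in the degenerate group case ($N_i$ trivial, $M_i=G$ a group, $\Gamma$ the diagonal in $G\times G$), where inducing an irreducible of the graph subgroup is almost never irreducible -- indeed the whole point of a theta correspondence is a multiplicity-one statement, not a bijection on $\Irr$. Since your final paragraph derives the unique-quotient condition on $\Theta_{\pi_i}$ from this bijection, that step collapses. The second gap is the Mackey computation of $\Res_{\Delta_{M_1}\times(M_2\times M_2)}\circ\Ind_{\Gamma\times\Gamma}$ ``along double cosets indexed by $\overline{\Gamma}$-orbits'': the paper's Mackey formula (Theorem \ref{theta3}) is a bimodule decomposition of $\C[M]$ over a pair of submonoids and does not by itself give the asserted transport of multiplicity-freeness between the $N_i$-level and the $M_i$-level; that transport is exactly the content one has to prove, so asserting it is circular. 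You also never explain how the restriction of the theta condition to $\Irr^{E}(M_1\times M_2)$ enters your argument.

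What makes the paper's proof short is a structural reduction you do not use. Assumption (2) makes each $M_i$ centric in itself, so $L_f^{M_i}=G_f^{M_i}$ and $L^{\Gamma}_{(f,f)}=G^{\Gamma}_{(f,f)}$; hence every representation in sight, having apex $(f,f)$, is an irreducible of a maximal subgroup extended by zero ($\pi_i=\Ind_{G_f^{M_i}}\sigma_i$ is just $\sigma_i$, and $\rho$ is just a representation of $G^{\Gamma}_{(f,f)}=\Gamma\cap[G_f^{M_1}\times G_f^{M_2}]$, by the lemma preceding the proposition). Frobenius reciprocity then gives
\[
\Hom_{M_1\times M_2}\bigl(\Ind_{\Gamma}^{M_1\times M_2}\rho,\ \pi_1\otimes\pi_2\bigr)\ \simeq\ \Hom_{\Gamma\cap[G_f^{M_1}\times G_f^{M_2}]}\bigl(\rho,\ \sigma_1\otimes\sigma_2\bigr),
\]
the same lemma shows only apex-$(f,f)$ constituents of $\pi$ meet $\Irr^{E}(M_1\times M_2)$, and $\Res^{\Gamma}_{N_1\times N_2}\rho$ is likewise a statement about the groups $N_i$ acting through $G^{\Gamma}_{(f,f)}$. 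At that point the proposition is exactly the finite-group theorem cited as \cite[Thm. A]{Wa1}, and no Mackey or Clifford analysis on the monoid level is needed. If you want to salvage your approach, you would have to either prove the group-level equivalence yourself (replacing the false bijection by a genuine multiplicity comparison) or perform this apex reduction first; without one of these the argument does not close.
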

\begin{proof}
 Assume $\rho=\Ind_{G^{\Gamma}_{(f,f)}} \sigma, W=\Ind_{G^{\Gamma}_{(f,f)}} S$. Note that $L^{\Gamma}_{(f, f)}=G^{\Gamma}_{(f, f)}$. For simplicity,  we can also use the $(\rho, W)$  for $ (\sigma, S)$.   Then $\Res^{\Gamma}_{N_1\times N_2} \rho= \Res^{G^{\Gamma}_{(f,f)} }_{N_1\times N_2} \rho$. By the above lemma, $G^{\Gamma}_{(f,f)} =\Gamma \cap [ G_{f}^{M_1}\times G_{f}^{M_2}]$, and we only need to consider irreducible components of $\pi$ in $\Irr^{(f, f)}(M_1\times M_2) $. For $(\pi_1\otimes \pi_2, V_1\otimes V_2) \in \Irr^{(f, f)}(M_1\times M_2)$, assume $\pi_i=\Ind_{ G_f^{M_i}} \sigma_i$.  Hence $\Hom_{M_1\times M_2}(\pi, \pi_1\otimes \pi_2)\simeq \Hom_{\Gamma}(\rho, \pi_1\otimes \pi_2) \simeq \Hom_{\Gamma \cap [ G_{f}^{M_1}\times G_{f}^{M_2}]}(\rho, \sigma_1\otimes \sigma_2)$. Finally it reduces to the finite group case, which have already been proved. (cf. \cite[Thm. A]{Wa1})
 \end{proof}

\section{Theta representations of finite monoids II}\label{finitemonoidsII}

\subsection{Symmetric  extension}
Let $(\chi, \mathbb{C})$ be a character of $S_n$, $(\pi,V)$ an irreducible representation of $G$ of dimension $m$.      Let $(\pi \wr \chi, V\wr \C)$  be a representation of $G\wr S_n$, given in Defintion \ref{wreathproduct}. It is clear that
$G^{\odot n}$ commutes with $S_n$ in $\C[G\wr S_n]$.  Recall the representation $(\pi^{\odot n},  V^{\odot n})$  of $G^{\odot n}$ in Lmm.\ref{quanquan}. Recall the notations from Lmm.\ref{Ker}. Then  the representation  $(\pi^{\odot n},  V^{\odot n})$ factors through $\C[G^{\odot n}] \longrightarrow A^{S_n}$.
\begin{theorem}\label{theta1}
$(\pi \wr \chi, V\wr \C)$ is a theta representation of $G^{\odot n} \times S_n$.
\end{theorem}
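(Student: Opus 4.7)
The plan is to verify the criterion of Proposition \ref{theta} for the pair $(G^{\odot n}, S_n)$ acting on the underlying vector space of $V\wr \C$. Let $C = \End_\C(V^{\otimes n})$, and let $A, B \subseteq C$ denote the images of $\C[G^{\odot n}]$ and $\C[S_n]$ respectively under the representation $(\pi\wr\chi, V\wr\C)$. By Proposition \ref{theta}, it suffices to show $A = Z_B(C)$; this is condition (3) there, and is equivalent to (2) by the double commutant theorem, since both $\C[G^{\odot n}]$ and $\C[S_n]$ are semisimple.

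First, I would unpack the action. By Definition \ref{wreathproduct} and the discussion at the end of Section \ref{rpGSn}, $V\wr\C = V^{\otimes n}$ as a $\C$-vector space. An element $g_{\underline i}^{\odot n} = \tfrac{1}{n!}\sum_{p\in S_n} g_{p(1)} \otimes \cdots \otimes g_{p(n)} \in G^{\odot n}$ acts as $\pi(g_1) \odot \cdots \odot \pi(g_n) \in S^n(\End V) \subseteq \End(V)^{\otimes n} = \End(V^{\otimes n})$, while $s \in S_n$ acts as $\chi(s)$ times the standard place-permutation. Because $G^{\odot n}$ sits in the symmetric part of $\C[G]^{\otimes n}$, it commutes with the $S_n$-action inside $\C[G\wr S_n]$, so the bimodule structure is well-defined.

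Second, I would compute $A$ and $Z_B(C)$ and show they coincide with $S^n(\End V)$. For $A$: since $(\pi,V)$ is irreducible, Burnside's theorem gives $\pi(\C[G]) = \End_\C(V)$; then the standard fact that symmetric elementary tensors of a spanning set span the full symmetric power shows that $A$, which is the linear span of $\pi(g_1)\odot \cdots \odot \pi(g_n)$, equals $S^n(\End V)$. For $Z_B(C)$: the subspace $B \subseteq C$ spanned by the $\chi$-twisted permutations $\{\chi(s)\rho_{\mathrm{std}}(s) : s\in S_n\}$ coincides with the linear span of the classical permutation operators $\{\rho_{\mathrm{std}}(s) : s\in S_n\}$, so their commutants in $C$ agree. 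By the classical Schur-Weyl duality, this commutant is precisely $S^n(\End V)$. Hence $A = Z_B(C) = S^n(\End V)$, and condition (3) of Proposition \ref{theta} is satisfied.

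I do not expect a serious obstacle; the one place that deserves care is the passage from ``$\pi(G)$ spans $\End V$'' to ``$\{\pi(g_1)\odot \cdots \odot \pi(g_n)\}$ spans $S^n(\End V)$,'' which I would either deduce from polarization of symmetric multilinear forms or by noting that symmetrization is surjective from $(\End V)^{\otimes n}$ onto $S^n(\End V)$. The twist by $\chi$ also requires a single line to confirm that it does not alter the image $B$ as a linear subspace, and therefore does not alter its commutant. Once these verifications are in place, the conclusion is immediate from Proposition \ref{theta}.
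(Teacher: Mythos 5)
Your proof is correct and follows essentially the same route as the paper: both verify the commutant condition of Proposition \ref{theta} by combining Burnside's theorem ($\pi(\C[G])=\End_{\C}(V)$) with classical Schur--Weyl duality ($\End_{S_n}(V^{\otimes n})\simeq S^n(\End(V))$) to identify the image of $\C[G^{\odot n}]$ with $\End_{S_n}(V^{\otimes n})$. The only cosmetic differences are that you span $S^n(\End(V))$ directly by the symmetrized elementary tensors $\pi(g_1)\odot\cdots\odot\pi(g_n)$ where the paper passes through the $w^{\odot n}$ generators and expands $w=\sum_i c_i\pi(g_i)$, and that you make explicit the one-line reduction for the twist by $\chi$, which the paper handles by simply assuming $\chi$ trivial.
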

\begin{proof}
For simplicity, we  assume $\chi=$ the trivial representation.  In this case, it  suffices to show the restriction of $( \pi^{\otimes n}, V^{\otimes n})$ to $G^{\odot n} \times S_n$  is a theta representation.  Let $W=\End(V) \simeq V^{\ast} \otimes V$.  By \cite[p.86]{FH}, $\End_{S_n}(V^{\otimes n}) \simeq W^{\odot n}$, and  $W^{\odot n}$ is generated by $w^{\odot n}=w\otimes \cdots \otimes w$, for $w\in W$. It is known that  some $\pi(g)$  form a basis of $W$.
For any $0\neq w\in W$, there exists $c_i\in \C^{\times}, g_i \in G$, such that $w=\sum_{i=1}^l c_i \pi(g_i)$.  Let $\mathcal{A}=\{ c_ig_i\mid 1\leq i\leq l\}$. Let $H=\{ h_{\underline{i}}=(h_1, \cdots, h_n) \mid h_i\in\mathcal{A}\}$. Each  $h_{\underline{i}}$ corresponds to $h_{\underline{i}}^{\odot n}=\sum_{p\in S_n} \frac{1}{n!}h_{p(1)}\otimes  \cdots \otimes h_{p(n)}\in \C[G^{\odot n}]$.  Hence $w^{\odot n}=
\sum_{h}  d_h\pi^{\otimes n}(h_{\underline{i}}^{\odot n})$, for some $d_h\in \Q$.  Hence $w^{\odot n} \in  \pi^{\otimes n}(\C[G^{\odot n}])$.  Finally, $\End_{S_n}(V^{\otimes n}) \simeq   \pi^{\otimes n}(\C[G^{\odot n}])$. Note that the image $\pi^{\otimes n}(\C[G^{\odot n}])$ is a semi-simple algebra.
Following the proof of  Prop.\ref{theta}, $( \pi^{\otimes  n}, V^{\otimes n})$  is a theta representation of $G^{\odot n} \times S_n$.
\end{proof}
\begin{example}
Let the above  $\chi$ be the trivial representation of $S_n$.  Then the  Howe corresponding gives
 \begin{itemize}
\item[(1)] $\chi^+_{S_n}    \longleftrightarrow \pi^{\wedge n}$,
\item[(2)]  $1_{S_n} \longleftrightarrow \pi^{\odot n}$,
\end{itemize}
where $1_{S_n}$ (resp.  $\chi^+_{S_n} $) denotes the trivial (resp. sign) representation of $S_n$, and $\pi^{\odot n}$(resp. $ \pi^{\wedge n}$) denotes the symmtric ( resp.  exterior) power representation of $G^{\odot n} $.
\end{example}
\begin{corollary}
$(\pi^{\odot n},  V^{\odot n})$, $(\pi^{\wedge n},  V^{\wedge n})$ both are  irreducible representations of $G^{\odot n}$.
\end{corollary}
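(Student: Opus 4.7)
The plan is to derive the corollary directly from Theorem \ref{theta1} specialized to $\chi=1_{S_n}$, so that $(\pi^{\otimes n},V^{\otimes n})$ is a theta representation of $G^{\odot n}\times S_n$. By the definition of a theta representation, for every $\sigma\in\mathcal{R}_{S_n}(V^{\otimes n})$ the associated $G^{\odot n}$-module $\Theta_\sigma$ is either zero or has a unique irreducible quotient, and every irreducible bimodule $\pi_1\otimes\pi_2$ of $G^{\odot n}\times S_n$ occurs in $V^{\otimes n}$ with multiplicity at most one.

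The first step is the standard identification $V^{\odot n}\simeq\Theta_{1_{S_n}}$ and $V^{\wedge n}\simeq\Theta_{\chi^+_{S_n}}$ as $G^{\odot n}$-modules, obtained by recognising $V^{\odot n}$ (resp.\ $V^{\wedge n}$) as the $1_{S_n}$-isotypic (resp.\ $\chi^+_{S_n}$-isotypic) piece of $V^{\otimes n}|_{S_n}$ by its defining symmetry property, combined with Waldspurger's second lemma (Lemma \ref{waldspurger2}). The second step is to observe that $V^{\otimes n}$ is a semisimple bimodule: by Lemma \ref{prdm} one has $\C[G^{\odot n}\times S_n]\simeq\C[G^{\odot n}]\otimes\C[S_n]$, and the semisimplicity of $\C[G^{\odot n}]$ proved in Section \ref{syex} together with the classical semisimplicity of $\C[S_n]$ forces their tensor product to be semisimple over $\C$. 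Consequently $\Theta_{1_{S_n}}$ and $\Theta_{\chi^+_{S_n}}$ are semisimple $G^{\odot n}$-modules.

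To conclude, I would note that a semisimple module possessing a unique irreducible quotient is itself irreducible: decomposing $\Theta_\sigma\simeq\bigoplus_i n_i\rho_i$ into pairwise inequivalent irreducibles, the multiplicity-one clause of the theta property applied to each pair $\rho_i\otimes\sigma$ forces $n_i\leq 1$, and the uniqueness of the irreducible quotient then forces at most one summand $\rho_i$ to actually appear. Applied to $\sigma=1_{S_n}$ and $\sigma=\chi^+_{S_n}$, this yields the claimed irreducibility of $V^{\odot n}$ and of $V^{\wedge n}$ (the latter being nontrivial precisely when $n\leq\dim V=m$; the statement for $V^{\wedge n}$ is vacuous otherwise). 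No serious obstacle is expected; the only point needing minor care is the identification in the first step, which is the classical observation that the trivial-isotypic (respectively sign-isotypic) $S_n$-subspace of $V^{\otimes n}$ coincides with the space of symmetric (respectively alternating) tensors.
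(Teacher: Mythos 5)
Your proposal is correct and follows essentially the route the paper intends: the corollary is an immediate consequence of Theorem \ref{theta1} (with $\chi=1_{S_n}$) once $V^{\odot n}$ and $V^{\wedge n}$ are recognized as the theta lifts of $1_{S_n}$ and $\chi^{+}_{S_n}$, and you correctly supply the small semisimplicity argument (left implicit in the paper, where it is available via Proposition \ref{theta} and the semisimplicity of $\C[G^{\odot n}]$ and $\C[S_n]$) that upgrades ``unique irreducible quotient with multiplicity at most one'' to genuine irreducibility. Your remark that the $V^{\wedge n}$ statement is vacuous for $n>\dim V$ is a sensible clarification of an edge case the paper does not address.
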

 Note that $V^{\odot n} \otimes D(V)^{\odot n} $ is generated by vectors $\underbrace{v\otimes \cdots \otimes v}_n\otimes \underbrace{v^{\ast}\otimes \cdots \otimes v^{\ast}}_n$,
 $[V\otimes D(V)]^{\odot n}$ is generated   by vectors $\underbrace{(v \otimes v^{\ast})  \otimes \cdots  \otimes (v\otimes v^{\ast})}_n$.  Hence  the isomorphism between $V^{\otimes n} \otimes D(V)^{\otimes n}$ and $(V\otimes D(V))^{\otimes n}$ will induce the isomorphism between $V^{\odot n} \otimes D(V)^{\odot n} $ and  $[V\otimes D(V)]^{\odot n}$. Note that $V\otimes D(V ) \hookrightarrow \C[G]$, which induces $[V\otimes D(V)]^{\odot n} \hookrightarrow \C[G^{\odot n}]$.   Note that  $[V\otimes D(V)]^{\odot n} $  is an irreducible $A^{S_n}-A^{S_n}$-bimodule.   Compose with $ \varphi: \C[G^{\odot n}] \longrightarrow A^{S_n}$, the image of  $[V\otimes D(V)]^{\odot n} $  in  $A^{S_n}$ is not zero, hence  isomorphic with  $V^{\otimes n} \otimes D(V)^{\otimes n}$.  By Lemma \ref{BIMODULE}, $\C[ A^{S_n}]$ is  a  theta ${G}^{\odot n}-{G}^{\odot n}$-bimodule. Consequently,  if $(\tau, U)\in \Irr(G)$, and $\tau  \ncong \pi$, then $\pi^{\odot n} \otimes  {D(\pi)}^{\odot n} \ncong\tau^{\odot n} \otimes  {D(\tau)}^{\odot n}  $, which implies $\pi^{\odot n} \ncong \tau^{\odot n} $. Hence:
 \begin{lemma}
 $\Irr(G^{\odot n})$ or $\Irr(A^{S_n})$ contains the pure part $\{  \pi_i^{\odot n}
 \mid  \pi_i\in \Irr(G)\}$,  and  $\pi_1^{\odot n} \ncong \pi_2^{\odot n}$ if $\pi_1\ncong \pi_2$.
  \end{lemma}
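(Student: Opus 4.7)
The plan is to deduce both claims from the theta property of $\C[G^{\odot n}]$ as a $G^{\odot n}$--$G^{\odot n}$-bimodule, together with Theorem~\ref{theta1} and the standard duality between matrix coefficients and symmetric powers.

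First, for the irreducibility of each $\pi^{\odot n}$ I would invoke Theorem~\ref{theta1}: taking $\chi=1_{S_n}$, the representation $(\pi^{\otimes n},V^{\otimes n})$ is a theta representation of $G^{\odot n}\times S_n$, and since $V^{\odot n}$ is the $1_{S_n}$-isotypic component of $\Res_{S_n}V^{\otimes n}$, Lemmas~\ref{waldspurger1}--\ref{waldspurger2} force $V^{\odot n}\simeq \theta_{1_{S_n}}$, which is irreducible by the definition of a theta representation. (Equivalently, this is the content of the corollary just preceding the lemma.) So the ``pure'' set $\{\pi_i^{\odot n}\}$ indeed sits inside $\Irr(G^{\odot n})$.

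Next, for the non-isomorphism part, I would follow the embedding sketched in the paragraph immediately above the lemma and make it precise. Matrix coefficients give an $M$--$M$-bimodule embedding $V\otimes D(V)\hookrightarrow \C[G]$; taking the $n$-fold tensor product yields
\[
[V\otimes D(V)]^{\otimes n}\hookrightarrow \C[G]^{\otimes n}\simeq \C[G^{\times n}].
\]
Restricting to the $S_n$-invariants (and using the canonical $S_n$-equivariant identification $[V\otimes D(V)]^{\otimes n}\simeq V^{\otimes n}\otimes D(V)^{\otimes n}$ followed by projection onto symmetric tensors on each side) produces a $G^{\odot n}$--$G^{\odot n}$-bimodule embedding
\[
V^{\odot n}\otimes D(V)^{\odot n}\simeq [V\otimes D(V)]^{\odot n}\hookrightarrow \C[G^{\odot n}].
\]
By Remark~\ref{irredM1}(3) (or Lemma~\ref{duality}) applied to the semi-simple monoid $G^{\odot n}$, the regular bimodule $\C[G^{\odot n}]$ is a theta bimodule: its isotypic decomposition is $\bigoplus_{\rho\in\Irr(G^{\odot n})}\rho\otimes D(\rho)$ with each component occurring exactly once.

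The hard part---really the only genuine content---is showing that the isotypic component $V^{\odot n}\otimes D(V)^{\odot n}$ inside $\C[G^{\odot n}]$ is determined by $\pi$, not merely by $\pi^{\odot n}$. Suppose $\pi_1,\pi_2\in\Irr(G)$ with $\pi_1\ncong\pi_2$ but $\pi_1^{\odot n}\simeq\pi_2^{\odot n}$. Then as $G^{\odot n}$--$G^{\odot n}$-bimodules,
\[
V_1^{\odot n}\otimes D(V_1)^{\odot n}\simeq V_2^{\odot n}\otimes D(V_2)^{\odot n},
\]
so both copies must land in the same isotypic component of $\C[G^{\odot n}]$. Pulling this back along the embedding produces a non-zero $G\times G$-bimodule map between the spans of matrix coefficients of $\pi_1^{\otimes n}$ and $\pi_2^{\otimes n}$ inside $\C[G]^{\otimes n}$, hence, by restricting to the diagonally embedded $G\times G$ factor and using the semisimplicity of $\C[G]\otimes\C[G]$ (Lemma~\ref{ss}), a non-zero $G\times G$-map $V_1\otimes D(V_1)\to V_2\otimes D(V_2)$; this contradicts Schur's Lemma, giving $\pi_1\cong\pi_2$. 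I expect the technical nuisance to be this last extraction step---making the passage from the symmetric tensor identity back to an honest $G\times G$-identity fully rigorous---since one must check that the embedding $[V\otimes D(V)]^{\odot n}\hookrightarrow\C[G^{\odot n}]$ is compatible with the diagonal $G$-action via $g\mapsto g^{\odot n}$, which is a routine but careful bookkeeping using the generation of $W^{\odot n}$ by elements $w^{\odot n}$ already exploited in the proof of Theorem~\ref{theta1}.
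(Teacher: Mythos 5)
Your first half (irreducibility of each $\pi^{\odot n}$ via Theorem \ref{theta1} with $\chi=1_{S_n}$) is exactly the paper's route, and your set-up for the second half -- the embedding $V^{\odot n}\otimes D(V)^{\odot n}\simeq[V\otimes D(V)]^{\odot n}\hookrightarrow \C[G^{\odot n}]$ together with the fact that $\C[G^{\odot n}]$ is a multiplicity-free (theta) $G^{\odot n}$--$G^{\odot n}$-bimodule -- is also the paper's argument. The problem is your finishing move. You propose to convert the assumed isomorphism $\pi_1^{\odot n}\simeq\pi_2^{\odot n}$ into a non-zero $G\times G$-map $V_1\otimes D(V_1)\to V_2\otimes D(V_2)$ by ``restricting to the diagonally embedded $G\times G$ factor,'' and then contradict Schur. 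That extraction step is not a technical nuisance; it is false as an implication, because the diagonal copy of $G$ inside $G^{\odot n}$ does not see enough. Concretely, take $G=\Z/2\Z=\{1,g\}$, $n=2$, $\pi_1$ the trivial character and $\pi_2$ the nontrivial character: the restrictions of $\Sym^2(\pi_i\otimes D(\pi_i))$ to the diagonally embedded $G\times G$ are both trivial, so the diagonal data is identical for the two inequivalent characters and no non-zero $G\times G$-map $V_1\otimes D(V_1)\to V_2\otimes D(V_2)$ can be produced from it. What actually separates $\pi_1^{\odot 2}$ from $\pi_2^{\odot 2}$ is a mixed monoid element such as $(1,g)^{\odot 2}=\tfrac12(1\otimes g+g\otimes 1)$, which acts by $+1$ in the first case and $-1$ in the second; your proposed descent to the diagonal discards precisely this information.

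The correct finish stays entirely at the level of $G^{\odot n}$ and is shorter than what you attempt. The two embedded sub-bimodules $[V_1\otimes D(V_1)]^{\odot n}$ and $[V_2\otimes D(V_2)]^{\odot n}$ sit inside the $\pi_1^{\otimes n}$- and $\pi_2^{\otimes n}$-matrix-coefficient subspaces of $\C[G]^{\otimes n}$, which are linearly independent when $\pi_1\ncong\pi_2$; hence they are non-zero sub-bimodules of $\C[G^{\odot n}]$ with zero intersection. If $\pi_1^{\odot n}\simeq\pi_2^{\odot n}$, these two sub-bimodules would be isomorphic irreducible bimodules, so the constituent $\pi_1^{\odot n}\otimes D(\pi_1^{\odot n})$ would occur with multiplicity at least $2$ in $\C[G^{\odot n}]$, contradicting the multiplicity-one (theta) decomposition $\C[G^{\odot n}]\simeq\oplus_{\rho\in\Irr(G^{\odot n})}\rho\otimes D(\rho)$. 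This is exactly how the paper concludes; no passage back to $G\times G$ and no appeal to Schur's Lemma for $G$ is needed, and, as the example above shows, none is available.
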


\subsection{Free extension}
Keep the above  notations. By Lmm.\ref{Zd}, we can take a basis $\{e_1, \cdots,e_m\}$ of $V$ such that  (1) there exists a field extension $K/\mathbb{Q}$, for $K\subseteq \overline{K} \subseteq \C$, (2) under such basis, $\pi(g) \in \GL_m(K)$, for all $g\in G$, (3)    for the  free extension  representation $(\Pi, V)$ of  $G\ast S_m$ from $(\pi,V)$ of  $G$, the image $K^{\times} \Pi(G\ast S_m)$ is Zariski-dense in $\GL_m(\overline{K})$ as well as $\M(\overline{K})$.
   Let  $(\chi, \C)$ be a character of $S_n$. Let $(\Pi \wr \chi,  V\wr \C)$ be the corresponding representation of $(G\ast S_m) \wr S_n$.  We will use some results of \cite[p.23, Section 3]{KrPr} to prove the next result.
\begin{theorem}\label{theta2}
$(\Pi \wr \chi, V\wr \C)$ is a theta representation of $(G\ast S_m) \times S_n$.
\end{theorem}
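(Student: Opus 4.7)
The plan is to mirror the strategy used for Theorem \ref{theta1}, replacing the finite monoid $G^{\odot n}$ by the infinite group $G\ast S_m$ and replacing the combinatorial expansion of $w^{\odot n}$ as a sum over $G$ by an algebro--geometric argument driven by Lemma \ref{Zd}. First, using Lemma \ref{Zd} I choose a basis of $V$ in which $K^{\times}\Pi(G\ast S_m)$ is Zariski-dense in $\GL_m(\overline{K})$; since $\GL_m$ is Zariski-open and dense in the affine variety $\M$, this Zariski-density extends to $\M(\overline{K})\simeq W\otimes_{\C}\overline{K}$, where $W=\End_{\C}(V)$. In particular the $\C$-linear span of $\Pi(G\ast S_m)$ is all of $W$.

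Next, just as in the proof of Theorem \ref{theta1}, I reduce to the case $\chi=\mathbf{1}$, so that the restriction of $\Pi\wr\chi$ to $(G\ast S_m)\times S_n$ becomes $(\Pi^{\otimes n},V^{\otimes n})$ with $G\ast S_m$ acting diagonally and $S_n$ permuting tensor factors. By classical Schur--Weyl duality, $\End_{S_n}(V^{\otimes n})=W^{\odot n}$, and $W^{\odot n}$ is $\C$-linearly spanned by the pure symmetric powers $w^{\odot n}=w\otimes\cdots\otimes w$. The core of the proof is therefore the identity
\[
\Pi^{\otimes n}(\C[G\ast S_m]) \;=\; W^{\odot n}.
\]
To establish it, I will consider the regular morphism of affine varieties $\Phi_n:\GL_m(\overline{K})\to W^{\odot n}\otimes\overline{K}$, $g\mapsto g^{\otimes n}$. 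If a linear functional $\lambda\in (W^{\odot n})^{\ast}$ vanishes on $\Phi_n(K^{\times}\Pi(G\ast S_m))$, then $\lambda\circ\Phi_n$ is a regular function on $\GL_m(\overline{K})$ which vanishes on the Zariski-dense subset $K^{\times}\Pi(G\ast S_m)$, hence vanishes identically, and so $\lambda=0$ on the linear span of $\Phi_n(\GL_m(\overline{K}))=W^{\odot n}\otimes\overline{K}$. Since $(c\Pi(g))^{\otimes n}=c^n\Pi(g)^{\otimes n}$, the scalar factors can be absorbed, giving $\operatorname{span}_{\C}\{\Pi(g)^{\otimes n}:g\in G\ast S_m\}=W^{\odot n}$ as required. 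This is the step that leans on the exercises of \cite{KrPr}.

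Finally, I apply Proposition \ref{theta}. Setting $M_1=G\ast S_m$, $M_2=S_n$, $\rho=\Pi^{\otimes n}$, and letting $A$, $B$, $C$ be as in the statement of that proposition, the identity just obtained says $A=\End_{S_n}(V^{\otimes n})=Z_B(C)$. By the double-centraliser theorem applied to the semi-simple action of $\C[S_n]$ on $V^{\otimes n}$, we then also have $B=Z_A(C)$, so condition (2) (equivalently (3)) of Proposition \ref{theta} is satisfied. Hence $(\Pi^{\otimes n},V^{\otimes n})$, and therefore $(\Pi\wr\chi,V\wr\C)$, is a theta representation of $(G\ast S_m)\times S_n$.

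The main obstacle is the middle paragraph: making the Zariski-density argument rigorous and extracting the symmetric-power spanning identity from density in $\GL_m(\overline{K})$. Everything else is either formal (Proposition \ref{theta}) or a citation to Lemma \ref{Zd} and classical Schur--Weyl; the genuinely new input is the geometric translation of density under the polynomial map $g\mapsto g^{\otimes n}$.
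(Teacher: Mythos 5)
Your proposal is essentially the paper's own proof: reduce to trivial $\chi$, use Lemma \ref{Zd} to get Zariski-density of $\overline{K}^{\times}\Pi(G\ast S_m)$ in $\M(\overline{K})$, and deduce (via the Kraft--Procesi spanning lemma for pure $n$-th powers of a Zariski-dense set, which you prove rather than cite from \cite[p.24]{KrPr}) that $\Pi^{\otimes n}(\C[G\ast S_m])=W^{\odot n}=\End_{S_n}(V^{\otimes n})$, concluding by the double-centralizer property. The only point to note is that Proposition \ref{theta} is stated for \emph{finite} semi-simple monoids while $G\ast S_m$ is infinite --- the paper instead cites \cite[p.26]{KrPr} at this last step --- but since the action factors through the finite-dimensional semi-simple image algebra, the same double-commutant argument applies.
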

\begin{proof}
By \cite[p.28, Exercise]{KrPr}, we can assume that all representations are $\overline{K}$-representations instead of $\C$-representations. Similar to the above  proof  of Thm. \ref{theta1}, we also assume $\chi=$ the trivial representation, and let   $W=\End(V) \simeq V^{\ast} \otimes V$. Let $X=\overline{K}^{\times} \Pi(G\ast S_m)$, $X_0= \Pi(G\ast S_m)$. By \cite[p.24, Lmm.]{KrPr}, $W^{\odot n}$ is generated by $x^{\odot n}=x\otimes \cdots \otimes x$, for all $x\in X$ or all $x\in X_0$.  Hence $\End_{S_n}(V^{\otimes n})=\langle\Pi(G\ast S_m) \rangle$. By \cite[p.26]{KrPr}, we obtain the result.
\end{proof}

\setcounter{secnumdepth}{5}
 \labelwidth=4em
\addtolength\leftskip{25pt}
\setlength\labelsep{0pt}
\addtolength\parskip{\smallskipamount}

\end{document}